\def\titlepaper{How to build a Hopf algebra}
\def\authorpaper{Theo Johnson-Freyd \& David Reutter}
\renewcommand{\to}[1][]{\ensuremath{\xrightarrow{#1}}}
\newcommand{\To}[1][]{\ensuremath{\xRightarrow{#1}}}
\theoremstyle{plain} 
\newtheorem{maintheorem}{Theorem}
\newtheorem{theorem}{Theorem}[section]
\newtheorem{lemma}[theorem]{Lemma}
\newtheorem{corollary}[theorem]{Corollary}          
\newtheorem{proposition}[theorem]{Proposition}              
\newtheorem{cor}[theorem]{Corollary}          
\newtheorem{prop}[theorem]{Proposition}
\theoremstyle{definition} 
\newtheorem{definition}[theorem]{Definition}
\theoremstyle{remark}  
\newtheorem{remark}[theorem]{Remark}
\newtheorem{example}[theorem]{Example}
\newtheorem{construction}[theorem]{Construction}
\newtheorem{warn}[theorem]{{Warning}}
\newtheorem{question}[theorem]{{Question}}
\newtheorem*{guide*}{Guide}
\newtheorem*{outline*}{Outline}
\newtheorem*{remarkohc*}{Remark on higher categories and the cobordism hypothesis}
\newtheorem*{assumpfield*}{Assumptions on the base field}
\newtheorem{notation}[theorem]{Notation}
\newcommand\subsektion[1]{\vspace{6pt}\noindent\textbf{#1.}}
\DeclareMathOperator\Mod{\cat{Mod}}
\DeclareMathOperator\RMod{\cat{RMod}}
\DeclareMathOperator\LMod{\cat{LMod}}
\newcommand\rmate{\curvearrowright}
\newcommand\lmate{\curvearrowleft}
\newcommand\longto\longrightarrow
\newcommand\Isom{\overset{\!\sim}\Rightarrow}
\newcommand\Iisom{\mathrel{\raisebox{1.5ex}{\ensuremath{\scriptstyle\sim}}\hspace{-1.5ex}{\ensuremath\Rrightarrow}}}
\DeclareMathOperator{\Hom}{Hom}
\DeclareMathOperator{\End}{End}
\DeclareMathOperator{\Fun}{Fun}
\newcommand{\id}{\mathrm{id}}
\newcommand{\Aut}{\mathrm{Aut}}
\newcommand{\Map}{\mathrm{Map}}
\renewcommand{\Vec}{\cat{Vec}}
\newcommand{\Vect}{\Vec}
\newcommand{\Alg}{\cat{Alg}}
\newcommand{\coAlg}{\cat{coAlg}}
\newcommand{\BiAlg}{\mathrm{BiAlg}}
\newcommand{\Hopf}{\cat{Hopf}}
\newcommand{\coHopf}{\cat{Hopf}}
\newcommand{\Cat}{\cat{Cat}}
\newcommand{\Spaces}{\cat{Spaces}}
\newcommand{\Set}{\cat{Set}}
\newcommand\strCat{\cat{StrCat}}
\newcommand{\PrL}{\cat{Pr}}
\renewcommand\Pr{\cat{Pr}}
\newcommand\PPr{\mathbb{P}\mathrm{r}}
\renewcommand\PPr{\Pr}
\newcommand{\ev}{\mathrm{ev}}
\newcommand{\coev}{\mathrm{coev}}
\newcommand{\op}{\mathrm{op}}
\newcommand{\co}{\mathrm{co}}
\newcommand\mop{\mathrm{mp}}
\renewcommand\Vec{\cat{Vec}}
\newcommand\br{\mathrm{braid}}
\newcommand\rev{\mathrm{rev}}
\newcommand\Adj{\mathrm{Adj}}
\newcommand\Mnd{\mathrm{Mnd}}
\newcommand\Funlax{\mathrm{Fun}^{\mathrm{lax}}}
\newcommand\Funoplax{\mathrm{Fun}^{\mathrm{oplax}}}
\newcommand\Funstrong{\mathrm{Fun}^{\mathrm{strong}}}
\newcommand\globe{\theta}
\newcommand\Gaunt{\cat{Gaunt}}
\newcommand\Arr[1]{\mathrm{Arr}(#1)}
\newcommand\Sone{\vec{S}^1}
\DeclareMathOperator\colim{colim}
\newcommand\eHom{\underline{\mathrm{Hom}}}
\newcommand\Mon{\cat{Mon}}
\newcommand\coMon{\cat{coMon}}
\newcommand\coCat{\cat{coCat}}
\DeclareMathOperator\ob{ob}
\DeclareMathOperator\Gau{Gau}
\DeclareMathOperator\maps{Map}
\newcommand{\rrmate}{\text{\rotatebox[origin=c]{90}{$\circlearrowright$}}}
\newcommand{\llmate}{\text{\reflectbox{\rotatebox[origin=c]{90}{$\circlearrowright$}}}}
\newcommand\Ret{\cat{Ret}}
\newcommand\RetWalking{\mathrm{Ret}}
\renewcommand\BiAlg{\cat{BiAlg}}
\renewcommand\Hopf{\cat{Hopf}}
\renewcommand\coHopf{\cat{coHopf}}
\newcommand\someadj{\mathrm{adj}}
\newcommand\strict{\mathrm{str}}
\newcommand\mono\hookrightarrow
\DeclareMathOperator\sh{sh}
\newcommand\oplax{\mathrm{oplax}}
\newcommand\Rad{\mathrm{Rad}}
\newcommand{\PSh}{\operatorname{PSh}}
\newcommand{\PShV}{\operatorname{PSh}_{\cV}}
\newcommand\LcoMod{\cat{LcoMod}}
\newcommand{\PrV}{\cat{Pr}_{\cV}}
\newcommand{\PPrV}{\mathbb{P}\mathrm{r}_{\cV}}
\newcommand{\tPPrV}{2\mathbb{P}\mathrm{r}_{\cV}}
\newcommand{\Mor}{\cat{Mor}}
\renewcommand\PPrV{\Pr_\cV}
\renewcommand\tPPrV{\cat{2Pr}_\cV}
\newcommand\vop{\textnormal{v-op}}
\newcommand\hop{\textnormal{h-op}}
\newcommand\fancysquare{\raisebox{-0.5pt}{\large\ensuremath{\square\!\!\!\!\!\square}}} 
\def\cA{\mathcal A}\def\cB{\mathcal B}\def\cC{\mathcal C}\def\cD{\mathcal D}
\def\cE{\mathcal E}
\def\cM{\mathcal M}\def\cN{\mathcal N}\def\cO{\mathcal O}
\def\cV{\mathcal V}\def\cX{\mathcal X}
\def\cY{\mathcal Y}
\def\EE{\mathbb E}
\def\OO{\mathbb O}
\newcommand\bG{\mathbb G}
\newcommand\bN{\mathbb N}
\newcommand\bZ{\mathbb Z}
\newcommand\rB{\mathrm B}
\newcommand\define[1]{\emph{#1}}
\newcommand\cat[1]{\mathbf{#1}}
\newcommand\ignore[1]{}
\newcommand\arXiv[1]{\href{http://arxiv.org/abs/#1}{\nolinkurl{arXiv:#1}}}
\newcommand\MRnumber[2]{}
\newcommand\DOI[1]{\href{http://dx.doi.org/#1}{\nolinkurl{DOI:#1}}}
\newcommand\MAILTO[1]{\href{mailto:#1}{\nolinkurl{#1}}}
\tikzset{arrow data/.style 2 args={
      decoration={
         markings,
         mark=at position #1 with {\arrow[scale=0.7]{#2}}}, 
         postaction=decorate}
}
     \gdef\node@@on@layer{%
      \setbox\tikz@tempbox=\hbox\bgroup\pgfonlayer{#1}\unhbox\tikz@tempbox\endpgfonlayer\egroup}
\def\node@on@layer{\aftergroup\node@@on@layer}
\tikzset{label/.style = {scale=0.5}}
\tikzset{braid/.style={ preaction={draw=white, line width =3pt, on layer=#1}, draw =black, line width=0.7pt}}
\tikzset{braid/.default=main}
\tikzset{box/.style={line width =0.3pt}}
\tikzset{string/.style={draw=black, line width=0.7pt}}
\tikzset{dinat/.style={draw=red, line width=0.7pt}}
\tikzset{hb/.style={draw=blue,  line width=0.7pt}}
 \def\dotscale{0.3}
\tikzset{dot/.style n args={1}{draw,fill=black,circle,line width=0.7pt,scale=\dotscale}}
\tikzset{odot/.style n args={1}{draw,circle,fill=white,line width=0.7pt,scale=\dotscale}}
\tikzset{smalldot/.style ={node on layer=front, fill = black, rectangle, line width =0.7pt, scale=1.5*\dotscale}}
\tikzset{smalldotgreen/.style ={node on layer=front, fill = black!20!green,, circle, line width =0.7pt, scale=0.75*\dotscale}}
\tikzset{std/.style = {string, scale=0.55}}
\tikzset{box/.style={rectangle, draw, fill=white, minimum width =#1, label}}
\def\shift{0.075cm}
\tikzset{upshift/.style={yshift=\shift}}
\tikzset{downshift/.style={yshift=-\shift}}
\tikzset{curve/.style={settings={#1},to path={(\tikztostart)
    .. controls ($(\tikztostart)!\pv{pos}!(\tikztotarget)!\pv{height}!270:(\tikztotarget)$)
    and ($(\tikztostart)!1-\pv{pos}!(\tikztotarget)!\pv{height}!270:(\tikztotarget)$)
    .. (\tikztotarget)\tikztonodes}},
    settings/.code={\tikzset{quiver/.cd,#1}
        \def\pv##1{\pgfkeysvalueof{/tikz/quiver/##1}}},
    quiver/.cd,pos/.initial=0.35,height/.initial=0}
\tikzset{between/.style n args={2}{/tikz/spath/at end path construction={
    \tikzset{spath/split at keep middle={current}{#1}{#2}}
}}}
\tikzset{tail reversed/.code={\pgfsetarrowsstart{tikzcd to}}}
\tikzset{2tail/.code={\pgfsetarrowsstart{Implies[reversed]}}}
\tikzset{2tail reversed/.code={\pgfsetarrowsstart{Implies}}}
\tikzset{no body/.style={/tikz/dash pattern=on 0 off 1mm}}
\tikzset{xzplane/.style={canvas is xz plane at y=#1}}
\tikzset{td/.style={
					y={(0.6cm, 0.4cm)},
					x={(1cm, 0cm)},					
                    z  = {(0cm,1cm)},
                    scale = 0.55}}
\tikzset{slice/.style={draw = gray!50, line width = 0.7pt}}              
  \tikzset{obj/.style={scale=0.6, color=gray}}
\tikzset{21braid/.style = {draw=black, line width =0.3pt}}
  \tikzset{surface/.style={ fill=gray!60, opacity=0.8}}
\tikzset{hb3d/.style={smalldot}}
\tikzset{1mor/.style= {string}}
\def\maskpointheight{5pt}
\tikzset{mask point/.style={ transform shape, sloped, 
minimum width=15pt, minimum height=\maskpointheight, inner sep=0pt, ultra thin, font=\tiny, node on layer=#1}}
\tikzset{mask point/.default=main}
\tikzset{
clip even odd rule/.code={\pgfseteorule},
invclip/.style={clip,insert path=[clip even odd rule]{
   [reset cm](-\maxdimen,-\maxdimen)rectangle(\maxdimen,\maxdimen)
    }}}
\def\circclip{2pt}
\tikzset{circ mask point/.style={ circle,
minimum width=\circclip, inner sep=0pt, ultra thin, font=\tiny, node on layer=#1}}
\tikzset{circ mask point/.default=main}
\def\boxmaskpointheight{6pt}
\tikzset{box mask point/.style={ transform shape, sloped, 
minimum width=3.5pt, minimum height=\boxmaskpointheight, inner sep=0pt, ultra thin, font=\tiny}}
\tikzset{
    dot/.style={circle,draw,fill,inner sep=1pt},
    arrow/.style={->,thick,shorten <=2pt,shorten >=2pt},
    twoarrow/.style={double,double distance=1.5pt,shorten <=9pt,shorten >=10pt,decoration={markings,mark=at position -8pt with {\arrow[scale=1.75]{>}}},preaction={decorate}},
    twoarrowlonger/.style={double,double distance=1.5pt,shorten <=5pt,shorten >=6pt,decoration={markings,mark=at position -4pt with {\arrow[scale=1.75]{>}}},preaction={decorate}},
    twoarrowshorter/.style={double,double distance=1.5pt,shorten <=13pt,shorten >=14pt,decoration={markings,mark=at position -12pt with {\arrow[scale=1.75]{>}}},preaction={decorate}},
    twoarrowshorthead/.style={double,double distance=1.5pt,shorten <=9pt,shorten >=20pt,decoration={markings,mark=at position -18pt with {\arrow[scale=1.75]{>}}},preaction={decorate}},
    threearrowpart1/.style={ thick,double,double distance=3pt,shorten <=9pt,shorten >=11pt},
    threearrowpart2/.style={ thick,shorten <=9pt,shorten >=10pt},
    threearrowpart3/.style={ shorten <=9pt,shorten >=10pt,decoration={markings,mark=at position -8pt with {\arrow[scale=3]{>}}},preaction={decorate}},
fourarrowpart1/.style={thick, double,double distance=4pt,shorten <=1pt,shorten >=2.75pt},
fourarrowpart2/.style={thick,  double,double distance=1pt,shorten <=1pt,shorten >=1.25pt,decoration={markings,mark=at position -.05pt with {\arrow[scale=3,ultra thin]{>}}},preaction={decorate} }
}
\newcommand\fourarrow{%
\begin{tikzpicture}
  \path coordinate (l) ++(3ex,0) coordinate (r);
  \draw[double,double distance=4pt,shorten <=1pt,shorten >=2.75pt] (l) -- (r);
  \draw[double,double distance=1pt,shorten <=1pt,shorten >=1.25pt,decoration={markings,mark=at position -.05pt with {\arrow[scale=3,ultra thin]{>}}},preaction={decorate}] (l) -- (r);
\end{tikzpicture}%
}
\newcommand\threearrowlong{%
\begin{tikzpicture}
  \path coordinate (l) ++(5ex,0) coordinate (r);
  \draw[,double,double distance=3pt,shorten <=1pt,shorten >=2.5pt] (l) -- (r);
  \draw[shorten <=1pt,shorten >=.5pt,decoration={markings,mark=at position -.05pt with {\arrow[scale=3,ultra thin]{>}}},preaction={decorate}] (l) -- (r);
\end{tikzpicture}%
}
\newcommand\fourarrowlong{%
\begin{tikzpicture}
  \path coordinate (l) ++(5ex,0) coordinate (r);
  \draw[double,double distance=4pt,shorten <=1pt,shorten >=2.75pt] (l) -- (r);
  \draw[double,double distance=1pt,shorten <=1pt,shorten >=1.25pt,decoration={markings,mark=at position -.05pt with {\arrow[scale=3,ultra thin]{>}}},preaction={decorate}] (l) -- (r);
\end{tikzpicture}%
}
\tikzset{wiggly/.style={decorate, decoration={snake, segment length=5pt, amplitude=1pt}}}
\tikzset{Rightarrow/.style={double equal sign distance,>={Implies},->},
triple/.style={-,preaction={draw,Rightarrow}},
quadruple/.style={preaction={draw,Rightarrow,shorten >=0pt},shorten >=1pt,-,double,double
distance=0.2pt}}
\tikzset{
    graydot/.style={gray!50!white,circle,draw,fill,inner sep=1pt},
    grayarrow/.style={gray!50!white,->,thick,shorten <=2pt,shorten >=2pt},
    graytwoarrow/.style={gray!50!white,double,double distance=1.5pt,shorten <=9pt,shorten >=10pt,decoration={markings,mark=at position -8pt with {\arrow[scale=1.75,gray!50!white]{>}}},preaction={decorate}},
}
\title{How to build a Hopf algebra}
\author{Theo Johnson-Freyd}
\address{
Perimeter Institute for Theoretical Physics}
\email{theojf@pitp.ca}
\urladdr{https://categorified.net/}
\author{David Reutter}
\address{Universit\"at Hamburg, Fachbereich Mathematik}
\email{david.reutter@uni-hamburg.de}
\urladdr{https://www.davidreutter.com/}
\begin{document}

\begin{abstract}
We construct a functor that inputs a retract in an $(\infty,3)$-category  satisfying some adjunctibility conditions and outputs a Hopf algebra in a braided monoidal $(\infty,1)$-category. Provided the braided monoidal category is presentable, any Hopf algebra can be obtained in this way. Our functor specializes to --- and provides a higher-categorical explanation for --- the Tannakian reconstruction of a Hopf algebra from a monoidal category with duals and a fiber functor. 

Towards this end, we review and develop the lax (aka Gray) tensor product $\otimes$ of $(\infty,\infty)$-categories, and we analyze the ``lax smash product'' $\owedge$ of pointed $(\infty,\infty)$-categories. We compute the $\owedge$-square of the ``walking adjunction'' and show that its $3$-localization corepresents retracts with some adjunctibilty conditions, whereas the $3$-localization of the $\owedge$-square of the ``walking monad'' corepresents  bialgebras. In these terms, our functor is restriction along the $\owedge$-square of the inclusion $\{\text{walking monad}\} \to \{\text{walking adjunction}\}$. After $3$-localization, we show that this restriction inverts a certain shear map,  proving the existence of an antipode. 

We discuss generalizations of this construction  to Hopf monads, analyze additional adjunctibility conditions and their interplay with integrals and cointegrals, and finally explain how variants of classical Tannakian reconstruction fit into our scheme. 
\end{abstract}

\maketitle

\tableofcontents

\section{Introduction}\label{sec:intro}

This article describes a very general form of \define{Tannakian reconstruction}: we will explain how to (re)construct a Hopf algebra given some basic ``fibre functor'' data. Before stating our main result, let us indicate how our construction generalizes the reconstruction of a Hopf algebra from a linear monoidal category $\cA$ and a linear monoidal functor $G:\cA \to \Vect$; this is described in detail in \S\ref{subsec:tannakian}.
Choose a convenient\footnote{For this exposition, we will be indifferent about exactly what ``linear category'' and ``linear functor'' means; the reader should choose their favourite setting in which to do categorified linear algebra, see \S\ref{subsec:tannakian} for one such choice.}
 $3$-category $ \cat{3Vec}$ of linear $2$-categories, and write $ \cat{2Vec} \in \cat{3Vec}$ for the $2$-category of linear categories. 
Our linear monoidal category $\cA$ has a $2$-category  $\cat{Mod}(\cA)$ of module categories equipped with a functor $f : \cat{2Vec} \to \cat{Mod}(\cA)$ selecting the regular module $\cA_{\cA}$. Base-changing along the fiber functor $G: \cA \to \Vect$ induces a  functor  $g: \cat{Mod}(\cA) \to \cat{2Vec}$ so that $gf \simeq \id_{\cat{2Vec}}$. 
Under  various finiteness conditions on $\cA$ and $G$ (such as the existence of duals in $\cA$), classical Tannakian reconstruction recovers   a Hopf algebra in $\Vec$. In \S\ref{subsec:tannakian} we explain how these finiteness conditions can be directly encoded in terms of adjunctibility conditions on the retract $(f,g,gf \simeq \id)$ in $\cat{3Vec}$. From this perspective, our main theorem is a far-reaching generalization of Tannakian reconstruction:  starting with \emph{any} $(\infty,3)$-category and \emph{any} retract therein satisfying certain adjunctibility conditions, we $\infty$-categorically-coherently construct a Hopf algebra.

\begin{maintheorem}[Lemma~\ref{lem:unpacked} and Corollary~\ref{cor:anyHopf}] \label{thm:mainHopftheorem} 
  Let $\cC$ be an $(\infty,3)$-category with  a chosen object $1_\cC \in \cC$.
  An \emph{adjunctible retract} in $\cC$ is a pair of $1$-morphisms $1_{\cC} \to[f]  X \to[g] 1_{\cC}$ together with a $2$-isomorphism $\alpha: \id_{1_{\cC}}\Isom gf$ satisfying the following adjunctibility conditions:
  \begin{enumerate}
  \item[{[i]}] $f$ is a left adjoint and $g$ is a right adjoint;
  \item[{[ii]}] the canonical $2$-morphism $f^R \To g$ (built from $\alpha$ and the adjunction data) is a right adjoint; equivalently, the canonical $2$-morphism $g^L \To f$ is a left adjoint.
  \end{enumerate}
  Consider the following composite in the $\EE_2$-monoidal $(\infty,1)$-category  $\Omega^2\cC := \End_{\End_\cC(1_\cC)}(\id_{1_\cC})$ of $2$-endo\-mor\-phisms of the basepoint $1_\cC \in \cC $: 
 \begin{equation}\label{eqn:formulaforH}
\begin{tikzcd}[sep=5em]
1_\cC\arrow[r, equal] \arrow[d, equal]& 1_\cC \arrow[r, equal] \arrow[d, "f" description]& 1_\cC \arrow[d, equal] \\
1_\cC  \arrow[ur, Rightarrow, shorten <=7pt, shorten >=7pt, "\alpha^{\lmate}" description]\arrow[r, "g^L" description] \arrow[d, equal] & X  \arrow[ur, Rightarrow, shorten <=7pt, shorten >=7pt,
"\sim" sloped, "\alpha^{-1}" description] \arrow[r, "g" description] \arrow[d, "f^R"description] & 1_\cC\arrow[d, equal]\\
1_\cC \arrow[ur, Rightarrow, shorten <=5pt, shorten >=5pt, "\tiny\substack{((\alpha^{\rmate})^L)^{\lmate} \\ \rotatebox[origin=c]{270}{\ensuremath\simeq} \\ ((\alpha^{\lmate})^R)^{\rmate}}" description]  \arrow[r, equal] & 1_\cC \arrow[ur, Rightarrow, shorten <=7pt, shorten >=7pt, "\alpha^{\rmate}" description] \arrow[r, equal] & 1_\cC
\end{tikzcd}\qquad \in \Omega^2\cC.
\end{equation}
Here, $(-)^L, (-)^R$ denote the left and right adjoints of a given $k$-morphism, and $(-)^\lmate,(-)^\rmate$ denote the left and right mates of a square, defined as compositions with certain adjunction data of the sides of the square, as detailed in \S\ref{subsec:mates}.

Then the composite \eqref{eqn:formulaforH} carries the structure of a coHopf\footnote{The category $\coAlg(\cB)$ of coalgebras in an $\EE_2 \simeq \EE_1 \otimes \EE_1$-monoidal $(\infty,1)$-category $\cB$ inherits a monoidal structure. A \emph{bialgebra} in $\cB$ is an algebra in $\coAlg(\cB)$. A bialgebra is \emph{Hopf} if it admits a (not-necessarily invertible) antipode or equivalently if the  \define{shear map} $(\id_B \otimes m_B) \circ (\Delta_B \otimes \id_B ) : B \otimes B \to B \otimes B$ is invertible. We call a bialgebra  \emph{coHopf} if the coopposite bialgebra, in which the comultiplication is flipped, is Hopf; or equivalently if the \define{coshear map} $(m_B \otimes \id_B) \circ (\id_B \otimes \br_{B,B}) \circ (\Delta_B \otimes \id_B)$ is invertible.   (Note that the coopposite bialgebra of a bialgebra in $\cB$ is a bialgebra in $\cB^{\rev}$, i.e.\ uses the opposite braiding for the bialgebra law.)  A Hopf algebra is coHopf if and only if its antipode is invertible.  See \S\ref{subsec:finalproof} for details.} algebra.
Moreover, any (not necessarily dualizable) coHopf algebra in any presentably $\EE_2$-monoidal $(\infty,1)$-category $\cB$ (such as $\cB=\Vect$, the category of all vector spaces) arises from this construction in a certain $3$-category $\cC$ with $\Omega^2 \cC = \cB$. 
\end{maintheorem}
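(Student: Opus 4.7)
The overall strategy is to package everything in the universal $\owedge$-square machinery developed in the paper and reduce the theorem to restriction along a single pointed functor. Concretely, I would fix the pointed $(\infty,3)$-categories $\Mnd$ (walking monad) and $\Adj$ (walking adjunction), together with the pointed inclusion $\iota:\Mnd \hookrightarrow \Adj$ which picks out the monad underlying the adjunction. Applying $\owedge$-square $\owedge^{2}$ to $\iota$ and taking $3$-localizations, the plan is to promote the result corepresenting bialgebras ($\Mnd^{\owedge 2}$) and the result corepresenting adjunctible retracts ($\Adj^{\owedge 2}$) to a commutative diagram so that the restriction functor
\[
\iota^{*}: \bigl\{\text{adjunctible retracts in }\cC\bigr\} \longrightarrow \bigl\{\text{bialgebras in } \Omega^{2}\cC\bigr\}
\]
is defined for every $(\infty,3)$-category $\cC$, natural in $\cC$, and factors through coHopf algebras. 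Then the composite \eqref{eqn:formulaforH} will be, by construction, the $2$-endomorphism in $\Omega^{2}\cC$ classified by the image under $\iota^{*}$ of the adjunctible retract $(f,g,\alpha)$.

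For the bialgebra structure, I would first make precise the universal adjunctible retract living in $\Adj^{\owedge 2}$ after $3$-localization: this is where adjunctibility condition [i] (requiring that $f$ be a left adjoint and $g$ a right adjoint) and condition [ii] (requiring the canonical $2$-morphism between $f^{R}$ and $g$ to be adjointable) are used to access left and right mates of the square which classifies the retract. I would then check that, when pulled back along $\iota^{\owedge 2}$, the two $\rmate$/$\lmate$ mates needed to build the multiplication and comultiplication of the bialgebra correspond, respectively, to the $2$-morphism $\alpha^{\rmate}$ and $\alpha^{\lmate}$ appearing in \eqref{eqn:formulaforH}. The composite \eqref{eqn:formulaforH} is precisely the $3\times 3$ grid of mates that assembles the universal bialgebra structure in the $\owedge$-square of the walking monad, so naturality of the $\owedge$-square construction gives the bialgebra law automatically.

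For the coHopf property, the goal is to show that the coshear map $(\Mult \otimes \id)(\id \otimes \br)(\Comult \otimes \id)$ is invertible. The plan is to recognize this coshear as itself a mate of a $3$-morphism internal to the $\owedge$-square of $\Adj$, namely the $3$-morphism identifying the two paths labelled ``$((\alpha^{\rmate})^{L})^{\lmate} \simeq ((\alpha^{\lmate})^{R})^{\rmate}$'' in the bottom-left cell of \eqref{eqn:formulaforH}. That this $3$-morphism is invertible is essentially a double-mate calculus identity which can be verified in the $\owedge$-square itself, but the existence of all four adjunction and mate operations involved uses exactly conditions [i] and [ii]. This is the most delicate step and is, in my estimation, the main obstacle of the proof: one must identify the correct cell of the $\owedge$-square whose inversion encodes precisely the coshear, and verify that the required adjunctibility data for the identification is exactly what [i]--[ii] supply — this is where the $\owedge$ machinery of the paper does the real work.

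For the converse statement (Corollary~\ref{cor:anyHopf}), given an arbitrary coHopf algebra $B$ in a presentably $\EE_{2}$-monoidal $(\infty,1)$-category $\cB$, I would construct the $(\infty,3)$-category $\cC$ as a $3$-category of $\cB$-enriched $2$-categories, with distinguished object $1_{\cC}$ corresponding to $\cB$ itself (so that $\Omega^{2}\cC \simeq \cB$), distinguished $1$-morphisms given by the free and forgetful functors between $\cB$ and the $(\infty,2)$-category $\coMod_{B}(\cB)$ of $B$-comodules, and the retract datum $\alpha$ coming from the unit of the free-forgetful adjunction. Presentability of $\cB$ guarantees that the forgetful functor admits both adjoints (giving [i]), and the coHopf hypothesis on $B$ — invertibility of the coshear map — translates, via the earlier parts of the argument run in reverse, into the adjunctibility condition [ii]. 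Tracing $B$ through the functor $\iota^{*}$ built in part one will reproduce $B$ itself on the nose, completing the proof.
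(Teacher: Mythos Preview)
Your overall framework is correct and matches the paper: the construction is restriction along $\Mnd \owedge \Mnd \to \Adj \owedge \Adj$, with $L_3(\Mnd \owedge \Mnd)$ corepresenting bialgebras (Corollary~\ref{cor:mndsquared}) and $L_3(\Adj \owedge \Adj)$ corepresenting adjunctible retracts (Corollary~\ref{cor:smashretract}). Your description of how the $3\times 3$ grid of mates arises is also on target.

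The genuine gap is in the coHopf step. You propose to recognize the coshear map as ``a mate of the $3$-morphism identifying $((\alpha^{\rmate})^{L})^{\lmate} \simeq ((\alpha^{\lmate})^{R})^{\rmate}$'' and to deduce its invertibility from a double-mate calculus identity. This cannot work: that $3$-isomorphism (Lemma~\ref{lem:adjointsofmates}) lives between $2$-morphisms of shape $\id_{1_\cC} \Rightarrow g^L f^R$ and is used merely to make the bottom-left cell of \eqref{eqn:formulaforH} well-defined; it has the wrong source and target to be the coshear, which is a map $H\otimes H \to H\otimes H$. The paper instead constructs a specific \emph{universal shear $3$-morphism} in $\Mnd \otimes \Mnd$ (Definition~\ref{def:strictshear}, Construction~\ref{cons:universalshear}) --- a composite of generating interchange $3$-cells, not a mate identity --- checks in Lemma~\ref{lemma:universalshearrepresents} that it corepresents the coshear, and then proves in Theorem~\ref{thm:invertibleshear} by an explicit string-diagram computation that its image in $L_3(\Adj \otimes \Adj)$ is invertible. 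That computation lifts through $e\OO^2 \otimes e\OO^2$, whiskers with the invertible $2$-cell $\beta$, and decomposes the result into pieces which are invertible either because they are adjunctible $3$-cells (via Corollary~\ref{cor:adjunctibilitytop}) in an $(\infty,3)$-category, or because they are $4$-cells trivialized by $L_3$. The paper flags this as ``the least formal step'' of the whole argument; it is not a formal consequence of mate calculus.

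For the converse, your sketch is in the right spirit but vague. The paper's $\cC$ is the Morita $(\infty,3)$-category $\Mor(\PPrV)$ with $\cV = \cB$, the object $X$ is $\LcoMod^H(\cV)$ as a monoidal category, and $f,g$ are the regular bimodules ${}_{\cA}\cA_{\cV}$ and ${}_{\cV}\cV_{\cA}$. The verification that condition~[ii] holds is not the coHopf argument run in reverse but rather a direct check (Corollary~\ref{cor:Tannaka2}) that the relevant bimodule map is an internal left adjoint --- this reduces to the shear map on the regular comodule being invertible, which is precisely the Hopf axiom. The paper handles the Hopf/coHopf discrepancy via the opposite variants of Remark~\ref{rem:oppositevariants}.
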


      The unital algebra and counital coalgebra structures on the composite~\eqref{eqn:formulaforH}  come from recognizing it as either a horizontal composition of an adjoint pair of $2$-morphisms in $\cC$ or as a vertical composition of a different adjoint pair of $2$-morphisms, see Remark~\ref{rem:simplesquare}.
 
\begin{example}
One of our motivations for Theorem~\ref{thm:mainHopftheorem}  comes from ongoing investigations into topological quantum field theories. Let $M$ be an $n$-manifold with an embedded normally framed $k$-disk $D^k \hookrightarrow M$. Surgering along the boundary $S^{k-1}$ produces a new $n$-manifold $M'$ and a handle-attachment cobordism $M \leadsto M'$. After surgery, the core of the original $D^k$ becomes an embedded sphere $S^k \hookrightarrow M$. Surgering $M'$ along this embedded $S^k$ returns  (a manifold diffeomorphic to) the original $M$, and the composite cobordism $M \leadsto M' \leadsto M$ is (diffeomorphic to) the identity. In other words, for every embedded disk in a manifold, the above handle-cancellation defines a retract in the cobordism category.  Hence, any sufficiently-extended cobordism higher category contains a host of adjunctible retracts, and hence by Theorem~\ref{thm:mainHopftheorem} a host of Hopf algebra objects. In particular, any functorial TQFT produces Hopf algebras in the codomain of the TQFT.  
In upcoming work, described for example in the talk \cite{TheoTalkPI}, we investigate these Hopf algebras and how they turn out to encode nontrivial physical information about the TQFT.
\end{example}

We make  the construction in Theorem~\ref{thm:mainHopftheorem}  functorial by defining an $(\infty,3)$-category $\Ret^{\someadj}(\cC)$ (Definition~\ref{defn:retadjcategory}) of adjunctible retracts in the sense of Theorem~\ref{thm:mainHopftheorem} and oplax morphisms of retracts\footnote{An \emph{oplax morphism of retracts} $(X_0, f_0, g_0, \alpha_0) \to (X_1, f_1, g_1, \alpha_1)$ consists of a $1$-morphism $X_< : X_0 \to X_1$,  $2$-morphisms $f_< : X_< f_0 \Rightarrow f_1$ and $g_< : g_0 \Rightarrow g_1 X_<$ and a $3$-isomorphism  $(g_0 f_<) \circ (g_< f_0) \circ \alpha_0 \Iisom \alpha_1$. A useful sufficient, but not necessary, condition for it to satisfy the adjunctibility conditions to be a morphism in $\Ret^{\someadj}(\cC)$ is  if $f_<$ is a left adjoint and  the canonical $2$-morphism $g_1^L \Rightarrow C_< g_0^L$ (built from $g_<$ and the adjunction data) is invertible, see Corollary~\ref{cor:somesubcategoriesofRetC}.} satisfying certain adjunctible conditions described in Proposition~\ref{prop:retadj}.

 \begin{maintheorem}[Corollaries~\ref{cor:part2ofmaintheorem} and~\ref{thm:adjinduceshopf}]\label{thm:maintheoremfunctorial}
 The construction from Theorem~\ref{thm:mainHopftheorem} assembles into a functor 
 \[ \Ret^\someadj(\cC) \to \coHopf(\Omega^2 \cC)
 \]
 to the full subcategory $\coHopf(\Omega^2 \cC) \subseteq \BiAlg(\Omega^2\cC):= \Alg(\coAlg(\Omega^2 \cC))$ on the coHopf algebras. 
 \end{maintheorem}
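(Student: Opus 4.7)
The plan is to present the construction of Theorem~\ref{thm:mainHopftheorem} as restriction along a map between universal corepresenting objects, so that functoriality emerges by abstract nonsense, and then to check that the image lies inside the coHopf locus.

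First, invoke Proposition~\ref{prop:retadj} to present $\Ret^{\someadj}(\cC)$ as the $(\infty,3)$-category of pointed, suitably adjunctible functors out of a universal walking adjunctible retract $W_{\Ret}$, built as the $3$-localization of the lax smash product $\Adj \owedge \Adj$ of the walking adjunction with itself. Dually, identify $\BiAlg(\Omega^2\cC) = \Alg(\coAlg(\Omega^2\cC))$ with pointed functors out of a walking bialgebra object $W_{\BiAlg}$ obtained from the $3$-localization of the walking monad $\Mnd$, in which the two sides of the bialgebra law are encoded by the horizontal and vertical directions (cf.\ Remark~\ref{rem:simplesquare}).

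Next, build the functor universally. The pointed inclusion $\Mnd \hookrightarrow \Adj$, which forgets the adjoint to a monad's underlying $1$-morphism, induces after the lax-smash-square construction and $3$-localization a pointed map $W_{\BiAlg} \to W_{\Ret}$ between the two corepresenting objects. Restriction along this map automatically defines an $(\infty,3)$-functor $\Ret^{\someadj}(\cC) \to \BiAlg(\Omega^2\cC)$, and unpacking the universal case shows it sends $(X,f,g,\alpha)$ to the pasting~\eqref{eqn:formulaforH} equipped with the bialgebra structure of Remark~\ref{rem:simplesquare}. This yields Corollary~\ref{cor:part2ofmaintheorem}.

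For Corollary~\ref{thm:adjinduceshopf}, refine the target to $\coHopf(\Omega^2\cC)$. Since coHopfness is a property --- invertibility of the coshear map --- it suffices to verify it for the universal bialgebra inside $W_{\Ret}$. The concrete plan is to express the universal coshear as a pasting of iterated mates of $\alpha$, and then use adjunctibility conditions~[i] and~[ii] to produce both left and right adjoints of each such mate; after $3$-localization these adjunctions collapse into inverses, so the coshear becomes invertible. Equivalently, one writes a pasting-diagram inverse of the coshear directly inside $W_{\Ret}$ from the same mates. The hard step is here: showing that the minimal conditions~[i] and~[ii] are exactly enough to invert the coshear requires careful string-diagrammatic rewriting, and I would expect the central role to be played by the $2$-morphism $(\alpha^{\rmate})^L \simeq (\alpha^{\lmate})^R$ in the middle of~\eqref{eqn:formulaforH}, which should encode the antipode, so that its very existence becomes tantamount to the coHopf condition.
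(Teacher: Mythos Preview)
Your first half---constructing the functor to $\BiAlg(\Omega^2\cC)$ by restriction along $\Mnd\owedge\Mnd \to \Adj\owedge\Adj$---is correct and is exactly the paper's Corollary~\ref{cor:part2ofmaintheorem}. (Two small wording issues: $W_{\BiAlg}$ is the $3$-localization of $\Mnd\owedge\Mnd$, not of $\Mnd$ itself; and the identification $\Ret^{\someadj}(\cC)=\Funoplax_*(\Adj\owedge\Adj,\cC)$ is Definition~\ref{defn:retadjcategory} rather than Proposition~\ref{prop:retadj}.)

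Your second half, however, contains a genuine gap. You propose to show coHopfness by producing the antipode explicitly from mates of $\alpha$, expecting the $2$-morphism $(\alpha^{\rmate})^L\simeq(\alpha^{\lmate})^R$ to encode it. But the paper explicitly flags this as an open problem (Question~\ref{question:lessadjunctibility}): at the minimal level of adjunctibility in Theorem~\ref{thm:mainHopftheorem}, no formula for the antipode is known, and since the antipode can be noninvertible it cannot simply be the ratio of two isomorphisms of the form you describe. So the ``find the antipode'' route is not available.

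The paper instead works directly with the coshear map (Theorem~\ref{thm:invertibleshear}), and does so in $L_3(\Adj\otimes\Adj)$ rather than the smash product. The argument has two ingredients you are missing. First, because it is not known whether $\Mnd\otimes\Mnd$ and $\Adj\otimes\Adj$ are gaunt, one cannot compute naively with string diagrams; the paper lifts the universal shear through $\OO^2\otimes\OO^2$ and $e\OO^2\otimes e\OO^2$ (extended orientals, which are retracts of Gray cubes and hence have strict $\otimes$). Second, the actual invertibility comes from a specific diagram~\eqref{eqn:bigdiagram}: after whiskering by the invertible $2$-cell $\beta$, the shear decomposes into $3$-cells of the form $r\otimes\alpha$ and $\alpha\otimes l$ (which are right resp.\ left adjoints by Corollary~\ref{cor:adjunctibilitytop}, hence invertible after $L_3$), a $4$-cell filler (automatically inverted by $L_3$), and cells that are already invertible in $\Adj\otimes\Adj$. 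This decomposition is the substantive content, and your proposal does not reach it.
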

  
  \begin{remark}\label{rem:oppositevariants}
  Applying Theorem~\ref{thm:mainHopftheorem} to various opposite categories, one  obtains the following three alternative adjunctibility conditions on a retract $gf \simeq \id_{1_{\cC}}$ which give rise to either a Hopf algebra or a coHopf algebra in $\Omega^2 \cC$: 
  
  \begin{center}
    \begin{tabular}{cll}
      1-op: &
      \begin{tabular}{rl}
        {}[i] & $f$ is a left adjoint and $g$ is a right adjoint \\ {}[ii] & $f^R \To g$ is a left adjoint, equiv.\ $g^L\To f$ is a right adjoint
      \end{tabular}
       & $\leadsto$ Hopf
      \\ \hline \\[-8pt]
      2-op: &
      \begin{tabular}{rl}
        {}[i] & $f$ is a right adjoint and $g$ is a left adjoint \\ {}[ii] & $f \To g^R$ is a right adjoint, equiv.\ $g\To f^L $ is a left adjoint
      \end{tabular}
      & $\leadsto$ Hopf
      \\ \hline \\[-8pt]
      1-op, 2-op: &
      \begin{tabular}{rl}
        {}[i] & $f$ is a right adjoint and $g$ is a left adjoint \\ {}[ii] & $f \To g^R$ is a left adjoint, equiv.\ $g\To f^L $ is a right adjoint
      \end{tabular}
      & $\leadsto$ coHopf
    \end{tabular}
  \end{center}

  To see this, note that the notion of bialgebra in $\Omega^2 \cC$ is invariant under all these opposites: for any braided monoidal $(\infty,1)$-category there are canonical equivalences $\BiAlg(\cB) \simeq \BiAlg(\cB^{\rev}) \simeq \BiAlg(\cB^\mop)$. Under these equivalences, the full subcategories $\Hopf(\cB)$ and $\coHopf(\cB)$ are exchanged depending on the parity of the number of opposites.  On the other hand, taking opposites does change adjunctibility conditions and can reverse the roles of $f$ and $g$ in a retract. This leads to the above four variants of our construction. The notion of an adjunctible retract is fixed under $(-)^{\text{1-op, 3-op}}$ (see Remark~\ref{rem:adjunctibleretractsinCop}), so these are all variants we can extract. 
  \end{remark}

\begin{remark}
Drawing {\color{black!50!red}$f$} and its adjoints in {\color{black!50!red}red} and {\color{black!50!blue}$g$} and its adjoints in {\color{black!50!blue}blue}, and abbreviating $\alpha^\sharp := ((\alpha^{\rmate})^L)^{\lmate} \simeq  ((\alpha^{\lmate})^R)^{\rmate}$, the composite \eqref{eqn:formulaforH} can be drawn Poincar\'e-dually as a square:
\[
\begin{tikzpicture}[scale=1.25,baseline=(M.base)]
\path[fill=gray,opacity=0.25] (-1,0) -- (0,-1) -- (+1,0) -- (0,+1) -- cycle;
\path 
(-1,0) node[draw, ellipse, inner sep=0.5pt,fill=white] (W) {$\scriptstyle \alpha^\rmate$}
(+1,0) node[draw, ellipse, inner sep=0.5pt,fill=white] (E) {$\scriptstyle \alpha^\lmate$}
(0,+1) node[draw, ellipse, inner sep=0.5pt,fill=white] (N) {$\scriptstyle \alpha^{-1}$}
(0,-1) node[draw, ellipse, inner sep=0.5pt,fill=white] (S) {$\scriptstyle \alpha^\sharp$}
(0,0) node (M) {$X$}
;
\draw[string, black!50!red] (S) -- node[auto] {$\scriptstyle f^R$} (W);
\draw[string, black!50!blue] (S) -- node[auto,swap] {$\scriptstyle g^L$} (E);
\draw[string, black!50!blue] (W) -- node[auto] {$\scriptstyle g$} (N);
\draw[string, black!50!red] (E) -- node[auto,swap] {$\scriptstyle f$} (N);
\end{tikzpicture}
\]
The multiplication, unit, counit, and comultiplication can then be visualized as the following three-dimensional diagrams:
\[
\begin{tikzpicture}[scale=.625]
  \path 
  (0,0,0)
  +(-1,0,0)  coordinate (Wm)
  +(+1,0,0)  coordinate (Em)
  +(0,0,+1)  coordinate (Sm)
  +(0,0,-1)  coordinate (Nm)
  ++(+.25,-1.85,0)
  +(-.5,0,-.5) coordinate (pi)
  +(+.5,0,+.5) coordinate (pii)
  (-1,-3,+1)
  +(-1,0,0)  coordinate (Wm1) 
  +(+1,0,0)  coordinate (Em1) 
  +(0,0,+1)  coordinate (Sm1) 
  +(0,0,-1)  coordinate (Nm1) 
  (+1,-3,-1)
  +(-1,0,0)  coordinate (Wm2) 
  +(+1,0,0)  coordinate (Em2) 
  +(0,0,+1)  coordinate (Sm2) 
  +(0,0,-1)  coordinate (Nm2) 
  ;
  \draw[string, black!50!red] (Em) -- (Nm);
  \draw[string, black!50!blue] (Nm) -- (Wm);
  \draw[string, black!50!red] (Em1) -- (Nm1);
  \draw[string, black!50!blue] (Nm1) -- (Wm1);
  \draw[string, black!50!red] (Em2) -- (Nm2);
  \draw[string, black!50!blue] (Nm2) -- (Wm2);
  \draw[string] (Nm2) .. controls +(0,+1,0) and +(0,-1,0) .. (Nm);
  \draw[string] (Nm1) .. controls +(0,+1.25,0) and +(-.5,0,-.5) .. (pi) .. controls +(+.2,0,+.2) and +(0,+.75,0)  .. (Wm2);
  \path[fill=gray!50!blue,opacity=0.5] (Wm1) .. controls +(0,+1,0) and +(0,-1,0) .. (Wm) -- (Nm) .. controls +(0,-1,0) and +(0,+1,0) .. (Nm2) -- (Wm2) 
  .. controls +(0,+.75,0) and +(+.2,0,+.2) .. (pi)  .. controls +(-.5,0,-.5) and +(0,+1.25,0)  ..  (Nm1) -- (Wm1);
  \path[fill=gray!50!red,opacity=0.5] (Nm2) .. controls +(0,+1,0) and +(0,-1,0) .. (Nm) -- (Em) .. controls +(0,-1,0) and +(0,+1,0) .. (Em2) -- (Nm2);
  \draw[string] (Wm1) .. controls +(0,+1,0) and +(0,-1,0) .. (Wm);
  \draw[string] (Em2) .. controls +(0,+1,0) and +(0,-1,0) .. (Em);
  \path[fill=gray!50!red,opacity=0.5] (Wm1) .. controls +(0,+1,0) and +(0,-1,0) .. (Wm) -- (Sm) .. controls +(0,-1,0) and +(0,+1,0) .. (Sm1) -- (Wm1);
  \path[fill=gray!50!red,opacity=0.5] (Nm1) .. controls +(0,+1.25,0) and +(-.5,0,-.5) .. (pi) -- (pii) .. controls +(-.5,0,-.5) and +(0,+1.25,0)  ..  (Em1) -- (Nm1);
  \path[fill=gray!50!red,opacity=0.5] (Sm2) .. controls +(0,+.75,0) and +(+.2,0,+.2) .. (pii) -- (pi) .. controls +(+.2,0,+.2) and +(0,+.75,0)  .. (Wm2) -- (Sm2);
  \path[fill=gray!50!blue,opacity=0.5] (Sm1) .. controls +(0,+1,0) and +(0,-1,0) .. (Sm) -- (Em) .. controls +(0,-1,0) and +(0,+1,0) .. (Em2) -- (Sm2) .. controls +(0,+.75,0) and +(+.2,0,+.2) .. (pii)  .. controls +(-.5,0,-.5) and +(0,+1.25,0)  ..  (Em1) -- (Sm1);
  \draw[string] (Sm1) .. controls +(0,+1,0) and +(0,-1,0) .. (Sm);
  \draw[string] (Em1) .. controls +(0,+1.25,0) and +(-.5,0,-.5) .. (pii) .. controls +(+.2,0,+.2) and +(0,+.75,0)  .. (Sm2);
  \draw[string, black!50!red] (Wm1) -- (Sm1);
  \draw[string, black!50!blue] (Sm1) -- (Em1);
  \draw[string, black!50!red] (Wm2) -- (Sm2);
  \draw[string, black!50!blue] (Sm2) -- (Em2);
  \draw[string, black!50!red] (Wm) -- (Sm);
  \draw[string, black!50!blue] (Sm) -- (Em);
  \path (2.625,-1.5) node {,};
  \path 
  (4.25,0,0)
  +(-1,0,0)  coordinate (Wu)
  +(+1,0,0)  coordinate (Eu)
  +(0,0,+1)  coordinate (Su)
  +(0,0,-1)  coordinate (Nu)
  ++(-.25,-1.15,0)
  +(-.5,0,-.5) coordinate (boti)
  +(+.5,0,+.5) coordinate (botii)
  ;
  \draw[string, black!50!red]  (Eu) -- (Nu);
  \draw[string, black!50!blue] (Nu) -- (Wu);
  \draw[string] (Wu) .. controls +(0,-.75,0) and +(-.2,0,-.2) .. (boti) .. controls +(+.5,0,+.5) and +(0,-1.25,0) .. (Nu);
  \path[fill=gray!50!blue,opacity=0.5] (Wu) .. controls +(0,-.75,0) and +(-.2,0,-.2) .. (boti) .. controls +(+.5,0,+.5) and +(0,-1.25,0) .. (Nu) -- (Wu);
  \path[fill=gray!50!red,opacity=0.5] (boti) .. controls +(+.5,0,+.5) and +(0,-1.25,0) .. (Nu) -- (Eu) .. controls +(0,-1.25,0) and +(+.5,0,+.5) .. (botii) -- (boti);
  \path[fill=gray!50!red,opacity=0.5] (Wu) .. controls +(0,-.75,0) and +(-.2,0,-.2) .. (boti) -- (botii) .. controls +(-.2,0,-.2) and +(0,-.75,0)  .. (Su) -- (Wu);
  \path[fill=gray!50!blue,opacity=0.5] (Su) .. controls +(0,-.75,0) and +(-.2,0,-.2) .. (botii) .. controls +(+.5,0,+.5) and +(0,-1.25,0) .. (Eu) -- (Su);
  \draw[string, black!50!red]  (Wu) -- (Su);
  \draw[string, black!50!blue] (Su) -- (Eu);
  \draw[string] (Su) .. controls +(0,-.75,0) and +(-.2,0,-.2) .. (botii) .. controls +(+.5,0,+.5) and +(0,-1.25,0) .. (Eu);
  \path (5.375,-1.5) node {,};
  \path 
  (7,-3,0)
  +(-1,0,0)  coordinate (Wcu)
  +(+1,0,0)  coordinate (Ecu)
  +(0,0,+1)  coordinate (Scu)
  +(0,0,-1)  coordinate (Ncu)
  ++(-.05,1.15,0)
  +(-.5,0,+.5) coordinate (topi)
  +(+.5,0,-.5) coordinate (topii)
  ;
  \draw[string, black!50!red]  (Ecu) -- (Ncu);
  \draw[string, black!50!blue] (Ncu) -- (Wcu);
  \draw[string] (Wcu) .. controls +(0,+.75,0) and +(-.05,0,+.05) .. (topi) .. controls +(+.15,0,-.15) and +(0,+1,0) .. (Scu);
  \draw[string] (Ncu) .. controls +(0,+.75,0) and +(-.05,0,+.05) .. (topii) .. controls +(+.15,0,-.15) and +(0,+1,0) .. (Ecu);
  \path[fill=gray!50!red,opacity=0.5] (Ncu) .. controls +(0,+.75,0) and +(-.05,0,+.05) .. (topii) .. controls +(+.15,0,-.15) and +(0,+1,0) .. (Ecu) -- (Ncu);
  \path[fill=gray!50!blue,opacity=0.5] (Wcu) .. controls +(0,+.75,0) and +(-.05,0,+.05) .. (topi) -- (topii) .. controls +(-.05,0,+.05) and +(0,+.75,0) .. (Ncu) -- (Wcu);
  \path[fill=gray!50!red,opacity=0.5] (Wcu) .. controls +(0,+.75,0) and +(-.05,0,+.05) .. (topi) .. controls +(+.15,0,-.15) and +(0,+1,0) .. (Scu) -- (Wcu);
  \draw[string, black!50!red]  (Wcu) -- (Scu);
  \draw[string, black!50!blue] (Scu) -- (Ecu);
  \path[fill=gray!50!blue,opacity=0.5] (topi) .. controls +(+.15,0,-.15) and +(0,+1,0) .. (Scu) -- (Ecu) .. controls +(0,+1,0)  and +(+.15,0,-.15) .. (topii) -- (topi);
  \path (8.125,-1.5) node {,};
  \path
  (10.75,-3,0)
  +(-1,0,0)  coordinate (Wc)
  +(+1,0,0)  coordinate (Ec)
  +(0,0,+1)  coordinate (Sc)
  +(0,0,-1)  coordinate (Nc)
  ++(+.05,1.85,0)
  +(-.5,0,+.5) coordinate (qi)
  +(+.5,0,-.5) coordinate (qii)
  (9.75,0,-1)
  +(-1,0,0)  coordinate (Wc1) 
  +(+1,0,0)  coordinate (Ec1) 
  +(0,0,+1)  coordinate (Sc1) 
  +(0,0,-1)  coordinate (Nc1) 
  (11.75,0,+1)
  +(-1,0,0)  coordinate (Wc2) 
  +(+1,0,0)  coordinate (Ec2) 
  +(0,0,+1)  coordinate (Sc2) 
  +(0,0,-1)  coordinate (Nc2) 
  ;
  \draw[string, black!50!red]  (Ec) -- (Nc);
  \draw[string, black!50!blue] (Nc) -- (Wc);
  \draw[string, black!50!red]  (Ec1) -- (Nc1);
  \draw[string, black!50!blue] (Nc1) -- (Wc1);
  \draw[string, black!50!red]  (Ec2) -- (Nc2);
  \draw[string, black!50!blue] (Nc2) -- (Wc2);
  \draw[string] (Wc1) .. controls +(0,-1,0) and +(0,+1,0) .. (Wc);
  \draw[string] (Nc1) .. controls +(0,-1,0) and +(0,+1,0) .. (Nc);
  \draw[string] (Ec2) .. controls +(0,-1,0) and +(0,+1,0) .. (Ec);
  \draw[string] (Nc2) .. controls +(0,-.75,0) and +(+.05,0,-.05) .. (qii) .. controls +(-.15,0,+.15) and +(0,-1,0) .. (Ec1);
  \path[fill=gray!50!blue,opacity=0.5] (Wc1) .. controls +(0,-1,0) and +(0,+1,0) .. (Wc) -- (Nc) .. controls +(0,+1,0) and +(0,-1,0) .. (Nc1) -- (Wc1);
  \path[fill=gray!50!red,opacity=0.5] (Nc1) .. controls +(0,-1,0) and +(0,+1,0) .. (Nc) -- (Ec) .. controls +(0,+1,0) and +(0,-1,0) .. (Ec2) -- (Nc2) .. controls +(0,-.75,0) and +(+.05,0,-.05) .. (qii) .. controls +(-.15,0,+.15) and +(0,-1,0) .. (Ec1) -- (Nc1);
  \path[fill=gray!50!blue,opacity=0.5] (qii) .. controls +(-.15,0,+.15) and +(0,-1,0) .. (Ec1) -- (Sc1) .. controls +(0,-1,0) and +(-.15,0,+.15) .. (qi) -- (qii);
  \draw[string] (Wc2) .. controls +(0,-.75,0) and +(+.05,0,-.05) .. (qi) .. controls +(-.15,0,+.15) and +(0,-1,0) .. (Sc1);
  \path[fill=gray!50!blue,opacity=0.5] (qi) .. controls +(+.05,0,-.05) and +(0,-.75,0)  .. (Wc2) -- (Nc2) .. controls +(0,-.75,0) and +(+.05,0,-.05) .. (qii) -- (qi);
  \path[fill=gray!50!red,opacity=0.5] (Wc1) .. controls +(0,-1,0) and +(0,+1,0) .. (Wc) -- (Sc) .. controls +(0,+1,0) and +(0,-1,0) .. (Sc2) -- (Wc2) .. controls +(0,-.75,0) and +(+.05,0,-.05) .. (qi) .. controls +(-.15,0,+.15) and +(0,-1,0) .. (Sc1) -- (Wc1);
  \path[fill=gray!50!blue,opacity=0.5] (Sc2) .. controls +(0,-1,0) and +(0,+1,0) .. (Sc) -- (Ec) .. controls +(0,+1,0) and +(0,-1,0) .. (Ec2) -- (Sc2);
  \draw[string, black!50!red]  (Wc) -- (Sc);
  \draw[string, black!50!blue] (Sc) -- (Ec);
  \draw[string, black!50!red]  (Wc1) -- (Sc1);
  \draw[string, black!50!blue] (Sc1) -- (Ec1);
  \draw[string, black!50!red]  (Wc2) -- (Sc2);
  \draw[string, black!50!blue] (Sc2) -- (Ec2);
  \draw[string] (Sc2) .. controls +(0,-1,0) and +(0,+1,0) .. (Sc);
\end{tikzpicture}
\]
As explained by the second author in \cite{David2017Talk}, bordism-categorical manipulations do suggest that these should form a bialgebra; compare our~\S\ref{subsec:unpacked}--\ref{subsec:extradualizability} and also~\cite{TudorWenjun}. Those earlier works focus on cases in which $\cC$ is a $(3,3)$-category,  has all adjoints and enough ``orientability'' to identify left and right adjoints, leading to Hopf--Frobenius algebras with an antipode that squares to the identity and can be represented as a ``$180^\circ$ rotation'' of the square. At the level of generality of our Theorem~\ref{thm:mainHopftheorem}, such topological manipulations can provide inspiration but not a proof: 
 three-dimensional diagrammatics have not been fully established as a language for $(\infty,3)$-categories,  to exhibit the infinite amount of coherence data inherent in a bialgebra in a general braided monoidal $(\infty,1)$-category. Even when working with $(3,3)$-categories without ``orientability'' structures to produce general (non-Frobenius) Hopf algebras, one needs to carefully keep track of framing data, see~\S\ref{subsec:extradualizability}. Lastly, we do not assume that our $(\infty,3)$-category has all adjoints --- and  indeed, our construction does produce non-dualizable Hopf algebras with non-invertible antipodes, see e.g.~\S\ref{subsec:tannakian}. 
\end{remark}

 This ultimately requires a more algebraic approach to Theorem~\ref{thm:mainHopftheorem}:

\subsektion{Lax smash products of adjunctions and monads}
The $\infty$-category $\Cat^*_{(\infty,\infty)}$ of pointed $(\infty,\infty)$-categories carries a canonical (presentably) monoidal structure given by the ``lax smash product'' (a.k.a.\ Gray smash product)~\cite{2311.00205,NarukiThesis,2505.22640}, which we study in detail in Section~\ref{sec:smash}. 

 The corresponding inner homs (defined as the right adjoints respectively to $\cX \owedge -$ and $-\owedge \cX$) are denoted $\Funlax_*(\cX,-)$ and $\Funoplax_*(\cX,-)$; given a pointed $(\infty,\infty)$-category $\cC$, the $(\infty,\infty)$-categories $\Funlax_*(\cX,\cC)$ and $\Funoplax_*(\cX,\cC)$ both have as their spaces of objects the pointed functors $\cX \to \cC$, but the (higher) morphisms are the lax or oplax, respectively, natural (higher) transformations between pointed functors.

Let $\Adj$ denote the ``walking adjunction,'' the free $(\infty,2)$-category on an adjoint $1$-morphism, pointed at the source of the left adjoint. The full subcategory on the source of the left adjoint is the walking monad $\Mnd \hookrightarrow \Adj$; and indeed, Haugseng shows in  \cite{2002.01037} that $\Funoplax_*(\Mnd, \cC)$ is equivalent to the $(\infty,1)$-category $\Alg(\Omega \cC)$ of algebras in $\Omega\cC$. In \S\ref{sec:mainconstruction}, we prove Proposition~\ref{prop:ClaudiaTheo} characterizing (higher) (op)lax morphisms between (higher) adjunctions  and use it to compute the smash square of $\Mnd$ and of $\Adj$.

\begin{maintheorem}[Corollaries~\ref{cor:smashretract} and~\ref{cor:mndsquared}]\label{mainthm:MndMndAdjAdj} Let $\cC$ be a pointed $(\infty,3)$-category. \begin{itemize}
\item The space of pointed functors $\Mnd \owedge \Mnd \to \cC$ is equivalent to the space of bialgebras in $\Omega^2 \cC$ and  $\Funoplax_*(\Mnd \owedge \Mnd, \cC)$ is equivalent to the $(\infty,1)$-category $\BiAlg(\Omega^2\cC)$;
\item The space of pointed functors $\Adj \owedge \Adj \to \cC$ is equivalent to the space of adjunctible retracts in $\cC$ in the sense of Theorem~\ref{thm:mainHopftheorem}.
\end{itemize}
\end{maintheorem}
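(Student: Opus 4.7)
The plan is to prove both parts in parallel by iterating the tensor--hom adjunction for the lax smash product, since $\owedge$ is a presentably monoidal structure on $\Cat^*_{(\infty,\infty)}$ with inner hom $\Funoplax_*(\cY,-)$ right adjoint to $(- \owedge \cY)$. Associativity of $\owedge$ then yields natural equivalences
\[
  \Map_*(\cY \owedge \cY, \cC) \simeq \Map_*\bigl(\cY, \Funoplax_*(\cY, \cC)\bigr), \qquad \Funoplax_*(\cY \owedge \cY, \cC) \simeq \Funoplax_*\bigl(\cY, \Funoplax_*(\cY, \cC)\bigr),
\]
so it suffices to compute each iterated internal hom for $\cY = \Mnd$ and $\cY = \Adj$ separately.

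For $\cY = \Mnd$, I would apply Haugseng's theorem $\Funoplax_*(\Mnd, -) \simeq \Alg(\Omega\,-)$ twice. The first application gives $\Funoplax_*(\Mnd, \cC) \simeq \Alg(\Omega \cC)$ as pointed $(\infty,\infty)$-categories, with basepoint the trivial algebra; the second then gives
\[
  \Funoplax_*\bigl(\Mnd, \Funoplax_*(\Mnd, \cC)\bigr) \simeq \Alg\bigl(\Omega \Funoplax_*(\Mnd, \cC)\bigr).
\]
The essential new step is the identification
\[
  \Omega\Funoplax_*(\Mnd, \cC) \simeq \coAlg(\Omega^2\cC)
\]
as $\EE_1$-monoidal $(\infty,1)$-categories, which I regard as the general ``$\Omega \Alg^{\mathrm{oplax}} = \coAlg \circ \Omega$'' principle: an oplax natural endomorphism of the trivial monad $\id_{1_\cC}$ consists of a $2$-endomorphism $\phi \in \Omega^2\cC$ together with oplax $3$-cells at the multiplication and unit, which are precisely a comultiplication $\phi \Rightarrow \phi \otimes \phi$ and a counit $\phi \Rightarrow \id$ in $\Omega^2\cC$, with the higher coherences matching those of a coalgebra. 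Combining gives $\Alg(\coAlg(\Omega^2\cC)) = \BiAlg(\Omega^2\cC)$, which is the desired identification; passing to groupoid cores gives the space-level statement.

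For $\cY = \Adj$, a pointed functor $\Adj \to \cX$ classifies an adjunction in $\cX$ whose left-adjoint source is the basepoint. Applied to $\cX = \Funoplax_*(\Adj, \cC)$, whose basepoint is the identity adjunction on $1_\cC$, this classifies ``an adjunction between adjunctions in $\cC$.'' Proposition~\ref{prop:ClaudiaTheo} characterizes oplax morphisms between adjunctions and identifies when they admit adjoints; applying it at both layers, I would unpack the data into: $(1)$ an inner adjunction $f \dashv f^R$ with $f : 1_\cC \to X$; $(2)$ a second $1$-morphism $g : X \to 1_\cC$ together with a $2$-morphism $\alpha : \id_{1_\cC} \Rightarrow g f$ coming from the oplax-natural-transformation structure of the outer $L$-component, where $\alpha$ is automatically invertible because the transformation emanates from the trivial adjunction; and $(3)$ the outer $L$-component having a right adjoint in $\Funoplax_*(\Adj, \cC)$, which by Proposition~\ref{prop:ClaudiaTheo} translates exactly into the adjunctibility conditions [i] and [ii] of Theorem~\ref{thm:mainHopftheorem} (with [i] giving that $g$ is a right adjoint, and [ii] giving that the canonical comparison $f^R \Rightarrow g$ is a right adjoint). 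This is precisely the data of an adjunctible retract in $\cC$.

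The main obstacle will be the careful application of Proposition~\ref{prop:ClaudiaTheo}: one must verify that its adjointability criterion detects the mate $f^R \Rightarrow g$ being a right adjoint rather than some other variant (e.g.\ one built using the left mate, which would correspond to a different opposite convention). A secondary but essential subtlety in the $\Mnd \owedge \Mnd$ computation is to track which of the two $\EE_1$-structures on $\Omega^2\cC$ is used at each stage of the iteration, since the consistent choice of $\Funoplax_*$ throughout is what pins down the coHopf (rather than Hopf) convention of the Main Theorem; the parallel check in the $\Adj \owedge \Adj$ case ensures that the extracted adjunctible retract is the one of Theorem~\ref{thm:mainHopftheorem} rather than one of the variants listed in Remark~\ref{rem:oppositevariants}.
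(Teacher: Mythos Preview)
Your overall strategy—iterate the $\owedge$–$\Funoplax_*$ adjunction and then analyze the intermediate category—is exactly the paper's approach. But each half of the execution has a real gap.

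For $\Adj \owedge \Adj$: your reason for $\alpha$ being invertible is incorrect, and the error stems from invoking too weak a proposition. The pointing in $\owedge$ collapses two \emph{edges} of the underlying square to identities (Corollary~\ref{cor:adjowedgeadj}), but it does not touch the filling $2$-cell $\beta$; in a general pointed $(\infty,\infty)$-category that filler is not invertible. Proposition~\ref{prop:ClaudiaTheo} only characterizes the ``aligned'' adjunctions (left adjoints in $\Funlax$, right adjoints in $\Funoplax$); to detect a left adjoint in $\Funoplax_*(\Adj,\cC)$ you must additionally check that the right adjoint \emph{and the unit and counit} lift back into the subcategory $\Funoplax(\Adj,\cC) \subset \Funoplax(\globe_1,\cC)$. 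This is the content of Proposition~\ref{prop:adjFunlaxAdj}, which produces \emph{three} conditions [i], [ii], [iii]. Condition [iii] requires the unit and counit of $\beta^L \dashv \beta$ to themselves be adjunctible; since these are $3$-morphisms in an $(\infty,3)$-category, adjunctibility forces them invertible, whence $\beta$ is invertible (Remark~\ref{rem:maxingout}). Your step (3) accounts only for [i] and [ii], and your step (2) substitutes an incorrect mechanism for [iii].

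For $\Mnd \owedge \Mnd$: the assertion $\Funoplax_*(\Mnd, \cC) \simeq \Alg(\Omega\cC)$ ``as pointed $(\infty,\infty)$-categories'' is false when $\cC$ is a $3$-category—the left side is an $(\infty,2)$-category with \emph{oplax} algebra maps as $1$-morphisms (Remark~\ref{rem:alglax}), and Haugseng's Proposition~\ref{prop:monadalgebra} only applies when the target is already an $(\infty,2)$-category. The paper first invokes Lemma~\ref{lem:surjectivereduces} to see that $\Funoplax_*(\Mnd,\cC)$ is an $(\infty,2)$-category, and \emph{then} applies Haugseng to that, yielding $\Alg(\Omega\Funoplax_*(\Mnd,\cC))$. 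The identification $\Omega\Funoplax_*(\Mnd,\cC) \simeq \coAlg(\Omega^2\cC)$ is Lemma~\ref{lem:loopcoalg}, proved via Corollary~\ref{cor:movingloopsinsidefun} (resting on the $\Sone$-centrality Corollary~\ref{cor:Sonecentral}), the equivalence $\Mnd \simeq \Mnd^{\op}$, and Proposition~\ref{prop:coalgebras}. Your informal unpacking of oplax endomorphisms of the trivial monad is the right intuition, but turning it into a proof requires precisely these ingredients.
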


In these terms, $\Ret^{\someadj}(\cC):= \Funoplax_*(\Adj\owedge \Adj, \cC)$, which we prove in Corollary~\ref{cor:retadjissubofretlax} is indeed a subcategory of the $(\infty,3)$-category  $\Ret^\oplax(\cC)$ of all retracts and their oplax morphisms. Our construction
\[ \Ret^{\someadj}(\cC) = \Funoplax_*(\Adj \owedge \Adj, \cC) \to \Funoplax_*(\Mnd \owedge \Mnd, \cC) \simeq \BiAlg(\Omega^2 \cC) 
\]
 is   given by restriction along the pointed functor 
 \[ \Mnd \owedge \Mnd \to \Adj \owedge \Adj.
 \]

The category $\Mnd \owedge \Mnd$ contains a \emph{universal shear $3$-morphism}, Definition~\ref{def:strictshear} and Construction~\ref{cons:universalshear},  so that coHopf algebras are precisely those pointed functors $\Mnd\owedge \Mnd \to \cC$ which invert this $3$-morphism. In fact, this $3$-morphism can be lifted to the lax product $\Mnd \otimes \Mnd$. The least formal step of our proof is the following theorem, which shows that  every bialgebra arising from our construction is (co)Hopf.

\begin{maintheorem}[Theorem~\ref{thm:invertibleshear}] \label{mainthm:universalshear}Under the composite $\Mnd \otimes \Mnd \to \Adj \otimes \Adj \to L_3(\Adj \otimes \Adj)$ to the localization of the $(\infty,4)$-category $\Adj \otimes \Adj$  to an $(\infty,3)$-category, the universal shear $3$-morphism is sent to an invertible $3$-morphism. 
\end{maintheorem}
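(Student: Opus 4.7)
The plan is to construct an explicit inverse $3$-morphism in $\Adj \otimes \Adj$ to the image of the universal shear, and then show that it becomes a genuine inverse once we pass to the $3$-localization $L_3(\Adj \otimes \Adj)$. The guiding principle is that $\Mnd \hookrightarrow \Adj$ is obtained by discarding the second object together with the adjoint $1$-morphism $g$, and so the extra $1$-morphisms, units, and counits living in $\Adj \otimes \Adj$ but not in $\Mnd \otimes \Mnd$ should furnish precisely the missing antipode that renders the shear invertible.

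First, I would unpack the universal shear explicitly. By the construction referenced in the excerpt, it is a $3$-morphism between two parallel composites built from the universal bialgebra endomorphism $gf$ at the basepoint of $\Mnd \otimes \Mnd$, assembled from the multiplication coming from the first monad factor, the comultiplication coming from the second, and the lax interchange $3$-cell of $\otimes$ that encodes their compatibility. Pushing this composite forward along $\Mnd \otimes \Mnd \to \Adj \otimes \Adj$, both source and target become $(gf) \otimes (gf)$ viewed as endomorphisms of the basepoint $(+,+)$ inside the larger category.

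Second, I would build the candidate inverse by exploiting adjunction data in both tensor factors. Concretely, in $\Adj \otimes \Adj$ each factor of $gf$ can be bent using the unit $\eta$ and counit $\epsilon$ of $f \dashv g$ to expose an internal zig-zag $gfg \to g$ and $fgf \to f$; combining these bends across the two tensor factors via the lax interchange $2$- and $3$-cells of $\otimes$ yields a $3$-morphism going in the opposite direction to the shear. The formal mates calculus reviewed in Section~\ref{subsec:mates}, applied factor-wise, organizes this bending into a single canonical composite. In bialgebra language this composite is exactly the candidate antipode-induced inverse to the shear.

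Third, I would verify that the two composites of shear with candidate inverse equal the identity $3$-morphism in $L_3(\Adj \otimes \Adj)$. The triangle identities for $f \dashv g$ in each factor, together with the coherence of the lax interchange cells, reduce both composites to identities modulo $4$-morphisms of $\Adj \otimes \Adj$; all such $4$-morphisms arise from lax interchange coherence and are inverted by $L_3$ by construction. The main obstacle is exactly this verification: at this level of generality one cannot rely on string-diagrammatic manipulations in an $(\infty,4)$-category, so the argument should be carried out inside the explicit computadic presentation of the lax tensor product reviewed in Section~\ref{sec:smash}, tracking that the offending $4$-cells are all generated by interchange squares. Once this bookkeeping is set up, the identities reduce to a finite diagram chase applying the four triangle identities (two per factor) and the naturality of the lax interchange.
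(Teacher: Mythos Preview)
Your proposal does not match the paper's proof, and it has a genuine gap. The paper never constructs an explicit inverse to the shear. Instead, it works in an auxiliary strict $4$-category $e\OO^2 \otimes^{\strict} e\OO^2$ (built from Street's second oriental whiskered with two extra $1$-cells) through which the image of the shear in $\Adj \otimes \Adj$ factors, and it exhibits this image as (a whiskering of) a composite of $3$-morphisms each of which becomes invertible in $L_3(\Adj \otimes \Adj)$ for one of three concrete reasons: either it is a $3$-cell of the form $r \otimes \alpha$ or $\alpha \otimes l$, which is a right or left adjoint by Corollary~\ref{cor:adjunctibilitytop} and hence invertible in a $3$-category; or the functor $e\OO^2 \to \Adj$ sends one of its constituent $1$-cells to an identity; or it is the boundary of a $4$-cell, which is trivially inverted by $L_3$. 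The detour through $e\OO^2 \otimes^{\strict} e\OO^2$ is essential: it lets one compute with the \emph{strict} Gray product (avoiding Warning~\ref{warn:GrayGaunt}) and replaces the trivalent vertex $m$ in the shear diagram by the counit $\ev$ of the adjunction, so that the individual $3$-cells in the big diagram become visible as the adjunctible crossings of Corollary~\ref{cor:adjunctibilitytop}.

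Your plan is different in kind, and the gap is real. First, your description of the shear is garbled: in $\Mnd \otimes \Mnd$ there are no $1$-morphisms called $f$ or $g$ and no basepoint ``$(+,+)$'' (you seem to conflate $\otimes$ with the smash $\owedge$); the bialgebra object is the crossing $2$-cell $A \otimes A$, and the shear is a $3$-morphism between pastings of such crossings with two $m$'s --- not something with source and target ``$(gf)\otimes(gf)$''. Second, and more seriously, you never actually construct the candidate inverse: ``combining bends via interchange'' does not define a $3$-morphism, and the ``finite diagram chase'' you promise is precisely the content of the theorem. The paper's own Question~\ref{question:lessadjunctibility} notes that no general formula for the antipode (and hence for the shear's inverse) is known at this level of adjunctibility; explicit antipode-style formulas appear only under the \emph{extra} dualizability of \S\ref{subsec:extradualizability}. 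So the direct route you propose, while conceptually natural, is exactly the approach the authors avoid because there is no visible way to complete it.
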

Our proof  depends very much on the localization to $(\infty,3)$-categories --- we did not find a similar statement in $(\infty,4)$-categories (where $\Adj \owedge \Adj$ describes a certain type of lax retract, and where $\Mnd \owedge \Mnd$ describes bialgebras in which the bialgebra axiom holds only laxly, see Remark~\ref{rem:laxbialgebra}).

\subsektion{Related work}
Quantum-physics-focused versions of our main Theorem~\ref{thm:mainHopftheorem}  have been explained by \v{S}evera~\cite{MR1915348}, the second-named author~\cite{David2017Talk},  Freed and  Teleman~\cite{MR4520300}, C\'ordova, Holfester, and Ohmori~\cite{2408.11045}, and Dimofte and Niu~\cite{TudorWenjun}.  A significant advantage of our approach  is that we work entirely $\infty$-categorically, setting up a framework in which to coherently handle functoriality, the bialgebra axiom, and the like, and which in particular is sensitive to all questions about framing data. 
More importantly,   by demanding only some, but not all, adjoints, our construction is able to produce \emph{any} Hopf algebra, in particular infinite-dimensional ones with non-invertible antipode.

The identification  of pointed functors $\Mnd \owedge \Mnd\to \cC$ into a $3$-category $\cC$ with bialgebras in $\Omega^2 \cC$ as in Theorem~\ref{mainthm:MndMndAdjAdj} was also  observed by Hadzihasanovic in~\cite{1709.08086,2101.10361} using the model of ``diagrammatic $(\infty,n)$-categories''~\cite{2410.19053, 2505.01387}.  For $n>0$, this model is not yet known to be equivalent to $(\infty,n)$-categories as used in this paper.

The $(\infty,\infty)$-category version of $\owedge$ is also studied in \cite{2311.00205,NarukiThesis,2505.22640}.

In \S\ref{subsec:hopmonad}, we generalize our construction to a construction of Hopf monads in these sense of \cite{MR1887157,MR2355605,MR2500058,MR2793022}, and explain these in terms of the restriction functor  $ \Mnd_+ \owedge \Mnd \to \Adj_+ \owedge \Adj$ where $(-)_+$ freely adjoins a basepoint. 

In \S\ref{subsec:tannakian}, we compare our construction with ``classical'' Tannakian reconstruction, as in the classical categorically-minded works \cite{MR1098991,MR1173027,MR1623637}  and the more recent physics-motivated works \cite{MR1915348,MR4520300,TudorWenjun}.

\subsektion{Questions}
We end with some questions we will not address in this article:

\begin{question}\label{question:lessadjunctibility}
At the level of generality of Theorem~\ref{thm:mainHopftheorem},
what is the meaning/formula for the antipode? If $\cC$ has all adjoints, then the Hopf algebra is dualizable and hence the antipode is invertible~\cite{MR1685417,MR1759389}. As explained in \S\ref{subsec:extradualizability}, in this case we can identify the antipode as a ``$180^\circ$ rotation of the square $H$ in~\eqref{eqn:formulaforH},'' i.e.\ more precisely as the ratio of two isomorphisms $H^{\rmate\rmate} \simeq H$: one  coming from comparing the various mates in the formula~\eqref{eqn:formulaforH}; the other  from the fact that for any square $\beta$ all of whose sides are identities, there is a canonical isomorphism $\beta^{\rmate\rmate} \simeq \beta$.
But in general, the antipode on the square \eqref{eqn:formulaforH} cannot be such a ratio, as it is not necessarily invertible!
\end{question}

\begin{question} If $\cC$ is an $(\infty,2)$-category, $\Funlax(\Mnd, \cC)$ unwinds to an $(\infty,2)$-categorical version of Street's category $\cat{Mnd}(\cC)$ of monads in $\cC$ from \cite{MR299653}.
Let $\globe_1$ denote the walking arrow, i.e.\ the strict 1-category $\bullet \to \bullet$.
 It follows from Proposition~\ref{prop:FunlaxAdj} that the morphism $\globe_1 \to \Adj$ picking out the right adjoint induces an equivalence $\Funlax(\Adj, \cC) \simeq \Fun(\globe_1, \cC)^R $ with the full subcategory of $\Fun(\globe_1, \cC)$ on the right adjoints.  The category $\cC$ is said to admit \emph{Eilenberg-Moore objects} if the restriction functor 
\[ \Fun(\globe_1, \cC)^R \simeq \Funlax(\Adj, \cC) \to \Funlax(\Mnd, \cC) = \cat{Mnd}(\cC)
\]
admits a right adjoint (which sends a monad in $\cC$ to its Eilenberg-Moore adjunction). As shown in~\cite{1712.00555} in an $(\infty,2)$-categorical context, this right adjoint is always fully faithful with image the \emph{monadic adjunctions}. 
Smashing this with itself, which universal property does it imply for our construction and does this recover the usual story of Tannakian reconstruction? 
\end{question}

\begin{question}
Given a Hopf algebra $H$ in a braided monoidal $(\infty,1)$-category $\cB$, is there an $(\infty,3)$-category $\cC$ with $\Omega^2 \cC = \cB$ and an adjunctible retract in $\cC$ with recovers $H$ via  Theorem~\ref{thm:mainHopftheorem}? 
Our Corollary~\ref{cor:anyHopf} proves this if $\cB$ is presentable; how much can this presentability assumption be relaxed? (Any $\cB$ embeds into a presentably braided monoidal category, namely its presheaf category $\PSh(\cB)$. One could imagine starting from  the adjunctible retract in Corollary~\ref{cor:anyHopf} and cutting down to a sub-$(\infty,3)$-category which contains all relevant morphisms but which deloops $\cB$ instead of $\PSh(\cB)$.) 
\end{question}

\begin{question}
  The functor $\Mnd \to \Adj$ is  fully faithful.  Our main Theorem~\ref{thm:mainHopftheorem} shows that after localizing to $(\infty,3)$-categories, the corresponding functor $\Mnd \owedge \Mnd \to \Adj \owedge \Adj$ is not fully faithful: the codomain, but not the domain, contains an inverse to the shear map. What about before localization? Both the domain and codomain are $(\infty,4)$-categories, but our Theorem~\ref{thm:mainHopftheorem} only sees their localizations to $(\infty,3)$-categories --- without localization, is the functor $\Mnd \owedge \Mnd \to \Adj \owedge \Adj$ a fully faithful inclusion of $(\infty,4)$-categories?
\end{question}

\begin{question}
The domain of the functor in Theorem~\ref{thm:maintheoremfunctorial} is the $(\infty,3)$-category $\Ret^\someadj(\cC) = \Funoplax_*(\Adj \owedge \Adj, \cC)$, whereas the codomain is the $(\infty,1)$-category $\BiAlg(\Omega^2\cC)$. It follows that $H$ factors through the $1$-localization of $\Ret^\someadj(\cC)$. What is this $1$-localization?
\end{question}

\subsection{Acknowledgements}

We thank Harshit Yaddav for many helpful comments and hints related to Hopf algebras in braided monoidal categories,  Tim Campion and Naruki Masuda for valuable discussions about lax tensor products of $(\infty,\infty)$-categories, Lukas M\"uller for patient conversation about diagram chases and for useful comments on a draft, and Markus Zetto for many conversations about higher presentable categories.
This work is supported by the NSERC grant Discovery Grant RGPIN-2021-02424, by the Simons Collaboration for Global Categorical Symmetries (Simons Foundation grant 888996), and by the Deutsche Forschungsgemeinschaft (DFG)
under the Emmy Noether program (grant 493608176) and  the Collaborative Research Center (SFB) 1624 “Higher structures, moduli spaces and integrability” (grant 506632645).
 Parts of this work were completed at the programs \define{Quantum Symmetries Reunion}, held at Simons Laufer Mathematical Sciences Institute, and \define{Global Categorical Symmetries}, held at the Perimeter Institute for Theoretical Physics.
 Research at the Perimeter Institute is supported by the Government of Canada through Industry Canada and by the Province of Ontario through the Ministry of Economic Development and Innovation. 
 We congratulate Benjamin on the occasion of his birthday.

\section{The lax smash product \texorpdfstring{$\owedge$}{owedge}} \label{sec:smash}

The goal of this section is to construct and analyze the lax smash product $\owedge$ of pointed $(\infty,\infty)$-categories, built from the lax (aka Gray) tensor product $\otimes$ on $(\infty,\infty)$-categories; much of this material has also been developed, and with a similar flavour, in the recent PhD thesis \cite{NarukiThesis}\footnote{\emph{Warning:} The lax product $\otimes$ is not commutative, and Masuda writes ``$A \otimes B$'' for what we call ``$B \otimes A$''. The literature is not amazingly consistent about the ordering for the lax product; it takes care not to make a ``categorical sign error.'' \label{footnote:Naurkiconvention}}, also see the recent~\cite{2505.22640}.
 The heart of our proof in Section~\ref{sec:mainconstruction} of our main theorem will be an analysis of the $\owedge$-square of the walking adjunction.

\subsection{Preliminaries}

Let $\Cat_{(\infty, n)}$ denote the $\infty$-category\footnote{We follow standard conventions and use the term ``$\infty$-category'' synonymously with $(\infty,1)$-category: between any two objects is a space of morphisms. Throughout the paper, we work in the framework developed in Lurie's books~\cite{HTT, HA}  and freely use the language and technology therein.} of $(\infty, n)$-categories, as e.g.\ axiomatized in~\cite{MR4301559}. The definitions and observations in this section largely follow~\cite{2311.00205}, although we will focus here on \emph{univalent} (or Rezk complete) $(\infty, n)$-categories while Campion works in the more general setting of flagged (aka \emph{valent}) $(\infty,n)$-categories of \cite{MR3869643}.

In fact, it will be  convenient to work with $(\infty,n)$-categories for various $n$ at a time, and more generally with $(\infty,\infty)$-categories allowing non-invertible morphisms at all levels.
The fully faithful inclusion $\Cat_{(\infty,n)} \hookrightarrow \Cat_{(\infty,n+1)}$ admits both adjoints: the left adjoint, denoted $L_n$, inverts all $(n+1)$-morphisms, and the right adjoint, denoted $\iota_n$, forgets the noninvertible $(n+1)$-morphisms. By definition, the $\infty$-category $\Cat_{(\infty, \infty)}$ of $(\infty,\infty)$-categories (with inductive equivalences) is the limit along the forgetful functors:
$$ \Cat_{(\infty,\infty)} := \varprojlim \left( \dots \to \Cat_{(\infty,n+1)} \overset{\iota_{n}}\to \Cat_{(\infty,n)} \to \dots \to \Cat_{(\infty,0)} \right) $$
The construction supplies a forgetful functor $\iota_n : \Cat_{(\infty,\infty)} \to \Cat_{(\infty,n)}$, which is a right adjoint. Its left adjoint is again fully faithful, and admits its own left adjoint $L_n : \Cat_{(\infty,\infty)} \to \Cat_{(\infty,n)}$ that inverts all morphisms of dimension $>n$.
\begin{equation}\label{eq:inclusion-and-localization}
 \begin{tikzcd}[column sep=1.5cm]
\Cat_{(\infty, n)}
\arrow[hookrightarrow]{r}[xshift=-0.cm, yshift=0.2cm]{\bot}[swap, xshift=-0.cm, yshift=-0.2cm]{\bot}&\arrow[bend left=45]{l}{\iota_n}
\arrow[bend right=45]{l}[swap]{L_n}
\Cat_{(\infty, \infty)}
\end{tikzcd}.
\end{equation}
For $(\infty, \infty)$-categories $\cC$ and $\cD$, we write $\Map(\cC, \cD)$ for the mapping space in $\Cat_{(\infty, \infty)}$, i.e.\ for the \emph{space} of functors (and natural isomorphisms, etc.)\ from $\cC$ to $\cD$. The $\infty$-categories $\Cat_{(\infty,\infty)}$ and $\Cat_{(\infty,n)}$ are Cartesian closed, and we will write $\Fun(\cC, \cD) \in \Cat_{(\infty,\infty)}$ for the internal hom with respect to the Cartesian product: the space of objects of $\Fun(\cC, \cD)$ is $\Map(\cC,\cD)$, the 1-morphisms in $\Fun(\cC, \cD)$ are the (strong) natural transformations, the 2-morphisms are the (strong) natural modifications, and so on.

Let $\strCat_n$ denote the $1$-category of strict $n$-categories and strict $n$-functors, inductively defined as categories (strictly) enriched in the $1$-category $\strCat_{n-1}$ of strict $(n-1)$-categories, starting with $\strCat_0 := \cat{Set}$. 
We emphasize that in $\strCat_n$, the isomorphisms are actual isomorphisms of higher categories, not equivalences --- they induce for example bijections of sets of objects.
The strict analogue of an $(\infty,\infty)$-category is a \emph{strict $\omega$-category}, the $1$-category $\strCat_{\omega}$ of which is defined as the limit $$\strCat_\omega := \varprojlim \left( \ldots \to \strCat_{n+1} \overset{\iota_n}\to \strCat_{n} \to \ldots \to \strCat_0\right)$$ in the $1$-category $\strCat_1$ of strict $1$-categories along the functors $\iota_n: \strCat_{n+1} \to \strCat_n$ which forget all $(n+1)$-morphisms.\footnote{The functors $\iota_n$ are isofibrations, and so this limit agrees with the limit computed in the $(2,1)$-category of strict $1$-categories, functors, and natural isomorphisms.}

A strict $\omega$-category is called \define{gaunt}\footnote{The name and theory of gaunt $\omega$-categories is due to~\cite{MR4301559}. The notion appears briefly under the name ``stiff'' in~\cite{MR1953060}, but the notion is not explored therein.} if for all $k \geq 1$, every invertible $k$-morphism is an identity. We denote the full sub-$1$-category of gaunt $\omega$-categories by $\Gaunt \subseteq \strCat_{\omega}$.
 Any strict $\omega$-category gives rise to an $(\infty, \infty)$-category, leading to a functor $\strCat_{\omega} \to \Cat_{(\infty,\infty)}$. When restricted to the full sub-$1$-category $\Gaunt \subseteq \strCat_{\omega}$ this functor $\Gaunt \to \Cat_{(\infty, \infty)}$ becomes fully faithful.\footnote{The inclusion $\Gaunt \subset \Cat_{(\infty, \infty)}$ is an equivalence onto the full sub-$\infty$-category of $0$-truncated objects~\cite{MR4301559}. Another useful way to think about gaunt $\omega$-categories is that they are precisely the higher categories which are simultaneously univalent and strict.}  We will often implicitly use this functor to promote  gaunt $\omega$-categories to $(\infty, \infty)$-categories. The inclusion $\Gaunt \subseteq \strCat_{\omega}$ is a right adjoint; we will write $\Gau : \strCat_{\omega} \to \Gaunt$ for its left adjoint, referred to as \define{gauntification}.

\begin{definition}\label{defn:suspension} Let $\strCat_{n+1}^{**}:= (\strCat_{n+1})_{*\sqcup */}$ denote the $1$-category of strict $(n+1)$-categories with two marked objects. The strict \define{categorical suspension functor} $ \Sigma: \strCat_n \to \strCat^{**}_{n+1}$ sends a strict $n$-category $C$ to the strict $(n+1)$-category with two (marked) objects $0$ and $1$ and 
\[\Sigma C(a,b):= \begin{cases} C, & a = 0, b = 1, \\ *, & a = b, \\ \emptyset & a=1, b=0. \end{cases}
\] 
\end{definition}
The suspension of a gaunt category is again gaunt.

\begin{definition}\label{defn:walkingcell}
For $n\geq 0$, we define the \define{walking $n$-cell} $\globe_n$ by
$$ \globe_n := \Sigma^n(*) = \Sigma(\globe_{n-1}).$$
\end{definition}
If one starts instead with the empty set, rather than a single point, the iterated suspensions 
$\Sigma^n \emptyset$ consists of two parallel $(n-1)$-morphisms: it is precisely the boundary $\partial \globe_n:= \Sigma^n \emptyset$ of the $n$-cell, with boundary inclusion $\partial \globe_n \mono \globe_n$ given by $\Sigma^n(\emptyset \hookrightarrow *)$.
Suspending instead the map $\globe_1 \to \globe_0 = *$ supplies degeneracy maps $\Sigma^n(\globe_1 \to *) : \globe_{n+1} \to \globe_n$.

The walking cells $\globe_n$ are a set of colimit generators\footnote{A \define{set of colimit-generators} of a cocomplete $\infty$-category $\cC$ is a set $S$ of objects such that the smallest full subcategory of $\cC$ which contains $S$ and is closed under colimits is $\cC$ itself.} for any of the (higher) categories $\Gaunt$, $\strCat_\omega$, and $\Cat_{(\infty,\infty)}$.
The walking cells are examples of \define{pasting diagrams}; we let $\Theta$ denote Joyal's full subcategory of $\strCat_\omega$ on the pasting diagrams (see e.g.~\cite{MR2331244} 
 or \cite[Def.~1.4]{2311.00205}) and write $\Theta_n := \Theta \cap \strCat_{n} \subseteq \strCat_n$ for the full subcategory of $n$-dimensional pasting diagrams. 
Since all pasting diagrams are gaunt, we may equally regard $\Theta_n$ and $\Theta$ as full subcategories of $\Cat_{(\infty, n)}$ and $\Cat_{(\infty, \infty)}$, respectively. The full inclusions $\Theta_n \subseteq \Cat_{(\infty,n)}$ and $\Theta \subseteq \Cat_{(\infty, \infty)}$ are \emph{dense}\footnote{A functor $\cA \to \cB$ of $\infty$-categories is \emph{dense}, if the restricted Yoneda embedding $\cB \to \cat{Psh}(\cA)$ is fully faithful.  If $\cB$ has small colimits and $\cX$ is another $\infty$-category with small colimits, this implies that the restriction functor $\Fun^{\mathrm{cocont.}}(\cB, \cX) \to \Fun(\cA, \cX)$ is full faithful, where $\Fun^{\mathrm{cocont.}}$ denotes the full subcategory of the functor category on the cocontinuous functors. In other words,  any functor $\cB \to \cX$ is uniquely determined by its restriction to $\cA$ and any functor $\cA \to \cX$ has at most one cocontinuous extension to $\cB$. } and hence any cocontinuous functor out $\Cat_{(\infty,n)}$ and $\Cat_{(\infty, \infty)}$ is uniquely determined by its restriction to $\Theta_n$ or $\Theta$, respectively, a fact that we will use repeatedly throughout.

The categorical suspension functor restricts to a functor $\Sigma: \Theta_n \to \Theta^{**}_{n+1}:= (\Theta_{n+1})_{* \sqcup */}$, which by \cite[Rem.~1.13]{2311.00205} admits a unique cocontinuous extension \begin{equation}\label{eq:sigmainfinity}
\Sigma : \Cat_{(\infty, n)} \to \Cat_{(\infty, n+1)}^{**}:= (\Cat_{(\infty, n+1)})_{* \sqcup */}.\end{equation} These assemble into a cocontinuous functor 
\[\Sigma : \Cat_{(\infty, \infty)} \to \Cat_{(\infty,\infty)}^{**}.
\]
We will also denote by $\Sigma$ its corestrictions to functors $\Cat_{(\infty,n)} \to \Cat_{(\infty,n+1)}$ and $\Cat_{(\infty,\infty)} \to \Cat_{(\infty,\infty)}$, but we warn that these functors do not preserve coproducts.

\begin{definition}\label{defn:opposites}
For a strict $\omega$-category $C$, let $C^{\op}$ denote the strict $\omega$-category with all odd-morphism directions reversed and $C^{\co}$ the strict $\omega$-category with all even morphism directions reversed. The equivalences $(-)^{\op}$ and $(-)^{\co}$ restrict to autoequivalences of $\Theta$ and extend cocontinuously to autoequivalences of $\Cat_{(\infty, \infty)}$, which we denote by the same names. 
\end{definition}

\begin{lemma}\label{lem:Sigmacoop}
For $\cC \in \Cat_{(\infty,\infty)}$, there are natural equivalences in $\Cat_{(\infty,\infty)}^{**}$: 
\begin{align*}
\left(\{0 \} \sqcup \{1\} \to (\Sigma \cC)^{\co}\right) & \simeq \left(\{0\} \sqcup \{1\}  \to \Sigma \cC^{\op}\right), \\  \left( \{0\} \sqcup \{1\} \to (\Sigma \cC)^{\op}  \right) & \simeq \left(\{1\} \sqcup \{0\} \to \Sigma \cC^{\co}\right).
\end{align*}
\end{lemma}
\begin{proof}
Since $\Sigma:\Cat_{(\infty,\infty)} \to \Cat_{(\infty, \infty)}^{**}$ preserves colimits, it suffices to construct these equivalences on gaunt categories where they are straightforward.
\end{proof}

Since our $(\infty,\infty)$-categories are by convention univalent (aka Rezk-complete), the morphisms $ \Sigma^{n-1}(\globe_1 \to *): \globe_n \to \globe_{n-1}$ are \emph{epimorphisms}\footnote{\label{footnote:epi}A morphism $f:a \to b$ in an $\infty$-category $\cE$ is an \emph{epimorphism} if for all objects $c$ the induced map of spaces $\Map(b, c) \to \Map(a,c)$ is a full subspace inclusion.} in $\Cat_{(\infty,\infty)}$: for any $\cC \in \Cat_{(\infty,\infty)}$, the space $\Map(\globe_n, \cC)$ is the space of $n$-morphisms in $\cC$, and $\Map(\globe_{n+1} \to \globe_{n}, \cC) : \Map(\globe_n, \cC) \to \Map(\globe_{n+1}, \cC)$ is equivalent to the full subspace inclusion onto the space of invertible $(n+1)$-morphisms.

The localization $L_n : \Cat_{(\infty,\infty)} \to \Cat_{(\infty,n)}$ that inverts morphisms of dimension $>n$ is precisely the localization of $\Cat_{(\infty,\infty)}$ along the degeneracy maps $\globe_{m+1} \to \globe_{m}$ for $m > n$. Given an $(\infty, \infty)$-category $\cC$, the categorical suspension $\Sigma (\cC \to L_n \cC)$ is a functor to an $(\infty,n+1)$-category and hence factors through $L_{n+1} \Sigma \cC \to \Sigma L_n \cC$.
\begin{lemma}For any $(\infty, \infty)$-category $\cC$, this canonical functor $L_{n+1} \Sigma \cC \to \Sigma L_n \cC$ is an equivalence.
\end{lemma}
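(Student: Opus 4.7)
The plan is to prove this by a Yoneda argument using adjunctions. The key observation is that the suspension functor $\Sigma: \Cat_{(\infty,n)} \to \Cat_{(\infty,n+1)}^{**}$ is cocontinuous between presentable $\infty$-categories, hence admits a right adjoint, which must be the ``hom between marked objects'' functor $\mathrm{H}: \Cat_{(\infty,n+1)}^{**} \to \Cat_{(\infty,n)}$ sending $(\cD, 0, 1)$ to $\Hom_\cD(0,1)$. The same construction supplies a suspension $\Sigma: \Cat_{(\infty,\infty)} \to \Cat_{(\infty,\infty)}^{**}$ with right adjoint $\Hom_{(-)}(0,1)$, and these two versions of $\Sigma$ are compatible with the inclusion $\Cat_{(\infty,n)} \hookrightarrow \Cat_{(\infty,\infty)}$ (both by construction from \eqref{eq:sigmainfinity}, and because the right adjoints $\Hom_{(-)}(0,1)$ manifestly commute with the fully faithful inclusion $\Cat_{(\infty,n+1)}^{**} \hookrightarrow \Cat_{(\infty,\infty)}^{**}$, which restricts a $(\infty,n+1)$-category's hom-category to an $(\infty,n)$-category).

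Given this setup, I would compute the functor represented by each side on a test pointed $(\infty,n+1)$-category $(\cD, 0, 1)$. For the right-hand side, applying the adjunction $\Sigma \dashv \Hom_{(-)}(0,1)$ in the $(\infty,n+1)$-setting and then the adjunction $L_n \dashv \iota_n$ gives natural equivalences
\[ \Map_{(\infty,n+1)}^{**}(\Sigma L_n \cC, \cD) \simeq \Map_{(\infty,n)}(L_n \cC, \Hom_\cD(0,1)) \simeq \Map_{(\infty,\infty)}(\cC, \Hom_\cD(0,1)), \]
where the last step uses that $\Hom_\cD(0,1)$ is an $(\infty,n)$-category since $\cD$ is $(\infty,n+1)$. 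For the left-hand side, applying $L_{n+1} \dashv \iota_{n+1}$ (in pointed categories) and then the adjunction $\Sigma \dashv \Hom_{(-)}(0,1)$ in the $(\infty,\infty)$-setting gives
\[ \Map_{(\infty,n+1)}^{**}(L_{n+1}\Sigma\cC, \cD) \simeq \Map_{(\infty,\infty)}^{**}(\Sigma\cC, \cD) \simeq \Map_{(\infty,\infty)}(\cC, \Hom_\cD(0,1)). \]

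Chasing the triangle defining the canonical map $L_{n+1}\Sigma\cC \to \Sigma L_n \cC$ (the unique factorization of $\Sigma(\cC \to L_n\cC)$ through the localization), one verifies that under these two chains of equivalences this map corresponds to the identity natural transformation of the functor $\cD \mapsto \Map(\cC, \Hom_\cD(0,1))$ on $\Cat_{(\infty,n+1)}^{**}$. Hence by the Yoneda lemma, the canonical map is an equivalence.

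The main step requiring care is not a genuine obstacle but rather bookkeeping: I must ensure the $\Sigma$-adjunctions at levels $(\infty,n)$ and $(\infty,\infty)$ are mutually compatible (so that the suspension of an $(\infty,n)$-category, viewed in $\Cat_{(\infty,\infty)}$, really is the same as the suspension computed in $\Cat_{(\infty,\infty)}$), and similarly for the localizations in the pointed setting. Both follow by verifying on the dense subcategory $\Theta$ of pasting diagrams, where the suspension is defined pointwise and the claims are visible by inspection.
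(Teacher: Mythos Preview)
Your proof is correct but takes a longer route than the paper's two-line argument. The paper simply observes that $L_n$, $L_{n+1}$, and $\Sigma$ are all cocontinuous, so it suffices to check on the colimit generators $\cC = \globe_m$; there the claim is immediate from $\Sigma\globe_m = \globe_{m+1}$ and $L_n\globe_m \simeq \globe_{\min(m,n)}$.

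Your Yoneda argument via $\Sigma \dashv \Hom_{(-)}(0,1)$ is valid, but its key input---that the right adjoint to $\Sigma$ sends bipointed $(\infty,n+1)$-categories to $(\infty,n)$-categories---is the content of Lemma~\ref{lem:takinghomloosescategorylevel}, which appears much later, and whose proof in the paper (via Proposition~\ref{prop:trickypushout}) actually \emph{invokes} the present lemma. So to avoid circularity you need an independent argument. The most direct one is to test locality of $R(\cD)$ against the degeneracies $\globe_{m+1} \to \globe_m$ for $m\geq n$, which by adjunction becomes locality of $\cD$ against $\Sigma\globe_{m+1} \to \Sigma\globe_m$, i.e.\ against $\globe_{m+2} \to \globe_{m+1}$---true since $\cD$ is $(\infty,n+1)$. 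But this is essentially the paper's globe check wearing a Yoneda costume. Alternatively one can appeal to the enriched model $\Cat_{(\infty,n+1)} \simeq \Cat(\Cat_{(\infty,n)})$, where the claim is tautological, but matching that hom with the abstract right adjoint of $\Sigma$ is Proposition~\ref{prop:enrichmentagreement}, again later. So: no genuine gap, but the paper's direct reduction to globes is shorter and more self-contained at this point in the exposition.
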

\begin{proof}Since $L_n, L_{n+1}$ and $\Sigma$ preserve colimits, it suffices to verify the statement on the walking $n$-cells 
 $\cC = \globe_m$. The statement is trivial for $m \leq n$, and follows for $m > n$ since the degeneracy maps $\globe_m \to \globe_{m-1} \to \ldots \to \globe_n$ induce equivalences $L_{n} \globe_m \to  \globe_n$. 
\end{proof}

Whereas some functors invert $n$-cells, others detect noninvertibility:
\begin{definition} A functor of $(\infty, n)$-categories is called \emph{conservative on $n$-morphisms} if it is right orthogonal to $\globe_{n} \to \globe_{n-1}$.
\end{definition}
Unpacked, this defines a functor $F:\cA \to \cB$ to be conservative on $n$-morphisms if  every $n$-cell in $\cA$ whose image in $\cB$ is invertible is itself invertible in $\cA$.
\begin{lemma} \label{lem:consequences-conservative} Let $\cA$ be an $(\infty, n)$-category and $F: \cB \to \cD$ be a functor of $(\infty, n)$-categories. 
\begin{enumerate}
\item $\cA$ is an $(\infty, n-1)$-category if and only if $\cA \to *$ is conservative on $n$-morphisms;
\item Conservative-on-n-morphism functors are preserved by pullback in $\Cat_{(\infty, n)}$;
\item If $F$ is conservative on $n$-morphisms, then for every $d\in \cD$, the fiber $\{d \} \times_{\cD} \cC$ (computed in $\Cat_{(\infty, n)}$) is an $(\infty, n-1)$-category. \label{lem:consequences-conservative3}
\end{enumerate}
\end{lemma}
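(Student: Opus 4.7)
The three parts cascade naturally, and no step presents a serious obstacle. For \textbf{(1)}, I would unpack the definition of right orthogonality to the degeneracy $\globe_n \to \globe_{n-1}$ for the functor $\cA \to *$. The pullback square defining orthogonality collapses since $\Map(-, *) = *$, so the condition reduces to asking that precomposition $\Map(\globe_{n-1}, \cA) \to \Map(\globe_n, \cA)$ is an equivalence. The discussion of degeneracies just above Definition \ref{defn:walkingcell} and following it already identifies this precomposition with the inclusion of the space of invertible $n$-morphisms of $\cA$ into the space of all $n$-morphisms (this uses the paragraph beginning ``Since our $(\infty,\infty)$-categories are by convention univalent''). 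So the orthogonality condition holds iff every $n$-morphism of $\cA$ is invertible; combined with the fully faithful inclusion $\Cat_{(\infty, n-1)} \hookrightarrow \Cat_{(\infty, n)}$ from~\eqref{eq:inclusion-and-localization}, this is exactly the statement that $\cA$ is an $(\infty, n-1)$-category.

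For \textbf{(2)}, I would invoke the general fact that for any fixed morphism $f$ in a (presentable) $\infty$-category, the class of morphisms right orthogonal to $f$ is stable under base change. Unwinding: $F$ is right orthogonal to $f: X \to Y$ precisely when a certain square of mapping spaces is a pullback, and pullback squares of mapping spaces stack along base changes of $F$. Hence pulling back a conservative-on-$n$-morphism functor yields one.

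Finally, \textbf{(3)} is immediate from \textbf{(1)} and \textbf{(2)}: the projection $\{d\} \times_\cD \cC \to *$ is the base change of $F: \cC \to \cD$ along $\{d\}: * \to \cD$, hence by \textbf{(2)} it is conservative on $n$-morphisms, and then by \textbf{(1)} its source $\{d\} \times_\cD \cC$ is an $(\infty, n-1)$-category.

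The only point requiring any care is \textbf{(1)}: without univalence (e.g.\ in the flagged/valent setting of~\cite{MR3869643}), ``every $n$-morphism is invertible'' would not be equivalent to ``the object lies in $\Cat_{(\infty, n-1)}$''. In our setup, however, univalence is built in by convention, so the two conditions coincide and the argument goes through.
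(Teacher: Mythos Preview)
Your proof is correct and follows essentially the same approach as the paper's: both rely on the characterization of $\Cat_{(\infty,n-1)}$ as the localization of $\Cat_{(\infty,n)}$ at $\globe_n \to \globe_{n-1}$ for part~(1), on stability of right orthogonality under pullback for part~(2), and combine these for part~(3). The paper's argument for (1) is more compressed (it invokes the localization description directly rather than unpacking the mapping-space condition), but the content is the same; your remark on the role of univalence is a welcome addition.
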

\begin{proof}
The first statement follows by definition, since $\Cat_{(\infty, n-1)}$ is the localization of $\Cat_{(\infty, n)}$ at the morphism $\globe_n \to \globe_{n-1}$. The second statement follows since morphisms right orthogonal to a fixed morphism are preserved under pullback. The third statement follows from the first as a special case of the second. 
\end{proof}

\subsection{The lax tensor product \texorpdfstring{$\otimes^{\strict}$}{otimes^strict} on strict \texorpdfstring{$\omega$}{omega}-categories} \label{subsec:otimesstrict}

The Cartesian product $\cA \times \cB$ of strict $\omega$-categories is built by declaring that, for each cell $a : \globe_m \to \cA$ and each cell $b : \globe_n \to \cB$, there is a strictly-commuting square $a \times b$. In \cite{MR0371990}, Gray introduced a ``lax'' product for strict $2$-categories, which we will denote ``$\otimes^{\strict}$'' in order to reserve ``$\otimes$'' for Campion's generalization to $(\infty,\infty)$-categories reviewed in \S\ref{subsec:otimesinfty}. This product was extended to strict $\omega$-categories by Crans in \cite{CransTensor}, and so $\otimes^{\strict}$ is variously known as the ``Gray product'' and the ``Crans--Gray product.''\footnote{In addition to the names ``Gray product'' and  ``Crans--Gray product,'' we like the names ``strict tensor product'' and ``strict lax product'' for $\otimes^{\strict}$. It is worth emphasizing that ``strict lax'' is not a contradiction. The tensor product $\otimes^{\strict}$ is ``strict'' in the sense that it is a construction on strict $\omega$-categories. It is ``lax'' in the sense that, rather than producing strictly-commuting squares, it fills squares with noninvertible cells. The word replacing ``lax'' for invertible cells is ``strong.''}
The goal of this section is to quickly review the definition of $\otimes^{\strict}$ and to describe a graphical calculus for it developed in~\cite{1709.08086,2101.10361} that we will use in \S\ref{subsec:finalproof}. Thorough developments of $\otimes^{\strict}$ can also be found in~\cite{MR2061574,MR4146146,2404.07273}.

Roughly speaking, given two strict $\omega$-categories $\cA, \cB$, the strict $\omega$-category $\cA \otimes^\strict \cB$ is generated by a cell $a \otimes b$ of dimension $p+q$ for each pair of a $p$-cell in $a \in \cA$ and a $q$-cell in $b \in \cB$, with source and target built from $\partial a \otimes b$ and $a \otimes \partial b$ (glued along $\partial a \otimes \partial b$)\footnote{
Specifically, the rule is that the cell $a \otimes b$ is directed in such a way that ``$\cA$ wins'': $\operatorname{source}(a) \otimes b$ always appears in $\operatorname{source}(a \otimes b)$ and $\operatorname{target}(a) \otimes b$ always appears in $\operatorname{target}(a \otimes b)$. Where $a \otimes \operatorname{source}(b)$ and $a \otimes \operatorname{target}(b)$ appear depends on the dimensions of $a$ and $b$; their locations in $\operatorname{source}(a \otimes b)$ and $\operatorname{target}(a \otimes b)$ are uniquely determined by requiring that $a \otimes b$  be well-formed. The precise combinatorics can be found for example in~\cite{JFS}.
}. 

We will primarily work with pasting diagrams, but it will be occasionally useful to describe formulas in terms of string diagrams (as detailed for example in \cite{MR2767048}). The full theory of  string (and foam) diagrams for $(\infty,\infty)$-categories is currently in progress, and so we will only use string diagrams for strict $\omega$-categories. Indeed, since we are bound to two-dimensional (digital) paper, we will in general lay out commutative diagrams whose entries are diagrams in (strict) 2-categories; for higher dimensional diagrams, we refer the reader to~\cite{1709.08086,2101.10361}. When drawing string diagrams, our direction conventions are the following: composition of 1-morphisms is from right to left; composition of 2-morphisms is from bottom to top. Moreover, we will generally suppress the labels of objects, as these can be reconstructed from the labels on morphisms. 
For example, the laxly commuting square $\alpha : kh \Rightarrow gf$
would be drawn as:
\[ 
\begin{tikzpicture}
  \path 
  (+1,0) node[gray] (e) {$\bullet$}
  (-1,0) node[gray] (w) {$\bullet$}
  (0,+1) node[gray] (n) {$\bullet$}
  (0,-1) node[gray] (s) {$\bullet$}
  ;
  \draw[grayarrow] (e) -- (n);
  \draw[grayarrow] (e) -- (s);
  \draw[grayarrow] (n) -- (w);
  \draw[grayarrow] (s) -- (w);
  \draw[graytwoarrow] (s) -- (n);
  \path (0,0) node[draw,circle,inner sep=0.5pt] (alpha) {$\alpha$}
  +(-.5,-1.5) node[anchor=north] (k) {$\scriptstyle k$}
  +(+.5,-1.5) node[anchor=north] (h) {$\scriptstyle h$}
  +(-.5,+1.5) node[anchor=south] (g) {$\scriptstyle g$}
  +(+.5,+1.5) node[anchor=south] (f) {$\scriptstyle f$}
  ;
  \draw[string] (k) .. controls ++(0,+.5) and +(-.5,-.5) .. (alpha);
  \draw[string] (h) .. controls ++(0,+.5) and +(+.5,-.5) .. (alpha);
  \draw[string] (g) .. controls ++(0,-.5) and +(-.5,+.5) .. (alpha);
  \draw[string] (f) .. controls ++(0,-.5) and +(+.5,+.5) .. (alpha);
\end{tikzpicture}
\]

Suppose now that $a : \globe_1 \to \cA$ and $b : \globe_1 \to \cB$ are 1-morphisms in strict $\omega$-categories $\cA$ and $\cB$. Their tensor product is the laxly commuting square
$$ a \otimes b : \bigl(a \otimes \operatorname{t}(b)\bigr) \underset{\operatorname{s}(a) \otimes \operatorname{t}(b)}\circ \bigl(\operatorname{s}(a) \otimes b\bigr) \Rightarrow \bigl(\operatorname{t}(a) \otimes b\bigr) \underset{\operatorname{t}(a) \otimes \operatorname{s}(b)}\circ \bigl(a \otimes \operatorname{s}(b)\bigr).$$
Here we abbreviate $\operatorname{s}(-) = \operatorname{source}(-)$ and $\operatorname{t}(-) = \operatorname{target}(-)$, and this $a \otimes b$ is a 2-cell between 1-cells of shape $\operatorname{s}(a) \otimes \operatorname{s}(b) \to \operatorname{t}(a) \otimes \operatorname{t}(b)$. We will draw this 2-cell simply as a crossing between an edge labeled $a$, running from southwest to northeast, and an edge labeled $b$, running from southeast to northwest:
\[ 
a \otimes b \quad = \quad 
\begin{tikzpicture}[baseline=(baseline)]
  \path 
  (+1,0) node[gray] (e) {$\scriptstyle\operatorname{s}(a) \otimes \operatorname{s}(b)$}
  (-1,0) node[gray] (w) {$\scriptstyle\operatorname{t}(a) \otimes \operatorname{t}(b)$}
  (0,+1) node[gray] (n) {$\scriptstyle\operatorname{t}(a) \otimes \operatorname{s}(b)$}
  (0,-1) node[gray] (s) {$\scriptstyle\operatorname{s}(a) \otimes \operatorname{t}(b)$}
  ;
  \draw[grayarrow] (e) -- (n);
  \draw[grayarrow] (e) -- (s);
  \draw[grayarrow] (n) -- (w);
  \draw[grayarrow] (s) -- (w);
  \draw[graytwoarrow] (s) -- (n);
  \path (0,0) coordinate (crossing) +(0,-.25) coordinate (baseline)
  +(-1,-1.5) node[anchor=north] (k) {$\scriptstyle \phantom{\color{gray}{\operatorname{s}(a)}{\otimes}} a {\color{gray}{\otimes}{\operatorname{t}(b)}}$}
  +(+1,-1.5) node[anchor=north] (h) {$\scriptstyle {\color{gray}{\operatorname{s}(a)}{\otimes}} b \phantom{\color{gray}{\otimes}{\operatorname{t}(b)}}$}
  +(-1,+1.5) node[anchor=south] (g) {$\scriptstyle {\color{gray}{\operatorname{t}(a)}{\otimes}} b \phantom{\color{gray}{\otimes}{\operatorname{t}(b)}}$}
  +(+1,+1.5) node[anchor=south] (f) {$\scriptstyle \phantom{\color{gray}{\operatorname{s}(a)}{\otimes}} a {\color{gray}{\otimes}{\operatorname{s}(b)}}$}
  ;
  \draw[string] (k) .. controls ++(+.25,+.75) and +(-.5,-.5) .. (crossing);
  \draw[string] (h) .. controls ++(-.25,+.75) and +(+.5,-.5) .. (crossing);
  \draw[string] (g) .. controls ++(+.25,-.75) and +(-.5,+.5) .. (crossing);
  \draw[string] (f) .. controls ++(-.25,-.75) and +(+.5,+.5) .. (crossing);
\end{tikzpicture}
\]

The tensor products of a 2-cell $\alpha : a \Rightarrow a'$  with a 1-cell $b$  and of a  1-cell $a$  with a 2-cell $\beta : b \Rightarrow b'$ are, respectively, the following 3-cells:
\[
\begin{tikzpicture}[baseline=(baseline),yscale=.75]
  \path (0,0) coordinate (crossing) +(0,-.25) coordinate (baseline)
  +(-.5,-1.5) node[anchor=north] (k) {$\scriptstyle  a $}
  +(+.5,-1.5) node[anchor=north] (h) {$\scriptstyle  b $}
  +(-.5,+1.5) node[anchor=south] (g) {$\scriptstyle  b $}
  +(+.5,+1.5) node[anchor=south] (f) {$\scriptstyle  a' $}
  ;
  \draw[string] (k) .. controls ++(0,+.5) and +(-.5,-.5) .. (crossing);
  \draw[string] (h) .. controls ++(0,+.5) and +(+.5,-.5) .. (crossing);
  \draw[string] (g) .. controls ++(0,-.5) and +(-.5,+.5) .. (crossing);
  \draw[string] (f) .. controls ++(0,-.5) and +(+.5,+.5) .. (crossing);
  \path (+.4,+.65) node[draw,circle,inner sep=0.5pt,fill,fill=white] (bead) {$\scriptstyle\alpha$};
\end{tikzpicture}
\overset{\alpha \otimes b}\threearrowlong
\begin{tikzpicture}[baseline=(baseline),yscale=.75]
  \path (0,0) coordinate (crossing) +(0,-.25) coordinate (baseline)
  +(-.5,-1.5) node[anchor=north] (k) {$\scriptstyle  a $}
  +(+.5,-1.5) node[anchor=north] (h) {$\scriptstyle  b $}
  +(-.5,+1.5) node[anchor=south] (g) {$\scriptstyle  b $}
  +(+.5,+1.5) node[anchor=south] (f) {$\scriptstyle  a' $}
  ;
  \draw[string] (k) .. controls ++(0,+.5) and +(-.5,-.5) .. (crossing);
  \draw[string] (h) .. controls ++(0,+.5) and +(+.5,-.5) .. (crossing);
  \draw[string] (g) .. controls ++(0,-.5) and +(-.5,+.5) .. (crossing);
  \draw[string] (f) .. controls ++(0,-.5) and +(+.5,+.5) .. (crossing);
  \path (-.4,-.65) node[draw,circle,inner sep=0.5pt,fill,fill=white] (bead) {$\scriptstyle\alpha$};
\end{tikzpicture}
, \qquad \qquad \qquad
\begin{tikzpicture}[baseline=(baseline),yscale=.75]
  \path (0,0) coordinate (crossing) +(0,-.25) coordinate (baseline)
  +(-.5,-1.5) node[anchor=north] (k) {$\scriptstyle  a $}
  +(+.5,-1.5) node[anchor=north] (h) {$\scriptstyle  b $}
  +(-.5,+1.5) node[anchor=south] (g) {$\scriptstyle  b' $}
  +(+.5,+1.5) node[anchor=south] (f) {$\scriptstyle  a $}
  ;
  \draw[string] (k) .. controls ++(0,+.5) and +(-.5,-.5) .. (crossing);
  \draw[string] (h) .. controls ++(0,+.5) and +(+.5,-.5) .. (crossing);
  \draw[string] (g) .. controls ++(0,-.5) and +(-.5,+.5) .. (crossing);
  \draw[string] (f) .. controls ++(0,-.5) and +(+.5,+.5) .. (crossing);
  \path (+.4,-.65) node[draw,circle,inner sep=0.5pt,fill,fill=white] (bead) {$\scriptstyle\beta$};
\end{tikzpicture}
\overset{a \otimes \beta}\threearrowlong
\begin{tikzpicture}[baseline=(baseline),yscale=.75]
  \path (0,0) coordinate (crossing) +(0,-.25) coordinate (baseline)
  +(-.5,-1.5) node[anchor=north] (k) {$\scriptstyle  a $}
  +(+.5,-1.5) node[anchor=north] (h) {$\scriptstyle  b $}
  +(-.5,+1.5) node[anchor=south] (g) {$\scriptstyle  b' $}
  +(+.5,+1.5) node[anchor=south] (f) {$\scriptstyle  a $}
  ;
  \draw[string] (k) .. controls ++(0,+.5) and +(-.5,-.5) .. (crossing);
  \draw[string] (h) .. controls ++(0,+.5) and +(+.5,-.5) .. (crossing);
  \draw[string] (g) .. controls ++(0,-.5) and +(-.5,+.5) .. (crossing);
  \draw[string] (f) .. controls ++(0,-.5) and +(+.5,+.5) .. (crossing);
  \path (-.4,+.65) node[draw,circle,inner sep=0.5pt,fill,fill=white] (bead) {$\scriptstyle\beta$};
\end{tikzpicture}
\]
In other words, when working in $\cA \otimes \cB$,
 one can pull $\cA$-cells from  northeast to southwest across wires from $\cB$, and one can pull $\cB$-cells from  southeast to northwest across wires in $\cA$; pulling across a wire is a typically non-invertible operation. The tensor product of two 2-cells is a non-invertible compatibility between these two pulling operations:
\[
\begin{array}{ccccc}
  && \begin{tikzpicture}[baseline=(baseline),yscale=.5]
  \path (0,0) coordinate (crossing) +(0,-.25) coordinate (baseline)
  +(-.5,-1.5) node[anchor=north] (k) {$\scriptstyle  a $}
  +(+.5,-1.5) node[anchor=north] (h) {$\scriptstyle  b $}
  +(-.5,+1.5) node[anchor=south] (g) {$\scriptstyle  b' $}
  +(+.5,+1.5) node[anchor=south] (f) {$\scriptstyle  a' $}
  ;
  \draw[string] (k) .. controls ++(0,+.5) and +(-.5,-.5) .. (crossing);
  \draw[string] (h) .. controls ++(0,+.5) and +(+.5,-.5) .. (crossing);
  \draw[string] (g) .. controls ++(0,-.5) and +(-.5,+.5) .. (crossing);
  \draw[string] (f) .. controls ++(0,-.5) and +(+.5,+.5) .. (crossing);
  \path (+.4,+.65) node[draw,circle,inner sep=0.5pt,fill,fill=white] (bead) {$\scriptstyle\alpha$};
  \path (-.4,+.65) node[draw,circle,inner sep=0.5pt,fill,fill=white] (bead) {$\scriptstyle\beta$};
\end{tikzpicture}
 && \\
 & \rotatebox[origin=c]{45}{$\overset{a \otimes \beta}\threearrowlong$}&& \rotatebox[origin=c]{-45}{$\overset{\alpha \otimes b}\threearrowlong$}& \\
 \begin{tikzpicture}[baseline=(baseline),yscale=.5]
  \path (0,0) coordinate (crossing) +(0,-.25) coordinate (baseline)
  +(-.5,-1.5) node[anchor=north] (k) {$\scriptstyle  a $}
  +(+.5,-1.5) node[anchor=north] (h) {$\scriptstyle  b $}
  +(-.5,+1.5) node[anchor=south] (g) {$\scriptstyle  b' $}
  +(+.5,+1.5) node[anchor=south] (f) {$\scriptstyle  a' $}
  ;
  \draw[string] (k) .. controls ++(0,+.5) and +(-.5,-.5) .. (crossing);
  \draw[string] (h) .. controls ++(0,+.5) and +(+.5,-.5) .. (crossing);
  \draw[string] (g) .. controls ++(0,-.5) and +(-.5,+.5) .. (crossing);
  \draw[string] (f) .. controls ++(0,-.5) and +(+.5,+.5) .. (crossing);
  \path (+.4,+.65) node[draw,circle,inner sep=0.5pt,fill,fill=white] (bead) {$\scriptstyle\alpha$};
  \path (+.4,-.65) node[draw,circle,inner sep=0.5pt,fill,fill=white] (bead) {$\scriptstyle\beta$};
\end{tikzpicture} 
&&
\rotatebox[origin=c]{-90}{$\fourarrowlong$}\; \raisebox{3pt}{$\alpha \otimes \beta$}
&&
\begin{tikzpicture}[baseline=(baseline),yscale=.5]
  \path (0,0) coordinate (crossing) +(0,-.25) coordinate (baseline)
  +(-.5,-1.5) node[anchor=north] (k) {$\scriptstyle  a $}
  +(+.5,-1.5) node[anchor=north] (h) {$\scriptstyle  b $}
  +(-.5,+1.5) node[anchor=south] (g) {$\scriptstyle  b' $}
  +(+.5,+1.5) node[anchor=south] (f) {$\scriptstyle  a' $}
  ;
  \draw[string] (k) .. controls ++(0,+.5) and +(-.5,-.5) .. (crossing);
  \draw[string] (h) .. controls ++(0,+.5) and +(+.5,-.5) .. (crossing);
  \draw[string] (g) .. controls ++(0,-.5) and +(-.5,+.5) .. (crossing);
  \draw[string] (f) .. controls ++(0,-.5) and +(+.5,+.5) .. (crossing);
  \path (-.4,-.65) node[draw,circle,inner sep=0.5pt,fill,fill=white] (bead) {$\scriptstyle\alpha$};
  \path (-.4,+.65) node[draw,circle,inner sep=0.5pt,fill,fill=white] (bead) {$\scriptstyle\beta$};
\end{tikzpicture} \\
 & \rotatebox[origin=c]{-45}{$\overset{\alpha \otimes b'}\threearrowlong$}&& \rotatebox[origin=c]{45}{$\overset{a' \otimes \beta}\threearrowlong$}& \\
  && \begin{tikzpicture}[baseline=(baseline),yscale=.5]
  \path (0,0) coordinate (crossing) +(0,-.25) coordinate (baseline)
  +(-.5,-1.5) node[anchor=north] (k) {$\scriptstyle  a $}
  +(+.5,-1.5) node[anchor=north] (h) {$\scriptstyle  b $}
  +(-.5,+1.5) node[anchor=south] (g) {$\scriptstyle  b' $}
  +(+.5,+1.5) node[anchor=south] (f) {$\scriptstyle  a' $}
  ;
  \draw[string] (k) .. controls ++(0,+.5) and +(-.5,-.5) .. (crossing);
  \draw[string] (h) .. controls ++(0,+.5) and +(+.5,-.5) .. (crossing);
  \draw[string] (g) .. controls ++(0,-.5) and +(-.5,+.5) .. (crossing);
  \draw[string] (f) .. controls ++(0,-.5) and +(+.5,+.5) .. (crossing);
  \path (-.4,-.65) node[draw,circle,inner sep=0.5pt,fill,fill=white] (bead) {$\scriptstyle\alpha$};
  \path (+.4,-.65) node[draw,circle,inner sep=0.5pt,fill,fill=white] (bead) {$\scriptstyle\beta$};
\end{tikzpicture}
 && \end{array}
\]

\subsection{The lax tensor product \texorpdfstring{$\otimes$}{otimes} on \texorpdfstring{$(\infty,\infty)$}{(oo,oo)}-categories} \label{subsec:otimesinfty}

In~\cite{2311.00205}, Campion uses the strict lax product $\otimes^\strict$ on $\strCat_\omega$ to define an analogous product $\otimes$ on $\Cat_{(\infty,\infty)}$. Campion's product is built as follows. Let $\fancysquare \subset \strCat_\omega$ denote the full subcategory on the $\otimes^\strict$-powers of $\globe_1$: 

\begin{equation}
\label{eq:GrayCubes}
\arraycolsep=20pt
\begin{array}{ccc}
\begin{tikzcd}[sep=2em]
\bullet
\end{tikzcd}
&
\begin{tikzcd}[sep=2em]
\bullet \arrow[r]&\bullet
\end{tikzcd}
&
\begin{tikzcd}[sep=2em]
\bullet \arrow[r]&\bullet
\end{tikzcd}
\otimes 
\begin{tikzcd}[sep=2em]\bullet \arrow[d] \\ \bullet
\end{tikzcd}
=
\begin{tikzcd}[sep=2em]
\bullet \arrow[r] \arrow[d] &\arrow[d] \bullet \\
\bullet \arrow[r] \arrow[Rightarrow, ur,  shorten <=3pt, shorten >=3pt] & \bullet
\end{tikzcd}
\\
\globe_1^{\otimes 0}
&
\globe_1^{\otimes 1}
& \globe_1^{\otimes 2}
\end{array}
\hspace{0.5cm}
\cdots
\end{equation}
These powers are all gaunt, and so we may equally consider $\fancysquare$ as a full subcategory of $\Cat_{(\infty,\infty)}$. Campion proves in \cite{2209.09376} that $\fancysquare \subset \Cat_{(\infty,\infty)}$ is dense; indeed, Campion proves that the idempotent completion of $\fancysquare$ contains $\Theta$. 
Since $\fancysquare \subset \Cat_{(\infty,\infty)}$ is dense,  the lax tensor product on $\fancysquare$ extends in at most one way to a product $\otimes$ on all of $\Cat_{(\infty,\infty)}$ which is cocontinuous in each variable; Theorem~A of~\cite{2311.00205} says that this extension does indeed exist.

\begin{warn}\label{warn:GrayGaunt}

It is expected, but not proven in~\cite{2311.00205}, that the $\infty$-categorical and strict-categorical tensor products agree when restricted along $\Gaunt \subset \Cat_{(\infty,\infty)}$ or $\Gaunt \subset \strCat_\omega$. By~\cite[Thm.~3.14]{2311.00205} this is equivalent to the assertion that the $\infty$-categorical tensor product of two gaunt $\omega$-categories is again a gaunt $\omega$-category (also see~\cite[Assumption~3.5(3)]{2002.01037}), which is unknown at the time of writing. In the sequel, when we write~$\otimes$, we will mean the $\infty$-categorical tensor product, and we will write~$\otimes^{\strict}$ for the strict version (when they are not known to agree).

That said, since Joyal's category $\Theta$ of pasting diagrams is a full subcategory of the idempotent completion of $\fancysquare$, see~\cite{2209.09376}, it does follow from Campion's construction that the $\infty$-categorical and strict-categorical tensor products agree on the monoidal closure of $\Theta \subset \Gaunt$. In particular, $\theta_p \otimes \theta_q$ can be computed strictly, and agrees with the gaunt $\omega$-category denoted $\theta_{(p);(q)}$ in \cite{JFS}.
\end{warn}

\begin{definition}
A \emph{lax square} in an $(\infty, \infty)$-category $\cC$ is a functor $\theta_1 \otimes \theta_1 \to \cC$, the \emph{space of lax squares in $\cC$} is $\Map(\theta_1 \otimes \theta_1, \cC)$. 
\end{definition}
Throughout this paper, we will often informally picture lax squares as diagrams in $\cC$ of the following shape:
\[\begin{tikzcd}[sep=3em]
\bullet \arrow[r] \arrow[d] &\arrow[d] \bullet \\
\bullet \arrow[r] \arrow[Rightarrow, ur,  shorten <=3pt, shorten >=3pt] & \bullet
\end{tikzcd}
\]

For any pair of $(\infty, \infty)$-categories $\cA, \cB$, the functors $\cA \otimes \cB \to \cA \otimes * \simeq \cA$ and $\cA \otimes \cB \to * \otimes \cB \simeq \cB$ induce a canonical functor $\cA \otimes \cB \to \cA \times \cB$. 

\begin{lemma} \label{lem:laxvstrongsquares} Composition with the canonical functor $\globe_1 \otimes \globe_1 \to \globe_1 \times\globe_1$ exhibits $\Map(\globe_1 \times \globe_1 ,\cC)$ as a full subspace of $\Map(\globe_1 \otimes \globe_1, \cC)$ on those lax squares whose filling $2$-cell is invertible. 
\end{lemma}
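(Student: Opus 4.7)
The plan is to realise $\globe_1 \times \globe_1$ as a pushout in $\Cat_{(\infty,\infty)}$:
\[
\globe_1 \times \globe_1 \simeq \globe_1 \otimes \globe_1 \sqcup_{\globe_2} \globe_1,
\]
where $\globe_2 \to \globe_1 \otimes \globe_1$ classifies the unique non-identity filling $2$-cell of $\globe_1 \otimes \globe_1$, and $\globe_2 \to \globe_1$ is the degeneracy map. The two composites $\globe_2 \to \globe_1 \otimes \globe_1 \to \globe_1 \times \globe_1$ and $\globe_2 \to \globe_1 \to \globe_1 \times \globe_1$ agree (both pick out the identity $2$-cell on the diagonal $1$-morphism of $\globe_1 \times \globe_1$), so a comparison map from the pushout exists canonically.

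Granting this pushout description, the lemma follows formally: applying the corepresentable $\Map(-, \cC)$ yields the pullback
\[
\Map(\globe_1 \times \globe_1, \cC) \simeq \Map(\globe_1 \otimes \globe_1, \cC) \times_{\Map(\globe_2, \cC)} \Map(\globe_1, \cC).
\]
Since, as recorded in \S\ref{subsec:otimesinfty} (footnote~\ref{footnote:epi} and the surrounding discussion), the degeneracy induces a full subspace inclusion $\Map(\globe_1, \cC) \hookrightarrow \Map(\globe_2, \cC)$ with image the invertible $2$-morphisms, and since pullbacks of spaces along full subspace inclusions are again full subspace inclusions, this pullback realises $\Map(\globe_1 \times \globe_1, \cC)$ as the full subspace of $\Map(\globe_1 \otimes \globe_1, \cC)$ on those lax squares whose filling $2$-cell is invertible.

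The main work is establishing the pushout description. The cleanest route is to verify it on a dense subcategory: by Campion's density of $\fancysquare$ (equivalently, of $\Theta$), it is enough to match mapping spaces into $\cC = \globe_1^{\otimes n}$ for every $n$, which by Warning~\ref{warn:GrayGaunt} reduces to a computation in $\Gaunt$. Alternatively, since $\globe_1 \otimes \globe_1$, $\globe_2$ and $\globe_1$ are all gaunt, one can compute the pushout strictly: gauntifying $\globe_1 \otimes \globe_1$ after forcing the filling $2$-cell to be an identity makes the two composite $1$-morphisms equal (as $\globe_1$ is univalent), producing exactly the strictly commutative square $\globe_1 \times \globe_1$. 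The delicate point is that the $\infty$-categorical pushout agrees with this strict/gaunt pushout — equivalently, that no further higher cells need to be freely adjoined when collapsing the filling $2$-cell.

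A conceptually pleasant alternative that sidesteps the pushout altogether is to use the Cartesian adjunction $\Map(\globe_1 \times \globe_1, \cC) \simeq \Map(\globe_1, \Fun(\globe_1, \cC))$ together with a direct identification of $1$-morphisms in $\Fun(\globe_1, \cC)$ with the space of squares in $\cC$ commuting up to an invertible $2$-cell (plus higher coherences); granting this identification, the claim is immediate. Either approach isolates the same essential input: the identification of ``commutative square'' with ``lax square whose filler is invertible'' under univalence.
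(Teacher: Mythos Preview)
Your strategy is exactly the paper's: exhibit $\globe_1 \times \globe_1$ as the pushout $(\globe_1 \otimes \globe_1) \sqcup_{\globe_2} \globe_1$, apply $\Map(-,\cC)$ to obtain a pullback, and use that the degeneracy $\globe_2 \to \globe_1$ induces the full subspace inclusion of invertible $2$-cells. Your deduction of the lemma from the pushout is correct and matches the paper verbatim.

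However, your argument for the pushout itself contains a genuine error. Density of $\fancysquare$ (or $\Theta$) in $\Cat_{(\infty,\infty)}$ means that the restricted Yoneda embedding is fully faithful, so an object $X$ is determined by maps \emph{from} objects of $\fancysquare$ \emph{into} $X$ --- not by maps out of $X$. Thus, matching $\Map(P, \globe_1^{\otimes n}) \simeq \Map(\globe_1 \times \globe_1, \globe_1^{\otimes n})$ for all $n$ does \emph{not} show that the comparison map $P \to \globe_1 \times \globe_1$ is an equivalence; you would instead need to match $\Map(\theta, P) \to \Map(\theta, \globe_1 \times \globe_1)$ for all $\theta \in \Theta$, which requires direct control over the $\infty$-categorical pushout $P$. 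Your two alternative approaches correctly flag exactly this as ``the delicate point'' but do not resolve it: the gaunt computation presupposes that the $\infty$-pushout is gaunt, and the Cartesian-adjunction route presupposes the identification of $1$-morphisms in $\Fun(\globe_1,\cC)$ with invertibly-filled squares, which is essentially the statement to be proved.

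The paper does not prove the pushout from scratch either: it simply cites \cite[Lem.~2.4.2]{2404.03971} for the pushout in $\Cat_{(\infty,2)}$, and then observes that the (left-adjoint) inclusion $\Cat_{(\infty,2)} \hookrightarrow \Cat_{(\infty,\infty)}$ preserves it.
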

\begin{proof}
The diagram 
\[ \begin{tikzcd}
\globe_2 \arrow[r]  \arrow[d]& {\globe_1}\arrow[d]\\ 
{\globe_1} \otimes{ \globe_1 }\arrow[r] & {\globe_1} \times {\globe_1}
\end{tikzcd}
\]
is a pushout in $\Cat_{(\infty,2)}$ by~\cite[Lem.~2.4.2]{2404.03971} and hence in $\Cat_{(\infty, \infty)}$. In particular, $\Map(\globe_1 \times \globe_1, \cC)$ is the pullback $\Map(\globe_1 \otimes \globe_1, \cC) \times_{\Map(\globe_2, \cC)}\Map(\globe_1, \cC)$, i.e.\ the full subspace of $\Map(\globe_1\otimes \globe_1,\cC)$ whose filling $2$-cell is invertible. \end{proof}

Notice that we may build $\theta_2$ by collapsing the vertical $1$-cells in $\theta_1 \otimes \theta_1$. The following key technical lemma generalizes this fact to arbitrary $(\infty, \infty)$-categories:

\begin{lemma}[{\cite[Lem.~3.8]{2311.00205}}]\label{lemma:sigmaaspushout}
  The categorical suspension functor $\Sigma: \Cat_{(\infty, \infty)} \to \Cat_{(\infty, \infty)}^{**}$  agrees with the functor sending an $(\infty, \infty)$-category $\cC$ to the pushout
  \begin{equation}\label{eq:pushoutsigma}
   \colim \left( \begin{tikzcd}
\cC \sqcup \cC \simeq \cC \otimes \partial\globe_1 \arrow[r]  \arrow[d]& \cC \otimes \globe_1 \\
\{0\} \sqcup \{1\} \simeq * \otimes \partial \globe_1  & 
\end{tikzcd} \right) 
\end{equation}
with pointing by $* \sqcup * =\partial \globe_1$ given by the colimit inclusion.
\end{lemma}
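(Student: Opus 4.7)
The plan is to show that both sides of the claimed identity define cocontinuous functors $\Cat_{(\infty,\infty)} \to \Cat_{(\infty,\infty)}^{**}$ that agree on the dense subcategory $\Theta$ of pasting diagrams.

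First I would check cocontinuity of each side in the variable $\cC$. The functor $\Sigma$ was defined in \eqref{eq:sigmainfinity} as the unique cocontinuous extension of $\Sigma\colon \Theta \to \Theta^{**}$. The right-hand side, viewed as $\cC \mapsto (* \sqcup *) \sqcup_{\cC \sqcup \cC} (\cC \otimes \globe_1)$, is built from the cocontinuous functors $\cC \mapsto \cC \otimes \globe_1$ (cocontinuous in $\cC$ by the construction of $\otimes$) and $\cC \mapsto \cC \otimes \partial\globe_1 \simeq \cC \sqcup \cC$, the natural transformation between them induced by $\partial\globe_1 \hookrightarrow \globe_1$, and a pushout against the collapse $\cC \otimes \partial\globe_1 \to * \otimes \partial\globe_1$, all of which are cocontinuous in $\cC$. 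On each side the canonical basepoint $* \sqcup * \to F(\cC)$ is natural, so this upgrades to cocontinuous functors landing in $\Cat_{(\infty,\infty)}^{**}$.

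By density of $\Theta \subset \Cat_{(\infty,\infty)}$, the restriction functor from cocontinuous functors $\Cat_{(\infty,\infty)} \to \Cat_{(\infty,\infty)}^{**}$ to functors $\Theta \to \Cat_{(\infty,\infty)}^{**}$ is fully faithful, and since the walking cells $\globe_n$ generate $\Theta$ under colimits, it suffices to exhibit a natural equivalence on $\cC = \globe_n$. For these, the object $\globe_n \otimes \globe_1$ lies in the monoidal closure of $\Theta$ inside $\fancysquare$, so by Warning~\ref{warn:GrayGaunt} the $\infty$-categorical Gray product coincides with the strict product $\globe_n \otimes^{\strict} \globe_1$. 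This reduces the question to a computation in the $1$-category $\Gaunt$ of gaunt $\omega$-categories.

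At the strict level, $\globe_n \otimes^{\strict} \globe_1$ is the ``globular cylinder'' on $\globe_n$, containing two parallel copies $\globe_n \otimes \{0\}$ and $\globe_n \otimes \{1\}$ together with the top $(n{+}1)$-cell $\id_{\globe_n} \otimes b$ (where $b$ denotes the nontrivial $1$-cell of $\globe_1$) and the boundary cells forced by the ``$\cA$-wins'' rule from \S\ref{subsec:otimesstrict}. Collapsing each copy of $\globe_n$ to a point identifies every cell of $\globe_n \otimes \{i\}$ with the basepoint $\{i\}$, and what remains is precisely an $(n{+}1)$-cell between two distinct points, i.e.\ $\globe_{n+1} = \Sigma\globe_n$, with basepoint matching Definition~\ref{defn:suspension}. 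The main obstacle is the combinatorial bookkeeping in this last step: one must verify that after collapse no extraneous cells or unexpected identifications remain beyond those of $\globe_{n+1}$. This is handled by a routine induction on $n$ using the globular source-target combinatorics of the Gray product, realizing the slogan that ``suspension is cylinder modulo its ends.''
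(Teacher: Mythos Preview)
Your overall strategy matches the paper's: both functors are cocontinuous, so by density of $\Theta$ it suffices to check on $\Theta$, where the claim becomes a strict computation. The paper then cites \cite[Lem.~3.8]{2311.00205} for the key step and \cite[Cor.~B.6.6]{MR4146146} for the strict computation, while you attempt to do these by hand.

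There is, however, a genuine gap in your reduction to the strict world. Knowing that $\globe_n \otimes \globe_1$ agrees with $\globe_n \otimes^{\strict} \globe_1$ (and hence is gaunt) only tells you that the \emph{inputs} to the pushout diagram are gaunt. It does not follow that the pushout itself, computed in $\Cat_{(\infty,\infty)}$, is gaunt or agrees with the pushout computed in $\Gaunt$: the inclusion $\Gaunt \hookrightarrow \Cat_{(\infty,\infty)}$ does not preserve colimits in general. This is precisely the content of Campion's \cite[Lem.~3.8]{2311.00205}, which the paper cites and which requires its own argument. Your sentence ``This reduces the question to a computation in the $1$-category $\Gaunt$'' is therefore unjustified as stated. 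A secondary, more minor issue: the reduction from $\Theta$ to the globes $\globe_n$ is not cleanly argued---density gives fully faithful restriction to $\Theta$, but the globes are only colimit generators, not dense, so you would need to check naturality on all of $\Theta$ (which your strict argument could handle once the main gap is fixed).
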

\begin{proof}Since both $\Sigma$ as well as the functor constructed from the pushout and $\otimes$ are cocontinuous in $\cC \in \Cat_{(\infty, \infty)}$, it suffices to check the claim on the dense subcategory $\Theta \subset \Cat_{(\infty,\infty)}$. For $\cC \in \Theta$, it is shown in~\cite[Lem.~3.8]{2311.00205} that the pushout~\eqref{eq:pushoutsigma} computed in $\Cat_{(\infty, \infty)}$ agrees with the pushout computed in $\Gaunt$, where it agrees with the categorical suspension, e.g.\ by~\cite[Cor.~B.6.6]{MR4146146}. 
\end{proof}

We end this section by recording a few technical results about pushouts like the one in Lemma~\ref{lemma:sigmaaspushout}. 

\begin{prop}\label{prop:trickypushout}
For all $n \geq 1$, the following is a pushout in $\Cat_{(\infty, n)}$: 
\begin{equation} \label{eq:pushoutcn}
\begin{tikzcd}
\globe_n \sqcup \globe_n \simeq \globe_{n} \otimes \partial\globe_1 \arrow[r]  \arrow[d]& L_n(\globe_n \otimes \globe_1) \arrow[d]\\
\globe_{n-1} \sqcup \globe_{n-1} \simeq \globe_{n-1} \otimes \partial \globe_1 \arrow[r] & \globe_{n-1} \otimes \globe_1
\end{tikzcd}
\end{equation}
\end{prop}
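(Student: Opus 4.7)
The plan is to argue by induction on $n$, with the base case $n=1$ handled by a direct computation using Lemma~\ref{lem:laxvstrongsquares} and the inductive step reduced, via the lax tensor--hom adjunction, to a universal property of the lax functor category $\Funoplax(\globe_1,-)$.

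\textbf{Base case $n=1$.} By Lemma~\ref{lem:laxvstrongsquares}, $L_1(\globe_1\otimes\globe_1)$ is the walking strongly commuting square $\globe_1\times\globe_1$, since $L_1$ inverts the unique noninvertible $2$-cell of $\globe_1\otimes\globe_1$ and any $(\infty,1)$-category sees only invertible higher data. The pushout in $\Cat_{(\infty,1)}$ of $\globe_1\times\globe_1 \leftarrow \globe_1\sqcup\globe_1 \to *\sqcup*$ (where the left map picks out the two horizontal edges of the square) collapses those horizontal edges to points; the commutativity of the square then identifies the two remaining parallel vertical edges with the diagonal, yielding $\globe_1 \simeq \globe_0\otimes\globe_1$, as required.

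\textbf{Inductive step.} For $n\geq 2$, the universal property of the pushout furnishes a canonical comparison map $\phi_n : P_n \to \globe_{n-1}\otimes\globe_1$, where $P_n$ denotes the proposed pushout. Its definition uses the degeneracy-tensor map $L_n((\globe_n\to\globe_{n-1})\otimes\globe_1)$ and the boundary inclusion $\globe_{n-1}\otimes\partial\globe_1 \hookrightarrow \globe_{n-1}\otimes\globe_1$. To show $\phi_n$ is an equivalence, I would test against mapping spaces into an arbitrary $(\infty,n)$-category $\cC$. Using the lax-hom adjunction $\Map(X\otimes\globe_1,\cC) \simeq \Map(X,\cF)$ where $\cF := \Funoplax(\globe_1,\cC)$ --- which is again $(\infty,n)$ since $\cC$ is --- together with the identity $\Map(L_n(-),\cC) \simeq \Map(-,\cC)$ for $\cC \in \Cat_{(\infty,n)}$, the required equivalence becomes the assertion that
\[
\Map(\globe_{n-1},\cF) \;\simeq\; \Map(\globe_n,\cF)\times_{\Map(\globe_n,\cC)^2}\Map(\globe_{n-1},\cC)^2,
\]
where the maps to $\cC^2$ come from evaluation at the two endpoints of $\globe_1$. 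Equivalently, the canonical functor $\cF \to \cC\times\cC$ must reflect identity $n$-morphisms: an $n$-cell of $\cF$ is an identity on an $(n-1)$-cell if and only if both of its evaluations at the endpoints of $\globe_1$ are identity $n$-cells in $\cC$.

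\textbf{Main obstacle.} This identity-reflection property is the principal non-formal step, and I expect it to be the crux of the proof. The intuition is that an $n$-cell of $\cF$ amounts to a lax $n$-square in $\cC$, i.e.\ a pair of parallel $n$-cells $(\rho_0,\rho_1)$ in $\cC$ together with an (invertible, since $\cC$ is $(\infty,n)$) $(n+1)$-cell filler between the two composite $n$-cells built from the sides of the square; when $\rho_0 = \id_{\tau_0}$ and $\rho_1 = \id_{\tau_1}$ are identities, the two composites degenerate to the two vertical composites and the filler collapses to an ordinary $n$-cell, producing a lax $(n-1)$-square with fillings $(\tau_0,\tau_1)$. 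Making this combinatorial picture precise --- either by direct analysis of $\globe_n\otimes\globe_1$ as a gaunt pasting diagram (cf.\ Warning~\ref{warn:GrayGaunt}, using density of $\Theta$) or by invoking Lemma~\ref{lemma:sigmaaspushout} to decompose $\globe_n\otimes\globe_1 = (\Sigma\globe_{n-1})\otimes\globe_1$ via cocontinuity of $\otimes$ in its first variable and applying the inductive hypothesis --- is where the main technical work lies.
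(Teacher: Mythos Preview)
Your reformulation is correct but essentially tautological. Testing the pushout on mapping spaces into an arbitrary $(\infty,n)$-category $\cC$ and curry-ing along the lax-hom adjunction does rewrite the statement as conservativity on $n$-morphisms of $\Funoplax(\globe_1,\cC) \to \cC\times\cC$. The paper itself records this equivalence (in the proof of Lemma~\ref{lem:takinghomloosescategorylevel}), but in the other direction: it deduces the conservativity \emph{from} Proposition~\ref{prop:trickypushout}. You have not made progress on the proposition; you have restated it, and you acknowledge as much in your ``main obstacle'' paragraph.

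The paper's proof is also by induction, but carries out the inductive step concretely. The key tool you are missing is Campion's pushout formula \cite[Lem.~3.6]{2311.00205} expressing $(\Sigma\cC)\otimes\globe_1$ in terms of $\Sigma(\cC\otimes\globe_1)$ and the auxiliary ``funny'' construction $\Sigma\cC\otimes^{\mathrm{funny}}\globe_1$ of~\eqref{eq:funnytensor}. Writing $\globe_{n+1}=\Sigma\globe_n$ and applying this formula twice (once for $\globe_{n+1}\otimes\globe_1$ and once for $\Adj$-free analogues), the paper assembles a commutative diagram of squares in which one square is the $\Sigma$ of the inductive hypothesis and the others are pushouts by Campion's lemma or by Lemma~\ref{lemma:sigmaaspushout}; pasting gives the desired pushout for $n+1$. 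Your second suggestion --- decompose $\globe_n\otimes\globe_1$ via Lemma~\ref{lemma:sigmaaspushout} and cocontinuity --- would produce $\globe_{n-1}\otimes\globe_1\otimes\globe_1$, which increases rather than decreases complexity; the funny-tensor formula is exactly what lets one peel off one $\Sigma$ while keeping only a single $\otimes\globe_1$.

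Your base case is correct but more laborious than needed: rather than identifying $L_1(\globe_1\otimes\globe_1)\simeq\globe_1\times\globe_1$ via Lemma~\ref{lem:laxvstrongsquares} and computing the collapse by hand, the paper simply applies the cocontinuous functor $L_1$ to Lemma~\ref{lemma:sigmaaspushout} with $\cC=\globe_1$, which gives $L_1\globe_2=\globe_1$ as the required pushout immediately.
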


\begin{proof}
We prove the statement by induction. For $n=1$, the statement follows by applying the cocontinuous functor $L_1: \Cat_{(\infty, \infty)} \to \Cat_{(\infty, 1)}$ to Lemma~\ref{lemma:sigmaaspushout} with $\cC = \globe_1$.

Suppose now that we have already proven that~\eqref{eq:pushoutcn} is a pushout for some $n\geq 1$. Consider the following commuting diagram:
\[
\begin{tikzcd}[column sep = small]
& {\Sigma \globe_{n-1} \otimes \partial\globe_1} \arrow[r] \arrow[d, ""{name=D11}] & {\Sigma \globe_{n-1} \otimes^{\mathrm{funny}} \globe_1} \arrow[d, ""{name=D12}] \\
&{ \Sigma \globe_n \otimes \partial \globe_1} \arrow[r] \arrow[d, ""{name=D21}]& {\Sigma \globe_n \otimes^{\mathrm{funny}} \globe_1} \arrow[d, ""{name=D22}] \\
 {\Sigma \globe_n \otimes \partial \globe_1 \simeq \Sigma(\globe_n \otimes \partial \globe_1) } \arrow[r] \arrow[d, ""{name=X1}] &{ \Sigma L_n (\globe_n \otimes \globe_1) \simeq L_{n+1} (\Sigma(\globe_n \otimes \globe_1))} \arrow[r] \arrow[d, ""{name=X2}] & {L_{n+1}((\Sigma \globe_n) \otimes \globe_1) }\arrow[d, ""{name=X3}] \\
 {\Sigma \globe_{n-1} \otimes \partial \globe_1 \simeq \Sigma(\globe_{n-1} \otimes \partial \globe_1)} \arrow[r] &{ \Sigma(\globe_{n-1} \otimes \globe_1)} \arrow[r] &{(\Sigma \globe_{n-1}) \otimes \globe_1 }
 \arrow[phantom, from =D11, to =D12, "(A)"]
  \arrow[phantom, from =D21, to =D22, "(B)"]
    \arrow[phantom, from =X1, to =X2, "(C)"]
    \arrow[phantom, from =X2, to =X3, "(D)"]
\end{tikzcd}
\]
Here, 
for $\cC \in \Cat_{(\infty, n)}$, we follow~\cite[Def.~3.7]{2311.00205} and write 
\begin{equation}\label{eq:funnytensor}
\Sigma \cC \otimes^{\mathrm{funny}}\globe_1 := (\Sigma\cC \cup_{*} \globe_1) \cup_{ \partial \globe_1} (\globe_1\cup_{*} \Sigma \cC) = \colim\left(\begin{tikzcd}
 {\globe_1}  &* \arrow[l, "1"'] \arrow[r, "0"]& \Sigma \cC \\
* \arrow[u, "0"]  \arrow[d, "0"']& &* \arrow[u, "1"'] \arrow[d, "1"]\\
 \Sigma \cC &* \arrow[l, "1"'] \arrow[r, "0"]& {\globe_1} 
\end{tikzcd}.\right)
\end{equation}
 The vertical functors in square $(A)$ are induced by the boundary inclusions $\globe_{n-1} \to \globe_n$ given by $\Sigma^{n-1}(\{0\} \hookrightarrow \globe_1 )$. 
The square $(C)$ is a  pushout diagram since it arises by applying $\Sigma$  to the pushout diagram~\eqref{eq:pushoutcn}. The square $(A)$ is a pushout using the definition~\eqref{eq:funnytensor} and the fact that colimits commute with colimits. The big square $(ABD)$ and the square $(B)$ are pushout squares by~\cite[Lem.~3.6]{2311.00205}, respectively its $(n+1)$-localization. Hence, the glued square $(AB)$ is a pushout, thus $(D)$ is a pushout and therefore $(CD)$ is a pushout, proving the statement for $n+1$.
\end{proof}

\begin{lemma} \label{lem:Sigmapreserves} Let $0 \leq n \leq \infty$. If $\cA \to \cB$ and $\cX \to \cY$ are functors of $(\infty, n)$-categories so that
\begin{equation}\label{eq:assumed-pushout}\begin{tikzcd}
L_n(\cA \otimes \cX) \arrow[r] \arrow[d] & L_n(\cA\otimes \cY)\arrow[d]\\
L_n(\cB \otimes \cX) \arrow[r] & L_n(\cB \otimes \cY)
\end{tikzcd}
\end{equation}
is a pushout square in $\Cat_{(\infty, n)}$, then so is 
\[\begin{tikzcd}
L_n(\cA \otimes  \Sigma \cX) \arrow[r] \arrow[d] & L_n( \cA\otimes  \Sigma \cY )\arrow[d]\\
L_n(\cB \otimes  \Sigma \cX ) \arrow[r] & L_n(\cB \otimes  \Sigma \cY)
\end{tikzcd}.
\]
\end{lemma}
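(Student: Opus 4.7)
The plan is to decompose each of the four corners of the target square using Lemma~\ref{lemma:sigmaaspushout}, interchange colimits to reduce to a pointwise check over the defining span of the suspension, and then handle the single nontrivial pointwise case via the fact that $-\otimes\globe_1$ preserves $L_n$-equivalences.

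Concretely, I would first apply Lemma~\ref{lemma:sigmaaspushout} to write $\Sigma\cX$ and $\Sigma\cY$ as pushouts in $\Cat_{(\infty,\infty)}$ over the span-shape $D = (\bullet\leftarrow\bullet\to\bullet)$ with apex $\cX\sqcup\cX$ (resp.\ $\cY\sqcup\cY$) and legs $*\sqcup*$ and $\cX\otimes\globe_1$ (resp.\ $\cY\otimes\globe_1$). Since $L_n$, $\cA\otimes-$, and $\cB\otimes-$ are each cocontinuous, every corner of the target square is a colimit of a $D$-shaped diagram in $\Cat_{(\infty,n)}$. Interchanging colimits then reduces the pushout-square property to the pointwise claim that at each of the three vertices of $D$ the corresponding square in $\Cat_{(\infty,n)}$ is itself a pushout.

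At the vertex $*\sqcup*$ all horizontal maps are identities, so the square is trivially a pushout; at the apex vertex the square is the disjoint union of two copies of the hypothesis square, again a pushout. The remaining vertex corresponds to $\cX\otimes\globe_1\to\cY\otimes\globe_1$, where the square to verify is exactly the hypothesis with $\cX$ and $\cY$ replaced by $\cX\otimes\globe_1$ and $\cY\otimes\globe_1$. Reformulating the hypothesis as the statement that the natural comparison $\cA\otimes\cY\sqcup_{\cA\otimes\cX}\cB\otimes\cX\to\cB\otimes\cY$ in $\Cat_{(\infty,\infty)}$ is an $L_n$-equivalence, and using cocontinuity of $-\otimes\globe_1$ to commute this pushout past the tensoring, reduces this final pointwise claim to the assertion that $-\otimes\globe_1\colon\Cat_{(\infty,\infty)}\to\Cat_{(\infty,\infty)}$ preserves $L_n$-equivalences.

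The hardest step is this preservation claim. Since the class of $L_n$-equivalences is strongly saturated and generated by the degeneracies $\globe_{m+1}\to\globe_m$ for $m>n$, and since $-\otimes\globe_1$ is cocontinuous, it suffices to show that $\globe_{m+1}\otimes\globe_1\to\globe_m\otimes\globe_1$ is an $L_n$-equivalence for every $m>n$. By Warning~\ref{warn:GrayGaunt} these lax tensor products agree with the strict lax products (both sides lie in the monoidal closure of $\Theta$), so the cells can be enumerated explicitly: the cells distinguishing $\globe_{m+1}\otimes\globe_1$ from $\globe_m\otimes\globe_1$ all involve the top $(m+1)$-cell of $\globe_{m+1}$ and therefore have dimension $\geq m+1>n$, so all are inverted by $L_n$. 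A direct combinatorial verification --- or, more structurally, an induction on $m$ using Proposition~\ref{prop:trickypushout} to rewrite $L_n(\globe_m\otimes\globe_1)$ in terms of $L_n(\globe_{m-1}\otimes\globe_1)$ and cells of dimension $\leq n$ --- then confirms that $L_n(\globe_{m+1}\otimes\globe_1)\to L_n(\globe_m\otimes\globe_1)$ is an equivalence, completing the argument.
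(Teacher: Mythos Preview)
Your overall architecture matches the paper's: both decompose each corner via Lemma~\ref{lemma:sigmaaspushout} and interchange colimits, reducing to a pointwise check over the span defining suspension. The only substantive difference is at the vertex $\cX\otimes\globe_1\to\cY\otimes\globe_1$. The paper handles this by invoking Campion's result (\cite[Cor.~3.15]{2311.00205}) that $L_n$ is monoidal, so that $\otimes_n := L_n(-\otimes-)$ is a biclosed monoidal structure on $\Cat_{(\infty,n)}$; the hypothesis then becomes a pushout identity in $\Cat_{(\infty,n)}$ that can simply be $\otimes_n$-tensored with $\globe_1$ and $\partial\globe_1$. Your route---reducing to the claim that $-\otimes\globe_1$ preserves $L_n$-equivalences and then checking this on the generating degeneracies via Proposition~\ref{prop:trickypushout}---is a legitimate alternative that stays internal to the paper's own earlier results: applying that proposition with its index set to $m+1$ and then applying $L_n$ (using $m>n$, so that $L_n\globe_{m+1}\to L_n\globe_m$ is the identity on $\globe_n$) immediately shows $L_n(\globe_{m+1}\otimes\globe_1)\to L_n(\globe_m\otimes\globe_1)$ is an equivalence; no induction on $m$ is actually needed. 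One caution: your ``direct combinatorial verification'' option is not a proof---observing that the extra cells have dimension $>n$ does not by itself show that the two $L_n$-localizations agree. Drop that option and keep the Proposition~\ref{prop:trickypushout} argument.
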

\begin{proof}
  For $(\infty, n)$-categories $\cC, \cD$, let $\cC \otimes_n \cD := L_n (\cC \otimes \cD)$. By~\cite[Cor.~3.15]{2311.00205}, $\otimes_n$ is a biclosed monoidal structure on $\Cat_{(\infty,n)}$, and $L_n : \Cat_{(\infty, \infty)} \to \Cat_{(\infty, n)}$ is monoidal. Applying $L_n$ to the pushout formula in Lemma~\ref{lemma:sigmaaspushout} for $\Sigma$ gives:
  \begin{equation}\label{eq:sigmaformula}L_n(\Sigma \cX)= \cX \otimes_n \globe_1 \underset{\cX \otimes_n \partial \globe_1}\cup \partial \globe_1.\end{equation}
Commuting colimits past one another, we find that the pushout 
\[
\colim\left(
\begin{tikzcd}
L_n(\cA \otimes  \Sigma \cX) \arrow[r] \arrow[d] & L_n( \cA\otimes  \Sigma \cY )\\
L_n(\cB \otimes  \Sigma \cX )  & 
\end{tikzcd}\right)
\]
is also computed by
\[ \simeq
\colim\left(
\begin{tikzcd}[sep=small]
(\cA \otimes_n \cY \otimes_n \partial \globe_1) \underset{(\cA \otimes_n \cX \otimes_n \partial \globe_1)}\cup (\cB \otimes_n \cX \otimes_n \partial \globe_1) \arrow[r] \arrow[d] &
(\cA \otimes_n \partial \globe_1) \underset{\cA \otimes_n \partial \globe_1}\cup \cB \otimes_n \partial \globe_1 \\
(\cA \otimes_n \cY \otimes_n \globe_1) \underset{(\cA \otimes_n \cX \otimes_n \globe_1)}\cup (\cB \otimes_n \cX \otimes_n \globe_1) &
\end{tikzcd}\right).
\]
But $\otimes_n$ preserves colimits in each variable, and since~\eqref{eq:assumed-pushout} is a pushout,  this cospan is equivalent to 
\[
\simeq 
\colim\left(
\begin{tikzcd}
\cB \otimes_n \cY \otimes_n \partial \globe_1 \arrow[r] \arrow[d] &  \cB \otimes_n \partial \globe_1 \\
\cB \otimes_n \cY \otimes_n \globe_1   & 
\end{tikzcd}\right) 
\]
whose colimit is $\cB \otimes_n L_n \Sigma \cY$ by~\eqref{eq:sigmaformula}. 
\end{proof}

\subsection{(Op)lax natural transformations}\label{subsec:oplax}
An $\infty$-category~$\cA$ is \define{presentably monoidal} if~$\cA$ is presentable (in the sense of \cite[Def.~5.5.0.1]{HTT}) and equipped with a monoidal structure $\otimes$ which preserves small colimits in each variable. The $\infty$-categorical adjoint functor theorem~\cite[Cor.~5.5.2.9]{HTT} implies that any presentably monoidal category is \define{biclosed}: for every $A \in \cA$, the endofunctors $A \otimes -$ and $-\otimes A$ of $\cA$ admit right adjoints, called the  \define{internal homs} associated to $\otimes$. Conversely, a biclosed monoidal structure on a presentable category is presentably monoidal.  Here, we will focus on the presentable category $\cA= \Cat_{(\infty, \infty)}$ with its lax tensor product $\otimes$ which by construction preserves colimits in each variable and hence is biclosed. 

\begin{definition}\label{defn:funoplax}
 The internal homs of the biclosed monoidal category $\Cat_{(\infty, \infty)}$ equipped with the lax tensor product are called: 
 \begin{itemize}
  \item $\Funlax(\cC,-)$ is the right adjoint of $ \cC \otimes -: \Cat_{(\infty, \infty)} \to \Cat_{(\infty, \infty)}$.
  \item $\Funoplax(\cC,-)$ is the right adjoint of $ - \otimes \cC: \Cat_{(\infty, \infty)} \to \Cat_{(\infty, \infty)}$.
\end{itemize}
\end{definition}

Being internal homs, $\Funlax(\cC,-)$ and $\Funoplax(\cC,-)$ are contravariantly functorial in $\cC$. For comparison, we remind that there is a third, this time symmetric, closed monoidal structure on $\Cat_{(\infty,\infty)}$, namely its Cartesian product $\times$. Its corresponding internal hom is:
\begin{itemize}
  \item $\Fun(\cC, -) = \Funstrong(\cC,-)$ is the right adjoint of $ \cC \times - \simeq - \times \cC: \Cat_{(\infty, \infty)} \to \Cat_{(\infty, \infty)}$.
\end{itemize}
Note that the terminal category $*$ is the monoidal unit for both $\times$ and $\otimes$. It follows that $\Funlax(\cC,\cD)$, $\Funoplax(\cC,\cD)$, and $\Funstrong(\cC,\cD)$ all have the same space of objects:
\begin{equation}
  \ob \bigl( \Funlax(\cC,\cD)\bigr) \simeq \ob \bigl( \Funoplax(\cC,\cD) \bigr) \simeq \ob \bigl( \Funstrong(\cC,\cD) \bigr) \simeq \Map(\cC,\cD)
\end{equation}
The (higher) morphisms in $\Funlax(\cC,\cD)$, $\Funoplax(\cC,\cD)$, and $\Funstrong(\cC,\cD)$ are, by definition, the \emph{lax/oplax/strong} natural transformations (and higher transfors) between functors. 
Informally, given functors $F, G: \cC \to \cD$, a lax/oplax/strong natural transformation amounts to the data of a $1$-morphism $Fc \to Gc$ for every $c\in \cC$, together with for every $1$-morphism $f:c \to c'$ in $\cC$ a $2$-morphism
\[
\arraycolsep=20pt
\begin{array}{ccc}
\begin{tikzcd}[sep=2em]
Fc\arrow[r] \arrow[d] &\arrow[d] Fc' \\
Gc \arrow[r] \arrow[Rightarrow, ur,  shorten <=3pt, shorten >=3pt] & Gc'
\end{tikzcd}
&
\begin{tikzcd}[sep=2em]
Fc\arrow[r] \arrow[d] &\arrow[d] Fc' \\
Gc \arrow[r] \arrow[Leftarrow, ur,  shorten <=3pt, shorten >=3pt] & Gc'
\end{tikzcd}
&
\begin{tikzcd}[sep=2em]
Fc\arrow[r] \arrow[d] &\arrow[d] Fc' \\
Gc \arrow[r] \arrow[phantom, "\simeq" {rotate=45}, ur,  shorten <=3pt, shorten >=3pt] & Gc'
\end{tikzcd}
\\
\text{lax}
&
\text{oplax}
& \text{strong}
\end{array}
\]
and corresponding higher morphisms for higher morphisms in $\cC$.

\begin{example}\label{exm:funlaxone}
For an $(\infty, \infty)$-category $\cC$, we explicitly unpack the first few layers of morphisms of $\Funlax(\globe_1, \cC)$ and $\Funoplax(\globe_1, \cC)$:
\begin{itemize}
\item The spaces of objects of $\Funlax(\globe_1, \cC)$ and of $\Funoplax(\globe_1, \cC)$ are given by 
\begin{gather*}
\Map(*, \Funlax(\globe_1, \cC)) \simeq \Map(\globe_1 \otimes *, \cC) \simeq \Map(\globe_1, \cC)
\\
\Map(*, \Funoplax(\globe_1, \cC)) \simeq \Map(* \otimes \globe_1, \cC) \simeq \Map(\globe_1, \cC)
\end{gather*}
In other words, an object of either $\Funlax(\globe_1, \cC)$ or  $\Funoplax(\globe_1, \cC)$ is precisely a $1$-morphism $f:a \to b$ in $\cC$.
\item The space of $1$-morphisms in $\Funlax(\globe_1, \cC)$ is given by 
\[\Map(\globe_1, \Funlax(\globe_1, \cC)) \simeq \Map(\globe_1 \otimes \globe_1, \cC) \]
with source and target restrictions induced by pre-composition with the inclusions $\globe_1 \otimes \{0\} \hookrightarrow \globe_1 \otimes \globe_1$ and $\globe_1 \otimes \{1\} \hookrightarrow \globe_1 \otimes \globe_1$.  

The space of $1$-morphisms in $\Funoplax(\globe_1, \cC)$ is given by 
\[\Map(\globe_1, \Funoplax(\globe_1, \cC)) \simeq \Map(\globe_1 \otimes \globe_1, \cC) \]
with source and target restrictions induced by pre-composition with the inclusions $\{0\} \otimes \globe_1 \hookrightarrow \globe_1 \otimes \globe_1$ and $\{1\} \otimes \globe_1 \hookrightarrow \globe_1 \otimes \globe_1$. 

Informally, given  $1$-morphisms $s$ and $t$ in $\cC$, considered as objects of $\Funlax(\globe_1, \cC)$ or $\Funoplax(\globe_1, \cC)$, respectively, $1$-morphisms $\eta:s \to t$ in $\Funlax(\globe_1, \cC)$ and $\mu:s \to t$ in $\Funoplax(\globe_1, \cC)$ are given by diagrams of the following shapes in $\cC$:
\[\begin{tikzcd}[sep=3em]
\bullet \arrow[r, "s"] \arrow[d, "\eta_0"'] &\arrow[d, "\eta_1"] \bullet \\
\bullet \arrow[r, "t"'] \arrow[Rightarrow, ur,"\eta_{<}" description] & \bullet
\end{tikzcd}
\hspace{1.5cm}
\begin{tikzcd}[sep=3em]
\bullet \arrow[r, "\mu_0"] \arrow[d, "s"'] &\arrow[d, "t"] \bullet \\
\bullet \arrow[r, "\mu_1"'] \arrow[Rightarrow, ur,"\mu_{<}" description] & \bullet
\end{tikzcd}
\]

It is worth emphasizing that the $1$-morphisms in both $\Funlax(\globe_1, \cC))$ and $\Funoplax(\globe_1, \cC))$ are the lax squares $\globe_1 \otimes \globe_1 \to \cC$. 
The difference is in the compositions, which are, respectively, the   \define{vertical} and \define{horizontal} compositions of squares.

\item  The spaces of $2$-morphisms of $\Funlax(\globe_1, \cC)$ and of $\Funoplax(\globe_1, \cC)$ are given by, respectively, 
\begin{gather*}
\Map(\globe_2, \Funlax(\globe_1, \cC)) \simeq \Map(\globe_1 \otimes \globe_2 , \cC) \\
\Map(\globe_2, \Funoplax(\globe_1, \cC)) \simeq \Map(\globe_2 \otimes \globe_1  , \cC)
\end{gather*}
In other words, a $2$-morphism in $\Funlax(\globe_1, \cC)$ between $1$-morphisms $\eta$ and $\eta'$, and a $2$-morphism in $\Funoplax(\globe_1, \cC)$ between $1$-morphisms $\mu$ and $\mu'$, may be  thought of as diagrams of the following shapes in $\cC$, respectively:
$$
\begin{tikzpicture}[baseline=(middle),scale=0.6]
  \path node[dot] (a) {} +(3,0) node[dot] (b) {} +(0,-3) node[dot] (c) {} +(3,-3) node[dot] (d) {} +(0,-1.5) coordinate (middle);
  \draw[arrow] (a) .. controls +(-.75,-1.5) and +(-.75,1.5) .. coordinate[near end] (lt) coordinate[very near end] (lt2) (c);
  \draw[arrow,thin] (a) .. controls +(.75,-1.5) and +(.75,1.5) .. coordinate[near start] (ls) coordinate (ls2) (c);
  \draw[twoarrowlonger] (ls) -- (lt);
  \draw[arrow] (b) .. controls +(.75,-1.5) and +(.75,1.5) .. coordinate[near start] (rs) coordinate[very near start] (rs0) (d);
  \draw[twoarrow] (ls2) -- coordinate[near end] (s) node[auto] {$\scriptstyle \eta$} (rs0);
  \draw[arrow] (b) .. controls +(-.75,-1.5) and +(-.75,1.5) .. coordinate[near end] (rt) coordinate (rt0) (d);
  \draw[twoarrowlonger,thick] (rs) -- (rt);
  \draw[arrow] (a) node[anchor = south east] {} -- node[auto] {$\scriptstyle s$} (b) node[anchor = south west] {};
  \draw[arrow] (c) node[anchor = north east] {} -- node[below] {$\scriptstyle t$} (d) node[anchor = north west] {};
  \path (lt2) -- coordinate[near start] (t) (rt0);
  \draw[threearrowpart1] (t) --  (s); 
  \draw[threearrowpart2] (t) -- (s); 
  \draw[threearrowpart3] (t) -- (s); 
  \draw[twoarrow,thick] (lt2) -- node[auto,swap] {$\scriptstyle \eta'$} (rt0);
\end{tikzpicture}
\quad \text{ and } \quad
\begin{tikzpicture}[baseline=(middle),scale=0.6]
  \path node[dot] (a) {} node[anchor= south east] {} +(3,0) node[dot] (b) {} node[anchor= south west] {} +(0,-3) node[dot] (c) {} node[anchor= north east] {} +(3,-3) node[dot] (d) {} node[anchor= north west] {}+(0,-1.5) coordinate (middle);
  \path (b) +(-.35,.25) coordinate (alpha1);
  \path (c) +(.35,-.25) coordinate (alpha2);
  \draw[arrow] (a) -- coordinate (l1) node[anchor=east] {$\scriptstyle s$} coordinate[very near end] (r1) (c);
  \draw[arrow] (b) -- coordinate[very near start] (l2) coordinate (r2) node[anchor=west] (rightlable) {$\scriptstyle t$} (d);
  \draw[twoarrow,thin] (l1) -- node[auto,pos=.4,inner sep=1pt] {$\scriptstyle \mu$} (l2);
  \draw[arrow] (a) .. controls +(1.5,.75) and +(-1.5,.75) .. coordinate (ss)  (b);
  \draw[arrow,thin] (c) .. controls +(1.5,.75) and +(-1.5,.75) .. coordinate (ts)  (d);
  \draw[arrow] (c) .. controls +(1.5,-.75) and +(-1.5,-.75) .. coordinate (tt) (d);
  \draw[twoarrowlonger,thin] (ts) --  (tt);
  \draw[threearrowpart1] (alpha1) --  (alpha2);
  \draw[threearrowpart2] (alpha1) --  (alpha2);
  \draw[threearrowpart3] (alpha1) --  (alpha2);
  \draw[arrow,thick] (a) .. controls +(1.5,-.75) and +(-1.5,-.75) .. coordinate (st) (b);
  \draw[twoarrowlonger,thick] (ss) --  (st);
  \draw[twoarrow,thick] (r1) -- node[auto,swap,pos=.6,inner sep=1pt] {$\scriptstyle \mu'$} (r2);
\end{tikzpicture} 
.$$
\end{itemize}
Considering more general pasting diagrams in $\Theta$ and their strict Gray tensor products shows that composition of $1$- and $2$-morphisms works as expected.  We do not provide a proof, but expect that for an $(\infty, n)$-category $\cC$, the $(\infty, n)$-categories $\Funlax(\globe_1, \cC)$ and $\Funoplax(\globe_1, \cC)$ agree, respectively, with the $(\infty,n)$-categories $\cC^{\rightarrow}$ and $\cC^{\downarrow}$ constructed in~\cite{JFS} in terms of $n$-fold Segal spaces. 
\end{example}

\begin{lemma}\label{lem:propertiesofFunoplax:compatwithcoop}
Let $\cC$ and $\cD$ be $(\infty, \infty)$-categories. 
 There are equivalences, natural in $\cC, \cD \in \Cat_{(\infty, \infty)}$:
\begin{align}
\label{lem:propertiesofFunoplax:compatwithcoop:tensor}
\cC^{\op} \otimes \cD^{\op} & \simeq (\cD \otimes \cC)^{\op} & \cC^{\co} \otimes \cD^{\co} & \simeq (\cD \otimes \cC)^{\co} \\
 \Funoplax(\cC, \cD)^{\op} & \simeq \Funlax(\cC^{\op}, \cD^{\op}) & \Funoplax(\cC, \cD)^{\co} &\simeq \Funlax(\cC^{\co},\cD^{\co})
 \label{lem:propertiesofFunoplax:compatwithcoop:fun}
\end{align}
\end{lemma}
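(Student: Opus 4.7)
The plan is to first establish the tensor equivalences~\eqref{lem:propertiesofFunoplax:compatwithcoop:tensor} by reducing to a classical strict-categorical identity, and then derive the internal-hom equivalences~\eqref{lem:propertiesofFunoplax:compatwithcoop:fun} formally by Yoneda.

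For the tensor equivalences, I observe that both sides of $\cC^{\op} \otimes \cD^{\op} \simeq (\cD \otimes \cC)^{\op}$ are cocontinuous in each variable separately: the lax tensor product is biclosed by construction (Section~\ref{subsec:otimesinfty}), and $(-)^{\op}$ is an autoequivalence of $\Cat_{(\infty,\infty)}$ by Definition~\ref{defn:opposites} and hence preserves all colimits. By density of $\fancysquare \subset \Cat_{(\infty,\infty)}$ (and hence of $\fancysquare \times \fancysquare \subset \Cat_{(\infty,\infty)} \times \Cat_{(\infty,\infty)}$), a natural equivalence between two cocontinuous-in-each-variable bifunctors is uniquely determined by its restriction to $\fancysquare \times \fancysquare$. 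On $\fancysquare$, the $\infty$-categorical lax tensor agrees on the nose with the strict Crans--Gray product $\otimes^{\strict}$ by Campion's construction, and $(-)^{\op}$ restricts to the classical odd-morphism-reversal on gaunt $\omega$-categories. The desired natural equivalence therefore reduces to the classical strict-categorical identity $(C \otimes^{\strict} D)^{\op} \simeq D^{\op} \otimes^{\strict} C^{\op}$, which is visible from the cellular definition of $\otimes^{\strict}$ reviewed in~\S\ref{subsec:otimesstrict}: the ``$\cA$-wins'' directionality for cells $a \otimes b$ becomes ``$\cB$-wins'' after simultaneously reversing odd-dimensional directions and swapping the order of factors. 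The $\co$-case is entirely analogous, substituting reversal of even-dimensional morphisms.

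For the hom equivalences, I chase the Yoneda lemma. For any $\cX \in \Cat_{(\infty,\infty)}$, the fact that $(-)^{\op}$ is an autoequivalence, together with the defining adjunctions of $\Funoplax$ and $\Funlax$ (Definition~\ref{defn:funoplax}), gives
\begin{align*}
\Map(\cX, \Funoplax(\cC, \cD)^{\op}) &\simeq \Map(\cX^{\op}, \Funoplax(\cC, \cD)) \simeq \Map(\cX^{\op} \otimes \cC, \cD), \\
\Map(\cX, \Funlax(\cC^{\op}, \cD^{\op})) &\simeq \Map(\cC^{\op} \otimes \cX, \cD^{\op}) \simeq \Map((\cC^{\op} \otimes \cX)^{\op}, \cD).
\end{align*}
Applying the tensor equivalence just proved yields $(\cC^{\op} \otimes \cX)^{\op} \simeq \cX^{\op} \otimes \cC$, so the two right-hand sides are canonically equivalent, naturally in $\cX$ (and in $\cC, \cD$). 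The Yoneda lemma then produces the desired natural equivalence $\Funoplax(\cC, \cD)^{\op} \simeq \Funlax(\cC^{\op}, \cD^{\op})$. The $\co$-case is formally identical, invoking the $\co$-tensor equivalence in place of the $\op$ one.

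The main obstacle is really the combinatorial strict-$\omega$-categorical identity on pasting diagrams, which must be checked by direct inspection of the cellular formulas for $\otimes^{\strict}$ (and is essentially visible already in the two-dimensional string-diagrammatic picture of \S\ref{subsec:otimesstrict}, where reading a crossing from the other side exchanges the roles of the two factors). Once this is granted, cocontinuity and Yoneda package the rest formally. A minor technical point is ensuring the equivalence is coherently natural in both $\cC$ and $\cD$ rather than merely pointwise; this is handled by treating the two sides as bifunctors $\Cat_{(\infty,\infty)} \times \Cat_{(\infty,\infty)} \to \Cat_{(\infty,\infty)}$ and invoking density of $\fancysquare \times \fancysquare$ among cocontinuous-in-each-variable bifunctors.
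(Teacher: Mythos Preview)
Your proposal is correct and follows essentially the same approach as the paper: reduce the tensor equivalences to $\fancysquare$ by density and cocontinuity, invoke the strict Gray-product identity there, and deduce the hom equivalences by adjunction/Yoneda. The only difference is that the paper cites \cite[Prop.~A.22]{MR4146146} for the strict identity $(C \otimes^{\strict} D)^{\op} \simeq D^{\op} \otimes^{\strict} C^{\op}$ rather than arguing it from the cellular description, and compresses your Yoneda chase to the phrase ``by adjunction.''
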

\begin{proof}
The autoequivalence $(-)^{\op}$ and $(-)^{\co}$ restrict to autoequivalences of the full subcategory $\fancysquare \subseteq \Cat_{(\infty, \infty)}$.  Since $\fancysquare \subseteq \Cat_{(\infty, \infty)}$ is a dense~\cite{2209.09376} full monoidal subcategory, $\otimes$ is cocontinuous in both variables, and $(-)^{\op}$ and $(-)^{\co}$ are equivalences and hence in particular cocontinuous, it suffices to provide the equivalences \eqref{lem:propertiesofFunoplax:compatwithcoop:tensor} on the strict $1$-category $\fancysquare$. There, it follows from the corresponding property of the strict Gray tensor product, which is established in~\cite[Prop.~A.22]{MR4146146}. The  equivalences \eqref{lem:propertiesofFunoplax:compatwithcoop:fun} follow by adjunction.
\end{proof}
\begin{lemma}\label{lem:propertiesofFunoplax:others}
Let $\cC$ and $\cD$ be $(\infty, \infty)$-categories. 
\begin{enumerate}
\item 
If $\cC$ and $\cD$ are $(\infty, n)$-categories, then so is $\Fun^{\mathrm{(op)lax}}(\cC, \cD)$. 
\item  If $\cC$ and $\cD$ are gaunt, then so is $\Fun^{\mathrm{(op)lax}}(\cC, \cD)$. Moreover, $\Fun^{\mathrm{(op)lax}}(-, -)$  agrees with its  strict variant, i.e.\ with the left/right internal hom of $\otimes^{\strict}$, when restricted to $\Gaunt$. 
\end{enumerate}
\end{lemma}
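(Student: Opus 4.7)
For part (1), my strategy is to exhibit $\Funoplax(\cC, \cD)$ as the internal hom of an analogous biclosed monoidal structure on $\Cat_{(\infty,n)}$. By~\cite[Cor.~3.15]{2311.00205} (already invoked in the proof of Lemma~\ref{lem:Sigmapreserves}), the localization $L_n \colon \Cat_{(\infty,\infty)} \to \Cat_{(\infty,n)}$ is monoidal with respect to $\otimes$ and $\otimes_n := L_n(-\otimes-)$, and $(\Cat_{(\infty,n)}, \otimes_n)$ is itself biclosed; let $\Funoplax_n$ denote its right internal hom. For every $\cX \in \Cat_{(\infty,\infty)}$, chaining the adjunctions while twice invoking locality of $\cD$ yields
\[
\Map(\cX, \Funoplax(\cC, \cD)) \simeq \Map(\cX \otimes \cC, \cD) \simeq \Map(L_n\cX \otimes_n L_n\cC, \cD) \simeq \Map(L_n\cX, \Funoplax_n(L_n\cC, \cD)) \simeq \Map(\cX, \Funoplax_n(L_n\cC, \cD)).
\]
By Yoneda, $\Funoplax(\cC, \cD) \simeq \Funoplax_n(L_n\cC, \cD)$, which is manifestly an $(\infty,n)$-category; the $\Funlax$ case is symmetric.

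For part (2), I would use the identification of $\Gaunt \subseteq \Cat_{(\infty,\infty)}$ with the full subcategory of $0$-truncated objects. Gauntness of $\cD$ is exactly the statement that all mapping spaces $\Map(-, \cD)$ are sets (discrete spaces), so the mapping space $\Map(\cX, \Funoplax(\cC, \cD)) \simeq \Map(\cX \otimes \cC, \cD)$ is a set for every $\cX$, whence $\Funoplax(\cC, \cD)$ is $0$-truncated and therefore gaunt.

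For the ``moreover'' statement, since both $\Funoplax(\cC, \cD)$ and the strict internal hom $\Funoplax^\strict(\cC, \cD)$ are gaunt, it suffices to compare them on a dense subcategory of $\Gaunt$, for which I would take the pasting diagrams $\Theta$. For $\theta \in \Theta$, the two universal properties respectively compute $\Map(\theta \otimes \cC, \cD)$ and $\Hom_{\strCat_\omega}(\theta \otimes^\strict \cC, \cD)$. Writing $\cC$ as a colimit of pasting diagrams $\theta_j$ and using cocontinuity of $\theta \otimes -$ (respectively $\theta \otimes^\strict -$) in the second variable, both sides decompose as the same limit $\lim_j \Hom_\Gaunt(\theta \otimes \theta_j, \cD)$, where by Warning~\ref{warn:GrayGaunt} the tensor product $\theta \otimes \theta_j = \theta \otimes^\strict \theta_j$ is unambiguously a pasting diagram (hence gaunt).

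The main obstacle I foresee is aligning the colimit decomposition $\cC \simeq \colim_j \theta_j$ across the three settings $\Cat_{(\infty,\infty)}$, $\strCat_\omega$, and $\Gaunt$: one must verify that after applying $\Map(-, \cD)$ to a gaunt target, the $\infty$-categorical and strict decompositions become identified. This requires careful use of the density of $\Theta$ in each of these settings together with the compatibility of gauntification and $0$-truncation, and is the only place where the ambient uncertainty recorded in Warning~\ref{warn:GrayGaunt} must be navigated.
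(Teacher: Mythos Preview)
Your argument for part (1) is correct and is exactly the paper's: both invoke \cite[Cor.~3.15]{2311.00205} and pass to the internal hom by adjunction. Your argument for gauntness in part (2) via $0$-truncatedness is also correct; the paper obtains gauntness as a by-product of the same Corollary applied to the gauntification localization $\Gau$, but the content is the same.

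The acknowledged gap in your ``moreover'' argument is genuine, and the paper closes it differently than you propose. You fix $\theta\in\Theta$ and try to decompose $\cC$ as a colimit of $\theta_j$'s, which forces you to compare the colimit $\colim_j(\theta\otimes\theta_j)$ in $\Cat_{(\infty,\infty)}$ against $\colim_j(\theta\otimes^{\strict}\theta_j)$ in $\strCat_\omega$ or $\Gaunt$; this is exactly the alignment problem you flag, and it is not resolved by Warning~\ref{warn:GrayGaunt} alone. The paper sidesteps this entirely: rather than decomposing one argument, it compares the two \emph{bifunctors} $\Gau(-\otimes-)$ and $-\otimes^{\strict}-$ on $\Gaunt$. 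The first is cocontinuous in each variable because $\Gau$ and $\otimes$ are; the second is cocontinuous by \cite[Th\'eor\`eme~A.15]{MR4146146}. They agree on $\fancysquare$ by construction, and $\fancysquare\subset\Gaunt$ is dense, so they agree on all of $\Gaunt$; the internal homs then agree by adjunction. The crucial move you are missing is to first pass through $\Gau$ (again via \cite[Cor.~3.15]{2311.00205}, which says $\Gau$ is compatible with $\otimes$): this replaces the problematic $\infty$-categorical tensor product of gaunt objects, which is not known to be gaunt, by $\Gau(\cC\otimes\cD)$, which manifestly is and is cocontinuous on $\Gaunt$.
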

\begin{proof}
\begin{enumerate}
\item
Corollary~3.15 of~\cite{2311.00205} implies that the localization $L_n: \Cat_{(\infty, \infty)} \to \Cat_{(\infty, n)}$ is compatible with the lax tensor product: the canonical map $ L_n(\cC \otimes \cD) \to L_n(L_n \cC \otimes L_n \cD)$ is an isomorphism for $(\infty, \infty)$-categories $\cC$ and $\cD$. The statement then follows by adjunction. 
\item 
Corollary~3.15 of~\cite{2311.00205} moreover implies  that gauntification $\Gau : \Cat_{(\infty, \infty)} \to \Gaunt$ is compatible with the lax tensor product. The induced monoidal structure $\Gau(-\otimes-)$ on $\Gaunt$ is by construction cocontinuous in each variable. The strict tensor product $\otimes^{\strict}$ is also cocontinuous in each variable~\cite[Th\'eor\`eme~A.15]{MR4146146}. By construction, $\Gau(-\otimes-)$ and $\otimes^{\strict}$ agree on $\fancysquare$, but $\fancysquare \subset \Gaunt$ is dense, and so $\Gau(-\otimes-)$ and $\otimes^{\strict}$ agree on all of $\Gaunt$.
The statement then follows by adjunction. \qedhere
\end{enumerate}
\end{proof}

\begin{lemma}\label{lem:surjectiveergoconservative} If $\cC \in \Cat_{(\infty, n)}$ and $\cA \to \cB$ is an essentially surjective functor of $(\infty, n)$-categories, then $\Funoplax(\cB, \cC) \to \Funoplax(\cA, \cC)$ is conservative on $n$-morphisms, i.e.\ is right orthogonal to $\globe_{n} \to \globe_{n-1}$. 
\end{lemma}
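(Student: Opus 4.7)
By the adjunction $-\otimes \cB \dashv \Funoplax(\cB, -)$ (and similarly for $\cA$), the desired right orthogonality is equivalent, after testing against all $(\infty, n)$-categories $\cC$ and applying the Yoneda embedding, to the assertion that, for every essentially surjective $\cA \to \cB$ in $\Cat_{(\infty,n)}$, the square
\[
\begin{tikzcd}
L_n(\globe_n \otimes \cA) \arrow[r] \arrow[d] & L_n(\globe_n \otimes \cB) \arrow[d] \\
L_n(\globe_{n-1} \otimes \cA) \arrow[r] & L_n(\globe_{n-1} \otimes \cB)
\end{tikzcd}
\]
is a pushout in $\Cat_{(\infty,n)}$. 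I plan to verify this universal property by a pointwise argument.

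The key observation is that for $\cC$ an $(\infty, n)$-category, an $n$-morphism $\alpha$ of $\Funoplax(\cB, \cC)$ -- i.e., a map $\globe_n \otimes \cB \to \cC$ -- is an identity (equivalently ``invertible'' in the univalent sense, by the footnote labelled \ref{footnote:epi}) if and only if each pointwise component $\alpha_b : \globe_n \to \cC$ obtained by restricting along $\{b\} \hookrightarrow \cB$ is an identity $n$-morphism of $\cC$. This is because the higher coherence data of $\alpha$ at $k$-morphisms of $\cB$ with $k \geq 1$ lives in $(n+k)$-morphisms of $\cC$, which are automatically invertible and so, by univalence, present no obstruction.

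Given this criterion, suppose $\alpha|_{\cA}$ is an identity, so that $\alpha_{F(a)}$ is an identity in $\cC$ for every $a \in \cA$. Fix $b \in \cB$. By essential surjectivity, there exists $a \in \cA$ and an equivalence $b \simeq F(a)$ in $\cB$; this is an invertible $1$-morphism of $\cB$, and the naturality component of $\alpha$ at this $1$-morphism supplies an invertible $(n+1)$-morphism of $\cC$ between $\alpha_b$ and $\alpha_{F(a)}$. Since ``being an identity $n$-morphism'' is invariant under invertible higher cells, $\alpha_b$ is itself an identity. Thus $\alpha$ is pointwise an identity, hence an identity $n$-morphism of $\Funoplax(\cB, \cC)$, producing the required factorization through $\globe_{n-1} \otimes \cB$. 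Uniqueness follows from univalence of $\Funoplax(\cB, \cC)$, which is an $(\infty, n)$-category by Lemma~\ref{lem:propertiesofFunoplax:others}.

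The chief difficulty lies in upgrading ``pointwise identity'' to a coherent global factorization: one must check that the invertible higher coherences not only exist but assemble compatibly into a witness that $\alpha$ descends along the degeneracy $\globe_n \otimes \cB \to \globe_{n-1} \otimes \cB$. A more formal route is to prove the displayed pushout by induction on $n$, using the suspension formula of Lemma~\ref{lemma:sigmaaspushout} and Lemma~\ref{lem:Sigmapreserves} to propagate the pushout through suspension, combined with the $(-)^{\op}$-symmetry of Lemma~\ref{lem:propertiesofFunoplax:compatwithcoop} to place $\globe_n$ on the correct side of the non-symmetric lax product; this reduces the general statement to the $n=1$ case, which is the standard fact that a pointwise-invertible natural transformation on an essentially surjective subcategory is invertible everywhere.
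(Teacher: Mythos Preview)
Your first paragraph correctly identifies the adjoint reformulation: the statement is equivalent to the displayed square being a pushout in $\Cat_{(\infty,n)}$. This matches the paper's opening move.

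The remainder, however, has a genuine gap. Your ``key observation''—that an $n$-morphism in $\Funoplax(\cB,\cC)$ is an identity iff each of its components at objects of $\cB$ is—is not an independent easier fact: it is precisely the special case of the lemma for the essentially surjective functor $\iota_0\cB \to \cB$. You then correctly flag the ``chief difficulty'' yourself: upgrading pointwise triviality to a coherent global factorization. That is the entire content of the lemma, and the paragraph does not supply it.

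Your ``more formal route'' is closer in spirit but misfires in two ways. First, Lemma~\ref{lem:Sigmapreserves} keeps the localization level $n$ fixed while suspending one tensor factor; applying it (after the $(-)^{\op}$-swap) to peel $\globe_n \to \globe_{n-1}$ down to $\globe_1 \to \globe_0$ leaves you with the square
\[
\begin{tikzcd}
L_n(\cA \otimes \globe_1) \arrow[r] \arrow[d] & L_n(\cB \otimes \globe_1) \arrow[d] \\
\cA \arrow[r] & \cB
\end{tikzcd}
\]
where $\cA \to \cB$ is still an \emph{arbitrary} essentially surjective functor of $(\infty,n)$-categories. This is not the $(\infty,1)$-case of the lemma, and it is not a standard fact. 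Second, and more fundamentally, you are missing the mechanism that tames $\cA \to \cB$: the paper uses that essentially surjective functors form the smallest saturated class in $\Cat_{(\infty,n)}$ containing the boundary inclusions $\{\partial\globe_k \to \globe_k\}_{1 \leq k \leq n}$ (they are the left class of the essentially-surjective/fully-faithful factorization system). Since the pushout property is itself stable under the saturation operations, it suffices to verify it on these specific generators. Only \emph{then} does Lemma~\ref{lem:Sigmapreserves} enter—applied to the $\partial\globe_k \to \globe_k$ side (no $(-)^{\op}$-trick needed), reducing to $k=1$, which is exactly Proposition~\ref{prop:trickypushout}. Without the saturation step you have no way to reach a combinatorial base case, and your proposed induction does not close.
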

\begin{proof}
The functor $\Funoplax(\cB, \cC) \to \Funoplax(\cA, \cC)$ being right orthogonal to $\globe_n \to \globe_{n-1}$ for all $\cC \in \Cat_{(\infty, n)}$ is by adjunction and Yoneda equivalent to the assertion that the square 
\begin{equation}
\label{eq:pushoutforsurj}\begin{tikzcd}
 \globe_n \otimes \cA \arrow[r] \arrow[d] & \globe_n \otimes \cB \arrow[d] \\
 \globe_{n-1} \otimes \cA \arrow[r] & \globe_{n-1} \otimes \cB
\end{tikzcd}
\end{equation}
is a pushout after $n$-localization for any surjective functor $\cA \to \cB$. 

Recall that a class of morphisms in an $\infty$-category $\cX$ is called \emph{saturated} if it contains all equivalences, is closed under composition, under colimits in $\Fun(\globe_1, \cX)$ and under pushout against arbitrary morphisms. 
As shown in~\cite[Thm.~5.3.7]{2401.02956}, essentially surjective functors of $(\infty, n)$-categories are the left class in a factorization system generated by the set of morphisms $S:=\{\partial \globe_k \to \globe_k\}_{1 \leq k \leq n}$ (with right class the fully faithful functors) and hence by ~\cite[Prop.~5.5.5.7]{HTT} essentially surjective functors form the smallest saturated class of morphisms in $\Cat_{(\infty, n)}$ containing $S$. Since the square~\eqref{eq:pushoutforsurj} is functorial in $(\cA \to \cB) \in \Fun(\globe_1,\Cat_{(\infty, n)})$ and since it being a pushout  is true for equivalences, preserved under composition, colimits in $\Fun(\globe_1, \Cat_{(\infty, n)})$ and pushouts against arbitrary morphisms, it suffices to prove it for $\cA \to \cB$ being $\partial \globe_k \to \globe_k$ for $1\leq k \leq n$. We thus need to prove that for all $n\geq 0$ and all $1\leq k \leq n$, the following is a pushout square:
\[\begin{tikzcd}
L_n( \globe_n \otimes \partial \globe_k )\arrow[r] \arrow[d] & L_n(\globe_n \otimes \globe_k) \arrow[d] \\
L_n( \globe_{n-1} \otimes \partial \globe_k) \arrow[r] & L_n(\globe_{n-1} \otimes \globe_k)
\end{tikzcd}
\]
Since $\partial \globe_k\to \globe_k$ is the suspension $\Sigma(\partial \globe_{k-1} \to \globe_{k-1})$, by Lemma~\ref{lem:Sigmapreserves} it suffices to prove the statement for $k=1$. The $k=1$ case is Proposition~\ref{prop:trickypushout}. 
\end{proof}

\subsection{Hom \texorpdfstring{$(\infty,\infty)$}{(infty,infty)}-categories} \label{subsec:enrichment}

The $(\infty,\infty)$-category $\Funoplax(\globe_1,\cC)$ turns out to be particularly important, and so we give it a special name:
\begin{definition} Given an $(\infty, \infty)$-category $\cC$, we define its $(\infty, \infty)$-category of \emph{arrows} by \[\Arr{\cC}:= \Funoplax(\globe_1, \cC).\] 
\end{definition}

We will often implicitly use the source/target projection $$\Arr{\cC} = \Funoplax(\globe_1, \cC) \to \Funoplax(\partial\globe_1, \cC) = \Funoplax(* \sqcup *, \cC) = \cC \times \cC.$$
This projection can be used to define the hom-categories of an $(\infty, \infty)$-category:
\begin{definition}\label{def:hom-cat} Given objects $a, b: * \to \cC$ of an $(\infty, \infty)$-category $\cC$, we define its \emph{hom-$(\infty, \infty)$-category} \[\cC(a,b):=\{a\} \times_{\cC} \Arr{\cC} \times_{\cC} \{ b\}.\]
\end{definition}

Definition~\ref{def:hom-cat} assembles into a functor 
\begin{equation}\label{eq:hom-functorial} \Funoplax_{*\sqcup */}([1], -): \Cat_{(\infty, \infty)}^{**}:= (\Cat_{(\infty, \infty)})_{* \sqcup */} \to \Cat_{(\infty, \infty)} 
\end{equation} sending an $(\infty, \infty)$-category $\cC$ with two marked objects $a,b$ to $\cC(a, b)$. 

\begin{prop}\label{prop:adjunction-hom-sigma}
The functor~\eqref{eq:hom-functorial} is right adjoint to the functor $\Sigma: \Cat_{(\infty, \infty)} \to  \Cat_{(\infty, \infty)}^{**}$ from~\eqref{eq:sigmainfinity}.
\end{prop}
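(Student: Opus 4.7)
The plan is to compute $\Map_{\Cat^{**}_{(\infty,\infty)}}(\Sigma\cX, (\cC,a,b))$ for arbitrary $\cX \in \Cat_{(\infty,\infty)}$ and $(\cC,a,b) \in \Cat^{**}_{(\infty,\infty)}$, and to identify it naturally with $\Map_{\Cat_{(\infty,\infty)}}(\cX, \cC(a,b))$. The two ingredients are Lemma~\ref{lemma:sigmaaspushout}, which expresses $\Sigma\cX$ as a pushout pointed by the colimit inclusion from $\partial\globe_1$, and the $\otimes$-$\Funoplax$ adjunction from Definition~\ref{defn:funoplax}.

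First I would use Lemma~\ref{lemma:sigmaaspushout} to unwind the left-hand side. Since mapping spaces out of a pushout are pullbacks of mapping spaces, and since the pointing of $\Sigma\cX$ is by definition the colimit leg from $\partial\globe_1$, a pointed map $(\Sigma\cX, \partial\globe_1) \to (\cC, a, b)$ is the same data as a map $\cX \otimes \globe_1 \to \cC$ whose restriction along $\cX \otimes \partial\globe_1 \hookrightarrow \cX \otimes \globe_1$ factors through the projection $\cX \otimes \partial\globe_1 \to \partial\globe_1 \xrightarrow{(a,b)} \cC$. This gives
\[
\Map_{\Cat^{**}}\bigl(\Sigma\cX,(\cC,a,b)\bigr) \simeq \Map(\cX \otimes \globe_1, \cC) \times_{\Map(\cX\otimes\partial\globe_1, \cC)} \{(a,b)\},
\]
where $\{(a,b)\}$ sits in $\Map(\cX\otimes\partial\globe_1,\cC) \simeq \Map(\cX,\cC)^2$ as the pair of constant functors at $a$ and $b$.

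Next I would apply the adjunction $-\otimes\globe_1 \dashv \Funoplax(\globe_1,-) = \Arr(-)$ to rewrite $\Map(\cX \otimes \globe_1, \cC) \simeq \Map(\cX, \Arr\cC)$. Under this adjunction, the restrictions along $\cX \otimes \{i\} \hookrightarrow \cX \otimes \globe_1$ for $i=0,1$ correspond to post-composition with the source and target projections $\Arr\cC \rightrightarrows \cC$ (the projections appearing just before Definition~\ref{def:hom-cat}). Therefore the pullback rewrites as
\[
\Map(\cX, \Arr\cC) \times_{\Map(\cX,\cC)^2} \{(\mathrm{const}_a,\mathrm{const}_b)\} \;\simeq\; \Map\bigl(\cX,\, \{a\}\times_\cC \Arr\cC \times_\cC \{b\}\bigr) \;=\; \Map(\cX, \cC(a,b)),
\]
using that the fiber of the source-target projection over $(a,b)$ is $\cC(a,b)$ by Definition~\ref{def:hom-cat}, and that $\Map(\cX,-)$ preserves limits.

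Naturality in $\cX$ is immediate from naturality of the pushout formula and of the tensor-hom adjunction; naturality in $(\cC,a,b) \in \Cat^{**}_{(\infty,\infty)}$ follows from functoriality of $\Arr(-)$ together with the observation that a morphism $(\cC,a,b) \to (\cD,a',b')$ in $\Cat^{**}$ induces a functor $\cC(a,b) \to \cD(a',b')$. The only subtlety is bookkeeping the pointing: specifically, checking that in the pushout decomposition of $\Sigma\cX$, pointed maps out are computed by pulling back along the fixed boundary data $\partial\globe_1 \to \cC$, which is built into the coslice $(\Cat_{(\infty,\infty)})_{\partial\globe_1 /} = \Cat^{**}_{(\infty,\infty)}$. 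No serious obstacle is expected.
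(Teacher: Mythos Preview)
Your proof is correct and is exactly the unpacking of the paper's one-line proof ``Immediate from Lemma~\ref{lemma:sigmaaspushout}'': you use the pushout formula for $\Sigma$ from that lemma together with the $\otimes$--$\Funoplax$ adjunction and the definition of $\cC(a,b)$ as a fiber of $\Arr\cC \to \cC\times\cC$, which is precisely what ``immediate'' means here.
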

\begin{proof}
Immediate from Lemma~\ref{lemma:sigmaaspushout}.
\end{proof}

 We first record:
\begin{lemma} \label{lem:takinghomloosescategorylevel}
If $\cC$ is an $(\infty,n)$-category, then for any  $a, b \in \cC$, $\cC(a,b)$ is an $(\infty, n-1)$-category.
\end{lemma}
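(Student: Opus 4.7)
The plan is to combine three lemmas from the preceding development: Lemma~\ref{lem:propertiesofFunoplax:others} to see that $\Arr{\cC}$ is still an $(\infty,n)$-category, Lemma~\ref{lem:surjectiveergoconservative} to check that the source--target projection is conservative on top-dimensional morphisms, and Lemma~\ref{lem:consequences-conservative}(\ref{lem:consequences-conservative3}) to conclude that the fiber over $(a,b)$ drops a categorical level.

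More concretely, first I observe that since $\globe_1$ is an $(\infty,1)$-category and hence an $(\infty,n)$-category, Lemma~\ref{lem:propertiesofFunoplax:others}(1) implies that $\Arr{\cC} = \Funoplax(\globe_1,\cC)$ is an $(\infty,n)$-category. Next, I consider the inclusion $\partial \globe_1 = {*}\sqcup {*} \hookrightarrow \globe_1$, which is essentially surjective in $\Cat_{(\infty,n)}$. Applying Lemma~\ref{lem:surjectiveergoconservative}, the restriction functor
\[
\Funoplax(\globe_1,\cC) \longrightarrow \Funoplax(\partial \globe_1,\cC) \simeq \cC \times \cC
\]
is conservative on $n$-morphisms. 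This functor is precisely the source--target projection used in Definition~\ref{def:hom-cat}.

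Finally, the point $(a,b):* \to \cC \times \cC$ exhibits $\cC(a,b)$ as the fiber of this conservative-on-$n$-morphisms functor (computed in $\Cat_{(\infty,n)}$, since pullbacks in $\Cat_{(\infty,n)}$ along a map between $(\infty,n)$-categories agree with those in $\Cat_{(\infty,\infty)}$). Lemma~\ref{lem:consequences-conservative}(\ref{lem:consequences-conservative3}) then immediately gives that $\cC(a,b)$ is an $(\infty,n-1)$-category.

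I do not expect any genuine obstacle here: each step is a direct appeal to a previously established lemma, and the only thing to be mildly careful about is that the fiber defining $\cC(a,b)$ can indeed be interpreted inside $\Cat_{(\infty,n)}$ (so that Lemma~\ref{lem:consequences-conservative}(\ref{lem:consequences-conservative3}) applies verbatim), which follows because all categories involved are $(\infty,n)$-categories and the inclusion $\Cat_{(\infty,n)} \hookrightarrow \Cat_{(\infty,\infty)}$ preserves limits (being a right adjoint, cf.~\eqref{eq:inclusion-and-localization}).
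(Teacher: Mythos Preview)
Your proof is correct and takes essentially the same approach as the paper: both establish that $\Arr{\cC} \to \cC \times \cC$ is conservative on $n$-morphisms and then apply Lemma~\ref{lem:consequences-conservative}(\ref{lem:consequences-conservative3}). The only cosmetic difference is that the paper proves conservativity by directly unpacking the right-orthogonality condition and citing Proposition~\ref{prop:trickypushout}, whereas you invoke the more general Lemma~\ref{lem:surjectiveergoconservative} (whose proof in turn reduces to Proposition~\ref{prop:trickypushout} in the relevant case $\partial\globe_1 \hookrightarrow \globe_1$).
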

\begin{proof}
We claim that the functor $\Arr{\cC} \to \cC \times \cC$ is conservative on $n$-morphisms.
By definition, this is the functor $\Funoplax(\globe_1, \cC) \to \Funoplax( \partial \globe_1, \cC)$. By adjunction, this morphism being right orthogonal to $\globe_n \to \globe_{n-1}$ for any $\cC \in \Cat_{(\infty, n)}$ is equivalent to the assertion that the square~\eqref{eq:pushoutcn} is a pushout in $\Cat_{(\infty, n)}$; this is the content of Proposition~\ref{prop:trickypushout}. That $\cC(a, b)$ is an $(\infty, n-1)$-category follows from $n$-conservativity of $\Arr{\cC} \to \cC \times \cC$ by Lemma~\ref{lem:consequences-conservative} part \ref{lem:consequences-conservative3}.
\end{proof}
\begin{lemma} \label{lem:homcatcorrect} If $\cC$ is in $\Gaunt\subseteq \Cat_{(\infty, \infty)}$, then Definition~\ref{def:hom-cat} recovers the usual strict hom-$\omega$-category.
\end{lemma}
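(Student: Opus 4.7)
The plan is to reduce everything to the strict setting. First I observe that both $\globe_1$ and $\cC$ lie in $\Gaunt \subseteq \Cat_{(\infty,\infty)}$. By Lemma~\ref{lem:propertiesofFunoplax:others}(2), the oplax functor category $\Arr{\cC} = \Funoplax(\globe_1, \cC)$ is again gaunt and coincides with its strict analogue, namely the right internal hom of the strict Gray tensor product $\otimes^{\strict}$. In particular, $\Arr{\cC}$ is the familiar strict $\omega$-category of arrows and oplax squares in $\cC$, with source/target projection $\Arr{\cC}\to \cC\times\cC$ computed strictly.

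Next I need to show that the pullback
\[
\cC(a,b) \;=\; \{a\}\times_{\cC}\Arr{\cC}\times_{\cC}\{b\},
\]
taken in $\Cat_{(\infty,\infty)}$, agrees with the corresponding strict $\omega$-categorical pullback. For this I use that the inclusion $\Gaunt \hookrightarrow \Cat_{(\infty,\infty)}$ is fully faithful and a right adjoint (to gauntification), hence preserves all limits that exist in $\Gaunt$. Strict pullbacks against the source/target maps exist in $\strCat_\omega$, and the property of being gaunt is manifestly inherited by strict pullbacks (an invertible $k$-morphism in the pullback projects to invertible $k$-morphisms in each factor, which are identities by gauntness, and hence is itself an identity). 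Thus the strict pullback already lies in $\Gaunt$ and so computes the pullback in $\Cat_{(\infty,\infty)}$.

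Finally, I verify that this strict pullback is exactly the usual strict hom-$\omega$-category $\cC(a,b)$. By the defining universal property of the strict Gray internal hom, a $k$-cell of $\Funoplax^{\strict}(\globe_1, \cC)$ is a strict functor $\globe_k \otimes^{\strict} \globe_1 \to \cC$; pulling back to force the two objects of $\globe_1$ to go to $a$ and $b$ picks out exactly strict functors $\globe_k \to \cC(a,b)$ in the classical sense, matching the strict hom-$\omega$-category.

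The only step that requires any care is the second one, verifying that the $\infty$-categorical pullback agrees with the strict one; the key point is that gauntness is preserved under strict pullback and that $\Gaunt$ is reflective in $\Cat_{(\infty,\infty)}$, so no derived corrections appear.
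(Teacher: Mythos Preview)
Your proof is correct and follows essentially the same route as the paper: both reduce to the strict setting via Lemma~\ref{lem:propertiesofFunoplax:others}(2), and then identify the strict fiber with the usual strict hom-$\omega$-category. The paper compresses your steps~2--3 into a single citation of \cite[Prop.~B.6.2]{MR4146146}; you instead spell out the limit comparison (via reflectivity of $\Gaunt$ in $\Cat_{(\infty,\infty)}$, which follows from its identification with the $0$-truncated objects) and sketch the $\Sigma$--hom adjunction directly, making explicit a point the paper leaves to the reference.
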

\begin{proof} By Lemma~\ref{lem:propertiesofFunoplax:others}, for $\cC \in \Gaunt$, $\Funoplax(\globe_1, \cC)$ is the strict variant of the $\omega$-category of functors and oplax natural transformations. The result then follows from~\cite[Prop.~B.6.2]{MR4146146}.
\end{proof}

Using the theory of enriched $\infty$-categories developed in \cite{MR3345192}, it is shown in \cite{MR3402334} that an $(\infty,n)$-category is precisely a category enriched in the $\infty$-category of $(\infty,n-1)$-categories: There is an equivalence $\Cat_{(\infty,n)} \simeq \Cat(\Cat_{(\infty,n-1)})$, where for a monoidal $\infty$-category $\cV$, $\Cat(\cV)$ denotes the $\infty$-category of $\cV$-enriched univalent (aka Rezk-complete) $\infty$-categories. Taking the limit as $n\to \infty$ supplies an equivalence 
\begin{equation} \label{eqn:catinfinffixed} \Cat_{(\infty,\infty)} \simeq \Cat(\Cat_{(\infty,\infty)}), \end{equation}
which by the main result of~\cite{MR4669810} may be used to universally characterize $\Cat_{(\infty,\infty)}$.
Thus treating an $(\infty, \infty)$-category as a category enriched in $(\infty, \infty)$-categories naturally encodes a hom-$(\infty, \infty)$-category between $(\infty, \infty)$-categories which we will now compare with our Definition~\ref{def:hom-cat}: 

\begin{proposition}\label{prop:enrichmentagreement}
  The hom $(\infty,\infty)$-categories from Definition~\ref{def:hom-cat}, defined in terms of $\Funoplax$, agree with the hom $(\infty,\infty)$-categories defined in terms of the equivalence~\eqref{eqn:catinfinffixed}.
\end{proposition}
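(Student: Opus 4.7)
The plan is to identify both definitions of hom-$(\infty,\infty)$-category as right adjoints to a common suspension functor, and then to invoke uniqueness of right adjoints. Proposition~\ref{prop:adjunction-hom-sigma} already exhibits the $\Funoplax$-version $(\cC, a, b) \mapsto \cC(a,b)$ of Definition~\ref{def:hom-cat} as right adjoint to the categorical suspension $\Sigma : \Cat_{(\infty,\infty)} \to \Cat_{(\infty,\infty)}^{**}$ constructed in Lemma~\ref{lemma:sigmaaspushout}. On the enriched side, under the equivalence~\eqref{eqn:catinfinffixed}, the enriched hom-object functor is tautologically right adjoint to the \emph{delooping} functor $B : \Cat_{(\infty,\infty)} \to \Cat(\Cat_{(\infty,\infty)})^{**}$ sending a category $V$ to the pointed two-object enriched category whose unique non-trivial mapping object is $V$; this is a rephrasing of what it means for $\cC$ to be enriched over $\Cat_{(\infty,\infty)}$.

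It thus suffices to identify the pointed suspension functors $\Sigma$ and $B$ under~\eqref{eqn:catinfinffixed}. Both are cocontinuous: $\Sigma$ by the pushout formula of Lemma~\ref{lemma:sigmaaspushout}, and $B$ because it is a left adjoint. By the density of $\Theta \subset \Cat_{(\infty,\infty)}$ established in~\cite{2209.09376}, any cocontinuous functor out of $\Cat_{(\infty,\infty)}$ is determined up to canonical equivalence by its restriction to $\Theta$. Both $\Sigma$ and $B$ send walking cells via $\globe_n \mapsto \globe_{n+1}$; more generally, they should agree on all of $\Theta$ because under~\eqref{eqn:catinfinffixed} an $(n+1)$-dimensional pasting diagram is by construction an enriched category whose mapping objects are $n$-dimensional pasting diagrams, and this inductive structure is precisely what encodes the equivalence~\eqref{eqn:catinfinffixed}. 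Once $\Sigma \simeq B$ is established as pointed-suspension functors, uniqueness of right adjoints yields the desired natural equivalence $\cC(a,b) \simeq \cC^{\mathrm{enr}}(a,b)$, functorially in pointed $(\cC,a,b)$.

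The main obstacle is the careful compatibility of $\Sigma$ and $B$ on morphisms in $\Theta$, not merely on objects: one must check that the suspension functor employed by Gepner--Haugseng and Haugseng in constructing the equivalence $\Cat_{(\infty,\infty)} \simeq \Cat(\Cat_{(\infty,\infty)})$ matches the colimit-theoretic suspension of Lemma~\ref{lemma:sigmaaspushout} at the level of face and degeneracy maps between pasting diagrams. We expect this to follow essentially definitionally from the construction of enriched $(\infty,n)$-categories via iterated delooping in~\cite{MR3345192,MR3402334} combined with the universal characterization of $\Cat_{(\infty,\infty)}$ in~\cite{MR4669810}, but it is precisely at this level that the proof requires unpacking rather than formal manipulation.
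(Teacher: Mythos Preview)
Your strategy is exactly the paper's: exhibit both hom constructions as right adjoints to suspension functors and identify the two suspensions. The paper resolves precisely the ``main obstacle'' you flag, but by a slightly different route that avoids checking face/degeneracy compatibility on $\Theta$ directly. Instead of restricting to the dense subcategory $\Theta$, the paper restricts to the larger dense subcategory $\Gaunt$. Both suspensions send gaunt categories to gaunt categories (for Gepner--Haugseng's this is immediate from the explicit formula; for the colimit-theoretic $\Sigma$ this is because it agrees with the strict suspension on~$\Theta$ and is left adjoint). Since the adjunctions therefore restrict to $\Gaunt$, it suffices to identify the two right adjoints on $\Gaunt^{**}$, and this is exactly Lemma~\ref{lem:homcatcorrect}: on gaunt inputs the $\Funoplax$-hom is the strict hom, which is what the enriched construction gives essentially by design. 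So the ``unpacking'' you anticipate is replaced by the single 1-categorical Lemma~\ref{lem:homcatcorrect}, which reduces everything to strict computations in $\Gaunt$.
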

\begin{proof} Let $\Cat(\cV)^{**} := \Cat(\cV)_{(*\sqcup *)/}$ denote the $\infty$-category of $\cV$-enriched categories $\cC$ with two distinguished objects $x_0,x_1$.
It follows from the discussion surrounding \cite[Def. 4.3.12]{MR3345192} that the functor  $\Cat(\cV)^{**} \to \cV$ given by $(\cC,x_0,x_1) \mapsto \cC(x_0,x_1)$ has a left adjoint,  the \define{$\cV$-categorical suspension} $\Sigma : \cV \to \Cat(\cV)^{**}$, which sends $V \in \cV$ to the two-object category $\Sigma V$ with object space $\{0,1\}$ and morphisms
  $$ (\Sigma V)(i,j) = \begin{cases} 1_\cV, & i=j, \\ V, & i < j \\ \emptyset_\cV, & i > j \end{cases}.$$
    Here $1_\cV \in \cV$ denotes the monoidal unit in $\cV$, and $\emptyset_\cV \in \cV$ denotes the initial object. 
For $\cV= \Cat_{(\infty, \infty)}$, we claim that our functor $\Sigma: \Cat_{(\infty, \infty)} \to \Cat_{(\infty, \infty)}^{**}$ from~\eqref{eq:sigmainfinity} agrees with the composite of Gepner-Haugseng's categorical suspension with the equivalence from \cite{MR4669810}  $\Cat_{(\infty, \infty)} \to \Cat(\Cat_{(\infty, \infty)})^{**} \simeq \Cat_{(\infty, \infty)}^{**}$.  Since $\Gaunt \subset \Cat_{(\infty, \infty)}$ is dense and both versions of ``$\Sigma$'' are left adjoints, it suffices to compare the functors when restricted to $\Gaunt$. Since both categorical suspension functors send gaunt categories to gaunt categories, their equivalence on $\Gaunt$ follows directly from Lemma \ref{lem:homcatcorrect}.

Hence, their right adjoints agree which by Proposition~\ref{prop:adjunction-hom-sigma} implies the claim. 
\end{proof}

The remainder of this section describes the composition on morphism categories entirely in terms of $\Funoplax$. We expect that this could be used to directly build the comparison map between Campion's model of $\Cat_{(\infty,\infty)}$ in terms of sheaves on $\fancysquare$ and the enriched categories model used by \cite{MR3402334,MR4669810}. We will not give a complete comparison along these lines, but rather develop only what we will need for our application.

\begin{definition}\label{defn:cocategory}
As is standard, we will write $\Delta \subset \Gaunt$ for the category of simplices, and denote its $k$th object by $[k]$. In particular, $[0] = *$ and $[1] = \globe_1$.

An \define{(internal) category}\footnote{We do not impose any sort of (co)univalence axiom for our internal (co)categories.} in an $\infty$-category $\cE$ is a functor $X: \Delta^{\op} \to \cE$ satisfying the \define{Segal conditions} that the face maps $X_{[n]} \to X_{[1]}$ exhibit $X_{[n]}$ as the limit  $X_{[1]} \times_{X_{[0]}} \ldots \times_{X_{[0]}} X_{[1]}$ for all $n\geq 1$. 

  An \define{(internal) cocategory} in an $\infty$-category $\cE$ is a functor $X: \Delta \to \cE$ satisfying the \define{coSegal conditions} that the coface maps $X_{[1]} \to X_{[n]}$ exhibit $X_{[n]}$ as the colimit $X_{[1]} \sqcup_{X_{[0]}} \ldots \sqcup_{X_{[0]}} X_{[1]}$ for all $n\geq 1$.
  
 We denote by $\coCat(\cE)$ the full subcategory of $\Fun(\Delta, \cE)$ on the cocategories.  Since colimits in functor categories are computed pointwise, it follows that for $\infty$-categories $\cA, \cE$, there is a natural equivalence $$\coCat(\Fun(\cA, \cE)) \simeq \Fun(\cA, \coCat(\cE))$$ which we will often use implicitly. 
\end{definition}

\begin{example}\label{exm:internalcat-hom}
A key example of a cocategory in $\Cat_{(\infty, \infty)}$  is the full inclusion $\Delta \hookrightarrow \Cat_{(\infty, \infty)}$. This example of course already lives in the full sub-$1$-category of  gaunt $1$-categories. 

Dualizing, it follows that for any $(\infty, \infty)$-category $\cC$, the functor $\Funoplax(-, \cC): \Delta^{\op} \to \Cat_{(\infty, \infty)}$ is an internal category in $\Cat_{(\infty, \infty)}$ which encodes the coherently associative and unital composition operation on $\Arr{\cC} = \Funoplax([1], \cC)$. This may be used as a starting point for a comparison with $(\infty, \infty)$-categories defined via iterating internal categories. 
\end{example}

Just like the hom-categories are by Proposition~\ref{prop:adjunction-hom-sigma} usefully encoded via the left adjoint $\Sigma$, the composition operation on these hom-categories from Example~\ref{exm:internalcat-hom} are usefully encoded by the following generalization of $\Sigma$ parameterized over $\Delta$:
\begin{construction} \label{const:generalSigma}
Definition~\ref{defn:suspension} naturally generalizes to a functor
$$\Sigma(-,-):  \strCat_{n} \times \Delta \to \strCat_{n+1}$$ sending a $C\in \strCat_n$  and an $[k] \in \Delta$ to the strict $(n+1)$-category $\Sigma(C, [k])$ with objects $0, \ldots, k$ and strict hom-$n$-categories
\[\mathrm{Hom}_{\Sigma(C, [k])}(i, j) = \left\{ \begin{array}{ll} C^{\times(j-i)} & i \leq j, \\ \emptyset & i> j. \end{array} \right.
\]
The composition operations and functoriality are evident. Moreover, $\Sigma(-,-)$ manifestly satisfies the following $1$-categorical universal property: for each $[k] \in \Delta$ and $C,D \in \Gaunt$, there is a natural bijection
\begin{equation}\label{eq:natlSigma}\Map_{\Gaunt}(\Sigma(C, [k]), D) \simeq  \bigsqcup_{x_0, \ldots, x_n \in \ob D} \Map_{\Gaunt}(C, D(x_0, x_1) \times \cdots \times D(x_{k-1}, x_k)).\end{equation}

The functor $\Sigma(-,-)$ restricts to a functor $\Theta_n \times \Delta \to \Theta_{n+1}$. Starting with $\Theta_1 = \Delta$, we may compose the functors $\Theta_n \times \Delta \to \Theta_{n+1}$ to obtain a functor $\Delta \times \ldots \times \Delta \to \Theta_n$, which is used to compare the $n$-fold Segal space model and the $\Theta_n$-space model of $(\infty, n)$-categories. 

In the colimit, the functors $\Sigma: \Theta_n \times \Delta \to \Theta_{n+1}$ induce a functor $\Theta \times \Delta \to \Theta$, which we will also denote by $\Sigma(-,-)$.
\end{construction}{

We will now give a formula for $\Sigma(-,-)$ in terms of the Gray tensor product, generalizing Lemma~\ref{lemma:sigmaaspushout}. For this, we first make the following simple observation:

\begin{lemma} The functor $\Sigma: \Theta \times \Delta  \to \Theta$ defines a cocategory in $\Fun(\Theta, \Theta)$ (or equivalently, a functor $\Theta \to \coCat(\Theta)$), which is preserved by the inclusions $\Theta \to \Gaunt \to \Cat_{(\infty, \infty)}$ (i.e.\ $\Sigma$ also defines a functor $\Theta \to \coCat(\Gaunt)$ and $\Theta \to \coCat(\Cat_{(\infty, \infty)})$). 
\end{lemma}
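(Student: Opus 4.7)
The plan is to verify two things: first, that for each $C \in \Theta$, the functor $\Sigma(C, -) : \Delta \to \Theta$ satisfies the coSegal conditions, with pushouts computed already in $\Theta$; and second, that these pushouts are preserved under the inclusions $\Theta \to \Gaunt \to \Cat_{(\infty,\infty)}$. Since a functor $\Delta \to \Fun(\Theta,\Theta)$ is a cocategory object iff it is pointwise one, this amounts to showing the two coSegal conditions for each $C \in \Theta$ separately.

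I would first verify the coSegal conditions in $\Gaunt$, since $\Theta \hookrightarrow \Gaunt$ is fully faithful. The universal property~\eqref{eq:natlSigma} says that $\Map_\Gaunt(\Sigma(C,[n]), D)$ is naturally the space of tuples $(x_0,\ldots,x_n; \varphi_1,\ldots,\varphi_n)$ with $x_i \in \ob D$ and $\varphi_i : C \to D(x_{i-1},x_i)$. This is precisely the set of maps out of the iterated pushout $\Sigma(C,[1]) \sqcup_{\Sigma(C,[0])} \cdots \sqcup_{\Sigma(C,[0])} \Sigma(C,[1])$ in $\Gaunt$, by the $n=1$ case of the same universal property. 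Yoneda then identifies the pushout with $\Sigma(C,[n])$; and since $\Sigma(C,[n]) \in \Theta$, the pushout automatically lives in $\Theta$ and computes the pushout there as well.

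The substantive step is to show that this pushout is preserved by $\Gaunt \to \Cat_{(\infty,\infty)}$. For this I would establish the following generalization of Lemma~\ref{lemma:sigmaaspushout}: for any $C \in \Cat_{(\infty,\infty)}$ and any $n \geq 0$, there is a natural equivalence in $(\Cat_{(\infty,\infty)})_{* \sqcup \cdots \sqcup */}$
\[
\Sigma(C,[n]) \;\simeq\; \colim\!\left(\begin{tikzcd} C \otimes [n]^\delta \arrow[r] \arrow[d] & C \otimes [n] \\ {}[n]^\delta & \end{tikzcd}\right),
\]
where $[n]^\delta = \{0,1,\ldots,n\}$ is the discrete set of $n+1$ points (so $[n]^\delta = [n] \otimes \partial\globe_1$'s analog for a chain). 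Both sides are cocontinuous in $C$ and agree on the dense subcategory $\Theta \subset \Cat_{(\infty,\infty)}$: since $[n] \in \Theta$ lies in the idempotent completion of $\fancysquare$, Warning~\ref{warn:GrayGaunt} guarantees that $C \otimes [n]$ computed in $\Cat_{(\infty,\infty)}$ agrees with $C \otimes^{\strict} [n]$ for gaunt $C$, and then the formula reduces to the straightforward strict/gaunt identification via~\eqref{eq:natlSigma}. The $n=1$ case is precisely Lemma~\ref{lemma:sigmaaspushout}.

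Granting that formula, the conclusion follows by standard colimit-commutation: writing $[n]$ as the iterated pushout $[1] \sqcup_{[0]} \cdots \sqcup_{[0]} [1]$ in $\Cat_{(\infty,\infty)}$ (a colimit preserved from $\Delta$), applying $C \otimes (-)$ and $[n]^\delta = \{0\} \sqcup \cdots \sqcup \{n\}$ componentwise, and using that $\otimes$ preserves colimits in each variable and that colimits commute with colimits, we obtain
\[
\Sigma(C,[n]) \;\simeq\; \Sigma(C,[1]) \sqcup_* \Sigma(C,[1]) \sqcup_* \cdots \sqcup_* \Sigma(C,[1])
\]
in $\Cat_{(\infty,\infty)}$, which is exactly the preserved coSegal pushout. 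The main obstacle is the generalization of Lemma~\ref{lemma:sigmaaspushout} above; once the $(\infty,\infty)$-categorical and strict lax tensor products are reconciled on $\Theta$ via Warning~\ref{warn:GrayGaunt} and Lemma~\ref{lem:propertiesofFunoplax:others}, the rest is a diagram chase on the dense subcategory $\Theta$.
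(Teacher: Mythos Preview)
Your first paragraph—verifying the coSegal pushouts in $\Gaunt$ via the universal property~\eqref{eq:natlSigma}, then noting they land in $\Theta$—is correct and matches the paper. The gap is in the second step. You want the general-$n$ pushout formula $\Sigma(C,[n]) \simeq (C \otimes [n]) \sqcup_{C \otimes [n]^\delta} [n]^\delta$ in $\Cat_{(\infty,\infty)}$, arguing that for $C \in \Theta$ this ``reduces to the straightforward strict/gaunt identification via~\eqref{eq:natlSigma}.'' But~\eqref{eq:natlSigma} only computes the pushout in $\Gaunt$; the inclusion $\Gaunt \hookrightarrow \Cat_{(\infty,\infty)}$ is a \emph{right} adjoint and does not preserve arbitrary pushouts, so knowing the inputs are gaunt does not suffice. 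The $n=1$ case (Lemma~\ref{lemma:sigmaaspushout}) works precisely because Campion proves that this particular pushout is preserved; you supply no analogous input for general $n$. Worse, the general-$n$ formula is derived in the paper \emph{after} the present lemma (as~\eqref{eq:sigma-n-pushout}, via Lemma~\ref{lem:coffSigma}), and given Lemma~\ref{lemma:sigmaaspushout} together with the spine decomposition of $[n]$, it is in fact \emph{equivalent} to the coSegal condition you are trying to prove—so the route is circular.

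The paper's proof is one sentence: the pushouts $\Sigma(C,[1]) \sqcup_* \cdots \sqcup_* \Sigma(C,[1]) \simeq \Sigma(C,[n])$ are exactly the ``compositional colimits'' of~\cite{MR4301559}, which are axiomatically preserved by $\Theta \hookrightarrow \Gaunt \hookrightarrow \Cat_{(\infty,\infty)}$. You already invoke one such colimit yourself—the spine decomposition $[n] \simeq [1] \sqcup_{[0]} \cdots \sqcup_{[0]} [1]$—without comment; the colimit you actually need is of exactly the same type, and citing it directly is both simpler and complete.
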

\begin{proof}The coSegal condition in $\Fun(\Theta, \Theta)$ can be verified pointwise for each $c\in \Theta$ and follow since  the coface maps exhibit $\Sigma(c, [n])$ as the colimit \[ \Sigma(c,[1]) \sqcup_{\Sigma(c,[0])} \cdots \sqcup_{\Sigma(c, [0])} \Sigma(c, [1])\] in  $\Theta$. This colimit is  is one of the ``compositional colimits'' from \cite{MR4301559} which is preserved by the inclusions $\Theta \subseteq \Gaunt \subseteq \Cat_{(\infty, \infty)}$. \end{proof}

In the sequel we will abuse notation and simply write $\Sigma$ also for this functor $\Theta \to \coCat(\Cat_{(\infty, \infty)})$. 

\begin{construction} \label{cons:Sigma-map}
We will now construct a natural transformation of functors  \[ - \otimes - \To \Sigma(-,-) : \Theta \times \Delta \to \Gaunt.\]

Consider the following function of sets, natural in $[n] \in \Delta, C\in \Theta$ and $D \in \Gaunt$ (note that $C\otimes [n]$ agrees with the strict Gray product and hence is gaunt):
\begin{align*}\Map_{\Gaunt}(\Sigma  (C, [n]), D)
 & \simeq \bigsqcup_{x_0, \ldots, x_n \in \mathrm{ob}D} \Map_{\Gaunt}(C, D(x_0, x_1) \times \ldots \times D(x_{n-1}, x_n)) \\ & \to   \Map_{\Gaunt}(C, \Funoplax([1], D) \times_{D} \cdots \times_D \Funoplax([1], D))  \\ & \simeq  \Map_{\Gaunt}(C, \Funoplax([n], D)) \simeq \Map_{\Gaunt}(C\otimes [n] , D).\end{align*}
 Here,
 the first line is precisely {\eqref{eq:natlSigma}}. The non-invertible map in the second line arises by expressing the hom-category $D(x_0, x_1)$ via Lemma~\ref{lem:homcatcorrect} as in Definition~\ref{def:hom-cat} as a fiber of $\Arr{D} \to D\times D$, and using the inclusion $D(x_0, x_1) \to \Arr{D} := \Funoplax([1], D)$.  By the Yoneda lemma, this assembles into the desired natural transformation. 
\end{construction}

In a monoidal $\infty$-category $\cV$ with finite colimits separately preserved by both slots of the tensor product functor,  the tensor product $V\otimes X$ of an object $V\in \cV$ and a cocategory  $X$ is again a cocategory. Hence, the functor $-\otimes - : \Theta \times \Delta \to \Gaunt$ (and its composite $\Theta \times \Delta \to \Gaunt \to \Cat_{(\infty, \infty)}$) defines a cocategory in $\Fun(\Theta, \Gaunt)$ (resp. in $\Fun(\Theta, \Cat_{(\infty, \infty)})$) and thus the functor from Construction~\ref{cons:Sigma-map} is a map of cocategories.

We record the following for use in Corollary~\ref{cor:sigmacircle}:

\begin{definition}\label{defn:coff} A map of cocategories $X\to Y$ in an $\infty$-category $\cE$ is called \emph{co-fully-faithful}, or \emph{co-ff}, if the commutative square 
\[\begin{tikzcd}
X_{[0]} \sqcup X_{[0]} \arrow[r] \arrow[d] & Y_{[0]} \sqcup Y_{[0]} \arrow[d]\\
X_{[1]} \arrow[r] & Y_{[1]}
\end{tikzcd}
\]
is a pushout.
\end{definition}
Indeed, since $X$ and $Y$ are cocategories, the condition of Definition~\ref{defn:coff} is equivalent to the assertion that the squares 
\begin{equation} \label{eq:nsquare}
\begin{tikzcd}
 \bigsqcup_{\ob [n]} X_{[0]}  \arrow[r] \arrow[d] &  \bigsqcup_{\ob [n]} Y_{[0]}  \arrow[d]\\
X_{[n]} \arrow[r] & Y_{[n]}
\end{tikzcd}
\end{equation}
are pushouts for all $n\geq 0$. 

\begin{lemma} \label{lem:coffSigma} Considering the transformation from Construction~\ref{cons:Sigma-map} as a map of cocategories in $\Fun(\Theta, \Gaunt)$ or as a map of cocategories in $\Fun(\Theta, \Cat_{(\infty, \infty)})$, it is co-ff. \end{lemma}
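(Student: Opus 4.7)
The plan is to verify Definition~\ref{defn:coff} directly, reducing the claim to Lemma~\ref{lemma:sigmaaspushout}. Pushouts in the functor category $\Fun(\Theta, \cE)$ --- for either $\cE = \Gaunt$ or $\cE = \Cat_{(\infty,\infty)}$ --- are computed pointwise. Consequently, the transformation of cocategories $-\otimes - \to \Sigma(-,-)$ is co-ff if and only if, for every $C \in \Theta$, the induced map of cocategories $C \otimes [-] \to \Sigma(C, [-])$ in $\cE$ is co-ff. Unpacking Definition~\ref{defn:coff} and using the identifications $C \otimes [0] = C$, $\Sigma(C, [0]) = *$, $C \otimes [1] = C \otimes \globe_1$, and $\Sigma(C, [1]) = \Sigma C$, the required square becomes
\[
\begin{tikzcd}
C \sqcup C \arrow[r] \arrow[d] & * \sqcup * \arrow[d]\\
C \otimes \globe_1 \arrow[r] & \Sigma C
\end{tikzcd}
\]
which is precisely the pushout square of Lemma~\ref{lemma:sigmaaspushout}.

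Two small checks complete the argument. First, one must confirm that the map $C \otimes \globe_1 \to \Sigma C$ appearing in this square is the one produced by Construction~\ref{cons:Sigma-map}. Unwinding: the construction sends a map $\Sigma C \to D$ with chosen basepoints $(x_0, x_1)$ --- equivalently, via Proposition~\ref{prop:adjunction-hom-sigma}, a map $C \to D(x_0, x_1)$ --- to its postcomposition with the fiber inclusion $D(x_0, x_1) \hookrightarrow \Funoplax([1], D)$, yielding by adjunction a map $C \otimes [1] \to D$. Testing against $D = \Sigma C$ with $(x_0, x_1)$ the two distinguished objects and the identity map identifies the resulting natural transformation $C \otimes \globe_1 \to \Sigma C$ with the pushout-induced map. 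Second, the pushout must hold in both $\Gaunt$ and $\Cat_{(\infty,\infty)}$: this is already recorded inside the proof of Lemma~\ref{lemma:sigmaaspushout}, which uses \cite[Lem.~3.8]{2311.00205} together with \cite[Cor.~B.6.6]{MR4146146} to match the $\Cat_{(\infty,\infty)}$-pushout with the $\Gaunt$-pushout whenever $C \in \Theta$.

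There is essentially no hard step: the entire proof is a translation of the definition of co-ff followed by an appeal to Lemma~\ref{lemma:sigmaaspushout}. The only mild piece of bookkeeping is verifying that the Yoneda recipe of Construction~\ref{cons:Sigma-map} reproduces the canonical pushout-induced comparison map, which is a purely formal check.
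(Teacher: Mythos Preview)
Your proposal is correct and follows exactly the same approach as the paper's proof: reduce to a pointwise check via the fact that colimits in functor categories are computed objectwise, then invoke Lemma~\ref{lemma:sigmaaspushout}. The paper's proof is a single sentence; your additional checks (that Construction~\ref{cons:Sigma-map} reproduces the pushout-induced comparison, and that the pushout holds in both $\Gaunt$ and $\Cat_{(\infty,\infty)}$) are reasonable bookkeeping that the paper leaves implicit.
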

\begin{proof}
Since colimits in functor categories are computed pointwise, it suffices to verify Definition~\ref{defn:coff} for a fixed $C\in \Theta$, where it follows from Lemma~\ref{lemma:sigmaaspushout}.
\end{proof}
In particular, Lemma~\ref{lem:coffSigma} implies that the squares~\eqref{eq:nsquare} are pushouts which unwinds to the following squares being pushouts for all $\cC \in \Theta$ and $[n] \in \Delta$, generalizing Lemma~\ref{lemma:sigmaaspushout}:
\begin{equation}\label{eq:sigma-n-pushout}
\begin{tikzcd}
\cC \otimes \ob [n] \arrow[r] \arrow[d] &  \ob [n]  \arrow[d] \\ \cC \otimes [n] \arrow[r] & \Sigma (\cC, [n]) 
\end{tikzcd} .\end{equation}

\begin{remark}
Just as in Proposition~\ref{prop:adjunction-hom-sigma}, it follows from~\eqref{eq:sigma-n-pushout} that the cocontinuous extension $\Sigma(-,[n]): \Cat_{(\infty, \infty)} \to (\Cat_{(\infty, \infty)})_{\ob [n]/}$ is left adjoint to the functor $\Funoplax_{\ob[n]/}([n], -): (\Cat_{(\infty, \infty)})_{\ob[n]/} \to \Cat_{(\infty, \infty)}$ and hence may be used to characterize the composition law on $(\infty, \infty)$-categories from Example~\ref{exm:internalcat-hom}. 
\end{remark}

\subsection{The lax smash product \texorpdfstring{$\owedge$}{owedge}}
\label{subsec:owedge}

In general, given any $\infty$-category $\cA$ with a terminal object $*$, the under-$\infty$-category $\cA^*:= \cA_{*/}$  is called the \emph{$\infty$-category of pointed objects in $\cA$}. Informally, an object of $\cA^*$ is an object $A \in \cA$ together with a map $a : * \to A$; a 1-morphism in $\cA^*$ is a morphism $f : A \to B$ and a higher isomorphism $fa \simeq b : * \to B$; and  so on.
 If $\cA$ has finite colimits, given any morphism $f:A\to B$ in $\cA$, there is an induced \emph{quotient object} $B/A \in \cA^*$ defined as the colimit in $\cA$ 
\[B/A := \colim(* \leftarrow A \to B )
\]
with evident pointing. This defines a cocontinuous functor $\Arr\cA \to \cA^*$. 

 If $\cA$ has coproducts, the forgetful functor $\cA^* \to \cA$ admits a left adjoint $(-)_+:\cA \to \cA^*, a \mapsto a \sqcup * = A/\emptyset$, where $\emptyset \in \cA$ denotes the initial object. 
 
 \begin{prop}[{c.f.~\cite[Thm.~5.1]{MR3450758}}]
 \label{prop:generalsmash}
 If $(\cA, \otimes)$ is a presentably monoidal $\infty$-category, then there is a unique presentably monoidal structure $\owedge$ on $\cA^*$ for which the left adjoint $(-)_+:\cA \to \cA^*$ is monoidal. Explicitly, the tensor unit in $\cA^*$ is given by $(1_{\cA})_+ = 1_{\cA} \sqcup *$ and the tensor product in $\cA^*$ of two pointed objects $a$ and $b$ is given by the formula \[a\owedge b:= (a\otimes b)/\bigl((a\otimes *) \underset{ * \otimes *} \sqcup (* \otimes b) \bigr).\] 
 \end{prop}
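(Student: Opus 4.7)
The plan has three parts: derive the formula as forced by the axioms (which yields uniqueness), verify the easy existence claims by direct computation, and invoke the $\infty$-categorical theory of monoidal localizations in $\Pr^L$ to assemble the coherence data.

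For uniqueness, I would first observe that every pointed object $(A, a : * \to A) \in \cA^*$ is the pushout in $\cA^*$ of the basepoint-forgetting presentation $A \simeq A_+ \sqcup_{*_+} *_+$, where the wedge collapses the free basepoint of $A_+$ to the chosen basepoint of $A$. Hence the objects of the form $a_+$ generate $\cA^*$ under colimits. Any presentably monoidal $\owedge$ making $(-)_+$ monoidal must be cocontinuous in each variable and satisfy $a_+ \owedge b_+ \simeq (a \otimes b)_+$; applying these two constraints to the pushout presentations of two arbitrary pointed objects and unwinding yields precisely the stated pushout formula for $\owedge$.

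For existence, the formula $a \owedge b := (a \otimes b)/((a \otimes *) \sqcup_{* \otimes *} (* \otimes b))$ manifestly defines a bifunctor $\cA^* \times \cA^* \to \cA^*$ that is cocontinuous in each variable: it is built from $\otimes$, which is cocontinuous in each variable by the presentable monoidality of $\cA$, together with finite colimits, and colimits commute with colimits. Biclosedness is then automatic by the $\infty$-categorical adjoint functor theorem. The unit and basic associator equivalences can be produced by direct calculation in $\cA$; for instance $(1_\cA \sqcup *) \otimes b \simeq b \sqcup (* \otimes b)$ by distributivity, and the pushout denominator absorbs the unwanted summand to yield $b$.

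The main obstacle is assembling all this into a coherent $\EE_1$-algebra structure in $\Pr^L$, rather than just a monoidal structure up to the first few homotopies. The cleanest route is via the general result \cite[Thm.~5.1]{MR3450758}, which upgrades the pointing construction $\cA \mapsto \cA^*$ to a functor on the $\infty$-category of presentably monoidal categories and cocontinuous monoidal functors; this produces the coherent monoidal structure on $\cA^*$ together with the monoidality of $(-)_+$, and the explicit formula is recovered from the uniqueness argument above. A more hands-on alternative would be to realize $\cA^*$ as a monoidal Bousfield-style reflective localization of a larger presentably monoidal $\infty$-category built from $\cA$ --- for instance by working inside $\Arr{\cA}$ equipped with the pointwise tensor product and reflecting onto arrows with source $*$ --- and to check that the resulting class of local equivalences is closed under tensoring with arbitrary objects, so that Lurie's general theory of monoidal localizations applies to produce $\owedge$ with all required coherence.
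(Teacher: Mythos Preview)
Your uniqueness argument and derivation of the explicit formula are essentially identical to the paper's: both observe that any pointed object is a pushout $\colim(* \leftarrow *_+ \to a_+)$ in $\cA^*$, so that the values of $\owedge$ are forced by cocontinuity and monoidality of $(-)_+$.

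Where you diverge is in the construction of the coherent monoidal structure. The paper does not invoke \cite{MR3450758} for existence, nor does it build a localization; instead it uses a one-line trick you may find cleaner than either of your two routes. Namely, \cite[Ex.~4.8.1.21]{HA} gives an equivalence $\cA^* \simeq \cA \boxtimes \Spaces^*$ in $\Pr^L$, where $\boxtimes$ is Lurie's tensor product of presentable categories. Since $\Spaces^*$ is already a (commutative) algebra in $\Pr^L$ via the smash product, and $\cA$ is an algebra by hypothesis, their tensor product $\cA \boxtimes \Spaces^*$ is automatically an algebra in $\Pr^L$ --- this packages all coherence data for free. Monoidality of $(-)_+$ then falls out because under this identification it becomes $\id_\cA \boxtimes (-)_+ : \cA \boxtimes \Spaces \to \cA \boxtimes \Spaces^*$, and $(-)_+ : \Spaces \to \Spaces^*$ is symmetric monoidal. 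Your localization sketch would also work in principle but requires more setup; the $\boxtimes \Spaces^*$ argument is the standard move and avoids having to check any compatibility conditions by hand.
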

 \begin{proof}
Recall from~\cite[Example~4.8.1.21]{HA} that $\cA^* \simeq \cA \boxtimes \Spaces^*$, where $\boxtimes$ denotes Lurie's tensor product in $\PrL$. The smash product of pointed spaces makes $\Spaces^*$ into a presentably symmetric monoidal category~\cite[Remark~4.8.2.14]{HA}, and hence $\cA^* \simeq \cA \otimes \Spaces^*$ inherits a presentably monoidal structure as a tensor product of algebras in the symmetric monoidal category $\PrL$. Moreover, under this equivalence, $(-)_+: \cA \to \cA^*$ becomes the functor $\cA \simeq \cA \boxtimes \Spaces \stackrel{\id_{\cA} \otimes (-)_+}{\to} \cA \boxtimes \Spaces^*$. Since $\Spaces \to \Spaces^*$ is symmetric monoidal, it follows that also $\cA \to \cA^*$ is monoidal. Uniqueness is proven analogously to~\cite[Thm.~5.1]{MR3450758}. For the explicit formula, note that any pointed object $a$ in $\cA^*$ may be written as the pushout $\colim(* \leftarrow (*)_+ \to a_+)$ in $\cA^*$. The formula then follows using that $(-)_+$ is monoidal and $\owedge$ preserves colimits in both variables. 
 \end{proof}
 \begin{definition} Given a presentably monoidal $\infty$-category $(\cA, \otimes)$, we will refer to the monoidal structure $\owedge$ on $\cA^*$ from Proposition~\ref{prop:generalsmash} as the \emph{smash product  associated to~$\otimes$}. \end{definition}
 
 As a special case, for $x \in \cA$ and $a \in \cA_*$, there are isomorphisms in $\cA_*$ 
  \begin{equation}\label{eq:plussmash}
 x_+ \owedge a \simeq  (x\otimes a)/(x\otimes *) \hspace{1cm} a \owedge x_+ \simeq (a \otimes x)/(* \otimes x) .
 \end{equation}
 
 \begin{lemma}
 \label{lem:generalsmashhom}
 Let $(\cA, \otimes)$ be a presentably monoidal $\infty$-category with associated presentably monoidal $(\cA^*, \owedge)$ as in Proposition~\ref{prop:generalsmash}. Then, the left (resp.\ right) internal hom $\eHom_{\cA^*}(-,-)$ of $\cA^*$ can be computed in terms of the left (resp.\ right) internal hom $\eHom_{\cA}(-, -)$ of $\cA$ as the following pullback in $\cA$ (equipped with its evident pointing): 
\[
\eHom_{\cA^*}((A, a), (B, b)) \simeq \eHom_{\cA}(A, B) \underset{\eHom_{\cA}(*, B)}\times  *  .
\]
Here, $\eHom_{\cA}(A, B) \to \eHom_{\cA}(*, B)$ is given by restriction along $* \overset a \to A$, and
 $* \to \eHom_{\cA}(*, B)$ is given by currying the map $* \otimes * \to * \overset b \to B$. 
 \end{lemma}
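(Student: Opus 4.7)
The plan is to verify the formula for the left internal hom by establishing a natural equivalence
\[ \Map_{\cA^*}\bigl((X, x),\, \eHom_{\cA}(A, B) \underset{\eHom_{\cA}(*, B)}\times * \bigr) \simeq \Map_{\cA^*}\bigl((A, a) \owedge (X, x),\, (B, b)\bigr) \]
for every pointed object $(X, x) \in \cA^*$, then invoking the Yoneda lemma; the right internal hom case is symmetric (with the $\owedge$-factors swapped). Write $P := \eHom_{\cA}(A, B) \underset{\eHom_{\cA}(*, B)}\times *$ with the pointing described in the statement.

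First I would unwind the right-hand side. By Proposition~\ref{prop:generalsmash}, $(A, a) \owedge (X, x) = (A \otimes X)/\bigl((A \otimes *) \sqcup_{* \otimes *} (* \otimes X)\bigr)$. Since $\cA^* = \cA_{*/}$ is a slice $\infty$-category, maps in $\cA^*$ out of a pointed pushout amount to maps in $\cA$ out of the top corner together with coherent identifications of their restrictions with the basepoint on the target. Thus the right-hand side becomes the iterated fiber of $\Map_\cA(A \otimes X, B)$ over $\Map_\cA(A \otimes *, B)$ and $\Map_\cA(* \otimes X, B)$ (along the two restrictions) at the constant maps determined by $b: * \to B$.

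Second I would unwind the left-hand side. By the same slice structure, $\Map_{\cA^*}((X, x), P)$ is the fiber of $\Map_\cA(X, P) \to \Map_\cA(*, P)$ over the chosen basepoint of $P$, which by construction corresponds to the map $* \to \eHom_\cA(*, B)$ adjoint to $b: * \otimes * \simeq * \to B$. Because $\Map_\cA(X, -)$ preserves pullbacks, and the defining adjunction of the left internal hom in $\cA$ gives $\Map_\cA(X, \eHom_\cA(Y, -)) \simeq \Map_\cA(Y \otimes X, -)$, this expression converts directly into the iterated fiber described in the previous paragraph.

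The hard part will be the careful bookkeeping of basepoints: one must verify that the pullback pointing on $P$ matches on the nose the basepoint conditions imposed by $x$, $a$, and $b$ on both sides of the equivalence, and that the two restriction maps appearing in the iterated fiber come out right under the adjunction. Since every operation used is natural in $(X, x) \in \cA^*$, the Yoneda lemma then promotes the equivalence of mapping spaces into the claimed isomorphism of internal homs in $\cA^*$.
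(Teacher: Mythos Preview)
Your proposal is correct and takes essentially the same approach as the paper: the paper's proof is the single sentence ``This is a direct consequence of the defining universal property of internal homs and the explicit formula for $\owedge$ from Proposition~\ref{prop:generalsmash},'' and you have spelled out precisely that unfolding via Yoneda, the pushout formula for $\owedge$, and the adjunction defining $\eHom_\cA$.
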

 \begin{proof}
 This is a direct consequence of the defining universal property of internal homs and the explicit formal for $\owedge$ from Proposition~\ref{prop:generalsmash}. 
 \end{proof}

We will apply this general theory to the case of $\cA = \Cat_{(\infty,\infty)}$ with its Gray tensor product. An object of $\Cat_{(\infty, \infty)}^* = (\Cat_{(\infty, \infty)})_{*/}$ is then a \define{pointed $(\infty,\infty)$-category}: an $(\infty,\infty)$-category $\cC$ equipped with an object $1_\cC \in \cC$. A 1-morphism from $(\cC, 1_\cC)$ to $(\cD, 1_\cD)$ is a functor $F : \cC \to \cD$ together with an isomorphism $F 1_\cC \simeq 1_\cD$.

\begin{definition}\label{defn:funoplaxstar}
We denote the \define{lax smash product} on $\Cat_{(\infty, \infty)}^*$ by $\owedge$ and its internal homs  --- computed in terms of $\Funoplax$ and $\Funlax$ as in Lemma~\ref{lem:generalsmashhom} --- by $\Funoplax_*$ and $\Funlax_*$. 
\end{definition}

The lax smash product $\owedge$ is also defined and analyzed in~\cite{NarukiThesis}.

\subsection{Directed loop categories}

\begin{definition} The \emph{directed circle} $\Sone \in \Cat_{(\infty, \infty)}^*$ is the quotient $\globe_1/\partial\globe_1$ of the boundary inclusion $* \sqcup * = \partial\globe_1 \hookrightarrow \globe_1$. 
\end{definition}

\begin{remark} Since the full subcategory $\Cat_{(\infty,1)} \subseteq \Cat_{(\infty, \infty)}$ is closed under colimits, $\Sone$ is an object of the full subcategory $\Cat_{(\infty,1)}^*$. It is equivalent to the $1$-category $B \mathbb{N}$, the delooping of the additive monoid of natural numbers; this is because $\bN$ is the free associative monoid on a single generator not only in the strict set-theoretic world but also in the homotopical world of spaces.
\end{remark}

\begin{definition} \label{def:Omega}We define\footnote{Given a pointed object $* \to X$ in an $\infty$-category with finite limits, some authors use the notation ``$\Omega X$'' for the pullback $* \times_X *$; for $(x:*\to X)\in\Cat^*_{(\infty, \infty)}$ this computes the space $\Map_*(S^1, X)$ of \emph{automorphisms of $x$ in $X$}. Our $\Omega$ instead uses the directed circle $\Sone$, hence computes \emph{endomorphisms of $x$ in $X$}, and so perhaps deserves the name $\vec{\Omega}$. As we will not use the undirected circle $S^1$ in the sequel we will write the simpler $\Omega$ rather than $\vec{\Omega}$.}
 the functor \[\Omega  := \Funoplax_*(\Sone,-) :\Cat_{(\infty, \infty)}^* \to \Cat_{(\infty, \infty)}.\]\mbox{}\\[-26pt]\end{definition}

Comparing with Definition~\ref{def:hom-cat}, for a fixed pointed $(\infty, \infty)$-category $(\cC,1_\cC)$, we have:
\begin{equation}
\label{eq:OmegaC}
\Omega \cC:= \cC(1_\cC,1_\cC) = \{1_\cC\} \times_{\cC} {\Arr{\cC}} \times_{\cC}\{1_\cC\}.
\end{equation}\mbox{}\\[-26pt]

\begin{lemma} \label{lem:surjectivereduces}
If $\cC$ is a pointed $(\infty,n)$-category for which the pointing $* \to \cC$ is essentially surjective on objects and if $\cD$ is any pointed $(\infty, n)$-category, then $\Funoplax_*(\cC, \cD)$ is an $(\infty, n-1)$-category. In particular, if $\cC$ is a pointed $(\infty,n)$-category, then $\Omega\cC$ is an $(\infty,n-1)$-category.
\end{lemma}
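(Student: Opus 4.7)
The plan is a direct chaining of previously established lemmas. First, I would use Lemma~\ref{lem:generalsmashhom} to write
\[
\Funoplax_*(\cC, \cD) \simeq \Funoplax(\cC, \cD) \underset{\Funoplax(*, \cD)}\times *,
\]
where the map $\Funoplax(\cC, \cD) \to \Funoplax(*, \cD)$ is restriction along the pointing $*\to \cC$, and $* \to \Funoplax(*, \cD)\simeq \cD$ picks out $1_\cD$. Since $\cC$ and $\cD$ are $(\infty,n)$-categories, Lemma~\ref{lem:propertiesofFunoplax:others}(1) implies that $\Funoplax(\cC,\cD)$ and $\Funoplax(*, \cD)\simeq \cD$ are $(\infty,n)$-categories, so the pullback can be computed in $\Cat_{(\infty, n)}$ (the inclusion $\Cat_{(\infty,n)} \hookrightarrow \Cat_{(\infty,\infty)}$ is a right adjoint and therefore preserves limits).

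Next, I would appeal to Lemma~\ref{lem:surjectiveergoconservative}: because the pointing $*\to \cC$ is essentially surjective by hypothesis, the restriction functor $\Funoplax(\cC, \cD) \to \Funoplax(*, \cD)$ is conservative on $n$-morphisms. The desired conclusion then follows from Lemma~\ref{lem:consequences-conservative}(3), which says that the fiber of any conservative-on-$n$-morphisms functor over a fixed object (here $1_\cD \in \cD \simeq \Funoplax(*,\cD)$) is an $(\infty, n-1)$-category.

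For the ``in particular'' clause, it suffices to check that $\Sone$ is a pointed $(\infty,n)$-category whose pointing $* \to \Sone$ is essentially surjective. By construction, $\Sone = \globe_1/\partial\globe_1$ is the pushout identifying the two endpoints of $\globe_1$, so it has a single object (up to equivalence); the pointing is then essentially surjective by inspection. Applying the first part with this $\cC = \Sone$ and with $\cD = \cC$ (the ambient pointed $(\infty,n)$-category) yields that $\Omega\cC = \Funoplax_*(\Sone, \cC)$ is an $(\infty,n-1)$-category.

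No step here should present a real obstacle; the proof is a routine assembly of the three preparatory lemmas. The only thing worth being careful about is tracking that the $\Cat_{(\infty,\infty)}$-level pullback defining $\Funoplax_*$ coincides with the $\Cat_{(\infty,n)}$-level pullback needed to apply Lemma~\ref{lem:consequences-conservative}(3), which is immediate from Lemma~\ref{lem:propertiesofFunoplax:others}(1) together with the limit-preservation of $\Cat_{(\infty,n)} \hookrightarrow \Cat_{(\infty,\infty)}$.
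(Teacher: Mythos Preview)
Your proof is correct and follows essentially the same route as the paper: express $\Funoplax_*(\cC,\cD)$ as a fiber via Lemma~\ref{lem:generalsmashhom}, apply Lemma~\ref{lem:surjectiveergoconservative} to obtain conservativity on $n$-morphisms, and conclude with Lemma~\ref{lem:consequences-conservative}. You add a bit more detail (tracking where the pullback lives and handling the ``in particular'' clause explicitly), but the argument is the same.
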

\begin{proof}
By Lemma~\ref{lem:generalsmashhom},  $\Funoplax_*(\cC, \cD)$ is the fiber of the restriction functor $\Funoplax(\cC, \cD) \to \Funoplax(*, \cD)$ at the pointing of $\cD$. Since $* \to \cC$ is essentially surjective on objects, this functor is conservative on $n$-morphisms by Lemma~\ref{lem:surjectiveergoconservative} and hence the fiber is an $(\infty, n-1)$-category by Lemma~\ref{lem:consequences-conservative}. 
\end{proof}
\begin{remark}
In~\cite[Def. 5.3.1]{2401.02956}, the notions of surjectivity and fullness of functors are generalized to a notion of ``$k$-surjectivity'' of a functor between $(\infty, n)$-categories. We expect an analogous statement and proof as in Lemma~\ref{lem:surjectivereduces}: if $* \to \cC$ is $k$-surjective, then $\Funoplax_*(\cC,-)$ should reduce category number by $k$ levels. Conversely, we expect that if $* \to \cC$ is $k$-surjective and $* \to \cD$ is $r$-surjective, then $* \to \cC \owedge \cD$ should be $(k+r)$-surjective. We will not pursue these more general statements in this paper.
\end{remark}

Intuitively, composition defines a monoidal structure on $\Omega \cC$, which we will now explicitly unpack.

 Recall from Definition~\ref{defn:cocategory} the notion of ``cocategory'' in an $\infty$-category $\cE$: a functor $X : \Delta \to \cE$ satisfying the coSegal conditions.
\begin{definition}
  A \define{comonoid} in an $\infty$-category $\cE$ is a cocategory $X$ for which $X_{[0]}$ is initial. We will denote by $\coMon(\cE)$ the full subcategory of (the full subcategory $\coCat(\cE)$ of) $\Fun(\Delta,\cE)$ on the comonoids. Since colimits in functor categories are computed pointwise, it follows that for $\infty$-categories $\cA, \cE$, there is a natural equivalence $$\coMon(\Fun(\cA, \cE)) \simeq \Fun(\cA, \coMon(\cE))$$ which we will often use implicitly.\end{definition}

\begin{construction} \label{cons:cocattocomon} Let $\cE$ be an $\infty$-category with finite colimits. We construct a functor 
\[\overline{(-)}: \coCat(\cE) \to \coMon(\cE_*)
\]
as follows:
The functor $\mathrm{ev}_{[0]}: \Fun(\Delta, \cE) \to \cE$ has a left adjoint, sending  $e\in \cE$ to the left Kan extension of $\{e\} \to \cE$ along $\{[0]\} \to \Delta$, i.e.\ explicitly to the cosimplicial object $[n] \mapsto   \bigsqcup_{\ob [n]} c$. The counit of this adjunction defines a functor $\Fun(\Delta, \cE) \to \Fun([1], \Fun(\Delta, \cE)) \simeq \Fun(\Delta, \Fun([1], \cE))$, which is cocontinuous since the right adjoint $\mathrm{ev}_0: \Fun(\Delta, \cE) \to \cE$ is cocontinuous. 
 Composing with the cocontinuous quotient functor $\Fun([1], \cE) \to \cE_*$  results in a cocontinuous functor $\Fun(\Delta, \cE) \to \Fun(\Delta, \cE_*)$, which restricts by cocontinuity to a functor $\coCat(\cE) \to \coCat(\cE_*)$. Unpacked, this functor sends a cocategory $X$ in $\cE$ to the cocategory $\overline{X}$ in $\cE_*$ with $n$th object given by $ \overline{X}_{[n]}:= X_{[n]}/(\bigsqcup_{\ob [n]} X_{[0]})$ in $\cE_*$. In particular, $\overline{X}_{[0]} = *$ is initial in $\cE_*$ and hence the image of any cocategory is indeed a comonoid.
\end{construction}

\begin{example}
  Take $\cE = \Spaces$, and input the terminal cocategory $*$ into Construction~\ref{cons:cocattocomon}. The output is the canonical comonoid structure on $S^1 \in \Spaces^*$.
\end{example}

\begin{lemma} \label{lem:coffiso} Let $\cC$ be an $\infty$-category with finite colimits. The functor $\coCat(\cC) \to \coMon(\cC_*)$ from Construction~\ref{cons:cocattocomon} sends co-ff maps of cocategories to isomorphisms.
\end{lemma}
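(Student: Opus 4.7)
The plan is to reduce the claim to a pointwise statement and then invoke the pasting law for pushouts. Since isomorphisms in the full subcategory $\coMon(\cE_*) \subseteq \Fun(\Delta, \cE_*)$ are detected pointwise, and since a morphism in the undercategory $\cE_* = \cE_{*/}$ is an equivalence iff it is an equivalence in $\cE$, it suffices to show that for each $[n] \in \Delta$ the induced $\cE$-morphism $\overline{X}_{[n]} \to \overline{Y}_{[n]}$ is an equivalence.

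By Construction~\ref{cons:cocattocomon}, the object $\overline{Y}_{[n]} \in \cE$ is the pushout
\[ \overline{Y}_{[n]} \; \simeq \; \colim\Bigl(* \longleftarrow \bigsqcup_{\ob[n]} Y_{[0]} \longrightarrow Y_{[n]} \Bigr), \]
and similarly for $\overline{X}_{[n]}$. The assumption that $X \to Y$ is co-ff ensures, via the pushout squares~\eqref{eq:nsquare} noted after Definition~\ref{defn:coff}, that the left square in the rectangle
\[ \begin{tikzcd}
\bigsqcup_{\ob[n]} X_{[0]} \arrow[r] \arrow[d] & \bigsqcup_{\ob[n]} Y_{[0]} \arrow[r] \arrow[d] & * \arrow[d]\\
X_{[n]} \arrow[r] & Y_{[n]} \arrow[r] & \overline{Y}_{[n]}
\end{tikzcd} \]
is a pushout. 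The right square is a pushout by the explicit formula for $\overline{Y}_{[n]}$. By the pasting law for pushouts, the outer rectangle is a pushout as well; but this outer rectangle is precisely the one defining $\overline{X}_{[n]}$ (pointed at the colimit inclusion $* \to \overline{Y}_{[n]}$), so the canonical comparison $\overline{X}_{[n]} \to \overline{Y}_{[n]}$ is an equivalence in $\cE$, and hence in $\cE_*$.

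I do not expect any genuine obstacle. The only thing to verify beyond the pasting law is that this pointwise equivalence assembles into an equivalence of cocategory-morphisms; this is automatic since the functor $\overline{(-)}$ is built in Construction~\ref{cons:cocattocomon} as a composite of natural constructions (a counit of an adjunction followed by the quotient functor $\Fun([1],\cE) \to \cE_*$), and the pasting argument above is natural in $[n]$ because it is obtained by pasting the naturally-constructed squares associated with $X \to Y$ and with the quotient.
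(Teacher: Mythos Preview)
Your proof is correct and follows the same route as the paper's: both arguments reduce to a pointwise statement and then use the pushout~\eqref{eq:nsquare} together with the pasting law to identify $\overline{X}_{[n]}$ with $\overline{Y}_{[n]}$. The paper's proof is the one-line version of exactly what you wrote out; your phrasing ``this outer rectangle is precisely the one defining $\overline{X}_{[n]}$'' is slightly loose (the bottom-right corner is $\overline{Y}_{[n]}$, and the point is that the outer rectangle exhibits it as the pushout whose universal property defines $\overline{X}_{[n]}$), but the intent is clear and correct.
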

\begin{proof} The induced map at stage $n$ is $X_{[n]}/\sqcup_{\ob [n]} X_{[0]} \to Y_{[n]} / \sqcup_{\ob [n]} Y_{[0]}$ and hence is an isomorphism since~\eqref{eq:nsquare} is a pushout. 
\end{proof}

\begin{lemma} Applying the functor $\coCat(\Cat_{(\infty, \infty)})\to \coMon(\Cat_{(\infty, \infty)}^*)$ to the cocategory given by the full subcategory inclusion  $\Delta \hookrightarrow \Cat_{(\infty, \infty)}$ results in a comonoid structure on $\Sone \in \Cat_{(\infty, \infty)}^*$.
\end{lemma}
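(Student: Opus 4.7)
My plan is to apply Construction~\ref{cons:cocattocomon} directly to the tautological cocategory $\Delta \hookrightarrow \Cat_{(\infty,\infty)}$ (the cocategory featured in Example~\ref{exm:internalcat-hom}) and read off the two things the conclusion asks for: first, that the output is a comonoid (rather than just a cocategory), and second, that its value at $[1]$ is precisely $\Sone$.

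For the comonoid condition, I would recall that by definition a comonoid is a cocategory $X$ in the pointed world with $X_{[0]}$ initial. The construction outputs the cocategory $\overline{\Delta}_{[n]} = [n]/\bigsqcup_{\ob[n]}[0]$ in $\Cat_{(\infty,\infty)}^*$. Evaluating at $[n] = [0]$ gives $\overline{\Delta}_{[0]} = [0]/[0] = *$, which is the initial object of the pointed $\infty$-category $\Cat_{(\infty,\infty)}^*$; so the comonoid axiom holds automatically. Since Construction~\ref{cons:cocattocomon} already guarantees that the output satisfies the coSegal conditions, no further check is required.

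For the identification of the underlying pointed $(\infty,\infty)$-category, I would simply evaluate the formula at $[1]$: $\overline{\Delta}_{[1]} = [1]/([0]\sqcup [0]) = \globe_1/\partial\globe_1$, and this is the definition of the directed circle $\Sone$.

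There is no serious obstacle here; the lemma is a formal consequence of Construction~\ref{cons:cocattocomon} applied to the universal example. The only substantive content is the observation that under the quotient-by-objects operation, the monoidal unit $[0]$ of $\Delta$ is sent to the initial pointed category, promoting the cocategory to a comonoid, while $[1]$ is sent to $\Sone$ by its very definition.
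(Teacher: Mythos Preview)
Your proof is correct and follows exactly the same approach as the paper: both simply evaluate Construction~\ref{cons:cocattocomon} on the tautological cocategory $\Delta$ and observe that $\overline{\Delta}_{[1]} = [1]/([0]\sqcup[0]) = \Sone$. Your explicit verification that $\overline{\Delta}_{[0]} = *$ is initial is fine but redundant, since Construction~\ref{cons:cocattocomon} already records that the output of $\overline{(-)}$ is always a comonoid.
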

\begin{proof} For any cocategory $X$, the underlying object of the resulting comonoid is $X_{[1]}/(X_{[0]} \sqcup X_{[0]})$, i.e.\ in our case $[1]/([0] \sqcup [0])$, the directed circle. 
\end{proof}

\begin{definition}
A \emph{monoid} in an $\infty$-category $\cA$ with finite products is a functor $M: \Delta^{\op} \to \cA$ for which $M_{[0]}$ is terminal and the face maps $M_{[n]} \to M_{[1]}$ exhibit $M_{[n]}$ as an $n$-fold product $M_{[n]} = M_{[1]} \times \ldots \times M_{[1]}$ for all $n \geq 1$. We write $\Mon(\cA)$ for the full subcategory of $\Fun(\Delta^{\op}, \cA)$ on the monoids. For a monoid $M$, we write $M^{\mop}$ for the \emph{reversed monoid} defined as the composite $\Delta^{\op} \overset{(-)^{\op}}{\to} \Delta^{\op} \overset{M}{\to} \cA$, where $(-)^{\op}$ denotes the autoequivalence of $\Cat_{(\infty,1)}$ restricted to the full subcategory $\Delta \subseteq \Cat_{(\infty,1)}$.   A monoid in $ \cA:= \Cat_{(\infty, \infty)}$ is a \emph{monoidal $(\infty, \infty)$-category}.

\end{definition}

\begin{corollary}\label{cor:monoidalstructureonloops}
The functor $\Omega (-) = \Funoplax_*(\Sone,-): \Cat_{(\infty, \infty)}^* \to \Cat_{(\infty, \infty)}$  factors through the $\infty$-category $\Mon(\Cat_{(\infty, \infty)})$ of monoidal $(\infty, \infty)$-categories.
\end{corollary}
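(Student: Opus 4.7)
The plan is to transport the comonoid structure on $\Sone$ established in the previous lemma across the contravariant hom functor $\Funoplax_*(-,\cC)$, which sends colimits in $\Cat_{(\infty,\infty)}^*$ to limits in $\Cat_{(\infty,\infty)}$, and then verify naturality in $\cC$.

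First I would observe that, for each fixed $\cC \in \Cat_{(\infty,\infty)}^*$, the functor $\Funoplax_*(-,\cC): (\Cat_{(\infty,\infty)}^*)^{\op} \to \Cat_{(\infty,\infty)}$ turns colimits into limits. This is formal from the defining adjunction $(-)\owedge\cX \dashv \Funoplax_*(\cX,-)$ of Definition~\ref{defn:funoplaxstar}: since $-\owedge\cX$ preserves colimits (as $\owedge$ is presentably monoidal by Proposition~\ref{prop:generalsmash}), the Yoneda argument $\Map(\cY,\Funoplax_*(\colim_i\cX_i,\cC)) \simeq \Map(\cY\owedge\colim_i\cX_i,\cC) \simeq \lim_i\Map(\cY,\Funoplax_*(\cX_i,\cC))$ identifies $\Funoplax_*(\colim_i\cX_i,\cC)$ with $\lim_i\Funoplax_*(\cX_i,\cC)$.

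Next, let $S^\bullet : \Delta \to \Cat_{(\infty,\infty)}^*$ be the comonoid structure on $\Sone$ supplied by the preceding lemma, so that $S^{[0]} \simeq *$ is initial in $\Cat_{(\infty,\infty)}^*$, $S^{[1]} = \Sone$, and the coSegal conditions identify $S^{[n]}$ with the iterated pushout $\Sone \sqcup_* \cdots \sqcup_* \Sone$. Precomposing with $\Funoplax_*(-,\cC)$ yields a functor $M^\bullet_\cC : \Delta^{\op} \to \Cat_{(\infty,\infty)}$ whose value at $[0]$ is $\Funoplax_*(*,\cC)\simeq *$ (terminal) and whose coSegal identifications become, under the colimit-to-limit conversion, Segal identifications $M^{[n]}_\cC \simeq \Omega\cC \times \cdots \times \Omega\cC$. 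Hence $M^\bullet_\cC$ is a monoid in $\Cat_{(\infty,\infty)}$ with underlying object $M^{[1]}_\cC = \Omega\cC$.

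Finally, this construction is functorial in $\cC$: a morphism $\cC \to \cC'$ in $\Cat_{(\infty,\infty)}^*$ induces, pointwise in $[n] \in \Delta^{\op}$, a morphism $\Funoplax_*(S^{[n]},\cC) \to \Funoplax_*(S^{[n]},\cC')$ compatible with the cosimplicial structure on $S^\bullet$, which assembles into a morphism of simplicial objects, hence of monoids. This gives a factorization $\Omega : \Cat_{(\infty,\infty)}^* \to \Mon(\Cat_{(\infty,\infty)})$ whose composition with the forgetful $\Mon(\Cat_{(\infty,\infty)}) \to \Cat_{(\infty,\infty)}$ (evaluation at $[1]$) returns the $\Omega$ of Definition~\ref{def:Omega}. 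No step presents a genuine obstacle --- the only content beyond bookkeeping is the colimit-to-limit principle for internal homs, which is purely formal.
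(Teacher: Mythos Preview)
Your proposal is correct and follows essentially the same route as the paper: both exploit that $\Funoplax_*(-,\cC)$ sends colimits to limits, hence carries the comonoid structure on $\Sone$ to a monoid structure on $\Omega\cC$, functorially in $\cC$. The paper packages the argument slightly more tersely by working directly with the bifunctor $\Funoplax_*(-,-)$ and observing that a comonoid in the first slot yields a monoid in the functor category $\Fun(\Cat_{(\infty,\infty)}^*, \Cat_{(\infty,\infty)}^*)$, which absorbs your separate functoriality check; your unpacking is a faithful expansion of that. One tiny slip: in your Yoneda computation you invoke that ``$-\owedge\cX$ preserves colimits'' but what the displayed chain actually uses is that $\cY\owedge-$ preserves colimits; both hold since $\owedge$ is biclosed, so the argument stands.
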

\begin{proof} The functor $\Funoplax_*(-,-): ( \Cat_{(\infty, \infty)}^*)^{\op} \times \Cat_{(\infty, \infty)}^* \to \Cat_{(\infty, \infty)}^*$ sends colimits in the first argument to limits, and hence sends comonoids in the first argument to monoids in the functor category $\Fun(\Cat_{(\infty, \infty)}^*, \Cat_{(\infty, \infty)}^*)$, or equivalently to functors $\Cat_{(\infty, \infty)}^* \to \Mon(\Cat_{(\infty, \infty)}^*) \simeq \Mon(\Cat_{(\infty, \infty)})$. \end{proof}

Suppose that $(\cA,\otimes)$ is a monoidal $\infty$-category  with finite colimits which are preserved by $\otimes$ in each variable. Then, for any object $A \in \cA$ and any cocategory object $X\in \cA$, the cosimplicial object $A \otimes X = \{A \otimes X_{[n]}\}_{[n] \in \Delta}$ is again a cocategory; if the cocategory $X$ is a comonoid, then so is $A \otimes X$.

\begin{lemma} \label{lem:auxiso} Suppose that $\cA$ a monoidal $\infty$-category with finite colimits which are preserved by $\otimes$ in each variable. There is an equivalence in $\coMon(\cA_*)$, natural in $A\in \cA$ and $X\in \coCat(\cA)$, between $\overline{A \otimes X}$ and $A_+ \owedge \overline{X}$.
\end{lemma}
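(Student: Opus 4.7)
My plan is to prove the equivalence levelwise in the cosimplicial index, and then to observe that the resulting isomorphism automatically respects naturality in $A$ and $X$, the cosimplicial structure, and the comonoid structure.

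First, I would unpack Construction~\ref{cons:cocattocomon}: for any cocategory $Y \in \coCat(\cA)$, the underlying object in $\cA$ of $\overline{Y}_{[n]} \in \cA_*$ is the pushout
\[ \overline{Y}_{[n]} \;\simeq\; Y_{[n]} \underset{\bigsqcup_{\ob[n]} Y_{[0]}}\sqcup *, \]
pointed by the canonical map from $*$. Applying this to $Y = A \otimes X$ (so $(A \otimes X)_{[n]} = A \otimes X_{[n]}$) yields
\[ \overline{A \otimes X}_{[n]} \;\simeq\; (A \otimes X_{[n]}) \underset{\bigsqcup_{\ob[n]} A \otimes X_{[0]}}\sqcup *. \]

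Second, I would compute the right-hand side. By~\eqref{eq:plussmash}, $A_+ \owedge B \simeq (A \otimes B)/(A \otimes *)$ for $B \in \cA_*$. Since $A \otimes -$ preserves colimits by hypothesis, $A \otimes \overline{X}_{[n]}$ is the pushout
\[ A \otimes \overline{X}_{[n]} \;\simeq\; (A \otimes X_{[n]}) \underset{\bigsqcup_{\ob[n]} A \otimes X_{[0]}}\sqcup (A \otimes *). \]
Further quotienting $A \otimes *$ to obtain $A_+ \owedge \overline{X}_{[n]}$, and then pasting the two pushouts, yields precisely the same expression as $\overline{A \otimes X}_{[n]}$ above.

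Third, I would note that every pushout in this computation is natural in $[n]\in\Delta$, in $A\in\cA$, and in $X\in\coCat(\cA)$, so the levelwise equivalences assemble into a natural isomorphism of cosimplicial objects in $\cA_*$. Since the comonoid structure on each side is determined by the cocategory data of $X$ together with the canonical pointing (and in particular $\overline{X}_{[0]} = *$ is initial in $\cA_*$, as is $A_+ \owedge *$), this isomorphism automatically lifts to $\coMon(\cA_*)$. The only substantive ingredient is the pasting of the two pushouts combined with the cocontinuity of $A \otimes -$ in each variable; I do not anticipate any significant obstacle, as the rest is purely formal.
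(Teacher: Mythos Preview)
Your proof is correct and follows essentially the same approach as the paper: both compute $A_+ \owedge \overline{X}_{[n]}$ by pasting two pushout squares (one from cocontinuity of $A\otimes-$, one from the formula~\eqref{eq:plussmash}) to recognize the result as $(A\otimes X_{[n]})/\bigl(\bigsqcup_{\ob[n]} A\otimes X_{[0]}\bigr) = \overline{A\otimes X}_{[n]}$. Your additional remarks on naturality and the comonoid structure make explicit what the paper leaves implicit.
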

\begin{proof}
Consider the squares
\[\begin{tikzcd}
A \otimes \sqcup_{\ob [n]} X_{[0]} \arrow[r] \arrow[d] & A\otimes *\arrow[r] \arrow[d] & *  \arrow[d] \\
A \otimes X_{[n]} \arrow[r] & A\otimes \overline{X}_{[n]} \arrow[r] & A_+ \owedge \overline{X}_{[n]}
\end{tikzcd}.
\]
By~\eqref{eq:plussmash}, the right square is a pushout, and since $\otimes$ preserves colimits, the left square is a pushout. Hence, the total square is a pushout, i.e.\ $A_+ \owedge \overline{X}_{[n]} \simeq A\otimes X_{[n]}/(A \otimes \sqcup_{\ob [n]} X_{[0]}) \simeq A \otimes X_{[n]} /(\sqcup_{\ob [n]} (A\otimes X_{[0]})) =: (\overline{A \otimes X})_{[n]}$.
\end{proof}

\begin{cor}\label{cor:sigmacircle}There is an equivalence of functors $\Theta \to \coMon(\Cat_{(\infty, \infty)}^*)$ between $(-)_+ \owedge \Sone$ and the composite $\Theta \to[\Sigma] \coCat(\Cat_{(\infty, \infty)}) \to[\overline{(-)}] \coMon(\Cat_{(\infty, \infty)}^*)$.
\end{cor}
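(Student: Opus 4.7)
The plan is to deduce the corollary directly by chaining together Lemmas~\ref{lem:auxiso}, \ref{lem:coffSigma}, and \ref{lem:coffiso}, using the cocategory $\Delta \hookrightarrow \Cat_{(\infty, \infty)}$ as a bridge. For a fixed $C \in \Theta$, the plan is to compare both sides with a third functor, namely the comonoid $\overline{C \otimes \Delta} \in \coMon(\Cat_{(\infty, \infty)}^*)$ obtained by applying Construction~\ref{cons:cocattocomon} to the cocategory $[n] \mapsto C \otimes [n]$.

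First, I will invoke Lemma~\ref{lem:auxiso} with $A = C$ and $X = (\Delta \hookrightarrow \Cat_{(\infty, \infty)})$ to obtain an equivalence of comonoids
\[
C_+ \owedge \Sone \;=\; C_+ \owedge \overline{\Delta} \;\simeq\; \overline{C \otimes \Delta},
\]
natural in $C \in \Theta$ (the lemma is stated functorially in $A$ and $X$, and $C \mapsto C \otimes \Delta$ is functorial in $C$ by cocontinuity of $\otimes$ in each variable).

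Second, Construction~\ref{cons:Sigma-map} supplies a natural transformation of cocategories $C \otimes [-] \To \Sigma(C, -)$, considered as a morphism in $\coCat(\Fun(\Theta, \Cat_{(\infty, \infty)}))$. By Lemma~\ref{lem:coffSigma} this transformation is co-ff, and hence by Lemma~\ref{lem:coffiso} its image under $\overline{(-)}\colon \coCat(\Cat_{(\infty, \infty)}) \to \coMon(\Cat_{(\infty, \infty)}^*)$ is an equivalence
\[
\overline{C \otimes \Delta} \;\isom\; \overline{\Sigma(C, -)},
\]
natural in $C \in \Theta$. Composing the two displayed natural equivalences yields the desired equivalence of functors $\Theta \to \coMon(\Cat_{(\infty, \infty)}^*)$.

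There is no genuine obstacle here: all the work has been done in the preceding lemmas, and the corollary is the assembly of those results. The only thing to double-check is that the equivalences are indeed compared as comonoids (not merely as underlying pointed $(\infty,\infty)$-categories), which is automatic since both Lemma~\ref{lem:auxiso} and Lemma~\ref{lem:coffiso} are phrased at the level of comonoids and cocategories, respectively.
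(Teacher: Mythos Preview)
Your proposal is correct and follows essentially the same approach as the paper: both use Lemma~\ref{lem:auxiso} to identify $C_+ \owedge \Sone$ with $\overline{C \otimes \Delta}$, then apply Lemma~\ref{lem:coffiso} to the co-ff map of Lemma~\ref{lem:coffSigma} to pass to $\overline{\Sigma(C,-)}$. Your write-up simply makes the intermediate cocategory $C \otimes \Delta$ and the naturality in $C$ more explicit than the paper's terse two-sentence version.
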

\begin{proof} By Lemma~\ref{lem:auxiso}, there is an equivalence of comonoids between $(-)_+ \owedge \Sone$ and $\overline{(- \otimes [1])}$, where $-\otimes [1]$ denotes  the functor $\Theta \to \coCat(\Cat_{(\infty, \infty)}) $ sending $C$ to the cocategory with $n$th object $C\otimes [n]$. The statement then follows by applying  Lemma~\ref{lem:coffiso} to Lemma~\ref{lem:coffSigma}.
 \end{proof}

Recall from Definition~\ref{defn:opposites} our notation $(-)^{\op}$ and $(-)^{\co}$ for the autoequivalences of $\Cat_{(\infty, \infty)}$ which reverse all odd morphism directions, respectively all even morphism directions. Corollary~\ref{cor:sigmacircle} implies the following interaction of $\Sigma$ and $\Omega$ with taking opposites. 

\begin{lemma}\label{lemma:movingSigmapastcoop} There is an equivalence of functors $\Theta \times \Delta \to \Theta$
\[ \left( \Sigma(-,-)\right)^{\co} \simeq \Sigma \left((-)^{\op}, -\right). \]
In particular,  the following diagram commutes:
\begin{equation}\label{eq:opcommute}
\begin{tikzcd}
\Theta \arrow[r, "\Sigma"] \arrow[d, "\op"] &   \coCat(\Cat_{(\infty, \infty)}) \arrow[d, "\coCat((-)^{\co})"]\\
\Theta \arrow[r, "\Sigma"] & \coCat(\Cat_{(\infty, \infty)})
\end{tikzcd}
\end{equation}
\end{lemma}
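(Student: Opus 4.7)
The plan is to reduce the claim to an explicit check on the strict pasting diagrams in $\Theta$, where $\Sigma(-,-)$ has a transparent formula in terms of hom-categories, and where $(-)^{\co}$ and $(-)^{\op}$ restrict to their strict-categorical namesakes.

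First I would record that both sides of the claimed equivalence land in $\Theta \subseteq \Cat_{(\infty,\infty)}$: the functor $\Sigma(-,-) : \Theta \times \Delta \to \Theta$ is defined in Construction~\ref{const:generalSigma}, and the autoequivalences $(-)^{\op}$ and $(-)^{\co}$ of $\Cat_{(\infty,\infty)}$ from Definition~\ref{defn:opposites} restrict to autoequivalences of $\Gaunt$, and in particular of $\Theta$, since reversing odd or even morphism directions preserves pasting-diagram shape. Hence it suffices to compare the two functors $\Theta \times \Delta \to \Theta \subseteq \strCat_\omega$ strictly.

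Next I would unpack $(-)^{\co}$ at the level of hom-$\omega$-categories. A strict $\omega$-category $D$ may be described by its object set $\ob D$ and its strict hom-$\omega$-categories $D(x,y)$; under this description a $k$-morphism of the hom category $D(x,y)$ is a $(k+1)$-morphism of $D$, so $k$-morphism parity in a hom is the opposite of the parity in $D$. Consequently $D^{\co}$ has the same set of objects as $D$ and hom-categories $(D^{\co})(x,y) = \bigl(D(x,y)\bigr)^{\op}$, and this description is natural in $D$. Applied to $D = \Sigma(C,[k])$, whose object set is $\{0,\dots,k\}$ and whose hom-categories are $\Sigma(C,[k])(i,j) = C^{\times(j-i)}$ for $i\leq j$ (and $\emptyset$ otherwise), one obtains
\[
\Sigma(C,[k])^{\co}(i,j) \;=\; \bigl(C^{\times(j-i)}\bigr)^{\op} \;\simeq\; (C^{\op})^{\times(j-i)} \;=\; \Sigma(C^{\op},[k])(i,j),
\]
with the same object set $\{0,\dots,k\}$. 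This identifies $\Sigma(C,[k])^{\co}$ with $\Sigma(C^{\op},[k])$; naturality in $C\in\Theta$ is inherited from functoriality of $(-)^{\op}$ on $\Theta$, and naturality in $[k]\in\Delta$ follows because the structure maps of $\Sigma(-,[k])$ are built from the coface/codegeneracy maps of $\Delta$ which act by inserting/deleting identity objects and by rearranging product factors --- operations preserved verbatim under $(-)^{\op}$ on each hom.

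For the commuting square~\eqref{eq:opcommute}, I would simply note that passing to varying $[k]\in\Delta$ reinterprets the first equivalence as an equivalence, natural in $C\in\Theta$, of cocategories in $\Cat_{(\infty,\infty)}$: on the left, $\coCat((-)^{\co})$ applied to $\Sigma(C,-)$ is by definition the composite $[k]\mapsto \Sigma(C,[k])^{\co}$, while on the right we have $[k]\mapsto \Sigma(C^{\op},[k])$. The identification above provides the required natural equivalence between these two cocategories, so the square commutes up to a canonical natural isomorphism. The main (modest) obstacle is just to verify carefully that the $\infty$-categorical $(-)^{\co}$ on $\Cat_{(\infty,\infty)}$ does restrict to strict reversal of even-morphism directions on the dense full subcategory $\Theta$, and that this restriction is compatible with the strict hom-category description of $(-)^{\co}$ used in the central computation; once that compatibility is in hand, the rest is a direct formula-level check.
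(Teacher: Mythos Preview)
Your proposal is correct and is exactly the ``straight-forward 1-categorical computation'' the paper invokes as its proof: you identify $(D^{\co})(x,y)$ with $(D(x,y))^{\op}$ via the parity shift between hom-level and ambient-level morphisms, and then apply this to $D=\Sigma(C,[k])$ using that $(-)^{\op}$ preserves products. The paper gives no further detail beyond declaring the computation routine, so your argument is a faithful elaboration of the intended proof.
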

\begin{proof} The first statement is a straight-forward 1-categorical computation. The second statement is an immediate consequence.
\end{proof}

\begin{corollary} \label{cor:Sonecentral} There is an equivalence of functors $\Cat_{(\infty, \infty)}^* \to \coMon(\Cat_{(\infty, \infty)}^*)$: 
\[\Sone \owedge (-)^{\co\op} \simeq (-) \owedge \Sone.\]
\end{corollary}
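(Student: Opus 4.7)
The strategy is to derive the equivalence from Corollary~\ref{cor:sigmacircle} by applying the autoequivalence $(-)^{\co}$ and invoking Lemma~\ref{lemma:movingSigmapastcoop}, then extending by cocontinuity. Both functors $\Sone \owedge (-)^{\co\op}$ and $(-) \owedge \Sone : \Cat_{(\infty,\infty)}^* \to \coMon(\Cat_{(\infty,\infty)}^*)$ are cocontinuous: $\owedge$ preserves colimits in each variable by Proposition~\ref{prop:generalsmash}, $(-)^{\co\op}$ is an autoequivalence, and the comonoid structure is inherited functorially from the comonoid structure on the constant factor $\Sone$. Since $(-)_+ : \Cat_{(\infty,\infty)} \to \Cat_{(\infty,\infty)}^*$ is a left adjoint and $\Theta \subset \Cat_{(\infty,\infty)}$ is a set of colimit-generators, the family $\{X_+ : X \in \Theta\}$ is a set of colimit-generators of $\Cat_{(\infty,\infty)}^*$. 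It therefore suffices to construct a natural equivalence on objects of the form $X_+$ with $X \in \Theta$.

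For such $X$, begin with the equivalence $X_+ \owedge \Sone \simeq \overline{\Sigma(X,-)}$ of Corollary~\ref{cor:sigmacircle} in $\coMon(\Cat_{(\infty,\infty)}^*)$, and apply the autoequivalence $(-)^{\co}$ of $\Cat_{(\infty,\infty)}^*$ levelwise; since $(-)^{\co}$ preserves colimits, it acts on comonoids. On the left side, the identification $\cC^{\co} \otimes \cD^{\co} \simeq (\cD \otimes \cC)^{\co}$ of Lemma~\ref{lem:propertiesofFunoplax:compatwithcoop} together with the explicit formula of Proposition~\ref{prop:generalsmash} yields the analogous swap $(A \owedge B)^{\co} \simeq B^{\co} \owedge A^{\co}$; combined with $\Sone^{\co} \simeq \Sone$ (as $\Sone$ is a $1$-category) and $(X_+)^{\co} \simeq (X^{\co})_+$, we obtain
\[ (X_+ \owedge \Sone)^{\co} \simeq \Sone \owedge (X^{\co})_+, \]
with the comonoid structure now carried by the left factor $\Sone$, matching the factor supplying the comonoid structure in $\Sone \owedge (-)^{\co\op}$. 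On the right side, Lemma~\ref{lemma:movingSigmapastcoop} gives $\Sigma(X,-)^{\co} \simeq \Sigma(X^{\op},-)$ as cocategories, and because $\overline{(-)}$ commutes with $(-)^{\co}$ (as the cokernel formula defining $\overline{(-)}$ is preserved by the colimit-preserving autoequivalence $(-)^{\co}$), another application of Corollary~\ref{cor:sigmacircle} identifies $\overline{\Sigma(X,-)}^{\co} \simeq \overline{\Sigma(X^{\op},-)} \simeq (X^{\op})_+ \owedge \Sone$.

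Combining these steps gives a natural equivalence $\Sone \owedge (X^{\co})_+ \simeq (X^{\op})_+ \owedge \Sone$ in $\coMon(\Cat_{(\infty,\infty)}^*)$ for $X \in \Theta$. Substituting $Y := X^{\op}$ (using that $(-)^{\op}$ is an autoequivalence of $\Theta$ and commutes with $(-)^{\co}$, since the two operations reverse disjoint sets of morphism dimensions) produces $\Sone \owedge (Y^{\co\op})_+ \simeq Y_+ \owedge \Sone$, natural in $Y \in \Theta$. By the cocontinuity and generation argument of the first paragraph, this extends uniquely to the desired equivalence of functors on all of $\Cat_{(\infty,\infty)}^*$. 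The main thing to keep track of is that every identification is happening in $\coMon(\Cat_{(\infty,\infty)}^*)$ rather than merely at the level of underlying pointed $(\infty,\infty)$-categories; the key observation that makes this work is that the $(-)^{\co}$-swap $(A \owedge B)^{\co} \simeq B^{\co} \owedge A^{\co}$ transports the comonoid structure from the right factor to the left factor, producing exactly the comonoid structure of $\Sone \owedge (-)^{\co\op}$ after $\Sone^{\co} = \Sone$.
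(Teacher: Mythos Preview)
Your proof follows essentially the same approach as the paper's: apply $(-)^{\co}$ to the equivalence of Corollary~\ref{cor:sigmacircle}, unwind the left side via the anti-monoidality of $(-)^{\co}$ from Lemma~\ref{lem:propertiesofFunoplax:compatwithcoop} together with $(\Sone)^{\co} \simeq \Sone$, unwind the right side via Lemma~\ref{lemma:movingSigmapastcoop} and another application of Corollary~\ref{cor:sigmacircle}, and then extend from $\Theta$. The computational steps, including the care taken to track the comonoid structure through the $(-)^{\co}$-swap, match the paper exactly.

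There is one imprecision worth flagging in your reduction step. You appeal to $\{X_+ : X \in \Theta\}$ being a set of colimit-generators of $\Cat_{(\infty,\infty)}^*$, but colimit-generation alone does not in general suffice to \emph{construct} a natural equivalence between cocontinuous functors from its values on the generators; the standard argument requires density, and it is neither obvious nor asserted in the paper that $\Theta_+$ is dense in $\Cat_{(\infty,\infty)}^*$. (Also, the inference ``$(-)_+$ is a left adjoint and $\Theta$ colimit-generates, hence $\Theta_+$ colimit-generates'' is not valid in general for left adjoints, though it does hold here for the specific reason that every pointed object is a pushout of objects of the form $X_+$.) The paper handles this cleanly in two steps: first, since the target $\coMon(\Cat_{(\infty,\infty)}^*)$ has a zero object, \cite[Prop.~5.4]{MR3450758} gives an equivalence $\Fun^L(\Cat_{(\infty,\infty)}^*, \cD) \simeq \Fun^L(\Cat_{(\infty,\infty)}, \cD)$ via restriction along $(-)_+$; second, density of $\Theta \subset \Cat_{(\infty,\infty)}$ then allows the further restriction to $\Theta$. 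Your argument is easily repaired by inserting this zero-object observation in place of the colimit-generation claim.
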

\begin{proof}
Note that $*$ with the trivial coalgebra structure is a zero object of $\coMon(\Cat_{(\infty, \infty)}^*)$. Recall from~\cite[Prop.~5.4]{MR3450758} that for any presentable $\infty$-category $\cE$ and any presentable $\infty$-category $\cD$ with zero object, restriction along $(-)_+: \cE \to \cE_*$ induces an equivalence $\Fun^L(\cE_*, \cD) \to \Fun^L(\cE, \cD)$, where $\Fun^L(-,-)$ denotes the full subcategory of $\Fun(-,-)$ on the left adjoints. Hence, it suffices to produce an equivalence between the functors $\Cat_{(\infty, \infty)} \to \coMon(\Cat_{(\infty, \infty)}^*)$ 
\[ \Sone \owedge (-)^{\co\op}_+ \simeq (-)_+ \owedge \Sone.
\]
Since $\Theta \subseteq \Cat_{(\infty, \infty)}$ is a dense full subcategory, it in fact suffices to produce this equivalence when further restricted to functors $\Theta \to \coMon(\Cat_{(\infty, \infty)}^*)$. 
Since the functor $\overline{(-)}: \coCat(\cA) \to \coMon(\cA_*)$ from Construction~\ref{cons:cocattocomon} is natural in $\cA$, it follows that the following diagram commutes:
\[\begin{tikzcd}
  \coCat(\Cat_{(\infty, \infty)}) \arrow[d, "\coCat((-)^{\co})"]  \arrow[r, "\overline{(-)}"] &\coMon(\Cat_{(\infty, \infty)}^*) \arrow[d, " \coMon((-)^{\co})"]\\
 \coCat(\Cat_{(\infty, \infty)}) \arrow[r, "\overline{(-)}"] & \coMon(\Cat_{(\infty, \infty)}^*) 
\end{tikzcd}
\]
Composing it with~\eqref{eq:opcommute} from Lemma~\ref{lemma:movingSigmapastcoop}, and using Corollary~\ref{cor:sigmacircle} that $\overline{\Sigma (-)} = (-)_+ \owedge \Sone$, we find that 
\[ \left((-)_+ \owedge \Sone\right)^{\co} \simeq (-)_+^{\op} \owedge \Sone
\]
Using the anti-monoidality of $(-)^{\co}$ from Lemma~\ref{lem:propertiesofFunoplax:compatwithcoop}, and using that $\Sone $ is (a cocategory valued in) a $1$-category with $(\Sone)^{\co} = \Sone$, the left hand side is equivalent to $\Sone \owedge (-)_+^{\co}$, which proves the claim.
\end{proof}

\begin{corollary}\label{cor:opcoloop}
 There are equivalences of functors $\Cat_{(\infty, \infty)}^* \to \Mon(\Cat_{(\infty, \infty)})$: 
\[\Omega \bigl((-)^{\co}\bigr) \simeq (\Omega(-))^{\op}
\hspace{2cm} \Omega \bigl((-)^{\op}\bigr) \simeq (\Omega(-))^{\mop, \co}
\]
\end{corollary}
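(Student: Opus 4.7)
The plan is to first rewrite $\Omega = \Funoplax_*(\Sone,-)$ in terms of $\Funlax_*(\Sone,-)$ by exploiting Corollary~\ref{cor:Sonecentral}, and then apply the $\co$ and $\op$ compatibilities of $\Funlax_*$ inherited via the pullback formula of Lemma~\ref{lem:generalsmashhom} from Lemma~\ref{lem:propertiesofFunoplax:compatwithcoop}. For the first step, I would take the equivalence $\cD \owedge \Sone \simeq \Sone \owedge \cD^{\co\op}$ of $\coMon(\Cat^*_{(\infty,\infty)})$-valued functors from Corollary~\ref{cor:Sonecentral}, apply $\Map_*(-,\cC)$ at each comonoid stage $[n]$, and combine the two tensor--hom adjunctions for $\owedge$ with the fact that $(-)^{\co\op}$ is an autoequivalence to obtain the natural equivalence
\[ \Map_*\bigl(\cD, \Omega(\cC)_{[n]}\bigr) \simeq \Map_*\bigl(\cD, \Funlax_*(\Sone_{[n]}, \cC)^{\co\op}\bigr). \]
By Yoneda and naturality in $[n] \in \Delta$ this will produce an equivalence of monoids $\Omega(\cC) \simeq \Funlax_*(\Sone, \cC)^{\co\op}$, which will serve as the key identity for the rest of the argument.

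For the first claimed equivalence, I would next derive a pointed version of Lemma~\ref{lem:propertiesofFunoplax:compatwithcoop} --- namely $\Funoplax_*(\cA,\cB)^{\co} \simeq \Funlax_*(\cA^{\co},\cB^{\co})$ --- by combining that lemma with the pullback formula for $\Funoplax_*$ in Lemma~\ref{lem:generalsmashhom} and the fact that $(-)^{\co}$ preserves limits. Substituting $\cB \mapsto \cB^{\co}$ and specializing to the comonoid $\Sone$ --- noting that $\Sone^{\co} \simeq \Sone$ \emph{as a comonoid}, since each $\Sone_{[n]}$ is a $(1,1)$-category on which $(-)^{\co}$ acts trivially and the cocategory structure maps are unchanged --- yields $\Omega(\cC^{\co}) \simeq \Funlax_*(\Sone, \cC)^{\co}$ as monoids. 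Composing with the key identity and using $\co\cdot\co = \id$ will give the first claim: $\Omega(\cC^{\co}) \simeq \Omega(\cC)^{\op}$.

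For the second claimed equivalence, the same strategy with $\op$ replacing $\co$ produces $\Omega(\cC^{\op}) \simeq \Funlax_*(\Sone^{\op}, \cC)^{\op}$, but the hard part will be that $\Sone^{\op}$ is \emph{not} equivalent to $\Sone$ as a comonoid. The key observation is that pointwise application of $(-)^{\op}$ to the cocategory $[n] \mapsto [n]$ produces $[n] \mapsto [n]^{\op}$, which under the canonical order-reversal isomorphism $[n]^{\op} \simeq [n]$ is equivalent to the original cocategory pre-composed with the involution $(-)^{\op} \colon \Delta \to \Delta$; hence $\Funlax_*(\Sone^{\op}, \cC) \simeq \Funlax_*(\Sone, \cC)^{\mop}$ as monoids. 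Combining with the key identity, and observing that the operations $\co$ and $\op$ (pointwise) and $\mop$ (reversing $\Delta$) commute pairwise while $\op\cdot\op = \id$, this will yield $\Omega(\cC^{\op}) \simeq \Omega(\cC)^{\mop,\co}$. The $\mop$ correction --- arising solely from the order-reversing involution on $\Delta$ --- is the sole source of asymmetry between the two claimed equivalences and would be easy to miss without careful bookkeeping at the level of cocategory structure maps.
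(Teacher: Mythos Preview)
Your proof is correct and uses the same ingredients the paper has in mind---Corollary~\ref{cor:Sonecentral} together with the $(-)^{\op}$/$(-)^{\co}$ compatibilities of Lemma~\ref{lem:propertiesofFunoplax:compatwithcoop}. The paper's one-line proof (``follows directly from Corollary~\ref{cor:Sonecentral}'') leaves all of this implicit, and your unpacking is a legitimate way to fill it in.

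That said, your route through the intermediate identity $\Omega(\cC) \simeq \Funlax_*(\Sone,\cC)^{\co\op}$ is a slight detour. A more direct reading of ``follows directly'' stays entirely on the $\owedge$ side: for the first claim, chase
\[
\Map_*(\cD, \Omega(\cC^{\co})_{[n]}) \simeq \Map_*(\cD \owedge \Sone_{[n]}, \cC^{\co}) \simeq \Map_*(\Sone_{[n]} \owedge \cD^{\co}, \cC) \simeq \Map_*(\cD^{\op} \owedge \Sone_{[n]}, \cC) \simeq \Map_*(\cD, \Omega(\cC)^{\op}_{[n]}),
\]
using first the anti-monoidality of $(-)^{\co}$ for $\owedge$, then Corollary~\ref{cor:Sonecentral} with $\cD^{\op}$ substituted, and conclude by Yoneda. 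The second claim runs identically after swapping $\co$ and $\op$, and the $\mop$ correction appears exactly where you identify it---from $(\Sone_{[n]})^{\op} \simeq \Sone_{[n]^{\op}}$ under order reversal. This avoids ever naming $\Funlax_*$, but is mathematically equivalent to what you wrote; your key identity is precisely this chain with $\cC^{\co}$ replaced by $\cC$ and one adjunction used in the other direction.
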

\begin{proof} With Definition~\ref{def:Omega}, this follows directly from Corollary~\ref{cor:Sonecentral}. 
\end{proof}

Notice that $\Funoplax_*(\cC,-)$ preserves limits and hence monoid objects. In other words, if $\cC$ is an $(\infty, \infty)$-category and $\cM$ is a monoidal $(\infty, \infty)$-category, then  $\Funoplax(\cC, \cM)$ is naturally monoidal. 

\begin{corollary} \label{cor:movingloopsinsidefun}
There is an equivalence of monoidal $(\infty, \infty)$-categories
\[ \Omega \Funoplax_*(\cC, \cD) \simeq \Funoplax_*(\cC^{\co\op}, \Omega \cD)
\]
natural in $\cC, \cD \in \Cat_{(\infty, \infty)}^*$. 
\end{corollary}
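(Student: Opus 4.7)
The plan is to reduce everything to Corollary~\ref{cor:Sonecentral} via the internal hom-tensor adjunction for the lax smash product $\owedge$.

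First, I would establish the ``uncurrying'' identity
\[
\Funoplax_*(X \owedge \cB, \cD) \simeq \Funoplax_*(X, \Funoplax_*(\cB, \cD)),
\]
natural in $X, \cB, \cD \in \Cat_{(\infty,\infty)}^*$. By Definition~\ref{defn:funoplaxstar}, $\Funoplax_*(\cB, -)$ is right adjoint to $-\owedge \cB$, so for any test object $T$ a quick Yoneda computation gives
\[
\Map_*(T, \Funoplax_*(X\owedge \cB, \cD)) \simeq \Map_*(T \owedge X \owedge \cB, \cD) \simeq \Map_*(T, \Funoplax_*(X, \Funoplax_*(\cB, \cD))),
\]
using associativity of $\owedge$ and applying the adjunction twice. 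Applying this with $X = \Sone$ and $\cB = \cC$ yields
\[
\Omega \Funoplax_*(\cC, \cD) = \Funoplax_*(\Sone, \Funoplax_*(\cC, \cD)) \simeq \Funoplax_*(\Sone \owedge \cC, \cD),
\]
and symmetrically with $X = \cC^{\co\op}$, $\cB = \Sone$,
\[
\Funoplax_*(\cC^{\co\op}, \Omega \cD) = \Funoplax_*(\cC^{\co\op}, \Funoplax_*(\Sone, \cD)) \simeq \Funoplax_*(\cC^{\co\op} \owedge \Sone, \cD).
\]

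Next, Corollary~\ref{cor:Sonecentral} produces a natural equivalence $\Sone \owedge (-)^{\co\op} \simeq (-) \owedge \Sone$ in $\coMon(\Cat_{(\infty,\infty)}^*)$; substituting $\cC$ for the variable and using $(\co\op)^2 = \id$ gives the natural equivalence $\Sone \owedge \cC \simeq \cC^{\co\op} \owedge \Sone$. Applying the contravariant functor $\Funoplax_*(-, \cD)$ to this identifies the two displays above, yielding the desired equivalence at the level of underlying $(\infty,\infty)$-categories, naturally in $\cC$ and $\cD$.

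Finally, for the monoidality upgrade, I would argue as in the proof of Corollary~\ref{cor:monoidalstructureonloops}: since the functor $\Funoplax_*(-, \cD)$ sends colimits in its first argument to limits, it carries comonoids to monoids, and hence carries equivalences of comonoids to equivalences of monoids. The equivalence $\Sone \owedge \cC \simeq \cC^{\co\op} \owedge \Sone$ provided by Corollary~\ref{cor:Sonecentral} is already an equivalence of comonoids (since that corollary lands in $\coMon(\Cat_{(\infty,\infty)}^*)$), and the uncurrying identity established above is a natural equivalence of functors into $\Mon(\Cat_{(\infty,\infty)})$ when the first slot is fed a comonoid. I expect the only non-bookkeeping step to be verifying that the uncurrying iso is compatible with the monoidal structures coming from the comonoid structure on $\Sone$; this should follow formally from the fact that $\owedge$ is associative and that all the equivalences in play are natural in all arguments, but it is the one place where some care is required to make sure the simplicial diagrams match up rather than just their underlying objects.
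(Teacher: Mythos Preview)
Your proposal is correct and follows essentially the same route as the paper: uncurry via the $\owedge$--$\Funoplax_*$ adjunction, invoke Corollary~\ref{cor:Sonecentral}, and uncurry again, with the monoidal upgrade coming from the fact that the equivalence in Corollary~\ref{cor:Sonecentral} lives in $\coMon(\Cat_{(\infty,\infty)}^*)$. One small phrasing slip: to obtain $\Sone \owedge \cC \simeq \cC^{\co\op} \owedge \Sone$ from Corollary~\ref{cor:Sonecentral} you should substitute $\cC^{\co\op}$ (not $\cC$) for the variable and then use $(\co\op)^2 \simeq \id$; your invocation of the latter identity shows you have the right idea.
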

\begin{proof}
By definition, we have the following equivalences of monoidal $(\infty,\infty)$-categories
\[\Omega\Funoplax_*(\cC, \cD) := \Funoplax_*(\Sone, \Funoplax_*(\cC, \cD))  \simeq \Funoplax_*(\Sone \owedge \cC, \cD),\]
where the monoidal structures on the latter two are induced by the comonoid structure on $\Sone$. Using Corollary~\ref{cor:Sonecentral}, this is equivalent to 
\[\Funoplax_*(\cC^{\co\op} \owedge \Sone, \cD) \simeq \Funoplax_*(\cC^{\co\op}, \Funoplax_*(\Sone, \cD)) \simeq \Funoplax_*(\cC^{\co\op}, \Omega \cD).\qedhere
\]
\end{proof}

\section{The Main Construction}\label{sec:mainconstruction}

This section proves our main theorems. As explained in the introduction, 
our strategy will be to identify retracts, Hopf algebras, and so on in terms of various lax smash products.
Specifically, \S\ref{subsec:adjoints}--\S\ref{subsec:smashsquares} study adjunctibility in categories of functors and (op)lax transformations, concluding with a description of $\Ret^\someadj(\cC) := \Funoplax_*(\Adj \otimes \Adj, \cC)$ as a subcategory of the category $\Ret^\oplax(\cC)$ of retracts and oplax retract morphisms. In \S\ref{sec:mndsquared}, we identify $\Funoplax_*(\Mnd\owedge\Mnd, \cC)$ with $\BiAlg(\Omega^2 \cC)$, and restricting along the inclusion $\Mnd \hookrightarrow \Adj$ supplies the functor $ \Ret^{\someadj}(\cC) \to \BiAlg(\Omega^2\cC)$.
Finally, in \S\ref{subsec:finalproof}, we prove that this functor factors through the full subcategory $\cat{coHopf}(\cC) \subset \BiAlg(\Omega^2\cC)$.

\subsection{Adjoints and their lax morphisms} \label{subsec:adjoints}

The notion of adjunction in higher categories is well-known: 
\begin{definition}
In a strict $2$-category\footnote{The definition here does not give the correct homotopy-coherent notion when implemented directly at the $\infty$-categorical level. The problem is to implement the phrase ``are identities.'' It is insufficient to simply assert the existence of equivalences $(r\epsilon)(\eta r) \simeq \id_r$ and $(\epsilon l)(l\eta) \simeq \id_l$. One can supply both equivalences, but then one needs to supply higher coherences relating them, or one can supply some equivalences while merely asserting existence of others in a not-fully-symmetric way.
Detailed discussion can be found in \cite{MR3415698}.},
 an \define{adjunction} is a pair of 1-morphisms $l : a \to b$ (called the \define{left adjoint}) and $r : b \to a$ (called the \define{right adjoint}) together with 2-morphisms $\epsilon : lr \Rightarrow \id_a$ and $\eta : \id_b \Rightarrow rl$ such that the two 2-compositions $(r\epsilon)(\eta r) : r \Rightarrow r$ and $(\epsilon l)(l\eta) : l \Rightarrow l$ are identities.

The \define{walking adjunction} $\Adj$ is the strict $2$-category generated by an adjunction, and we will write $l,r:\globe_1 \to \Adj$ for the strict $2$-functors picking out the generating left and right adjoint morphism, respectively. This strict $2$-category is gaunt, and so we will, without changing names, equally write $\Adj$ for its image under the inclusion $\Gaunt \subset \Cat_{(\infty,\infty)}$.
\end{definition}
 It is shown in~\cite{MR3415698} that $\Adj$ satisfies the correct universal property among $(\infty, 2)$-categories: for any $(\infty, 2)$-category $\cC$, the maps
\[
\Map(\Adj, \cC) \to \Map(\globe_1, \cC)
\] 
induced by precomposing with $r,l:\globe_1 \to \Adj$ are fully faithful (i.e.\ $(-1)$-truncated maps of spaces; full subspace inclusions) and identify $\Map(\Adj, \cC)$ with the full subspace of $\Map(\globe_1, \cC)$ on those arrows in $\cC$ which are right (resp.\ left) adjoints; in particular, both $r,l : \globe_1 \to \Adj$ are epimorphisms (see footnote~\ref{footnote:epi}) in $\Cat_{(\infty,2)}$.
Since for a $1$-morphism in an $(\infty,\infty)$-category $\cC$, ``being an adjunction'' can be characterized in terms of the underlying $(\infty,2)$-category $\iota_2 \cC$, it follows from the adjunction~\eqref{eq:inclusion-and-localization} that $\Adj$ moreover fulfills the correct universal property among $(\infty,\infty)$-categories. 

In the next section we will compute the tensor product $\Adj \otimes \Adj$ as the result of taking $\globe_1 \otimes \globe_1$ and declaring certain cells to be adjunctible. Towards this end, let $\Adj_k := \Sigma^{k-1}\Adj$ denote the walking adjunctible $k$-morphism, and write $l,r : \globe_k \to \Adj_k$ for the suspensions of the maps $l,r : \globe_1 \to \Adj_1$. In
Proposition~\ref{prop:ClaudiaTheo} we will describe
 $\globe_s \otimes \Adj_k$ and $\Adj_s \otimes \globe_k$ as the result of taking $\globe_s \otimes \globe_k$ and declaring certain cells adjunctible.
  Versions and special cases of our Proposition~\ref{prop:ClaudiaTheo} have also appeared in \cite{1511.03589,2002.01037}; the full version is stated\footnote{
The proof in \cite{JFS} contains a gap: the authors of \cite{JFS} try to reduce the statement to the case when $\cC$ is gaunt, and in particular strict, by claiming in their Lemma~7.9 that the canonical functor $\cC \to \Gau(\cC)$ is always conservative; but there are many examples where it is not. We will therefore give a complete and independent proof of Proposition~\ref{prop:ClaudiaTheo}.} in \cite{JFS}.
 
 The details are slightly subtle for the following reason: as observed in \cite[Prop.~7.13]{JFS}, depending on exactly which maps $\globe_k \to \Adj_k$ or $\globe_s \to \Adj_s$ one wants to use, some of the cells that need to be declared adjunctible are not cells from $\globe_s \otimes \globe_k$ but rather are ``mates'' of those cells, which exist only once other cells are declared adjunctible. We review the theory of mates in \S\ref{subsec:mates} below.

Recall from Definition~\ref{defn:walkingcell} and the subsequent discussion that the boundary of the walking $n$-cell $\globe_n$ is glued from two $(n-1)$-cells, namely its \emph{incoming and outgoing boundary $\partial_0 \globe_n$ and $\partial_1\globe_n$}, along their boundary. (This includes the case $n=0$: The $0$-cell $\globe_0$ has empty boundary, which is glued from two $(-1)$-cells $\globe_{-1} = \emptyset$.) The inclusions $\globe_{n-1} = \partial_{i} \globe_n \mono \globe_n$, for $i=0,1$, induce restriction functors $ \mathrm{Fun}^{\mathrm{(op)lax}}(\globe_n, \cC) \to \mathrm{Fun}^{\mathrm{(op)lax}}(\partial_{i} \globe_n, \cC) = \mathrm{Fun}^{\mathrm{(op)lax}}(\globe_{n-1}, \cC)$ that we typically write as $\alpha \mapsto \alpha_{i}$. A functor $\alpha : \globe_m \otimes \globe_n \to \cC$ has a ``top-dimensional filler'' $\alpha_{<}$ given by restricting along the top cell $\globe_{m+n} \to \globe_m \otimes \globe_n$.

\begin{prop}
 \label{prop:ClaudiaTheo} 
Let $\cC$ be an $(\infty, \infty)$-category, and $s\geq 0$ and $k \geq 1$.

A $k$-morphism $\eta$ in $\Funlax(\globe_s, \cC)$  is a left adjoint if and only if its images $\eta_i$ under the restrictions $\Funlax(\globe_s, \cC) \to \Funlax(\partial_i \globe_s, \cC)$ are left adjoints for $i=0,1$ and also its top-dimensional filler $\eta_{<}$ is a left-adjoint $(k+s)$-morphism in $\cC$. 

A $k$-morphism $\mu$ in $\Funoplax(\globe_s, \cC)$  is a right adjoint if and only if its images $\mu_i$ in $\Funoplax(\partial_i \globe_s, \cC)$ are right adjoints for $i=0,1$ and also its top-dimensional filler $\mu_<$ is a right-adjoint $(k+s)$-morphism in $\cC$. 
\end{prop}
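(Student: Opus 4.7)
The plan is to identify both statements with a single computation of the mapping space $\Map(\globe_s \otimes \Adj_k, \cC)$ as a full subspace of $\Map(\globe_s \otimes \globe_k, \cC)$, and then to match this subspace with the explicit adjunctibility conditions on $\eta_0, \eta_1, \eta_<$ via induction on $s$.

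First I would reduce the oplax/right-adjoint statement to the lax/left-adjoint one. Lemma~\ref{lem:propertiesofFunoplax:compatwithcoop} supplies an equivalence $\Funoplax(\globe_s, \cC)^{\op} \simeq \Funlax(\globe_s^{\op}, \cC^{\op})$, and $\globe_s^{\op} \simeq \globe_s$ (the equivalence merely exchanges $\partial_0\globe_s$ with $\partial_1\globe_s$, which is harmless since the two boundary conditions in the statement play symmetric roles). Since $(-)^{\op}$ swaps left and right adjoints, a right-adjoint $k$-morphism $\mu$ in $\Funoplax(\globe_s, \cC)$ corresponds to a left-adjoint $k$-morphism in $\Funlax(\globe_s, \cC^{\op})$, and the conditions on $\mu_0, \mu_1, \mu_<$ transport correspondingly. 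By the universal property of the walking adjunction from~\cite{MR3415698} (propagated to $\Adj_k = \Sigma^{k-1}\Adj$ through $\Sigma$, which preserves epimorphisms as a left adjoint), the map $l: \globe_k \to \Adj_k$ is an epimorphism and identifies $\Map(\Adj_k, \cD) \hookrightarrow \Map(\globe_k, \cD)$ with the full subspace of left-adjoint $k$-morphisms. Applied to $\cD = \Funlax(\globe_s, \cC)$ and transposed via the $\otimes$-$\Funlax$ adjunction, this exhibits $\Map(\globe_s \otimes \Adj_k, \cC) \hookrightarrow \Map(\globe_s \otimes \globe_k, \cC)$ as the full subspace cut out by ``$\eta$ is a left adjoint in $\Funlax(\globe_s, \cC)$''. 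The task reduces to matching this subspace with the one cut out by the explicit conditions on $\eta_0, \eta_1, \eta_<$.

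I would then induct on $s$. The base case $s = 0$ is immediate, since $\globe_0 = *$ is the unit of $\otimes$, making $\Funlax(\globe_0, \cC) = \cC$ and $\eta_< = \eta$, with the boundary conditions vacuous. For the inductive step, write $\globe_s = \Sigma\globe_{s-1}$ and combine Lemma~\ref{lemma:sigmaaspushout} with the cocontinuity of $-\otimes \Adj_k$ to present $\globe_s \otimes \Adj_k$ as the pushout of $\globe_{s-1}\otimes\globe_1 \otimes \Adj_k$ over $\globe_{s-1}\otimes\partial\globe_1\otimes\Adj_k$ along $\partial\globe_1\otimes\Adj_k = \Adj_k \sqcup \Adj_k$. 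The disjoint factor $\Adj_k \sqcup \Adj_k$ contributes precisely the data of two left-adjoint $k$-morphisms, which by the inductive hypothesis matches the condition that $\eta_0, \eta_1$ are left adjoints in $\Funlax(\globe_{s-1}, \cC)$. The bulk factor is then analyzed by a further inductive unfolding of $\Adj_k = \Sigma\Adj_{k-1}$, with base case $k=1$ handled by direct computation on the dense subcategory $\Theta$ via Warning~\ref{warn:GrayGaunt} (reducing to the strict analysis of $\globe_1\otimes\Adj$), eventually identifying the remaining adjunctibility condition as that on the top filler $\eta_<$.

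The main obstacle is the ``mates'' subtlety flagged in the paragraph preceding the proposition: the cell whose left-adjointness ultimately encodes ``$\eta_<$ is a left adjoint'' in the iterated pushout presentation is not literally the image of the top cell of $\globe_s \otimes \globe_k$ but a mate of it, built using the adjunction data already installed on the boundary. Handling this requires the mates calculus of \S\ref{subsec:mates}: once the boundary adjunctibility is imposed, the relevant mate of the top cell is well-defined, and one must verify that imposing left-adjointness on this mate is equivalent, under the boundary conditions, to imposing it on the naive top cell. This bookkeeping, combined with the pushout presentation and the universal properties of $\Adj_k$ and $\Adj_{s+k}$, yields the claimed characterisation.
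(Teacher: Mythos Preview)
Your overall strategy matches the paper's: translate to characterising $\Map(\globe_s \otimes \Adj_k, \cC)$ inside $\Map(\globe_s \otimes \globe_k, \cC)$ via the epimorphism $\globe_s \otimes l$, then induct. The reduction of the oplax/right statement to the lax/left one via $(-)^{\op}$ and Lemma~\ref{lem:propertiesofFunoplax:compatwithcoop} is a reasonable shortcut (the paper instead proves the oplax/right version directly and declares the other analogous). But two pieces of your plan do not work as written.

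First, the mates paragraph is a misreading of the preamble. The warning there concerns the \emph{other} pairings---lax with right, or oplax with left---which is precisely the subject of Proposition~\ref{prop:adjFunlaxAdj} and the calculus of \S\ref{subsec:mates}. The sentence immediately after the statement, ``We emphasize that Proposition~\ref{prop:ClaudiaTheo} is sensitive to the pairing of `lax' with `left' and of `oplax' with `right','' is exactly this point: for \emph{these} pairings no mates appear. The paper's proof is a pure colimit-matching argument with no mate bookkeeping whatsoever; the cell whose left-adjointness is at stake is literally the top filler $\eta_<$, not a mate of it. So what you flag as ``the main obstacle'' is not an obstacle here at all.

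Second, your base case is not handled. Warning~\ref{warn:GrayGaunt} does \emph{not} let you reduce $\globe_1 \otimes \Adj$ to a strict computation: it guarantees agreement of $\otimes$ with $\otimes^{\strict}$ only on the monoidal closure of $\Theta$, and $\Adj$ is gaunt but not in that closure. The case $s=k=1$ is a genuine input which the paper imports from \cite[Lem.~6.1.5]{NarukiThesis}, not a triviality.

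Finally, the paper's induction order is the reverse of yours and the order matters: first induct on $k$ with $s=1$ fixed, using Campion's pushout formula~\eqref{CampionFormula} for $\Sigma(-)\otimes\globe_1$, and only then induct on $s$. The $s$-step consumes the already-established $s=1$ statement at the shifted level $k+s$ in order to recognise the condition on the top filler (this is the role of condition~[i.a] in the paper's proof). Your ``further inductive unfolding of $\Adj_k = \Sigma\Adj_{k-1}$'' nested inside the $s$-induction is too vague to see this closing, and the bare relation $\Adj_k = \Sigma\Adj_{k-1}$ does not by itself give what Campion's formula gives.
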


We emphasize that Proposition~\ref{prop:ClaudiaTheo} is sensitive to the pairing of ``lax'' with ``left'' and of ``oplax'' with ``right.''

\begin{proof}
The following improvement of our original proof of Proposition~\ref{prop:ClaudiaTheo}  was communicated to us by Naruki Masuda.

The first statement is equivalent to the statement that there exists a factorization (necessarily unique since the left vertical map is an epimorphism), drawn as the bottom dashed arrow, in the following square,  where all the (solid) morphisms between $\globe$'s and $\Adj$'s pick out left adjoints, plus the statement that this square is pushout: 
\[\begin{tikzcd}
\globe_{s+k} \sqcup( \sqcup_{i=0,1} \partial_i \globe_s \otimes \globe_k) \arrow[r] \arrow[d] & \globe_s \otimes \globe_k \arrow[d] \\ \Adj_{s+k}  \sqcup (\sqcup_{i=0,1} \partial_i \globe_s \otimes \Adj_k) \arrow[r, dashed] &\globe_s \otimes \Adj_k.
\end{tikzcd}
\]
The second statement equivalently asserts the existence of a (dashed) factorization making the following square a pushout, where all morphisms between $\globe$'s and $\Adj$'s pick out the right adjoint: 
\[\begin{tikzcd}
\globe_{k+s} \sqcup ( \globe_k \otimes \sqcup_{i=0,1} \partial_i \globe_s )\arrow[r] \arrow[d] & \globe_k\otimes \globe_s \arrow[d] \\ \Adj_{k+s}  \sqcup (\Adj_k \otimes  \sqcup_{i=0,1} \partial_i \globe_s) \arrow[r, dashed] &\Adj_k \otimes \globe_s.\end{tikzcd}
\]
We prove the first (``left'') statement; the second is analogous. 
The statement is tautological when $s=0$, and the $s=k=1$ version follows from the mate calculus of \S\ref{subsec:mates} and can be found in \cite[Lem.~6.1.5]{NarukiThesis}\footnote{Note that \cite{NarukiThesis} uses the opposite convention from ours for the ordering of $\otimes$.}. By induction, increasing $k \leadsto k+1$ can be achieved by tensoring the pushout with $- \otimes \globe_1 $ and using the decomposition $\Sigma X = X \otimes \globe_1  \sqcup_{X \otimes \partial \globe_1} \partial \globe_1$ from Lemma~\ref{lemma:sigmaaspushout}. To increase $s\leadsto s+1$, tensor the pushout with $\globe_1 \otimes -$ and use the decomposition $\Sigma(X^{\co\op}) \simeq \Sigma(X^{\co})^{\co}  \simeq \globe_1 \otimes X \sqcup_{\partial \globe_1 \otimes X} \partial \globe_1$ which follows from Lemma~\ref{lemma:movingSigmapastcoop} and applying $(-)^{\co}$ to Lemma~\ref{lemma:sigmaaspushout} using Lemma~\ref{lem:propertiesofFunoplax:compatwithcoop}. Finally observe that $l^{\co\op}: \globe_1^{\co\op} \to \Adj^{\co\op}$ is equivalent to $l: \globe_1 \to \Adj$ and hence for any $i\geq 1$, it follows from Lemma~\ref{lem:Sigmacoop} that the functor $(\Sigma^i l)^{\co\op}: (\Sigma^i \globe_1)^{\co\op} =: \globe_{i+1}^{\co\op} \to (\Sigma^ i \Adj)^{\co\op} =: \Adj_{i+1}^{\co\op}$ is equivalent to $\Sigma^i l$. 
\end{proof}

A functor $\cC \to \cD$ of $(\infty, \infty)$-categories is called a \emph{subcategory inclusion} if it is a monomorphism\footnote{A morphism $f:a \to b$ in an $\infty$-category $\cE$ is a \emph{monomorphism} if for all objects $c$ the induced map of spaces $\Map(c, a) \to \Map(c,b)$ is a full subspace inclusion.} in $\Cat_{(\infty, \infty)}$. Because the $\globe_n$'s are colimit generators, a functor is a subcategory inclusion if and only if the induced maps $\Map(\globe_n, \cC) \to \Map(\globe_n, \cD)$ between spaces of $n$-cells are full subspace inclusions for all $n \geq 0$, and a subcategory of $\cD$ is uniquely determined by this list of subspaces of the $\Map(\globe_n, \cD)$'s.\footnote{More precisely, let $(\Cat_{(\infty,1)})_{/^{\mathrm{mono}} \cD}$ denote the full subcategory of $(\Cat_{(\infty,1)})_{/\cD}$ on the subcategory inclusions. (Since subcategory inclusions are monomorphisms, this is in fact a poset.) Then, the functor $(\Cat_{(\infty,1)})_{/^{\mathrm{mono}} \cD} \to \prod_{n \in \mathbb{N}_0} \cat{Spaces}_{/^{\mathrm{mono}} \Map(\globe_n, \cD)}$ is fully faithful.}

\begin{prop}\label{prop:FunlaxAdj}
For any $(\infty, \infty)$-category $\cC$, restriction along the $2$-functor $r: \globe_1 \to \Adj$ exhibits $\Funlax(\Adj, \cC)$ as the sub-$(\infty, \infty)$-category of $\Funlax(\globe_1, \cC)$ whose spaces of $n$-morphisms are inductively characterized as follows:
\begin{itemize}
\item Its space of objects (i.e.\ $0$-morphisms) is the full subspace of $\Map(*, \Funlax(\globe_1, \cC)) \simeq \Map(\globe_1, \cC)$ on the right-adjoint $1$-morphisms in $\cC$.
\item For $n\geq 1$, its space of $n$-morphisms is the full subspace of $\Map(\globe_n, \Funlax(\globe_1, \cC))$ on those $n$-morphisms whose source and target $(n-1)$-morphism are in the subcategory $\Funlax(\Adj, \cC) \subseteq \Funlax(\globe_1, \cC)$ and whose top-dimensional filler is a right-adjoint $(n+1)$-morphism in $\cC$.
\end{itemize}
Restriction along the $2$-functor $l : \globe_1 \to \Adj$ exhibits $\Funoplax(\Adj, \cC)$ as the sub-$(\infty, \infty)$-category of $\Funoplax(\globe_1, \cC)$ whose spaces of $n$-morphisms are inductively characterized as follows:
\begin{itemize}
\item Its space of objects (i.e.\ $0$-morphisms) is the full subspace of $\Map(*, \Funoplax(\globe_1, \cC)) \simeq \Map(\globe_1, \cC)$ on the left-adjoint $1$-morphisms in $\cC$.
\item For $n\geq 1$, its space of $n$-morphisms is the full subspace of $\Map(\globe_n, \Funoplax(\globe_1, \cC))$ on those $n$-morphisms whose source and target $(n-1)$-morphism are in the subcategory $\Funoplax(\Adj, \cC) \subseteq \Funoplax(\globe_1, \cC)$ and whose top-dimensional filler is a left-adjoint $(n+1)$-morphism in $\cC$.
\end{itemize}
\end{prop}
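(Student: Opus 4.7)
The plan is to derive Proposition~\ref{prop:FunlaxAdj} directly from Proposition~\ref{prop:ClaudiaTheo} via a straightforward adjunction shuffle. I treat the $\Funlax(\Adj, \cC)$ case; the $\Funoplax$ case is entirely analogous. For each $n \geq 0$, the biclosed adjunctions supply natural equivalences
\[
\Map(\globe_n, \Funlax(\Adj, \cC)) \simeq \Map(\Adj \otimes \globe_n, \cC) \simeq \Map(\Adj, \Funoplax(\globe_n, \cC)),
\]
and similarly with $\Adj$ replaced by $\globe_1$. Under these bijections, restriction along $r : \globe_1 \to \Adj$ becomes precisely the restriction map $\Map(\Adj, \Funoplax(\globe_n, \cC)) \to \Map(\globe_1, \Funoplax(\globe_n, \cC))$. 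By the universal property of $\Adj$ (valid in $\Cat_{(\infty,2)}$, and hence in $\Cat_{(\infty,\infty)}$ via~\eqref{eq:inclusion-and-localization}), this is a full subspace inclusion whose image is the space of right-adjoint $1$-morphisms in $\Funoplax(\globe_n, \cC)$. Since this holds for every $n$ and the $\globe_n$ are colimit-generators of $\Cat_{(\infty,\infty)}$, it follows that $r^* : \Funlax(\Adj, \cC) \to \Funlax(\globe_1, \cC)$ is a subcategory inclusion in the sense of \S\ref{subsec:otimesinfty}.

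To identify the image levelwise I apply the second (right-adjoint) statement of Proposition~\ref{prop:ClaudiaTheo} with $s = n$ and $k = 1$. It asserts that a $1$-morphism $\mu$ in $\Funoplax(\globe_n, \cC)$ is a right adjoint if and only if (a) for each $i = 0, 1$, the restriction $\mu_i$ along the boundary inclusion $\partial_i \globe_n = \globe_{n-1} \hookrightarrow \globe_n$ is a right-adjoint $1$-morphism in $\Funoplax(\globe_{n-1}, \cC)$, and (b) the top-dimensional filler $\mu_<$ is a right-adjoint $(n+1)$-morphism in $\cC$. Transporting back along the adjunction identifications, condition (a) is exactly the statement that the source and target $(n-1)$-morphisms of the corresponding $n$-morphism of $\Funlax(\globe_1, \cC)$ lie in $\Funlax(\Adj, \cC)$ (invoking the inductive hypothesis one category level down), while condition (b) is verbatim the top-dimensional-filler requirement in the statement, since both notions of ``top-dimensional filler'' agree: they are restriction along the unique top cell $\globe_{n+1} \to \globe_1 \otimes \globe_n$. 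The base case $n = 0$ is the universal property of $\Adj$.

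The $\Funoplax(\Adj, \cC)$ statement is proved identically using the equivalences
\[
\Map(\globe_n, \Funoplax(\Adj, \cC)) \simeq \Map(\globe_n \otimes \Adj, \cC) \simeq \Map(\Adj, \Funlax(\globe_n, \cC)),
\]
restriction along $l : \globe_1 \to \Adj$, and the first (left-adjoint) statement of Proposition~\ref{prop:ClaudiaTheo}. The genuine content of the result has already been absorbed into Proposition~\ref{prop:ClaudiaTheo}; the only delicate point is bookkeeping, namely verifying that the two interpretations of ``top-dimensional filler'' agree and that the source--target restriction of an $n$-morphism in $\Funlax(\globe_1, \cC)$ matches the boundary restrictions $\mu_i$ of a $1$-morphism in $\Funoplax(\globe_n, \cC)$, both of which are tautologies once one writes out the adjunction identifications.
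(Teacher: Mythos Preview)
Your proof is correct and follows essentially the same route as the paper: curry the map $\Map(\globe_n,\Funlax(\Adj,\cC)) \to \Map(\globe_n,\Funlax(\globe_1,\cC))$ to $\Map(\Adj,\Funoplax(\globe_n,\cC)) \to \Map(\globe_1,\Funoplax(\globe_n,\cC))$, invoke the universal property of $\Adj$ to see it is a full subspace inclusion, and then read off the image using Proposition~\ref{prop:ClaudiaTheo}. The paper's proof is slightly terser (it establishes the subcategory inclusion for arbitrary $\cE$ rather than just the $\globe_n$, and leaves the levelwise unpacking implicit), but the content is the same.
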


\begin{proof}
We address the inclusion $\Funlax(r, \cC) : \Funlax(\Adj, \cC) \to \Funlax(\globe_1, \cC)$; the oplax version is identical. 
First note that $\Funlax(r, \cC)$ is indeed a subcategory inclusion: for any $(\infty,\infty)$-category $\cE$, the induced map $\Map(\cE, \Funlax(\Adj, \cC)) \to \Map(\cE, \Funlax(\globe_1, \cC))$ is a fully faithful inclusion, since it agrees (via currying) with the map $\Map(\Adj, \Funoplax(\cE, \cC)) \to \Map(\globe_1, \Funoplax(\cE, \cC))$.

In particular, the space $\Map(\globe_n, \Funlax(\Adj, \cC))$ of $n$-morphisms in $\Funlax(\Adj, \cC)$ is precisely the full subspace
\[ \Map(\Adj, \Funoplax(\globe_n, \cC)) \subseteq \Map(\globe_1, \Funoplax(\globe_n, \cC))\] which is characterized in Proposition~\ref{prop:ClaudiaTheo}. 
\end{proof}

\begin{example} Recall that a $n$-morphism in an $(\infty, n)$-category is adjunctible if and only if it is invertible. Hence, Proposition~\ref{prop:FunlaxAdj} recovers \cite[Cor.~4.8]{2002.01037}, which states that for $\cC$ an $(\infty,2)$-category, the $(\infty,2)$-category $\Funlax(\Adj, \cC)$ is equivalent to the full subcategory of the (strong!)\ functor category $\Fun(\globe_1, \cC)$ on the right adjoint arrows.
\end{example}

For later use, we record the following immediate consequence of Proposition~\ref{prop:FunlaxAdj}: 
\begin{corollary}\label{cor:adjunctibilitytop}Let $n \geq 0$. 
\begin{enumerate}
\item The functor $\globe_n \otimes \globe_1 \to[\id \otimes l] \globe_n \otimes \Adj$ sends the generating $(n+1)$-morphism to a left-adjoint $(n+1)$-morphism in $\globe_n \otimes \Adj$.
\item The functor $\globe_1 \otimes \globe_n \to[r \otimes \id] \Adj \otimes \globe_n$ sends the generating $(n+1)$-morphism to a right-adjoint $(n+1)$-morphism in $\Adj \otimes \globe_n$.  \qed
\end{enumerate}
\end{corollary}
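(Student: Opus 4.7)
The plan is to derive each statement directly from Proposition~\ref{prop:FunlaxAdj} by specializing its universal characterization to the identity functor on $\globe_n \otimes \Adj$ (respectively $\Adj \otimes \globe_n$). In effect, the universal $n$-morphism classified by the identity already witnesses the adjunctibility required.

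For statement (1), the plan is as follows. Using the adjunction $(-\otimes \Adj) \dashv \Funoplax(\Adj,-)$, the identity functor $\id_{\globe_n \otimes \Adj}$ corresponds to a universal $n$-morphism $\tilde{\alpha} : \globe_n \to \Funoplax(\Adj, \globe_n \otimes \Adj)$. By Proposition~\ref{prop:FunlaxAdj} (applied with $\cC = \globe_n \otimes \Adj$), the subcategory inclusion $\Funoplax(\Adj, \globe_n \otimes \Adj) \hookrightarrow \Funoplax(\globe_1, \globe_n \otimes \Adj)$ is given by restriction along $l : \globe_1 \to \Adj$, and its $n$-morphisms are precisely those whose top-dimensional filler is a left-adjoint $(n+1)$-morphism in $\globe_n \otimes \Adj$. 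Restricting $\tilde{\alpha}$ along $l$ yields an $n$-morphism of $\Funoplax(\globe_1, \globe_n \otimes \Adj)$ which, via the adjunction, classifies the functor $\id \otimes l : \globe_n \otimes \globe_1 \to \globe_n \otimes \Adj$. By unwinding definitions, its top-dimensional filler is exactly the image under $\id \otimes l$ of the generating top $(n+1)$-cell of $\globe_n \otimes \globe_1$. Proposition~\ref{prop:FunlaxAdj} then gives that this image is a left adjoint in $\globe_n \otimes \Adj$, as required.

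Statement (2) is entirely analogous, using the adjunction $(\Adj \otimes -) \dashv \Funlax(\Adj, -)$, the other half of Proposition~\ref{prop:FunlaxAdj} (restriction along $r : \globe_1 \to \Adj$ into $\Funlax$), and concluding the dual ``right adjoint'' condition. There is no real obstacle to the argument: the only verification worth spelling out is the compatibility of ``top-dimensional filler'' with the adjunction counits, which is immediate from the definition in Example~\ref{exm:funlaxone} that an $n$-morphism in $\Funoplax(\globe_1, \cE)$ is a map $\globe_n \otimes \globe_1 \to \cE$ and its top-dimensional filler is its precomposition with the top cell $\globe_{n+1} \to \globe_n \otimes \globe_1$.
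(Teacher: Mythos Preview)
Your proposal is correct and matches the paper's intent: the paper presents this corollary as an immediate consequence of Proposition~\ref{prop:FunlaxAdj} and gives no further proof, and your argument is precisely the unwinding of that implication via the $(-\otimes\Adj)\dashv\Funoplax(\Adj,-)$ and $(\Adj\otimes-)\dashv\Funlax(\Adj,-)$ adjunctions applied to the respective identity functors.
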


\subsection{The calculus of mates} \label{subsec:mates}

Proposition~\ref{prop:ClaudiaTheo} answers directly the question of factoring a functor $\alpha : \globe_m \otimes \globe_n \to \cC$ through the epimorphisms $\globe_m \otimes l : \globe_m \otimes \globe_n \to \globe_m \otimes \Adj_n$ and $r \otimes \globe_n : \globe_m \otimes \globe_n \to \Adj_m \otimes \globe_n$. But we will also want to understand the problem of factoring through $\globe_m \otimes r$ and $l \otimes \globe_n$, and the most na\"ive generalization to these cases of Proposition~\ref{prop:ClaudiaTheo} is false. The correct answer in these cases requires the notion of the \define{mates} of $\alpha$. We will only need the case $m=n=1$ in the sequel, and so restrict to that case now.

Suppose we are given a lax square $\alpha$ as follows: 
\begin{equation} \label{eqn:basicsquare}
\begin{tikzcd}[sep=4em]
A \arrow[d, "h", swap] \arrow[r, "f"] & B \arrow[d, "g"]\\
C \arrow[r, "k", swap]  \arrow[ur, Rightarrow, shorten <=7pt, shorten >=7pt, "\alpha" description] & D
\end{tikzcd}
\end{equation}
Suppose furthermore that the (horizontal) 1-morphisms $f : A \to B$ and $k : C \to D$ are left adjoints, with right adjoints $f^R : B \to A$ and $k^R : D \to C$. In this case, $\alpha$ can be ``rotated $90^\circ$ clockwise'' by declaring:
\begin{equation}\label{eqn:rmate}
\begin{tikzcd}[column sep=4em, row sep = tiny]
  & B \arrow[dl, "f^R", swap] \arrow [dddd, "g"] &&   & B \arrow[dl, "f^R", swap] \arrow[dd, equal] \\
A \arrow[dddd, "h", swap] &   && A \arrow[dd, "h", swap] \arrow[dr, "f", swap] \arrow[r, Rightarrow, shorten <=20pt, shorten >=10pt] &  \mbox{}  \\
  &   & := &  & B \arrow[dd, "g"] \\
  & && C \arrow[dr, "k"] \arrow[dd, equal] \arrow[ur, Rightarrow, shorten <=10pt, shorten >=10pt, "\alpha" description] & \\
  & D \arrow[dl, "k^R"] \arrow[uuul, Leftarrow, shorten <=7pt, shorten >=7pt, "\alpha^\rmate" description] && \mbox{} \arrow[r, Rightarrow, shorten <=10pt, shorten >=20pt] & D \arrow[dl, "k^R"] \\
C &   && C &   
\end{tikzcd}
\end{equation}
The triangles are filled with unit and counit data for the adjunctions $f \dashv f^R$ and $k \dashv k^R$. The morphism $\alpha^\rmate$ is called the \define{right mate} of $\alpha$.

Suppose instead that the (vertical) 1-morphisms $h : A \to C$ and $g : B \to D$ are right adjoints, with left adjoints $h^L$ and $g^L$. In this case, $\alpha$ can be ``rotated $90^\circ$ counterclockwise'' by declaring:
\begin{equation}\label{eqn:lmate}
\begin{tikzcd}[row sep=4em, column sep = tiny]
  & A \arrow[rrrr, "f"] \arrow[ddrrr, Leftarrow, shorten <=15pt, shorten >=15pt, "\alpha^\lmate" description] &   & \mbox{}  &   & B & &   & A \arrow[ddr, "h"] \arrow[rr, "f"] &   & B \arrow[rr, equal] \arrow[ddr, "g", swap] & \mbox{}  & B\\
  &   &   &   &   &   &:=&   &   &   &   &   &  \\
C\arrow[uur, "h^L"] \arrow[rrrr, "k"] &   & \mbox{}  &   & D\arrow[uur, "g^L", swap] &   & & C \arrow[uur, "h^L"] \arrow[rr, equal] & \mbox{}\arrow[uu, Rightarrow, shorten <=15pt, shorten >=30pt]  & C \arrow[rr, "k"] \arrow[uur, Rightarrow, shorten <=15pt, shorten >=15pt, "\alpha" description]&   & D \arrow[uur, "g^L", swap] \arrow[uu, Rightarrow, shorten <=30pt, shorten >=15pt] &  
\end{tikzcd}
\end{equation}
The morphism $\alpha^\lmate$ is called the \define{left mate} of $\alpha$.

\begin{remark}
Suppose that the sides of $\alpha$ are sufficiently adjunctible for $\alpha^\rmate$ be defined: the (horizontal) 1-morphisms $f : A \to B$ and $k : C \to D$ are left adjoints. Then $(\alpha^\rmate)^\lmate$ is defined, and the axioms of adjunction imply that, up to coherence isomorophisms,
$$ (\alpha^\rmate)^\lmate = \alpha.$$
Similarly, if $\alpha^\lmate$ is defined, then
$$ (\alpha^\lmate)^\rmate = \alpha,$$
again up to coherence isomorphisms.
\end{remark}

\begin{lemma}\label{lem:adjointsofmates}
  Suppose $\alpha : k \circ h \Rightarrow g \circ f$ is a square as in \eqref{eqn:basicsquare} in which $f$ and $k$ are left adjoints and $g$ and $h$ are right adjoints, so that both $\alpha^\rmate$ and $\alpha^\lmate$ are defined. Then, the $2$-morphism $\alpha^\rmate: h\circ f^R \To k^R \circ g$ is a right adjoint (resp.\ left adjoint) if and only if $\alpha^\lmate:g^L \circ k \To f \circ h^L$ is a left adjoint (resp.\ right adjoint), and there is an equivalence of $2$-morphisms $h^L \circ k^R \To f^R \circ g^L$:
    \[ ((\alpha^\lmate)^R)^\rmate \simeq ((\alpha^\rmate)^L)^\lmate\]
    (resp.\ of $2$-morphisms $((\alpha^\lmate)^L)^\rmate \simeq ((\alpha^\rmate)^R)^\lmate$).
\end{lemma}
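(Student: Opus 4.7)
The proof naturally separates into two tasks: (a) establishing the biconditional relating adjunctibility of $\alpha^\rmate$ and of $\alpha^\lmate$, and (b) exhibiting the canonical equivalence of the two iterated mates. My plan is to handle both tasks by treating mating as a functorial operation and then exploiting its invertibility.

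For task (a), I would first record that the mate constructions extend from $2$-morphisms to equivalences of the appropriate $(\infty,\infty)$-categories of $2$-morphisms with fixed adjunctible boundary: given the chosen adjunction data $f\adj f^R$, $k\adj k^R$, $h^L\adj h$, $g^L\adj g$, whiskering by units and counits defines functors $(-)^\rmate$ and $(-)^\lmate$ in the appropriate directions, and the identities $(\alpha^\rmate)^\lmate\simeq\alpha\simeq(\alpha^\lmate)^\rmate$ recalled above say these are mutually quasi-inverse equivalences. Equivalences of $(\infty,2)$-categories preserve adjunctions, so existence of any adjoint of $\alpha^\rmate$ translates, under the round-trip $\alpha^\rmate\mapsto(\alpha^\rmate)^\lmate\simeq\alpha$, into the corresponding adjoint data on $\alpha$; analogously for $\alpha^\lmate$. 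The swap between ``right'' and ``left'' comes from the fact that $(-)^\rmate$ uses the \emph{horizontal} adjunction data $f\adj f^R$, $k\adj k^R$, whereas $(-)^\lmate$ uses the \emph{vertical} adjunction data $g^L\adj g$, $h^L\adj h$; when viewed through the round trip $\alpha\rightsquigarrow\alpha^\rmate\rightsquigarrow\alpha^\lmate$ these mates traverse opposite halves of the $4$-fold rotational symmetry of a square, so the ``handedness'' of the adjunction that they detect on $\alpha$ is opposite.

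For task (b), both $((\alpha^\lmate)^R)^\rmate$ and $((\alpha^\rmate)^L)^\lmate$ are $2$-morphisms $h^L k^R\Rightarrow f^R g^L$, and I would prove they are equivalent by exhibiting both as universal solutions to the same representability problem encoded by~$\alpha$ and the chosen side-adjunctions. Concretely, starting from $(\alpha^\rmate)^L\colon k^R g\Rightarrow h f^R$ with its unit/counit $3$-morphisms, apply $(-)^\lmate$: this is whiskering by $\epsilon^{g^L}$ on the left and $\eta^{h^L}$ on the right together with the middle substitution of $(\alpha^\rmate)^L$. The resulting $2$-morphism makes the adjunction $(\alpha^\rmate)\adj(\alpha^\rmate)^L$ into an adjunction ``rotated'' by $(-)^\lmate$, and in particular produces unit/counit $3$-morphisms witnessing that the result is a right adjoint of $\alpha^\lmate$. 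By uniqueness of right adjoints up to canonical isomorphism in the hom-$2$-category, this uniquely identifies $((\alpha^\rmate)^L)^\lmate$ with $((\alpha^\lmate)^R)^\rmate$. The ``resp.'' variant is completely parallel, swapping $L\leftrightarrow R$ throughout.

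The main obstacle I anticipate is the bookkeeping in task (b): explicitly writing out the unit/counit $3$-morphisms for the derived adjunction $(\alpha^\lmate)\adj((\alpha^\rmate)^L)^\lmate$ requires combining eight pieces of whiskering data (the four side adjunctions $f\adj f^R$, $k\adj k^R$, $h^L\adj h$, $g^L\adj g$ together with the adjunction $\alpha^\rmate\adj (\alpha^\rmate)^L$ in the hom-$2$-category) and verifying the triangle identities modulo triangle identities of the side adjunctions. A clean presentation would introduce a notation for ``squared mates'' $\alpha^\sharp:=((\alpha^\rmate)^L)^\lmate$, observe that by construction it represents the universal right adjoint data of $\alpha^\lmate$ via the mate-equivalence, and then invoke uniqueness to identify it with $((\alpha^\lmate)^R)^\rmate$. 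The rest is then a symmetry argument.
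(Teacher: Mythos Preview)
Your plan for task (b) is sound in principle, and with enough bookkeeping would succeed. But your argument for task (a) has a genuine gap, and filling it correctly would hand you a much shorter proof of everything at once---which is exactly what the paper does.

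The issue is your claim that single mates, being ``mutually quasi-inverse equivalences,'' transfer adjointness of the filling $2$-cell. A single mate $(-)^\rmate$ is indeed an equivalence of hom-$(\infty,1)$-categories
\[
\Hom_{\cC(A,D)}(kh,\,gf)\;\simeq\;\Hom_{\cC(B,C)}(hf^R,\,k^Rg),
\]
but ``$\alpha^\rmate$ is a right adjoint'' is not a property visible inside this hom-$(\infty,1)$-category: it asks for a $2$-cell $k^Rg\Rightarrow hf^R$ going the \emph{other} way together with unit/counit $3$-cells, i.e.\ it lives in the ambient hom-$(\infty,2)$-category. A single mate does not assemble into any $2$-functor between $\cC(A,D)$ and $\cC(B,C)$, so ``equivalences preserve adjunctions'' does not apply. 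Your sentence about ``opposite halves of the $4$-fold rotational symmetry'' gestures at the right phenomenon but is not an argument.

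The paper's one-line observation is that the composite $\alpha^\rmate\mapsto\alpha^\lmate$ \emph{is} a $2$-functorial operation: since $(\alpha^\rmate)^\lmate\simeq\alpha$, one has $\alpha^\lmate=(\alpha^\rmate)^{\lmate\lmate}$, and this double mate is nothing but the classical full transpose $\beta\mapsto\beta^\vee:(\text{target }\beta)^L\Rightarrow(\text{source }\beta)^L$ (check: $(k^Rg)^L=g^Lk$ and $(hf^R)^L=fh^L$). The full transpose is well known to be \emph{contravariant} on $2$-cell composition and covariant on $3$-cells---an anti-equivalence of hom-$2$-categories---and anti-equivalences swap left and right adjoints. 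This gives (a) immediately, and (b) drops out for free: the anti-equivalence carries the adjunction $(\alpha^\rmate)^L\dashv\alpha^\rmate$ to an adjunction $\alpha^\lmate\dashv((\alpha^\rmate)^L)^{\lmate\lmate}$, so $(\alpha^\lmate)^R\simeq((\alpha^\rmate)^L)^{\lmate\lmate}$, and one further $(-)^\rmate$ yields the stated identity.

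So rather than passing through $\alpha$ via two single mates (which individually lack the needed structure), go directly from $\alpha^\rmate$ to $\alpha^\lmate$ via the $180^\circ$ rotation $(-)^{\lmate\lmate}$ and use its contravariance. This replaces all of your task (b) bookkeeping with a single appeal to uniqueness of adjoints under an anti-equivalence.
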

\begin{proof}
  We have $\alpha^\lmate = (\alpha^\rmate)^{\lmate\lmate}$. But, on the squares for which it is defined, $(-)^{\lmate\lmate}$ is contravariant for the composition of 2-morphisms and covariant for the composition of 3-morphisms and hence sends right adjoints to left adjoints and left adjoints to right adjoints.
\end{proof}

\begin{remark}\label{rem:verticalandhorizontaladjoints}
Lax squares like \eqref{eqn:basicsquare}, in an $(\infty,\infty)$-category $\cC$, have two natural directions of composition: they are the 2-cells in a double category (more precisely, the $(1;1)$-cells in a double $(\infty,\infty)$-category; compare \cite{JFS}). The horizontal composition (in our diagrams) is the composition of 1-morphisms in $\Funoplax(\globe_1,\cC)$, and the vertical composition (in our diagrams) is the composition of 1-morphisms in $\Funlax(\globe_1,\cC)$.

Upon unpacking the proof, Proposition~\ref{prop:ClaudiaTheo} says what are the adjoints (if they exist) of a square \eqref{eqn:basicsquare} under each of these compositions. The horizontal left and right adjoints are:
\[
  \left(
\begin{tikzcd}[sep=3em]
A \arrow[d, "h", swap] \arrow[r, "f"] & B \arrow[d, "g"]\\
C \arrow[r, "k", swap]  \arrow[ur, Rightarrow, shorten <=4pt, shorten >=4pt, "\alpha" description] & D
\end{tikzcd}  
  \right)^{h L} = 
  \begin{tikzcd}[sep=3em]
B \arrow[d, "g", swap] \arrow[r, "f^L"] & A \arrow[d, "h"]\\
D \arrow[r, "k^L", swap]  \arrow[ur, Rightarrow, shorten <=4pt, shorten >=4pt, "(\alpha^L)^\lmate" description] & C
\end{tikzcd}
,\quad
  \left(
\begin{tikzcd}[sep=3em]
A \arrow[d, "h", swap] \arrow[r, "f"] & B \arrow[d, "g"]\\
C \arrow[r, "k", swap]  \arrow[ur, Rightarrow, shorten <=4pt, shorten >=4pt, "\alpha" description] & D
\end{tikzcd}  
  \right)^{h R} = 
  \begin{tikzcd}[sep=3em]
B \arrow[d, "g", swap] \arrow[r, "f^R"] & A \arrow[d, "h"]\\
D \arrow[r, "k^R", swap]  \arrow[ur, Rightarrow, shorten <=4pt, shorten >=4pt, "(\alpha^\rmate)^R" description] & C
\end{tikzcd}.
\]
The vertical left and right adjoints are:
\[
\left(
\begin{tikzcd}[sep=3em]
A \arrow[d, "h", swap] \arrow[r, "f"] & B \arrow[d, "g"]\\
C \arrow[r, "k", swap]  \arrow[ur, Rightarrow, shorten <=4pt, shorten >=4pt, "\alpha" description] & D
\end{tikzcd}  
\right)^{v L} = 
\begin{tikzcd}[sep=3em]
C \arrow[d, "h^L", swap] \arrow[r, "k"] & D \arrow[d, "g^L"]\\
A \arrow[r, "f", swap]  \arrow[ur, Rightarrow, shorten <=4pt, shorten >=4pt, "(\alpha^\lmate)^L" description] & B
\end{tikzcd}
,\quad
\left(
\begin{tikzcd}[sep=3em]
A \arrow[d, "h", swap] \arrow[r, "f"] & B \arrow[d, "g"]\\
C \arrow[r, "k", swap]  \arrow[ur, Rightarrow, shorten <=4pt, shorten >=4pt, "\alpha" description] & D
\end{tikzcd}  
\right)^{v R} = 
\begin{tikzcd}[sep=3em]
C \arrow[d, "h^R", swap] \arrow[r, "k"] & D \arrow[d, "g^R"]\\
A \arrow[r, "f", swap]  \arrow[ur, Rightarrow, shorten <=4pt, shorten >=4pt, "(\alpha^R)^\rmate" description] & B
\end{tikzcd}.
\]
\end{remark}

\subsection{The lax tensor square of the walking adjunction}\label{sec:adjsquared}

We now apply Propositions~\ref{prop:ClaudiaTheo} and~\ref{prop:FunlaxAdj} to present $\Adj \otimes \Adj$ by starting with $\globe_1 \otimes \globe_1$ and adding adjunctibility.
We record for reference:
\begin{lemma}\label{lem:twohandedadjoint}
  For an $n$-morphism $f$ in an $(\infty,\infty)$-category, the following conditions are equivalent:
  \begin{itemize}
    \item $f$ is a right adjoint, and the unit and counit of the adjunction $f^L \dashv f$ are left adjoints.
    \item $f$ is a left adjoint, and the unit and counit of the adunction $f \dashv f^R$ are right adjoints.
  \end{itemize}
  In this case, the adjoints of unit and counit exhibit any left adjoint $f^L$ also as a right adjoint of $f$; and exhibit any right adjoint $f^R$ also as a left adjoint. 
\end{lemma}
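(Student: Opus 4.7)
My plan is to show that the first bullet implies the second by explicit construction of the adjunction $f \dashv f^L$; the converse direction is entirely symmetric, and the final clause of the statement will then follow from the essential uniqueness of adjoints.

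Assume therefore that $f^L \dashv f$ is an adjunction in the ambient $(\infty,\infty)$-category with unit $\eta : \id \Rightarrow f f^L$ and counit $\epsilon : f^L f \Rightarrow \id$, both admitting right adjoints $\eta^R : f f^L \Rightarrow \id$ and $\epsilon^R : \id \Rightarrow f^L f$. I will propose the adjunction $f \dashv f^L$ with unit $\tilde\eta := \epsilon^R$ and counit $\tilde\epsilon := \eta^R$; the required triangle identities are
\[ (\eta^R f)(f \epsilon^R) \simeq \id_f \qquad \text{and} \qquad (f^L \eta^R)(\epsilon^R f^L) \simeq \id_{f^L}. \]
The key observation will be that taking right adjoints of $(n+1)$-morphisms is contravariant under vertical composition --- so $(\beta \alpha)^R \simeq \alpha^R \beta^R$ for composable left-adjoint $(n+1)$-morphisms $\alpha, \beta$ --- and compatible with whiskering by $n$-morphisms, because composition of $n$-morphisms defines a functor between the relevant hom-$(\infty,\infty)$-categories of $(n+1)$-morphisms, and functors preserve adjoint pairs. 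Applying these rules to the original triangle $(f\epsilon)(\eta f) \simeq \id_f$ of $f^L \dashv f$ yields $(\eta^R f)(f \epsilon^R) \simeq (\id_f)^R \simeq \id_f$, and analogously the triangle $(\epsilon f^L)(f^L \eta) \simeq \id_{f^L}$ produces the second identity.

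The main obstacle will be to conclude from this pointwise data that $f^L$ is genuinely a right adjoint of $f$ in the homotopy-coherent $(\infty,\infty)$-categorical sense, i.e.\ that the data assembles into a pointed functor $\Adj \to \cC$ picking out $f$ as the left adjoint. For this I will invoke the universal property of $\Adj$ established in~\cite{MR3415698}: the space $\Map(\Adj, \cD)$ is identified with a full subspace of $\Map(\globe_1, \cD)$ on the left (resp.\ right) adjoints, so being part of an adjunction is a mere property of an arrow, and all higher coherence data is supplied automatically once valid unit/counit/triangle data have been produced. Since the whole argument takes place inside the $(\infty,2)$-categorical structure on the hom-category of $(n+1)$-morphisms enveloping $f$, this applies directly. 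The concluding clause that $\tilde\eta$ and $\tilde\epsilon$ are themselves right adjoints is true by construction, completing the verification of the second bullet.
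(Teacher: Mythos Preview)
Your proposal is correct and follows the same construction as the paper: set the new unit to $\epsilon^R$ and the new counit to $\eta^R$, so that $f^L$ becomes a right adjoint of $f$. The paper's proof is a single sentence stating exactly this, while you have spelled out the triangle-identity verification (via contravariance of $(-)^R$ and compatibility with whiskering) and invoked \cite{MR3415698} to handle coherence; this extra care is reasonable, though the paper treats the claim as essentially immediate.
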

These and similar equivalences, and their applications, are thoroughly explored in \cite{2312.05051}.
\begin{proof}
  Indeed, suppose that $f : X \to Y$ admits a left adjoint $f^L : Y \to X$ with unit $\eta : \id_Y \to ff^L$ and counit $\epsilon : f^L f \to \id_X$, which in turn  admit right adjoints $\eta^R$ and $\epsilon^R$; then $\eta^R$ and $\epsilon^R$ are, respectively, the counit and unit of an adjunction $f \dashv f^L$, witnessing $f^L$ as a right adjoint to $f$.
\end{proof}

\begin{remark}\label{rem:maxingout}
  If $\cC$ is an $(\infty,n+1)$-category, then the conditions in Lemma~\ref{lem:twohandedadjoint} are equivalent to asking that $f$ itself is invertible. Indeed, in this case the unit and counit who are asserted to have adjoints are top-dimensional morphisms, and so they have adjoints only when they are invertible; but an adjunctible morphism with invertible unit and counit is itself invertible.
\end{remark}

\begin{prop}\label{prop:adjFunlaxAdj}
  Consider a $1$-morphism $\eta:s \to t$ in $\Funlax(\Adj, \cC)$, represented via the inclusion $r:\globe_1 \to \Adj$ and Proposition~\ref{prop:FunlaxAdj} by a square in $\cC$ 
\[\begin{tikzcd}[sep=3em]
\bullet \arrow[r, "s"] \arrow[d, "\eta_0"'] &\arrow[d, "\eta_1"] \bullet \\
\bullet \arrow[r, "t"'] \arrow[Rightarrow, ur,"\eta_{<}" description, shorten <=4pt, shorten >=4pt,] & \bullet
\end{tikzcd}\]
for which $s,t$ and $\eta_{<}$ are right-adjoint morphisms in $\cC$. Then, $\eta$ is a \emph{left}-adjoint $1$-morphism in $\Funlax(\Adj, \cC)$ if and only if moreover: 
\begin{itemize}
  \item[{[i]}] $\eta_0$ and $\eta_1$ are left adjoints.
  \item[{[ii]}] 
  $(\eta_{<})^R \simeq (\eta_{<})^L$ satisfies the equivalent conditions of Lemma~\ref{lem:adjointsofmates}: $((\eta_{<})^R)^\rmate$ is a right adjoint; equivalently  $((\eta_{<})^L)^\lmate$ is a left adjoint.
  \item[{[iii]}] 
  $\eta_<$ is satisfies the equivalent conditions of Lemma~\ref{lem:twohandedadjoint}: $\eta_<$ is a right adjoint, and the unit and counit of the adjunction $(\eta_<)^L \dashv \eta_<$ are left adjoints; or equivalently,  $\eta_<$ is a left adjoint, and the unit and counit of the adjunction $\eta_< \dashv (\eta_<)^R$ are right adjoints.
\end{itemize}
\end{prop}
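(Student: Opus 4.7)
The plan is to combine the universal property of the walking adjunction $\Adj$ with the subcategory characterization of Proposition~\ref{prop:FunlaxAdj}, applied level-by-level to the adjunction data. By the universal property of $\Adj$, a $1$-morphism in any $(\infty,\infty)$-category $\cD$ is a left adjoint if and only if it extends along $l : \globe_1 \to \Adj$ to a functor $\Adj \to \cD$. Applied to $\cD = \Funlax(\Adj, \cC)$ and curried through the $\Adj \otimes (-) \dashv \Funlax(\Adj, -)$ adjunction, this says that $\eta$ is a left adjoint if and only if the corresponding map $\globe_1 \otimes \Adj \to \cC$ extends along $l \otimes \Adj : \globe_1 \otimes \Adj \to \Adj \otimes \Adj$. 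Since $\Funlax(\Adj, \cC) \hookrightarrow \Funlax(\globe_1, \cC)$ is a (non-full) subcategory inclusion, being a left adjoint in the smaller category further decomposes as: (a)~$\eta$ is a left adjoint in the ambient $\Funlax(\globe_1, \cC)$, and (b)~the right adjoint $\eta^R$, together with the unit, counit, triangle identities, and all higher coherences of $\eta \dashv \eta^R$, lie in the subcategory $\Funlax(\Adj, \cC)$.

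Condition~(a) is handled by Proposition~\ref{prop:ClaudiaTheo} with $s = k = 1$, which yields exactly condition~[i] on $\eta_0, \eta_1$ together with the requirement that $\eta_<$ be a left-adjoint $2$-morphism in $\cC$ (part of~[iii]). For~(b), Proposition~\ref{prop:FunlaxAdj} rewrites ``lies in $\Funlax(\Adj, \cC)$'' as a condition on each top-dimensional filler in $\cC$: it must be a right adjoint. Using Remark~\ref{rem:verticalandhorizontaladjoints} and the mate calculus of \S\ref{subsec:mates}, the $2$-cell top filler of $\eta^R$ is identified as the mate $((\eta_<)^R)^\rmate$; requiring it to be a right adjoint is precisely condition~[ii], whose two equivalent formulations are interchanged by Lemma~\ref{lem:adjointsofmates}. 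Analogously, the $3$-cell top fillers of the unit and counit of $\eta \dashv \eta^R$ are built via further mate rotations from the unit and counit of the $2$-morphism adjunction $\eta_< \dashv (\eta_<)^R$ in $\cC$; requiring these to be right adjoints corresponds to condition~[iii], whose two phrasings are exchanged via Lemma~\ref{lem:twohandedadjoint}.

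The main obstacle is to show that no additional conditions beyond~[i]--[iii] arise from the triangle identities or higher coherences of $\eta \dashv \eta^R$. The key point is that Proposition~\ref{prop:FunlaxAdj} is stated at the full $(\infty,\infty)$-categorical level: once conditions~[i]--[iii] hold, the higher coherences are uniquely determined in $\Funlax(\globe_1, \cC)$, and their top fillers are assembled entirely from the already-adjointable lower-dimensional data via iterated mate rotations, so their right-adjointability is automatic. Executing this last step is essentially a bookkeeping exercise in the higher-dimensional analogue of Remark~\ref{rem:verticalandhorizontaladjoints}, combined with iterated applications of Proposition~\ref{prop:ClaudiaTheo}.
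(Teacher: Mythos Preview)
Your approach is essentially the paper's: reduce to (a) left-adjointness in $\Funlax(\globe_1,\cC)$ via Proposition~\ref{prop:ClaudiaTheo}, then (b) check that $\eta^R$, the unit, and the counit lift to the subcategory $\Funlax(\Adj,\cC)$ via Proposition~\ref{prop:FunlaxAdj}. The identification of conditions~[i] and~[ii] matches the paper exactly; for~[iii] the paper is somewhat more precise than your ``built via further mate rotations'': it identifies the $3$-cell filler of the unit (resp.\ counit) of $\eta\dashv\eta^R$ directly with the counit (resp.\ unit) of $\eta_<\dashv(\eta_<)^R$ in~$\cC$, transported across an explicit hom-equivalence.

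Your ``main obstacle''---that the triangle identities or higher coherences might impose further conditions---dissolves for a simpler reason than the one you offer. You propose showing that their top fillers are right adjoints because they are ``assembled from already-adjointable data via iterated mate rotations''; this would require actually computing those $4$-cell fillers, and it is not clear the computation goes through. The cleaner argument, which the paper leaves implicit in its bare assertion ``it suffices to show that $\eta^R$, and also the unit and counit\dots, lift,'' is the following. The inclusion $\Funlax(\Adj,\cC)\hookrightarrow\Funlax(\globe_1,\cC)$ is a monomorphism, so each $\Map(\globe_n,-)$ restricts to a full subspace inclusion, i.e.\ a union of path components. The triangle-identity witnesses are \emph{invertible} $3$-cells, hence paths in the space of $2$-cells; once their endpoints (composites of the already-lifted $\eta$, $\eta^R$, unit, counit) lie in the subcategory, the paths do too. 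Equivalently: $\Adj$ is a $2$-category, so a functor $\Adj\to\cE$ factors through a subcategory $\cD\hookrightarrow\cE$ as soon as the images of its generating $0$-, $1$-, and $2$-cells land in~$\cD$. No bookkeeping with higher mates is needed.
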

The existence of the isomorphism $(\eta_{<})^R \simeq (\eta_{<})^L$ attested in condition~[ii] follows via Lemma~\ref{lem:twohandedadjoint} from condition~[iii].

\begin{proof}
  Restricting along $r : \globe_1 \to \Adj$, any left adjoint $1$-morphism $\eta$ in $\Funlax(\Adj, \cC)$ is sent to a left adjoint $1$-morphism in $\Funlax(\globe_1, \cC)$ and hence by Proposition~\ref{prop:ClaudiaTheo} to a square for which $\eta_0$ and $\eta_1$ and $\eta_<$ are left adjoints in $\cC$; implying condition~[i] from the statement of the Proposition. Explicitly, the right adjoint to $\eta$ in $\Funlax(\globe_1, \cC)$ is given by the square $((\eta_{<})^R)^\rmate : s \circ \eta_0^R \To \eta_1^R \circ t$.
  
  To lift this adjunction $\eta \dashv \eta^R$ into the subcategory $\Funlax(r, \cC) : \Funlax(\Adj, \cC) \mono \Funlax(\globe_1, \cC)$, it suffices to show that $\eta^R$, and also the unit and counit of the adjunction $\eta \dashv \eta^R$, lift. We will repeatedly use Proposition~\ref{prop:FunlaxAdj}, which says that a morphism lifts against $\Funlax(r, \cC)$ if and only if its source and target lift and also its top-dimensional filler is a right adjoint in $\cC$.

  Thus we find that $\eta$ is a left adjoint in $\Funlax(\Adj, \cC)$ if and only if:
  \begin{itemize}
    \item The source $t$ and target $s$ of $\eta^R$ live in $\Funlax(\Adj, \cC)$. This is already assumed.
    \item The 2-morphism $(\eta^R)_{<} = ((\eta_{<})^R)^\rmate$ is a right adjoint in $\cC$. This is precisely condition~[ii] from the statement of the Proposition.
    \item The source and target of the unit and counit of $\eta \dashv \eta^R$ live in $\Funlax(\Adj, \cC)$. This happens as soon as $\eta$ and $\eta^R$ live in $\Funlax(\Adj, \cC)$, which happens once the previous conditions are satisfied.
    \item The 3-morphisms filling the unit and counit of $\eta \dashv \eta^R$ are right adjoints in $\cC$. 
  \end{itemize}
  
  Only the last bullet point requires any unpacking. The 3-cell filling the unit of $\eta \dashv \eta^R$ is the 1-morphism in $\Hom_{\cC}(t \circ \eta_0 \circ \eta_0^R, t)$ that under the equivalence with $\End_{\cC}(t \circ \eta_0)$ becomes the counit of the adjunction $\eta_{<} \dashv (\eta_{<})^R$. The 3-cell filling the counit of $\eta \dashv \eta^R$ is the $1$-morphism in $\Hom_{\cC}(s, s \circ \eta_1 \circ \eta_1^R)$ that under the equivalence with $\End_{\cC}(s \circ \eta_1)$ becomes the unit of the adjunction $\eta_{<} \dashv (\eta_{<})^R$.  
  Thus these 3-cell fillings are right adjoints if and only if those (co)units are right adjoints, and so we find precisely condition~[iii] from the statement of the Proposition.
\end{proof}

Changing some letters, we find:

\begin{corollary} \label{cor:AdjotimesAdj}
  Let $l, r: \globe_1 \to \Adj$ the inclusion of the generating left, resp.\ right adjoint $1$-morphism in $\Adj$. 
  Then, given an $(\infty,\infty)$-category $\cC$, precomposition with the functor $r\otimes l : \globe_1 \otimes \globe_1 \to \Adj\otimes \Adj$ identifies $\Map(\Adj \otimes \Adj, \cC)$ with the full subspace of $\Map(\globe_1 \otimes \globe_1,\cC)$ on those squares in $\cC$ 
  \[\begin{tikzcd}[sep=3em]
\bullet \arrow[r, "f"] \arrow[d, "h"'] &\arrow[d, "g"] \bullet \\
\bullet \arrow[r, "k"'] \arrow[Rightarrow, ur,"\beta" description, shorten <=4pt, shorten >=4pt,] & \bullet
\end{tikzcd}\]
with the following adjunctibility conditions:
\begin{itemize}
\item[{[i]}] $f$ and $k$ are right adjoints and
 $g$ and $h$ are left adjoints.
\item[{[ii]}] $\beta^R \simeq \beta^L$ satisfies the equivalent conditions of Lemma~\ref{lem:adjointsofmates}: $(\beta^R)^\rmate$ is a right adjoint; equivalently $(\beta^L)^\lmate$ is a left adjoint.
\item[{[iii]}] $\beta$ satisfies the equivalent conditions of Lemma~\ref{lem:twohandedadjoint}: $\beta$ is a right adjoint, and the unit and counit of the adjunction $\beta^L \dashv \beta$ are left adjoints; equivalently  $\beta$ is a left adjoint, and the unit and counit of the adjunction $\beta \dashv \beta^R$ are right adjoints.
 \qed
\end{itemize}
\end{corollary}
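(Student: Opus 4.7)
The approach is to iterate the $\otimes$--$\Funlax$ adjunction with the universal properties of $\Adj$, reducing the statement to Proposition~\ref{prop:adjFunlaxAdj}. The first step is to apply the adjunction $\Adj \otimes (-) \dashv \Funlax(\Adj, -)$ to obtain
\[ \Map(\Adj \otimes \Adj, \cC) \simeq \Map\bigl(\Adj, \Funlax(\Adj, \cC)\bigr). \]
Restricting along $l : \globe_1 \to \Adj$, the universal property of $\Adj$ recalled at the beginning of \S\ref{subsec:adjoints} identifies the right-hand side with the full subspace of $\Map(\globe_1, \Funlax(\Adj, \cC))$ on those $1$-morphisms in $\Funlax(\Adj, \cC)$ which are left adjoints.

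The main technical input is then Proposition~\ref{prop:adjFunlaxAdj}, which characterizes precisely such left-adjoint $1$-morphisms.  Represented via the inclusion $r : \globe_1 \to \Adj$ as in Proposition~\ref{prop:FunlaxAdj}, a left-adjoint $1$-morphism $\eta$ in $\Funlax(\Adj, \cC)$ amounts to a square in $\cC$ whose horizontal sides $s, t$ are right adjoints, whose vertical sides $\eta_0, \eta_1$ are left adjoints, and whose filler $\eta_{<}$ satisfies conditions [ii] and [iii] of that Proposition.  Relabelling $(s,t,\eta_0,\eta_1,\eta_{<})$ as $(f,k,h,g,\beta)$ transcribes this verbatim into the data and adjunctibility conditions [i]--[iii] of the Corollary.

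The only remaining bookkeeping is to verify that the resulting subspace inclusion into $\Map(\globe_1 \otimes \globe_1, \cC)$ is precomposition with $r \otimes l : \globe_1 \otimes \globe_1 \to \Adj \otimes \Adj$.  This follows from naturality of the $\otimes$--$\Funlax$ adjunction: the outer factor of $\Adj$ is restricted along $r$ (reflecting the inclusion $\Funlax(\Adj, \cC) \hookrightarrow \Funlax(\globe_1, \cC)$ from Proposition~\ref{prop:FunlaxAdj}), while the inner factor is restricted along $l$ (reflecting the selection of left-adjoint $1$-morphisms).  I do not anticipate any serious obstacle: the substantive content was already carried out in Propositions~\ref{prop:ClaudiaTheo}, \ref{prop:FunlaxAdj}, and~\ref{prop:adjFunlaxAdj}, and the only care required is to track which factor of $\Adj \otimes \Adj$ absorbs $r$ and which absorbs $l$.
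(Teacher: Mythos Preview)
Your proposal is correct and takes essentially the same approach as the paper: the paper simply writes ``Changing some letters, we find'' before stating the Corollary with a \qed, treating it as an immediate relabelling of Proposition~\ref{prop:adjFunlaxAdj}. Your explicit unpacking of the $\otimes$--$\Funlax$ adjunction and the tracking of which factor absorbs $r$ versus $l$ is exactly the argument implicit in that remark.
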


In the sequel, we will be most interested in the localization $L_3(\Adj \otimes \Adj)$. Comparing Corollary~\ref{cor:AdjotimesAdj} with Remark~\ref{rem:maxingout} (and recalling Lemma~\ref{lem:laxvstrongsquares}), we find:

\begin{corollary}\label{cor:L3AdjAdj} Let $\cC$ be an $(\infty,3)$-category. Precomposition with the functor $r\otimes l : \globe_1 \otimes \globe_1 \to \Adj\otimes \Adj$ identifies $\Map(\Adj \otimes \Adj, \cC)$ with the full subspace of $\Map(\globe_1\times\globe_1,\cC) \subseteq \Map([1] \otimes [1], \cC)$ on the \emph{strongly commuting} squares 
 \[\begin{tikzcd}[sep=3em]
\bullet \arrow[r, "f"] \arrow[d, "h"'] &\arrow[d, "g"] \bullet \\
\bullet \arrow[r, "k"'] 
\arrow[Rightarrow, ur, phantom, sloped, "\overset\beta{\overset{\textstyle\sim} \Rightarrow}"] 
& \bullet
\end{tikzcd}\]
with the following adjunctibility conditions:
\begin{itemize}
\item[{[i]}] $f$ and $k$ are right adjoints and
 $h$ and $g$ are left adjoints.
\item[{[ii]}] $\beta^{-1}$ satisfies the equivalent conditions of Lemma~\ref{lem:adjointsofmates}: $(\beta^{-1})^\rmate$ is a right adjoint; $(\beta^{-1})^\lmate$ is a left adjoint. \qed
\end{itemize}
\end{corollary}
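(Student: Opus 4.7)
The plan is to deduce this from Corollary~\ref{cor:AdjotimesAdj} by observing that its three adjunctibility conditions collapse in the $(\infty,3)$-setting. Recall that Corollary~\ref{cor:AdjotimesAdj} identifies $\Map(\Adj \otimes \Adj, \cC)$ with the full subspace of $\Map(\globe_1 \otimes \globe_1, \cC)$ consisting of lax squares satisfying [i] (adjunctibility of the four sides), [ii] (adjunctibility of the right/left mates of $\beta^R/\beta^L$), and [iii] (the two-handed adjointness condition of Lemma~\ref{lem:twohandedadjoint} on $\beta$).

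The main step is to rephrase condition [iii]. It asserts that $\beta$, a $2$-morphism in $\cC$, admits a left (equivalently right) adjoint whose unit and counit are themselves left (equivalently right) adjoints. In an $(\infty,3)$-category these units and counits are $3$-morphisms, hence top-dimensional, so Remark~\ref{rem:maxingout} applied with $n = 2$ reduces condition [iii] to the assertion that $\beta$ is an invertible $2$-morphism.

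With this rephrasing in hand, Lemma~\ref{lem:laxvstrongsquares} identifies the resulting subspace of $\Map(\globe_1 \otimes \globe_1, \cC)$ --- the lax squares with invertible filler --- with the subspace $\Map(\globe_1 \times \globe_1, \cC) \subseteq \Map(\globe_1 \otimes \globe_1, \cC)$ of strongly commuting squares. Under this identification, once $\beta$ is invertible, both its left and right adjoints agree with $\beta^{-1}$ (inverses are automatically simultaneously left and right adjoints), so condition [ii] of Corollary~\ref{cor:AdjotimesAdj} translates verbatim into the condition [ii] stated here, phrased in terms of $\beta^{-1}$. Condition [i] carries over unchanged. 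No new constraints appear, because the only information in [iii] beyond [i]+[ii] is exactly the invertibility of $\beta$, which has been absorbed into the target space $\Map(\globe_1 \times \globe_1, \cC)$. I do not anticipate any genuine obstacle: once Remark~\ref{rem:maxingout} and Lemma~\ref{lem:laxvstrongsquares} are invoked, the statement is essentially a bookkeeping exercise.
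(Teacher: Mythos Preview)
Your proposal is correct and matches the paper's approach exactly: the paper derives the corollary by comparing Corollary~\ref{cor:AdjotimesAdj} with Remark~\ref{rem:maxingout} (which collapses condition~[iii] to invertibility of $\beta$) and Lemma~\ref{lem:laxvstrongsquares} (which identifies invertible-filler lax squares with strong squares), precisely as you outline.
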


\subsection{The lax smash squares of the walking adjunction} \label{subsec:smashsquares}

We turn our attention from tensor squares of categories to smash squares of pointed (higher) categories. 

\begin{notation}
We will decide to use \emph{the source of the left adjoint} to point the walking adjunction $\Adj$. This choice is determined by the following criteria: like most mathematicians, we find monads and algebras easier to think about than comonads and coalgebras; the walking monad $\Mnd$ (formally introduced in the next section) is naturally pointed, as it has a single object; we want the canonical inclusion $\Mnd \hookrightarrow \Adj$ to be a map of pointed categories.
\end{notation}

\begin{remark}\label{rem:Adjopcopointed}
  Recall our convention that, for an $(\infty,\infty)$-category, $(-)^\op$ denotes the ``all odds opposite'' and $(-)^\co$ denotes the ``all evens opposite.'' In particular, since $\Adj$ is a $2$-category, $\Adj^\op = \Adj^{\text{1-op}}$ and $\Adj^\co = \Adj^{\text{2-op}}$. 
  There is an interesting equivalence of pointed $2$-categories $\Adj^\op \simeq \Adj$: it is the identity on objects, exchanges the two generating $1$-morphisms, it is the identity on generating $2$-morphisms. There is also an isomorphism $\Adj^\co \simeq \Adj$ of unpointed $2$-categories, but it exchanges the two objects and so does not preserve the choice of pointing.
\end{remark}

\begin{notation} 
The walking arrow $\globe_1$ admits two pointings, which we will distinguish under the names $\globe_1^l$ and $\globe_1^r$, chosen so that the functors picking out the generating left, resp. right adjoint 
$$ l : \globe_1^l \to \Adj \qquad \text{and} \qquad r : \globe_1^r \to \Adj$$
are pointed functors. In other words, $\globe_1^l$ denotes $\globe_1$ pointed at the source object $0 \in \globe_1$, and $\globe_1^r$ denotes $\globe_1$ pointed at the target object $1 \in \globe_1$. Note that $\globe_1^l$ and $\globe_1^r$ are each fixed under $(-)^\co$ and are exchanged under $(-)^\op$.
\end{notation}

\begin{example}\label{eg:retract}
  Let $(\cC,1_\cC)$ be a pointed $(\infty, \infty)$-category. Then, by definition \[\Map_*(\globe_1^r \owedge \globe_1^l, \cC):=\Map(\globe_1 \otimes \globe_1, \cC) \times_{\Map(\globe_1 \otimes \{0\} \cup_{\{1\} \otimes \{0\}} \{1\} \otimes \globe_1, \cC)} \{\mathrm{const}_{1_{\cC}}\}
\] 
which we can informally think of as the space of diagrams in $\cC$ of the following shape:
\[\begin{tikzcd}[sep=3em]
1_{\cC}  \arrow[equal, r] \arrow[d]&  1_{\cC} \arrow[equal, d] \\\bullet \arrow[r] \arrow[Rightarrow, ur, shorten <=4pt, shorten >=4pt,] & 1_{\cC}
\end{tikzcd}
\]
\end{example}

\begin{corollary}\label{cor:adjowedgeadj}
  Let $(\cC, 1_\cC)$ be a pointed $(\infty,\infty)$-category. Precomposition with $r\owedge l : \globe_1^r \owedge \globe_1^l \to \Adj\owedge \Adj$ identifies $\Map_*(\Adj \owedge \Adj, \cC)$ with the full subspace of $\Map(\globe_1^r \owedge \globe_1^l,\cC)$ on those squares in $\cC$ 
  \[\begin{tikzcd}[sep=3em]
1_\cC \arrow[r, equal] \arrow[d, "h"'] &\arrow[d, equal] 1_\cC \\
\bullet \arrow[r, "k"'] \arrow[Rightarrow, ur,"\beta" description, shorten <=4pt, shorten >=4pt,] & 1_\cC
\end{tikzcd}\]
with the following adjunctibility conditions:
\begin{itemize}
\item[{[i]}] 
 $h$ is a left adjoint and
$k$ is a right adjoint.
\item[{[ii]}] $\beta^R \simeq \beta^L$ satisfies the equivalent conditions of Lemma~\ref{lem:adjointsofmates}: $(\beta^R)^\rmate$ is a right adjoint; equivalently  $(\beta^L)^\lmate$ is a left adjoint.
\item[{[iii]}] $\beta$ satisfies the equivalent conditions of Lemma~\ref{lem:twohandedadjoint}: $\beta$ is a right adjoint, and the unit and counit of the adjunction $\beta^L \dashv \beta$ are left adjoints; equivalently $\beta$ is a left adjoint, and the unit and counit of the adjunction $\beta \dashv \beta^R$ are right adjoints.
\end{itemize}
\end{corollary}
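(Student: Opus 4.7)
The plan is to deduce this corollary directly from the tensor-square statement Corollary~\ref{cor:AdjotimesAdj} by unpacking the definition of the lax smash product. Recall from Proposition~\ref{prop:generalsmash} that
\[
\Adj \owedge \Adj \;=\; \bigl(\Adj \otimes \Adj\bigr)\big/ \bigl((\Adj \otimes *) \cup_{*\otimes *} (* \otimes \Adj)\bigr),
\]
pointed by the image of the quotient. Consequently, for a pointed $(\infty,\infty)$-category $(\cC,1_\cC)$, the space $\Map_*(\Adj \owedge \Adj,\cC)$ is the fiber at $\mathrm{const}_{1_\cC}$ of the restriction map
\[
\Map(\Adj \otimes \Adj,\cC) \longrightarrow \Map\bigl((\Adj \otimes *) \cup_{*\otimes *}(* \otimes \Adj),\cC\bigr).
\]
In other words, a pointed functor $\Adj \owedge \Adj \to \cC$ is the same as a functor $\Adj \otimes \Adj \to \cC$ that sends both ``axes'' $\Adj \otimes *$ and $* \otimes \Adj$ to the constant functor at $1_\cC$.

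Applying Corollary~\ref{cor:AdjotimesAdj} and passing to this full subspace, $\Map_*(\Adj \owedge \Adj,\cC)$ is identified with the full subspace of the space of lax squares
\[\begin{tikzcd}[sep=3em]
\bullet \arrow[r, "f"] \arrow[d, "h"'] &\arrow[d, "g"] \bullet \\
\bullet \arrow[r, "k"'] \arrow[Rightarrow, ur,"\beta" description, shorten <=4pt, shorten >=4pt] & \bullet
\end{tikzcd}\]
in $\cC$ satisfying the adjunctibility conditions of Corollary~\ref{cor:AdjotimesAdj}, together with the additional requirement that the top-left, top-right, and bottom-right corners all equal $1_\cC$, and both $f$ and $g$ are identities thereon. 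Precomposition with $r \owedge l : \globe_1^r \owedge \globe_1^l \to \Adj \owedge \Adj$ then corresponds, under the identification of Corollary~\ref{cor:AdjotimesAdj}, to precomposition with $r \otimes l : \globe_1 \otimes \globe_1 \to \Adj \otimes \Adj$ restricted to the same fiber; by the analogous smash-vs-tensor pushout description applied on the $\globe_1$-side, this exhibits $\Map_*(\Adj \owedge \Adj,\cC)$ inside $\Map(\globe_1^r \owedge \globe_1^l, \cC)$, which by Example~\ref{eg:retract} is exactly the space of squares of the shape displayed in the statement.

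It remains to check that, once $f$ and $g$ are constrained to be identities at $1_\cC$, condition [i] of Corollary~\ref{cor:AdjotimesAdj} reduces to condition [i] of the present statement, while conditions [ii] and [iii] are left unchanged. The reduction of [i] is immediate: identity morphisms are both left and right adjoints (with invertible unit and counit), so the adjunctibility demands on $f$ and $g$ are automatic, leaving only the requirements that $h$ is a left adjoint and $k$ is a right adjoint. Conditions [ii] and [iii] concern only the $2$-morphism $\beta$ and its mates $(\beta^R)^{\rmate}$, $(\beta^L)^{\lmate}$, together with the units and counits of $\beta \dashv \beta^R$ and $\beta^L \dashv \beta$; none of these data involve the sides $f$ or $g$ of the square in any way that is altered by trivializing them, so these conditions transfer verbatim. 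The main ``obstacle'' is purely bookkeeping: one must verify that the mate constructions $(-)^{\rmate}$ and $(-)^{\lmate}$ are insensitive to trivializing the horizontal/vertical sides, but this is transparent from the definitions \eqref{eqn:rmate}--\eqref{eqn:lmate} since the unit/counit triangles involving $f$ or $g$ collapse to identity triangles. This yields the claimed description.
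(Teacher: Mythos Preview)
Your proof is correct and follows exactly the approach the paper takes: the paper's proof is simply ``Immediate from Corollary~\ref{cor:AdjotimesAdj},'' and you have carefully spelled out why, by unpacking the smash product as a quotient of the tensor product and checking that the adjunctibility conditions on the identity edges $f$ and $g$ become automatic.
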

\begin{proof}Immediate from Corollary~\ref{cor:AdjotimesAdj}.
\end{proof}

If instead of the lax smash product $\owedge$ we used the  strong smash product $\wedge$, defined from the Cartesian product $\times$ rather than $\otimes$, then we may think of \[\Map_*(\globe_1^r \wedge \globe_1^l, \cC):=\Map(\globe_1 \times \globe_1, \cC) \times_{\Map(\globe_1 \times \{0\} \cup_{\{1\} \times \{0\}} \{1\} \times \globe_1, \cC)} \{\mathrm{const}_{1_{\cC}}\}
\]  as the space of diagrams of shape:
 \begin{equation}\label{eq:strongsmash}
 \begin{tikzcd}[sep=3em]
1_\cC \arrow[r, equal] \arrow[d, ] &\arrow[d, equal] 1_\cC \\
\bullet \arrow[r, ] 
\arrow[Rightarrow, ur, phantom, sloped, "{\simeq}"] 
& 1_\cC
\end{tikzcd} 
\end{equation}
Notice that diagrams of shape~\eqref{eq:strongsmash} are precisely section-retraction pairs in $\cC$. To make this precise, we follow~\cite{HTT} and let $\RetWalking$ denote the \emph{walking retract}, the strict 1-category with two objects $a$ and $b$ and two generating morphisms $s:a\to b$ and $r:b \to a$ satisfying the equation $r\circ s = \id_a$. We will henceforth consider $\RetWalking$ as pointed at the object $a$. 
When computing in $\cat{StrCat}_1$, $\RetWalking$ is definitionally the quotient\footnote{Recall from the introduction to \S\ref{subsec:owedge}  that for a map $f:A \to B$ in a category with pushouts, we write $B/A$ for the pushout $* \cup_{A} B$,  with its canonical pointing.} $[2] / [1]$ of the $2$-simplex $[2]$ by its ``long edge''  $[1] \simeq \{0<2\} \hookrightarrow \{0<1<2\} = [2]$. The discussion of $\RetWalking$ at the start of \cite[Section 4.4.5]{HTT} verifies that $\RetWalking$ remains the quotient $[2]/[1]$ when the latter is computed in $\Cat_{(\infty,1)}$.

\begin{lemma}\label{lem:retract}
The category $\globe_1^r \wedge \globe_1^l$ is equivalent to $\RetWalking$. 
\end{lemma}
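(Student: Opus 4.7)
The plan is to reduce the computation of $\globe_1^r \wedge \globe_1^l$ to the fact, recalled just before the lemma statement, that $\RetWalking = [2]/[1]$ in $\Cat_{(\infty,1)}$.

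First, by Proposition~\ref{prop:generalsmash}, the strong smash product may be written as
\[
\globe_1^r \wedge \globe_1^l \;=\; (\globe_1 \times \globe_1)\big/\bigl((\globe_1 \times \{0\}) \cup_{\{(1,0)\}} (\{1\} \times \globe_1)\bigr),
\]
since the pointings place $*$ at $1 \in \globe_1^r$ and at $0 \in \globe_1^l$. The subcategory being collapsed is the composable pair of arrows $(0,0) \to (1,0) \to (1,1)$ glued at $(1,0)$; as a pushout of two copies of $[1]$ along a point, this is the totally ordered poset $[2]$, included in $[1]\times [1]$ as the ``lower-right staircase.''

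The key geometric fact I would invoke is that the nerve of $[1]\times [1]$ has exactly two non-degenerate $2$-simplices, glued along their long edge (the diagonal $(0,0)\to(1,1)$); equivalently,
\[
[1]\times [1] \;\simeq\; [2]_{\mathrm{upper}} \cup_{[1]_{\mathrm{diag}}} [2]_{\mathrm{lower}},
\]
where $[2]_{\mathrm{upper}}$ is the triangle $(0,0)\to(0,1)\to(1,1)$, $[2]_{\mathrm{lower}}$ is the triangle $(0,0)\to(1,0)\to(1,1)$, and the gluing is along the diagonal. This pushout is a $1$-categorical identity that persists in $\Cat_{(\infty,1)}$ (and hence in $\Cat_{(\infty,\infty)}$), since all three entries are $1$-categories and the Segal condition can be checked directly.

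With this decomposition in hand, the rest is pushout pasting. The defining pushout of the quotient, together with the pushout above, assembles into the diagram
\[
\begin{tikzcd}
[1]_{\mathrm{diag}} \ar[r] \ar[d] & [2]_{\mathrm{upper}} \ar[d] \\
{[2]_{\mathrm{lower}}} \ar[r] \ar[d] & [1]\times [1] \ar[d] \\
* \ar[r] & \globe_1^r \wedge \globe_1^l
\end{tikzcd}
\]
in which both small squares are pushouts. Hence the outer rectangle is a pushout, identifying $\globe_1^r \wedge \globe_1^l$ with $[2]_{\mathrm{upper}}/[1]_{\mathrm{diag}} = [2]/[1] = \RetWalking$. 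There is no substantial obstacle: the argument is entirely a matter of correctly unpacking the smash product and invoking the two-triangle decomposition of $[1]\times [1]$.
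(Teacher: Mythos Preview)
Your proposal is correct and takes essentially the same approach as the paper: both decompose $[1]\times[1]$ as the pushout $[2]_{\mathrm{upper}} \cup_{[1]_{\mathrm{diag}}} [2]_{\mathrm{lower}}$ in $\Cat_{(\infty,1)}$ and then use pushout pasting to identify the smash with $[2]/[1]$. The paper is slightly more precise in justifying that decomposition (it argues via pushouts of simplicial sets along monomorphisms, hence homotopy pushouts), whereas your ``Segal condition can be checked directly'' is terser, but the argument is the same.
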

\begin{proof}
By definition, $\globe_1^r \wedge \globe_1^l$ is the quotient $\globe_1 \times \globe_1/[2]$ (with its canonical pointing) of the full inclusion of posets $[2]=\{00<10<11\} \hookrightarrow \{0<1\} \times \{0<1\}$.  
The diagram of full poset inclusions  
\[ \begin{tikzcd}
\{00 < 01 <11\} \arrow[hook,r] & \{0<1\} \times \{0<1\} \\
\{00<11\} \arrow[hook,u] \arrow[hook,r] &\{00<10<11\}\arrow[u, hook]
\end{tikzcd}\]
is (after taking nerves) a pushout in $\Fun(\Delta^{\op}, \Set)$ and hence, since all involved maps are monomorphisms, a pushout in $\Fun(\Delta^{\op}, \Spaces)$ and since all involved simplicial spaces are already $(\infty,1)$-categories, it is in fact a pushout in $\Cat_{(\infty,1)}$. Thus, the quotient of the left vertical map $[2]/[1] \simeq \RetWalking$ is equivalent to the quotient of the right vertical map $[1] \times [1] /2 =: \globe_1^r \wedge \globe_1^l$. 
\end{proof}

In the sequel, we will be most interested in the localization $L_3(\Adj \owedge\Adj)$ to an $(\infty, 3)$-category, which can be characterized as follows:

\begin{corollary}\label{cor:smashretract}
Let $(\cC, 1_\cC)$ be a pointed $(\infty,3)$-category. 

Precomposition with $r\owedge l : \globe_1^r \owedge \globe_1^l \to \Adj\owedge \Adj$ identifies $\Map_*(\Adj \owedge \Adj, \cC)$ with the full subspace of $ \Map_*(\RetWalking, \cC) \simeq \Map_*(\globe_1^r \wedge \globe_1^l,\cC) \subseteq \Map_*(\globe_1^r \owedge \globe_1^l, \cC)$ on those section-retraction pairs
 \[\begin{tikzcd}[sep=3em]
1_\cC \arrow[r, equal] \arrow[d, "h"'] &\arrow[d, equal] 1_\cC \\
\bullet \arrow[r, "k"'] 
\arrow[Rightarrow, ur, phantom, sloped, "\overset\beta{\overset{\textstyle\sim} \Rightarrow}"] 
& 1_\cC
\end{tikzcd}\]
such that:
\begin{itemize}
\item[{[i]}]
$h$ is a left adjoint and $k$ is a right adjoint.
\item[{[ii]}] Any of the following equivalent conditions hold:
\begin{itemize}
  \item[{\ensuremath\bullet}] $(\beta^{-1})^\rmate :h^R \To k$ is a right adjoint,
  \item[{\ensuremath\bullet}] $(\beta^{-1})^\lmate: k^L \To h$ is a left adjoint,
  \item[{\ensuremath\bullet}] the whiskering $k\ev_h : khh^R \Rightarrow k$ is a right adjoint,
  \item[{\ensuremath\bullet}] the whiskering $\ev_kh : k^Lkh \Rightarrow h$ is a left adjoint.
\end{itemize}
\end{itemize}
\end{corollary}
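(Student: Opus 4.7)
The plan is to deduce Corollary~\ref{cor:smashretract} from Corollary~\ref{cor:adjowedgeadj} by simplifying each of the three adjunctibility conditions therein under the stronger hypothesis that $\cC$ is $(\infty,3)$-categorical. Corollary~\ref{cor:adjowedgeadj} already identifies $\Map_*(\Adj \owedge \Adj, \cC)$ with the full subspace of $\Map_*(\globe_1^r \owedge \globe_1^l, \cC)$ on lax squares satisfying [i], [ii], and [iii] of that statement.

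The first step is to simplify condition [iii], which asks that $\beta$ satisfy the two-handed adjointness of Lemma~\ref{lem:twohandedadjoint}. Since $\beta$ is a $2$-morphism in the $(\infty, 2+1)$-category $\cC$, Remark~\ref{rem:maxingout} applies: the (co)units in question are $3$-morphisms and the adjunctibility requirement on them forces $\beta$ itself to be invertible. Conversely, if $\beta$ is invertible then its $2$-categorical inverse serves as both left and right adjoint with invertible (co)units, so condition [iii] is equivalent to asking that $\beta$ be an equivalence.

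The second step upgrades this to the claimed parametrization by section-retraction pairs. By Lemma~\ref{lem:laxvstrongsquares}, the subspace of $\Map(\globe_1 \otimes \globe_1, \cC)$ on lax squares with invertible filler is precisely $\Map(\globe_1 \times \globe_1, \cC)$; taking pullbacks along the pointing and using Lemma~\ref{lem:retract} (which identifies $\globe_1^r \wedge \globe_1^l \simeq \RetWalking$), this shows that the subspace of $\Map_*(\globe_1^r \owedge \globe_1^l, \cC)$ on squares with invertible filler is exactly $\Map_*(\RetWalking, \cC) \simeq \Map_*(\globe_1^r \wedge \globe_1^l, \cC)$. Hence the image of $\Map_*(\Adj \owedge \Adj, \cC)$ lands in $\Map_*(\RetWalking, \cC)$, and  condition~[i] of Corollary~\ref{cor:adjowedgeadj} translates directly to condition~[i] of the present statement (with $f$ and $g$ identities).

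The third step verifies that the four bullet points of condition [ii] are equivalent to condition~[ii] of Corollary~\ref{cor:adjowedgeadj}. Once $\beta$ is invertible, $\beta^L$ and $\beta^R$ may both be taken to be $\beta^{-1}$, so the first two bullets are precisely condition~[ii] of Corollary~\ref{cor:adjowedgeadj}, and their equivalence is Lemma~\ref{lem:adjointsofmates}. To obtain the remaining two bullets, I would unpack the formula~\eqref{eqn:rmate} for $\rmate$ in our concrete situation, yielding the composition
\[ (\beta^{-1})^\rmate \;=\; (k\ev_h) \circ (\beta^{-1} h^R) : h^R \Rightarrow khh^R \Rightarrow k, \]
and analogously $(\beta^{-1})^\lmate = (\ev_k h)\circ(k^L \beta^{-1})$. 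Since $\beta^{-1}h^R$ and $k^L\beta^{-1}$ are equivalences and (left/right) adjointness of a $2$-morphism is preserved and reflected by pre- and post-composition with equivalences, $(\beta^{-1})^\rmate$ is a right adjoint iff $k\ev_h$ is, and $(\beta^{-1})^\lmate$ is a left adjoint iff $\ev_k h$ is.

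The main substantive input is Remark~\ref{rem:maxingout}, which collapses the two-handed adjointness into invertibility; everything else is a careful unpacking of the previously established statements.
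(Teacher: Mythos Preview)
Your proof is correct and follows essentially the same approach as the paper. The paper's proof reads ``Immediate from Corollary~\ref{cor:L3AdjAdj}, Lemma~\ref{lem:retract}, and the invertibility of $\beta$'': it routes through the $(\infty,3)$-categorical tensor version (Corollary~\ref{cor:L3AdjAdj}) rather than the general smash version (Corollary~\ref{cor:adjowedgeadj}) as you do, but this is a cosmetic reorganization since both are derived from Corollary~\ref{cor:AdjotimesAdj} by the same use of Remark~\ref{rem:maxingout}. Your explicit unpacking of the four bullet equivalences via the factorizations $(\beta^{-1})^\rmate = (k\ev_h)\circ(\beta^{-1}h^R)$ and $(\beta^{-1})^\lmate = (\ev_k h)\circ(k^L\beta^{-1})$ is exactly what the paper's phrase ``the invertibility of $\beta$'' is gesturing at.
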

\begin{proof} Immediate from Corollary~\ref{cor:L3AdjAdj}, Lemma~\ref{lem:retract}, and the invertibility of $\beta$.
\end{proof}

\begin{definition}\label{def:adjunctibleretract} We will refer to a section-retraction pair $(h, k, \beta: kh \simeq \id_{1_{\cC}})$ satisfying the conditions of Corollary~\ref{cor:smashretract} as an \emph{adjunctible retract}. 
\end{definition}

\begin{remark}\label{rem:adjunctibleretractsinCop}
  Recall from Remark~\ref{rem:Adjopcopointed} that there is an equivalence $\Adj^\op \simeq \Adj$ in $\Cat^*_{(\infty,\infty)}$. Thus, for $\cC$ a pointed $(\infty,3)$-category, using Lemma~\ref{lem:propertiesofFunoplax:compatwithcoop}, we find an isomorphism of spaces
  \begin{multline*}
   \{\text{adjunctible retracts in $\cC^\op$}\} = \maps_*(\Adj \owedge \Adj, \cC^\op) \simeq \maps_*((\Adj \owedge \Adj)^\op, \cC) \\ \simeq \maps_*(\Adj^\op \owedge \Adj^\op, \cC) \simeq \maps_*(\Adj \owedge \Adj, \cC) \simeq \{\text{adjunctible retracts in $\cC$}\}.
   \end{multline*}
\end{remark}

\begin{remark} 
 For $\cC$ a pointed $(\infty,3)$-category, rather than identifying  $\Map_*(\Adj \owedge \Adj, \cC)$ via  $r\owedge l : \globe_1^r \owedge \globe_1^l \to \Adj\owedge \Adj$ with a full subspace of $ \Map_*(\RetWalking, \cC) \simeq \Map_*(\globe_1^r \wedge \globe_1^l,\cC) \subseteq \Map_*(\globe_1^r \owedge \globe_1^l, \cC)$ as in Corollary~\ref{cor:smashretract}, we may also identify it via $l \owedge l: \globe_1^l \owedge \globe_1^l \to \Adj \owedge \Adj$ with the full subspace of $\Map(\globe_1^l \owedge \globe_1^l, \cC)$ on those diagrams 
\begin{align*}
\begin{tikzcd}[column sep=.75cm,ampersand replacement=\&]
1_\cC
\arrow[bend left=45]{rr}{j:=k^L}
\arrow[bend right=45,swap]{rr}{h}
\&
\Uparrow {\scriptstyle \gamma := (\beta^{-1})^\lmate}
\&
\bullet
\end{tikzcd}
\qquad
&\text{such that }\qquad
\begin{aligned}
  \text{[i] } &\text{$h$ and $j$ are left adjoints,} \\
  \text{[ii] } &\text{$\gamma$ is a left adjoint,}\\
  \text{[iii] } & \text{$\gamma^\rmate: \id_{1_{\cC}} \To j^R \circ h$ is invertible.}
\end{aligned}
\end{align*}
Alternately, one could use $r\owedge r: \globe_1^r \owedge \globe_1^r \to \Adj \owedge \Adj$ to work entirely with right adjoints.
The presentation in Corollary~\ref{cor:smashretract} in terms of retracts is the most useful for our application.
\end{remark}

\begin{definition}\label{defn:Retlax}
  Given a pointed $(\infty,\infty)$-category $\cC \in \Cat^*_{(\infty,\infty)}$, we define the $(\infty,\infty)$-category  of \define{retracts and oplax morphisms of retracts} in $\cC$  as \[\Ret^\oplax(\cC):=\Funoplax_*(\RetWalking,\cC).\] 
\end{definition}
\begin{example}\label{ex:laxretract}
Using that $\Map(\globe_1, \Funoplax_*(\Ret, \cC)) \simeq \Map_*(\Ret, \Funlax(\globe_1, \cC))$ and the description of $\Funlax(\globe_1, \cC)$ from Example~\ref{exm:funlaxone}, the objects and $1$-morphisms of $\Ret^\oplax(\cC)$ may  be unpacked as follows:
\begin{itemize}
\item an object consists of a tuple $(X, h, k, \beta)$ where $X$ is an object, $1_{\cC} \to[h] X\to[k] 1_{\cC}$ are $1$-morphisms and $\beta: kh\Isom \id_{1_{\cC}}$ is a $2$-isomorphism;
\item a 1-morphism $(X_0, h_0, k_0,  \beta_0) \to (X_1, h_1, k_1,  \beta_1)$ consists of a tuple $(X_{<}, h_{<}, k_{<}, \beta_{<})$ where $X_{<}: X_0 \to X_1$ is a $1$-morphism, $h_{<}: X h_0 \To h_1$ and $k_{<}: k_0 \To k_1 X$  are $2$-morphisms and \[ \beta_{<}: \beta_1 \circ (k_1 h_{<}) \circ (k_{<} h_0) \Iisom   \beta_0\]
is a $3$-isomorphism. 
\end{itemize}
\end{example}

\begin{corollary}\label{cor:retadjissubofretlax}
  For any pointed $(\infty,3)$-category $\cC$,
  precomposition with $r\owedge l : \globe_1^r \wedge \globe_1^l \to L_3(\Adj\owedge \Adj)$
   induces 
 a  sub-$(\infty,3)$-category inclusion 
   \begin{gather*}
   \Funoplax_*(\Adj \owedge \Adj, \cC) \hookrightarrow \Funoplax_*(\RetWalking,\cC) =: \Ret^\oplax(\cC). 
   \end{gather*}
\end{corollary}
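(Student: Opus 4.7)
The plan is to reduce the statement to Corollary~\ref{cor:smashretract}, which already handles the case of objects, and then propagate it to higher cells via a standard adjunction argument. Concretely, Corollary~\ref{cor:smashretract} asserts that for every pointed $(\infty,3)$-category $\cD$, the precomposition map $\Map_*(\Adj \owedge \Adj, \cD) \hookrightarrow \Map_*(\RetWalking, \cD)$ is a full subspace inclusion; rephrased, it says that $r \owedge l : \RetWalking \to L_3(\Adj \owedge \Adj)$ is an \emph{epimorphism} in the $\infty$-category $\Cat^*_{(\infty,3)}$.

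I would first observe that both source and target of the claimed functor are indeed $(\infty,3)$-categories. This follows from Lemma~\ref{lem:propertiesofFunoplax:others}(1), which asserts that $\Funoplax(X, \cC)$ is an $(\infty,3)$-category for any $X$ when $\cC$ is, combined with the pullback presentation of $\Funoplax_*(X, \cC)$ in Lemma~\ref{lem:generalsmashhom} and the fact that the full subcategory of $(\infty,3)$-categories is closed under pullbacks in $\Cat_{(\infty,\infty)}$.

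The core of the argument is the purely formal observation that in any presentably monoidal $\infty$-category, tensoring with a fixed object preserves epimorphisms: if $f: X \to Y$ is an epimorphism in the sense that the codiagonal $Y \sqcup_X Y \xrightarrow{\sim} Y$ is an isomorphism, then since $A \owedge -$ preserves pushouts, the codiagonal of $A \owedge f$ is likewise an isomorphism. Applying this in $\Cat^*_{(\infty,3)}$ with $A = (\globe_m)_+$ for each $m \geq 0$, the maps
\[ (\globe_m)_+ \owedge \RetWalking \;\longrightarrow\; (\globe_m)_+ \owedge L_3(\Adj \owedge \Adj) \]
are all epimorphisms in $\Cat^*_{(\infty,3)}$.

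Finally, I would dualize each of these under the defining adjunction $\Map_*((\globe_m)_+ \owedge Z, \cC) \simeq \Map(\globe_m, \Funoplax_*(Z, \cC))$ to obtain full subspace inclusions
\[ \Map(\globe_m, \Funoplax_*(\Adj \owedge \Adj, \cC)) \;\hookrightarrow\; \Map(\globe_m, \Funoplax_*(\RetWalking, \cC)) \]
for every $m \geq 0$, which is by definition the condition for the restriction functor to be a sub-$(\infty,3)$-category inclusion. I foresee no genuine obstacle; the only item requiring any care is compatibility with the $L_3$-localization, which is automatic because $\cC$ is an $(\infty,3)$-category, so every $\Map_*(-, \cC)$ factors through $L_3$ and the localization is transparent throughout.
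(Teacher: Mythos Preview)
Your proof is correct and takes essentially the same approach as the paper: both deduce from Corollary~\ref{cor:smashretract} that $r \owedge l$ is an epimorphism in $\Cat^*_{(\infty,3)}$, and then use that internal homs in a biclosed monoidal category send epimorphisms to monomorphisms. The paper states this last fact in one line, while you unpack its proof (tensoring preserves the codiagonal pushout, then test against the generating cells $(\globe_m)_+$), but the content is the same.
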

\begin{proof}
  Corollary~\ref{cor:smashretract} implies that, after $3$-localization, the functor $r \owedge l$ becomes an epimorphism in $\Cat^*_{(\infty,3)}$. But if $\cA \to \cB$ is an epimorphism in any biclosed monoidal category, then for any $\cC$, the induced map $\underline{\hom}(\cB, \cC) \to \underline{\hom}(\cA, \cC)$ is a monomorphism.
\end{proof}

This justifies:

\begin{definition}\label{defn:retadjcategory}
  Let $\cC$ be a pointed $(\infty,3)$-category. We define the $(\infty,3)$-category of \emph{adjunctible retracts} as the following subcategory 
   of $\Ret^\oplax(\cC)$:
   \begin{gather*}\Ret^\someadj(\cC) := \Funoplax_*(\Adj \owedge \Adj, \cC). 
   \end{gather*}
\end{definition}

By Corollary~\ref{cor:smashretract}, the objects of $\Ret^{\someadj}(\cC)$ are precisely the adjunctible retracts from Definition~\ref{def:adjunctibleretract}; we will now describe the $1$-morphisms of the subcategory $\Ret^{\someadj}(\cC) \subseteq \Ret^{\oplax}(\cC)$:
\begin{prop}\label{prop:retadj} Let $(\cC,1_\cC)$ be a pointed $(\infty,3)$-category. 
An oplax morphism of retracts $(X_{<}, h_{<}, k_{<}, \beta_{<})$ (with notation as in Example~\ref{ex:laxretract}) is  in the subcategory $\Ret^{\someadj}(\cC)$ if and only if its source and target retracts are (i.e.\ satisfy the conditions of the first part of Corollary~\ref{cor:smashretract}) and if moreover the following hold:
 \begin{itemize}
 \item[{[i]}] The $2$-morphism $h_{<}:Xh_0 \To h_1$ is a left adjoint and the $2$-morphism $(k_{<})^{\lmate}: k_1^L \To X k_0^L$ is a right adjoint;
 \item[{[ii]}] The $3$-morphism given by whiskering the unit $\coev_{(k_{<})^{\lmate}}$ as follows is invertible: 
\[
\raisebox{-0.45cm}{
\scalebox{.6}{
\begin{tikzcd}[ampersand replacement=\&]
	1 \&\& {X_0} \&\& 1 \&\& {X_0} \&\& {X_1} \\
	\\
	\&\&\&\&\&\& \phantom{X_0}
	\arrow["{h_0}"{description}, from=1-1, to=1-3]
	\arrow[""{name=0, anchor=center, inner sep=0}, "{h_1}"{description}, curve={height=-90pt}, from=1-1, to=1-9]
	\arrow["{k_0}"{description}, from=1-3, to=1-5]
	\arrow[""{name=1, anchor=center, inner sep=0}, curve={height=-40pt}, equals, from=1-3, to=1-7]
	\arrow["{k_0^L}"{description}, from=1-5, to=1-7]
	\arrow["{X_{<}}"{description}, from=1-7, to=1-9]
	\arrow["{h_{<}}"', pos=0.7, shorten <=30pt, shorten >=4pt, Rightarrow, from=1-5, to=0]
	\arrow["{\ev_{k_0}}"'{pos=0.1}, shorten >=42pt, Rightarrow, from=1-5, to=0]
\end{tikzcd}
}}
~\Rrightarrow~
\raisebox{-0.45cm}{
\scalebox{.6}{
\begin{tikzcd}[ampersand replacement=\&]
	1 \&\& {X_0} \&\& 1 \&\& {X_0} \&\& {X_1} \\
	\\
	\&\&\&\&\&\& {X_0}
	\arrow["{h_0}"{description}, from=1-1, to=1-3]
	\arrow[""{name=0, anchor=center, inner sep=0}, "{h_1}"{description}, curve={height=-90pt}, from=1-1, to=1-9]
	\arrow["{k_0}"{description}, from=1-3, to=1-5]
	\arrow[""{name=1, anchor=center, inner sep=0}, curve={height=-40pt}, equals, from=1-3, to=1-7]
	\arrow["{k_0^L}"{description}, from=1-5, to=1-7]
	\arrow[""{name=2, anchor=center, inner sep=0}, "{k_1^L}"{description}, curve={height=30pt}, from=1-5, to=1-9]
	\arrow["{k_0^L}"{description}, curve={height=24pt}, from=1-5, to=3-7]
	\arrow["{X_{<}}"{description}, from=1-7, to=1-9]
	\arrow["{X_{<}}"{description}, curve={height=24pt}, from=3-7, to=1-9]
	\arrow["{h_{<}}"', pos=0.7, shorten <=30pt, shorten >=4pt, Rightarrow, from=1-5, to=0]
	\arrow["{~k_{<}^{\lmate}}"', pos=0.85,shorten <=26pt, shorten >=-1pt, Rightarrow, from=3-7, to=1-7]
	\arrow["{\ev_{k_0}}"'{pos=0.1}, shorten >=42pt, Rightarrow, from=1-5, to=0]
	\arrow["{(k_{<}^{\lmate})^L}"', pos=0.2, shorten >=22pt, shorten <=2pt, Rightarrow, from=3-7, to=1-7]
\end{tikzcd}
}}\]
 \end{itemize}
\end{prop}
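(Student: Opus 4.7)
The plan is to exploit the associativity of $\owedge$ and the adjunction $(-)\owedge \Adj \dashv \Funoplax_*(\Adj,-)$ to present
\[ \Funoplax_*(\Adj \owedge \Adj, \cC) \simeq \Funoplax_*(\Adj, \tilde\cC), \qquad \tilde\cC := \Funoplax_*(\Adj, \cC), \]
and then to characterize $1$-morphisms in $\Funoplax_*(\Adj, \tilde\cC)$ by iterating the adjunctibility criteria of Propositions~\ref{prop:FunlaxAdj} and~\ref{prop:ClaudiaTheo}. This parallels the proof of Proposition~\ref{prop:adjFunlaxAdj}, but applied one categorical level higher.

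First, I would apply (the pointed oplax analog of) Proposition~\ref{prop:FunlaxAdj} to the outer inclusion $\Funoplax_*(\Adj, \tilde\cC) \hookrightarrow \Funoplax_*(\globe_1^l, \tilde\cC)$: a $1$-morphism in the supercategory lies in the subcategory if and only if its source and target are left adjoints in $\tilde\cC$---equivalently, adjunctible retracts in $\cC$, recovering the source/target condition---and its top-dimensional filler is a left-adjoint $2$-morphism in $\tilde\cC$. By Example~\ref{exm:funlaxone} applied to $\tilde\cC$ and the description of $\tilde\cC$ as a subcategory of $\Funoplax_*(\globe_1^l, \cC)$, a $1$-morphism in $\Funoplax_*(\globe_1^l, \tilde\cC)$ between two adjunctible retracts packages precisely the data $(X_<, h_<, k_<, \beta_<)$ of Example~\ref{ex:laxretract}: the $1$-morphism $X_<$ and the $2$-cell $k_<$ arise from the vertical source/target of the square (a $1$-morphism in $\tilde\cC$), $h_<$ (upgraded by $\beta_<$) plays the role of the filler in $\tilde\cC$, and $\beta_<$ supplies the pointing coherences.

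The bulk of the work is to unpack ``left-adjoint $2$-morphism in $\tilde\cC$''. Running the argument of Proposition~\ref{prop:adjFunlaxAdj} with the role of $1$-morphisms played by $2$-morphisms and with lax/right exchanged for oplax/left, one obtains three conditions analogous to [i]--[iii] there:
\begin{enumerate}
\item[(a)] The top-dimensional $3$-morphism filler of the $2$-morphism in $\cC$ must be a left adjoint; by Proposition~\ref{prop:ClaudiaTheo} this reduces to $h_<$ being a left-adjoint $2$-morphism in $\cC$ (first part of [i]).
\item[(b)] The source/target $1$-morphisms of the filler, which live in $\Funoplax_*(\globe_1^l, \cC)$, must lie in the subcategory $\tilde\cC$; translating through the mate calculus---whiskering $k_<$ with the unit and counit of $k_0^L \dashv k_0$ and $k_1^L \dashv k_1$---produces exactly the condition that $(k_<)^\lmate: k_1^L \Rightarrow X_< k_0^L$ be a right adjoint (second part of [i]).
\item[(c)] The unit and counit of the adjunction in $\tilde\cC$ must themselves lift to $\tilde\cC$; their top-dimensional fillers in $\cC$ would be $4$-morphisms, which, since $\cC$ is an $(\infty,3)$-category, reduce by Remark~\ref{rem:maxingout} to invertibility conditions on the underlying $3$-morphisms. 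After unraveling the mates, the relevant $3$-morphism is precisely the whiskered $\coev_{(k_<)^\lmate}$ of condition [ii].
\end{enumerate}

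The main obstacle is item~(c): explicit verification that the $3$-morphism prescribed in condition~[ii] is the correct one, namely the image in $\cC$ of the unit of the adjunction in $\tilde\cC$ after the relevant mate manipulations. This amounts to a careful diagrammatic calculation tracking how adjunction data in $\tilde\cC = \Funoplax_*(\Adj, \cC)$ is assembled from adjunction data in $\cC$; our expectation is that the whiskered coevaluation displayed in the statement emerges as the natural output of this unpacking, the whiskering by $h_<$ and by $\ev_{k_0}$ being the unavoidable ``translation artifacts'' between $\cC$ and $\tilde\cC$.
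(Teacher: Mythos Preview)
Your approach is workable in principle but takes a harder route than the paper's. The key difference is \emph{which factor you move across the adjunction}. You curry one copy of $\Adj$ to the target, writing $\Funoplax_*(\Adj,\tilde\cC)$ with $\tilde\cC=\Funoplax_*(\Adj,\cC)$, and then must characterize $1$-morphisms in this category by rerunning an analogue of Proposition~\ref{prop:adjFunlaxAdj} one level up. The paper instead curries the \emph{probe} $\globe_1$ to the target:
\[
\Map(\globe_1, \Funoplax_*(\Adj\owedge\Adj,\cC)) \simeq \Map_*((\globe_1)_+\owedge\Adj\owedge\Adj,\cC) \simeq \Map_*(\Adj\owedge\Adj,\Funlax(\globe_1,\cC)).
\]
This reduces the question to an \emph{object-level} question---namely, which pointed functors $\Adj\owedge\Adj\to\Funlax(\globe_1,\cC)$ exist---for which Corollary~\ref{cor:smashretract} is already available. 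The remaining work is then just to translate the adjunctibility conditions~[i] and~[ii] of that Corollary from $\Funlax(\globe_1,\cC)$ down to $\cC$, which is exactly what Proposition~\ref{prop:ClaudiaTheo} and Remark~\ref{rem:verticalandhorizontaladjoints} provide: $h$ left-adjoint gives $h_<$ left-adjoint; $k$ right-adjoint (via the vertical-left-adjoint formula in Remark~\ref{rem:verticalandhorizontaladjoints}) gives $(k_<)^\lmate$ right-adjoint; and condition~[ii] of Corollary~\ref{cor:smashretract}, being an adjunctibility condition on a $2$-morphism in an $(\infty,3)$-category $\Funlax(\globe_1,\cC)$, reduces via Proposition~\ref{prop:ClaudiaTheo} to invertibility of its top $3$-cell filler, which is the displayed whiskered $\coev_{(k_<)^\lmate}$.

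What the paper's approach buys is that no new argument is needed: Corollary~\ref{cor:smashretract} is applied verbatim in a bigger ambient category. Your approach would require you to carry out the ``main obstacle'' you flag in~(c)---an explicit diagrammatic verification that the unit of the adjunction in $\tilde\cC$ unpacks to the displayed $3$-morphism---which you acknowledge is only an expectation. That verification is not wrong, but it is real work that the paper's bracketing sidesteps entirely.
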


\begin{proof}
Since by definition 
\begin{multline*}
\Map(\globe_1, \Funoplax_*(\Adj \owedge \Adj, \cC)) \simeq \Map_*((\globe_1)_+ \owedge \Adj \owedge \Adj, \cC) \\ \simeq \Map_*(\Adj \owedge \Adj, \Funlax(\globe_1, \cC)),
\end{multline*}
 it suffices to work out the conditions from Corollary~\ref{cor:smashretract} for a retract in $\Funlax(\globe_1, \cC)$. 
 The characterization of adjunctibility and adjoints in $\Funlax(\globe_1, \cC)$ from Proposition~\ref{prop:ClaudiaTheo} and Remark~\ref{rem:verticalandhorizontaladjoints} supplies condition [i] of Corollary~\ref{cor:smashretract} into condition [i] here. Since adjunctible $3$-morphisms in an $(\infty,3)$-category are invertible, condition [ii] of Corollary~\ref{cor:smashretract} translates to condition [ii] here.
\end{proof}
The description of which higher morphisms in $\Ret^\oplax(\cC)$ lift to $\Ret^{\someadj}(\cC)$ 
can be worked out similarly; we will not need it.

\begin{remark}
The conditions labeled [ii] in Proposition~\ref{prop:retadj} unpack the last bulleted versions of the conditions labeled [ii] in Corollary~\ref{cor:smashretract}, asking a whiskering of a (co)unit to be a left adjoint. There is an equivalent condition arising from unpacking the third bullet points in Corollary~\ref{cor:smashretract} which ask a whiskering of (co)unit to be a right adjoint. Since right adjointness in $\Funlax(\globe_1, \cC)$ is characterized in terms of right adjointness of mates of filling cells, the unpacked condition looks more complicated: concretely, in the case of $\Ret^{\someadj}(\cC)$, it will ask a mate (with respect to the adjunctions $(k_i \ev_{h_i})^L \dashv k_i \ev_{h_i}$) of a certain whiskering of the $3$-morphism $\coev_{h_{<}}$ to be invertible. 
\end{remark}

We end this section by recording a useful sufficient condition on an oplax morphism of retracts to be in $\Ret^{\someadj}(\cC)$, which, 
after translating $(X,f,g,\alpha) := (X,h,k,\beta^{-1})$.

\begin{corollary}\label{cor:somesubcategoriesofRetC}
Suppose $\cC$ is a pointed $(\infty,3)$-category and $(X_{<}, h_{<}, k_{<}, \beta_{<})$ (with notation as in Example~\ref{ex:laxretract}) is an oplax morphism of retracts whose source and target retracts satisfy the conditions from Corollary~\ref{cor:smashretract}, i.e.\ are objects of $\Ret^{\someadj}(\cC)$. If $h_{<}$ is a left adjoint, and $k_{<}^{\lmate}$ is invertible, then the oplax map of retracts satisfies the conditions from Proposition~\ref{prop:retadj}, i.e.\ is in the subcategory $\Ret^{\someadj}(\cC)$.
\end{corollary}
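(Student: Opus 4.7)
The plan is to directly verify the two conditions of Proposition~\ref{prop:retadj} using only two simple principles: an invertible $k$-morphism is an adjoint equivalence (so it is both a left and a right adjoint, with its inverse serving as either adjoint and with invertible unit and counit), and whiskering an invertible higher cell with arbitrary lower cells yields an invertible higher cell.

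First I would check condition [i] of Proposition~\ref{prop:retadj}. The assertion that $h_{<}$ is a left adjoint is part of the hypothesis. The assertion that $(k_{<})^{\lmate}: k_1^L \Rightarrow X_{<} k_0^L$ is a right adjoint follows immediately from the other hypothesis: if $(k_{<})^{\lmate}$ is invertible, then its inverse exhibits it as an adjoint equivalence, so in particular as a right adjoint.

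Next I would check condition [ii] of Proposition~\ref{prop:retadj}. The $3$-morphism in question is built by whiskering the unit $\coev_{(k_{<})^{\lmate}} : \id_{X_{<} k_0^L} \Rrightarrow (k_{<})^{\lmate} \circ ((k_{<})^{\lmate})^{L}$ of the adjunction $((k_{<})^{\lmate})^{L} \dashv (k_{<})^{\lmate}$ with the $2$-morphisms $h_{<}$ and $\ev_{k_0}$ (and identities). Since $(k_{<})^{\lmate}$ is assumed to be invertible, this adjunction is an adjoint equivalence and its unit $\coev_{(k_{<})^{\lmate}}$ is an invertible $3$-morphism. Whiskering an invertible $3$-morphism with $2$-morphisms produces an invertible $3$-morphism in $\cC$, and since $\cC$ is an $(\infty,3)$-category all invertible $3$-morphisms are equivalences, so condition [ii] holds.

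The main potential obstacle is bookkeeping: one must verify that the $3$-morphism displayed in Proposition~\ref{prop:retadj}[ii] really is a whiskering of $\coev_{(k_{<})^{\lmate}}$ alone and does not secretly require further invertibility of other cells (such as $h_{<}$ or $\ev_{k_0}$). This is a matter of inspecting the diagram and identifying the unique non-identity $3$-morphism component as the displayed unit, which is unambiguous once one expands the whiskering data. Once this identification is made, the argument reduces to the two bullet-point principles above.
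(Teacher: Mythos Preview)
Your proposal is correct and follows essentially the same argument as the paper's proof: the paper observes that if $k_{<}^{\lmate}$ is invertible, then it is in particular a right adjoint with invertible unit $\coev_{k_{<}^{\lmate}}$, and any whiskering of that invertible unit remains invertible. Your added bookkeeping paragraph is a reasonable sanity check but is not needed for the argument.
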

\begin{proof}
If $k_{<}^{\lmate}$ is invertible, it is in particular a right adjoint with invertible unit $\coev_{k_{<}^{\lmate}}$. In particular, any whiskering of this unit remains invertible. 
\end{proof}

\subsection{The lax smash square of the walking monad}\label{sec:mndsquared}

\begin{definition}\label{defn:mnd}
The \define{walking monad} $\Mnd$ is the strict $2$-category admitting either of the following equivalent definitions:
\begin{itemize}
  \item $\Mnd$ is the free strict $2$-category with one object $\bullet$ and an associative unital algebra object in the strict monoidal category $\End_\Mnd(\bullet)$.
  \item $\Mnd$ is the full subcategory of $\Adj$ on the source of the generating left adjoint.
\end{itemize}
\end{definition}
The equivalence between these definitions is well-known in the world of strict 2-categories. That the strict $2$-category $\Mnd$ plays the same double role amongst coherent higher categories follows from a result of Haugseng: 
\begin{prop}[{\cite[Cor.~8.9]{2002.01037}}] \label{prop:monadalgebra}
Let $\cC$ be a pointed $(\infty,2)$-category. Then, there is an equivalence, natural in $\cC \in \Cat_{(\infty,2)}^*$, of pointed $(\infty,1)$-categories
\[\Funoplax_*(\Mnd, \cC) \simeq \Alg(\Omega \cC), 
\]
where $\Alg(\Omega \cC)$ denotes the $(\infty,1)$-category of $\EE_1$-algebras and $\EE_1$-algebra morphisms in the monoidal $(\infty,1)$-category $\Omega\cC$ from Corollary~\ref{cor:monoidalstructureonloops}. 
\end{prop}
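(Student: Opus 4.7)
The plan is to invoke Haugseng's Corollary~8.9 of~\cite{2002.01037} directly, which is exactly what the Proposition states. For orientation, I sketch why the statement should hold from the perspective developed in this paper, and indicate what the main obstacle is for a self-contained proof.

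A pointed functor $F : \Mnd \to \cC$ sends the unique object of $\Mnd$ to $1_\cC$, and therefore restricts to a monoidal functor $\End_\Mnd(\bullet) \simeq \Delta_+ \to \End_\cC(1_\cC)$, where $\Delta_+$ carries its ordinal-sum monoidal structure. Since $\Delta_+$ is the walking $\EE_1$-algebra (i.e.\ the free monoidal $(\infty,1)$-category on an associative unital algebra object --- a fact which holds already at the strict $1$-categorical level and persists $\infty$-categorically because $\EE_1$ is presented by a $1$-category), such monoidal functors are equivalent to $\EE_1$-algebras in $\Omega\cC$. A $1$-morphism in $\Funoplax_*(\Mnd,\cC)$ between pointed functors $F, G$ unpacks as a pointed oplax transformation, which, because $\Mnd$ has a single object pointing $F(\bullet) = G(\bullet) = 1_\cC$, consists of a single component $\nu \in \Omega\cC$ together with oplax $2$-cells $\nu\cdot F(t) \Rightarrow G(t) \cdot \nu$ for each generating endomorphism $t$, compatibly with composition; this unpacks exactly to an $\EE_1$-algebra morphism $A_F \to A_G$. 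Lemma~\ref{lem:surjectivereduces} applied to the essentially surjective pointing $* \to \Mnd$ already guarantees that $\Funoplax_*(\Mnd, \cC)$ is an $(\infty,1)$-category, so no higher-morphism data remains.

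To turn this sketch into a rigorous proof, the main obstacle is to coherently match the simplicial-object data on both sides, naturally in $\cC$. The approach I would take, following Haugseng, is to give an explicit cellular presentation of $\Mnd$ as a colimit of suspensions and lax smash powers of $\Sone$, and then use Corollary~\ref{cor:movingloopsinsidefun} and its iterates (along with $\Omega = \Funoplax_*(\Sone, -)$) to identify $\Funoplax_*(\Mnd, \cC)$ as a totalization of a bar-type simplicial object in $\Cat_{(\infty,1)}$ computing $\Alg(\Omega\cC)$. The delicate step is ensuring the comparison respects the $\Delta^{\op}$-indexing and assembles into an equivalence of $(\infty,1)$-categories rather than merely a levelwise equivalence; this is exactly what Haugseng carries out, and we rely on his proof rather than reproducing it.
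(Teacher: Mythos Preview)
Your proposal is correct and follows essentially the same approach as the paper: both cite Haugseng's result directly and invoke Lemma~\ref{lem:surjectivereduces} to confirm that $\Funoplax_*(\Mnd,\cC)$ is already an $(\infty,1)$-category. The paper adds one translation remark you omit: it checks that the monoidal structure on $\Omega\cC$ constructed in Corollary~\ref{cor:monoidalstructureonloops} (via the comonoid structure on $\Sone$) agrees, after unstraightening, with the one Haugseng uses in~\cite[Def.~8.7]{2002.01037}.
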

\begin{proof}
The result is essentially \cite[Cor.~8.9]{2002.01037}, and so we comment only on the translation to our setting. First, by Lemma~\ref{lem:surjectivereduces},  $\Funoplax_*(\Mnd, \cC)$ is an $(\infty,1)$-category and hence equivalent to its underlying $(\infty,1)$-category $\iota_1 \Funoplax_*(\Mnd, \cC)$;  Haugseng shows that the underlying $(\infty,1)$-category $\iota_1 \Funoplax_*(\Mnd, \cC)$ is equivalent to $\Alg_{\EE_1}(\Omega\cC)$. Second, our construction in Corollary~\ref{cor:monoidalstructureonloops} of the monoidal structure on $\Omega \cC$ in terms of the comonoid structure on $\Sone$ agrees with the one considered by Haugseng in~\cite[Def.~8.7]{2002.01037} after  
unstraightening.
\end{proof}

\begin{remark}\label{rem:MndMndop}
The equivalence $(-)^{\op} : \Delta \to \Delta$ induces for any monoidal $(\infty,1)$-category $\cA$ an equivalence \begin{equation}\label{eq:algmop}\Alg(\cA) \simeq \Alg(\cA^{\mop})\end{equation}natural in $\cA$. Using Proposition~\ref{prop:monadalgebra},  this induces for any pointed $(\infty,2)$-category $\cC$ a natural equivalence 
\begin{align*}
\Funoplax_*(\Mnd, \cC)& \overset{\text{Prop.}~\ref{prop:monadalgebra}}{\simeq} \Alg(\Omega \cC) \overset{\text{Eq.}~\eqref{eq:algmop}}{\simeq} \Alg((\Omega \cC)^{\mop}) \overset{\text{Cor.}~\ref{cor:opcoloop}}{\simeq} \Alg(\Omega( \cC^{\op}))\\& \overset{\text{Prop.}~\ref{prop:monadalgebra}}{\simeq} \Funoplax_*(\Mnd, \cC^{\op}) \overset{\text{Lem.}~\ref{lem:propertiesofFunoplax:compatwithcoop}}{\simeq} \Funlax_*(\Mnd^{\op}, \cC)^\op.
\end{align*}Passing to spaces of objects, this becomes an equivalence
\[\Map_*(\Mnd, \cC) \simeq \Map_*(\Mnd^{\op}, \cC)
\]
and hence by Yoneda an equivalence
\begin{equation}\label{eq:MndMndop}\Mnd \simeq \Mnd^{\op}.
\end{equation}
Since the space $\Aut(\Mnd)$ is contractible\footnote{$\Mnd$ is gaunt, hence $\Aut(\Mnd)$ is the \emph{set} of \emph{strict} $2$-functor automorphisms, which is trivial.}, this  is in fact the unique such equivalence.
\end{remark}

At the level of objects, Proposition~\ref{prop:monadalgebra} generalizes to arbitrary $(\infty, \infty)$-categories $\cC$. Indeed, since the ``maximal-$(\infty,1)$-category'' functor $\iota_1: \Cat_{(\infty, \infty)} \to \Cat_{(\infty,1)}$ is a right adjoint, it preserves limits and hence monoid objects. In particular, for any $(\infty, \infty)$-category $\cC$, $\iota_1 \Omega \cC$ is a monoidal $(\infty,1)$-category.

\begin{corollary} \label{cor:monadalgspace}
Let $\cC$ be a pointed $(\infty, \infty)$-category. Then, there is an equivalence of spaces, natural in $\cC \in\Cat_{(\infty,\infty)}^*$:
\[\Map_*(\Mnd, \cC) \simeq  \Alg( \Omega \iota_2 \cC)^{\simeq} \simeq \Alg(\iota_1 \Omega \cC)^{\simeq} .\]
\end{corollary}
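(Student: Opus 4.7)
The plan is to reduce Corollary~\ref{cor:monadalgspace} to Proposition~\ref{prop:monadalgebra} by exploiting that $\Mnd$ lives in the full subcategory of $(\infty,2)$-categories.

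First, since $\Mnd$ is a (gaunt) $2$-category, I would invoke the adjunction $\Cat_{(\infty,2)} \hookrightarrow \Cat_{(\infty,\infty)} : \iota_2$ from \eqref{eq:inclusion-and-localization}, which upgrades to an adjunction between the pointed versions. Applied to $\cC \in \Cat_{(\infty,\infty)}^*$, it yields a natural equivalence
\[
\Map_*(\Mnd,\cC)\ \simeq\ \Map_*(\Mnd,\iota_2\cC).
\]
Then Proposition~\ref{prop:monadalgebra} supplies a natural equivalence of pointed $(\infty,1)$-categories $\Funoplax_*(\Mnd,\iota_2\cC)\simeq\Alg(\Omega\iota_2\cC)$; taking the underlying spaces of objects gives the first claimed equivalence
\[
\Map_*(\Mnd,\iota_2\cC)\ \simeq\ \Alg(\Omega\iota_2\cC)^{\simeq}.
\]

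For the second equivalence, the essential observation is that $\Omega\iota_2\cC\simeq\iota_1\Omega\cC$ as monoidal $(\infty,1)$-categories, naturally in $\cC$. Indeed, by the universal property of $\Omega=\Funoplax_*(\Sone,-)$ from Definition~\ref{def:Omega}, and since $\Sone$ is a $1$-category (so in particular an $(\infty,2)$-category), the right adjoint $\iota_2$ commutes with $\Omega$ up to a comparison map; at the level of underlying $(\infty,1)$-categories, taking $\iota_1$ on both sides of this comparison yields an equivalence because the hom-$(\infty,1)$-category of the maximal $(\infty,2)$-subcategory $\iota_2\cC$ at the basepoint coincides with the $\iota_1$ of the hom-$(\infty,\infty)$-category $\Omega\cC$; this compatibility can be checked directly using the description~\eqref{eq:OmegaC} of $\Omega\cC$ as $\{1_\cC\}\times_\cC\Arr{\cC}\times_\cC\{1_\cC\}$ together with the fact that $\iota_1, \iota_2$ preserve limits. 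Moreover, this equivalence is monoidal with respect to the monoidal structures provided by Corollary~\ref{cor:monoidalstructureonloops}, as both sides arise from the same comonoid structure on $\Sone$.

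Finally, for any monoidal $(\infty,1)$-category $\cA$, the underlying space $\Alg(\cA)^{\simeq}$ depends only on the underlying monoidal $\infty$-category; applied to the canonical map $\Omega\cC\to\iota_1\Omega\cC\simeq\Omega\iota_2\cC$ (which is an isomorphism after passage to $\iota_1$), this yields
\[
\Alg(\Omega\iota_2\cC)^{\simeq}\ \simeq\ \Alg(\iota_1\Omega\cC)^{\simeq},
\]
completing the chain of equivalences. The only potentially subtle step is checking that the comparison $\Omega\iota_2\cC\to\iota_1\Omega\cC$ is a monoidal equivalence rather than merely an equivalence of underlying $(\infty,1)$-categories, but this follows formally from naturality of the $\Omega$ construction in its monoidal structure, as both monoidal structures are pulled back along the same comonoid $\Sone$ via Corollary~\ref{cor:monoidalstructureonloops}.
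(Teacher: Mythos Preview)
Your overall architecture matches the paper's proof exactly: reduce to $\iota_2\cC$ via the adjunction, apply Proposition~\ref{prop:monadalgebra} at the level of objects, and then identify $\Omega\iota_2\cC$ with $\iota_1\Omega\cC$. The monoidality remark at the end is also correct and handled the same way.

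The gap is in your justification of $\Omega\iota_2\cC \simeq \iota_1\Omega\cC$. You write that this ``can be checked directly using the description~\eqref{eq:OmegaC} \dots\ together with the fact that $\iota_1, \iota_2$ preserve limits.'' Limit preservation reduces the question to comparing $\iota_1\Arr{\iota_2\cC}$ with $\iota_1\Arr{\cC}$ over $\iota_1\cC \times \iota_1\cC$, but says nothing about why \emph{those} agree: $\Arr = \Funoplax(\globe_1,-)$ has no reason to commute with $\iota_2$. The operative fact, which the paper names explicitly, is that $\globe_1\otimes\globe_1$ (and more generally $[n]\otimes\globe_1$) is an $(\infty,2)$-category, so $\Map(\globe_1\otimes\globe_1,\cC) = \Map(\globe_1\otimes\globe_1,\iota_2\cC)$ and hence $\iota_1\Arr{\cC}\simeq\iota_1\Arr{\iota_2\cC}$. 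You also silently use that $\Omega\iota_2\cC$ is already an $(\infty,1)$-category (to drop the outer $\iota_1$); the paper cites Lemma~\ref{lem:surjectivereduces} for this. Finally, a minor point: the canonical map runs $\iota_1\Omega\cC \to \Omega\cC$ (the counit of the $\iota_1$-adjunction), not in the direction you wrote.
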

\begin{proof}
We note that by adjunction $\Map_*(\Mnd, \cC) \simeq \Map_*(\Mnd, \iota_2 \cC)$. Hence, applying the object-level version of Proposition~\ref{prop:monadalgebra}, this space is equivalent to the space of algebras $\Alg(\Omega \iota_2 \cC)^{\simeq}$. Thus we merely need to identify $\Omega\iota_2\cC$ with $\iota_1\Omega\cC$. Since $\Sone$ is an $(\infty,1)$-category and $\globe_1 \otimes \globe_1$ is an $(\infty,2)$-category, it immediately follows that $\iota_1 \Omega \cC \simeq \iota_1 \Omega \iota_2 \cC$. By Lemma~\ref{lem:surjectivereduces},  $\Omega\iota_2\cC$ is already a $(\infty,1)$-category and hence \[ \iota_1 \Omega \cC \simeq \iota_1 \Omega \iota_2 \cC \simeq \Omega \iota_2 \cC.\qedhere\]
\end{proof}

\begin{remark}\label{rem:alglax}
If $\cC$ is a pointed $(\infty,\infty)$-category, then $\Funoplax_*(\Mnd, \cC)$ may be used as a \emph{definition} of the $(\infty,\infty)$-category \[\Alg^{\oplax}(\Omega\cC):= \Funoplax_*(\Mnd, \cC) \] whose objects are by Corollary~\ref{cor:monadalgspace} the $\EE_1$-algebras in $\Omega\cC$ and whose morphisms are the \define{oplax algebra homomorphisms}. Such an oplax algebra homomorphism between algebras $(A, m_A: A \otimes^{\Omega \cC} A \to A , u_A: 1^{\Omega \cC} \to A, \dots)$ and $(B, m_B: B \otimes^{\Omega \cC} B \to B, u_B: 1^{\Omega \cC} \to B, \dots)$ in $\Omega \cC$ unpacks to a $1$-morphism $f:  A\to B$ in $\Omega \cC$ together with  $2$-morphisms 
 \begin{equation}\label{eqn:oplaxalgmapcomponents}
 m_f: f \circ m_A \Rightarrow m_B \circ (f \otimes^{\Omega \cC} f), \qquad f\circ u_A \To u_B,
 \end{equation}
  and higher coherence morphisms.  (Here $1^{\Omega \cC}$ and $\otimes^{\Omega \cC}$ denote the tensor unit and tensor product on $\Omega \cC$ induced by $1$-morphism composition in $\cC$.) For example, if $A = B = 1^{\Omega\cC}$, then \eqref{eqn:oplaxalgmapcomponents} looks like the data of a \emph{coalgebra} structure on~$f$. We will return to this in Lemma~\ref{lem:loopcoalg}. 
 \end{remark}

Proposition~\ref{prop:monadalgebra} can also be used to characterize categories of coalgebras.

\begin{definition}
The category of \define{coalgebras} in a monoidal $(\infty,1)$-category $\cA$ is defined as
  $$ \coAlg(\cA) := \Alg(\cA^\op)^\op.$$
\end{definition}

\begin{prop}\label{prop:coalgebras}
For any pointed $(\infty,2)$-category $\cC$, there is an equivalence of pointed $(\infty,1)$-categories, natural in $\cC \in \Cat_{(\infty,2)}^*$:
\[\Funoplax_*(\Mnd^{\co}, \cC) \simeq \coAlg(\Omega \cC)
\]
\end{prop}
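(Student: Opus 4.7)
The plan is to reduce the statement to Proposition~\ref{prop:monadalgebra} applied to an appropriately twisted version of $\cC$, bridging the resulting algebras and coalgebras via Corollary~\ref{cor:opcoloop} and the definition $\coAlg(\cA) := \Alg(\cA^{\op})^{\op}$.

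I would proceed as follows. First, applying Lemma~\ref{lem:propertiesofFunoplax:compatwithcoop} (the $\co$-identity) to the pair $(\Mnd^{\co}, \cC)$ at the pointed level (using Lemma~\ref{lem:generalsmashhom} to handle the basepoint) yields a natural equivalence
\[ \Funoplax_*(\Mnd^{\co}, \cC)^{\co} \simeq \Funlax_*(\Mnd, \cC^{\co}). \]
By Lemma~\ref{lem:surjectivereduces}, $\Funoplax_*(\Mnd^{\co}, \cC)$ is an $(\infty,1)$-category, so $(-)^{\co}$ acts trivially on it. Thus $\Funoplax_*(\Mnd^{\co}, \cC) \simeq \Funlax_*(\Mnd, \cC^{\co})$.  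I would then take $(-)^{\op}$ of both sides, and apply Lemma~\ref{lem:propertiesofFunoplax:compatwithcoop} (the $\op$-identity) combined with the equivalence $\Mnd^{\op} \simeq \Mnd$ of Remark~\ref{rem:MndMndop} to get
\[ \Funoplax_*(\Mnd^{\co}, \cC)^{\op} \simeq \Funlax_*(\Mnd, \cC^{\co})^{\op} \simeq \Funoplax_*(\Mnd, \cC^{\co\op}). \]

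The right-hand side now falls within the scope of Proposition~\ref{prop:monadalgebra}, giving $\Funoplax_*(\Mnd, \cC^{\co\op}) \simeq \Alg(\Omega\cC^{\co\op})$. Two applications of Corollary~\ref{cor:opcoloop} identify $\Omega\cC^{\co\op}$ with $((\Omega\cC)^{\mop,\co})^{\op}$ as monoidal $(\infty,1)$-categories; since $\Omega\cC$ is an $(\infty,1)$-category the $\co$-operation acts trivially on it, leaving the monoidal $(\infty,1)$-category $(\Omega\cC)^{\mop,\op}$. The equivalence \eqref{eq:algmop} of Remark~\ref{rem:MndMndop}, $\Alg(\cA) \simeq \Alg(\cA^{\mop})$, applied to $\cA = (\Omega\cC)^{\op}$ then gives $\Alg((\Omega\cC)^{\mop,\op}) \simeq \Alg((\Omega\cC)^{\op}) = \coAlg(\Omega\cC)^{\op}$. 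Chaining these, one obtains $\Funoplax_*(\Mnd^{\co}, \cC)^{\op} \simeq \coAlg(\Omega\cC)^{\op}$, and taking $(-)^{\op}$ once more yields the desired equivalence. Naturality in $\cC$ and preservation of the basepoints are automatic from the naturality of each ingredient.

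The main obstacle will be bookkeeping the various opposites: one must carefully distinguish the categorical opposites $(-)^{\op}$ and $(-)^{\co}$ from the monoidal opposite $(-)^{\mop}$, and verify at each step that the equivalences being chained are in fact natural in $\cC$ and respect pointings. In particular, the step using \eqref{eq:algmop} deserves some care — it is not the identity on underlying categories, but comes from an automorphism of the $\EE_1$-operad — and the step invoking Lemma~\ref{lem:generalsmashhom} to transfer the equivalences in Lemma~\ref{lem:propertiesofFunoplax:compatwithcoop} from the unpointed to the pointed setting should be made precise. Once these accounting issues are handled, all the content is provided by Proposition~\ref{prop:monadalgebra}, Corollary~\ref{cor:opcoloop}, Remark~\ref{rem:MndMndop}, and Lemmas~\ref{lem:propertiesofFunoplax:compatwithcoop} and~\ref{lem:surjectivereduces}.
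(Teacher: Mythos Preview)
Your proposal is correct and follows essentially the same route as the paper: reduce to Proposition~\ref{prop:monadalgebra} applied to $\cC^{\co\op}$ via Lemma~\ref{lem:propertiesofFunoplax:compatwithcoop} and Remark~\ref{rem:MndMndop}, then unwind $\Omega(\cC^{\co\op})$ with Corollary~\ref{cor:opcoloop} and strip the monoidal opposite via \eqref{eq:algmop}. The only cosmetic difference is bookkeeping: the paper applies $\Mnd\simeq\Mnd^{\op}$ first and carries a single $(-)^{\co\op}$ on the outside throughout, whereas you peel off the $\co$ early using Lemma~\ref{lem:surjectivereduces} and carry only an $(-)^{\op}$; the chain of equivalences is otherwise identical.
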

\begin{proof}
We compose the following natural equivalences:
\begin{align}\label{eqn:seriesofequivalences}
\Funoplax_*(\Mnd^{\co}, \cC) &\simeq \Funoplax_*(\Mnd^{\co\op}, \cC) && \text{(Equation~\eqref{eq:MndMndop})}  \\
 \notag  & \simeq \Funoplax_*(\Mnd, \cC^{\co\op})^{\co\op} && \text{(Lemma~\ref{lem:propertiesofFunoplax:compatwithcoop})} \\
 \notag  & \simeq \Alg(\Omega(\cC^{\co\op}))^{\co\op} && \text{(Proposition~\ref{prop:monadalgebra})} \\
 \notag  & \simeq \Alg((\Omega\cC)^{\mop, \co\op})^{\co\op} && \text{(Corollary~\ref{cor:opcoloop})}\\
 \notag  & \simeq \Alg(\Omega\cC)^{\co\op})^{\co\op} && \text{(Equation~\eqref{eq:algmop})}
\end{align}
Finally use that $(-)^{\co}$ acts trivially on $(\infty,1)$-categories, and hence find that this is equivalent to $\Alg((\Omega\cC)^{\op})^{\op} =: \coAlg(\Omega \cC)$. 
\end{proof}

\begin{definition}\label{def:braidedmonoidal}
A \define{braided monoidal $(\infty,1)$-category} is an object of $\Mon(\Mon(\Cat_{(\infty,1)}))$. Dunn additivity \cite{MR938617} (see also \cite[Thm.~5.1.2.2 and Example~5.1.2.4]{HA}) provides an equivalence of operads $\EE_2 \simeq \EE_1 \otimes \EE_1$, identifying  braided monoidal $(\infty,1)$-categories with  $\EE_2$-algebras in $\Cat_{(\infty,1)}$. \end{definition}
For $(1,1)$-categories, this notion is equivalent to the classical notion of \cite{JoyalStreetMacquarie,JOYALSTREET199320} as sketched in \cite[Ex.~5.1.2.4]{HA}, also see~\cite[Rem.~7.7.7]{2401.02956} for an alternative more detailed proof.

\begin{example}\label{exm:3catbraided}
  The functor $\Omega(-) = \Funoplax_*(\Sone,-) : \Cat_{(\infty,2)}^* \to \Cat_{(\infty,1)}^*$ is a right adjoint and hence preserves products and hence preserves monoid objects: if $\cA$ is a monoidal $(\infty,2)$-category, then $\Omega\cA$ is naturally a braided monoidal $(\infty,1)$-category. In particular, if $\cC$ is a pointed $(\infty,3)$-category, then $\Omega^2\cC$ is naturally a braided monoidal $(\infty,1)$-category.
\end{example}

\begin{notation}\label{notat:verticalhorizontal}
Consider the functor $\Mon(\Mon(\Cat_{(\infty,1)})) \to \Mon(\Cat_{(\infty,1)})$ ``forgetting the outer $\Mon$''; we refer to it as the  \define{(underlying) horizontal monoidal category} of a braided monoidal category. Since the functor $\Mon(\Cat_{(\infty,1)}) \to \Cat_{(\infty,1)}$ preserves limits, there is  another functor $\Mon(\Mon(\Cat_{(\infty,1)}) \to \Cat_{(\infty,1)})$ which ``forgets the inner $\Mon$.'' We refer to it as the \define{(underlying) vertical monoidal category}. Note that the two composites $\Mon(\Mon(\Cat_{(\infty,1)})) \to \Mon(\Cat_{(\infty,1)}) \to \Cat_{(\infty,1)}$ are canonically identified, and hence the horizontal and vertical monoidal category may be thought of as two tensor structures on the same category $\cB$. 
We will often denote the corresponding binary multiplication functors as follows, respectively
\[ A, B \mapsto A | B \hspace{1cm} A, B \mapsto \frac A B.
\]
Indeed, unwinding Example~\ref{exm:3catbraided}, for a pointed $(\infty,3)$-category $\cC$,  the $(\infty,1)$-category $\Omega^2 \cC$ uses composition of $1$-morphisms $f,g \mapsto fg$ as horizontal monoidal structure and the composition of $2$-morphisms $\alpha, \beta \mapsto \alpha \circ \beta$ as the vertical monoidal structure, justifying our terminology.
\end{notation}

\begin{remark}\label{rem:EH}
The Eckman-Hilton argument identifies the vertical and horizontal underlying monoidal categories of a $\cB \in \Mon(\Mon(\Cat_{(\infty,1)}))$, but there are many such identifications and we need to be careful to distinguish them. More precisely, these two underlying monoidal categories arise from restricting along the inclusions $\EE_1 = \EE_1 \otimes * \to \EE_1 \otimes \EE_1$ and $\EE_1 = * \otimes \EE_1 \to \EE_1 \otimes \EE_1$ which, after identifying the space of operad maps $\EE_1 \to \EE_1\otimes \EE_1$ with the space of linear injections $\mathrm{LinInj}(\mathbb{R}^1,\mathbb{R}^1 \oplus \mathbb{R}^1) \sim S^1$, correspond to the inclusions of $\mathbb{R}^1 \oplus \{0\} $  and $\{0\} \oplus \mathbb{R}^1$. We will once and for all choose  a path in the \emph{positive quadrant} to identify them, but warn the reader that this is a choice (with $\Omega S^1 = \bZ$-many options); we return to this in Notation~\ref{not:shearmaps}. In terms of Notation~\ref{notat:verticalhorizontal}, this identifies $A |B \simeq \frac A B$ by rotating ``through the upper left quadrant.'' 
\end{remark}

\begin{remark}\label{rem:visualizebraided}
In terms of Notation~\ref{notat:verticalhorizontal}, we may visualize the equivalence between braided monoidal $(1,1)$-categories in the sense of Definition~\ref{def:braidedmonoidal} and in the classical sense of  \cite{JoyalStreetMacquarie,JOYALSTREET199320} as follows: Given a $\cB \in \Mon(\Mon(\Cat_{(1,1)}))$, we choose its underlying monoidal category to  be its \emph{vertical monoidal category}, i.e.\ we set $A\otimes B:= \frac A B$. We then identify $A |B \simeq \frac A B$ by a ``quarter rotation,'' as explained in Remark~\ref{rem:EH}.  The braiding isomorphism $\br_{A,B}:A \otimes B \to B\otimes A$  is then for example given by the following composite:
\[A|B \simeq \left. \frac 1 A \middle| \frac B 1\right. \simeq \frac{ \vphantom{.}1|B}{A|1\vphantom{.}} \simeq \frac B A
\]
where the middle isomorphism encodes the compatibility between the two monoidal structures in $\Mon(\Mon(\Cat_{(1,1)}))$.
\end{remark}

\begin{lemma}\label{lem:algmonoidal}
The functors $\Alg(-), \coAlg(-): \Mon(\Cat_{(\infty,1)}) \to \Cat_{(\infty,1)}$ preserve limits and hence lift to functors $\Mon(\Mon(\Cat_{(\infty,1)})) \to \Mon(\Cat_{(\infty,1)})$. 
\end{lemma}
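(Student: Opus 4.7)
The plan is to unravel the definitions and reduce everything to the elementary fact that limits commute with limits. Recall that $\Alg(\cA) = \Mon(\cA)$ is by definition the full subcategory of $\Fun(\Delta^{\op}, \cA)$ cut out by the Segal conditions (together with $M_{[0]}$ terminal). Both conditions are limit conditions on the diagram $M : \Delta^{\op} \to \cA$. Therefore, if $\cA \simeq \lim_{i} \cA_i$ is computed in $\Mon(\Cat_{(\infty,1)})$, the canonical equivalence
\[
\Fun(\Delta^{\op}, \cA) \;\simeq\; \lim_i \Fun(\Delta^{\op}, \cA_i)
\]
restricts to the full subcategories of monoid objects because the Segal/terminality conditions are tested pointwise and limits commute with limits. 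Since the forgetful functor $\Mon(\Cat_{(\infty,1)}) \to \Cat_{(\infty,1)}$ creates limits (limits of monoidal categories are computed on underlying categories), this yields $\Alg(\lim_i \cA_i) \simeq \lim_i \Alg(\cA_i)$, proving the claim for $\Alg$.

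For $\coAlg$, I would use the definition $\coAlg(\cA) := \Alg(\cA^{\op})^{\op}$ and factor $\coAlg$ as the composite
\[
\Mon(\Cat_{(\infty,1)}) \xrightarrow{(-)^{\op}} \Mon(\Cat_{(\infty,1)}) \xrightarrow{\Alg} \Cat_{(\infty,1)} \xrightarrow{(-)^{\op}} \Cat_{(\infty,1)}.
\]
The functor $(-)^{\op}: \Cat_{(\infty,1)} \to \Cat_{(\infty,1)}$ is an involutive equivalence and hence preserves all limits; it lifts to an endoequivalence of $\Mon(\Cat_{(\infty,1)})$ (sending a monoidal category to its opposite monoidal category) which again preserves limits. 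The middle factor preserves limits by the previous paragraph, so the composite preserves limits.

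Finally, to produce the lifts to $\Mon(\Mon(\Cat_{(\infty,1)})) \to \Mon(\Cat_{(\infty,1)})$, I would invoke the general principle: any functor $F: \cC \to \cD$ between $\infty$-categories admitting finite products, which preserves these finite products, induces a functor $\Mon(F): \Mon(\cC) \to \Mon(\cD)$ by postcomposition with $F$. Indeed, given a Segal object $M : \Delta^{\op} \to \cC$ with $M_{[0]}$ terminal, the composite $F \circ M : \Delta^{\op} \to \cD$ again satisfies both conditions because $F$ preserves terminal objects and the products appearing in the Segal conditions. Applied to the limit-preserving (hence in particular finite-product-preserving) functors $\Alg$ and $\coAlg$ just established, this produces the desired lifts. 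There is no substantive obstacle here — the entire content is bookkeeping around the commutation of limits and the characterization of $\Alg(\cA)$ as a limit-defined full subcategory of a diagram category.
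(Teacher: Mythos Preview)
Your argument contains a genuine error in the very first step. You write ``$\Alg(\cA) = \Mon(\cA)$ is by definition the full subcategory of $\Fun(\Delta^{\op}, \cA)$ cut out by the Segal conditions (together with $M_{[0]}$ terminal).'' But this is the definition of a monoid object with respect to the \emph{Cartesian} product of $\cA$, whereas $\Alg(\cA)$ denotes associative algebras with respect to the given monoidal structure $\otimes$. These disagree as soon as $\otimes$ is not Cartesian: for $\cA = \Vect$ with the usual tensor product, $\Alg(\cA)$ is the category of associative algebras, while the objects you describe (simplicial objects with $M_{[0]}$ terminal and Segal maps for $\times$) are all zero. So the encoding you invoke simply does not compute the functor whose limit-preservation is at stake; the rest of the argument for $\Alg$ therefore does not go through.

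Your reduction of $\coAlg$ to $\Alg$ via $(-)^{\op}$ is fine, and the final paragraph about lifting product-preserving functors to $\Mon(-)$ is standard. The issue is entirely in how you encode $\Alg(\cA)$.

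The paper's proof avoids this pitfall by working one categorical level up: it views a monoidal category $\cA$ as its simplicial object $\cA_\bullet \in \Fun(\Delta^{\op}, \Cat_{(\infty,1)})$, equips the latter with its pointwise $\Cat_{(\infty,1)}$-module structure, and identifies the category of monoidal functors $\Fun^{\otimes}(\cC, \cD)$ with the inner hom $\operatorname{Nat}(\cC_\bullet, \cD_\bullet)$. Since inner homs are right adjoints they preserve limits in the second variable, and since $\Mon(\Cat_{(\infty,1)}) \hookrightarrow \Fun(\Delta^{\op}, \Cat_{(\infty,1)})$ is closed under limits, $\Fun^{\otimes}(\cC,-)$ preserves limits. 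Both $\Alg$ and $\coAlg$ are then recognized as $\Fun^{\otimes}(\cC,-)$ for $\cC$ the walking algebra or coalgebra. Your ``limits commute with limits'' philosophy is exactly right, but it has to be applied to $\Fun(\Delta^{\op}, \Cat_{(\infty,1)})$ rather than to $\Fun(\Delta^{\op}, \cA)$.
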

\begin{proof}
Pointwise
multiplication makes $\Cat_{(\infty,1)}^{\Delta^{\op}}:=\Fun(\Delta^{\op}, \Cat_{(\infty,1)})$ into a presentable $\Cat_{(\infty,1)}$-module category (i.e.\ an object of $\Mod_{\Cat_{(\infty,1)}}(\mathrm{Pr}^L)$ in the notation of~\cite{HA}).
It hence admits by the adjoint functor theorem an inner hom $\operatorname{Nat}(-,-) : \left(\Cat_{(\infty,1)}^{\Delta^{\op}}\right)^{\op} \times \Cat_{(\infty,1)}^{\Delta^{\op}} \to \Cat_{(\infty,1)}$, which may be thought of as encoding the 2-categorical structure on $\Cat_{(\infty,1)}^{\Delta^{\op}}$.
For monoidal categories $\cC$ and $\cD$, recall that the \emph{category of monoidal functors} may be defined as $\Fun^{\otimes}(\cC, \cD):=\mathrm{Nat}(\cC_{\bullet}, \cD_{\bullet})$, where $\cC_{\bullet}, \cD_{\bullet}: \Delta^{\op} \to \Cat_{(\infty,1)}$ are the respective simplicial categories.
Since the full inclusion $\Mon(\Cat_{(\infty,1)}) \hookrightarrow \Cat_{(\infty,1)}^{\Delta^{\op}}$ is closed under limits (since limits commute with limits) and since for any $F\in\Cat_{(\infty,1)}^{\Delta^{\op}}$, the functor $\mathrm{Nat}(F, -): \Cat_{(\infty,1)}^{\Delta^{\op}} \to \Cat_{(\infty,1)}$ is an inner hom, thus by definition a right adjoint and hence preserves limits, it follows that for any $\cC \in \Mon(\Cat_{(\infty,1)})$, the functor $\Fun^{\otimes}(\cC, -): \Mon(\Cat_{(\infty,1)}) \to \Cat_{(\infty,1)}$ preserves limits.
But both functors $\Alg$ and $\coAlg$ are of the form $\Fun^{\otimes}(\cC,-)$ for appropriate $\cC$: the former by the \emph{walking algebra}, e.g.\ defined equivalently as $\Omega \Mnd$ or as the monoidal envelope of the $\EE_1$-operad \cite[Appendix~A]{MR3345192}; the latter by its opposite monoidal category, the \emph{walking coalgebra}. 
\end{proof}
In other words, if $\cB$ is a braided monoidal $(\infty,1)$-category, then the $(\infty,1)$-categories $\Alg(\cB)$ and $\coAlg(\cB)$ are naturally monoidal, as are the forgetful functors $\Alg(\cB) \to \cB$ and $\coAlg(\cB) \to \cB$.

Recall from Remark~\ref{rem:alglax} that for a pointed $(\infty,3)$-category $\cC$,  $\Funoplax_*(\Mnd, \cC)$ can be thought of as the $(\infty,2)$-category $\Alg^{\oplax}(\Omega \cC)$ of algebras and oplax algebra homomorphisms in the monoidal $(\infty,2)$-category $\Omega \cC$. We next show that an oplax algebra endomorphism of the trivial algebra in $\Omega \cC$ is precisely a coalgebra in $\Omega^2 \cC$:

\begin{lemma}\label{lem:loopcoalg}
Let $\cC$ be a pointed $(\infty,3)$-category. Then, there is an equivalence of monoidal $(\infty,1)$-categories, natural in $\cC \in \Cat_{(\infty,3)}^*$:
\[\Omega \Funoplax_*(\Mnd, \cC)  \simeq \coAlg(\Omega^2 \cC).
\]
Here, $\Omega \Funoplax_*(\Mnd, \cC)$ carries the monoidal structure induced from $\Omega$ as in Corollary~\ref{cor:monoidalstructureonloops} and $\coAlg(\Omega^2 \cC)$ carries the monoidal structure induced from applying the limit preserving functor  $\coAlg(\Omega-)$ to the monoid object $\Omega \cC $.
\end{lemma}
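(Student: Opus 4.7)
The plan is to chain together three previously established equivalences, being careful at each stage that the chosen monoidal structure is tracked.

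First, I would apply Corollary~\ref{cor:movingloopsinsidefun} with $\cC$ replaced by $\Mnd$ and $\cD$ replaced by $\cC$ to obtain a natural equivalence of monoidal $(\infty,\infty)$-categories
\[ \Omega \Funoplax_*(\Mnd, \cC) \simeq \Funoplax_*(\Mnd^{\co\op}, \Omega \cC), \]
where the monoidal structure on both sides comes from the comonoid structure on $\Sone$ (which, by Corollary~\ref{cor:Sonecentral}, has been moved past $\Mnd^{\co\op}$ to land on $\Omega\cC$). Note that $\Omega\cC$ is a pointed $(\infty,2)$-category by Lemma~\ref{lem:surjectivereduces}. Next, I would invoke the canonical equivalence $\Mnd \simeq \Mnd^{\op}$ from~\eqref{eq:MndMndop} (which is uniquely determined, $\Mnd$ being gaunt with trivial automorphism group) to rewrite $\Mnd^{\co\op} \simeq \Mnd^{\co}$, giving
\[ \Funoplax_*(\Mnd^{\co\op}, \Omega \cC) \simeq \Funoplax_*(\Mnd^{\co}, \Omega \cC). \]
Finally, I would apply Proposition~\ref{prop:coalgebras} to the pointed $(\infty,2)$-category $\Omega\cC$, yielding
\[ \Funoplax_*(\Mnd^{\co}, \Omega \cC) \simeq \coAlg(\Omega(\Omega \cC)) = \coAlg(\Omega^2 \cC). \]
Composing these three equivalences produces the desired equivalence of $(\infty,1)$-categories, naturally in $\cC$.

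The main obstacle is verifying that the entire chain lifts to an equivalence of \emph{monoidal} $(\infty,1)$-categories, where the monoidal structure on the left is the one from Corollary~\ref{cor:monoidalstructureonloops} and the monoidal structure on the right is the one from Lemma~\ref{lem:algmonoidal} applied to the monoid $\Omega\cC \in \Mon(\Cat_{(\infty,2)})$. For the first step, the monoidal structure on $\Funoplax_*(\Mnd^{\co\op}, \Omega\cC)$ arises because $\Funoplax_*(\Mnd^{\co\op}, -)$ is a right adjoint (and thus preserves monoid objects), applied to the monoidal $\Omega\cC$; Corollary~\ref{cor:movingloopsinsidefun} explicitly states naturality as monoidal $(\infty,\infty)$-categories, so this is automatic. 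For the third step, I would unpack the proof of Proposition~\ref{prop:coalgebras}: each equivalence in the chain~\eqref{eqn:seriesofequivalences} is either a naturality statement for $\Funoplax_*$ in a monoidal $(\infty,2)$-category argument, or an instance of Proposition~\ref{prop:monadalgebra}, which is itself natural in the $\cC$-slot and hence lifts along the functor $\Mon(\Cat^*_{(\infty,2)}) \to \Mon(\Cat_{(\infty,1)})$ induced by $\Omega$. This identifies the monoidal structure on $\Funoplax_*(\Mnd^{\co}, \Omega\cC)$ with the $\coAlg$ of the monoidal structure on $\Omega^2\cC$, as desired.

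In summary, the proof is a formal composition of named results; the only content is bookkeeping the monoidal structure and noting that all constructions involved ($\Funoplax_*(\cX,-)$ for fixed $\cX$, $\Omega$, $(-)^{\op}$, $(-)^{\co}$, $\Alg$, and $\coAlg$) are right adjoints or equivalences, hence preserve the relevant $\Mon$-structure.
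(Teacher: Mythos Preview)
Your proof is correct and follows essentially the same route as the paper's: apply Corollary~\ref{cor:movingloopsinsidefun}, use the equivalence $\Mnd^{\co\op}\simeq\Mnd^{\co}$ from~\eqref{eq:MndMndop}, and then invoke Proposition~\ref{prop:coalgebras}. The paper handles the monoidality tracking more tersely, observing only that the composite equivalence $\Funoplax_*(\Mnd^{\co\op},-)\simeq\coAlg(\Omega-)$ is a natural equivalence between product-preserving functors and hence carries monoid objects to monoid objects; your more detailed unpacking of each step is fine but not strictly needed.
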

\begin{proof}By Corollary~\ref{cor:movingloopsinsidefun}, there is a natural equivalence of monoidal $(\infty,1)$-categories \[\Omega \Funoplax_*(\Mnd, \cC) \simeq \Funoplax_*(\Mnd^{\co \op}, \Omega \cC) \simeq \Funoplax_*(\Mnd^{\co} , \Omega \cC),\] which by Proposition~\ref{prop:coalgebras} is  equivalent to $\coAlg(\Omega^2\cC)$. This latter equivalence is monoidal since it arises from applying a natural equivalence $\Funoplax_*(\Mnd^{\co\op}, -) \simeq \coAlg(\Omega -)$ between product preserving functors to monoid objects. 
\end{proof}

\begin{definition}\label{defn:bialgebra}
Let $\cB$ be a braided monoidal $(\infty,1)$-category. The $(\infty,1)$-category of \define{bialgebras} in $\cB$ is
 \[\BiAlg(\cB):= \Alg(\coAlg(\cB)).\]
\end{definition}

Using the terminology from Notation~\ref{notat:verticalhorizontal} and unwinding the definition, a bialgebra therefore has an underlying algebra in the horizontal monoidal category of~$\cB$ and an underlying coalgebra in the vertical monoidal category of~$\cB$.  We stress that treating the algebra and coalgebra as living in the same monoidal category (as e.g.\ needed for the definition of a Hopf algebra in \S\ref{subsec:finalproof}) involves a choice of identification between these monoidal directions as  discussed in Remark~\ref{rem:EH}.

\begin{remark}\label{rem:fourshearmaps}
Given $\cB \in \Mon(\Mon(\Cat_{(\infty,1)}))$, (some of) the data of a bialgebra  $B$  may be informally visualized as follows using Notation~\ref{notat:verticalhorizontal}: The underlying coalgebra of $B$ lives by definition in the vertical monoidal category and hence has a binary comultiplication $\Delta_B: B \to \frac B B$. The monoidal structure on the category $\coAlg(\cB)$ is induced by the horizontal monoidal structure on $\cB$. That is, given coalgebras $A$ and $B$, the binary comultiplication on their tensor product $A|B$ is given by 
\[ \Delta_{A|B}:=  A|B \to[\Delta_A | \Delta_B] \left. \frac A A \middle | \frac B B\right. \simeq \frac{ A | B }{ A | B}.
\]

Equipping a coalgebra with an algebra structure in $\coAlg(\cB)$ with respect to the remaining horizontal monoidal direction therefore entails the data of a binary multiplication $m_B : B | B \to B$ which is a coalgebra homomorphism. This compatibility in turn entails an isomorphism between 
\[ B | B \quad \overset{\Delta | \Delta} \longto \quad \left. \frac B B \middle| \frac B B\right. \quad \simeq  \quad \frac{ B|B}{B|B} \quad \overset{\frac m m}\longto \quad \frac B B  \]
and
\[  B  | B \quad \overset{m}\longto \quad B \quad \overset\Delta\longto \quad \frac B B . \]
If $\cB$ is a classical braided monoidal $(1,1)$-category, this  translates via Remark~\ref{rem:visualizebraided} (i.e. where we define  $A\otimes B:= \frac A B$ and identify it with $A|B$ via the quarter rotation) to the familiar  \emph{bialgebra axiom} 
\begin{equation}
\label{eq:strongbialgebra} (m_B \otimes m_B) \circ (\id_B \otimes \br_{B,B} \otimes \id_B )  \circ (\Delta_B \otimes \Delta_B) \simeq \Delta_B \circ m_B.
\end{equation}
\end{remark}

\begin{corollary}\label{cor:mndsquared}
  Let $\cC$ be a pointed $(\infty,3)$-category. Then, there is an equivalence of pointed $(\infty,1)$-categories,  natural in $\cC \in \Cat_{(\infty,3)}^*$:
  $$ \Funoplax_*(\Mnd \owedge \Mnd, \cC) \simeq \BiAlg(\Omega^2\cC).$$
\end{corollary}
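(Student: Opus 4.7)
The plan is to obtain the equivalence by iterating the adjunction between $\owedge$ and $\Funoplax_*$ and then reducing the inner and outer applications of $\Funoplax_*(\Mnd,-)$ to algebras and coalgebras respectively.

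First, since $\Funoplax_*(\cA,-)$ is right adjoint to $-\owedge \cA$ (Definition~\ref{defn:funoplaxstar}), associativity of the lax smash product supplies a natural equivalence of pointed $(\infty,\infty)$-categories
\[
  \Funoplax_*(\Mnd \owedge \Mnd, \cC) \simeq \Funoplax_*\!\bigl(\Mnd,\, \Funoplax_*(\Mnd, \cC)\bigr).
\]
Since the pointing $* \to \Mnd$ is essentially surjective on objects and $\cC$ is an $(\infty,3)$-category, Lemma~\ref{lem:surjectivereduces} implies that $\Funoplax_*(\Mnd, \cC)$ is a pointed $(\infty,2)$-category. We may therefore apply Proposition~\ref{prop:monadalgebra} to the outer $\Funoplax_*(\Mnd,-)$, obtaining a natural equivalence of pointed $(\infty,1)$-categories
\[
  \Funoplax_*\!\bigl(\Mnd,\, \Funoplax_*(\Mnd, \cC)\bigr) \simeq \Alg\!\bigl(\Omega \Funoplax_*(\Mnd, \cC)\bigr),
\]
where the monoidal structure on $\Omega \Funoplax_*(\Mnd, \cC)$ is the one coming from Corollary~\ref{cor:monoidalstructureonloops}.

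Next, Lemma~\ref{lem:loopcoalg} identifies this monoidal $(\infty,1)$-category naturally with $\coAlg(\Omega^2\cC)$ equipped with the monoidal structure inherited from applying the limit-preserving functor $\coAlg(\Omega-)$ to the monoid $\Omega\cC \in \Mon(\Cat_{(\infty,2)})$. Applying the product-preserving functor $\Alg(-)$ (Lemma~\ref{lem:algmonoidal}) to this equivalence of monoidal categories yields
\[
  \Alg\!\bigl(\Omega \Funoplax_*(\Mnd, \cC)\bigr) \simeq \Alg\!\bigl(\coAlg(\Omega^2\cC)\bigr) =: \BiAlg(\Omega^2\cC).
\]
Composing the three equivalences above produces the desired natural equivalence $\Funoplax_*(\Mnd \owedge \Mnd, \cC) \simeq \BiAlg(\Omega^2\cC)$, with naturality in $\cC \in \Cat_{(\infty,3)}^*$ following because each input equivalence is natural.

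The one genuinely nontrivial input is Lemma~\ref{lem:loopcoalg}, which is itself built from Proposition~\ref{prop:coalgebras} and the interaction of $\Omega$ with opposites (Corollaries~\ref{cor:opcoloop} and~\ref{cor:movingloopsinsidefun}); the rest of the argument is formal manipulation of adjunctions. The main thing to verify carefully is that the monoidal structures match up under the series of identifications: the $\Alg$ of the ``loops-of-$\Alg^{\oplax}$'' monoidal structure on the left is matched against the $\Alg$ of the ``coalgebras in the braided $\Omega^2\cC$'' monoidal structure on the right. This is precisely what Lemma~\ref{lem:loopcoalg} is set up to say, and applying $\Alg(-)$ (which preserves equivalences of monoidal categories) closes the argument.
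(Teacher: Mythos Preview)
Your proof is correct and follows essentially the same approach as the paper: uncurry via the $\owedge$–$\Funoplax_*$ adjunction, use Lemma~\ref{lem:surjectivereduces} to drop the category level so that Proposition~\ref{prop:monadalgebra} applies to the outer $\Funoplax_*(\Mnd,-)$, and then invoke Lemma~\ref{lem:loopcoalg} to identify the resulting monoidal $(\infty,1)$-category with $\coAlg(\Omega^2\cC)$. Your write-up is slightly more explicit about the monoidal compatibility and the role of $\Alg(-)$ preserving equivalences of monoidal categories, but the argument is the same.
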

\begin{proof}
By definition, $\Funoplax_*(\Mnd \owedge \Mnd, \cC) \simeq \Funoplax_*(\Mnd, \Funoplax_*(\Mnd, \cC))$. By Lemma~\ref{lem:surjectivereduces}, $\Funoplax_*(\Mnd,\cC)$ is an $(\infty,2)$-category and we may hence apply Proposition~\ref{prop:monadalgebra}  to find $\Funoplax_*(\Mnd, \Funoplax_*(\Mnd, \cC)) \simeq \Alg(\Omega \Funoplax_*(\Mnd, \cC))$. The statement then follows from Lemma~\ref{lem:loopcoalg}. 
\end{proof}

Tracing through the proof of Corollary~\ref{cor:mndsquared}, the underlying algebra of a bialgebra is its image under
\begin{equation}\label{eq:underlyingalgebra}
 \Funoplax_*(\Mnd \owedge \Mnd, \cC) \to \Funoplax_*(\Mnd \owedge \Sone, \cC)  \simeq \Funoplax_*(\Mnd, \Omega \cC) 
 \simeq \Alg(\Omega^2 \cC). \end{equation}
 and the underlying coalgebra is its image under 
 \begin{multline}\label{eq:underlyingcoalgebra}
 \Funoplax_*(\Mnd \owedge \Mnd, \cC) \to \Funoplax_*(\Sone \owedge \Mnd, \cC)  
 \simeq \Omega \Funoplax_*(\Mnd, \cC) \\ \overset{\text{Lem.~\ref{lem:loopcoalg}}}\simeq \coAlg(\Omega^2\cC) \end{multline}

\begin{remark}\label{rem:laxbialgebra}
Corollary~\ref{cor:mndsquared} identifies the $3$-localization $L_3(\Mnd \owedge \Mnd)$ with (the twice-delooping of) the \define{walking bialgebra}.
More generally $\Mnd \owedge \Mnd$ may be thought of as the walking \emph{lax bialgebra}, classifying \emph{lax bialgebra objects} in braided monoidal $(\infty,\infty)$-categories, i.e.\ objects in $\Alg^{\oplax}(\coAlg^{\oplax}(\cB))$. When unpacked, a lax bialgebra structure on an object $B \in \cB$ consists of a unital algebra structure $(m_B : B \otimes B \to B, \dots)$, a counital coalgebra structure $(\Delta_B : B \to B \otimes B, \dots)$, and various coherent comparison 2-morphisms (between compositions of the algebra and coalgebra structures). The most interesting of these is the bialgebra axiom~\eqref{eq:strongbialgebra}, which becomes a typically-noninvertible 2-morphism
\begin{equation}
\label{eq:laxbialgebra} 
(m_B \otimes m_B) \circ (\id_B \otimes \br_{B,B} \otimes \id_B )  \circ (\Delta_B \otimes \Delta_B) \Rightarrow \Delta_B \circ m_B.
\end{equation}
 See~\cite[Definifition~3.1.1]{1710.01465} for a bicategorical incarnation of such lax bialgebras\footnote{The paper \cite{1710.01465} weakens the bialgebra axiom~\eqref{eq:strongbialgebra} to a comparison map which goes the other direction from our \eqref{eq:laxbialgebra}, and calls the resulting notion an ``oplax bialgebra.'' This is why we have used the term ``lax bialgebra'' even though it is built entirely out of oplax constructions. Indeed, there is no coherent way to decide which weakening of \eqref{eq:strongbialgebra} should be called ``lax'' and which ``oplax'': if one unpacks $\coAlg^{\oplax}(\Alg^{\oplax}(\cB))$ instead of our $\Alg^{\oplax}(\coAlg^{\oplax}(\cB))$, one does indeed find the other weakening from \eqref{eq:laxbialgebra}.}.
 \end{remark}

\begin{remark}\label{rem:notationforbimnd}
We now explicitly unpack the morphisms in $L_3(\Mnd \owedge\Mnd)$ which corepresent via Corollary~\ref{cor:mndsquared} the binary multiplication and comultiplication of the algebra, resp. coalgebra. See also~\cite{2101.10361} for a similar analysis.
Let $A: \globe_1 \to \Mnd$ denote the generating $1$-morphism and $m: \globe_2 \to \Mnd$ the $2$-morphism corepresenting the binary multiplication. The underlying object of the bialgebra in $L_3(\Mnd \owedge \Mnd)$ is corepresented by the filling $2$-cell $\globe_2 \to \globe_1 \otimes \globe_1 \to[A\otimes A]  \Mnd \otimes \Mnd \to L_3(\Mnd \owedge \Mnd)$. Passing to the gauntification\footnote{At the time of writing, it is not known whether $\Mnd \otimes \Mnd$ is already gaunt, see Warning~\ref{warn:GrayGaunt}. However, see Construction~\ref{cons:universalshear} for how diagrams such as the ones below can be canonically lifted to define cells in $\Mnd \otimes \Mnd$. } this cell may be represented in the graphical calculus from \S\ref{subsec:otimesstrict}) as follows:
\begin{equation}\label{eq:underlyingobject}
\begin{tikzpicture}[baseline=(baseline)]
  \path (0,0) coordinate (middle) +(0,-.125) coordinate (baseline)
  +(-.5,-1) coordinate (sw) 
  +(+.5,-1) coordinate (se) 
  +(-.5,+1) coordinate (nw) 
  +(+.5,+1) coordinate (ne) 
  ;
  \draw[string,gray!50!white, name path=1A] (se) .. controls ++(0,+1) and ++(0,-1) .. (nw);
  \draw[string,gray!50!white, name path=A1] (sw) .. controls ++(0,+1) and ++(0,-1) .. (ne);
  \path[name intersections={of=1A and A1}] (intersection-1) node[dot]{};
\end{tikzpicture}
\end{equation}
Here and below, we have shaded in grey all the cells which become trivial in the quotient $\Mnd \owedge \Mnd$, i.e.\ any cell in $\Mnd \otimes \Mnd$ which is a product (in either order) of the $0$-cell in $\Mnd$ with some other cell. To make up for this shading, we have added a black dot to emphasize that the crossing itself, corresponding to the 2-cell $A \otimes A$, which stays nontrivial in the quotient.

By~\eqref{eq:underlyingalgebra}, the binary multiplication is corepresented by the $3$-cell $\globe_3 \to \globe_2 \otimes \globe_1 \to[m \otimes A] \Mnd \otimes \Mnd \to L_3(\Mnd \owedge \Mnd)$, which looks like:
\begin{equation}\label{eq:underlyingmultiplication}
\begin{tikzpicture}[baseline=(baseline)]
  \path (0,0) coordinate (middle) +(0,-.125) coordinate (baseline)
  +(-.75,-1) coordinate (sw1) 
  +(-.5,-1) coordinate (sw2) 
  +(+.5,-1) coordinate (se) 
  +(-.5,+1) coordinate (nw) 
  +(+.5,+1) coordinate (ne) 
  +(.25,.5) node[draw,circle,inner sep=0.5pt,gray!50!white] (AA) {$\scriptstyle m$}
  ;
  \draw[string,gray!50!white, name path=1A] (se) .. controls ++(0,+1) and ++(0,-1) .. (nw);
  \draw[string,gray!50!white] (AA) .. controls ++(.125,.125) and ++(0,-.125) .. (ne);
  \draw[string,gray!50!white, name path=A12] (sw2) .. controls ++(0,+.5) and ++(-.25,-.5) .. (AA);
  \draw[string,gray!50!white, name path=A11] (sw1) .. controls ++(0,+.5) and ++(-.5,-.25) .. (AA);
  \path[name intersections={of = 1A and A11}] (intersection-1) node[dot]{};
  \path[name intersections={of = 1A and A12}] (intersection-1) node[dot]{};
\end{tikzpicture}
\quad \Rrightarrow \quad
\begin{tikzpicture}[baseline=(baseline)]
  \path (0,0) coordinate (middle) +(0,-.125) coordinate (baseline)
  +(-.75,-1) coordinate (sw1) 
  +(-.5,-1) coordinate (sw2) 
  +(+.5,-1) coordinate (se) 
  +(-.5,+1) coordinate (nw) 
  +(+.5,+1) coordinate (ne) 
  +(-.25,-.5) node[draw,circle,inner sep=0.5pt,gray!50!white] (AA) {$\scriptstyle m$}
  ;
  \draw[string,gray!50!white, name path=1A] (se) .. controls ++(0,+1) and ++(0,-1) .. (nw);
  \draw[string,gray!50!white, name path=A1] (AA) .. controls ++(.5,.5) and ++(0,-.5) .. (ne);
  \draw[string,gray!50!white, name path=A12] (sw2) .. controls ++(0,+.25) and ++(-.125,-.25) .. (AA);
  \draw[string,gray!50!white, name path=A11] (sw1) .. controls ++(0,+.25) and ++(-.25,-.125) .. (AA);
  \path[name intersections={of = 1A and A1}] (intersection-1) node[dot]{};
\end{tikzpicture}
\end{equation}
Tracing through \eqref{eq:underlyingcoalgebra}, the binary comultiplication is corepresented by the $3$-cell 
$\globe_3 \to \globe_1 \otimes \globe_2 \to[A \otimes m] \Mnd \otimes \Mnd \to L_3(\Mnd \owedge \Mnd)$, which looks like: \begin{equation}\label{eq:underlyingcomultiplication}
\begin{tikzpicture}[baseline=(baseline),xscale=-1]
  \path (0,0) coordinate (middle) +(0,-.125) coordinate (baseline)
  +(-.75,-1) coordinate (sw1) 
  +(-.5,-1) coordinate (sw2) 
  +(+.5,-1) coordinate (se) 
  +(-.5,+1) coordinate (nw) 
  +(+.5,+1) coordinate (ne) 
  +(-.25,-.5) node[draw,circle,inner sep=0.5pt,gray!50!white] (AA) {$\scriptstyle m$}
  ;
  \draw[string,gray!50!white, name path=1A] (se) .. controls ++(0,+1) and ++(0,-1) .. (nw);
  \draw[string,gray!50!white, name path=A1] (AA) .. controls ++(.5,.5) and ++(0,-.5) .. (ne);
  \draw[string,gray!50!white, name path=A12] (sw2) .. controls ++(0,+.25) and ++(-.125,-.25) .. (AA);
  \draw[string,gray!50!white, name path=A11] (sw1) .. controls ++(0,+.25) and ++(-.25,-.125) .. (AA);
  \path[name intersections={of = 1A and A1}] (intersection-1) node[dot]{};
\end{tikzpicture}
\quad \Rrightarrow \quad
\begin{tikzpicture}[baseline=(baseline),xscale=-1]
  \path (0,0) coordinate (middle) +(0,-.125) coordinate (baseline)
  +(-.75,-1) coordinate (sw1) 
  +(-.5,-1) coordinate (sw2) 
  +(+.5,-1) coordinate (se) 
  +(-.5,+1) coordinate (nw) 
  +(+.5,+1) coordinate (ne) 
  +(.25,.5) node[draw,circle,inner sep=0.5pt,gray!50!white] (AA) {$\scriptstyle m$}
  ;
  \draw[string,gray!50!white, name path=1A] (se) .. controls ++(0,+1) and ++(0,-1) .. (nw);
  \draw[string,gray!50!white] (AA) .. controls ++(.125,.125) and ++(0,-.125) .. (ne);
  \draw[string,gray!50!white, name path=A12] (sw2) .. controls ++(0,+.5) and ++(-.25,-.5) .. (AA);
  \draw[string,gray!50!white, name path=A11] (sw1) .. controls ++(0,+.5) and ++(-.5,-.25) .. (AA);
  \path[name intersections={of = 1A and A11}] (intersection-1) node[dot]{};
  \path[name intersections={of = 1A and A12}] (intersection-1) node[dot]{};
\end{tikzpicture}
\end{equation}
Hence, one may think of the algebra as multiplying along the southeast-to-northwest axis and the coalgebra as comultiplying along the southwest-to-northeast axis. 
The identification in Corollary~\ref{cor:mndsquared} of $L_3(\Mnd \owedge \Mnd)$ with the (twice delooping of the) walking bialgebra in which by definition the coalgebra is in the vertical monoidal category and the algebra is in the horizontal monoidal category therefore intuitively involves a 45-degree rotation to the left. This is ultimately the same 45-degree rotation between the direction of a $2$-morphism in a lax square and the vertical axis of $2$-morphism composition. \end{remark}

Combining the work of this section and the previous one gives the first part of Theorem~\ref{thm:maintheoremfunctorial}:

\begin{corollary}
\label{cor:part2ofmaintheorem}
  Restriction along the canonical inclusions $\Mnd \to \Adj$ defines a functor
  \begin{multline*}
   H : \Ret^\someadj(\cC) \overset{\text{Def.~\ref{defn:retadjcategory}}}\simeq \Funoplax_*(\Adj \owedge \Adj, \cC) \\ \longto \Funoplax_*(\Mnd \owedge \Mnd, \cC) \overset{\text{Cor.~\ref{cor:mndsquared}}}\simeq \BiAlg(\Omega^2\cC) . \qed \hspace{-1em}\end{multline*}
\end{corollary}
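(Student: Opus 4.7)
The plan is to assemble this as a direct formal consequence of the constructions and identifications established earlier. The statement claims only that a certain functor exists, so the proof reduces to naming the ingredients.

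First, I would exhibit the canonical pointed functor $\iota : \Mnd \to \Adj$ in $\Cat^*_{(\infty,\infty)}$. By Definition~\ref{defn:mnd}, $\Mnd$ is the full subcategory of $\Adj$ on the source of the generating left adjoint, and by our pointing convention (stated at the start of \S\ref{subsec:smashsquares}) both $\Adj$ and $\Mnd$ are pointed at this very object, so the inclusion is manifestly a pointed functor.

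Second, since $\owedge$ is by Proposition~\ref{prop:generalsmash} (applied to $\cA = \Cat_{(\infty,\infty)}$ with its lax tensor product $\otimes$) a presentably monoidal structure on $\Cat^*_{(\infty,\infty)}$, in particular it is functorial in each variable, so $\iota \owedge \iota : \Mnd \owedge \Mnd \to \Adj \owedge \Adj$ is a well-defined morphism in $\Cat^*_{(\infty,\infty)}$. Applying the internal hom $\Funoplax_*(-, \cC)$ from Definition~\ref{defn:funoplaxstar}, which is contravariantly functorial, yields a functor
\[
\Funoplax_*(\Adj \owedge \Adj, \cC) \longto \Funoplax_*(\Mnd \owedge \Mnd, \cC)
\]
of $(\infty,\infty)$-categories.

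Third, I would invoke the two identifications to rewrite the source and target. By Definition~\ref{defn:retadjcategory}, the source is exactly $\Ret^\someadj(\cC)$. By Corollary~\ref{cor:mndsquared}, the target is naturally equivalent to $\BiAlg(\Omega^2\cC)$. Composing these equivalences with the above functor yields the desired $H$.

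There is no real obstacle here: all the substantive content --- the identification $\Funoplax_*(\Mnd \owedge \Mnd, \cC) \simeq \BiAlg(\Omega^2 \cC)$ and the realization of $\Funoplax_*(\Adj \owedge \Adj, \cC)$ as a subcategory of retracts --- has been established earlier. The only thing worth briefly remarking upon is naturality in $\cC$: since each step above (taking $\iota \owedge \iota$, applying the internal hom, invoking the natural equivalences of Corollary~\ref{cor:mndsquared} and Definition~\ref{defn:retadjcategory}) is natural in the pointed $(\infty,3)$-category $\cC$, the functor $H$ assembles into a natural transformation of functors $\Cat^*_{(\infty,3)} \to \Cat_{(\infty,3)}$, although this extra naturality is not part of the statement.
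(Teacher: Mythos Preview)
Your proposal is correct and matches the paper's approach exactly: the paper treats this corollary as immediate (note the \qed in the statement itself), since it is just the composite of the restriction functor $\Funoplax_*(\iota \owedge \iota, \cC)$ with the identifications from Definition~\ref{defn:retadjcategory} and Corollary~\ref{cor:mndsquared}. Your unpacking of the pointed inclusion, the functoriality of $\owedge$, and the contravariance of the internal hom is precisely the implicit reasoning behind that \qed.
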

We will unpack how the functor $H$ acts on objects in \S\ref{subsec:unpacked}.

\begin{remark}
  The domain of $H$ is a properly-$(\infty,3)$-category: it has noninvertible 2- and 3-morphisms in general. The codomain is an $(\infty,1)$-category. Thus $H$ factors through the $(\infty,1)$-localiztion $L_1(\Ret^\someadj(\cC))$ of the domain. We do not have a good description of that localization.
\end{remark}

\subsection{The Hopf axiom} \label{subsec:finalproof}
It remains only to show that the bialgebras in the image of the functor $H$ from Corollary~\ref{cor:part2ofmaintheorem} are (co)Hopf.

Classically, a bialgebra $B$ (with binary multiplication denoted $m_B: B \otimes B \to B$ and comultiplication denoted $\Delta_B:B \to B \otimes B$) in a braided monoidal $(1,1)$-category is  a \emph{Hopf algebra} if it admits an antipode, see e.g.\ 
\cite{q-alg/9509023}.  It is a straight-forward exercise\footnote{Write $1_\cB$ for the unit object in~$\cB$, and $1_B : 1_\cB \to B$ and $\epsilon_B : B \to 1_\cB$ for the unit and counit in a bialgebra~$B$. If~$\sh_B$ is invertible, then the map $S_B : (\epsilon_B \otimes \id_B) \circ (\sh_B)^{-1} \circ (\id_B \otimes 1_B) : B \to B$ is an antipode. If $B$ admits an antipode $S_B$, then $(\id_B \otimes m_B) \circ (\id_B \otimes S_B \otimes \id_B) \circ (\Delta_B \otimes \id_B)$ is an inverse to $\sh_B$.} to show that this condition is equivalent to the invertibility of the following \emph{shear map} for $B$: 
  \begin{equation}\label{eq:shear}\sh_B:= (\id_B \otimes m_B) \circ (\Delta_B \otimes \id_B): B \otimes B \to B \otimes B\end{equation}
  
We will generalize this latter condition to the $\infty$-categorical context. 
\begin{definition} \label{def:Hopf} A bialgebra $B$ in a braided monoidal $(\infty,1)$-category $\cB \in \Mon(\Mon(\Cat_{(\infty,1)}))$ is Hopf if its shear map~\eqref{eq:shear} is invertible. We write $\Hopf(\cB) \subset \BiAlg(\cB)$ for the full subcategory on the Hopf algebras.

\end{definition}
The inclusion  $\Cat_{(1,1)} \hookrightarrow \Cat_{(\infty,1)}$ has a product preserving (and hence symmetric monoidal) left adjoint $\operatorname{h}_1 : \Cat_{(\infty,1)} \to \Cat_{(1,1)}$ which takes an $(\infty,1)$-category to its \define{homotopy category}. Thus, for $\cB \in \Mon(\Mon(\Cat_{(\infty,1)}))$, also $\operatorname{h}_1 \cB \in \Mon(\Mon(\Cat_{(1,1)}))$ and hence has the structure of a braided monoidal $1$-category in the sense of Joyal--Street  (see \cite{HA,2401.02956})
and any  bialgebra in $\cB$ determines a bialgebra in the classical sense in $\operatorname{h}_1\cB$. Finally, conservativity of $\cB \to \operatorname{h}_1 \cB$ implies:
\begin{lemma} A bialgebra $B$ in a braided monoidal $(\infty,1)$-category $\cB \in \Mon(\Mon(\Cat_{(\infty,1)}))$ is Hopf if and only if $B$ is Hopf in the classical sense in $\operatorname{h}_1\cB$. \qed
\end{lemma}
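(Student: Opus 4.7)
The plan is to observe that Hopfness, as defined via invertibility of the shear map, is both preserved and reflected by any product-preserving functor between braided monoidal $(\infty,1)$-categories that is conservative on invertibility of $1$-morphisms, and then to apply this observation to $\operatorname{h}_1$.

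First I would note that since $\operatorname{h}_1 : \Cat_{(\infty,1)} \to \Cat_{(1,1)}$ is product-preserving, it is symmetric monoidal with respect to the Cartesian structures, and therefore induces a functor
\[
\Mon(\Mon(\Cat_{(\infty,1)})) \to \Mon(\Mon(\Cat_{(1,1)}))
\]
and, by further application of $\Alg \circ \coAlg$ (which is limit-preserving by Lemma~\ref{lem:algmonoidal} and hence monoidal), an induced functor $\BiAlg(\cB) \to \BiAlg(\operatorname{h}_1\cB)$. Since the shear map
\[
\sh_B = (\id_B \otimes m_B)\circ(\Delta_B \otimes \id_B)
\]
is built purely from the binary multiplication and comultiplication of $B$, together with identities and the tensor product, this functoriality identifies the image $\operatorname{h}_1(\sh_B)$ with the shear map $\sh_{\operatorname{h}_1 B}$ of the induced classical bialgebra.

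Next I would invoke that the canonical functor $\cB \to \operatorname{h}_1\cB$ is by construction conservative on invertibility of $1$-morphisms: a $1$-morphism $f$ in the $(\infty,1)$-category $\cB$ is an equivalence if and only if its image in $\operatorname{h}_1\cB$ is an isomorphism, since isomorphism classes of objects and morphisms are preserved. Applied to $f = \sh_B$, this yields the equivalence between ``$\sh_B$ is invertible in $\cB$'' and ``$\sh_{\operatorname{h}_1 B}$ is invertible in $\operatorname{h}_1\cB$''.

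Finally, I would recall the standard $(1,1)$-categorical equivalence (whose explicit formulas for antipode and inverse shear map are recorded in the footnote preceding Definition~\ref{def:Hopf}) between invertibility of the shear map of a bialgebra in a braided monoidal $1$-category and the existence of an antipode, i.e.\ Hopfness in the classical sense. Chaining these three equivalences yields the lemma. I anticipate no substantial obstacle: the only technical input is the symmetric monoidality of $\operatorname{h}_1$, which follows from its being a product-preserving left adjoint between Cartesian monoidal $\infty$-categories.
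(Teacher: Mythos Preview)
Your proposal is correct and follows exactly the paper's approach: the paper's entire proof is the single clause ``conservativity of $\cB \to \operatorname{h}_1\cB$ implies'' preceding the statement, together with the setup in the preceding sentences noting that $\operatorname{h}_1$ is product-preserving and hence carries bialgebras to bialgebras. Your version simply spells out the details the paper leaves implicit.
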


\begin{warn}
Recall from Remark~ that the underlying algebra and coalgebra of a bialgebra are defined with respect to two a  priori different monoidal structures. Hence, writing~\eqref{eq:shear} requires, as explained in Remark~\ref{rem:EH},  a choice of identification between these two monoidal directions; and such choices are not unique.
\end{warn}

Without making such a choice, there are four equally valid candidates for a shear map:
\begin{notation} \label{not:shearmaps}
Let $B$ be a bialgebra in a braided monoidal $(\infty,1)$-category $\cB$. Denotes its underlying horizontal and vertical monoidal structures  from Notation~\ref{notat:verticalhorizontal} by $A, B \mapsto A |B$ and $A, B \mapsto \frac{A}{B}$, respectively. Define\footnote{More precisely, these morphisms are well defined in the homotopy category $\operatorname{h}_1\cB$ and we choose an arbitrary lift of each to a morphism in $\cB$. }
\begin{gather*}
 \sh_B^\nwarrow    := \left[  B|B \quad\overset{B|\Delta}\longto\quad  B {\left|\frac{B}{B}\right.} \simeq  \frac{B|B}B    \quad\overset{ \frac m B}\longto\quad \frac B B \right] \\
 \sh_B^\nearrow:= \left[ B|B \quad\overset{\Delta | B }\longto\quad {\left.\frac{B}{B}\right|}B \simeq  \frac{B|B}B     \quad\overset{\frac m B}\longto\quad \frac B B \right] \\
 \sh_B^\swarrow  := \left[ B|B \quad\overset{B| \Delta}\longto\quad  B {\left|\frac{B}{B}\right.} \simeq \frac B{B|B}     \quad\overset{\frac B m}\longto\quad \frac B B \right] \\
 \sh_B^\searrow  := \left[ B|B \quad\overset{\Delta | B}\longto\quad  {\left.\frac{B}{B}\right|}B   \simeq \frac B{B|B}  \quad\overset{\frac B m}\longto\quad \frac B B \right] 
\end{gather*}
where the middle isomorphism is a choice of ``interchange isomorphism'' which is part of the data of $\cB$. 
The superscript arrows indicate the direction the exchanged ``$B$'' travels.
\end{notation}

\begin{lemma} \label{lem:reversingashearmap}
For a bialgebra $B$ in a braided monoidal $(\infty,1)$-category,  $\sh_B^\nwarrow$  is invertible if and only if  $\sh_B^\searrow$ is, and $ \sh_B^\nearrow$ is invertible if and only if $ \sh_B^\swarrow$ is. \end{lemma}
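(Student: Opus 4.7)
The plan is to reduce to classical bialgebra theory by passing to the homotopy category. Since invertibility of a morphism in $\cB$ is detected by its image in the homotopy category $\operatorname{h}_1\cB \in \Mon(\Mon(\Cat_{(1,1)}))$, which is a classical braided monoidal category in the sense of~\cite{JoyalStreetMacquarie,JOYALSTREET199320}, and since all four shear maps are natural in $\cB$, it will suffice to prove the lemma assuming $\cB$ is a classical braided monoidal $(1,1)$-category. In this setting, $B$ is Hopf if and only if it admits an antipode.

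For the first equivalence, I will show that both $\sh_B^\nwarrow$ and $\sh_B^\searrow$ are invertible if and only if $B$ is Hopf. The $\sh_B^\searrow$-case is precisely Definition~\ref{def:Hopf}. For $\sh_B^\nwarrow$, given an antipode $S$ of $B$, I will verify by a direct diagram chase (using the antipode, coassociativity, and counit axioms) that the morphism $(x \otimes y) \mapsto (x \cdot S(y_{(1)}) \otimes y_{(2)})$ (in Sweedler notation) is a two-sided inverse to $\sh_B^\nwarrow$. Conversely, given an inverse $T$ to $\sh_B^\nwarrow$, the composite
\[ S := B \xrightarrow{\Unit_B \otimes \id_B} B \otimes B \xrightarrow{T} B \otimes B \xrightarrow{\id_B \otimes \Counit_B} B \]
will be an antipode, verified by a standard computation dual to the usual argument for $\sh_B^\searrow$.

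For the second equivalence, the plan is to relate $\sh_B^\nearrow$ and $\sh_B^\swarrow$ to the first pair applied to the coopposite bialgebra $B^{\mathrm{cop}}$, where $\Delta^{\mathrm{cop}} := \br_{B,B} \circ \Delta$. Unwinding the definitions of the shear maps from Notation~\ref{not:shearmaps} in Sweedler notation, and carefully tracking the braidings appearing in the interchange isomorphisms (see Remark~\ref{rem:EH}), one obtains equalities
\[ \sh_B^\nearrow = \br_{B,B} \circ \sh_{B^{\mathrm{cop}}}^\searrow \qquad \text{and} \qquad \sh_B^\swarrow = \br_{B,B} \circ \sh_{B^{\mathrm{cop}}}^\nwarrow \]
as morphisms $B|B \to B|B$. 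Since $\br_{B,B}$ is invertible, $\sh_B^\nearrow$ is invertible if and only if $\sh_{B^{\mathrm{cop}}}^\searrow$ is, and similarly $\sh_B^\swarrow$ if and only if $\sh_{B^{\mathrm{cop}}}^\nwarrow$ is. Applying the first equivalence (already established) to the bialgebra $B^{\mathrm{cop}}$ then yields $\sh_B^\nearrow$ invertible $\Leftrightarrow$ $\sh_B^\swarrow$ invertible, concluding the proof. The main subtlety in executing this plan will be the careful bookkeeping of braidings hidden in the interchange isomorphisms $\frac{B}{B}|B \simeq \frac{B|B}{B}$, which depend on choices of conventions as discussed in Remark~\ref{rem:EH}; however, since all such auxiliary interchange data is invertible, different choices of convention only contribute invertible pre- or post-composition factors and do not affect the final invertibility conclusions.
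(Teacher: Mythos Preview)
Your proposal is correct and follows exactly the paper's approach: the paper's proof consists entirely of the sentence ``By conservativity of $\cB \to \operatorname{h}_1 \cB$, these conditions can be checked in the homotopy category, where it is a straight-forward exercise,'' and you have supplied a reasonable outline of that exercise (relating both $\sh^\nwarrow$ and $\sh^\searrow$ to existence of an antipode, and reducing the $\nearrow/\swarrow$ pair to the first via the coopposite). Your hedge about the exact form of the braiding identities is appropriate and matches the paper's later Remark, where the identity $\sh_{B^\co} = \br_{B,B}^{-1} \circ \sh_B^\co$ appears with essentially the same content as your claimed formula.
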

\begin{proof} By conservativity of $\cB \to \operatorname{h}_1 \cB$, these conditions can be checked in the homotopy category, where it is a straight-forward exercise. 
\end{proof}
After identifying $A|B \simeq \frac A B$ via a 90 degree rotation (see Remark~\ref{rem:EH}), the shear map $\sh_B^{\searrow}$ becomes the shear map~\eqref{eq:shear}, and hence a bialgebra is Hopf if $\sh_B^{\searrow}$, or equivalently $\sh_B^{\nwarrow}$ are invertible. The remaining case is therefore:

\begin{definition}A bialgebra $B \in \BiAlg(\cB)$ in a braided monoidal $(\infty,1)$-category $\cB \in \Mon(\Mon(\Cat_{(\infty,1)}))$ is \emph{coHopf} if $\sh_B^{\nearrow}$ (or equivalently $\sh_B^{\swarrow}$) is invertible.  We write $\coHopf(\cB) \subset \BiAlg(\cB)$ for the full subcategory on the coHopf algebras.
\end{definition}

\begin{remark}\label{rem:coshear} Identifying $A|B \simeq \frac A B$ as before, these \emph{coshear maps} become
\begin{gather*}
 \sh_B^{\nearrow} = (m_B \otimes \id_B) \circ (\id_B \otimes \br_{B,B}) \circ (\Delta_B \otimes \id_B) =: \sh_B^{\co}.
\\
 \sh_B^{\swarrow} = ( \id_B \otimes m_B) \circ (\br_{B,B} \otimes \id_B) \circ (\id_B \otimes \Delta_B),
\end{gather*}
where $\br_{B,B}$ is defined in Remark~\ref{rem:visualizebraided}.
\end{remark}

\begin{remark}
Let $(-)^{\vop}$ and $(-)^{\hop}$ denote the automorphisms of $\Mon(\Mon(\Cat_{(\infty,1)}))$ reversing the vertical, resp.\ horizontal monoidal directions. Then, the equivalence~\eqref{eq:algmop} induces canonical equivalences $\BiAlg(\cB) \simeq \BiAlg(\cB^{\vop}) \simeq \BiAlg(\cB^{\hop}) \simeq \BiAlg(\cB^{\vop, \hop})$. Since vertical and horizontal reflection exchanges shear maps, these equivalences restrict to equivalences between the subcategories
\[ \Hopf(\cB) \simeq \coHopf(\cB^{\vop}) \simeq \coHopf(\cB^{\hop}) \simeq \Hopf(\cB^{\vop \hop}).
\]
In particular, a bialgebra $B \in \BiAlg(\cB)$ is coHopf if and only if the corresponding bialgebra in $\BiAlg(\cB^{\hop})$ is Hopf and vice versa.

For $\cB$ a braided monoidal $(1,1)$-category in the classical sense, we may identify $\cB^{\vop}$ with its \emph{monoidal opposite}  $\cB^{\mop}$ and $\cB^{\hop}$ with $\cB^{\rev}$, defined as the same monoidal category equipped with the inverse braiding.
In these terms, the equivalence  $\BiAlg(\cB) \simeq \BiAlg(\cB^{\rev})$ sends a bialgebra $B$ to the bialgebra  $B^{\co}$ with the same multiplication $m_B: B \otimes B\to B$ but with comultiplication $\br_{B, B} \circ \Delta_B$. This will indeed satisfy the bialgebra axiom for the reversed braiding. Moreover,  $\sh_{B^\co} = \br_{B,B}^{-1} \circ \sh_B^\co$ and hence $B^{\co}$ will be coHopf if and only if $B$ is Hopf and vice versa. 
\end{remark}

\begin{remark}
Another straightforward exercise\footnote{The inverse to $\sh_{B}^\co$ is $\bigl(\id_B \otimes (m_B\br_{B,B}^{-1})\bigr) \circ \bigl(\br_{B,B}^{-1} \otimes \id_B\bigr) \circ \bigl(\id_B \otimes (\br_{B,B}^{-1}\Delta_B)\bigr)$.}: if $B$ is Hopf, then it is also coHopf if and only its antipode $S_B$ is invertible.  In particular, since already in $\cat{Vec}$ there do exist Hopf algebras with noninvertible antipode \cite{MR292876,MR1761130}, the Hopf and coHopf axioms are not equivalent: $$\Hopf(\cB) \neq \coHopf(\cB).$$
\end{remark}

\begin{definition}\label{def:strictshear} Recall the graphical calculus of~\S\ref{subsec:otimesstrict}, and its implementation for $\Mnd \otimes^{\strict} \Mnd$ in
Remark~\ref{rem:notationforbimnd}.\footnote{In the diagrams in Remark~\ref{rem:notationforbimnd}, we grayed out some cells in order to focus on the quotient $\Mnd \owedge^{\strict} \Mnd$. Here we care about the unquotiented $\Mnd \otimes^{\strict} \Mnd$, so we will not gray anything and we will not emphasize the intersections with bullets.} In this notation, the \define{strict universal shear map} is the following $3$-morphism in the  strict $4$-category $\Mnd \otimes^{\strict} \Mnd$:
\begin{equation}\label{eq:shearpicture}
\widetilde{\sh}:= 
\qquad
\begin{tikzpicture}[baseline=(baseline)]
  \path (0,0) coordinate (middle) +(0,-.125) coordinate (baseline)
  +(-1,+1) coordinate (nw) 
  +(+,+1) coordinate (ne) 
  +(-1,-1) coordinate (a) 
  +(-.25,-1) coordinate (b) 
  +(+.25,-1) coordinate (c) 
  +(+1,-1) coordinate (d) 
  ;
  \path (middle)
  +(-.25,-.25) node[draw,circle,inner sep=0.5pt] (mw) {$\scriptstyle m$}
  +(+.5,+.5) node[draw,circle,inner sep=0.5pt] (me) {$\scriptstyle m$}
  ;
  \draw[string] (mw) -- (nw);
  \draw[string] (me) -- (ne);
  \draw[string] (a) .. controls +(.25,1) and +(-1,-.5) .. (me);
  \draw[string] (c) .. controls +(.25,1) and +(.25,-.5) .. (me);
  \draw[string] (b) .. controls +(-.25,.25) and +(-.125,-.25) .. (mw);
  \draw[string] (d) .. controls +(-.25,.25) and +(+1,-.25) .. (mw);
\end{tikzpicture}
\Rrightarrow
\begin{tikzpicture}[baseline=(baseline)]
  \path (0,0) coordinate (middle) +(0,-.125) coordinate (baseline)
  +(-1,+1) coordinate (nw) 
  +(+,+1) coordinate (ne) 
  +(-1,-1) coordinate (a) 
  +(-.25,-1) coordinate (b) 
  +(+.25,-1) coordinate (c) 
  +(+1,-1) coordinate (d) 
  ;
  \path (middle)
  +(-.5,+.5) node[draw,circle,inner sep=0.5pt] (mw) {$\scriptstyle m$}
  +(+.5,+.5) node[draw,circle,inner sep=0.5pt] (me) {$\scriptstyle m$}
  ;
  \draw[string] (mw) -- (nw);
  \draw[string] (me) -- (ne);
  \draw[string] (a) .. controls +(.25,1) and +(-1,-.5) .. (me);
  \draw[string] (c) .. controls +(.25,1) and +(.25,-.5) .. (me);
  \draw[string] (b) .. controls +(-.25,1) and +(-.25,-.5) .. (mw);
  \draw[string] (d) .. controls +(-.25,1) and +(+1,-.5) .. (mw);
\end{tikzpicture}
\Rrightarrow
\begin{tikzpicture}[baseline=(baseline)]
  \path (0,0) coordinate (middle) +(0,-.125) coordinate (baseline)
  +(-1,+1) coordinate (nw) 
  +(+,+1) coordinate (ne) 
  +(-1,-1) coordinate (a) 
  +(-.25,-1) coordinate (b) 
  +(+.25,-1) coordinate (c) 
  +(+1,-1) coordinate (d) 
  ;
  \path (middle)
  +(-.5,+.5) node[draw,circle,inner sep=0.5pt] (mw) {$\scriptstyle m$}
  +(+.25,-.25) node[draw,circle,inner sep=0.5pt] (me) {$\scriptstyle m$}
  ;
  \draw[string] (mw) -- (nw);
  \draw[string] (me) -- (ne);
  \draw[string] (a) .. controls +(.25,.25) and +(-1,-.25) .. (me);
  \draw[string] (c) .. controls +(.25,.25) and +(.125,-.25) .. (me);
  \draw[string] (b) .. controls +(-.25,1) and +(-.25,-.5) .. (mw);
  \draw[string] (d) .. controls +(-.25,1) and +(+1,-.5) .. (mw);
\end{tikzpicture}
\end{equation}
\end{definition}

At the time of writing, it is unknown whether the gauntification $\Mnd \otimes \Mnd \to \Mnd \otimes^{\strict} \Mnd$ is an equivalence (see Warning~\ref{warn:GrayGaunt}). We will therefore now construct ``by hand'' a lift of this universal shear map  to a 3-morphism in the (possibly weak) $(\infty,4)$-category $\Mnd \otimes \Mnd$.

\begin{construction}
\label{cons:universalshear}
Let $\OO^2$ denote Street's second oriental, i.e.\ the strict $2$-category generated by the following $0$-, $1$- and $2$-morphisms:
\begin{equation}\label{eq:oriental}
\OO^2 := \quad 
\begin{tikzcd}
\bullet  &  \arrow[d, Leftarrow, shorten >=4pt] & \bullet \arrow[ll] \arrow[dl]  \\
& \bullet \arrow[ul] 
\end{tikzcd}
\end{equation}
There is a strict $2$-functor $\OO^2 \to \Mnd$ which sends the three generating $1$-cells of $\OO^2$ to the single generating $1$-cell of $\Mnd$ and sends the generating $2$-cell to the ``multiplication'' in $\Mnd$. Anticipating this $2$-functor, we graphically denote the generator of $\OO^2$ by
\[ 
\begin{tikzpicture}
  \path (0,0) node[dot] (m) {}  +(0,-.125) coordinate (baseline)
  +(0,+.75) coordinate (n)
  +(-.5,-1) coordinate (sw)
  +(+.5,-1) coordinate (se)
  ;
  \path (m)
  +(0,-.5) node[dot, gray!50!white] (b) {}
  +(-.75,+.25) node[dot, gray!50!white] (c) {}
  +(+.75,+.25) node[dot, gray!50!white] (a) {}
  ;
  \draw[grayarrow] (a) -- (b);
  \draw[grayarrow] (b) -- (c);
  \draw[grayarrow] (a) -- (c);
  \draw[string] (m) -- (n);
  \draw[string] (se) .. controls +(0,.5) and +(+.5,-.5) .. (m);
  \draw[string] (sw) .. controls +(0,.5) and +(-.5,-.5) .. (m);
  \path (m) node[gray!50!white] {$\Uparrow$};
\end{tikzpicture}
\] 
In particular, it is immediate that the strict universal shear map in $\Mnd\otimes^{\strict}\Mnd$ from Definition~\ref{def:strictshear} lifts through $\OO^2 \otimes^{\strict} \OO^2$. Since $\OO^2$ is in the idempotent completion of Campion's category of Gray cubes~$\fancysquare$ (indeed, it is a retract of $\globe_1 \otimes^{\strict} \globe_1$), it follows by the construction in~\cite{2311.00205} that the comparison functor $\OO^2 \otimes \OO^2 \to \OO^2 \otimes^{\strict} \OO^2$ is an equivalence. Since gauntification is natural and hence the diagram 
\[\begin{tikzcd}
\OO^2 \otimes \OO^2 \arrow[r, "\simeq"] \arrow[d] & \OO^2\otimes^{\strict} \OO^2 \arrow[d] \\ \Mnd \otimes \Mnd \arrow[r] & \Mnd \otimes^{\strict} \Mnd
\end{tikzcd}
\] commutes, this lifts the shear map from Definition~\ref{def:strictshear} to a $3$-morphism in $\Mnd \otimes \Mnd$, which we call the \emph{universal shear map}. We will also refer to its image in $L_3(\Mnd \otimes \Mnd)$ and $L_3(\Mnd \owedge \Mnd)$ as the universal shear map. 
\end{construction}

\begin{lemma}\label{lemma:universalshearrepresents}
Let $\cC$ be a pointed $(\infty,3)$-category and $B$ a bialgebra in $\Omega^2\cC$, identified via Corollary~\ref{cor:mndsquared} with a pointed functor $L_3(\Mnd \owedge \Mnd) \to[B] \cC$. Then the composition $\Mnd \otimes \Mnd \to \Mnd \owedge \Mnd \to L_3(\Mnd \owedge \Mnd) \to[B] \cC$ sends the universal shear map from Construction~\ref{cons:universalshear} to the coshear map $\sh_B^\nearrow$ from Notation~\ref{not:shearmaps} and Remark~\ref{rem:coshear}.
\end{lemma}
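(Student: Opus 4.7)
The plan is to directly compute the image of $\widetilde{\sh}$ under the composite $\Mnd \otimes \Mnd \to \Mnd \owedge \Mnd \to L_3(\Mnd \owedge \Mnd) \to[B] \cC$, landing in the braided monoidal $(\infty,1)$-category $\Omega^2 \cC$, and identify the answer with $\sh_B^{\nearrow}$. Because $\widetilde{\sh}$ is by construction lifted from $\OO^2 \otimes \OO^2$, which sits in the idempotent completion of Campion's Gray cube category $\fancysquare$, we may --- by \cite[Theorem~A]{2311.00205} --- work throughout with the strict representative~\eqref{eq:shearpicture} in $\Mnd \otimes^{\strict} \Mnd$ and its corresponding lax string-diagram calculus from~\S\ref{subsec:otimesstrict}.

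The first step is to render the equivalence of Corollary~\ref{cor:mndsquared} concrete on generators by unwinding the proofs of Proposition~\ref{prop:monadalgebra} and Lemma~\ref{lem:loopcoalg}: the underlying object $B$ corresponds to the crossing $A \otimes A$ of~\eqref{eq:underlyingobject}; the algebra multiplication $m_B$ (in the horizontal monoidal structure, from the first $\Mnd$-factor) corresponds to $m \otimes A$ as drawn in~\eqref{eq:underlyingmultiplication}; the comultiplication $\Delta_B$ (in the vertical monoidal structure, from the second $\Mnd$-factor) corresponds to $A \otimes m$ as drawn in~\eqref{eq:underlyingcomultiplication}; and the braiding $\br_{B,B}$ is recovered from the Eckmann--Hilton identification of the two induced monoidal structures on $\Omega^2 \cC$, fixed by our convention in Remark~\ref{rem:EH} of rotating through the upper-left quadrant.

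The second step is to decompose $\widetilde{\sh}$ into its two constituent $3$-morphisms from~\eqref{eq:shearpicture} and evaluate them under $B$. The first $3$-morphism moves the second-factor $2$-cell $m_w$, which represents $\Delta_B$, from its lower-left to its upper-left position, traversing a first-factor strand; by the functoriality of $\otimes$ and the identifications above, its image in $\Omega^2 \cC$ realizes the whiskering $(\Delta_B \otimes \id_B) \circ (\id_B \otimes \br_{B,B})$, where the braiding factor arises from the traversed crossing. The second $3$-morphism similarly moves the first-factor $2$-cell $m_e$, which represents $m_B$, from its upper-right to its lower-right position and realizes the whiskering $(m_B \otimes \id_B)$. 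Composing and reassembling via the Eckmann--Hilton identification yields $(m_B \otimes \id_B) \circ (\id_B \otimes \br_{B,B}) \circ (\Delta_B \otimes \id_B) = \sh_B^{\nearrow}$.

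The main obstacle is diagrammatic bookkeeping: one must carefully track which $\Mnd$-factor each $m$-cell lives in, which direction each wire runs (per the orientation convention of~\S\ref{subsec:otimesstrict} that the first factor runs SW-to-NE and the second SE-to-NW), and how the crossing in $\Mnd \otimes \Mnd$ traversed during the first $3$-morphism produces precisely the braiding $\br_{B,B}$ --- rather than its inverse or a composition with powers of the twist --- needed for the $\nearrow$ variant of the coshear rather than for $\sh^{\nwarrow}$, $\sh^{\swarrow}$, or $\sh^{\searrow}$. A useful sanity check is that restricting the computation to the case where $\cC$ is a strict $3$-category recovers the strict analogue due to Hadzihasanovic~\cite{1709.08086, 2101.10361}.
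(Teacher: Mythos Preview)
Your overall strategy coincides with the paper's: use the dictionary of Remark~\ref{rem:notationforbimnd} (crossings $\leftrightarrow B$, the first-factor $m$ $\leftrightarrow m_B$, the second-factor $m$ $\leftrightarrow \Delta_B$) to read off the image of $\widetilde{\sh}$. The paper executes this by noting that the dictionary amounts to a $45^\circ$ rotation aligning multiplication with the horizontal axis and comultiplication with the vertical axis, after which the three pictures of~\eqref{eq:shearpicture} become $B|B \overset{\Delta|B}\to \frac{B}{B}|B \simeq \frac{B|B}{B} \overset{\frac{m}{B}}\to \frac{B}{B}$ in the two-monoidal notation of Notation~\ref{not:shearmaps}, i.e.\ exactly $\sh^\nearrow$.

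The gap is in your decomposition of the two constituent $3$-morphisms. The first $3$-morphism is a single generating $3$-cell of type $A \otimes m$ --- the second-factor cell $m_w$ sliding northwest across one first-factor strand --- and under the dictionary this is simply $\Delta | B$; nothing further is ``traversed,'' and no braiding is produced at this step. Your asserted formula $(\Delta_B \otimes \id_B) \circ (\id_B \otimes \br_{B,B})$ does not even type-check: the right-hand factor has three tensor inputs while the source of the first $3$-morphism corresponds to $B \otimes B$. The braiding in the single-$\otimes$ formula $\sh_B^{\nearrow} = (m_B \otimes \id_B) \circ (\id_B \otimes \br_{B,B}) \circ (\Delta_B \otimes \id_B)$ of Remark~\ref{rem:coshear} enters only when one converts the interchange isomorphism $\frac{B}{B}|B \simeq \frac{B|B}{B}$, sitting \emph{between} the two $3$-morphisms, into single-$\otimes$ language via the Eckmann--Hilton identification. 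Your closing phrase ``reassembling via the Eckmann--Hilton identification'' is therefore doing the actual work that your decomposition claimed to have already done. The clean fix is either to stay in the two-monoidal notation throughout (as the paper does), or to record explicitly that the first $3$-morphism gives $\Delta | B$, the second gives $\frac{m}{B}$, and the braiding is precisely the intermediate interchange.
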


\begin{proof}
For better readability, we follow the notation of Remark~\ref{rem:notationforbimnd} and gray out the morphisms in $\widetilde{\sh}$ that become trivial when mapped to the quotient $\Mnd \owedge \Mnd$ and add a bullet to emphasize the (nontrivial) crossing. 
\begin{equation}
\widetilde{\sh}:= 
\qquad
\begin{tikzpicture}[baseline=(baseline)]
  \path (0,0) coordinate (middle) +(0,-.125) coordinate (baseline)
  +(-1,+1) coordinate (nw) 
  +(+,+1) coordinate (ne) 
  +(-1,-1) coordinate (a) 
  +(-.25,-1) coordinate (b) 
  +(+.25,-1) coordinate (c) 
  +(+1,-1) coordinate (d) 
  ;
  \path (middle)
  +(-.25,-.25) node[draw,circle,inner sep=0.5pt,gray!50!white] (mw) {$\scriptstyle m$}
  +(+.5,+.5) node[draw,circle,inner sep=0.5pt,gray!50!white] (me) {$\scriptstyle m$}
  ;
  \draw[string,gray!50!white, name path=X] (mw) -- (nw);
  \draw[string,gray!50!white] (me) -- (ne);
  \draw[string,gray!50!white, name path=Y] (a) .. controls +(.25,1) and +(-1,-.5) .. (me);
  \draw[string,gray!50!white, name path=Z] (c) .. controls +(.25,1) and +(.25,-.5) .. (me);
  \draw[string,gray!50!white] (b) .. controls +(-.25,.25) and +(-.125,-.25) .. (mw);
  \draw[string,gray!50!white, name path=W] (d) .. controls +(-.25,.25) and +(+1,-.25) .. (mw);
     \path[name intersections={of=X and Y}] (intersection-1) node[dot]{};
    \path[name intersections={of=Z and W}] (intersection-1) node[dot]{};  
\end{tikzpicture}
\Rrightarrow
\begin{tikzpicture}[baseline=(baseline)]
  \path (0,0) coordinate (middle) +(0,-.125) coordinate (baseline)
  +(-1,+1) coordinate (nw) 
  +(+,+1) coordinate (ne) 
  +(-1,-1) coordinate (a) 
  +(-.25,-1) coordinate (b) 
  +(+.25,-1) coordinate (c) 
  +(+1,-1) coordinate (d) 
  ;
  \path (middle)
  +(-.5,+.5) node[draw,circle,inner sep=0.5pt,gray!50!white] (mw) {$\scriptstyle m$}
  +(+.5,+.5) node[draw,circle,inner sep=0.5pt,gray!50!white] (me) {$\scriptstyle m$}
  ;
  \draw[string,gray!50!white] (mw) -- (nw);
  \draw[string,gray!50!white] (me) -- (ne);
  \draw[string,gray!50!white, name path=X] (a) .. controls +(.25,1) and +(-1,-.5) .. (me);
  \draw[string,gray!50!white, name path=Y] (c) .. controls +(.25,1) and +(.25,-.5) .. (me);
  \draw[string,gray!50!white, name path=Z] (b) .. controls +(-.25,1) and +(-.25,-.5) .. (mw);
  \draw[string,gray!50!white, name path=W] (d) .. controls +(-.25,1) and +(+1,-.5) .. (mw);
      \path[name intersections={of=X and W}] (intersection-1) node[dot]{};
    \path[name intersections={of=Y and W}] (intersection-1) node[dot]{};
    \path[name intersections={of=X and Z}] (intersection-1) node[dot]{};
\end{tikzpicture}
\Rrightarrow
\begin{tikzpicture}[baseline=(baseline)]
  \path (0,0) coordinate (middle) +(0,-.125) coordinate (baseline)
  +(-1,+1) coordinate (nw) 
  +(+,+1) coordinate (ne) 
  +(-1,-1) coordinate (a) 
  +(-.25,-1) coordinate (b) 
  +(+.25,-1) coordinate (c) 
  +(+1,-1) coordinate (d) 
  ;
  \path (middle)
  +(-.5,+.5) node[draw,circle,inner sep=0.5pt,gray!50!white] (mw) {$\scriptstyle m$}
  +(+.25,-.25) node[draw,circle,inner sep=0.5pt,gray!50!white] (me) {$\scriptstyle m$}
  ;
  \draw[string,gray!50!white] (mw) -- (nw);
  \draw[string,gray!50!white, name path=W] (me) -- (ne);
  \draw[string,gray!50!white, name path=Y] (a) .. controls +(.25,.25) and +(-1,-.25) .. (me);
  \draw[string,gray!50!white] (c) .. controls +(.25,.25) and +(.125,-.25) .. (me);
  \draw[string,gray!50!white, name path=X] (b) .. controls +(-.25,1) and +(-.25,-.5) .. (mw);
  \draw[string,gray!50!white, name path=Z] (d) .. controls +(-.25,1) and +(+1,-.5) .. (mw);
    \path[name intersections={of=X and Y}] (intersection-1) node[dot]{};
    \path[name intersections={of=Z and W}] (intersection-1) node[dot]{};
\end{tikzpicture}
\end{equation}

 We can now unpack these pictures in terms of the dictionary from Remark~\ref{rem:notationforbimnd}: the solid black bullets at the crossings correspond to the underlying object of the walking bialgebra $B$, and the comultiplication $\Delta_B$ and multiplication $m_B$ correspond respectively to the moves that split a bullet into two or merge two bullets into one, as in (\ref{eq:underlyingmultiplication},\ref{eq:underlyingcomultiplication}).
 As explained there, in terms of these pictures,  the equivalence  $\Funoplax_*(\Mnd \otimes \Mnd, \cC) \simeq \BiAlg(\Omega^2 \cC)$ implements a ``$45^\circ$-rotation to the left,'' aligning the comultiplication of the coalgebra with the vertical axis and the multiplication of the algebra with the horizontal axis. After this ``rotation'', comparing with Notation~\ref{not:shearmaps},  the image of $\widetilde{\sh}$ becomes $\sh^{\nearrow}$. 
\end{proof}

The core of the proof of our main Theorem~\ref{thm:mainHopftheorem} is the following remarkable result:

\begin{theorem} \label{thm:invertibleshear}Under $\Mnd \otimes \Mnd \to \Adj \otimes \Adj \to L_3(\Adj \otimes \Adj)$, the universal shear $3$-morphism from Construction~\ref{cons:universalshear} is sent to an invertible $3$-morphism. 
\end{theorem}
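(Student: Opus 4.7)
My plan is to exhibit a candidate inverse $3$-morphism in $L_3(\Adj\otimes\Adj)$ to the image of $\widetilde{\sh}$ and verify it actually inverts, making essential use of the adjunctibility structure present in $\Adj\otimes\Adj$ but absent from $\Mnd\otimes\Mnd$. First, I would unpack the source and target of $\widetilde{\sh}$ concretely: by Construction~\ref{cons:universalshear}, the universal shear factors through $\OO^2\otimes\OO^2$ and decomposes as the composite of two elementary ``sliding'' $3$-cells, each pulling one multiplication $2$-cell past a $1$-strand from the opposite factor. These are instances of the canonical noninvertible $3$-cells of the form $m\otimes A$ and $A\otimes m$ produced by the lax tensor product, and each remains noninvertible even in $L_3(\Mnd\otimes\Mnd)$, so the inversion must come entirely from the extra structure available after restricting to $\Adj$.

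Next, under $\Mnd\hookrightarrow\Adj$ the generator $A:\bullet\to\bullet$ is sent to $rl$ and the monad multiplication to the whiskered counit $r\epsilon l$, so that in $\Adj\otimes\Adj$ the constituents of $\widetilde{\sh}$ factor through the four kinds of crossings $r\otimes l$, $l\otimes r$, $l\otimes l$, $r\otimes r$. By Corollary~\ref{cor:adjunctibilitytop} applied in each factor separately, each of these has adjunctibility properties (invertible fillers in $L_3$ for $r\otimes l$ by Corollary~\ref{cor:L3AdjAdj}, horizontal/vertical adjoint fillers for the others by Proposition~\ref{prop:ClaudiaTheo} and the mates calculus of \S\ref{subsec:mates}), and the relevant crossings are mutually mated via Lemma~\ref{lem:adjointsofmates}. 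The candidate inverse is then constructed by the geometrically natural recipe: starting from the target of $\widetilde{\sh}$, one inserts a zigzag $\id\To rl\to\ldots\to lr\To\id$ on an appropriate strand using the $l\dashv r$ adjunction data, applies the (now invertible) inverse crossing filler coming from Corollary~\ref{cor:L3AdjAdj}, and contracts using a triangle identity; this assembles into a $3$-morphism back to the source of $\widetilde{\sh}$.

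The main obstacle, and the place that genuinely forces the $3$-localization hypothesis, is verifying that the two composites equal identity $3$-morphisms in $L_3(\Adj\otimes\Adj)$. A direct diagrammatic chase in $\Adj\otimes\Adj$ itself leaves over a pasting of $4$-morphisms built from naturality squares of adjunction data, the interchange $4$-cells $\alpha\otimes\beta$ for the $2$-morphisms $\alpha,\beta$ appearing in the shear, and coherence cells of the adjunction triangle identities. None of these are invertible in $\Adj\otimes\Adj$ in general --- matching the paper's remark that no analogous statement has been found in $(\infty,4)$-categories --- but each is sent to an identity by $L_3$. The plan is therefore to reduce the verification of both $\widetilde{\sh}^{-1}\circ\widetilde{\sh}\simeq\id$ and $\widetilde{\sh}\circ\widetilde{\sh}^{-1}\simeq\id$ to the $L_3$-vanishing of a finite explicit list of such $4$-cells, each recognizable as an iterated composite $\alpha\otimes\beta$ of $2$-morphisms from $\Adj$ and hence collapsed by the $3$-localization. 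The main technical labor will be marshalling this list systematically and checking it exhausts the discrepancy; I expect it to be cleanest to perform this reduction on the $\OO^2\otimes\OO^2$ lift provided by Construction~\ref{cons:universalshear}, where the bookkeeping of generators is finite.
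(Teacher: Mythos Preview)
Your approach differs substantially from the paper's, and while it might be made to work, it misses the key simplifying insight and leaves the hardest step unspecified.

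The paper does \emph{not} construct an explicit inverse to $\widetilde{\sh}$. Instead, it shows that (after a whiskering by an invertible $2$-morphism) the image of $\widetilde{\sh}$ in $L_3(\Adj\otimes\Adj)$ decomposes as a composite of $3$-morphisms each of which is already invertible there. The decomposition is carried out in an auxiliary strict category $e\OO^2\otimes e\OO^2$, where $e\OO^2$ is $\OO^2$ whiskered by two extra $1$-cells; the map $e\OO^2\to\Adj$ sends the generating $2$-cell to the counit $\ev$. Since $e\OO^2$ (like $\OO^2$) lies in the idempotent completion of $\fancysquare$, the weak and strict tensor products agree and one can compute by explicit pasting. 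The resulting big diagram has three kinds of constituents: squares that commute on the nose; a single hexagon filled by a genuine $4$-cell (the interchange $\alpha\otimes\beta$ for the two $2$-cells), collapsed by $L_3$; and $3$-cells labeled ``$R$'' and ``$L$'' which are the elementary sliding cells $\globe_3\to\globe_1\otimes\globe_2\xrightarrow{r\otimes\alpha}\Adj\otimes\Adj$ and $\globe_3\to\globe_2\otimes\globe_1\xrightarrow{\alpha\otimes l}\Adj\otimes\Adj$.

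The crucial step you are missing is the application of Corollary~\ref{cor:adjunctibilitytop} at the level of \emph{$3$-cells} (the case $n=2$), not at the level of $2$-cell fillers as you propose. That corollary says precisely that these sliding $3$-cells are right (resp.\ left) adjoints in $\Adj\otimes\Adj$; since adjunctible top-dimensional morphisms in an $(\infty,3)$-category are invertible, they become invertible in $L_3$. This single observation replaces your entire programme of building a candidate inverse and chasing cancellations. Your ``geometrically natural recipe'' for the inverse is not made precise, and the promised reduction of both round-trip composites to a finite list of collapsible $4$-cells is exactly the hard part --- the paper sidesteps it entirely. One further technical point: working on the $\OO^2\otimes\OO^2$ lift, as you suggest, does not see the adjunction structure of $\Adj$; the passage through $e\OO^2$ is what allows a strict computation that nonetheless captures the counit data.
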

\begin{proof}
Since we currently do not know whether $\Mnd \otimes \Mnd$ and $\Adj \otimes \Adj$ are strict, we have to resort to a similar trick as in Construction~\ref{cons:universalshear}.
Let $\OO^2$ denote the strict $2$-category~\eqref{eq:oriental} and $e\OO^2$ its whiskering with two $1$-cells: 
\begin{equation*} 
e\OO^2 := \quad
\begin{tikzcd}
\bullet&  \arrow[l]\bullet &\arrow[d, Leftarrow, shorten >=4pt]&  \arrow[ll] \arrow[dl] \bullet &  \arrow[l]\bullet\\
&& \bullet \arrow[ul] 
\end{tikzcd}
\end{equation*}
Construct the following commutative diagram
\begin{equation}\label{eqn:commutativesquareoforientals}
\begin{tikzcd}
 \OO^2  \arrow[d]\arrow[r] & e \OO^2 \arrow[d] \\
 \Mnd \arrow[r] & \Adj
\end{tikzcd}
\end{equation}
of strict $2$-categories:
\begin{itemize}
  \item The strict $2$-functor $\Mnd \to \Adj$ is the standard inclusion.
  \item The  strict $2$-functor $\OO^2 \to e\OO^2$ sends the $2$-arrow in $\OO^2$ to its obvious whiskering; in particular, it sends the  ``long'' $1$-arrow to the composite of the top path of $1$-arrows, and it sends the short $1$-arrows to the 
 composite of the left, resp.\ right half of the bottom path of $1$-arrows. 
 \item The strict $2$-functor $\OO^2 \to \Mnd$ sends all generating $1$-cells of $\OO^2$ to the generating $1$-cell of $\Mnd$ and sends the generating $2$-cell to the ``multiplication'' in $\Mnd$.
 \item The strict $2$-functor $e\OO^2 \to \Adj$ sends the generating cells to the following morphisms in $\Adj$ (where~$l$ and~$r$ denote the generating left and right adjoints, and~$\ev$ denotes the counit of the adjunction):
\[
\begin{tikzcd}
\bullet\arrow[r, leftarrow,"r" description] & \bullet \arrow[dr, leftarrow,"l" description]  \arrow[rr, equal] &\arrow[d, Leftarrow, "\ev" description]& \bullet  \arrow[r,leftarrow, "l" description] & \bullet\\
&& \bullet \arrow[ur, leftarrow,"r" description] 
\end{tikzcd}
\]
\end{itemize}
The commutative square \eqref{eqn:commutativesquareoforientals} induces a commutative diagram of $(\infty,\infty)$-categories:
\[\begin{tikzcd}
 \OO^2 \otimes \OO^2 \arrow[d] \arrow[r] & e \OO^2 \otimes e\OO^2 \arrow[d] \\
 \Mnd \otimes \Mnd \arrow[r] & \Adj \otimes \Adj
\end{tikzcd}.
\]
By definition, the universal shear $3$-morphism lifts through $\OO^2 \otimes \OO^2$ and hence its image in $\Adj \otimes \Adj$ lifts through $e\OO^2 \otimes e\OO^2$.
Both $\OO^2$ and $e\OO^2$ are in the idempotent completion of Campion's category~$\fancysquare$,
and so by~\cite{2311.00205} the comparison functors $\OO^2 \otimes \OO^2 \to \OO^2 \otimes^{\strict} \OO^2$ and $e\OO^2 \otimes e\OO^2 \to e\OO^2 \otimes^{\strict} e\OO^2$ to the strict tensor products are equivalences; thus we may compute with the strict Gray tensor product.

For the remainder of the proof, we switch notation to the graphical calculus for~$\otimes^\strict$ described in~\S\ref{subsec:otimesstrict}. Our notation for the generators of $\OO^2$ and $e\OO^2$ anticipates  the functors 
$\OO^2 \to \Mnd$ and $e\OO^2 \to \Adj$; in particular, 
we will suppress the labels for objects (they can always be worked out from context). 
We denote the generators of $\OO^2$ and $e\OO^2$ as follows:
\[ 
\begin{tikzpicture}[baseline=(baseline)]
  \path (0,0) node[dot] (m) {}  +(0,-.25) coordinate (baseline)
  +(0,+.75) coordinate (n)
  +(-.5,-1) coordinate (sw)
  +(+.5,-1) coordinate (se)
  ;
  \path (m)
  +(0,-.5) node[dot, gray!50!white] (b) {}
  +(-.75,+.25) node[dot, gray!50!white] (c) {}
  +(+.75,+.25) node[dot, gray!50!white] (a) {}
  ;
  \draw[grayarrow] (a) -- (b);
  \draw[grayarrow] (b) -- (c);
  \draw[grayarrow] (a) -- (c);
  \draw[string] (m) -- (n);
  \draw[string] (se) .. controls +(0,.5) and +(+.5,-.5) .. (m);
  \draw[string] (sw) .. controls +(0,.5) and +(-.5,-.5) .. (m);
  \path (m) node[gray!50!white] {$\Uparrow$};
\end{tikzpicture}
,\qquad
\begin{tikzpicture}[baseline=(baseline)]
  \path (0,0) node[dot] (m) {}  +(0,-.25) coordinate (baseline)
  +(0,+.75) coordinate (n)
  +(-.5,-1) coordinate (sw)
  +(+.5,-1) coordinate (se)
  +(-1.125,-1) coordinate (farsw)
  +(-1.125,+.75) coordinate (farnw)
  +(+1.125,-1) coordinate (farse)
  +(+1.125,+.75) coordinate (farne)
  ;
  \path (m)
  +(0,-.5) node[dot, gray!50!white] (b) {}
  +(-.75,+.25) node[dot, gray!50!white] (c) {}
  +(+.75,+.25) node[dot, gray!50!white] (a) {}
  +(+1.5,+.25) node[dot, gray!50!white] (z) {}
  +(-1.5,+.25) node[dot, gray!50!white] (d) {}
  ;
  \draw[grayarrow] (a) -- (b);
  \draw[grayarrow] (b) -- (c);
  \draw[grayarrow] (a) -- (c);
  \draw[string,dashed] (m) -- (n);
  \draw[string, arrow data={.5}{>}] (se) .. controls +(0,.5) and +(+.5,-.5) .. (m);
  \draw[string, arrow data={.5}{<}] (sw) .. controls +(0,.5) and +(-.5,-.5) .. (m);
  \path (m) node[gray!50!white] {$\Uparrow$};
  \draw[string, arrow data={.5}{<}] (farse) -- (farne);
  \draw[string, arrow data={.5}{>}] (farsw) -- (farnw);
  \draw[grayarrow] (z) -- (a);
  \draw[grayarrow] (c) -- (d);
\end{tikzpicture}
\]

Recall that the universal shear map lifts to a $3$-morphism  $\widetilde{\sh}$ in $\OO^2 \otimes \OO^2 \simeq \OO^2 \otimes^{\strict} \OO^2$ which in the graphical calculus is defined as~\eqref{eq:shearpicture}. 
Under the functor $\OO^2 \otimes \OO^2 \to e\OO^2 \otimes e\OO^2$, this $3$-morphism $\widetilde{\sh}$ is mapped to the following composite (where each of the $3$-morphism below is a composite of two generating interchange $3$-morphisms):
\begin{equation}\label{eq:imagesh}
\begin{tikzpicture}[baseline=(baseline)]
  \path (0,0) coordinate (middle) +(0,-.125) coordinate (baseline)
  +(-1,+1) coordinate (nw) +(-1.25,+1) coordinate (nwp) +(-.75,+1) coordinate (nwq)
  +(+,+1) coordinate (ne) +(+1.25,+1) coordinate (nep) +(+.75,+1) coordinate (neq)
  +(-1,-1) coordinate (a) +(-1.25,-1) coordinate (ap)
  +(-.25,-1) coordinate (b) +(-.5,-1) coordinate (bp)
  +(+.25,-1) coordinate (c) +(.5,-1) coordinate (cp)
  +(+1,-1) coordinate (d) +(+1.25,-1) coordinate (dp)
  ;
  \path (middle)
  +(-.25,-.25) node[dot] (mw) {}
  +(+.5,+.5) node[dot] (me) {}
  ;
  \draw[string,dashed] (mw) -- (nw);
  \draw[string,dashed] (me) -- (ne);
  \draw[string, arrow data={.5}{<}] (a) .. controls +(.25,1) and +(-1,-.5) .. (me);
  \draw[string, arrow data={.5}{>}] (c) .. controls +(.25,1) and +(.25,-.5) .. (me);
  \draw[string, arrow data={.5}{<}] (b) .. controls +(-.25,.25) and +(-.125,-.25) .. (mw);
  \draw[string, arrow data={.5}{>}] (d) .. controls +(-.25,.25) and +(+1,-.25) .. (mw);
  \draw[string, arrow data={.6}{>}] (ap) .. controls +(.25,1.5) and +(-.5,-.5) .. (neq);
  \draw[string, arrow data={.6}{>}] (bp) .. controls +(-.5,.5) and +(+.5,-.5) .. (nwp);
  \draw[string, arrow data={.55}{<}] (dp) .. controls +(-.5,1) and +(+.5,-.5) .. (nwq);
  \draw[string, arrow data={.6}{<}] (cp) .. controls +(.25,.5) and +(-.5,-.5) .. (nep);
\end{tikzpicture}
\Rrightarrow
\begin{tikzpicture}[baseline=(baseline)]
  \path (0,0) coordinate (middle) +(0,-.125) coordinate (baseline)
  +(-1,+1) coordinate (nw) +(-1.25,+1) coordinate (nwp) +(-.75,+1) coordinate (nwq)
  +(+,+1) coordinate (ne) +(+1.25,+1) coordinate (nep) +(+.75,+1) coordinate (neq)
  +(-1,-1) coordinate (a) +(-1.25,-1) coordinate (ap)
  +(-.25,-1) coordinate (b) +(-.5,-1) coordinate (bp)
  +(+.25,-1) coordinate (c) +(.5,-1) coordinate (cp)
  +(+1,-1) coordinate (d) +(+1.25,-1) coordinate (dp)
  ;
  \path (middle)
  +(-.5,+.5) node[dot] (mw) {}
  +(+.5,+.5) node[dot] (me) {}
  ;
  \draw[string,dashed] (mw) -- (nw);
  \draw[string,dashed] (me) -- (ne);
  \draw[string, arrow data={.6}{<}] (a) .. controls +(.25,1) and +(-1,-.5) .. (me);
  \draw[string, arrow data={.5}{>}] (c) .. controls +(.25,1) and +(.25,-.5) .. (me);
  \draw[string, arrow data={.5}{<}] (b) .. controls +(-.25,1) and +(-.25,-.5) .. (mw);
  \draw[string, arrow data={.6}{>}] (d) .. controls +(-.25,1) and +(+1,-.5) .. (mw);
  \draw[string, arrow data={.55}{>}] (ap) .. controls +(.25,1.5) and +(-.5,-.5) .. (neq);
  \draw[string, arrow data={.6}{>}] (bp) .. controls +(-.125,.5) and +(+.5,-.5) .. (nwp);
  \draw[string, arrow data={.55}{<}] (dp) .. controls +(-.25,1.5) and +(+.5,-.5) .. (nwq);
  \draw[string, arrow data={.6}{<}] (cp) .. controls +(.125,.5) and +(-.5,-.5) .. (nep);
\end{tikzpicture}
\Rrightarrow
\begin{tikzpicture}[baseline=(baseline)]
  \path (0,0) coordinate (middle) +(0,-.125) coordinate (baseline)
  +(-1,+1) coordinate (nw) 
  +(+,+1) coordinate (ne) 
  +(-1,-1) coordinate (a) 
  +(-.25,-1) coordinate (b) 
  +(+.25,-1) coordinate (c) 
  +(+1,-1) coordinate (d) 
  ;
  \path (middle)
  +(-.5,+.5) node[dot] (mw) {}
  +(+.25,-.25) node[dot] (me) {}
  ;
  \draw[string,dashed] (mw) -- (nw);
  \draw[string,dashed] (me) -- (ne);
  \draw[string, arrow data={.5}{<}] (a) .. controls +(.25,.25) and +(-1,-.25) .. (me);
  \draw[string, arrow data={.5}{>}] (c) .. controls +(.25,.25) and +(.125,-.25) .. (me);
  \draw[string, arrow data={.5}{<}] (b) .. controls +(-.25,1) and +(-.25,-.5) .. (mw);
  \draw[string, arrow data={.5}{>}] (d) .. controls +(-.25,1) and +(+1,-.5) .. (mw);
  \draw[string, arrow data={.55}{>}] (ap) .. controls +(.5,1) and +(-.5,-.5) .. (neq);
  \draw[string, arrow data={.6}{>}] (bp) .. controls +(-.25,.5) and +(+.5,-.5) .. (nwp);
  \draw[string, arrow data={.6}{<}] (dp) .. controls +(-.25,1.5) and +(+.5,-.5) .. (nwq);
  \draw[string, arrow data={.6}{<}] (cp) .. controls +(.5,.5) and +(-.5,-.5) .. (nep);
\end{tikzpicture}
\end{equation}
The 3-morphism \eqref{eq:imagesh} is a whiskering of the following composite in $e \OO^2 \otimes e\OO^2$:
\begin{equation}\label{eq:longcomposite}
\begin{tikzpicture}[baseline=(baseline),scale=.75]
  \path (0,0) coordinate (middle) +(0,-.125) coordinate (baseline)
  +(-1,+1) coordinate (nw) +(-1.25,+1) coordinate (nwp) +(-.75,+1) coordinate (nwq)
  +(+,+1) coordinate (ne) +(+1.25,+1) coordinate (nep) +(+.75,+1) coordinate (neq)
  +(-1,-1) coordinate (a) +(-1.25,-1) coordinate (ap)
  +(-.25,-1) coordinate (b) +(-.5,-1) coordinate (bp)
  +(+.25,-1) coordinate (c) +(.5,-1) coordinate (cp)
  +(+1,-1) coordinate (d) +(+1.25,-1) coordinate (dp)
  ;
  \path (middle)
  +(-.25,-.25) node[dot] (mw) {}
  +(+.5,+.5) node[dot] (me) {}
  ;
  \draw[string,dashed] (mw) -- (nw);
  \draw[string,dashed] (me) -- (ne);
  \draw[string, arrow data={.5}{<}] (a) .. controls +(.25,1) and +(-1,-.5) .. (me);
  \draw[string, arrow data={.5}{>}] (c) .. controls +(.25,1) and +(.25,-.5) .. (me);
  \draw[string, arrow data={.5}{<}] (b) .. controls +(-.25,.25) and +(-.125,-.25) .. (mw);
  \draw[string, arrow data={.5}{>}] (d) .. controls +(-.25,.25) and +(+1,-.25) .. (mw);
  \draw[string, arrow data={.6}{>}] (ap) .. controls +(.25,1.5) and +(-.5,-.5) .. (neq);
  \draw[string, arrow data={.55}{<}] (dp) .. controls +(-.5,1) and +(+.5,-.5) .. (nwq);
\end{tikzpicture}
\Rrightarrow
\begin{tikzpicture}[baseline=(baseline),scale=.75]
  \path (0,0) coordinate (middle) +(0,-.125) coordinate (baseline)
  +(-1,+1) coordinate (nw) +(-1.25,+1) coordinate (nwp) +(-.75,+1) coordinate (nwq)
  +(+,+1) coordinate (ne) +(+1.25,+1) coordinate (nep) +(+.75,+1) coordinate (neq)
  +(-1,-1) coordinate (a) +(-1.25,-1) coordinate (ap)
  +(-.25,-1) coordinate (b) +(-.5,-1) coordinate (bp)
  +(+.25,-1) coordinate (c) +(.5,-1) coordinate (cp)
  +(+1,-1) coordinate (d) +(+1.25,-1) coordinate (dp)
  ;
  \path (middle)
  +(-.5,0) node[dot] (mw) {}
  +(+.5,+.5) node[dot] (me) {}
  ;
  \draw[string,dashed] (mw) -- (nw);
  \draw[string,dashed] (me) -- (ne);
  \draw[string, arrow data={.5}{<}] (a) .. controls +(.25,.5) and +(-.5,-.5) .. (me);
  \draw[string, arrow data={.5}{>}] (c) .. controls +(.25,1) and +(.25,-.5) .. (me);
  \draw[string, arrow data={.5}{<}] (b) .. controls +(-.25,.25) and +(-.125,-.25) .. (mw);
  \draw[string, arrow data={.6}{>}] (d) .. controls +(-.25,.25) and +(+1,-.25) .. (mw);
  \draw[string, arrow data={.6}{>}] (ap) .. controls +(.25,1.5) and +(-.5,-.5) .. (neq);
  \draw[string, arrow data={.5}{<}] (dp) .. controls +(-.5,1) and +(+.5,-.5) .. (nwq);
\end{tikzpicture}
\Rrightarrow
\begin{tikzpicture}[baseline=(baseline),scale=.75]
  \path (0,0) coordinate (middle) +(0,-.125) coordinate (baseline)
  +(-1,+1) coordinate (nw) +(-1.25,+1) coordinate (nwp) +(-.75,+1) coordinate (nwq)
  +(+,+1) coordinate (ne) +(+1.25,+1) coordinate (nep) +(+.75,+1) coordinate (neq)
  +(-1,-1) coordinate (a) +(-1.25,-1) coordinate (ap)
  +(-.25,-1) coordinate (b) +(-.5,-1) coordinate (bp)
  +(+.25,-1) coordinate (c) +(.5,-1) coordinate (cp)
  +(+1,-1) coordinate (d) +(+1.25,-1) coordinate (dp)
  ;
  \path (middle)
  +(-.5,+.5) node[dot] (mw) {}
  +(+.5,+.5) node[dot] (me) {}
  ;
  \draw[string,dashed] (mw) -- (nw);
  \draw[string,dashed] (me) -- (ne);
  \draw[string, arrow data={.6}{<}] (a) .. controls +(.25,1) and +(-1,-.5) .. (me);
  \draw[string, arrow data={.5}{>}] (c) .. controls +(.25,1) and +(.25,-.5) .. (me);
  \draw[string, arrow data={.5}{<}] (b) .. controls +(-.25,1) and +(-.25,-.5) .. (mw);
  \draw[string, arrow data={.6}{>}] (d) .. controls +(-.25,1) and +(+1,-.5) .. (mw);
  \draw[string, arrow data={.55}{>}] (ap) .. controls +(.25,1.5) and +(-.5,-.5) .. (neq);
  \draw[string, arrow data={.55}{<}] (dp) .. controls +(-.25,1.5) and +(+.5,-.5) .. (nwq);
\end{tikzpicture}
\Rrightarrow
\begin{tikzpicture}[baseline=(baseline),scale=.75]
  \path (0,0) coordinate (middle) +(0,-.125) coordinate (baseline)
  +(-1,+1) coordinate (nw) 
  +(+,+1) coordinate (ne) 
  +(-1,-1) coordinate (a) 
  +(-.25,-1) coordinate (b) 
  +(+.25,-1) coordinate (c) 
  +(+1,-1) coordinate (d) 
  ;
  \path (middle)
  +(-.5,+.5) node[dot] (mw) {}
  +(+.5,0) node[dot] (me) {}
  ;
  \draw[string,dashed] (mw) -- (nw);
  \draw[string,dashed] (me) -- (ne);
  \draw[string, arrow data={.6}{<}] (a) .. controls +(.25,.25) and +(-1,-.25) .. (me);
  \draw[string, arrow data={.5}{>}] (c) .. controls +(.25,.25) and +(.125,-.25) .. (me);
  \draw[string, arrow data={.5}{<}] (b) .. controls +(-.25,1) and +(-.25,-.5) .. (mw);
  \draw[string, arrow data={.5}{>}] (d) .. controls +(-.25,.5) and +(+.5,-.5) .. (mw);
  \draw[string, arrow data={.5}{>}] (ap) .. controls +(.5,1) and +(-.5,-.5) .. (neq);
  \draw[string, arrow data={.6}{<}] (dp) .. controls +(-.25,1.5) and +(+.5,-.5) .. (nwq);
\end{tikzpicture}
\Rrightarrow
\begin{tikzpicture}[baseline=(baseline),scale=.75]
  \path (0,0) coordinate (middle) +(0,-.125) coordinate (baseline)
  +(-1,+1) coordinate (nw) 
  +(+,+1) coordinate (ne) 
  +(-1,-1) coordinate (a) 
  +(-.25,-1) coordinate (b) 
  +(+.25,-1) coordinate (c) 
  +(+1,-1) coordinate (d) 
  ;
  \path (middle)
  +(-.5,+.5) node[dot] (mw) {}
  +(+.25,-.25) node[dot] (me) {}
  ;
  \draw[string,dashed] (mw) -- (nw);
  \draw[string,dashed] (me) -- (ne);
  \draw[string, arrow data={.5}{<}] (a) .. controls +(.25,.25) and +(-1,-.25) .. (me);
  \draw[string, arrow data={.5}{>}] (c) .. controls +(.25,.25) and +(.125,-.25) .. (me);
  \draw[string, arrow data={.5}{<}] (b) .. controls +(-.25,1) and +(-.25,-.5) .. (mw);
  \draw[string, arrow data={.5}{>}] (d) .. controls +(-.25,1) and +(+1,-.5) .. (mw);
  \draw[string, arrow data={.55}{>}] (ap) .. controls +(.5,1) and +(-.5,-.5) .. (neq);
  \draw[string, arrow data={.6}{<}] (dp) .. controls +(-.25,1.5) and +(+.5,-.5) .. (nwq);
\end{tikzpicture}
\end{equation}
Thus to show that \eqref{eq:imagesh} becomes invertible under $e\OO^2 \otimes e\OO^2 \to \Adj \otimes \Adj \to L_3(\Adj \otimes \Adj)$, it suffices to show that \eqref{eq:longcomposite} becomes invertible.

In the notation of Corollary~\ref{cor:L3AdjAdj}, the $2$-morphism
\tikz[baseline=(baseline)]{ \path (0,0) coordinate (baseline); \draw[string, arrow data={.85}{>}] (0,-.5ex) -- (2ex,2ex); \draw[string, arrow data={.8}{<}] (2ex,-.5ex) -- (0,2ex); }
corepresents the filler $\beta$ of the square $kh \To gf$; that Corollary says that this $2$-morphism becomes invertible in $L_3(\Adj \otimes \Adj)$. Thus to prove that~\eqref{eq:longcomposite} becomes invertible, it suffices to prove that it becomes invertible after whiskering with \tikz[baseline=(baseline)]{ \path (0,0) coordinate (baseline); \draw[string, arrow data={.85}{>}] (0,-.5ex) -- (2ex,2ex); \draw[string, arrow data={.8}{<}] (2ex,-.5ex) -- (0,2ex); }. This whiskering becomes the top horizontal composite in the following diagram of $3$- and $4$-morphisms in $e\OO^2 \otimes^{\strict} e\OO^2$:
\begin{equation}\label{eqn:bigdiagram}\hspace{-1in}
\begin{array}{ccccccccc}
\begin{tikzpicture}[baseline=(baseline),scale=.75]
  \path (0,0) coordinate (middle) +(0,-.125) coordinate (baseline)
  +(-1,+1) coordinate (nw) +(-1.25,+1) coordinate (nwp) +(-.75,+1) coordinate (nwq)
  +(+,+1) coordinate (ne) +(+1.25,+1) coordinate (nep) +(+.75,+1) coordinate (neq)
  +(-1,-1) coordinate (a) +(-1.25,-1) coordinate (ap)
  +(+.25,-1) coordinate (b) +(-.5,-1) coordinate (bp)
  +(-.25,-1) coordinate (c) +(.5,-1) coordinate (cp)
  +(+1,-1) coordinate (d) +(+1.25,-1) coordinate (dp)
  ;
  \path (middle)
  +(-.25,-.25) node[dot] (mw) {}
  +(+.5,+.5) node[dot] (me) {}
  ;
  \draw[string,dashed] (mw) -- (nw);
  \draw[string,dashed] (me) -- (ne);
  \draw[string, arrow data={.5}{<}] (a) .. controls +(.25,1) and +(-1,-.5) .. (me);
  \draw[string, arrow data={.6}{>}] (c) .. controls +(.25,1) and +(.25,-.5) .. (me);
  \draw[string, arrow data={.4}{<}] (b) .. controls +(-.25,.25) and +(-.125,-.25) .. (mw);
  \draw[string, arrow data={.5}{>}] (d) .. controls +(-.25,.25) and +(+1,-.25) .. (mw);
  \draw[string, arrow data={.6}{>}] (ap) .. controls +(.25,1.5) and +(-.5,-.5) .. (neq);
  \draw[string, arrow data={.55}{<}] (dp) .. controls +(-.5,1) and +(+.5,-.5) .. (nwq);
\end{tikzpicture}
& \Rrightarrow &
\begin{tikzpicture}[baseline=(baseline),scale=.75]
  \path (0,0) coordinate (middle) +(0,-.125) coordinate (baseline)
  +(-1,+1) coordinate (nw) +(-1.25,+1) coordinate (nwp) +(-.75,+1) coordinate (nwq)
  +(+,+1) coordinate (ne) +(+1.25,+1) coordinate (nep) +(+.75,+1) coordinate (neq)
  +(-1,-1) coordinate (a) +(-1.25,-1) coordinate (ap)
  +(+.25,-1) coordinate (b) +(-.5,-1) coordinate (bp)
  +(-.25,-1) coordinate (c) +(.5,-1) coordinate (cp)
  +(+1,-1) coordinate (d) +(+1.25,-1) coordinate (dp)
  ;
  \path (middle)
  +(-.5,0) node[dot] (mw) {}
  +(+.5,+.5) node[dot] (me) {}
  ;
  \draw[string,dashed] (mw) -- (nw);
  \draw[string,dashed] (me) -- (ne);
  \draw[string, arrow data={.5}{<}] (a) .. controls +(.25,.5) and +(-.5,-.5) .. (me);
  \draw[string, arrow data={.5}{>}] (c) .. controls +(.25,.5) and +(.25,-.5) .. (me);
  \draw[string, arrow data={.5}{<}] (b) .. controls +(-.25,.25) and +(-.125,-.25) .. (mw);
  \draw[string, arrow data={.5}{>}] (d) .. controls +(-.25,.25) and +(+1,-.25) .. (mw);
  \draw[string, arrow data={.6}{>}] (ap) .. controls +(.25,1.5) and +(-.5,-.5) .. (neq);
  \draw[string, arrow data={.5}{<}] (dp) .. controls +(-.5,1) and +(+.5,-.5) .. (nwq);
\end{tikzpicture}
& \overset R \Rrightarrow &
\begin{tikzpicture}[baseline=(baseline),scale=.75]
  \path (0,0) coordinate (middle) +(0,-.125) coordinate (baseline)
  +(-1,+1) coordinate (nw) +(-1.25,+1) coordinate (nwp) +(-.75,+1) coordinate (nwq)
  +(+,+1) coordinate (ne) +(+1.25,+1) coordinate (nep) +(+.75,+1) coordinate (neq)
  +(-1,-1) coordinate (a) +(-1.25,-1) coordinate (ap)
  +(+.25,-1) coordinate (b) +(-.5,-1) coordinate (bp)
  +(-.25,-1) coordinate (c) +(.5,-1) coordinate (cp)
  +(+1,-1) coordinate (d) +(+1.25,-1) coordinate (dp)
  ;
  \path (middle)
  +(-.5,+.5) node[dot] (mw) {}
  +(+.5,+.5) node[dot] (me) {}
  ;
  \draw[string,dashed] (mw) -- (nw);
  \draw[string,dashed] (me) -- (ne);
  \draw[string, arrow data={.6}{<}] (a) .. controls +(.25,1) and +(-1,-.5) .. (me);
  \draw[string, arrow data={.5}{>}] (c) .. controls +(.25,.5) and +(.5,-.5) .. (me);
  \draw[string, arrow data={.5}{<}] (b) .. controls +(-.25,.5) and +(-.5,-.5) .. (mw);
  \draw[string, arrow data={.6}{>}] (d) .. controls +(-.25,1) and +(+1,-.5) .. (mw);
  \draw[string, arrow data={.55}{>}] (ap) .. controls +(.25,1.5) and +(-.5,-.5) .. (neq);
  \draw[string, arrow data={.55}{<}] (dp) .. controls +(-.25,1.5) and +(+.5,-.5) .. (nwq);
\end{tikzpicture}
& \overset L \Rrightarrow &
\begin{tikzpicture}[baseline=(baseline),scale=.75]
  \path (0,0) coordinate (middle) +(0,-.125) coordinate (baseline)
  +(-1,+1) coordinate (nw) 
  +(+,+1) coordinate (ne) 
  +(-1,-1) coordinate (a) 
  +(+.25,-1) coordinate (b) 
  +(-.25,-1) coordinate (c) 
  +(+1,-1) coordinate (d) 
  ;
  \path (middle)
  +(-.5,+.5) node[dot] (mw) {}
  +(+.5,0) node[dot] (me) {}
  ;
  \draw[string,dashed] (mw) -- (nw);
  \draw[string,dashed] (me) -- (ne);
  \draw[string, arrow data={.5}{<}] (a) .. controls +(.25,.25) and +(-1,-.25) .. (me);
  \draw[string, arrow data={.5}{>}] (c) .. controls +(.25,.25) and +(.125,-.25) .. (me);
  \draw[string, arrow data={.4}{<}] (b) .. controls +(-.25,.5) and +(-.25,-.5) .. (mw);
  \draw[string, arrow data={.55}{>}] (d) .. controls +(-.25,.5) and +(+.5,-.5) .. (mw);
  \draw[string, arrow data={.5}{>}] (ap) .. controls +(.5,1) and +(-.5,-.5) .. (neq);
  \draw[string, arrow data={.6}{<}] (dp) .. controls +(-.25,1.5) and +(+.5,-.5) .. (nwq);
\end{tikzpicture}
& \Rrightarrow &
\begin{tikzpicture}[baseline=(baseline),scale=.75]
  \path (0,0) coordinate (middle) +(0,-.125) coordinate (baseline)
  +(-1,+1) coordinate (nw) 
  +(+,+1) coordinate (ne) 
  +(-1,-1) coordinate (a) 
  +(+.25,-1) coordinate (b) 
  +(-.25,-1) coordinate (c) 
  +(+1,-1) coordinate (d) 
  ;
  \path (middle)
  +(-.5,+.5) node[dot] (mw) {}
  +(+.25,-.25) node[dot] (me) {}
  ;
  \draw[string,dashed] (mw) -- (nw);
  \draw[string,dashed] (me) -- (ne);
  \draw[string, arrow data={.5}{<}] (a) .. controls +(.25,.25) and +(-1,-.25) .. (me);
  \draw[string, arrow data={.4}{>}] (c) .. controls +(.25,.25) and +(.125,-.25) .. (me);
  \draw[string, arrow data={.6}{<}] (b) .. controls +(-.25,1) and +(-.25,-.5) .. (mw);
  \draw[string, arrow data={.5}{>}] (d) .. controls +(-.25,1) and +(+1,-.5) .. (mw);
  \draw[string, arrow data={.55}{>}] (ap) .. controls +(.5,1) and +(-.5,-.5) .. (neq);
  \draw[string, arrow data={.6}{<}] (dp) .. controls +(-.25,1.5) and +(+.5,-.5) .. (nwq);
\end{tikzpicture}
\\[8ex]
& \raisebox{-1ex}{$\scriptstyle L$} \raisebox{+1ex}{\rotatebox{-45}{$\Rrightarrow$}}
& \circ
& \raisebox{-1ex}{$\scriptstyle L$} \raisebox{+1ex}{\rotatebox{-45}{$\Rrightarrow$}}
& \circ
& \raisebox{+1ex}{$\scriptstyle R$} \raisebox{-1ex}{\rotatebox{+45}{$\Rrightarrow$}}
& \circ
& \raisebox{+1ex}{$\scriptstyle R$} \raisebox{-1ex}{\rotatebox{+45}{$\Rrightarrow$}}
\\[4ex]
&& 
\begin{tikzpicture}[baseline=(baseline),scale=.75]
  \path (0,0) coordinate (middle) +(0,-.125) coordinate (baseline)
  +(-1,+1) coordinate (nw) +(-1.25,+1) coordinate (nwp) +(-.75,+1) coordinate (nwq)
  +(+,+1) coordinate (ne) +(+1.25,+1) coordinate (nep) +(+.75,+1) coordinate (neq)
  +(-1,-1) coordinate (a) +(-1.25,-1) coordinate (ap)
  +(+.25,-1) coordinate (b) +(-.5,-1) coordinate (bp)
  +(-.25,-1) coordinate (c) +(.5,-1) coordinate (cp)
  +(+1,-1) coordinate (d) +(+1.25,-1) coordinate (dp)
  ;
  \path (middle)
  +(-.25,-.25) node[dot] (mw) {}
  +(+.5,0) node[dot] (me) {}
  ;
  \draw[string,dashed] (mw) -- (nw);
  \draw[string,dashed] (me) -- (ne);
  \draw[string, arrow data={.7}{<}] (a) .. controls +(.25,1) and +(-1,0) .. (me);
  \draw[string, arrow data={.5}{>}] (c) .. controls +(.25,.5) and +(.5,-.5) .. (me);
  \draw[string, arrow data={.6}{<}] (b) .. controls +(-.25,.5) and +(-.5,-.5) .. (mw);
  \draw[string, arrow data={.4}{>}] (d) .. controls +(-.25,.5) and +(+1,0) .. (mw);
  \draw[string, arrow data={.55}{>}] (ap) .. controls +(.25,1.5) and +(-.5,-.5) .. (neq);
  \draw[string, arrow data={.55}{<}] (dp) .. controls +(-.25,1.5) and +(+.5,-.5) .. (nwq);
\end{tikzpicture}   
& \Rrightarrow &
\begin{tikzpicture}[baseline=(baseline),scale=.75]
  \path (0,0) coordinate (middle) +(0,-.125) coordinate (baseline)
  +(-1,+1) coordinate (nw) +(-1.25,+1) coordinate (nwp) +(-.75,+1) coordinate (nwq)
  +(+,+1) coordinate (ne) +(+1.25,+1) coordinate (nep) +(+.75,+1) coordinate (neq)
  +(-1,-1) coordinate (a) +(-1.25,-1) coordinate (ap)
  +(+.25,-1) coordinate (b) +(-.5,-1) coordinate (bp)
  +(-.25,-1) coordinate (c) +(.5,-1) coordinate (cp)
  +(+1,-1) coordinate (d) +(+1.25,-1) coordinate (dp)
  ;
  \path (middle)
  +(-.5,0) node[dot] (mw) {}
  +(+.5,0) node[dot] (me) {}
  ;
  \draw[string,dashed] (mw) -- (nw);
  \draw[string,dashed] (me) -- (ne);
  \draw[string, arrow data={.6}{<}] (a) .. controls +(.25,1) and +(-1,-.5) .. (me);
  \draw[string, arrow data={.5}{>}] (c) .. controls +(.25,.5) and +(.5,-.5) .. (me);
  \draw[string, arrow data={.5}{<}] (b) .. controls +(-.25,.5) and +(-.5,-.5) .. (mw);
  \draw[string, arrow data={.6}{>}] (d) .. controls +(-.25,1) and +(+1,-.5) .. (mw);
  \draw[string, arrow data={.55}{>}] (ap) .. controls +(.25,1.5) and +(-.5,-.5) .. (neq);
  \draw[string, arrow data={.55}{<}] (dp) .. controls +(-.25,1.5) and +(+.5,-.5) .. (nwq);
\end{tikzpicture}
& \Rrightarrow &
\begin{tikzpicture}[baseline=(baseline),scale=.75]
  \path (0,0) coordinate (middle) +(0,-.125) coordinate (baseline)
  +(-1,+1) coordinate (nw) +(-1.25,+1) coordinate (nwp) +(-.75,+1) coordinate (nwq)
  +(+,+1) coordinate (ne) +(+1.25,+1) coordinate (nep) +(+.75,+1) coordinate (neq)
  +(-1,-1) coordinate (a) +(-1.25,-1) coordinate (ap)
  +(+.25,-1) coordinate (b) +(-.5,-1) coordinate (bp)
  +(-.25,-1) coordinate (c) +(.5,-1) coordinate (cp)
  +(+1,-1) coordinate (d) +(+1.25,-1) coordinate (dp)
  ;
  \path (middle)
  +(-.5,0) node[dot] (mw) {}
  +(+.25,-.25) node[dot] (me) {}
  ;
  \draw[string,dashed] (mw) -- (nw);
  \draw[string,dashed] (me) -- (ne);
  \draw[string, arrow data={.6}{<}] (a) .. controls +(.25,.5) and +(-1,0) .. (me);
  \draw[string, arrow data={.6}{>}] (c) .. controls +(.25,.5) and +(.5,-.5) .. (me);
  \draw[string, arrow data={.5}{<}] (b) .. controls +(-.25,.5) and +(-.5,-.5) .. (mw);
  \draw[string, arrow data={.7}{>}] (d) .. controls +(-.25,1) and +(+1,0) .. (mw);
  \draw[string, arrow data={.55}{>}] (ap) .. controls +(.25,1.5) and +(-.5,-.5) .. (neq);
  \draw[string, arrow data={.55}{<}] (dp) .. controls +(-.25,1.5) and +(+.5,-.5) .. (nwq);
\end{tikzpicture}
\\[8ex]
& \raisebox{1.5ex}{$\scriptstyle R$}\raisebox{-.5ex}{\rotatebox{45}{$\Rrightarrow$}}
&&& \raisebox{2.5ex}{\rotatebox{-90}{\fourarrow}}
&&& \raisebox{1.5ex}{\rotatebox{-45}{$\Rrightarrow$}}\raisebox{1.5ex}{$\scriptstyle L$}
\\[4ex]
\begin{tikzpicture}[baseline=(baseline),scale=.75]
  \path (0,0) coordinate (middle) +(0,-.125) coordinate (baseline)
  +(-1,+1) coordinate (nw) +(-1.25,+1) coordinate (nwp) +(-.75,+1) coordinate (nwq)
  +(+,+1) coordinate (ne) +(+1.25,+1) coordinate (nep) +(+.75,+1) coordinate (neq)
  +(-1,-1) coordinate (a) +(-1.25,-1) coordinate (ap)
  +(+.25,-1) coordinate (b) +(-.5,-1) coordinate (bp)
  +(-.25,-1) coordinate (c) +(.5,-1) coordinate (cp)
  +(+1,-1) coordinate (d) +(+1.25,-1) coordinate (dp)
  ;
  \path (middle)
  +(.5,-.25) node[dot] (mw) {}
  +(0,.25) node[dot] (me) {}
  ;
  \draw[string,dashed] (mw) .. controls +(-.5,+.25) and +(+.25,-.5) .. (nw);
  \draw[string,dashed] (me) .. controls +(+.5,+.5) and +(-.5,-.5) .. (ne);
  \draw[string, arrow data={.6}{<}] (a) .. controls +(0,.25) and +(-.5,-.5) .. (me);
  \draw[string, arrow data={.5}{>}] (c) .. controls +(0,.5) and +(.25,-.5) .. (me);
  \draw[string, arrow data={.5}{<}] (b) .. controls +(0,.25) and +(0,-.25) .. (mw);
  \draw[string, arrow data={.6}{>}] (d) .. controls +(0,.25) and +(+.5,-.25) .. (mw);
  \draw[string, arrow data={.5}{>}] (ap) .. controls +(.25,1.5) and +(-.5,-.5) .. (neq);
  \draw[string, arrow data={.5}{<}] (dp) .. controls +(-.25,1.5) and +(+.5,-.5) .. (nwq);
\end{tikzpicture}
&& \overset{i}{\Rrightarrow} &&
\begin{tikzpicture}[baseline=(baseline),scale=.75]
  \path (0,0) coordinate (middle) +(0,-.125) coordinate (baseline)
  +(-1,+1) coordinate (nw) +(-1.25,+1) coordinate (nwp) +(-.75,+1) coordinate (nwq)
  +(+,+1) coordinate (ne) +(+1.25,+1) coordinate (nep) +(+.75,+1) coordinate (neq)
  +(-1,-1) coordinate (a) +(-1.25,-1) coordinate (ap)
  +(+.25,-1) coordinate (b) +(-.5,-1) coordinate (bp)
  +(-.25,-1) coordinate (c) +(.5,-1) coordinate (cp)
  +(+1,-1) coordinate (d) +(+1.25,-1) coordinate (dp)
  ;
  \path (middle)
  +(+.5,0) node[dot] (mw) {}
  +(-.5,0) node[dot] (me) {}
  ;
  \draw[string,dashed] (mw) .. controls +(-.5,+.5) and +(+.5,-.5) .. (nw);
  \draw[string,dashed] (me) .. controls +(+.5,+.5) and +(-.5,-.5) .. (ne);
  \draw[string, arrow data={.6}{<}] (a) .. controls +(0,.25) and +(-.5,-.5) .. (me);
  \draw[string, arrow data={.5}{>}] (c) .. controls +(0,.5) and +(.25,-.5) .. (me);
  \draw[string, arrow data={.5}{<}] (b) .. controls +(0,.5) and +(-.25,-.5) .. (mw);
  \draw[string, arrow data={.6}{>}] (d) .. controls +(0,.25) and +(+.5,-.5) .. (mw);
  \draw[string, arrow data={.5}{>}] (ap) .. controls +(.25,1.5) and +(-.5,-.5) .. (neq);
  \draw[string, arrow data={.5}{<}] (dp) .. controls +(-.25,1.5) and +(+.5,-.5) .. (nwq);
\end{tikzpicture}
&& \overset{i}{\Rrightarrow} &&
\begin{tikzpicture}[baseline=(baseline),scale=.75]
  \path (0,0) coordinate (middle) +(0,-.125) coordinate (baseline)
  +(-1,+1) coordinate (nw) +(-1.25,+1) coordinate (nwp) +(-.75,+1) coordinate (nwq)
  +(+,+1) coordinate (ne) +(+1.25,+1) coordinate (nep) +(+.75,+1) coordinate (neq)
  +(-1,-1) coordinate (a) +(-1.25,-1) coordinate (ap)
  +(+.25,-1) coordinate (b) +(-.5,-1) coordinate (bp)
  +(-.25,-1) coordinate (c) +(.5,-1) coordinate (cp)
  +(+1,-1) coordinate (d) +(+1.25,-1) coordinate (dp)
  ;
  \path (middle)
  +(0,.25) node[dot] (mw) {}
  +(-.5,-.25) node[dot] (me) {}
  ;
  \draw[string,dashed] (mw) .. controls +(-.5,+.5) and +(+.5,-.5) .. (nw);
  \draw[string,dashed] (me) .. controls +(+.5,+.25) and +(-.25,-.5) .. (ne);
  \draw[string, arrow data={.6}{<}] (a) .. controls +(0,.25) and +(-.5,-.25) .. (me);
  \draw[string, arrow data={.5}{>}] (c) .. controls +(0,.25) and +(0,-.25) .. (me);
  \draw[string, arrow data={.5}{<}] (b) .. controls +(0,.5) and +(-.25,-.5) .. (mw);
  \draw[string, arrow data={.6}{>}] (d) .. controls +(0,.25) and +(+.5,-.5) .. (mw);
  \draw[string, arrow data={.5}{>}] (ap) .. controls +(.25,1.5) and +(-.5,-.5) .. (neq);
  \draw[string, arrow data={.5}{<}] (dp) .. controls +(-.25,1.5) and +(+.5,-.5) .. (nwq);
\end{tikzpicture}
\end{array}
\hspace{-1in}\end{equation}
The three top squares in \eqref{eqn:bigdiagram}, i.e.\ those marked with ``$\circ$,'' strictly commute by  the axioms of composition in a strict $3$-category. The bottom hexagon in \eqref{eqn:bigdiagram} is filled with the $4$-cell filler of $\globe_2 \otimes \globe_2 \to e\OO^2 \otimes e\OO^2$, where $\globe_2 \to e\OO^2$ is the generating $2$-cell; being a $4$-cell, the bottom hexagon definitionally becomes invertible in $L_3(\Adj \otimes \Adj)$. 
The bottom horizontal $3$-morphisms labeled ``$i$'' along the bottom row of  \eqref{eqn:bigdiagram} become invertible in $\Adj \otimes \Adj$ since the dashed $1$-morphism is sent to an identity under $e\OO^2 \to \Adj$.

To complete the proof, it suffices to show that the $3$-cells labeled ``$R$'' and ``$L$''  become invertible in $L_3(\Adj \otimes \Adj)$;
indeed, then the top composition would be isomorphic, in $L_3(\Adj \otimes \Adj)$, to the composition around the perimeter of \eqref{eqn:bigdiagram} of invertible $3$-morphisms.
By Corollary~\ref{cor:adjunctibilitytop}, the generating $3$-cell $\globe_3 \to \globe_1 \otimes \globe_2$ is sent under $r \otimes \id$ to a right adjoint in $\Adj \otimes \globe_2$. Thus for any $2$-cell $\alpha : \globe_2 \to \Adj$, the $3$-cell $\globe_3 \to \globe_1 \otimes \globe_2  \to[ r \otimes \alpha] \Adj \otimes \Adj$ is a right adjoint. Such $3$-cells are represented graphically by ``pulling a $2$-cell southeastward across a northeast-pointing line.'' By the same token, any $3$-cell of the form $\globe_3 \to \globe_2 \otimes \globe_1 \to[\alpha \otimes l] \Adj \otimes \Adj$ --- represented graphically by ``pulling a $2$-cell northeastward across a southeast-pointing line'' --- is a left adjoint. These are, respectively, the $3$-cells labeled ``$R$'' and ``$L$.''
 But adjunctible $3$-cells in a $3$-category are invertible, and so these cells get mapped to invertibles in $L_3(\Adj \otimes \Adj)$.
\end{proof}

\begin{corollary}
\label{thm:adjinduceshopf} 
The functor 
$H : \Ret^{\someadj}(\cC) \to \BiAlg(\Omega^2\cC)$ from Corollary~\ref{cor:part2ofmaintheorem}
 factors through the full subcategory $\cat{coHopf}(\Omega^2\cC) \subset \BiAlg(\Omega^2\cC)$. 
\end{corollary}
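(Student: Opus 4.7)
The plan is to combine Lemma~\ref{lemma:universalshearrepresents} with Theorem~\ref{thm:invertibleshear}: together they do essentially all the real work, and what remains is a short diagram chase. First I would reduce the statement to the level of objects. Since $\coHopf(\Omega^2\cC) \subseteq \BiAlg(\Omega^2\cC)$ is by construction a full subcategory (spanned by those bialgebras whose coshear map $\sh^\nearrow$ is invertible), to factor $H$ through it it suffices to verify that $H(F) \in \coHopf(\Omega^2\cC)$ for every object $F$ of $\Ret^{\someadj}(\cC)$.

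Unpacking, such an object is a pointed functor $F: L_3(\Adj \owedge \Adj) \to \cC$, and $H(F)$ is the bialgebra corresponding via Corollary~\ref{cor:mndsquared} to the restriction $L_3(\Mnd \owedge \Mnd) \to L_3(\Adj \owedge \Adj) \xrightarrow{F} \cC$ along the pointed inclusion $\Mnd \hookrightarrow \Adj$. By Lemma~\ref{lemma:universalshearrepresents}, the coshear map $\sh_{H(F)}^{\nearrow}$ is precisely the image of the universal shear $3$-morphism from Construction~\ref{cons:universalshear} under
\[
\Mnd \otimes \Mnd \longrightarrow L_3(\Mnd \owedge \Mnd) \longrightarrow L_3(\Adj \owedge \Adj) \xrightarrow{F} \cC.
\]

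By the naturality of the quotient $(-) \otimes (-) \to (-) \owedge (-)$ applied to the pointed inclusion $\Mnd \hookrightarrow \Adj$, and using that $L_3(\Adj \owedge \Adj)$ is an $(\infty,3)$-category and hence receives a canonical functor from $L_3(\Adj \otimes \Adj)$, I can rewrite this composite as
\[
\Mnd \otimes \Mnd \longrightarrow \Adj \otimes \Adj \longrightarrow L_3(\Adj \otimes \Adj) \longrightarrow L_3(\Adj \owedge \Adj) \xrightarrow{F} \cC.
\]
Now Theorem~\ref{thm:invertibleshear} asserts exactly that the universal shear becomes invertible after the second arrow, in $L_3(\Adj \otimes \Adj)$; since functors preserve invertible morphisms, its image in $\cC$ is invertible as well. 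Therefore $\sh_{H(F)}^{\nearrow}$ is invertible, meaning $H(F)$ is coHopf by definition.

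The only nontrivial ingredient is Theorem~\ref{thm:invertibleshear}, which has already been established; the remaining content of the corollary is a naturality argument identifying the two ways of going from $\Mnd \otimes \Mnd$ through the $3$-localizations to $\cC$, and the formality that a full subcategory inclusion lifts any functor landing pointwise in the subcategory.
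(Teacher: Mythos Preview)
Your proposal is correct and follows essentially the same approach as the paper: reduce to invertibility of the coshear map on objects, invoke Lemma~\ref{lemma:universalshearrepresents} to identify it with the image of the universal shear, then use the commutativity of the passage from $\Mnd \otimes \Mnd$ through $L_3(\Adj \otimes \Adj)$ to $L_3(\Adj \owedge \Adj)$ together with Theorem~\ref{thm:invertibleshear}. The paper packages the naturality step as a single commuting square of $3$-localizations, but your unwinding is equivalent.
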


\begin{proof}
We must show that the coshear map of any bialgebra in the image of 
$$\Ret^{\someadj}(\cC) = \Funoplax_*(\Adj \owedge \Adj, \cC) \to \Funoplax_*(\Mnd \owedge \Mnd, \cC)  = \BiAlg(\Omega^2\cC) $$
is invertible.
 By Lemma~\ref{lemma:universalshearrepresents}, the coshear map is represented by a universal shear $3$-morphism in $\Mnd \otimes \Mnd$. Since the diagram 
\[\begin{tikzcd}
L_3(\Mnd \otimes \Mnd) \arrow[r] \arrow[d] & L_3(\Adj \otimes \Adj) \arrow[d]\\
L_3(\Mnd \owedge \Mnd) \arrow[r] & L_3(\Adj \owedge \Adj)
\end{tikzcd}
\]
commutes, the statement follows from Theorem~\ref{thm:invertibleshear}.
\end{proof}

\section{Special cases and generalizations}

We end with some comments on the constructions in this paper. 
In \S\ref{subsec:hopmonad} we extend our main theorem from a construction of Hopf algebras to a construction of Hopf monads a la \cite{MR2355605,MR2793022} --- this is straightforward because, as we explain, the walking bimonad is an intermediate quotient between $L_3(\Mnd \otimes \Mnd)$ and $L_3(\Mnd \owedge \Mnd)$. We then unpack our construction of a Hopf algebra from an adjunctible retract in \S\ref{subsec:unpacked}, and in \S\ref{subsec:extradualizability} we explore what happens when the input, and hence the output, admits further dualizability. 
 Finally, \S\ref{subsec:tannakian} works out the details of how classical Tannakian reconstruction arises from our main Theorem~\ref{thm:mainHopftheorem}, and in particular shows that our construction can give rise to Hopf algebras without invertible antipodes.

\subsection{Hopf monads}\label{subsec:hopmonad}
In this section, we discuss how \define{opmonoidal monads} aka \define{bimonads}\footnote{These are called ``Hopf monads'' in \cite{MR1887157}, but, following \cite{MR2355605} and later references, we will reserve that term for bimonads with a Hopf-type condition. The term ``opmonoidal monad'' is more descriptive, but ``bimonad'' is shorter and established in the literature.}  as defined and studied in~\cite{MR1887157,MR2355605,MR2500058,MR2793022} fit neatly into our  framework and how our main construction immediately generalize to a construction of \define{Hopf monads}. 

Although \emph{op.\ cit.}\ mostly focus on bimonads in the $2$-category $\Cat_{(1,1)}$ of strict $1$-categories, they can be defined more generally inside an arbitrary monoidal $(\infty,2)$-category as follows.

\begin{definition} Let $\cD$ be a monoidal $(\infty,2)$-category\footnote{This definition parses for monoidal $(\infty,\infty)$-categories, but recovers ``lax bimonads'' in which certain compatibility $3$-morphisms in $\cD$ (coming from the $4$-cells in $\Mnd_+ \owedge \Mnd$) are not demanded to be an isomorphism; compare Remark~\ref{rem:laxbialgebra}.}, and write $\rB \cD$ for its delooped $(\infty,3)$-category. Recall from the discussion around~\eqref{eqn:oplaxalgmapcomponents} that the $(\infty,2)$-category of algebras and oplax algebra homomorphisms in $\cD$ is
\[\Alg^{\oplax}(\cD):= \Funoplax_*(\Mnd, \rB\cD).\]
A \define{bimonad} in $\cD$ is a monad in $\Alg^{\oplax}(\cD)$.
\end{definition}

Recall from \S\ref{subsec:owedge} that we write $(-)_+ : \Cat_{(\infty,\infty)} \to \Cat_{(\infty,\infty)}^*$ for the left adjoint to the forgetful map freely adjoining a basepoint. The following is immediately analogous to Corollary~\ref{cor:mndsquared}:

\begin{lemma} Let $\cC$ be a pointed $(\infty,3)$-category. Then, the equivalences 
\begin{multline}\label{eqn:mnd+mnd}
  \maps_*(\Mnd_+ \owedge \Mnd, \cC)  \simeq \maps_*( \Mnd_+ , \Funoplax_*(\Mnd, \cC))  \\  \simeq \maps(\Mnd, \Funoplax_*(\Mnd, \cC)) \simeq \maps(\Mnd, \Alg^{\oplax}(\Omega\cC)). 
\end{multline}
identify the left hand side with the space of bimonads in $\Omega \cC$. \qed
\end{lemma}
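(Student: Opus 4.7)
The plan is to verify each of the three displayed equivalences in turn, and then identify the final mapping space with the space of bimonads in $\Omega\cC$.

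First, I would establish the equivalence $\maps_*(\Mnd_+ \owedge \Mnd, \cC) \simeq \maps_*(\Mnd_+, \Funoplax_*(\Mnd, \cC))$ as an instance of the tensor--hom adjunction in the biclosed monoidal $\infty$-category $(\Cat^*_{(\infty,\infty)}, \owedge)$, whose internal right adjoint $\Funoplax_*(\Mnd,-)$ to $-\owedge\Mnd$ is exactly the one introduced in Definition~\ref{defn:funoplaxstar}. The next equivalence $\maps_*(\Mnd_+, -) \simeq \maps(\Mnd, -)$ follows from the free--forgetful adjunction $(-)_+ \dashv U$ between $\Cat_{(\infty,\infty)}$ and $\Cat^*_{(\infty,\infty)}$ recalled at the start of \S\ref{subsec:owedge}. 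The third is a change of notation via Remark~\ref{rem:alglax}, which defines $\Alg^{\oplax}(\Omega\cC) := \Funoplax_*(\Mnd, \cC)$; Lemma~\ref{lem:surjectivereduces}, together with the essential surjectivity of the pointing $* \to \Mnd$ on objects, guarantees that this is in fact an $(\infty,2)$-category, a fact I will need in the next step.

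The substance is the final identification. By the definition of bimonad given just above the lemma, a bimonad in $\Omega\cC$ is by fiat a monad in the $(\infty,2)$-category $\Alg^{\oplax}(\Omega\cC)$. To match this with $\maps(\Mnd, \Alg^{\oplax}(\Omega\cC))$, I will invoke the universal property of the walking monad. Concretely, for any $(\infty,2)$-category $\cD$, the space $\maps(\Mnd, \cD)$ fibers over $\cD^\simeq$ by restriction to the unique object of $\Mnd$, with fiber over $A \in \cD^\simeq$ the pointed mapping space $\maps_*((\Mnd, *), (\cD, A))$. Applying the object-level form of Proposition~\ref{prop:monadalgebra} --- namely Corollary~\ref{cor:monadalgspace} --- fiberwise with $\cD = \Alg^{\oplax}(\Omega\cC)$ identifies each fiber with the space of monad structures on self-maps of the chosen object $A$, i.e.\ with the space of monads in $\Alg^{\oplax}(\Omega\cC)$ whose underlying object is $A$. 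Letting $A$ vary over the space of algebras in $\Omega\cC$ (which, again by Corollary~\ref{cor:monadalgspace}, is exactly the space of objects of $\Alg^{\oplax}(\Omega\cC)$) then recovers the full space of monads in $\Alg^{\oplax}(\Omega\cC)$, which is the space of bimonads in $\Omega\cC$ by definition.

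There is no real obstacle here: the proof is a sequence of formal adjunction shuffles together with the universal property of the walking monad. The one subtlety worth flagging is keeping track of which levels of (un)pointing are in play --- in particular, the free--forgetful step in the second equivalence is precisely what allows the underlying algebra of a bimonad to range freely over $\Alg(\Omega\cC)$, rather than being pinned to the trivial algebra at the basepoint of $\cC$.
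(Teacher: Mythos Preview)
Your proposal is correct and matches the paper's approach: the paper gives no proof beyond the displayed chain of equivalences and a terminal $\qed$, indicating that each step is a formal adjunction or a definitional unpacking, which is exactly what you have supplied. Your care in noting that $\Alg^{\oplax}(\Omega\cC)$ is an $(\infty,2)$-category (via Lemma~\ref{lem:surjectivereduces}) before invoking the universal property of $\Mnd$ is the one point that genuinely needs checking, and you have it.
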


\begin{remark}
Let us unpack this equivalence \eqref{eqn:mnd+mnd} between $L_3(\Mnd_+ \owedge \Mnd)$ and (the delooping of) the walking bimonad following the notation of Remark~\ref{rem:notationforbimnd}. We encourage the reader to compare our ``two plus one''-dimensional diagrams to the three-dimensional diagrams from \cite{MR2500058}.
Recall from \eqref{eq:plussmash} that $\Mnd_+ \owedge \Mnd \simeq (\Mnd \otimes \Mnd) / (\Mnd \otimes *)$ is a quotient of $\Mnd \otimes \Mnd$, and we continue the convention of shading out the cells that are trivialized in this quotient. For example, if we denote by $A\in \Mnd$ the generating $1$-cell, then the 1-cell $A \otimes * \in \Mnd \otimes \Mnd$ trivializes in $\Mnd_+ \owedge \Mnd$ and so will be drawn in grey, but $* \otimes A$ does not trivialize and rather supplies the unique generating 1-cell:
$$
{\color{gray!50!white} A \otimes * \quad =\quad \begin{tikzpicture}[baseline=(baseline)]
  \path (0,0) coordinate (middle) +(0,-.125) coordinate (baseline)
  +(-.5,-1) coordinate (sw) 
  +(+.5,-1) coordinate (se) 
  +(-.5,+1) coordinate (nw) 
  +(+.5,+1) coordinate (ne) 
  ;
  \draw[string,gray!50!white, name path=A1] (sw) .. controls ++(0,+1) and ++(0,-1) .. (ne);
\end{tikzpicture}
\quad,}
\qquad\qquad
 * \otimes A \quad =\quad \begin{tikzpicture}[baseline=(baseline)]
  \path (0,0) coordinate (middle) +(0,-.125) coordinate (baseline)
  +(-.5,-1) coordinate (sw) 
  +(+.5,-1) coordinate (se) 
  +(-.5,+1) coordinate (nw) 
  +(+.5,+1) coordinate (ne) 
  ;
  \draw[string, name path=1A] (se) .. controls ++(0,+1) and ++(0,-1) .. (nw);
\end{tikzpicture}
\quad.
$$
Indeed, all of $* \otimes \Mnd$ survives: this 1-cell carries a monoid structure.
  The binary multiplication is the 2-cell $* \otimes m$:
$$
\begin{tikzpicture}[baseline=(baseline),xscale=-1]
  \path (0,0) coordinate (middle) +(0,-.125) coordinate (baseline)
  +(-.75,-1) coordinate (sw1) 
  +(-.5,-1) coordinate (sw2) 
  +(+.5,-1) coordinate (se) 
  +(-.5,+1) coordinate (nw) 
  +(+.5,+1) coordinate (ne) 
  +(0,0) node[draw,circle,inner sep=0.5pt] (AA) {$\scriptstyle m$}
  ;
  \draw[string,name path=A1] (AA) .. controls ++(.5,.5) and ++(0,-.5) .. (ne);
  \draw[string,name path=A12] (sw2) .. controls ++(0,+.25) and ++(-.125,-.25) .. (AA);
  \draw[string,name path=A11] (sw1) .. controls ++(0,+.25) and ++(-.25,-.125) .. (AA);
\end{tikzpicture}
$$
Together with the unit $*\otimes u$, these corepresent (the binary multiplication and unit of) a monoid in $\Omega \cC$. 
In addition,  there is one other generating $2$-cell in $\Mnd_+ \owedge \Mnd$, namely $A \otimes A$:
$$ 
\begin{tikzpicture}[baseline=(baseline)]
  \path (0,0) coordinate (middle) +(0,-.125) coordinate (baseline)
  +(-.5,-1) coordinate (sw) 
  +(+.5,-1) coordinate (se) 
  +(-.5,+1) coordinate (nw) 
  +(+.5,+1) coordinate (ne) 
  ;
  \draw[string, name path=1A] (se) .. controls ++(0,+1) and ++(0,-1) .. (nw);
  \draw[string,gray!50!white, name path=A1] (sw) .. controls ++(0,+1) and ++(0,-1) .. (ne);
  \path[name intersections={of=1A and A1}] (intersection-1) node[dot]{};
\end{tikzpicture}
$$
In the translation to bimonads, this 2-cell becomes the underlying endofunctor $T$ of the bimonad. Its oplax monoidality  is given by the 3-cells $A \otimes m $ (and a similar, simpler 3-cell involving units):
$$ \Delta_T \quad := \quad
\begin{tikzpicture}[baseline=(baseline),xscale=-1]
  \path (0,0) coordinate (middle) +(0,-.125) coordinate (baseline)
  +(-.75,-1) coordinate (sw1) 
  +(-.5,-1) coordinate (sw2) 
  +(+.5,-1) coordinate (se) 
  +(-.5,+1) coordinate (nw) 
  +(+.5,+1) coordinate (ne) 
  +(-.25,-.5) node[draw,circle,inner sep=0.5pt] (AA) {$\scriptstyle m$}
  ;
  \draw[string,gray!50!white, name path=1A] (se) .. controls ++(0,+1) and ++(0,-1) .. (nw);
  \draw[string, name path=A1] (AA) .. controls ++(.5,.5) and ++(0,-.5) .. (ne);
  \draw[string, name path=A12] (sw2) .. controls ++(0,+.25) and ++(-.125,-.25) .. (AA);
  \draw[string, name path=A11] (sw1) .. controls ++(0,+.25) and ++(-.25,-.125) .. (AA);
  \path[name intersections={of = 1A and A1}] (intersection-1) node[dot]{};
\end{tikzpicture} \quad \Rrightarrow \quad
\begin{tikzpicture}[baseline=(baseline),xscale=-1]
  \path (0,0) coordinate (middle) +(0,-.125) coordinate (baseline)
  +(-.75,-1) coordinate (sw1) 
  +(-.5,-1) coordinate (sw2) 
  +(+.5,-1) coordinate (se) 
  +(-.5,+1) coordinate (nw) 
  +(+.5,+1) coordinate (ne) 
  +(.25,.5) node[draw,circle,inner sep=0.5pt] (AA) {$\scriptstyle m$}
  ;
  \draw[string,gray!50!white, name path=1A] (se) .. controls ++(0,+1) and ++(0,-1) .. (nw);
  \draw[string] (AA) .. controls ++(.125,.125) and ++(0,-.125) .. (ne);
  \draw[string, name path=A12] (sw2) .. controls ++(0,+.5) and ++(-.25,-.5) .. (AA);
  \draw[string, name path=A11] (sw1) .. controls ++(0,+.5) and ++(-.5,-.25) .. (AA);
  \path[name intersections={of = 1A and A11}] (intersection-1) node[dot]{};
  \path[name intersections={of = 1A and A12}] (intersection-1) node[dot]{};
\end{tikzpicture},$$
whereas its ``monad-ality'' is given by $m \otimes A$ (and a similar, simpler 3-cell involving units):
\[ \mu_T \quad := \quad
\begin{tikzpicture}[baseline=(baseline)]
  \path (0,0) coordinate (middle) +(0,-.125) coordinate (baseline)
  +(-.75,-1) coordinate (sw1) 
  +(-.5,-1) coordinate (sw2) 
  +(+.5,-1) coordinate (se) 
  +(-.5,+1) coordinate (nw) 
  +(+.5,+1) coordinate (ne) 
  +(.25,.5) node[draw,circle,inner sep=0.5pt,gray!50!white] (AA) {$\scriptstyle m$}
  ;
  \draw[string, name path=1A] (se) .. controls ++(0,+1) and ++(0,-1) .. (nw);
  \draw[string,gray!50!white] (AA) .. controls ++(.125,.125) and ++(0,-.125) .. (ne);
  \draw[string,gray!50!white, name path=A12] (sw2) .. controls ++(0,+.5) and ++(-.25,-.5) .. (AA);
  \draw[string,gray!50!white, name path=A11] (sw1) .. controls ++(0,+.5) and ++(-.5,-.25) .. (AA);
  \path[name intersections={of = 1A and A11}] (intersection-1) node[dot]{};
  \path[name intersections={of = 1A and A12}] (intersection-1) node[dot]{};
\end{tikzpicture}
\quad \Rrightarrow \quad
\begin{tikzpicture}[baseline=(baseline)]
  \path (0,0) coordinate (middle) +(0,-.125) coordinate (baseline)
  +(-.75,-1) coordinate (sw1) 
  +(-.5,-1) coordinate (sw2) 
  +(+.5,-1) coordinate (se) 
  +(-.5,+1) coordinate (nw) 
  +(+.5,+1) coordinate (ne) 
  +(-.25,-.5) node[draw,circle,inner sep=0.5pt,gray!50!white] (AA) {$\scriptstyle m$}
  ;
  \draw[string, name path=1A] (se) .. controls ++(0,+1) and ++(0,-1) .. (nw);
  \draw[string,gray!50!white, name path=A1] (AA) .. controls ++(.5,.5) and ++(0,-.5) .. (ne);
  \draw[string,gray!50!white, name path=A12] (sw2) .. controls ++(0,+.25) and ++(-.125,-.25) .. (AA);
  \draw[string,gray!50!white, name path=A11] (sw1) .. controls ++(0,+.25) and ++(-.25,-.125) .. (AA);
  \path[name intersections={of = 1A and A1}] (intersection-1) node[dot]{};
\end{tikzpicture}\]
\end{remark}

Which bimonads deserve to be called ``Hopf''? This is answered in \cite{MR2355605,MR2500058} for bimonads living on monoidal categories with duals; the reason those papers ask for duals is because of the relationship between antipodes and dual modules. The existence-of-duals requirement is dropped in \cite{MR2793022}, which reexpresses Hopf-ness in terms of the invertibility of a shear map (called a ``fusion map'' therein). Specifically, \S2.6 of that paper introduces two shear maps, denoted $H^l$ and $H^r$ therein. They correspond, respectively, to the shear maps denoted $\sh^\nearrow$ and $\sh^\searrow$ in 
Notation~\ref{not:shearmaps}.

The first main result of \cite{MR2793022} is that if the underlying monoidal category of a bimonad admits left resp.\ right duals, then $H^l$ resp.\ $H^r$ is invertible if and only if the bimonad admits a left resp.\ right antipode in the sense of \cite{MR2355605}.
 The perspective of \cite{MR2355605,MR2793022} is that a bialgebra is properly considered ``Hopf'' only when 
 both of the shear maps  are invertible; in other words, those authors use the term ``Hopf algebra'' for what we would call ``Hopf algebra with invertible antipode.'' Continuing this perspective, \cite{MR2355605,MR2793022} define a bimonad to be ``Hopf'' only when both $H^l$ and $H^r$ are invertible.
 We reject this perspective, since we believe that Hopf algebras with noninvertible antipode are of great interest, and instead declare:
\begin{definition}
  A bimonad $T$ is a \define{Hopf monad} when the shear map 
\[ \begin{tikzpicture}[baseline=(baseline)]
  \path (0,0) coordinate (middle) +(0,-.125) coordinate (baseline)
  +(-1,+1) coordinate (nw) 
  +(+,+1) coordinate (ne) 
  +(-1,-1) coordinate (a) 
  +(-.25,-1) coordinate (b) 
  +(+.25,-1) coordinate (c) 
  +(+1,-1) coordinate (d) 
  ;
  \path (middle)
  +(-.25,-.25) node[draw,circle,inner sep=0.5pt] (mw) {$\scriptstyle m$}
  +(+.5,+.5) node[draw,circle,inner sep=0.5pt,gray!50!white] (me) {$\scriptstyle m$}
  ;
  \draw[string, name path=X] (mw) -- (nw);
  \draw[string,gray!50!white] (me) -- (ne);
  \draw[string,gray!50!white, name path=Y] (a) .. controls +(.25,1) and +(-1,-.5) .. (me);
  \draw[string,gray!50!white, name path=Z] (c) .. controls +(.25,1) and +(.25,-.5) .. (me);
  \draw[string] (b) .. controls +(-.25,.25) and +(-.125,-.25) .. (mw);
  \draw[string, name path=W] (d) .. controls +(-.25,.25) and +(+1,-.25) .. (mw);
    \path[name intersections={of=X and Y}] (intersection-1) node[dot]{};
    \path[name intersections={of=Z and W}] (intersection-1) node[dot]{};
\end{tikzpicture}
\overset{\Delta_T(-,T-)}\Rrightarrow
\begin{tikzpicture}[baseline=(baseline)]
  \path (0,0) coordinate (middle) +(0,-.125) coordinate (baseline)
  +(-1,+1) coordinate (nw) 
  +(+,+1) coordinate (ne) 
  +(-1,-1) coordinate (a) 
  +(-.25,-1) coordinate (b) 
  +(+.25,-1) coordinate (c) 
  +(+1,-1) coordinate (d) 
  ;
  \path (middle)
  +(-.5,+.5) node[draw,circle,inner sep=0.5pt] (mw) {$\scriptstyle m$}
  +(+.5,+.5) node[draw,circle,inner sep=0.5pt,gray!50!white] (me) {$\scriptstyle m$}
  ;
  \draw[string] (mw) -- (nw);
  \draw[string,gray!50!white] (me) -- (ne);
  \draw[string,gray!50!white, name path=X] (a) .. controls +(.25,1) and +(-1,-.5) .. (me);
  \draw[string,gray!50!white, name path=Y] (c) .. controls +(.25,1) and +(.25,-.5) .. (me);
  \draw[string, name path=Z] (b) .. controls +(-.25,1) and +(-.25,-.5) .. (mw);
  \draw[string, name path=W] (d) .. controls +(-.25,1) and +(+1,-.5) .. (mw);
    \path[name intersections={of=X and W}] (intersection-1) node[dot]{};
    \path[name intersections={of=Y and W}] (intersection-1) node[dot]{};
    \path[name intersections={of=X and Z}] (intersection-1) node[dot]{};
\end{tikzpicture}
\overset{m(T-, \mu_T-)}\Rrightarrow
\begin{tikzpicture}[baseline=(baseline)]
  \path (0,0) coordinate (middle) +(0,-.125) coordinate (baseline)
  +(-1,+1) coordinate (nw) 
  +(+,+1) coordinate (ne) 
  +(-1,-1) coordinate (a) 
  +(-.25,-1) coordinate (b) 
  +(+.25,-1) coordinate (c) 
  +(+1,-1) coordinate (d) 
  ;
  \path (middle)
  +(-.5,+.5) node[draw,circle,inner sep=0.5pt] (mw) {$\scriptstyle m$}
  +(+.25,-.25) node[draw,circle,inner sep=0.5pt,gray!50!white] (me) {$\scriptstyle m$}
  ;
  \draw[string] (mw) -- (nw);
  \draw[string,gray!50!white, name path=W] (me) -- (ne);
  \draw[string,gray!50!white, name path=Y] (a) .. controls +(.25,.25) and +(-1,-.25) .. (me);
  \draw[string,gray!50!white] (c) .. controls +(.25,.25) and +(.125,-.25) .. (me);
  \draw[string, name path=X] (b) .. controls +(-.25,1) and +(-.25,-.5) .. (mw);
  \draw[string, name path=Z] (d) .. controls +(-.25,1) and +(+1,-.5) .. (mw);
    \path[name intersections={of=X and Y}] (intersection-1) node[dot]{};
    \path[name intersections={of=Z and W}] (intersection-1) node[dot]{};
\end{tikzpicture}
 \]
is invertible.
\end{definition}
\begin{remark}
As indicated by the presence of the shaded-out edges, we have selected this shear map as it is the image in $\Mnd_+ \owedge \Mnd$ of the universal shear map from Definition~\ref{def:strictshear}.
Comparing with \cite{MR2793022}, this recovers precisely the map called $H^l$ therein: our ``Hopf monads'' are their ``left Hopf monads.'' The map called $H^r$ therein does not (as far as we can tell) lift to $\Mnd \otimes \Mnd$. 
\end{remark}

Our Theorem~\ref{thm:invertibleshear} applies already to the map $\Mnd \otimes \Mnd \to \Adj \otimes \Adj$, and so we immediately conclude:
\begin{corollary}\label{cor:Hopfmonad}
  Let $\cC$ be a pointed $(\infty,3)$-category.  The restriction functor along $\Mnd \to \Adj$ 
  $$H_+ : \Funoplax_*(\Adj_+ \owedge \Adj, \cC) \to \Funoplax_*(\Mnd_+ \owedge \Mnd, \cC) \simeq \cat{BiMnd}(\Omega\cC)$$
  factors through the full sub-$(\infty,2)$-category on the Hopf monads. \qed
\end{corollary}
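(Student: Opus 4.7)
The plan is to mimic the proof of Corollary~\ref{thm:adjinduceshopf} step by step, exploiting the fact that Theorem~\ref{thm:invertibleshear} is stated already at the level of the unreduced product $\Mnd \otimes \Mnd \to \Adj \otimes \Adj$, before any basepoint is collapsed. First I would recall that the Hopf condition on a bimonad, by definition, asks for the invertibility of the shear $3$-morphism pictured in the definition of a Hopf monad. The point is that this shear map is precisely the image under $\Mnd \otimes \Mnd \to \Mnd_+ \owedge \Mnd$ of the universal shear $3$-morphism of Definition~\ref{def:strictshear} and Construction~\ref{cons:universalshear}: indeed, the diagrammatic representative of the shear map on a bimonad is literally the one drawn in Definition~\ref{def:strictshear}, with exactly the strands corresponding to the collapsed basepoint factor $\Mnd \otimes \{*\}$ shaded out.

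Next I would write down the commutative diagram of pointed $(\infty,3)$-categories
\[
\begin{tikzcd}
L_3(\Mnd \otimes \Mnd) \ar[r] \ar[d] & L_3(\Adj \otimes \Adj) \ar[d] \\
L_3(\Mnd_+ \owedge \Mnd) \ar[r] & L_3(\Adj_+ \owedge \Adj)
\end{tikzcd}
\]
induced by $\Mnd \to \Adj$ and the quotients defining the smash products. Applying this to the universal shear $3$-morphism, Theorem~\ref{thm:invertibleshear} ensures that its image in $L_3(\Adj \otimes \Adj)$ is invertible, and hence its image in $L_3(\Adj_+ \owedge \Adj)$ is invertible as well.

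To finish, I would unwind what it means for a pointed functor $F: \Adj_+ \owedge \Adj \to \cC$ to restrict to a bimonad in $\Omega\cC$ under the identification from~\eqref{eqn:mnd+mnd}. Its associated bimonad's shear map is the image of the universal shear $3$-morphism under $\Mnd_+ \owedge \Mnd \to \Adj_+ \owedge \Adj \to L_3(\Adj_+ \owedge \Adj) \to[F] \cC$. By the commutative diagram above, this factors through $L_3(\Adj \otimes \Adj)$ where the universal shear is invertible, and hence the shear map of the restricted bimonad is invertible in $\cC$, so the bimonad is Hopf.

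I do not expect any genuine obstacle: all the hard work (namely Theorem~\ref{thm:invertibleshear} on the invertibility of the universal shear in $L_3(\Adj \otimes \Adj)$, and Lemma~\ref{lemma:universalshearrepresents} which identifies images of the universal shear with the shear maps of the corresponding (bi)algebraic structures) is already done in the main text; the only care needed is in verifying the ``shaded'' picture for the Hopf-monad shear map matches the image of the universal shear, which is a direct consequence of the formula $\Mnd_+ \owedge \Mnd \simeq (\Mnd \otimes \Mnd)/(\Mnd \otimes *)$ from~\eqref{eq:plussmash}.
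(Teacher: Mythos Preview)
Your proposal is correct and matches the paper's approach exactly: the paper simply notes that Theorem~\ref{thm:invertibleshear} already concerns the unreduced map $\Mnd \otimes \Mnd \to \Adj \otimes \Adj$ and declares the corollary immediate, which is precisely what your commutative-square argument spells out (and is the direct analogue of the proof of Corollary~\ref{thm:adjinduceshopf} with the bottom row replaced by the $(-)_+$-smash quotients).
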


The reader can easily work out a presentation of the 3-localization of $\Adj_+ \owedge \Adj \simeq (\Adj \otimes \Adj) / (\Adj \otimes *)$ using Corollary~\ref{cor:L3AdjAdj}.

\begin{remark}
Let $\cD$ be a monoidal $(\infty,2)$-category. A \define{comonoidal adjunction} in $\cD$ is an adjunction in $\Alg^{\oplax}(\cD)$ --- in other words, (the delooping of) \define{the walking comonoidal adjunction} is $L_3(\Adj_+ \owedge \Mnd)$. Restriction along the map $\Mnd_+ \owedge \Mnd \to \Adj_+ \owedge \Mnd$ produces a bimonad from any comonoidal adjunction.

The paper \cite{MR2793022} defines certain shear maps in  $L_3(\Adj_+ \owedge \Mnd)$, denoted $\mathbb{H}^l$ and $\mathbb{H}^r$ therein, and calls a
comonoidal adjuction a \define{left} resp.\ \define{right Hopf adjunction} when $\mathbb{H}^l$ resp.\  $\mathbb{H}^r$ is invertible.

Of the two shear maps from \cite{MR2793022}, $\mathbb{H}^l$ lifts to $\Adj \otimes \Mnd$ (and $\mathbb{H}^r$ does not, as far as we can tell). Namely, extending the notation from the proof of Theorem~\ref{thm:invertibleshear}, the lift is:
\[ 
\widetilde{\mathbb{H}}^l := 
\qquad 
\begin{tikzpicture}[baseline=(baseline)]
  \path (0,0) coordinate (middle) +(0,-.125) coordinate (baseline)
  +(-1,+1) coordinate (nw) 
  +(+,+1) coordinate (ne) 
  +(-1,-1) coordinate (a) 
  +(-.25,-1) coordinate (b) 
  +(+.25,-1) coordinate (c) 
  +(+1,-1) coordinate (d) 
  ;
  \path (middle)
  +(-.25,-.25) node[draw,circle,inner sep=0.5pt] (mw) {$\scriptstyle m$}
  +(+.5,+.5) node[dot] (me) {}
  ;
  \draw[string] (mw) -- (nw);
  \draw[string,dashed] (me) -- (ne);
  \draw[string, arrow data={.4}{<}] (a) .. controls +(.25,1) and +(-1,-.5) .. (me);
  \draw[string, arrow data={.6}{>}] (c) .. controls +(.25,1) and +(.25,-.5) .. (me);
  \draw[string] (b) .. controls +(-.25,.25) and +(-.125,-.25) .. (mw);
  \draw[string] (d) .. controls +(-.25,.25) and +(+1,-.25) .. (mw);
\end{tikzpicture}
\Rrightarrow
\begin{tikzpicture}[baseline=(baseline)]
  \path (0,0) coordinate (middle) +(0,-.125) coordinate (baseline)
  +(-1,+1) coordinate (nw) 
  +(+,+1) coordinate (ne) 
  +(-1,-1) coordinate (a) 
  +(-.25,-1) coordinate (b) 
  +(+.25,-1) coordinate (c) 
  +(+1,-1) coordinate (d) 
  ;
  \path (middle)
  +(-.5,+.5) node[draw,circle,inner sep=0.5pt] (mw) {$\scriptstyle m$}
  +(+.5,+.5) node[dot] (me) {}
  ;
  \draw[string] (mw) -- (nw);
  \draw[string,dashed] (me) -- (ne);
  \draw[string, arrow data={.6}{<}] (a) .. controls +(.25,1) and +(-1,-.5) .. (me);
  \draw[string, arrow data={.5}{>}] (c) .. controls +(.25,1) and +(.25,-.5) .. (me);
  \draw[string] (b) .. controls +(-.25,1) and +(-.25,-.5) .. (mw);
  \draw[string] (d) .. controls +(-.25,1) and +(+1,-.5) .. (mw);
\end{tikzpicture}
\Rrightarrow
\begin{tikzpicture}[baseline=(baseline)]
  \path (0,0) coordinate (middle) +(0,-.125) coordinate (baseline)
  +(-1,+1) coordinate (nw) 
  +(+,+1) coordinate (ne) 
  +(-1,-1) coordinate (a) 
  +(-.25,-1) coordinate (b) 
  +(+.25,-1) coordinate (c) 
  +(+1,-1) coordinate (d) 
  ;
  \path (middle)
  +(-.5,+.5) node[draw,circle,inner sep=0.5pt] (mw) {$\scriptstyle m$}
  +(+.25,-.25) node[dot] (me) {}
  ;
  \draw[string] (mw) -- (nw);
  \draw[string,dashed] (me) -- (ne);
  \draw[string, arrow data={.4}{<}] (a) .. controls +(.25,.25) and +(-1,-.25) .. (me);
  \draw[string, arrow data={.5}{>}] (c) .. controls +(.25,.25) and +(.125,-.25) .. (me);
  \draw[string] (b) .. controls +(-.25,1) and +(-.25,-.5) .. (mw);
  \draw[string] (d) .. controls +(-.25,1) and +(+1,-.5) .. (mw);
\end{tikzpicture}
\]
Take the universal shear map $\widetilde{\sh} \in \Mnd \otimes \Mnd$ and push it forward to $L_3(\Adj \otimes \Mnd)$: one finds a certain whiskering of $\widetilde{\mathbb{H}}^l$ composed with the oplax monoidality for the left adjoint. It is a standard exercise that the left adjoint in a comonoidal adjunction is not merely oplax monoidal but in fact strong monoidal, and so this composition does not affect questions of invertibility. This immediately implies (a monoidal $(\infty,2)$-category generalization of)  Proposition~2.14 of \cite{MR2793022}: the bimonad produced from a Hopf adjunction is a Hopf monad.

On the other hand, our proof of  Theorem~\ref{thm:invertibleshear} showed invertibility of a further ``unwhiskering'' of this shear map (after pushing it along the map to $\Adj \otimes \Adj$). Thus we can factor Corollary~\ref{cor:Hopfmonad}: the restriction functor $\Funoplax_*(\Adj_+ \owedge \Adj, \cC) \to \Funoplax_*(\Adj_+ \owedge \Mnd, \cC)$ factors through the full subcategory of Hopf adjunctions.
\end{remark}

\subsection{The construction unpacked} \label{subsec:unpacked} 

Our main construction in Section~\ref{sec:mainconstruction} takes an adjunctible retract $(X,h, k, \beta)$ in a pointed $(\infty,3)$-category $(\cC, 1_{\cC})$ as in Definition~\ref{def:adjunctibleretract} and produces from it  a Hopf algebra in $\Omega^2 \cC$. We will now explicitly unpack the underlying algebra and coalgebra structure of this Hopf algebra. In order to write fewer $^{-1}$s, in the remainder of the paper we  rename  \begin{equation}\label{eq:renaming} \alpha:= \beta^{-1}, \hspace{0.5cm} f:= h , \hspace{0.5cm} g:= k,\end{equation} matching the conventions of our main Theorem~\ref{thm:mainHopftheorem}.

\begin{lemma}\label{lem:unpacked}
Let $(\cC, 1_{\cC})$ be a pointed $(\infty,3)$-category and given an adjunctible retract therein, i.e.\ a diagram
 \[\begin{tikzcd}[sep=3em]
1_\cC \arrow[r, equal] \arrow[d, "f"'] &\arrow[d, equal] 1_\cC \\
X \arrow[r, "g"'] 
\arrow[ur, Leftarrow, "\sim" sloped, "\alpha" '] 
& 1_\cC
\end{tikzcd}\]
such that $f$ admits a right adjoint $f^R$, $g$ admits a left adjoint $g^L$, and $\alpha^\rmate$ admits a left adjoint adjoint $(\alpha^\rmate)^L$ or equivalently $\alpha^\lmate$ admits a right adjoint $(\alpha^\lmate)^R$ (note the relabelling~\eqref{eq:renaming} from Definition~\ref{def:adjunctibleretract}).

Then, the underlying object of the Hopf algebra $H(C, f,g,\alpha)$  constructed in Corollary~\ref{cor:part2ofmaintheorem} is given by the following composite: 
\begin{equation}\label{eqn:unpackedpasting}
\begin{tikzcd}[sep=5em]
1_\cC\arrow[r, equal] \arrow[d, equal]& 1_\cC \arrow[r, equal] \arrow[d, "f" description]& 1_\cC \arrow[d, equal] \\
1_\cC  \arrow[ur, Rightarrow, shorten <=7pt, shorten >=7pt, "\alpha^{\lmate}" description]\arrow[r, "g^L" description] \arrow[d, equal] & X  \arrow[ur, Rightarrow, shorten <=7pt, shorten >=7pt,
"\sim" sloped, "\alpha^{-1}" description] \arrow[r, "g" description] \arrow[d, "f^R"description] & 1_\cC\arrow[d, equal]\\
1_\cC \arrow[ur, Rightarrow, shorten <=5pt, shorten >=5pt, "\tiny\substack{((\alpha^{\rmate})^L)^{\lmate} \\ \rotatebox[origin=c]{270}{\ensuremath\simeq} \\ ((\alpha^{\lmate})^R)^{\rmate}}" description]  \arrow[r, equal] & 1_\cC \arrow[ur, Rightarrow, shorten <=7pt, shorten >=7pt, "\alpha^{\rmate}" description] \arrow[r, equal] & 1_\cC
\end{tikzcd}
\end{equation}
\end{lemma}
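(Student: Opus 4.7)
The plan is to unwind the definition of $H$ from Corollary~\ref{cor:part2ofmaintheorem} and trace through how the underlying object of the bialgebra is computed from the adjunctible retract data. By definition, $H$ is the restriction along $\Mnd \owedge \Mnd \to \Adj \owedge \Adj$, and under the equivalence $\Funoplax_*(\Mnd \owedge \Mnd, \cC) \simeq \BiAlg(\Omega^2\cC)$ of Corollary~\ref{cor:mndsquared}, the underlying object of the bialgebra corresponds to the image in $\Omega^2\cC$ of the generating $2$-cell $A \otimes A \in \Mnd \otimes \Mnd$, where $A$ is the generating $1$-cell of $\Mnd$ (see Remark~\ref{rem:notationforbimnd}). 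So the first step is to reduce the claim to showing that the image of $A \otimes A$ under the composite
$$ \Mnd \otimes \Mnd \to \Adj \otimes \Adj \to \Adj \owedge \Adj \overset{F}{\to} \cC $$
is the $3 \times 3$ pasting~\eqref{eqn:formulaforH}, where $F$ is the pointed functor corresponding via Corollary~\ref{cor:smashretract} to the given adjunctible retract.

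Under the inclusion $\Mnd \hookrightarrow \Adj$ as the endomorphism monad at the source of the generating left adjoint, the generator $A$ corresponds to the composite $rl$. Thus $A \otimes A$ becomes $(rl) \otimes (rl)$ in $\Adj \otimes \Adj$. Using the lax distributivity of the Gray tensor product over composition in each variable, I will decompose this $2$-cell as a pasting of exactly four basic crossings $l_1 \otimes l_2$, $r_1 \otimes l_2$, $l_1 \otimes r_2$, $r_1 \otimes r_2$ organized as the interior of a $3\times 3$ grid. The twelve boundary $1$-cells of the grid are all of the form ``(cell in one factor) $\otimes$ (basepoint of the other),'' and hence collapse to identities in the smash product $\Adj \owedge \Adj$, matching the identity boundary edges of~\eqref{eqn:formulaforH}; likewise, the four interior non-basepoint corners all become the unique non-basepoint object $X \in \cC$.

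The core of the proof is identifying, for each of the four basic crossings, its image under $F$ with the corresponding inner $2$-cell of~\eqref{eqn:formulaforH}. The crossing $r_1 \otimes l_2$ is precisely the filling $2$-cell of the retract square from Corollary~\ref{cor:smashretract}, so $F(r_1 \otimes l_2) = \alpha^{-1}$ fills the NE inner square. Restricted to each of the two ``axis'' inclusions of $\Adj$ into $\Adj \owedge \Adj$, the functor $F$ classifies the adjunctions $f \dashv f^R$ (from the second factor) and $g^L \dashv g$ (from the first factor) in $\cC$. Each remaining basic crossing can be expressed as a pasting of $r_1 \otimes l_2$ with the unit and counit cells of the $\Adj$-adjunction $l \dashv r$ in one or both factors; these pastings are sent by $F$ to the corresponding pastings involving $\alpha^{-1}$ and the units and counits of $f \dashv f^R$ and $g^L \dashv g$, which are by definition the mates $\alpha^\lmate$ (the NW filler), $\alpha^\rmate$ (the SE filler), and the iterated mate $((\alpha^\rmate)^L)^\lmate \simeq ((\alpha^\lmate)^R)^\rmate$ (the SW filler) from~\S\ref{subsec:mates}.

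The main obstacle will be the diagonal crossing $l_1 \otimes r_2$, whose identification with $((\alpha^\rmate)^L)^\lmate$ requires not only the adjunctions $l \dashv r$ in both factors of $\Adj$ but also adjunctibility of the mate $\alpha^\rmate$ itself; this is precisely condition [ii] of the Main Theorem, and it ensures that the left adjoint $(\alpha^\rmate)^L$ exists so that the iterated mate is defined. Concretely, this step requires expressing $l_1 \otimes r_2$ as an iterated mate of $r_1 \otimes l_2$ using the adjunctibility structure on cells in $\Adj \otimes \Adj$ supplied by Propositions~\ref{prop:ClaudiaTheo} and~\ref{prop:adjFunlaxAdj}, and then verifying that $F$ transports this iterated mate to the corresponding iterated mate on the retract side. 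Once all four identifications are in hand, the pasting of the four basic crossings inside the $3 \times 3$ grid assembles on the nose into~\eqref{eqn:formulaforH}.
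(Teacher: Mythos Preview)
Your proposal is correct and follows the same approach as the paper; the paper's own proof is a single sentence (``By definition, the bialgebra $H(X,f,g,\alpha) \in \Omega^2\cC$ is given by restricting along $\Mnd \owedge \Mnd \to \Adj \owedge \Adj$, its underlying object is the further restriction along $\Sone \owedge \Sone \to \Mnd \owedge \Mnd$. Tracing through the definitions we find the above composite.''), and your plan is exactly what that tracing looks like in detail. Your decomposition of $(rl)\otimes(rl)$ into the four basic crossings and your identification of each with the corresponding mate of $\alpha^{-1}$ is precisely the content of ``tracing through the definitions,'' and your observation that the SW crossing $l_1\otimes r_2$ is the one requiring condition~[ii] is on the mark.
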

\begin{proof}
By definition, the bialgebra $H(X,f,g,\alpha) \in \Omega^2 \cC$ is given by restricting along  $\Mnd \owedge \Mnd \to \Adj \owedge \Adj$, its underlying object is the further restriction along $\Sone \owedge \Sone \to \Mnd \owedge \Mnd$. Tracing through the definitions we find the above composite. \end{proof}

\begin{remark} \label{rem:simplesquare}As in condition [ii] of Corollary~\ref{cor:smashretract}, invertibility of $\alpha$ implies that   $\alpha^{\rmate}$ admits a left adjoint (equivalently $\alpha^{\lmate}$ admits a right adjoint) if and only if  the whiskering of the counit $g \ev_f :g ff^R \To g$ admits a left adjoint (equivalently $\ev_g f: g^Lg f \To f$ admits a right adjoint). In particular,  
\[( \alpha^{\rmate})^L \simeq (\alpha^{-1}  f^R) \circ (g\, \ev_f)^{L}  \hspace{1cm} (\alpha^{\lmate})^R \simeq (g^L   \alpha^{-1}) \circ (\ev_g \,  f)^R.
\]
Using functoriality of $(-)^{\lmate}$ and $(-)^{\rmate}$, respectively, the composite \eqref{eqn:unpackedpasting} defining $H(X,f,g,\alpha)$ is therefore equivalent to both the composites 
\begin{equation} \label{eq:simplification1} \id_{1_{\cC}} \To[\alpha] g f \To[(g \, \ev_f)^L f] gff^R f \To[g \,\ev_f\, f] gf \To[\alpha^{-1}] \id_{1_{\cC}}
\end{equation}
and
\begin{equation}
\label{eq:simplification2}
\id_{1_{\cC}} \To[\alpha] gf \To[g (\ev_g \,f)^R] gg^L gf \To[g\, \ev_g\, f] gf \To[\alpha^{-1}] \id_{1_{\cC}}.
\end{equation}
The algebra structure on~\eqref{eqn:unpackedpasting} is induced from writing it via~\eqref{eq:simplification1} as a composite $\gamma \circ \gamma^L$ (where $\gamma = \alpha^{-1} \circ (g\,\ev_f \,f) : gff^Rf \To \id_{1_{\cC}}$), while the coalgebra structure is induced from writing it via~\eqref{eq:simplification2} as a composite $\delta \circ \delta^R$ (where $\delta = \alpha^{-1} \circ (g\,\ev_g \,f): gg^Lgf \To \id_{1_{\cC}}$). 

From this perspective, any $2$-morphism $\mu: \id_{1_{\cC}} \To gff^Rf$ defines a canonical (and coherent) left module $\gamma \circ \mu$ while any $2$-morphism $\nu:\id_{1_{\cC}} \To gg^Lg f $ defines a canonical left comodule $\delta \circ \nu$. In particular, $\mu:= (gf \coev_f) \circ \alpha$ defines a left module structure on $\gamma \circ \mu \simeq \id_{\id_{1_{\cC}}}$, i.e.\ an algebra homomorphism $\gamma  \circ \gamma^L \To \id_{1_{\cC}}$ --- this is precisely the counit of the Hopf algebra. Similarly, $\nu:=(\coev_g gf) \circ \alpha $ defines a left comodule structure on $\delta \circ \nu \simeq \id_{\id_{1_{\cC}}}$, i.e.\ a coalgebra homomorphism $\id_{1_{\cC}} \To \delta \circ \delta^R$ --- this is precisely the unit of the Hopf algebra.
\end{remark}

We find it instructive to give an equivalent description of the (co)algebra structure directly in terms of the more symmetric composite~\eqref{eqn:unpackedpasting}. 
We will henceforth abbreviate $\alpha^\sharp := ((\alpha^{\rmate})^L)^{\lmate} \simeq ((\alpha^{\lmate})^R)^{\rmate}$.
String diagramatically (and recalling our convention that $1$-morphisms in string diagrams compose from right to left, and $2$-morphisms compose from bottom to top), we can equally write $H(X,f,g,\alpha)$ as:
\begin{equation}\label{eqn:unpackedstring}
H(X,f,g,\alpha) \quad \simeq \quad 
\begin{tikzpicture}[scale=1.5,baseline=(M.base)]
\path[fill=gray,opacity=0.25] (-1,0) -- (0,-1) -- (+1,0) -- (0,+1) -- cycle;
\path 
(-1,0) node[draw, ellipse, inner sep=0.5pt,fill=white] (W) {$\scriptstyle \alpha^\rmate$}
(+1,0) node[draw, ellipse, inner sep=0.5pt,fill=white] (E) {$\scriptstyle \alpha^\lmate$}
(0,+1) node[draw, ellipse, inner sep=0.5pt,fill=white] (N) {$\scriptstyle \alpha^{-1}$}
(0,-1) node[draw, ellipse, inner sep=0.5pt,fill=white] (S) {$\scriptstyle \alpha^\sharp$}
(0,0) node (M) {$X$}
;
\draw[string, black!50!red] (S) -- node[auto] {$\scriptstyle f^R$} (W);
\draw[string, black!50!blue] (S) -- node[auto,swap] {$\scriptstyle g^L$} (E);
\draw[string, black!50!blue] (W) -- node[auto] {$\scriptstyle g$} (N);
\draw[string, black!50!red] (E) -- node[auto,swap] {$\scriptstyle f$} (N);
\end{tikzpicture}
\end{equation} 

The algebra and coalgebra structures on this object are described abstractly in Corollary~\ref{cor:part2ofmaintheorem} and Remark~\ref{rem:notationforbimnd}.
Concretely, the algebra structure arises as follows. Since $\alpha^\sharp = ((\alpha^{\rmate})^L)^{\lmate}$, unpacking the mates in the south and east corners gives:
\begin{equation}\label{eqn:Hasanalgebra}
H(X,f,g,\alpha) \quad \simeq \quad 
\begin{tikzpicture}[scale=1.5,baseline=(M.base)]
\path 
(-1,0) coordinate (W)
(+1,0) coordinate (E)
(0,+1) coordinate (N)
(0,-1) coordinate (S)
(0,0) node (M) {$X$}
;
\path[fill=gray,opacity=0.25] (W) -- (S) .. controls +(1,-1) and +(-.5,.5) .. (E) .. controls +(0,.5) and +(.5,-.5) .. (N) -- (W);
\draw[string, black!50!red] (S) -- node[auto] {$\scriptstyle f^R$} (W);
\draw[string, black!50!blue] (S) .. controls +(1,-1) and +(-.5,.5) .. node[auto,swap, pos=0.6] {$\scriptstyle g^L$} (E);
\draw[string, black!50!blue] (W) -- node[auto] {$\scriptstyle g$} (N);
\draw[string, black!50!red] (E) .. controls +(0,.5) and +(.5,-.5) .. node[auto,swap] {$\scriptstyle f$} (N);
\path 
(W) node[draw, ellipse, inner sep=0.5pt,fill=white] {$\scriptstyle \alpha^\rmate$}
(E) node[draw, ellipse, inner sep=0.5pt,fill=white] {$\scriptstyle \alpha$}
(N) node[draw, ellipse, inner sep=0.5pt,fill=white] {$\scriptstyle \alpha^{-1}$}
(S) node[draw, ellipse, inner sep=0.5pt,fill=white] {$\scriptstyle \alpha^{\rmate L}$}
;
\end{tikzpicture}
\quad \simeq \quad 
\begin{tikzpicture}[scale=1.5,baseline=(M.base)]
\path[fill=gray,opacity=0.25] (-.5,.5) -- (.5,+1) -- (+.5,-1) -- (-.5,-.5) -- cycle;
\path 
(-.5,.5) node[draw, ellipse, inner sep=0.5pt,fill=white] (W) {$\scriptstyle \alpha^\rmate$}
(+.5,-1) node[draw, ellipse, inner sep=0.5pt,fill=white] (E) {$\scriptstyle \alpha$}
(.5,+1) node[draw, ellipse, inner sep=0.5pt,fill=white] (N) {$\scriptstyle \alpha^{-1}$}
(-.5,-.5) node[draw, ellipse, inner sep=0.5pt,fill=white] (S) {$\scriptstyle \alpha^{\rmate L}$}
(0,0) node (M) {$X$}
;
\draw[string, black!50!red] (S) -- node[auto] {$\scriptstyle f^R$} (W);
\draw[string, black!50!blue] (S) -- node[auto,swap] {$\scriptstyle g$} (E);
\draw[string, black!50!blue] (W) -- node[auto] {$\scriptstyle g$} (N);
\draw[string, black!50!red] (E) -- node[auto,swap] {$\scriptstyle f$} (N);
\end{tikzpicture}\vspace{-24pt}
\end{equation}
But this is manifestly a (vertical) composition of the form $\beta \circ \beta^L$, and hence an algebra. On the other hand, writing $\alpha^\sharp \simeq ((\alpha^{\lmate})^R)^{\rmate}$, we can instead unpack the south and west mates to give: 
\begin{equation}\label{eqn:Hasacoalgebra}
H(X,f,g,\alpha) \quad \simeq \quad 
\begin{tikzpicture}[scale=1.5,baseline=(M.base)]
\path 
(-1,0) coordinate (W)
(+1,0) coordinate (E)
(0,+1) coordinate (N)
(0,-1) coordinate (S)
(0,0) node (M) {$X$}
;
\path[fill=gray,opacity=0.25] (S) .. controls +(-1,-1) and +(.5,.5) .. (W) .. controls +(0,.5) and +(-.5,-.5) .. (N) -- (E) -- (S);
\draw[string, black!50!red] (S) .. controls +(-1,-1) and +(.5,.5) .. node[auto, pos=0.6] {$\scriptstyle f^R$} (W);
\draw[string, black!50!blue] (S) -- node[auto,swap] {$\scriptstyle g^L$} (E);
\draw[string, black!50!blue] (W) .. controls +(0,.5) and +(-.5,-.5) .. node[auto] {$\scriptstyle g$} (N);
\draw[string, black!50!red] (E) -- node[auto,swap] {$\scriptstyle f$} (N);
\path 
(W) node[draw, ellipse, inner sep=0.5pt,fill=white] {$\scriptstyle \alpha$}
(E) node[draw, ellipse, inner sep=0.5pt,fill=white] {$\scriptstyle \alpha^\lmate$}
(N) node[draw, ellipse, inner sep=0.5pt,fill=white] {$\scriptstyle \alpha^{-1}$}
(S) node[draw, ellipse, inner sep=0.5pt,fill=white] {$\scriptstyle \alpha^{\lmate R}$}
;
\end{tikzpicture}
\quad \simeq \quad 
\begin{tikzpicture}[scale=1.5,baseline=(M.base)]
\path[fill=gray,opacity=0.25] (-.5,-1) -- (-.5,+1) -- (+.5,.5) -- (.5,-.5) -- cycle;
\path 
(-.5,-1) node[draw, ellipse, inner sep=0.5pt,fill=white] (W) {$\scriptstyle \alpha$}
(+.5,.5) node[draw, ellipse, inner sep=0.5pt,fill=white] (E) {$\scriptstyle \alpha^\lmate$}
(-.5,+1) node[draw, ellipse, inner sep=0.5pt,fill=white] (N) {$\scriptstyle \alpha^{-1}$}
(.5,-.5) node[draw, ellipse, inner sep=0.5pt,fill=white] (S) {$\scriptstyle \alpha^{\lmate R}$}
(0,0) node (M) {$X$}
;
\draw[string, black!50!red] (S) -- node[auto] {$\scriptstyle f$} (W);
\draw[string, black!50!blue] (S) -- node[auto,swap] {$\scriptstyle g^L$} (E);
\draw[string, black!50!blue] (W) -- node[auto] {$\scriptstyle g$} (N);
\draw[string, black!50!red] (E) -- node[auto,swap] {$\scriptstyle f$} (N);
\end{tikzpicture}
\vspace{-24pt}\end{equation}
This is a composition of the form $\gamma \circ \gamma^R$ and hence a coalgebra.

In other words, the multiplication, unit, counit, and comultiplication can be pictured respectively as the following 3-dimensional diagrams:
$$
\hspace{-1in}
\begin{tikzpicture}[scale=0.7]
  \path 
  (0,0,0)
  +(-1,0,0)  coordinate (Wm)
  +(+1,0,0)  coordinate (Em)
  +(0,0,+1)  coordinate (Sm)
  +(0,0,-1)  coordinate (Nm)
  ++(+.25,-1.85,0)
  +(-.5,0,-.5) coordinate (pi)
  +(+.5,0,+.5) coordinate (pii)
  (-1,-3,+1)
  +(-1,0,0)  coordinate (Wm1) 
  +(+1,0,0)  coordinate (Em1) 
  +(0,0,+1)  coordinate (Sm1) 
  +(0,0,-1)  coordinate (Nm1) 
  (+1,-3,-1)
  +(-1,0,0)  coordinate (Wm2) 
  +(+1,0,0)  coordinate (Em2) 
  +(0,0,+1)  coordinate (Sm2) 
  +(0,0,-1)  coordinate (Nm2) 
  ;
  \draw[string, black!50!red] (Em) -- (Nm);
  \draw[string, black!50!blue] (Nm) -- (Wm);
  \draw[string, black!50!red] (Em1) -- (Nm1);
  \draw[string, black!50!blue] (Nm1) -- (Wm1);
  \draw[string, black!50!red] (Em2) -- (Nm2);
  \draw[string, black!50!blue] (Nm2) -- (Wm2);
  \draw[string] (Nm2) .. controls +(0,+1,0) and +(0,-1,0) .. (Nm);
  \draw[string] (Nm1) .. controls +(0,+1.25,0) and +(-.5,0,-.5) .. (pi) .. controls +(+.2,0,+.2) and +(0,+.75,0)  .. (Wm2);
  \path[fill=gray!50!blue,opacity=0.5] (Wm1) .. controls +(0,+1,0) and +(0,-1,0) .. (Wm) -- (Nm) .. controls +(0,-1,0) and +(0,+1,0) .. (Nm2) -- (Wm2) 
  .. controls +(0,+.75,0) and +(+.2,0,+.2) .. (pi)  .. controls +(-.5,0,-.5) and +(0,+1.25,0)  ..  (Nm1) -- (Wm1);
  \path[fill=gray!50!red,opacity=0.5] (Nm2) .. controls +(0,+1,0) and +(0,-1,0) .. (Nm) -- (Em) .. controls +(0,-1,0) and +(0,+1,0) .. (Em2) -- (Nm2);
  \draw[string] (Wm1) .. controls +(0,+1,0) and +(0,-1,0) .. (Wm);
  \draw[string] (Em2) .. controls +(0,+1,0) and +(0,-1,0) .. (Em);
  \path[fill=gray!50!red,opacity=0.5] (Wm1) .. controls +(0,+1,0) and +(0,-1,0) .. (Wm) -- (Sm) .. controls +(0,-1,0) and +(0,+1,0) .. (Sm1) -- (Wm1);
  \path[fill=gray!50!red,opacity=0.5] (Nm1) .. controls +(0,+1.25,0) and +(-.5,0,-.5) .. (pi) -- (pii) .. controls +(-.5,0,-.5) and +(0,+1.25,0)  ..  (Em1) -- (Nm1);
  \path[fill=gray!50!red,opacity=0.5] (Sm2) .. controls +(0,+.75,0) and +(+.2,0,+.2) .. (pii) -- (pi) .. controls +(+.2,0,+.2) and +(0,+.75,0)  .. (Wm2) -- (Sm2);
  \path[fill=gray!50!blue,opacity=0.5] (Sm1) .. controls +(0,+1,0) and +(0,-1,0) .. (Sm) -- (Em) .. controls +(0,-1,0) and +(0,+1,0) .. (Em2) -- (Sm2) .. controls +(0,+.75,0) and +(+.2,0,+.2) .. (pii)  .. controls +(-.5,0,-.5) and +(0,+1.25,0)  ..  (Em1) -- (Sm1);
  \draw[string] (Sm1) .. controls +(0,+1,0) and +(0,-1,0) .. (Sm);
  \draw[string] (Em1) .. controls +(0,+1.25,0) and +(-.5,0,-.5) .. (pii) .. controls +(+.2,0,+.2) and +(0,+.75,0)  .. (Sm2);
  \draw[string, black!50!red] (Wm1) -- (Sm1);
  \draw[string, black!50!blue] (Sm1) -- (Em1);
  \draw[string, black!50!red] (Wm2) -- (Sm2);
  \draw[string, black!50!blue] (Sm2) -- (Em2);
  \draw[string, black!50!red] (Wm) -- (Sm);
  \draw[string, black!50!blue] (Sm) -- (Em);
  \path (2.625,-1.5) node {,};
  \path 
  (4.25,0,0)
  +(-1,0,0)  coordinate (Wu)
  +(+1,0,0)  coordinate (Eu)
  +(0,0,+1)  coordinate (Su)
  +(0,0,-1)  coordinate (Nu)
  ++(-.25,-1.15,0)
  +(-.5,0,-.5) coordinate (boti)
  +(+.5,0,+.5) coordinate (botii)
  ;
  \draw[string, black!50!red]  (Eu) -- (Nu);
  \draw[string, black!50!blue] (Nu) -- (Wu);
  \draw[string] (Wu) .. controls +(0,-.75,0) and +(-.2,0,-.2) .. (boti) .. controls +(+.5,0,+.5) and +(0,-1.25,0) .. (Nu);
  \path[fill=gray!50!blue,opacity=0.5] (Wu) .. controls +(0,-.75,0) and +(-.2,0,-.2) .. (boti) .. controls +(+.5,0,+.5) and +(0,-1.25,0) .. (Nu) -- (Wu);
  \path[fill=gray!50!red,opacity=0.5] (boti) .. controls +(+.5,0,+.5) and +(0,-1.25,0) .. (Nu) -- (Eu) .. controls +(0,-1.25,0) and +(+.5,0,+.5) .. (botii) -- (boti);
  \path[fill=gray!50!red,opacity=0.5] (Wu) .. controls +(0,-.75,0) and +(-.2,0,-.2) .. (boti) -- (botii) .. controls +(-.2,0,-.2) and +(0,-.75,0)  .. (Su) -- (Wu);
  \path[fill=gray!50!blue,opacity=0.5] (Su) .. controls +(0,-.75,0) and +(-.2,0,-.2) .. (botii) .. controls +(+.5,0,+.5) and +(0,-1.25,0) .. (Eu) -- (Su);
  \draw[string, black!50!red]  (Wu) -- (Su);
  \draw[string, black!50!blue] (Su) -- (Eu);
  \draw[string] (Su) .. controls +(0,-.75,0) and +(-.2,0,-.2) .. (botii) .. controls +(+.5,0,+.5) and +(0,-1.25,0) .. (Eu);
  \path (5.375,-1.5) node {,};
  \path 
  (7,-3,0)
  +(-1,0,0)  coordinate (Wcu)
  +(+1,0,0)  coordinate (Ecu)
  +(0,0,+1)  coordinate (Scu)
  +(0,0,-1)  coordinate (Ncu)
  ++(-.05,1.15,0)
  +(-.5,0,+.5) coordinate (topi)
  +(+.5,0,-.5) coordinate (topii)
  ;
  \draw[string, black!50!red]  (Ecu) -- (Ncu);
  \draw[string, black!50!blue] (Ncu) -- (Wcu);
  \draw[string] (Wcu) .. controls +(0,+.75,0) and +(-.05,0,+.05) .. (topi) .. controls +(+.15,0,-.15) and +(0,+1,0) .. (Scu);
  \draw[string] (Ncu) .. controls +(0,+.75,0) and +(-.05,0,+.05) .. (topii) .. controls +(+.15,0,-.15) and +(0,+1,0) .. (Ecu);
  \path[fill=gray!50!red,opacity=0.5] (Ncu) .. controls +(0,+.75,0) and +(-.05,0,+.05) .. (topii) .. controls +(+.15,0,-.15) and +(0,+1,0) .. (Ecu) -- (Ncu);
  \path[fill=gray!50!blue,opacity=0.5] (Wcu) .. controls +(0,+.75,0) and +(-.05,0,+.05) .. (topi) -- (topii) .. controls +(-.05,0,+.05) and +(0,+.75,0) .. (Ncu) -- (Wcu);
  \path[fill=gray!50!red,opacity=0.5] (Wcu) .. controls +(0,+.75,0) and +(-.05,0,+.05) .. (topi) .. controls +(+.15,0,-.15) and +(0,+1,0) .. (Scu) -- (Wcu);
  \draw[string, black!50!red]  (Wcu) -- (Scu);
  \draw[string, black!50!blue] (Scu) -- (Ecu);
  \path[fill=gray!50!blue,opacity=0.5] (topi) .. controls +(+.15,0,-.15) and +(0,+1,0) .. (Scu) -- (Ecu) .. controls +(0,+1,0)  and +(+.15,0,-.15) .. (topii) -- (topi);
  \path (8.125,-1.5) node {,};
  \path
  (10.75,-3,0)
  +(-1,0,0)  coordinate (Wc)
  +(+1,0,0)  coordinate (Ec)
  +(0,0,+1)  coordinate (Sc)
  +(0,0,-1)  coordinate (Nc)
  ++(+.05,1.85,0)
  +(-.5,0,+.5) coordinate (qi)
  +(+.5,0,-.5) coordinate (qii)
  (9.75,0,-1)
  +(-1,0,0)  coordinate (Wc1) 
  +(+1,0,0)  coordinate (Ec1) 
  +(0,0,+1)  coordinate (Sc1) 
  +(0,0,-1)  coordinate (Nc1) 
  (11.75,0,+1)
  +(-1,0,0)  coordinate (Wc2) 
  +(+1,0,0)  coordinate (Ec2) 
  +(0,0,+1)  coordinate (Sc2) 
  +(0,0,-1)  coordinate (Nc2) 
  ;
  \draw[string, black!50!red]  (Ec) -- (Nc);
  \draw[string, black!50!blue] (Nc) -- (Wc);
  \draw[string, black!50!red]  (Ec1) -- (Nc1);
  \draw[string, black!50!blue] (Nc1) -- (Wc1);
  \draw[string, black!50!red]  (Ec2) -- (Nc2);
  \draw[string, black!50!blue] (Nc2) -- (Wc2);
  \draw[string] (Wc1) .. controls +(0,-1,0) and +(0,+1,0) .. (Wc);
  \draw[string] (Nc1) .. controls +(0,-1,0) and +(0,+1,0) .. (Nc);
  \draw[string] (Ec2) .. controls +(0,-1,0) and +(0,+1,0) .. (Ec);
  \draw[string] (Nc2) .. controls +(0,-.75,0) and +(+.05,0,-.05) .. (qii) .. controls +(-.15,0,+.15) and +(0,-1,0) .. (Ec1);
  \path[fill=gray!50!blue,opacity=0.5] (Wc1) .. controls +(0,-1,0) and +(0,+1,0) .. (Wc) -- (Nc) .. controls +(0,+1,0) and +(0,-1,0) .. (Nc1) -- (Wc1);
  \path[fill=gray!50!red,opacity=0.5] (Nc1) .. controls +(0,-1,0) and +(0,+1,0) .. (Nc) -- (Ec) .. controls +(0,+1,0) and +(0,-1,0) .. (Ec2) -- (Nc2) .. controls +(0,-.75,0) and +(+.05,0,-.05) .. (qii) .. controls +(-.15,0,+.15) and +(0,-1,0) .. (Ec1) -- (Nc1);
  \path[fill=gray!50!blue,opacity=0.5] (qii) .. controls +(-.15,0,+.15) and +(0,-1,0) .. (Ec1) -- (Sc1) .. controls +(0,-1,0) and +(-.15,0,+.15) .. (qi) -- (qii);
  \draw[string] (Wc2) .. controls +(0,-.75,0) and +(+.05,0,-.05) .. (qi) .. controls +(-.15,0,+.15) and +(0,-1,0) .. (Sc1);
  \path[fill=gray!50!blue,opacity=0.5] (qi) .. controls +(+.05,0,-.05) and +(0,-.75,0)  .. (Wc2) -- (Nc2) .. controls +(0,-.75,0) and +(+.05,0,-.05) .. (qii) -- (qi);
  \path[fill=gray!50!red,opacity=0.5] (Wc1) .. controls +(0,-1,0) and +(0,+1,0) .. (Wc) -- (Sc) .. controls +(0,+1,0) and +(0,-1,0) .. (Sc2) -- (Wc2) .. controls +(0,-.75,0) and +(+.05,0,-.05) .. (qi) .. controls +(-.15,0,+.15) and +(0,-1,0) .. (Sc1) -- (Wc1);
  \path[fill=gray!50!blue,opacity=0.5] (Sc2) .. controls +(0,-1,0) and +(0,+1,0) .. (Sc) -- (Ec) .. controls +(0,+1,0) and +(0,-1,0) .. (Ec2) -- (Sc2);
  \draw[string, black!50!red]  (Wc) -- (Sc);
  \draw[string, black!50!blue] (Sc) -- (Ec);
  \draw[string, black!50!red]  (Wc1) -- (Sc1);
  \draw[string, black!50!blue] (Sc1) -- (Ec1);
  \draw[string, black!50!red]  (Wc2) -- (Sc2);
  \draw[string, black!50!blue] (Sc2) -- (Ec2);
  \draw[string] (Sc2) .. controls +(0,-1,0) and +(0,+1,0) .. (Sc);
\end{tikzpicture}
\hspace{-1in}
$$

The bialgebra axioms asserting compatibility between the algebra and coalgebra structures hold coherently for abstract reasons by 
Corollaries~\ref{cor:mndsquared} and~\ref{cor:part2ofmaintheorem}.
In terms of the three-dimensional diagrams, these laws assert: 
\begin{gather}
\label{eqn:coaug}
\begin{tikzpicture}[baseline=(middle), scale=0.7]
  \path (0,-1,0) coordinate (middle);
  \path
  (0,-2.5,0)
  +(-1,0,0)  coordinate (Wc)
  +(+1,0,0)  coordinate (Ec)
  +(0,0,+1)  coordinate (Sc)
  +(0,0,-1)  coordinate (Nc)
  ++(-.25,-1.15,0)
  +(-.5,0,-.5) coordinate (boti)
  +(+.5,0,+.5) coordinate (botii)
  (0,-3,0)
  ++(+.05,1.85,0)
  +(-.5,0,+.5) coordinate (qi)
  +(+.5,0,-.5) coordinate (qii)
  (-1,0,-1)
  +(-1,0,0)  coordinate (Wc1) 
  +(+1,0,0)  coordinate (Ec1) 
  +(0,0,+1)  coordinate (Sc1) 
  +(0,0,-1)  coordinate (Nc1) 
  (+1,0,+1)
  +(-1,0,0)  coordinate (Wc2) 
  +(+1,0,0)  coordinate (Ec2) 
  +(0,0,+1)  coordinate (Sc2) 
  +(0,0,-1)  coordinate (Nc2) 
  ;
  \draw[string, black!50!red]  (Ec1) -- (Nc1);
  \draw[string, black!50!blue] (Nc1) -- (Wc1);
  \draw[string, black!50!red]  (Ec2) -- (Nc2);
  \draw[string, black!50!blue] (Nc2) -- (Wc2);
  \draw[string] (Wc1) .. controls +(0,-1,0) and +(0,+1,0) .. (Wc) .. controls +(0,-.75,0) and +(-.2,0,-.2) .. (boti) .. controls +(+.5,0,+.5) and +(0,-1.25,0) .. (Nc)  .. controls +(0,+1,0) and +(0,-1,0) .. (Nc1);
  \draw[string] (Wc2) .. controls +(0,-.75,0) and +(+.05,0,-.05) .. (qi) .. controls +(-.15,0,+.15) and +(0,-1,0) .. (Sc1);
  \draw[string] (Nc2) .. controls +(0,-.75,0) and +(+.05,0,-.05) .. (qii) .. controls +(-.15,0,+.15) and +(0,-1,0) .. (Ec1);
  \path[fill=gray!50!blue,opacity=0.5] (Wc1) .. controls +(0,-1,0) and +(0,+1,0) .. (Wc) .. controls +(0,-.75,0) and +(-.2,0,-.2) .. (boti) .. controls +(+.5,0,+.5) and +(0,-1.25,0) .. (Nc) .. controls +(0,+1,0) and +(0,-1,0) .. (Nc1) -- (Wc1);
  \path[fill=gray!50!red,opacity=0.5] (Nc1) .. controls +(0,-1,0) and +(0,+1,0) .. (Nc) .. controls +(0,-1.25,0) and +(+.5,0,+.5) .. (boti) -- (botii) .. controls +(+.5,0,+.5) and +(0,-1.25,0) .. (Ec) .. controls +(0,+1,0) and +(0,-1,0) .. (Ec2) -- (Nc2) .. controls +(0,-.75,0) and +(+.05,0,-.05) .. (qii) .. controls +(-.15,0,+.15) and +(0,-1,0) .. (Ec1) -- (Nc1);
  \path[fill=gray!50!blue,opacity=0.5] (qii) .. controls +(-.15,0,+.15) and +(0,-1,0) .. (Ec1) -- (Sc1) .. controls +(0,-1,0) and +(-.15,0,+.15) .. (qi) -- (qii);
  \path[fill=gray!50!blue,opacity=0.5] (qi) .. controls +(+.05,0,-.05) and +(0,-.75,0)  .. (Wc2) -- (Nc2) .. controls +(0,-.75,0) and +(+.05,0,-.05) .. (qii) -- (qi);
  \path[fill=gray!50!red,opacity=0.5] (Wc1) .. controls +(0,-1,0) and +(0,+1,0) .. (Wc) .. controls +(0,-.75,0) and +(-.2,0,-.2) .. (boti) -- (botii) .. controls +(-.2,0,-.2) and +(0,-.75,0)  .. (Sc) .. controls +(0,+1,0) and +(0,-1,0) .. (Sc2) -- (Wc2) .. controls +(0,-.75,0) and +(+.05,0,-.05) .. (qi) .. controls +(-.15,0,+.15) and +(0,-1,0) .. (Sc1) -- (Wc1);
  \path[fill=gray!50!blue,opacity=0.5] (Sc2) .. controls +(0,-1,0) and +(0,+1,0) .. (Sc) .. controls +(0,-.75,0) and +(-.2,0,-.2) .. (botii) .. controls +(+.5,0,+.5) and +(0,-1.25,0) .. (Ec) .. controls +(0,+1,0) and +(0,-1,0) .. (Ec2) -- (Sc2);
  \draw[string, black!50!red]  (Wc1) -- (Sc1);
  \draw[string, black!50!blue] (Sc1) -- (Ec1);
  \draw[string, black!50!red]  (Wc2) -- (Sc2);
  \draw[string, black!50!blue] (Sc2) -- (Ec2);
  \draw[string] (Sc2) .. controls +(0,-1,0) and +(0,+1,0) .. (Sc) .. controls +(0,-.75,0) and +(-.2,0,-.2) .. (botii) .. controls +(+.5,0,+.5) and +(0,-1.25,0) .. (Ec) .. controls +(0,+1,0) and +(0,-1,0) .. (Ec2);
\end{tikzpicture}
\quad = \quad
\begin{tikzpicture}[baseline=(middle),scale=0.7]
  \path (0,-1,0) coordinate (middle);
  \path
  (0,-1,0)
  +(-2,0,-1)  coordinate (Wc)
  +(+2,0,1)  coordinate (Ec)
  +(1,0,+2)  coordinate (Sc)
  +(-1,0,-2)  coordinate (Nc)
  ++(-.25,-1.15,0)
  +(-1.5,0,-1.5) coordinate (boti)
  +(+1.5,0,+1.5) coordinate (botii)
  (0,-3,0)
  ++(+.05,1.85,0)
  +(-.5,0,+.5) coordinate (qi)
  +(+.5,0,-.5) coordinate (qii)
  (-1,0,-1)
  +(-1,0,0)  coordinate (Wc1) 
  +(+1,0,0)  coordinate (Ec1) 
  +(0,0,+1)  coordinate (Sc1) 
  +(0,0,-1)  coordinate (Nc1) 
  (+1,0,+1)
  +(-1,0,0)  coordinate (Wc2) 
  +(+1,0,0)  coordinate (Ec2) 
  +(0,0,+1)  coordinate (Sc2) 
  +(0,0,-1)  coordinate (Nc2) 
  ;
  \draw[string, black!50!red]  (Ec1) -- (Nc1);
  \draw[string, black!50!blue] (Nc1) -- (Wc1);
  \draw[string, black!50!red]  (Ec2) -- (Nc2);
  \draw[string, black!50!blue] (Nc2) -- (Wc2);
  \draw[string] (Wc1) .. controls +(0,-1,0) and +(0,+1,0) .. (Wc) .. controls +(0,-.75,0) and +(-.2,0,-.2) .. (boti) .. controls +(+.5,0,+.5) and +(0,-1.25,0) .. (Nc)  .. controls +(0,+1,0) and +(0,-1,0) .. (Nc1);
  \draw[string] (Wc2) .. controls +(0,-.75,0) and +(+.05,0,-.05) .. (qi) .. controls +(-.15,0,+.15) and +(0,-1,0) .. (Sc1);
  \draw[string] (Nc2) .. controls +(0,-.75,0) and +(+.05,0,-.05) .. (qii) .. controls +(-.15,0,+.15) and +(0,-1,0) .. (Ec1);
  \path[fill=gray!50!blue,opacity=0.5] (Wc1) .. controls +(0,-1,0) and +(0,+1,0) .. (Wc) .. controls +(0,-.75,0) and +(-.2,0,-.2) .. (boti) .. controls +(+.5,0,+.5) and +(0,-1.25,0) .. (Nc) .. controls +(0,+1,0) and +(0,-1,0) .. (Nc1) -- (Wc1);
  \path[fill=gray!50!red,opacity=0.5] (Nc1) .. controls +(0,-1,0) and +(0,+1,0) .. (Nc) .. controls +(0,-1.25,0) and +(+.5,0,+.5) .. (boti) -- (botii) .. controls +(+.5,0,+.5) and +(0,-1.25,0) .. (Ec) .. controls +(0,+1,0) and +(0,-1,0) .. (Ec2) -- (Nc2) .. controls +(0,-.75,0) and +(+.05,0,-.05) .. (qii) .. controls +(-.15,0,+.15) and +(0,-1,0) .. (Ec1) -- (Nc1);
  \path[fill=gray!50!blue,opacity=0.5] (qii) .. controls +(-.15,0,+.15) and +(0,-1,0) .. (Ec1) -- (Sc1) .. controls +(0,-1,0) and +(-.15,0,+.15) .. (qi) -- (qii);
  \path[fill=gray!50!blue,opacity=0.5] (qi) .. controls +(+.05,0,-.05) and +(0,-.75,0)  .. (Wc2) -- (Nc2) .. controls +(0,-.75,0) and +(+.05,0,-.05) .. (qii) -- (qi);
  \path[fill=gray!50!red,opacity=0.5] (Wc1) .. controls +(0,-1,0) and +(0,+1,0) .. (Wc) .. controls +(0,-.75,0) and +(-.2,0,-.2) .. (boti) -- (botii) .. controls +(-.2,0,-.2) and +(0,-.75,0)  .. (Sc) .. controls +(0,+1,0) and +(0,-1,0) .. (Sc2) -- (Wc2) .. controls +(0,-.75,0) and +(+.05,0,-.05) .. (qi) .. controls +(-.15,0,+.15) and +(0,-1,0) .. (Sc1) -- (Wc1);
  \path[fill=gray!50!blue,opacity=0.5] (Sc2) .. controls +(0,-1,0) and +(0,+1,0) .. (Sc) .. controls +(0,-.75,0) and +(-.2,0,-.2) .. (botii) .. controls +(+.5,0,+.5) and +(0,-1.25,0) .. (Ec) .. controls +(0,+1,0) and +(0,-1,0) .. (Ec2) -- (Sc2);
  \draw[string, black!50!red]  (Wc1) -- (Sc1);
  \draw[string, black!50!blue] (Sc1) -- (Ec1);
  \draw[string, black!50!red]  (Wc2) -- (Sc2);
  \draw[string, black!50!blue] (Sc2) -- (Ec2);
  \draw[string] (Sc2) .. controls +(0,-1,0) and +(0,+1,0) .. (Sc) .. controls +(0,-.75,0) and +(-.2,0,-.2) .. (botii) .. controls +(+.5,0,+.5) and +(0,-1.25,0) .. (Ec) .. controls +(0,+1,0) and +(0,-1,0) .. (Ec2);
\end{tikzpicture}
\quad = \quad
\begin{tikzpicture}[baseline=(middle), scale=0.7]
  \path (0,-1,0) coordinate (middle);
  \path
  (-1,0,-1)
  +(-1,0,0)  coordinate (Wu1) 
  +(+1,0,0)  coordinate (Eu1) 
  +(0,0,+1)  coordinate (Su1) 
  +(0,0,-1)  coordinate (Nu1) 
  ++(-.25,-1.15,0)
  +(-.5,0,-.5) coordinate (boti1)
  +(+.5,0,+.5) coordinate (botii1)
  (+1,0,+1)
  +(-1,0,0)  coordinate (Wu2) 
  +(+1,0,0)  coordinate (Eu2) 
  +(0,0,+1)  coordinate (Su2) 
  +(0,0,-1)  coordinate (Nu2) 
  ++(-.25,-1.15,0)
  +(-.5,0,-.5) coordinate (boti2)
  +(+.5,0,+.5) coordinate (botii2)
  ;
  \draw[string, black!50!red]  (Eu1) -- (Nu1);
  \draw[string, black!50!blue] (Nu1) -- (Wu1);
  \draw[string] (Wu1) .. controls +(0,-.75,0) and +(-.2,0,-.2) .. (boti1) .. controls +(+.5,0,+.5) and +(0,-1.25,0) .. (Nu1);
  \path[fill=gray!50!blue,opacity=0.5] (Wu1) .. controls +(0,-.75,0) and +(-.2,0,-.2) .. (boti1) .. controls +(+.5,0,+.5) and +(0,-1.25,0) .. (Nu1) -- (Wu1);
  \path[fill=gray!50!red,opacity=0.5] (boti1) .. controls +(+.5,0,+.5) and +(0,-1.25,0) .. (Nu1) -- (Eu1) .. controls +(0,-1.25,0) and +(+.5,0,+.5) .. (botii1) -- (boti1);
  \path[fill=gray!50!red,opacity=0.5] (Wu1) .. controls +(0,-.75,0) and +(-.2,0,-.2) .. (boti1) -- (botii1) .. controls +(-.2,0,-.2) and +(0,-.75,0)  .. (Su1) -- (Wu1);
  \path[fill=gray!50!blue,opacity=0.5] (Su1) .. controls +(0,-.75,0) and +(-.2,0,-.2) .. (botii1) .. controls +(+.5,0,+.5) and +(0,-1.25,0) .. (Eu1) -- (Su1);
  \draw[string, black!50!red]  (Wu1) -- (Su1);
  \draw[string, black!50!blue] (Su1) -- (Eu1);
  \draw[string] (Su1) .. controls +(0,-.75,0) and +(-.2,0,-.2) .. (botii1) .. controls +(+.5,0,+.5) and +(0,-1.25,0) .. (Eu1);
  \draw[string, black!50!red]  (Eu2) -- (Nu2);
  \draw[string, black!50!blue] (Nu2) -- (Wu2);
  \draw[string] (Wu2) .. controls +(0,-.75,0) and +(-.2,0,-.2) .. (boti2) .. controls +(+.5,0,+.5) and +(0,-1.25,0) .. (Nu2);
  \path[fill=gray!50!blue,opacity=0.5] (Wu2) .. controls +(0,-.75,0) and +(-.2,0,-.2) .. (boti2) .. controls +(+.5,0,+.5) and +(0,-1.25,0) .. (Nu2) -- (Wu2);
  \path[fill=gray!50!red,opacity=0.5] (boti2) .. controls +(+.5,0,+.5) and +(0,-1.25,0) .. (Nu2) -- (Eu2) .. controls +(0,-1.25,0) and +(+.5,0,+.5) .. (botii2) -- (boti2);
  \path[fill=gray!50!red,opacity=0.5] (Wu2) .. controls +(0,-.75,0) and +(-.2,0,-.2) .. (boti2) -- (botii2) .. controls +(-.2,0,-.2) and +(0,-.75,0)  .. (Su2) -- (Wu2);
  \path[fill=gray!50!blue,opacity=0.5] (Su2) .. controls +(0,-.75,0) and +(-.2,0,-.2) .. (botii2) .. controls +(+.5,0,+.5) and +(0,-1.25,0) .. (Eu2) -- (Su2);
  \draw[string, black!50!red]  (Wu2) -- (Su2);
  \draw[string, black!50!blue] (Su2) -- (Eu2);
  \draw[string] (Su2) .. controls +(0,-.75,0) and +(-.2,0,-.2) .. (botii2) .. controls +(+.5,0,+.5) and +(0,-1.25,0) .. (Eu2);
\end{tikzpicture}
\\[12pt]
\label{eqn:aug}
\begin{tikzpicture}[baseline=(middle), scale=0.7]
  \path (0,-2,0) coordinate (middle);
  \path 
  (0,-.5,0)
  +(-1,0,0)  coordinate (Wm)
  +(+1,0,0)  coordinate (Em)
  +(0,0,+1)  coordinate (Sm)
  +(0,0,-1)  coordinate (Nm)
    ++(-.05,1.15,0)
  +(-.5,0,+.5) coordinate (topi)
  +(+.5,0,-.5) coordinate (topii)
  (+.25,-1.85,0)
  +(-.5,0,-.5) coordinate (pi)
  +(+.5,0,+.5) coordinate (pii)
  (-1,-3,+1)
  +(-1,0,0)  coordinate (Wm1) 
  +(+1,0,0)  coordinate (Em1) 
  +(0,0,+1)  coordinate (Sm1) 
  +(0,0,-1)  coordinate (Nm1) 
  (+1,-3,-1)
  +(-1,0,0)  coordinate (Wm2) 
  +(+1,0,0)  coordinate (Em2) 
  +(0,0,+1)  coordinate (Sm2) 
  +(0,0,-1)  coordinate (Nm2) 
  ;
  \draw[string, black!50!red] (Em1) -- (Nm1);
  \draw[string, black!50!blue] (Nm1) -- (Wm1);
  \draw[string, black!50!red] (Em2) -- (Nm2);
  \draw[string, black!50!blue] (Nm2) -- (Wm2);  
  \draw[string] (Nm2) .. controls +(0,+1,0) and +(0,-1,0) .. (Nm) .. controls +(0,+.75,0) and +(-.05,0,+.05) .. (topii);
  \path[fill=gray!50!blue,opacity=0.5] (Wm1) .. controls +(0,+1,0) and +(0,-1,0) .. (Wm) .. controls +(0,+.75,0) and +(-.05,0,+.05) .. (topi) -- (topii) .. controls +(-.05,0,+.05) and +(0,+.75,0) .. (Nm) .. controls +(0,-1,0) and +(0,+1,0) .. (Nm2) -- (Wm2) 
  .. controls +(0,+.75,0) and +(+.2,0,+.2) .. (pi)  .. controls +(-.5,0,-.5) and +(0,+1.25,0)  ..  (Nm1) -- (Wm1);
  \draw[string] (Nm1) .. controls +(0,+1.25,0) and +(-.5,0,-.5) .. (pi) .. controls +(+.2,0,+.2) and +(0,+.75,0)  .. (Wm2);
  \path[fill=gray!50!red,opacity=0.5] (Nm2) .. controls +(0,+1,0) and +(0,-1,0) .. (Nm) .. controls +(0,+.75,0) and +(-.05,0,+.05) .. (topii) .. controls +(+.15,0,-.15) and +(0,+1,0) .. (Em) .. controls +(0,-1,0) and +(0,+1,0) .. (Em2) -- (Nm2);
  \draw[string] (Wm1) .. controls +(0,+1,0) and +(0,-1,0) .. (Wm) .. controls +(0,+.75,0) and +(-.05,0,+.05) .. (topi);
  \draw[string] (topii) .. controls +(+.15,0,-.15) and +(0,+1,0) .. (Em) .. controls +(0,-1,0) and +(0,+1,0)   .. (Em2) ;
  \path[fill=gray!50!red,opacity=0.5] (Wm1) .. controls +(0,+1,0) and +(0,-1,0) .. (Wm) .. controls +(0,+.75,0) and +(-.05,0,+.05) .. (topi) .. controls +(+.15,0,-.15) and +(0,+1,0) .. (Sm) .. controls +(0,-1,0) and +(0,+1,0) .. (Sm1) -- (Wm1);
  \path[fill=gray!50!red,opacity=0.5] (Nm1) .. controls +(0,+1.25,0) and +(-.5,0,-.5) .. (pi) -- (pii) .. controls +(-.5,0,-.5) and +(0,+1.25,0)  ..  (Em1) -- (Nm1);
  \path[fill=gray!50!red,opacity=0.5] (Sm2) .. controls +(0,+.75,0) and +(+.2,0,+.2) .. (pii) -- (pi) .. controls +(+.2,0,+.2) and +(0,+.75,0)  .. (Wm2) -- (Sm2);
  \path[fill=gray!50!blue,opacity=0.5] (Sm1) .. controls +(0,+1,0) and +(0,-1,0) .. (Sm) 
  .. controls +(0,+1,0) and +(+.15,0,-.15)  .. (topi) -- (topii) .. controls +(+.15,0,-.15)  and +(0,+1,0) ..
   (Em) .. controls +(0,-1,0) and +(0,+1,0) .. (Em2) -- (Sm2) .. controls +(0,+.75,0) and +(+.2,0,+.2) .. (pii)  .. controls +(-.5,0,-.5) and +(0,+1.25,0)  ..  (Em1) -- (Sm1);
  \draw[string] (Sm1) .. controls +(0,+1,0) and +(0,-1,0) .. (Sm) .. controls +(0,+1,0) and +(+.15,0,-.15)  .. (topi) ;
  \draw[string] (Em1) .. controls +(0,+1.25,0) and +(-.5,0,-.5) .. (pii) .. controls +(+.2,0,+.2) and +(0,+.75,0)  .. (Sm2);
  \draw[string, black!50!red] (Wm1) -- (Sm1);
  \draw[string, black!50!blue] (Sm1) -- (Em1);
  \draw[string, black!50!red] (Wm2) -- (Sm2);
  \draw[string, black!50!blue] (Sm2) -- (Em2);
\end{tikzpicture}
\quad = \quad
\begin{tikzpicture}[baseline=(middle),scale=0.7]
  \path (0,-2,0) coordinate (middle);
  \path 
  (-1,-1.5,+1)
  +(-1,0,0)  coordinate (Wm)
  +(0,0,+1)  coordinate (Sm)
   ++(-.05,1.15,0)
  +(-.5,0,+.5) coordinate (topi)
   (+1,-1.5,-1)
  +(+1,0,0)  coordinate (Em)
  +(0,0,-1)  coordinate (Nm)
    ++(-.05,1.15,0)
  +(+.5,0,-.5) coordinate (topii)
  (+.25,-1.85,0)
  +(-.5,0,-.5) coordinate (pi)
  +(+.5,0,+.5) coordinate (pii)
  (-1,-3,+1)
  +(-1,0,0)  coordinate (Wm1) 
  +(+1,0,0)  coordinate (Em1) 
  +(0,0,+1)  coordinate (Sm1) 
  +(0,0,-1)  coordinate (Nm1) 
  (+1,-3,-1)
  +(-1,0,0)  coordinate (Wm2) 
  +(+1,0,0)  coordinate (Em2) 
  +(0,0,+1)  coordinate (Sm2) 
  +(0,0,-1)  coordinate (Nm2) 
  ;
  \draw[string, black!50!red] (Em1) -- (Nm1);
  \draw[string, black!50!blue] (Nm1) -- (Wm1);
  \draw[string, black!50!red] (Em2) -- (Nm2);
  \draw[string, black!50!blue] (Nm2) -- (Wm2);  
  \draw[string] (Nm2) .. controls +(0,+1,0) and +(0,-1,0) .. (Nm) .. controls +(0,+.75,0) and +(-.05,0,+.05) .. (topii);
  \path[fill=gray!50!blue,opacity=0.5] (Wm1) .. controls +(0,+1,0) and +(0,-1,0) .. (Wm) .. controls +(0,+.75,0) and +(-.05,0,+.05) .. (topi) -- (topii) .. controls +(-.05,0,+.05) and +(0,+.75,0) .. (Nm) .. controls +(0,-1,0) and +(0,+1,0) .. (Nm2) -- (Wm2) 
  .. controls +(0,+.75,0) and +(+.2,0,+.2) .. (pi)  .. controls +(-.5,0,-.5) and +(0,+1.25,0)  ..  (Nm1) -- (Wm1);
  \draw[string] (Nm1) .. controls +(0,+1.25,0) and +(-.5,0,-.5) .. (pi) .. controls +(+.2,0,+.2) and +(0,+.75,0)  .. (Wm2);
  \path[fill=gray!50!red,opacity=0.5] (Nm2) .. controls +(0,+1,0) and +(0,-1,0) .. (Nm) .. controls +(0,+.75,0) and +(-.05,0,+.05) .. (topii) .. controls +(+.15,0,-.15) and +(0,+1,0) .. (Em) .. controls +(0,-1,0) and +(0,+1,0) .. (Em2) -- (Nm2);
  \draw[string] (Wm1) .. controls +(0,+1,0) and +(0,-1,0) .. (Wm) .. controls +(0,+.75,0) and +(-.05,0,+.05) .. (topi);
  \draw[string] (topii) .. controls +(+.15,0,-.15) and +(0,+1,0) .. (Em) .. controls +(0,-1,0) and +(0,+1,0)   .. (Em2) ;
  \path[fill=gray!50!red,opacity=0.5] (Wm1) .. controls +(0,+1,0) and +(0,-1,0) .. (Wm) .. controls +(0,+.75,0) and +(-.05,0,+.05) .. (topi) .. controls +(+.15,0,-.15) and +(0,+1,0) .. (Sm) .. controls +(0,-1,0) and +(0,+1,0) .. (Sm1) -- (Wm1);
  \path[fill=gray!50!red,opacity=0.5] (Nm1) .. controls +(0,+1.25,0) and +(-.5,0,-.5) .. (pi) -- (pii) .. controls +(-.5,0,-.5) and +(0,+1.25,0)  ..  (Em1) -- (Nm1);
  \path[fill=gray!50!red,opacity=0.5] (Sm2) .. controls +(0,+.75,0) and +(+.2,0,+.2) .. (pii) -- (pi) .. controls +(+.2,0,+.2) and +(0,+.75,0)  .. (Wm2) -- (Sm2);
  \path[fill=gray!50!blue,opacity=0.5] (Sm1) .. controls +(0,+1,0) and +(0,-1,0) .. (Sm) 
  .. controls +(0,+1,0) and +(+.15,0,-.15)  .. (topi) -- (topii) .. controls +(+.15,0,-.15)  and +(0,+1,0) ..
   (Em) .. controls +(0,-1,0) and +(0,+1,0) .. (Em2) -- (Sm2) .. controls +(0,+.75,0) and +(+.2,0,+.2) .. (pii)  .. controls +(-.5,0,-.5) and +(0,+1.25,0)  ..  (Em1) -- (Sm1);
  \draw[string] (Sm1) .. controls +(0,+1,0) and +(0,-1,0) .. (Sm) .. controls +(0,+1,0) and +(+.15,0,-.15)  .. (topi) ;
  \draw[string] (Em1) .. controls +(0,+1.25,0) and +(-.5,0,-.5) .. (pii) .. controls +(+.2,0,+.2) and +(0,+.75,0)  .. (Sm2);
  \draw[string, black!50!red] (Wm1) -- (Sm1);
  \draw[string, black!50!blue] (Sm1) -- (Em1);
  \draw[string, black!50!red] (Wm2) -- (Sm2);
  \draw[string, black!50!blue] (Sm2) -- (Em2);
\end{tikzpicture} 
\quad = \quad
\begin{tikzpicture}[baseline=(middle),scale=0.7]
  \path (0,-2.5,0) coordinate (middle);
  \path
  (-1,-3,+1)
  +(-1,0,0)  coordinate (Wcu)
  +(+1,0,0)  coordinate (Ecu)
  +(0,0,+1)  coordinate (Scu)
  +(0,0,-1)  coordinate (Ncu)
  ++(-.05,1.15,0)
  +(-.5,0,+.5) coordinate (topi)
  +(+.5,0,-.5) coordinate (topii)
  ;
  \draw[string, black!50!red]  (Ecu) -- (Ncu);
  \draw[string, black!50!blue] (Ncu) -- (Wcu);
  \draw[string] (Wcu) .. controls +(0,+.75,0) and +(-.05,0,+.05) .. (topi) .. controls +(+.15,0,-.15) and +(0,+1,0) .. (Scu);
  \draw[string] (Ncu) .. controls +(0,+.75,0) and +(-.05,0,+.05) .. (topii) .. controls +(+.15,0,-.15) and +(0,+1,0) .. (Ecu);
  \path[fill=gray!50!red,opacity=0.5] (Ncu) .. controls +(0,+.75,0) and +(-.05,0,+.05) .. (topii) .. controls +(+.15,0,-.15) and +(0,+1,0) .. (Ecu) -- (Ncu);
  \path[fill=gray!50!blue,opacity=0.5] (Wcu) .. controls +(0,+.75,0) and +(-.05,0,+.05) .. (topi) -- (topii) .. controls +(-.05,0,+.05) and +(0,+.75,0) .. (Ncu) -- (Wcu);
  \path[fill=gray!50!red,opacity=0.5] (Wcu) .. controls +(0,+.75,0) and +(-.05,0,+.05) .. (topi) .. controls +(+.15,0,-.15) and +(0,+1,0) .. (Scu) -- (Wcu);
  \draw[string, black!50!red]  (Wcu) -- (Scu);
  \draw[string, black!50!blue] (Scu) -- (Ecu);
  \path[fill=gray!50!blue,opacity=0.5] (topi) .. controls +(+.15,0,-.15) and +(0,+1,0) .. (Scu) -- (Ecu) .. controls +(0,+1,0)  and +(+.15,0,-.15) .. (topii) -- (topi);
  \path
  (+1,-3,-1)
  +(-1,0,0)  coordinate (Wcu2)
  +(+1,0,0)  coordinate (Ecu2)
  +(0,0,+1)  coordinate (Scu2)
  +(0,0,-1)  coordinate (Ncu2)
  ++(-.05,1.15,0)
  +(-.5,0,+.5) coordinate (topi2)
  +(+.5,0,-.5) coordinate (topii2)
  ;
  \draw[string, black!50!red]  (Ecu2) -- (Ncu2);
  \draw[string, black!50!blue] (Ncu2) -- (Wcu2);
  \draw[string] (Wcu2) .. controls +(0,+.75,0) and +(-.05,0,+.05) .. (topi2) .. controls +(+.15,0,-.15) and +(0,+1,0) .. (Scu2);
  \draw[string] (Ncu2) .. controls +(0,+.75,0) and +(-.05,0,+.05) .. (topii2) .. controls +(+.15,0,-.15) and +(0,+1,0) .. (Ecu2);
  \path[fill=gray!50!red,opacity=0.5] (Ncu2) .. controls +(0,+.75,0) and +(-.05,0,+.05) .. (topii2) .. controls +(+.15,0,-.15) and +(0,+1,0) .. (Ecu2) -- (Ncu2);
  \path[fill=gray!50!blue,opacity=0.5] (Wcu2) .. controls +(0,+.75,0) and +(-.05,0,+.05) .. (topi2) -- (topii2) .. controls +(-.05,0,+.05) and +(0,+.75,0) .. (Ncu2) -- (Wcu2);
  \path[fill=gray!50!red,opacity=0.5] (Wcu2) .. controls +(0,+.75,0) and +(-.05,0,+.05) .. (topi2) .. controls +(+.15,0,-.15) and +(0,+1,0) .. (Scu2) -- (Wcu2);
  \draw[string, black!50!red]  (Wcu2) -- (Scu2);
  \draw[string, black!50!blue] (Scu2) -- (Ecu2);
  \path[fill=gray!50!blue,opacity=0.5] (topi2) .. controls +(+.15,0,-.15) and +(0,+1,0) .. (Scu2) -- (Ecu2) .. controls +(0,+1,0)  and +(+.15,0,-.15) .. (topii2) -- (topi2);
\end{tikzpicture}
\\[15pt]
\label{eqn:unitcounit}
\begin{tikzpicture}[baseline=(middle),scale=0.7]
  \path 
  (0,0,0)
  +(0,0,0) coordinate (middle)
  +(-1,0,0)  coordinate (W)
  +(+1,0,0)  coordinate (E)
  +(0,0,+1)  coordinate (S)
  +(0,0,-1)  coordinate (N)
  (-.25,-1.15,0)
  +(-.5,0,-.5) coordinate (boti)
  +(+.5,0,+.5) coordinate (botii)
  (-.05,1.15,0)
  +(-.5,0,+.5) coordinate (topi)
  +(+.5,0,-.5) coordinate (topii)
  ;
  \draw[string] (topi) .. controls +(-.05,0,+.05) and +(0,+.75,0)  .. (W) .. controls +(0,-.75,0) and +(-.2,0,-.2) .. (boti) .. controls +(+.5,0,+.5) and +(0,-1.25,0) .. (N) .. controls +(0,+.75,0) and +(-.05,0,+.05) .. (topii);
  \path[fill=gray!50!blue,opacity=0.5] (topi) .. controls +(-.05,0,+.05) and +(0,+.75,0)  ..(W) .. controls +(0,-.75,0) and +(-.2,0,-.2) .. (boti) .. controls +(+.5,0,+.5) and +(0,-1.25,0) .. (N) .. controls +(0,+.75,0) and +(-.05,0,+.05) .. (topii) -- (topi);
  \path[fill=gray!50!red,opacity=0.5] (boti) .. controls +(+.5,0,+.5) and +(0,-1.25,0) .. (N) .. controls +(0,+.75,0) and +(-.05,0,+.05) .. (topii) .. controls +(+.15,0,-.15)  and +(0,+1,0)   .. (E) .. controls +(0,-1.25,0)  and +(+.5,0,+.5) .. (botii) -- (boti);
  \path[fill=gray!50!red,opacity=0.5] (boti) .. controls +(-.2,0,-.2) and +(0,-.75,0)  .. (W) .. controls +(0,+.75,0) and +(-.05,0,+.05)    .. (topi)  .. controls +(+.15,0,-.15) and +(0,+1,0) .. (S) .. controls +(0,-.75,0) and +(-.2,0,-.2) .. (botii) -- (boti);
  \path[fill=gray!50!blue,opacity=0.5] (topi) .. controls +(+.15,0,-.15) and +(0,+1,0) .. (S) .. controls +(0,-.75,0) and +(-.2,0,-.2) .. (botii) .. controls +(+.5,0,+.5) and +(0,-1.25,0) .. (E) .. controls +(0,+1,0) and +(+.15,0,-.15)  .. (topii);
  \draw[string] (topi) .. controls +(+.15,0,-.15) and +(0,+1,0) .. (S) .. controls +(0,-.75,0) and +(-.2,0,-.2) .. (botii) .. controls +(+.5,0,+.5) and +(0,-1.25,0) .. (E) .. controls +(0,+1,0) and +(+.15,0,-.15)  .. (topii);
  ;
\end{tikzpicture}
\quad = \quad 1
\\[12pt]
\begin{tikzpicture}[baseline=(middle),scale=0.7]
  \path (0,0,0) coordinate (middle);
  \path
  (-2,0,0)
  +(-1,0,0)  coordinate (Ww)
  +(+1,0,0)  coordinate (Ew)
  +(0,0,+1)  coordinate (Sw)
  +(0,0,-1)  coordinate (Nw)
  (+2,0,0)
  +(-1,0,0)  coordinate (We)
  +(+1,0,0)  coordinate (Ee)
  +(0,0,+1)  coordinate (Se)
  +(0,0,-1)  coordinate (Ne)
  (0,0,+2)
  +(-1,0,0)  coordinate (Ws)
  +(+1,0,0)  coordinate (Es)
  +(0,0,+1)  coordinate (Ss)
  +(0,0,-1)  coordinate (Ns)
  (0,0,-2)
  +(-1,0,0)  coordinate (Wn)
  +(+1,0,0)  coordinate (En)
  +(0,0,+1)  coordinate (Sn)
  +(0,0,-1)  coordinate (Nn)
  ;
  \path
  (-1,3,-1)
  +(-1,0,0)  coordinate (Wt1)
  +(+1,0,0)  coordinate (Et1)
  +(0,0,+1)  coordinate (St1)
  +(0,0,-1)  coordinate (Nt1)
  ++(+.25,-1.85,0)
  +(-.5,0,-.5) coordinate (pi1)
  +(+.5,0,+.5) coordinate (pii1)
  (+1,3,+1)
  +(-1,0,0)  coordinate (Wt2)
  +(+1,0,0)  coordinate (Et2)
  +(0,0,+1)  coordinate (St2)
  +(0,0,-1)  coordinate (Nt2)
  ++(+.25,-1.85,0)
  +(-.5,0,-.5) coordinate (pi2)
  +(+.5,0,+.5) coordinate (pii2)
  (-1,-3,+1)
  +(-1,0,0)  coordinate (Wb1)
  +(+1,0,0)  coordinate (Eb1)
  +(0,0,+1)  coordinate (Sb1)
  +(0,0,-1)  coordinate (Nb1)
  ++(+.05,1.85,0)
  +(-.5,0,+.5) coordinate (qi1)
  +(+.5,0,-.5) coordinate (qii1)
  (+1,-3,-1)
  +(-1,0,0)  coordinate (Wb2)
  +(+1,0,0)  coordinate (Eb2)
  +(0,0,+1)  coordinate (Sb2)
  +(0,0,-1)  coordinate (Nb2)
  ++(+.05,1.85,0)
  +(-.5,0,+.5) coordinate (qi2)
  +(+.5,0,-.5) coordinate (qii2)
  ;
  \draw[string, black!50!red]  (Et1) -- (Nt1);
  \draw[string, black!50!blue] (Nt1) -- (Wt1);
  \draw[string, black!50!red]  (Et2) -- (Nt2);
  \draw[string, black!50!blue] (Nt2) -- (Wt2);
  \draw[string, black!50!red]  (Eb1) -- (Nb1);
  \draw[string, black!50!red]  (Eb2) -- (Nb2);
  \draw[string, black!50!blue] (Nb2) -- (Wb2);
  \draw[string] (Nb2) .. controls +(0,1,0) and +(0,-1,0) .. (Nn) .. controls +(0,1,0) and +(0,-1,0) .. (Nt1);
  \path[fill=gray!50!blue,opacity=0.5] 
    (Nb2) .. controls +(0,1,0) and +(0,-1,0) .. (Nn) .. controls +(0,1,0) and +(0,-1,0) .. (Nt1) -- (Wt1) .. controls +(0,-1,0) and +(0,1,0) .. (Ww) .. controls +(0,-1,0) and +(0,1,0) .. (Wb1) -- (Nb1) .. controls +(0,1,0) and +(0,-1,0) .. (Nw) 
    .. controls +(0,+1.25,0) and +(-.5,0,-.5) .. (pi1) .. controls +(+.2,0,+.2) and +(0,+.75,0)  ..
    (Wn) .. controls +(0,-1,0) and +(0,1,0) .. (Wb2) -- (Nb2)
  ;
  \draw[string] (Wt1) .. controls +(0,-1,0) and +(0,1,0) .. (Ww) .. controls +(0,-1,0) and +(0,1,0) .. (Wb1);
  \draw[string] (Nb1) .. controls +(0,1,0) and +(0,-1,0) .. (Nw) 
  .. controls +(0,+1.25,0) and +(-.5,0,-.5) .. (pi1) .. controls +(+.2,0,+.2) and +(0,+.75,0)  ..
  (Wn) .. controls +(0,-1,0) and +(0,1,0) .. (Wb2);
  \path[fill=gray!50!red,opacity=0.5] 
    (Nb2) .. controls +(0,1,0) and +(0,-1,0) .. (Nn) .. controls +(0,1,0) and +(0,-1,0) .. (Nt1) -- 
    (Et1) .. controls +(0,-1,0) and +(0,1,0) .. (En) .. controls +(0,-1,0) and +(-.15,0,+.15) .. (qii2) .. controls  +(+.05,0,-.05) and +(0,-.75,0) .. (Ne) .. controls +(0,1,0) and +(0,-1,0) .. (Nt2) --
    (Et2)  .. controls +(0,-1,0) and +(0,1,0) .. (Ee) .. controls +(0,-1,0) and +(0,1,0) .. (Eb2) 
    -- (Nb2)
   ;
   \draw[string] (Et1) .. controls +(0,-1,0) and +(0,1,0) .. (En) .. controls +(0,-1,0) and +(-.15,0,+.15) .. (qii2) .. controls  +(+.05,0,-.05) and +(0,-.75,0) .. (Ne) .. controls +(0,1,0) and +(0,-1,0) .. (Nt2);
   \draw[string] (Et2)  .. controls +(0,-1,0) and +(0,1,0) .. (Ee) .. controls +(0,-1,0) and +(0,1,0) .. (Eb2) ;
   \path[fill=gray!50!red,opacity=0.5]
   (pi1) --
   (pii1) .. controls +(-.5,0,-.5) and +(0,+1.25,0)  .. (Ew) .. controls +(0,-1,0) and +(-.15,0,+.15) .. (qii1) 
   .. controls  +(+.05,0,-.05) and +(0,-.75,0) .. (Ns)
   .. controls +(0,+1.25,0)  and +(-.5,0,-.5)  .. (pi2) -- (pii2)
   .. controls +(-.5,0,-.5) and +(0,+1.25,0)  .. (Es)
   .. controls +(0,-1,0) and +(0,1,0) .. (Eb1) -- (Nb1) .. controls +(0,1,0) and +(0,-1,0) .. (Nw)
   .. controls +(0,+1.25,0)  and +(-.5,0,-.5)  .. (pi1)
   ;
   \path[fill=gray!50!blue,opacity=0.5]
   (Et1) .. controls +(0,-1,0) and +(0,1,0) .. (En) .. controls +(0,-1,0) and +(-.15,0,+.15) .. (qii2) -- (qi2)
   .. controls +(-.15,0,+.15)  and  +(0,-1,0) .. (Sn)
   .. controls +(0,+.75,0) and +(+.2,0,+.2) .. (pii1) .. controls +(-.5,0,-.5) and +(0,+1.25,0)  .. (Ew)
   .. controls +(0,-1,0) and +(-.15,0,+.15) .. (qii1) -- (qi1)
   .. controls +(-.15,0,+.15)  and  +(0,-1,0) .. (Sw)
   .. controls +(0,1,0) and +(0,-1,0) .. (St1) -- (Et1)
   ;
   \draw[string, black!50!blue] (St1) -- (Et1);
   \draw[string, black!50!blue] (Nb1) -- (Wb1);
   \draw[string] (qi2)
   .. controls +(-.15,0,+.15)  and  +(0,-1,0) .. (Sn)
   .. controls +(0,+.75,0) and +(+.2,0,+.2) .. (pii1) .. controls +(-.5,0,-.5) and +(0,+1.25,0)  .. (Ew)
   .. controls +(0,-1,0) and +(-.15,0,+.15) .. (qii1);
   \draw[string] 
   (qii1) 
   .. controls  +(+.05,0,-.05) and +(0,-.75,0) .. (Ns)
   .. controls +(0,+1.25,0)  and +(-.5,0,-.5)  .. (pi2)
    .. controls +(+.2,0,+.2) and +(0,+.75,0)  .. (We)
    .. controls +(0,-.75,0) and +(+.05,0,-.05)  .. (qi2)
    ;
   \path[fill=gray!50!red,opacity=0.5]
   (qi2)
   .. controls +(-.15,0,+.15)  and  +(0,-1,0) .. (Sn)
   .. controls +(0,+.75,0) and +(+.2,0,+.2) .. (pii1) -- (pi1)
   .. controls +(+.2,0,+.2) and +(0,+.75,0)  ..
    (Wn) .. controls +(0,-1,0) and +(0,1,0) .. (Wb2) -- (Sb2) .. controls +(0,1,0) and +(0,-1,0) .. (Se)
    .. controls +(0,+.75,0) and +(+.2,0,+.2) .. (pii2) -- (pi2)
    .. controls +(+.2,0,+.2) and +(0,+.75,0)  .. (We)
    .. controls +(0,-.75,0) and +(+.05,0,-.05)  .. (qi2)
   ;
   \path[fill=gray!50!blue,opacity=0.5]
   (qi1) -- (qii1) 
   .. controls  +(+.05,0,-.05) and +(0,-.75,0) .. (Ns)
   .. controls +(0,+1.25,0)  and +(-.5,0,-.5)  .. (pi2)
    .. controls +(+.2,0,+.2) and +(0,+.75,0)  .. (We)
    .. controls +(0,-.75,0) and +(+.05,0,-.05)  .. (qi2)
    -- (qii2) .. controls  +(+.05,0,-.05) and +(0,-.75,0) .. (Ne) .. controls +(0,1,0) and +(0,-1,0) .. (Nt2) -- (Wt2)
    .. controls +(0,-1,0) and +(0,1,0) .. (Ws) .. controls +(0,-.75,0) and +(+.05,0,-.05)  .. (qi1)
   ;
   \draw[string, black!50!red] (Wb2) -- (Sb2);
   \draw[string]  (Sb2) .. controls +(0,1,0) and +(0,-1,0) .. (Se)
    .. controls +(0,+.75,0) and +(+.2,0,+.2) .. (pii2)
   .. controls +(-.5,0,-.5) and +(0,+1.25,0)  .. (Es)
   .. controls +(0,-1,0) and +(0,1,0) .. (Eb1)
   ;
   \draw[string]
   (Wt2)
    .. controls +(0,-1,0) and +(0,1,0) .. (Ws) .. controls +(0,-.75,0) and +(+.05,0,-.05)  .. (qi1)
   .. controls +(-.15,0,+.15)  and  +(0,-1,0) .. (Sw)
   .. controls +(0,1,0) and +(0,-1,0) .. (St1)
   ;
   \path[fill=gray!50!red,opacity=0.5]
   (Wt2)
    .. controls +(0,-1,0) and +(0,1,0) .. (Ws) .. controls +(0,-.75,0) and +(+.05,0,-.05)  .. (qi1)
   .. controls +(-.15,0,+.15)  and  +(0,-1,0) .. (Sw)
   .. controls +(0,1,0) and +(0,-1,0) .. (St1) -- (Wt1)
    .. controls +(0,-1,0) and +(0,1,0) .. (Ww) .. controls +(0,-1,0) and +(0,1,0) .. (Wb1)
    -- (Sb1) .. controls +(0,1,0) and +(0,-1,0) .. (Ss) .. controls +(0,1,0) and +(0,-1,0) .. (St2) -- (Wt2)
   ;
   \draw[string, black!50!red] (Wb1) -- (Sb1);
   \draw[string, black!50!red] (Wt1) -- (St1);
   \draw[string, black!50!red] (Wt2) -- (St2);
   \path[fill=gray!50!blue,opacity=0.5]
   (Sb2) .. controls +(0,1,0) and +(0,-1,0) .. (Se)
    .. controls +(0,+.75,0) and +(+.2,0,+.2) .. (pii2)
   .. controls +(-.5,0,-.5) and +(0,+1.25,0)  .. (Es)
   .. controls +(0,-1,0) and +(0,1,0) .. (Eb1)
   -- (Sb1) .. controls +(0,1,0) and +(0,-1,0) .. (Ss) .. controls +(0,1,0) and +(0,-1,0) .. (St2) -- (Et2)
   .. controls +(0,-1,0) and +(0,1,0) .. (Ee) .. controls +(0,-1,0) and +(0,1,0) .. (Eb2) -- (Sb2)
   ;
   \draw[string, black!50!blue] (Eb2) -- (Sb2);
   \draw[string, black!50!blue] (Eb1) -- (Sb1);
   \draw[string, black!50!blue] (Et2) -- (St2);
   \draw[string] (Sb1) .. controls +(0,1,0) and +(0,-1,0) .. (Ss) .. controls +(0,1,0) and +(0,-1,0) .. (St2);
\end{tikzpicture}
\quad=
\begin{tikzpicture}[baseline=(middle),scale=0.7]
  \path (0,0,0) coordinate (middle);
  \path
  (0,0,0)
  +(-1,0,0)  coordinate (W)
  +(+1,0,0)  coordinate (E)
  +(0,0,+1)  coordinate (S)
  +(0,0,-1)  coordinate (N)
  (+.25,-1.85,0)
  +(-.5,0,-.5) coordinate (pi)
  +(+.5,0,+.5) coordinate (pii)
  (+.05,1.85,0)
  +(-.5,0,+.5) coordinate (qi)
  +(+.5,0,-.5) coordinate (qii)
  ;
  \path
  (-1,3,-1)
  +(-1,0,0)  coordinate (Wt1)
  +(+1,0,0)  coordinate (Et1)
  +(0,0,+1)  coordinate (St1)
  +(0,0,-1)  coordinate (Nt1)
  (+1,3,+1)
  +(-1,0,0)  coordinate (Wt2)
  +(+1,0,0)  coordinate (Et2)
  +(0,0,+1)  coordinate (St2)
  +(0,0,-1)  coordinate (Nt2)
  (-1,-3,+1)
  +(-1,0,0)  coordinate (Wb1)
  +(+1,0,0)  coordinate (Eb1)
  +(0,0,+1)  coordinate (Sb1)
  +(0,0,-1)  coordinate (Nb1)
  (+1,-3,-1)
  +(-1,0,0)  coordinate (Wb2)
  +(+1,0,0)  coordinate (Eb2)
  +(0,0,+1)  coordinate (Sb2)
  +(0,0,-1)  coordinate (Nb2)
  ;
  \draw[string, black!50!red]  (Et1) -- (Nt1);
  \draw[string, black!50!blue] (Nt1) -- (Wt1);
  \draw[string, black!50!red]  (Et2) -- (Nt2);
  \draw[string] (Nt1) .. controls +(0,-1,0) and +(0,1,0) .. (N) .. controls +(0,-1,0) and +(0,1,0) .. (Nb2);
  \draw[string, black!50!red]  (Eb1) -- (Nb1);
  \draw[string, black!50!blue] (Nb1) -- (Wb1);
  \draw[string, black!50!red]  (Eb2) -- (Nb2);
  \draw[string, black!50!blue] (Nb2) -- (Wb2);
  \path[fill=gray!50!blue,opacity=0.5]
   (Nt1) .. controls +(0,-1,0) and +(0,1,0) .. (N) .. controls +(0,-1,0) and +(0,1,0) .. (Nb2) -- (Wb2)
   .. controls +(0,+.75,0) and +(+.2,0,+.2) .. (pi)
   .. controls +(-.5,0,-.5) and +(0,+1.25,0)  .. (Nb1) -- (Wb1)
   .. controls +(0,1,0) and +(0,-1,0) .. (W) .. controls +(0,1,0) and +(0,-1,0) .. (Wt1) -- (Nt1)
  ;
  \draw[string] 
   (Wb2)
   .. controls +(0,+.75,0) and +(+.2,0,+.2) .. (pi)
   .. controls +(-.5,0,-.5) and +(0,+1.25,0)  .. (Nb1)
  ;
  \draw[string]
   (Wb1)
   .. controls +(0,1,0) and +(0,-1,0) .. (W) .. controls +(0,1,0) and +(0,-1,0) .. (Wt1)
  ;
  \path[fill=gray!50!red,opacity=0.5]
   (Nt1) .. controls +(0,-1,0) and +(0,1,0) .. (N) .. controls +(0,-1,0) and +(0,1,0) .. (Nb2) -- (Eb2)
   .. controls +(0,1,0) and +(0,-1,0) .. (E) .. controls +(0,1,0) and +(0,-1,0) .. (Et2) -- (Nt2)
   .. controls +(0,-.75,0) and +(+.05,0,-.05)  .. (qii)
   .. controls +(-.15,0,+.15)  and  +(0,-1,0) .. (Et1) -- (Nt1)
  ;
  \draw[string]
   (Nt2)
   .. controls +(0,-.75,0) and +(+.05,0,-.05)  .. (qii)
   .. controls +(-.15,0,+.15)  and  +(0,-1,0) .. (Et1)
  ;
  \draw[string]
   (Eb2)
   .. controls +(0,1,0) and +(0,-1,0) .. (E) .. controls +(0,1,0) and +(0,-1,0) .. (Et2)
  ;
  \path[fill=gray!50!red,opacity=0.5] (Nb1) .. controls +(0,+1.25,0) and +(-.5,0,-.5) .. (pi) -- (pii) .. controls +(-.5,0,-.5) and +(0,+1.25,0)  ..  (Eb1) -- (Nb1);
  \path[fill=gray!50!red,opacity=0.5] (Sb2) .. controls +(0,+.75,0) and +(+.2,0,+.2) .. (pii) -- (pi) .. controls +(+.2,0,+.2) and +(0,+.75,0)  .. (Wb2) -- (Sb2);
  \draw[string, black!50!red] (Wb2) -- (Sb2);
  \draw[string] (Sb2) .. controls +(0,+.75,0) and +(+.2,0,+.2) .. (pii) .. controls +(-.5,0,-.5) and +(0,+1.25,0)  ..  (Eb1);
  \path[fill=gray!50!blue,opacity=0.5] (qii) .. controls +(-.15,0,+.15) and +(0,-1,0) .. (Et1) -- (St1) .. controls +(0,-1,0) and +(-.15,0,+.15) .. (qi) -- (qii);
  \draw[string] (Wt2) .. controls +(0,-.75,0) and +(+.05,0,-.05) .. (qi) .. controls +(-.15,0,+.15) and +(0,-1,0) .. (St1);
  \path[fill=gray!50!blue,opacity=0.5] (qi) .. controls +(+.05,0,-.05) and +(0,-.75,0)  .. (Wt2) -- (Nt2) .. controls +(0,-.75,0) and +(+.05,0,-.05) .. (qii) -- (qi);
  \draw[string, black!50!blue] (St1) -- (Et1);
  \draw[string, black!50!blue] (Wt2) -- (Nt2);
  \path[fill=gray!50!red,opacity=0.5]
   (Wt2) .. controls +(0,-.75,0) and +(+.05,0,-.05) .. (qi) .. controls +(-.15,0,+.15) and +(0,-1,0) .. (St1) -- (Wt1) 
   .. controls +(0,-1,0) and +(0,1,0) .. (W) .. controls +(0,-1,0) and +(0,1,0) .. (Wb1) -- (Sb1)
   .. controls +(0,1,0) and +(0,-1,0) .. (S) .. controls +(0,1,0) and +(0,-1,0) .. (St2) -- (Wt2)
  ;
  \draw[string, black!50!red] (Wt1) -- (St1);
  \draw[string, black!50!red] (Wt2) -- (St2);
  \draw[string, black!50!red] (Wb1) -- (Sb1);
  \path[fill=gray!50!blue,opacity=0.5]
    (Sb2) .. controls +(0,+.75,0) and +(+.2,0,+.2) .. (pii) .. controls +(-.5,0,-.5) and +(0,+1.25,0)  ..  (Eb1) 
    -- (Sb1) .. controls +(0,1,0) and +(0,-1,0) .. (S) .. controls +(0,1,0) and +(0,-1,0) .. (St2)
    -- (Et2) .. controls +(0,-1,0) and +(0,1,0) .. (E) .. controls +(0,-1,0) and +(0,1,0) .. (Eb2) -- (Sb2)
  ;
  \draw[string, black!50!blue] (Et2) -- (St2);
  \draw[string, black!50!blue] (Eb2) -- (Sb2);
  \draw[string, black!50!blue] (Eb1) -- (Sb1);
  \draw[string] (Sb1) .. controls +(0,1,0) and +(0,-1,0) .. (S) .. controls +(0,1,0) and +(0,-1,0) .. (St2) ;
\end{tikzpicture}
\label{eqn:bial}
\end{gather}

As the second-named author explained in the talk \cite{David2017Talk}, these axioms hold for topological reasons: for example, the triviality of the bigon $1 \overset\alpha\Rightarrow gf \overset{\alpha^{-1}}\Rightarrow 1$ is what allows the ``neck-cutting'' of \eqref{eqn:coaug} and \eqref{eqn:aug}, and the triviality of the other composite $gf  \overset{\alpha^{-1}}\Rightarrow 1 \overset\alpha\Rightarrow gf$ is responsible for the ``hole-boring'' of \eqref{eqn:bial}.

\begin{remark} In~\cite{David2017Talk}, it is also explained how dropping the triviality of the composite $1 \Rightarrow gf \Rightarrow 1$ and only requiring triviality of  $gf  \Rightarrow 1 \Rightarrow gf$   leads to \emph{weak Hopf algebras} in the sense of \cite{MR1726707} instead of true Hopf algebras. 
\end{remark}

Such three-dimensional arguments are dangerous, however, when there is incomplete adjunctibility. For example, \cite{David2017Talk} supplies also a topological proof of the existence of an antipode which requires extra adjunctibility beyond what is mandated by $\Adj \owedge \Adj$.

\subsection{Extra dualizability and (co)integrals} \label{subsec:extradualizability} The goal of this section is to explore this extra adjunctibility.

\begin{prop}\label{prop:dualizability}
Consider an adjunctible retract in a pointed $(\infty,3)$-category $(\cC, 1_{\cC})$, i.e.\ a section-retraction pair $f: 1_{\cC} \to X$, $g:X \to 1_{\cC}$, and $\alpha: gf \simeq \id_{1_{\cC}}$  for which $f$ admits a right adjoint $f^R$, $g$ admits a left adjoint $g^L$, and $\alpha$ satisfies one (and hence all) of the equivalent conditions from Corollary~\ref{cor:smashretract}: 
\begin{itemize}
\item $\alpha^{\rmate}$ admits a left adjoint; 
\item $\alpha^{\lmate}$ admits a right adjoint;
\item $g\,\ev_f$ admits a left adjoint; 
\item $\ev_g\, f$ admits a right adjoint.
\end{itemize}
If it moreover also satisfies one (and hence all) of the following equivalent conditions
\begin{itemize}
\item $\alpha^{\rmate}$ admits a right adjoint; 
\item $\alpha^{\lmate}$ admits a left adjoint;
\item $g\,\ev_f$ admits a  right adjoint; 
\item $\ev_g \,f$ admits a left adjoint,
\end{itemize}
Then the underlying object of the Hopf algebra $H(X,f,g,\alpha)$ is dualizable in the braided monoidal $(\infty,1)$-category $\Omega^2 \cC$.
\end{prop}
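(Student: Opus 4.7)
The plan is to establish dualizability of $H \in \Omega^2 \cC$ by exhibiting a right adjoint of $H$ viewed as a 1-morphism in the $(\infty,2)$-category $\End_\cC(1_\cC)$. Recall from Example~\ref{exm:3catbraided} that $\Omega^2 \cC \simeq \Omega(\End_\cC(1_\cC))$, where the horizontal monoidal structure on $\Omega^2\cC$ corresponds to composition of 1-morphisms in $\End_\cC(1_\cC)$ (i.e.\ horizontal composition of 2-morphisms in $\cC$). It is a general fact---derivable from Proposition~\ref{prop:ClaudiaTheo} together with the standard correspondence between adjunctions in a monoidal $(\infty,2)$-category $\cA$ and dualities in its loop category $\Omega \cA$---that an object $B \in \Omega\cA$ is right (equivalently left) dualizable in $\Omega \cA$ iff $B$ admits a right (resp.\ left) adjoint as a 1-morphism in $\cA$. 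Hence we are reduced to showing that the 2-morphism $H : \id_{1_\cC} \Rightarrow \id_{1_\cC}$ in $\cC$ admits a right adjoint as a 2-morphism.

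For this, I would invoke the decomposition $H \simeq \beta \circ \beta^L$ from equation~\eqref{eq:simplification1} of Remark~\ref{rem:simplesquare}, where $\beta := \alpha^{-1} \circ (g\,\ev_f\, f) : gff^Rf \Rightarrow \id_{1_\cC}$. The original hypothesis of an adjunctible retract guarantees that $g\,\ev_f$ admits a left adjoint, and hence that $\beta$ does too, yielding the factorization $H \simeq \beta \circ \beta^L$. The extra hypothesis of Proposition~\ref{prop:dualizability} provides a right adjoint of $g\,\ev_f$, and hence a right adjoint $\beta^R$ of $\beta$. Both factors of $H$ now admit right adjoints: $\beta$ has $\beta^R$ (from the new hypothesis), while $\beta^L$ tautologically has $(\beta^L)^R \simeq \beta$ (from the adjunction $\beta^L \dashv \beta$). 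Since right adjoints of compositions compose contravariantly, $H = \beta \circ \beta^L$ admits the right adjoint
\[ H^R \;:=\; (\beta^L)^R \circ \beta^R \;\simeq\; \beta \circ \beta^R, \]
witnessing dualizability of $H$ in $\Omega^2\cC$. Dually, using the decomposition $H \simeq \gamma \circ \gamma^R$ from~\eqref{eq:simplification2} together with the fact that $\ev_g\, f$ admits a left adjoint by the extra hypothesis, one obtains a left adjoint $H^L \simeq \gamma^L \circ \gamma$, exhibiting $H$ as left dualizable as well (as it must be, given the braiding on $\Omega^2 \cC$).

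The main obstacle is the initial reduction step: justifying that dualizability in $\Omega^2\cC$ corresponds precisely to adjunctibility of $H$ as a 1-morphism in $\End_\cC(1_\cC)$, which requires care about the chosen (horizontal versus vertical) monoidal structure and the computation of internal homs in $\Omega^2\cC$. Once this is in hand, construction of the explicit dual $H^R$ and verification of the triangle identities are essentially formal: the evaluation $H^R \otimes H \to \one$ and coevaluation $\one \to H \otimes H^R$ assemble from the units and counits of the adjunctions $\beta^L \dashv \beta$ and $\beta \dashv \beta^R$, and the triangle identities follow from the defining zig-zag equations of these adjunctions.
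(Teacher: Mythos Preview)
Your proposal is correct and is exactly the argument the paper intends: its one-line proof simply points to Remark~\ref{rem:simplesquare} (the factorization $H \simeq \gamma \circ \gamma^L$, your $\beta \circ \beta^L$), from which dualizability follows once the extra hypothesis supplies a right adjoint $\gamma^R$. One minor slip: composition of $1$-morphisms in $\End_\cC(1_\cC)$ is \emph{vertical} composition of $2$-morphisms in $\cC$ (Notation~\ref{notat:verticalhorizontal}), not horizontal, though this is harmless since the braiding identifies dualizability for the two structures.
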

\begin{proof}By Lemma~\ref{lem:adjointsofmates}, these conditions are indeed equivalent. Dualizability of $H(X, f,g,\alpha)$ is immediate from Remark~\ref{rem:simplesquare} or also directly from~\eqref{eqn:unpackedpasting}. 
\end{proof}

Dualizability of a Hopf algebra is closely related to the existence of integrals and cointegrals on the underlying bialgebra.
\begin{definition}Let $\cB$ be a $\EE_2$-monoidal $(\infty,1)$-category, $I \in \cB$ an object and $H \in \cB$ a bialgebra. An \define{$I$-valued left integral} on $H$ is a left  $H$-comodule map $\mathrm{int} : H \to I$, where $I$ is made into an $H$-comodule via the unit. Dually, an \define{$I$-covalued left cointegral} in $H$ is a left module map $\mathrm{coint}: I \to H$, where $I$ is made into an $H$-module via the counit.\end{definition}
The following fundamental result in the theory of Hopf algebras in braided monoidal categories is proved in \cite{MR1685417,MR1759389}:
\begin{proposition}[\cite{MR1685417,MR1759389}]\label{prop:integrals}
  For a dualizable bialgebra $H$ in a braided monoidal $(1,1)$-category $\cB$, the following are equivalent:
\begin{enumerate}
  \item $H$ is a Hopf algebra. \label{integral1}
  \item $H$ is a Hopf algebra with invertible antipode. \label{integral2}
  \item \label{integral3} There exists unique (up to unique isomorphism) invertible objects $I_{\mathrm{int}}$ and $I_{\mathrm{coint}}$ in the Karoubi completion of $\cB$ and an $I_{\mathrm{int}}$-valued left integral $\mathrm{int}$ and an $I_{\mathrm{coint}}$-covalued left cointegral $\mathrm{coint}$  such that the composition $I_{\mathrm{coint}} \to[\mathrm{coint}] H \to[\mathrm{int}] I_{\mathrm{int}}$ is an identity.\qed
\end{enumerate}
\end{proposition}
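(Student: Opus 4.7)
The plan is to establish the cycle $(2) \Rightarrow (3) \Rightarrow (1) \Rightarrow (2)$; since $(2) \Rightarrow (1)$ is tautological, this yields the full equivalence. The central tool is the \emph{Fundamental Theorem of Hopf Modules} in braided monoidal categories, due to Bespalov and Bespalov--Drabant~\cite{MR1685417,MR1759389}: for a Hopf algebra $H$ with invertible antipode, the category of $(H, H)$-Hopf modules in $\cB$ is equivalent to $\cB$ itself (or rather its Karoubi completion), via $M \mapsto M \otimes H$ with inverse given by taking coinvariants.

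For $(2) \Rightarrow (3)$: the dual $H^*$ (which exists by dualizability) can be upgraded, using the invertible antipode to twist comodule structures, to an $(H,H)$-Hopf module. Applying the Fundamental Theorem identifies it as $I_{\mathrm{int}} \otimes H$ for a unique object $I_{\mathrm{int}} := (H^*)^{\mathrm{co}H}$ in the Karoubi completion of $\cB$; the evaluation pairing $H \otimes H^* \to \one$ then produces the integral $\mathrm{int} : H \to I_{\mathrm{int}}$. Invertibility of $I_{\mathrm{int}}$ follows from identifying it as the image of $\one$ under the equivalence, combined with dualizability of $H$. A symmetric construction using right modules produces $I_{\mathrm{coint}}$ and $\mathrm{coint}$. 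The compatibility $\mathrm{int}\circ \mathrm{coint} = \id$ reduces to a diagram chase inside the Hopf-module equivalence, and uniqueness follows from the universal property of coinvariants.

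For $(3) \Rightarrow (2)$: given the integral, cointegral, and their compatibility, one writes down an explicit antipode as a convolution-type composite of the form
\begin{equation*}
  S_H := H \overset{\Delta}\longto H \otimes H \overset{\mathrm{int}\otimes \id}\longto I_{\mathrm{int}} \otimes H \simeq I_{\mathrm{coint}}^{-1} \otimes H \overset{\mathrm{coint}^{-1}\otimes \id}\longto H,
\end{equation*}
where invertibility of $I_{\mathrm{int}}$ and $I_{\mathrm{coint}}$ together with $\mathrm{int}\circ \mathrm{coint} = \id$ combine to identify $I_{\mathrm{int}} \simeq I_{\mathrm{coint}}^{-1}$. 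The antipode axiom then follows from the $H$-(co)linearity of $\mathrm{int}$ and $\mathrm{coint}$ paired with the bialgebra axiom, and the manifest symmetry of the formula in $\mathrm{int}$ versus $\mathrm{coint}$ supplies a symmetric inverse for $S_H$, establishing~(2) directly.

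The main obstacle is the braided version of the Fundamental Theorem of Hopf Modules used in $(2) \Rightarrow (3)$. Unlike the symmetric case, establishing the equivalence $\cB \simeq {}_H\mathcal{M}^H$ requires one to split a certain idempotent defining the coinvariants; this splitting is precisely where invertibility of the antipode enters crucially, and is also what forces the objects $I_{\mathrm{int}}, I_{\mathrm{coint}}$ to live in the Karoubi completion of $\cB$ rather than necessarily in $\cB$ itself.
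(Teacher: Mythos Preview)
Your cycle has a gap: you announce the plan $(2) \Rightarrow (3) \Rightarrow (1) \Rightarrow (2)$, but the body only supplies $(2) \Rightarrow (3)$ and $(3) \Rightarrow (2)$. The implication $(1) \Rightarrow (2)$ --- that a \emph{dualizable} Hopf algebra automatically has invertible antipode --- is never addressed, and it is precisely the heart of the proposition. Your argument for $(2) \Rightarrow (3)$ explicitly invokes invertibility of the antipode (to twist comodule structures and, as you yourself note, to split the coinvariants idempotent), so it does not descend to an argument for $(1) \Rightarrow (3)$. What you have actually established is $(2) \Leftrightarrow (3)$, which together with the trivial $(2) \Rightarrow (1)$ leaves $(1) \Rightarrow (2)$ open.

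Your antipode formula in $(3) \Rightarrow (2)$ is also not right. From $\mathrm{int} \circ \mathrm{coint} = \id$ one obtains $I_{\mathrm{coint}} \simeq I_{\mathrm{int}}$, not $I_{\mathrm{int}} \simeq I_{\mathrm{coint}}^{-1}$; and ``$\mathrm{coint}^{-1} \otimes \id$'' is meaningless since $\mathrm{coint} : I_{\mathrm{coint}} \to H$ is not an isomorphism. Compare the paper's formula~\eqref{eqn:integralformulaforantipode}: it is a substantially more intricate composite involving $\mathrm{int}$, $\mathrm{coint}$, $\Delta$, $m$, and the \emph{inverse braiding}, and it produces an antipode but not a manifestly invertible one --- so it only yields $(3) \Rightarrow (1)$. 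The paper does not claim more from this formula and explicitly defers the remaining, harder implications (in particular $(1) \Rightarrow (2)$) to the cited references~\cite{MR1685417,MR1759389}. Your ``manifest symmetry'' claim for an inverse of $S_H$ is therefore doing work that neither your formula nor the correct one actually supports.
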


To show that \ref{integral3} implies \ref{integral1}, given an integral and cointegral as in \ref{integral3}, one verifies that the following defines an antipode: 
\begin{multline}\label{eqn:integralformulaforantipode}
H \simeq H \otimes I_{\mathrm{int}}^{-1}\otimes I_{\mathrm{int}} 
\to[H \otimes I_{\mathrm{int}}^{-1} \otimes (\mathrm{int} \circ \mathrm{coint})^{-1}]  H \otimes I_{\mathrm{int}}^{-1}\otimes I_{\mathrm{coint}} 
\to[H \otimes I_{\mathrm{coint}}^{-1} \otimes \mathrm{coint}] H \otimes I_{\mathrm{int}}^{-1} \otimes H
\\ \to[ H \otimes I_{\mathrm{int}}^{-1} \otimes \Delta] H \otimes I_{\mathrm{int}}^{-1} \otimes H \otimes H
 \to[\mathrm{br}^{-1}_{H, I_{\mathrm{int}}^{-1} \otimes H} \otimes H] I_{\mathrm{int}}^{-1} \otimes H \otimes H \otimes H
 \\ \to[I_{\mathrm{int}}^{-1} \otimes m \otimes H] I_{\mathrm{int}}^{-1} \otimes H \otimes H \to [I_{\mathrm{int}}^{-1} \otimes \mathrm{int} \otimes H] I_{\mathrm{int}}^{-1} \otimes I_{\mathrm{int}} \otimes H \simeq H.
\end{multline}
The other (more difficult) implications are proven in \cite{MR1685417,MR1759389}.

Examples are given in \cite{MR1685417,MR1759389} of Hopf algebras $H$ whose associated invertible object $I$ is not isomorphic to the unit object $1 \in \cB$. In particular, in these examples, $I_{\mathrm{int}} \cong I_{\mathrm{coint}}$ is not an object of the full subcategory of $\cB$ on the tensor powers of $H$. This shows that the Karoubi completion in condition \ref{integral3} of the Proposition can be necessary.

\begin{remark} Although we will not prove it in this paper, we expect the proof of Proposition~\ref{prop:integrals} to carry over to the $\infty$-categorical context. We do however point out that it does not suffice to apply Proposition~\ref{prop:integrals} to the homotopy category $h_1\cB$ of a braided monoidal $(\infty,1)$-category $\cB$ since Karoubi completeness and Karoubi completion  are not compatible with taking homotopy categories \cite[Warning~1.2.4.8]{HA}.
\end{remark}

In the remainder of this section, we will show that if we assume more adjunctibility than mandated in Proposition~\ref{prop:dualizability}, then these universal (co)integral and cointegral for our Hopf algebra $H(X,f,g,\alpha)$ already exist ($\infty$-categorically coherently) in $\Omega^2\cC$  without Karoubi completion. 

Indeed, the topological pictures in Section~\ref{subsec:unpacked} might lead one to think that, in the presence of sufficient adjunctibility, there should be a valid morphism $H(X,f,g,\alpha) \to 1$ of the following shape:
\begin{equation}\label{eqn:nonexistentintegral}
\begin{tikzpicture}[scale=-1,baseline=(middle), scale=0.7]
  \path 
  (0,0,0)
  +(0,-.5,0) coordinate (middle)
  +(-1,0,0)  coordinate (Wu)
  +(+1,0,0)  coordinate (Eu)
  +(0,0,+1)  coordinate (Su)
  +(0,0,-1)  coordinate (Nu)
  ++(-.25,-1.15,0)
  +(-.5,0,-.5) coordinate (boti)
  +(+.5,0,+.5) coordinate (botii)
  ;
  \draw[string, black!50!red]  (Eu) -- (Nu);
  \draw[string, black!50!blue] (Nu) -- (Wu);
  \draw[string] (Wu) .. controls +(0,-.75,0) and +(-.2,0,-.2) .. (boti) .. controls +(+.5,0,+.5) and +(0,-1.25,0) .. (Nu);
  \path[fill=gray!50!blue,opacity=0.5] (Wu) .. controls +(0,-.75,0) and +(-.2,0,-.2) .. (boti) .. controls +(+.5,0,+.5) and +(0,-1.25,0) .. (Nu) -- (Wu);
  \path[fill=gray!50!red,opacity=0.5] (boti) .. controls +(+.5,0,+.5) and +(0,-1.25,0) .. (Nu) -- (Eu) .. controls +(0,-1.25,0) and +(+.5,0,+.5) .. (botii) -- (boti);
  \path[fill=gray!50!red,opacity=0.5] (Wu) .. controls +(0,-.75,0) and +(-.2,0,-.2) .. (boti) -- (botii) .. controls +(-.2,0,-.2) and +(0,-.75,0)  .. (Su) -- (Wu);
  \path[fill=gray!50!blue,opacity=0.5] (Su) .. controls +(0,-.75,0) and +(-.2,0,-.2) .. (botii) .. controls +(+.5,0,+.5) and +(0,-1.25,0) .. (Eu) -- (Su);
  \draw[string, black!50!red]  (Wu) -- (Su);
  \draw[string, black!50!blue] (Su) -- (Eu);
  \draw[string] (Su) .. controls +(0,-.75,0) and +(-.2,0,-.2) .. (botii) .. controls +(+.5,0,+.5) and +(0,-1.25,0) .. (Eu);
\end{tikzpicture}
\end{equation}
It turns out that one cannot draw the morphism \eqref{eqn:nonexistentintegral} due to ``framing'' data suppressed in this topological notation: 
the
Tangle Hypothesis \cite{baezdolan,lurie} describes categories with adjunctibility in terms of diagrams made out of framed manifolds, and the acts of clockwise or counterclockwise $180^\circ$-rotation of the framing are not homotopic. (We will not assume the Tangle Hypothesis in our formulas, but we will use it as a source of inspiration for the formulas themselves.) Indeed, if there were not framing issues to contend with, then as explained in \cite{David2017Talk}, the above morphism would be a left and right integral valued in the unit object, and $H(X,f,g,\alpha)$ would be a unimodular Hopf algebra. The goal of the remainder of this section is to write down formulas for the integrals and cointegrals on $H(X,f,g,\alpha)$ that keep track of all framing data.

\begin{definition} \label{def:fullyadjunctible} A $1$-morphism $f:B\to C$ in an $(\infty,3)$-category is \emph{fully adjunctible} if it admits a right adjoint $f^R$ and its counit and unit  $\ev_f: ff^R \Rightarrow \id_{C}$ and $\coev_f : \id_{1_\cC} \Rightarrow f^Rf$ themselves admit both left and right adjoint $2$-morphisms. 
\end{definition}
We will justify the terminology in Remark~\ref{rem:justifyfullyadj}.
Both the choice of a right adjoint and of a left adjoint of $\ev_f$ and $\coev_f$ exhibit $f^R$ as a  left adjoint of $f$. The intertwining isomorphism between these two identification is given by the following: 

\begin{definition}\label{def:Radford}
Given a fully adjunctible $1$-morphism $f:B\to C$ in an  $(\infty,3)$-category,  its \define{Radford automorphism} $\Rad_f : f \Isom f$ is the composition \begin{equation*} 
  \begin{tikzpicture}[xscale=1.5,baseline=(C.base)]
    \path
    (0,0) coordinate (beginning)
    (0,1.5) coordinate (Rad)
    (0,3) coordinate (ending)
    ;
    \draw[string, black!50!red] (beginning) -- node[auto,swap] {$\scriptstyle f$} (Rad);
    \draw[string, black!50!red] (Rad) -- node[auto,swap] {$\scriptstyle f$} (ending);
    \path[fill=gray!50!orange,opacity=0.25] (beginning) -- (Rad) -- (ending) -- ++(-1.5,0) -- ++(0,-3) -- cycle;
    \path[fill=gray!50!yellow,opacity=0.25] (beginning) -- (Rad) -- (ending) -- ++(+1.5,0) -- ++(0,-3) -- cycle;
    \path (Rad) node[draw, black!50!red, ellipse, inner sep=0.5pt,fill=white]  {$\scriptstyle \Rad_f$} ;
    \path (-1,1.5) node (C) {$X$} (+1, 1.5) node {$B$};
  \end{tikzpicture}
  \quad := \quad
    \begin{tikzpicture}[xscale=1.5,baseline=(C.base)]
      \path
      (2.5,0) coordinate (beginning) 
      (2,2) coordinate (coevR) 
      (1,1) coordinate (evL)
      (.5,3) coordinate (ending) 
      ;
      \draw[string, black!50!red] (beginning) .. controls +(0,1) and +(.5,0) .. node[auto,swap,pos=0.3] {$\scriptstyle f$} (coevR);
      \draw[string, black!50!red] (coevR) .. controls +(-.5,0) and +(.5,0) .. node[auto] {$\scriptstyle f^R$} (evL);
      \draw[string, black!50!red] (evL) .. controls +(-.5,0) and +(0,-1) .. node[auto,swap] {$\scriptstyle f$} (ending);
      \path[fill=gray!50!orange,opacity=0.25] (beginning) .. controls +(0,1) and +(.5,0) ..  (coevR) .. controls +(-.5,0) and +(.5,0) .. (evL) .. controls +(-.5,0) and +(0,-1) .. (ending) -- ++(-1,0) -- ++(0,-3) -- cycle;
      \path[fill=gray!50!yellow,opacity=0.25] (beginning) .. controls +(0,1) and +(.5,0) ..  (coevR) .. controls +(-.5,0) and +(.5,0) .. (evL) .. controls +(-.5,0) and +(0,-1) .. (ending) -- ++(3,0) -- ++(0,-3) -- cycle;
      \path 
      (coevR) node[draw, black!50!red, ellipse, inner sep=0.5pt,fill=white] {$\scriptstyle \coev_f^R$}
      (evL) node[draw, black!50!red, ellipse, inner sep=0.5pt,fill=white]  {$\scriptstyle \ev_f^L$}
      (0,1.5) node (C) {$X$} (3,1.5) node {$B$}
      ;
    \end{tikzpicture}.
  \end{equation*}
\end{definition}
The Radford automorphism is indeed invertible with inverse 
 \begin{equation} \label{eqn:radfordinverse}
  \begin{tikzpicture}[xscale=1.5,baseline=(C.base)]
    \path
    (0,0) coordinate (beginning)
    (0,1.5) coordinate (Rad)
    (0,3) coordinate (ending)
    ;
    \draw[string, black!50!red] (beginning) -- node[auto,swap] {$\scriptstyle f$} (Rad);
    \draw[string, black!50!red] (Rad) -- node[auto,swap] {$\scriptstyle f$} (ending);
    \path[fill=gray!50!orange,opacity=0.25] (beginning) -- (Rad) -- (ending) -- ++(-1.5,0) -- ++(0,-3) -- cycle;
    \path[fill=gray!50!yellow,opacity=0.25] (beginning) -- (Rad) -- (ending) -- ++(+1.5,0) -- ++(0,-3) -- cycle;
    \path (Rad) node[draw, black!50!red, ellipse, inner sep=0.5pt,fill=white]  {$\scriptstyle \Rad_f^{-1}$} ;
    \path (-1,1.5) node (C) {$X$} (+1, 1.5) node {$B$};
  \end{tikzpicture}
  \quad \simeq \quad
    \begin{tikzpicture}[xscale=1.5,baseline=(C.base)]
      \path
      (2.5,0) coordinate (beginning) 
      (2,2) coordinate (coevR) 
      (1,1) coordinate (evL)
      (.5,3) coordinate (ending) 
      ;
      \draw[string, black!50!red] (beginning) .. controls +(0,1) and +(.5,0) .. node[auto,swap,pos=0.3] {$\scriptstyle f$} (coevR);
      \draw[string, black!50!red] (coevR) .. controls +(-.5,0) and +(.5,0) .. node[auto] {$\scriptstyle f^R$} (evL);
      \draw[string, black!50!red] (evL) .. controls +(-.5,0) and +(0,-1) .. node[auto,swap] {$\scriptstyle f$} (ending);
       \path[fill=gray!50!orange,opacity=0.25] (beginning) .. controls +(0,1) and +(.5,0) ..  (coevR) .. controls +(-.5,0) and +(.5,0) .. (evL) .. controls +(-.5,0) and +(0,-1) .. (ending) -- ++(-1,0) -- ++(0,-3) -- cycle;
      \path[fill=gray!50!yellow,opacity=0.25] (beginning) .. controls +(0,1) and +(.5,0) ..  (coevR) .. controls +(-.5,0) and +(.5,0) .. (evL) .. controls +(-.5,0) and +(0,-1) .. (ending) -- ++(3,0) -- ++(0,-3) -- cycle;
      \path 
      (coevR) node[draw, black!50!red, ellipse, inner sep=0.5pt,fill=white] {$\scriptstyle \coev_f^L$}
      (evL) node[draw, black!50!red, ellipse, inner sep=0.5pt,fill=white]  {$\scriptstyle \ev_f^R$}
      (0,1.5) node (C) {$X$} (3,1.5) node {$B$}
      ;
    \end{tikzpicture}.
  \end{equation}

  \begin{remark}\label{rem:justifyfullyadj}
  If $f$ is fully adjunctible, then any choice of left or right dual of $\ev_f$ and $\coev_f$ exhibits $f^R$ as a left adjoint of $f$; hence $f$ is part of a $2$-periodic chain of adjoints 
  \[   \cdots \dashv f^L \dashv f \dashv f^R \simeq f^L \dashv f \dashv \cdots
  \]
Moreover, by definition of the Radford automorphism, \begin{equation}\label{eq:Radev}\ev_f^L \simeq (\Rad_f f^R) \circ  \ev_f^R  \hspace{1cm} \coev_f^L \circ (f^R \Rad_f) \simeq \coev_f^R \end{equation}  and hence both $\ev_f$ and $\coev_f$ are part of an infinite chain of adjoints
 \[   \cdots  \dashv  \ev_f \circ  (\Rad_f^{-1} f^R)  \dashv (\Rad_f f^R)\circ ev_f^R   \dashv \ev_f \dashv \ev_f^R \dashv \ev_f  \circ (\Rad_f f^R) \dashv \cdots \]  and similarly for $\coev_f$, justifying the terminology ``fully adjunctible'' in Definition~\ref{def:fullyadjunctible}. Compare~\cite{AraujoThesis}.
  \end{remark}

Up to isomorphism, Definition~\ref{def:Radford} does not depend on the various choices of adjunction data. In particular, it follows that if $f:B \to C$ and $g:C \to D$ are fully adjunctible $1$-morphisms, then there is a canonical isomorphism \begin{equation}\label{eq:Radcomposition}
\Rad_g\Rad_f \simeq \Rad_{g \circ f},\end{equation} and if $\beta: f \To f'$ is a $2$-isomorphism, then \begin{equation}\label{eq:Radiso}\Rad_{f'} \circ \beta \simeq \beta \circ \Rad_{f}.\end{equation} 

\begin{remark}
The isomorphism~\eqref{eq:Radiso} can be generalized to non-invertible $2$-morphisms $\beta: f \To f'$ which admit an infinite chain of adjoints $\cdots \dashv \beta^L \dashv \beta \dashv \beta^R \dashv \cdots $ in which case\footnote{Equivalently, we can define a $3$-isomorphism  $ {\beta^{LL}} \circ {\Rad_f} \simeq {\Rad_{f'}} \circ {\beta^{RR}}.$  Invertibility of $\Rad_f$ implies that $\Rad_f \simeq (\Rad_f^{-1})^{L} \simeq (\ev_f f) \circ (f \coev_f^{LL})$.  For a $2$-morphism $\beta: f \To f'$, we let $\beta^{\rrmate}: (f')^R \To f^R$ and $\beta^{\llmate}: (f')^L \To f^L$ denote their ``full mates/180-degree rotations.''  Together with functoriality  of $(-)^{LL}$, we can recognize  $ \beta^{LL} \circ \Rad_f \simeq \Rad_{f'} \circ \beta^{{\rrmate} LL {\llmate}}.$  On the other hand, Lemma~\ref{lem:adjointsofmates} supplies an isomorphism  $ \beta^{{\rrmate} LL {\llmate}} \simeq \beta^{RR}. $}
\[\beta  \circ \Rad_f \simeq \Rad_{f'} \circ \beta^{RRRR}.
\]
These isomorphisms are natural in $\beta$ and should define an identity-on-objects natural isomorphism between the identity functor $\id_{\cC}$ and the functor  $(-)^{RRRR}: \cC \to \cC$, whose existence is a categorical manifestation of the isomorphism $\pi_1 O(3) = \mathbb{Z}/2$. We will not develop this further, but see~\cite{DSPS} from whence we get the name ``Radford.''  
\end{remark}

In the following, it will be useful to express the Radford automorphism in terms of a left adjoint of a $1$-morphism. 

    \begin{lemma} \label{lem:RadForL}A $1$-morphism $g:B \to C$ in an $(\infty,3)$-category is fully adjunctible if and only if it admits a left adjoint $g^L$ whose counit and unit $\ev_g: g^L g \To \id_{B} $ and $\coev_g : \id_A \To g g^L$ admit both left and right adjoints. In this case, the Radford automorphism is isomorphic to   \[\begin{tikzpicture}[xscale=1.5,baseline=(C.base)]
    \path
    (0,0) coordinate (beginning)
    (0,1.5) coordinate (Rad)
    (0,3) coordinate (ending)
    ;
    \draw[string, black!50!red] (beginning) -- node[auto,swap] {$\scriptstyle g$} (Rad);
    \draw[string, black!50!red] (Rad) -- node[auto,swap] {$\scriptstyle g$} (ending);
    \path[fill=gray!50!orange,opacity=0.25] (beginning) -- (Rad) -- (ending) -- ++(-1.5,0) -- ++(0,-3) -- cycle;
    \path[fill=gray!50!yellow,opacity=0.25] (beginning) -- (Rad) -- (ending) -- ++(+1.5,0) -- ++(0,-3) -- cycle;
    \path (Rad) node[draw, black!50!red, ellipse, inner sep=0.5pt,fill=white]  {$\scriptstyle \Rad_g$} ;
    \path (-1,1.5) node (C) {$X$} (+1, 1.5) node {$B$};
  \end{tikzpicture}
  \quad \simeq \quad
    \begin{tikzpicture}[xscale=-1.5,baseline=(C.base)]
      \path
      (2.5,0) coordinate (beginning) 
      (2,2) coordinate (coevR) 
      (1,1) coordinate (evL)
      (.5,3) coordinate (ending) 
      ;
      \draw[string, black!50!red] (beginning) .. controls +(0,1) and +(.5,0) .. node[auto,swap,pos=0.3] {$\scriptstyle g$} (coevR);
      \draw[string, black!50!red] (coevR) .. controls +(-.5,0) and +(.5,0) .. node[auto] {$\scriptstyle g^L$} (evL);
      \draw[string, black!50!red] (evL) .. controls +(-.5,0) and +(0,-1) .. node[auto,swap] {$\scriptstyle g$} (ending);
      \path[fill=gray!50!yellow, opacity=0.25] (beginning) .. controls +(0,1) and +(.5,0) ..  (coevR) .. controls +(-.5,0) and +(.5,0) .. (evL) .. controls +(-.5,0) and +(0,-1) .. (ending) -- ++(-1,0) -- ++(0,-3) -- cycle;
      \path[fill=gray!50!orange,opacity=0.25] (beginning) .. controls +(0,1) and +(.5,0) ..  (coevR) .. controls +(-.5,0) and +(.5,0) .. (evL) .. controls +(-.5,0) and +(0,-1) .. (ending) -- ++(3,0) -- ++(0,-3) -- cycle;
      \path 
      (coevR) node[draw, black!50!red, ellipse, inner sep=0.5pt,fill=white] {$\scriptstyle \coev_g^R$}
      (evL) node[draw, black!50!red, ellipse, inner sep=0.5pt,fill=white]  {$\scriptstyle \ev_g^L$}
      (0,1.5) node (C) {$B$} (3,1.5) node {$X$}
      ;
    \end{tikzpicture}
 \]
 with inverse
 \[\begin{tikzpicture}[xscale=1.5,baseline=(C.base)]
    \path
    (0,0) coordinate (beginning)
    (0,1.5) coordinate (Rad)
    (0,3) coordinate (ending)
    ;
    \draw[string, black!50!red] (beginning) -- node[auto,swap] {$\scriptstyle g$} (Rad);
    \draw[string, black!50!red] (Rad) -- node[auto,swap] {$\scriptstyle g$} (ending);
    \path[fill=gray!50!orange,opacity=0.25] (beginning) -- (Rad) -- (ending) -- ++(-1.5,0) -- ++(0,-3) -- cycle;
    \path[fill=gray!50!yellow,opacity=0.25] (beginning) -- (Rad) -- (ending) -- ++(+1.5,0) -- ++(0,-3) -- cycle;
    \path (Rad) node[draw, black!50!red, ellipse, inner sep=0.5pt,fill=white]  {$\scriptstyle \Rad_g^{-1} $} ;
    \path (-1,1.5) node (C) {$X$} (+1, 1.5) node {$B$};
  \end{tikzpicture}
  \quad \simeq \quad
    \begin{tikzpicture}[xscale=-1.5,baseline=(C.base)]
      \path
      (2.5,0) coordinate (beginning) 
      (2,2) coordinate (coevR) 
      (1,1) coordinate (evL)
      (.5,3) coordinate (ending) 
      ;
      \draw[string, black!50!red] (beginning) .. controls +(0,1) and +(.5,0) .. node[auto,swap,pos=0.3] {$\scriptstyle g$} (coevR);
      \draw[string, black!50!red] (coevR) .. controls +(-.5,0) and +(.5,0) .. node[auto] {$\scriptstyle g^L$} (evL);
      \draw[string, black!50!red] (evL) .. controls +(-.5,0) and +(0,-1) .. node[auto,swap] {$\scriptstyle g$} (ending);
      \path[fill=gray!50!yellow, opacity=0.25] (beginning) .. controls +(0,1) and +(.5,0) ..  (coevR) .. controls +(-.5,0) and +(.5,0) .. (evL) .. controls +(-.5,0) and +(0,-1) .. (ending) -- ++(-1,0) -- ++(0,-3) -- cycle;
      \path[fill=gray!50!orange,opacity=0.25] (beginning) .. controls +(0,1) and +(.5,0) ..  (coevR) .. controls +(-.5,0) and +(.5,0) .. (evL) .. controls +(-.5,0) and +(0,-1) .. (ending) -- ++(3,0) -- ++(0,-3) -- cycle;
      \path 
      (coevR) node[draw, black!50!red, ellipse, inner sep=0.5pt,fill=white] {$\scriptstyle \coev_g^L$}
      (evL) node[draw, black!50!red, ellipse, inner sep=0.5pt,fill=white]  {$\scriptstyle \ev_g^R$}
      (0,1.5) node (C) {$B$} (3,1.5) node {$X$}
      ;
    \end{tikzpicture}.
 \]
 \end{lemma}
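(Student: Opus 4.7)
The proof has two parts: the equivalence of the two characterizations of full adjunctibility, and the equivalence of the two formulas for $\Rad_g$.

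For the first part, the argument is symmetric in the two directions, so it suffices to treat one implication. Assume $g: B \to C$ admits a right adjoint $g^R$ with unit $\coev_g: \id_B \To g^Rg$ and counit $\ev_g: gg^R \To \id_C$ which themselves admit left and right adjoints in the relevant hom $(\infty,2)$-categories. The plan is to set $g^L := g^R$ and to exhibit $\ev_g^L: \id_C \To gg^L$ as the unit and $\coev_g^R: g^Lg \To \id_B$ as the counit of an adjunction $g^L \dashv g$. The two triangle identities for this new adjunction follow by applying the left-/right-adjoint operation to the triangle identities of $g \dashv g^R$ (this is a standard mate calculus argument in the hom $(\infty,2)$-category, analogous to Lemma~\ref{lem:adjointsofmates}). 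It remains to check that $\ev_g^L$ and $\coev_g^R$ themselves admit both left and right adjoints: the right adjoint of $\ev_g^L$ is canonically $\ev_g$ (via $(-)^{LR}\simeq \id$), the left adjoint of $\coev_g^R$ is canonically $\coev_g$, and the remaining adjoints $\ev_g^{LL}$ and $\coev_g^{RR}$ are provided by the infinite chain of adjoints of Remark~\ref{rem:justifyfullyadj}.

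For the second part, I observe that the picture in the lemma is the horizontal mirror reflection of the picture in Definition~\ref{def:Radford}: the \texttt{xscale=-1.5} reflects the diagram across its vertical axis, swapping the regions labeled $B$ and $X$ and reversing the sides on which the constituent $2$-morphisms are whiskered. In formula form, the alternative composition is
\[ (\coev_g^R\, g) \circ (g\, \ev_g^L) : g \To gg^Lg \To g, \]
where $\coev_g^R$ and $\ev_g^L$ denote the adjoints of the unit and counit of $g^L \dashv g$. Under the identification $g^L \simeq g^R$ established in the first part, the new unit and counit correspond to $\ev_g^L$ and $\coev_g^R$ of the original $g \dashv g^R$ adjunction. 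Tracking this correspondence, along with the reversal of whiskering sides coming from the mirror reflection, one verifies by a direct string diagram calculation that the alternative formula yields a $2$-morphism canonically isomorphic to the composition of Definition~\ref{def:Radford}. The formula for $\Rad_g^{-1}$ is established by the same argument applied to the inverse formula~\eqref{eqn:radfordinverse}.

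The main obstacle is that, although the equivalence of the two formulas is ``topologically obvious'' — both describe the same framing rotation using either right or left adjoint data — the rigorous $(\infty,3)$-categorical verification requires carefully matching up several layers of iterated adjoint $2$-morphisms and their canonical isomorphisms $(-)^{LR}\simeq\id \simeq(-)^{RL}$. In principle this is a finite diagram chase, but expressing it cleanly depends on packaging the ``mirror reflection'' as a functoriality statement of the mate calculus with respect to the $1$-op/$2$-op involutions of \S\ref{subsec:otimesinfty}; alternatively, once one knows that $\Rad_g$ is characterized up to unique isomorphism as the Radford automorphism, the result can be obtained by checking that the left-adjoint formula satisfies the same defining property as the right-adjoint formula.
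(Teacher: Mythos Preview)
Your first part is fine and essentially matches the paper's reasoning (it is the same Lemma~\ref{lem:twohandedadjoint}-style argument combined with the infinite chain of Remark~\ref{rem:justifyfullyadj}, run in the opposite direction).

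The second part has a genuine gap. You correctly read off the formula $R_g = (\coev_g^R\, g) \circ (g\, \ev_g^L)$, and you correctly note that the picture is the mirror of Definition~\ref{def:Radford}; but ``one verifies by a direct string diagram calculation'' is not a proof, and the alternative approaches you sketch in your last paragraph are not carried out either. The difficulty you flag --- matching iterated adjoints across the mirror --- is exactly the content of the lemma, and the paper handles it by a short explicit computation rather than by abstract mate-calculus functoriality.

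Here is what the paper actually does. Working from the left-adjoint data $g^L\dashv g$ with counit $\ev_g$ and unit $\coev_g$, set $g^R := g^L$ with unit $\ev_g^R$ and counit $\coev_g^R$; this is a valid adjunction $g\dashv g^L$. Now compute $\Rad_g$ from Definition~\ref{def:Radford} using \emph{this} choice of right-adjoint data: $\Rad_g \simeq (g\,(\ev_g^R)^R)\circ((\coev_g^R)^L\, g) = (g\,(\ev_g^R)^R)\circ(\coev_g\, g)$. The only nontrivial step is to identify $(\ev_g^R)^R$. For this one checks the elementary identity
\[
(g^L R_g)\circ \ev_g^R \;\simeq\; \ev_g^L,
\]
which follows immediately from one triangle identity for the new adjunction $g\dashv g^L$ (namely $(g^L\coev_g^R)\circ(\ev_g^R g^L)\simeq \id_{g^L}$, whiskered on the right by $g$) together with interchange. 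Taking right adjoints gives $(\ev_g^R)^R \simeq \ev_g\circ(g^L R_g)$, whence
\[
\Rad_g \;\simeq\; (g\,\ev_g)\circ(gg^L R_g)\circ(\coev_g\, g) \;\simeq\; (g\,\ev_g)\circ(\coev_g\, g)\circ R_g \;\simeq\; R_g,
\]
the last step being the triangle identity for the original $g^L\dashv g$. This is the ``finite diagram chase'' you allude to, and it is short enough that it should be written out rather than asserted. Your ``approach 3'' (use independence of $\Rad_g$ from the choice of adjunction data) is in fact the right idea --- but the point is to make the \emph{specific} choice above, which makes the computation a two-line application of triangle identities.
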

 \begin{proof}
The right adjoints $\ev_g^R$ and $\coev_g^R$ exhibit $g^L$ as a right adjoint of $g$. The unit $\ev_g^R$ and counit $\coev_g^R$ of this ``new'' adjunction $g \dashv g^L$ by definition have left adjoints.  Similar to Remark~\ref{rem:justifyfullyadj}, we will now show that they also have right adjoints. Denote the right-hand side of the above equation by $R_g$. It follows that $ ( g^L R_g)  \circ \ev_g^R  \simeq \ev_g^L$ and that $\coev_g^L \circ (R_g g^L) \simeq \coev_g^R$.  Hence, the unit and counit also have right adjoints, namely $\ev_g \circ (g^L R_g)$ and $(R_g^{-1} g^L) \circ \coev_g$, respectively. Using these choices of unit and counit, the Radford isomorphism of $g$ unpacks to $\Rad_g \simeq (g \ev_g) \circ (\coev_g g ) \circ R_g \simeq R_g$.  \end{proof}

With this set up, we are now ready to describe the canonical left integral and cointegral for $H(X,f,g,\alpha)$.
\begin{definition}
  Given a retract $(X,f,g,\alpha)$ in a pointed $(\infty,3)$-category for which both $f$ and $g$ are fully adjunctible. Its \define{Radford bubbles}  $I_{\mathrm{int}}(X,f,g,\alpha), I_{\mathrm{coint}}(X,f,g,\alpha) \in \Omega^2\cC$ are the following composites:
  \begin{align*}
  I_{\mathrm{coint}}(X,f,g,\alpha)  :=&  
  \begin{tikzpicture}[yscale=1.5, baseline=(C.base)]
    \path
    (0,0) coordinate (alpha)
    (0,2) coordinate (alpham)
    ;
    \path[fill=gray,opacity=0.25] (alpha) .. controls +(+1.5,1) and +(+1.5,-1) .. (alpham) .. controls +(-1.5,-1) and +(-1.5,1) .. (alpha);
    \draw[string, black!50!red] (alpha) .. controls +(+1.5,1) and +(+1.5,-1) .. node[draw, black!50!red, ellipse, line width=0.4pt, inner sep=0.5pt,fill=white] {$\scriptstyle \Rad_f$} node[auto,swap,pos=0.2] {$\scriptstyle f$} node[auto,swap,pos=0.8] {$\scriptstyle f$} (alpham);
    \draw[string, black!50!blue] (alpha) .. controls +(-1.5,1) and +(-1.5,-1) .. node [auto] {$\scriptstyle g$} (alpham);
    \path 
    (0,1) node (C) {$X$}
    (alpha) node[draw, ellipse, inner sep=0.5pt,fill=white] {$\scriptstyle \alpha$}
    (alpham) node[draw, ellipse, inner sep=0.5pt,fill=white] {$\scriptstyle \alpha^{-1}$}
    ;
  \end{tikzpicture}
&\qquad I_{\mathrm{int}}(X,f,g,\alpha)  :=&  \;
  \begin{tikzpicture}[xscale=-1,yscale=1.5, baseline=(C.base)]
    \path
    (0,0) coordinate (alpha)
    (0,2) coordinate (alpham)
    ;
    \path[fill=gray,opacity=0.25] (alpha) .. controls +(+1.5,1) and +(+1.5,-1) .. (alpham) .. controls +(-1.5,-1) and +(-1.5,1) .. (alpha);
    \draw[string, black!50!blue] (alpha) .. controls +(+1.5,1) and +(+1.5,-1) .. node[draw,  ellipse, line width=0.4pt, inner sep=0.5pt,fill=white] {$\scriptstyle \Rad_g^{-1}$} node[auto,pos=0.2] {$\scriptstyle g$} node[auto,pos=0.8] {$\scriptstyle g$} (alpham);
    \draw[string, black!50!red] (alpha) .. controls +(-1.5,1) and +(-1.5,-1) .. node [auto,swap] {$\scriptstyle f$} (alpham);
    \path 
    (0,1) node (C) {$X$}
    (alpha) node[draw, ellipse, inner sep=0.5pt,fill=white] {$\scriptstyle \alpha$}
    (alpham) node[draw, ellipse, inner sep=0.5pt,fill=white] {$\scriptstyle \alpha^{-1}$}
    ;
  \end{tikzpicture}
\\
\intertext{  The Radford bubbles are manifestly invertible, with inverse}
  I_{\mathrm{coint}}(X,f,g,\alpha)^{-1}  \simeq &  
  \begin{tikzpicture}[yscale=1.5, baseline=(C.base)]
    \path
    (0,0) coordinate (alpha)
    (0,2) coordinate (alpham)
    ;
    \path[fill=gray,opacity=0.25] (alpha) .. controls +(+1.5,1) and +(+1.5,-1) .. (alpham) .. controls +(-1.5,-1) and +(-1.5,1) .. (alpha);
    \draw[string, black!50!red] (alpha) .. controls +(+1.5,1) and +(+1.5,-1) .. node[draw, black!50!red, ellipse, line width=0.4pt, inner sep=0.5pt,fill=white] {$\scriptstyle \Rad_f^{-1}$} node[auto,swap,pos=0.2] {$\scriptstyle f$} node[auto,swap,pos=0.8] {$\scriptstyle f$} (alpham);
    \draw[string, black!50!blue] (alpha) .. controls +(-1.5,1) and +(-1.5,-1) .. node [auto] {$\scriptstyle g$} (alpham);
    \path 
    (0,1) node (C) {$X$}
    (alpha) node[draw, ellipse, inner sep=0.5pt,fill=white] {$\scriptstyle \alpha$}
    (alpham) node[draw, ellipse, inner sep=0.5pt,fill=white] {$\scriptstyle \alpha^{-1}$}
    ;
  \end{tikzpicture} 
&\qquad I_{\mathrm{coint}}^{-1}(X,f,g,\alpha)  \simeq&  \;
  \begin{tikzpicture}[xscale=-1,yscale=1.5, baseline=(C.base)]
    \path
    (0,0) coordinate (alpha)
    (0,2) coordinate (alpham)
    ;
    \path[fill=gray,opacity=0.25] (alpha) .. controls +(+1.5,1) and +(+1.5,-1) .. (alpham) .. controls +(-1.5,-1) and +(-1.5,1) .. (alpha);
    \draw[string, black!50!blue] (alpha) .. controls +(+1.5,1) and +(+1.5,-1) .. node[draw,  ellipse, line width=0.4pt, inner sep=0.5pt,fill=white] {$\scriptstyle \Rad_g$} node[auto,pos=0.2] {$\scriptstyle g$} node[auto,pos=0.8] {$\scriptstyle g$} (alpham);
    \draw[string, black!50!red] (alpha) .. controls +(-1.5,1) and +(-1.5,-1) .. node [auto,swap] {$\scriptstyle f$} (alpham);
    \path 
    (0,1) node (C) {$X$}
    (alpha) node[draw, ellipse, inner sep=0.5pt,fill=white] {$\scriptstyle \alpha$}
    (alpham) node[draw, ellipse, inner sep=0.5pt,fill=white] {$\scriptstyle \alpha^{-1}$}
    ;
  \end{tikzpicture}
  \end{align*}
  \end{definition}
  
  \begin{remark}
    The Radford bubbles $I_{\mathrm{coint}}(X,f,g,\alpha)$ and $I_{\mathrm{int}}(X,f,g,\alpha)$ are isomorphic by the naturality equations~\eqref{eq:Radcomposition} and~\eqref{eq:Radiso}.
  \end{remark}
  
  \begin{prop}\label{prop:intcoint}
  Given a section-retraction pair $(X,f,g,\alpha)$ in a pointed $(\infty,3)$-category $(\cC, 1_{\cC})$ for which  $f$ and $g$ are fully adjunctible. Then, $(X,f,g,\alpha)$ satisfies the conditions of Proposition~\ref{prop:dualizability} and hence gives rise to a dualizable Hopf algebra $H(X,f,g,\alpha)$ in $\Omega^2 \cC$. Moreover, the composite 
    \[
  I_{\mathrm{coint}}(X,f,g,\alpha)~\simeq
   \begin{tikzpicture}[yscale=0.75, xscale=1,scale=0.8, baseline=(C.center)]
    \path
    (0,0) coordinate (alpha)
    (0,1) coordinate (bot)
    (1,2) coordinate (botR)
   (1,3) coordinate (topR) 
    (0,4) coordinate (top)
    (0,5) coordinate (alpham)
    ;
    \draw[string, black!50!red] (alpha) to[out=45, in=-45] node[auto,swap,pos=0.5] {$\scriptstyle f$} (botR) to [out=-135, in= 45]
     (bot) to [out=135, in =-135, looseness=1.5]node[auto,swap,pos=0.5] {$\scriptstyle f$} (top) to [out=-45, in=135] 
     (topR) to [out=45, in =-45] node[auto,swap,pos=0.5] {$\scriptstyle f$} (alpham);
    \draw[string, black!50!blue] (alpha) to [out=135, in=-135, looseness=1.5] node[auto,pos=0.5] {$\scriptstyle g$} (alpham);
     \path[fill=gray,opacity=0.25] (alpha)  to[out=45, in=-45] (botR) to [out=-135, in= 45] (bot) to [out=135, in =-135, looseness=1.5] (top) to [out=-45, in=135] (topR) to [out=45, in =-45] (alpham) to [out=-135, in =135, looseness=1.5] (alpha); 
    
    \path 
    (0,2.5) node (C) {}
    (alpha) node[draw, ellipse, inner sep=0.5pt,fill=white] {$\scriptstyle \alpha$}
     (top) node[draw, ellipse, inner sep=0.5pt,fill=white] {$\scriptstyle \ev_f$}
     (bot)  node[draw, ellipse, inner sep=0.5pt,fill=white] {$\scriptstyle \ev_f^L$}
    (alpham) node[draw, ellipse, inner sep=0.5pt,fill=white] {$\scriptstyle \alpha^{-1}$}
   (botR) node [draw, ellipse, inner sep=0.5pt,fill=white] {$\scriptstyle \coev_f^R$}
   (topR)  node [draw, ellipse, inner sep=0.5pt,fill=white] {$\scriptstyle \coev_f$}
    ;
  \end{tikzpicture} 
  \To[\ev_{\coev_f}]
   \begin{tikzpicture}[yscale=0.75,scale=0.8, baseline=(C.center)]
    \path
    (0,0) coordinate (alpha)
    (0,1) coordinate (bot)
    (1,2) coordinate (botR)
   (1,3) coordinate (topR) 
    (0,4) coordinate (top)
    (0,5) coordinate (alpham)
    ;
    \draw[string, black!50!red] (alpha) to[out=45, in=-45, looseness=1.5] node[auto,swap,pos=0.5] {$\scriptstyle f$} (alpham);
    \draw[string, black!50!blue] (alpha) to [out=135, in=-135, looseness=1.5] node[auto,pos=0.5] {$\scriptstyle g$} (alpham);
         \path[fill=gray,opacity=0.25] (alpha)  to[out=45, in=-45, looseness=1.5] (alpham) to [out=-135, in= 135, looseness=1.5] (alpha);
      \draw[string,black!50!red ,fill =white  ] (bot) to [out=45, in=-45, looseness=1.5]node[auto,pos=0.5] {$\scriptstyle f^R$} (top) to [out=-135, in =135, looseness=1.5] node[auto,pos=0.5] {$\scriptstyle f$}(bot);
 
     \path 
    (0,2.5) node (C) {}
    (alpha) node[draw, ellipse, inner sep=0.5pt,fill=white] {$\scriptstyle \alpha$}
     (top) node[draw, ellipse, inner sep=0.5pt,fill=white] {$\scriptstyle \ev_f$}
     (bot)  node[draw, ellipse, inner sep=0.5pt,fill=white] {$\scriptstyle \ev_f^L$}
    (alpham) node[draw, ellipse, inner sep=0.5pt,fill=white] {$\scriptstyle \alpha^{-1}$}
    ;
  \end{tikzpicture} 
\stackrel{\eqref{eq:simplification1}}{ \simeq }  ~  H(X,f,g,\alpha)
  \]
  defines a left $I_{\mathrm{coint}}(X,f,g,\alpha)$-valued cointegral and the composite 
  \[
  H(X,f,g,\alpha)~\stackrel{\eqref{eq:simplification2}}{ \simeq }
   \begin{tikzpicture}[yscale=0.75,scale=0.8,  baseline=(C.center)]
    \path
    (0,0) coordinate (alpha)
    (0,1) coordinate (bot)
    (1,2) coordinate (botR)
   (1,3) coordinate (topR) 
    (0,4) coordinate (top)
    (0,5) coordinate (alpham)
    ;
    \draw[string, black!50!red] (alpha) to[out=45, in=-45, looseness=1.5] node[auto,swap,pos=0.5] {$\scriptstyle f$} (alpham);
    \draw[string, black!50!blue] (alpha) to [out=135, in=-135, looseness=1.5] node[auto,pos=0.5] {$\scriptstyle g$} (alpham);
         \path[fill=gray,opacity=0.25] (alpha)  to[out=45, in=-45, looseness=1.5] (alpham) to [out=-135, in= 135, looseness=1.5] (alpha);
      \draw[string,black!50!blue ,fill =white  ] (bot) to [out=45, in=-45, looseness=1.5]node[auto,pos=0.5] {$\scriptstyle g$} (top) to [out=-135, in =135, looseness=1.5] node[auto,pos=0.5] {$\scriptstyle g^L$}(bot);
 
     \path 
    (0,2.5) node (C) {}
    (alpha) node[draw, ellipse, inner sep=0.5pt,fill=white] {$\scriptstyle \alpha$}
     (top) node[draw, ellipse, inner sep=0.5pt,fill=white] {$\scriptstyle \ev_g$}
     (bot)  node[draw, ellipse, inner sep=0.5pt,fill=white] {$\scriptstyle \ev_g^R$}
    (alpham) node[draw, ellipse, inner sep=0.5pt,fill=white] {$\scriptstyle \alpha^{-1}$}
    ;
  \end{tikzpicture} 
  \To[\coev_{\coev_g}]
   \begin{tikzpicture}[yscale=0.75, xscale=-1, scale=0.8, baseline=(C.center)]
    \path
    (0,0) coordinate (alpha)
    (0,1) coordinate (bot)
    (1,2) coordinate (botR)
   (1,3) coordinate (topR) 
    (0,4) coordinate (top)
    (0,5) coordinate (alpham)
    ;
    \draw[string, black!50!blue] (alpha) to[out=45, in=-45] node[auto,pos=0.5] {$\scriptstyle g$} (botR) to [out=-135, in= 45]
    (bot) to [out=135, in =-135, looseness=1.5]node[auto,pos=0.5] {$\scriptstyle g$} (top) to [out=-45, in=135] 
    (topR) to [out=45, in =-45] node[auto,pos=0.5] {$\scriptstyle g$} (alpham);
    \draw[string, black!50!red] (alpha) to [out=135, in=-135, looseness=1.5] node[auto,swap,pos=0.5] {$\scriptstyle f$} (alpham);
     \path[fill=gray,opacity=0.25] (alpha)  to[out=45, in=-45] (botR) to [out=-135, in= 45] (bot) to [out=135, in =-135, looseness=1.5] (top) to [out=-45, in=135] (topR) to [out=45, in =-45] (alpham) to [out=-135, in =135, looseness=1.5] (alpha); 
    
    \path 
    (0,2.5) node (C) {}
    (alpha) node[draw, ellipse, inner sep=0.5pt,fill=white] {$\scriptstyle \alpha$}
     (top) node[draw, ellipse, inner sep=0.5pt,fill=white] {$\scriptstyle \ev_g$}
     (bot)  node[draw, ellipse, inner sep=0.5pt,fill=white] {$\scriptstyle \ev_g^R$}
    (alpham) node[draw, ellipse, inner sep=0.5pt,fill=white] {$\scriptstyle \alpha^{-1}$}
   (botR) node [draw, ellipse, inner sep=0.5pt,fill=white] {$\scriptstyle \coev_g^L$}
   (topR)  node [draw, ellipse, inner sep=0.5pt,fill=white] {$\scriptstyle \coev_g$}
    ;
  \end{tikzpicture} 
  \simeq~I_{\mathrm{int}}(X,f,g,\alpha)
   \]
defines a left $I_{\mathrm{int}}(X,f,g,\alpha)$-valued integral. The composite $\mathrm{int}_{(X,f,g,\alpha)} \circ \mathrm{coint}_{(X,f,g,\alpha)}$ is invertible.
  \end{prop}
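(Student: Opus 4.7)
The plan is to break the proof into four parts, following the four claims of the proposition.

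For the first claim, the hypotheses of Proposition~\ref{prop:dualizability} require that $\alpha^\rmate$ and $\alpha^\lmate$ each admit both left and right adjoints. By Remark~\ref{rem:simplesquare}, this reduces to showing that $g \circ \ev_f$ and $\ev_g \circ f$ admit both adjoints as $2$-morphisms; since $f$ and $g$ are fully adjunctible, $\ev_f$ and $\ev_g$ each admit both adjoints (Remark~\ref{rem:justifyfullyadj}), and whiskering by $g$ and by $f$ (which themselves have adjoints) preserves this. Thus Proposition~\ref{prop:dualizability} applies and $H(X,f,g,\alpha)$ is dualizable.

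For the second and third claims, I would verify the module/comodule axioms diagrammatically using the presentations~\eqref{eq:simplification1}--\eqref{eq:simplification2}. The cointegral should factor as $\mathrm{coint} = \gamma \circ \beta$, where $\beta : I_{\mathrm{coint}} \to g f f^R f$ is built from $\alpha$ and $\coev_{\coev_f}$, and $\gamma = \alpha^{-1} \circ (g\,\ev_f \,f)$ is the ``counit of the adjunction'' of which $H(X,f,g,\alpha) \simeq \gamma \circ \gamma^L$. By the universal module structure noted in Remark~\ref{rem:simplesquare}, any $2$-morphism $\mu : \id \to gff^Rf$ defines a canonical (and coherent) left $H$-module structure on $\gamma \circ \mu$. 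Applied to $\beta$, this gives $I_{\mathrm{coint}}$ the structure of a left $H$-module, and it remains to check that this structure agrees with the one defined via the counit of $H$ --- which follows from a direct cell-by-cell diagram chase using the zig-zag axioms for $f \dashv f^R$ and the adjunction $\ev_f \dashv \ev_f^R$ (involving the Radford automorphism via Definition~\ref{def:Radford}). The integral case is dual, using Lemma~\ref{lem:RadForL}.

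For the fourth claim, I would compose the explicit diagrams defining $\mathrm{int}$ and $\mathrm{coint}$: pasting the bottom picture of the cointegral formula on top of the top picture of the integral formula. The composite $I_{\mathrm{coint}} \to H(X,f,g,\alpha) \to I_{\mathrm{int}}$ contains a subdiagram where two pairs of adjoint $2$-morphisms attached to $\ev_f, \coev_f, \ev_g, \coev_g$ can be cancelled using zig-zag identities. What remains is a diagram built from $\alpha, \alpha^{-1}$, and whiskerings of the $3$-isomorphisms witnessing the $\ev/\coev$-zig-zag identities, which is manifestly invertible (each constituent is). The concrete form of the result should be a whiskering by the Radford automorphisms $\Rad_f$ and $\Rad_g^{-1}$, relating $I_{\mathrm{coint}}$ and $I_{\mathrm{int}}$ via the $3$-isomorphism implicit in equations~\eqref{eq:Radcomposition} and~\eqref{eq:Radiso}.

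The main obstacle is verifying the module axiom in part two (and its dual in part three): while the universal construction of Remark~\ref{rem:simplesquare} guarantees \emph{some} left module structure on $I_{\mathrm{coint}}$, identifying it with the module structure induced by the counit of $H$ requires a careful calculation involving the interaction of the Radford automorphism with the bigon $gf \overset{\alpha^{-1}}\Rightarrow 1 \overset{\alpha}\Rightarrow gf$. This is where the framing data hinted at by \eqref{eqn:nonexistentintegral} genuinely enters and where bordism-categorical intuition must be replaced with explicit $(\infty,3)$-categorical manipulation.
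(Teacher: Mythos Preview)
Your approach is essentially the same as the paper's, and correct. One simplification you are missing: what you call the ``main obstacle'' --- identifying the module structure on $I_{\mathrm{coint}}$ with the one induced by the counit --- is not a hard diagram chase in the paper. The paper writes your $\beta$ explicitly as $\mu_1 := (gf\,\coev_f) \circ (g\,\Rad_f) \circ \alpha$ and observes that $\mu_1 \simeq \mu_0 \circ I_{\mathrm{coint}}$, where $\mu_0 := (gf\,\coev_f)\circ\alpha$ is the $2$-morphism yielding the trivial module $\gamma\circ\mu_0 \simeq \id$. Since the module structure on $\gamma\circ\mu$ is functorial in $\mu$ (it is just whiskering by $\gamma$), and $I_{\mathrm{coint}}$ is invertible, this factorization immediately shows $\gamma\circ\mu_1 \simeq I_{\mathrm{coint}}$ carries the module structure pulled back along the counit --- no cell-by-cell verification or careful Radford manipulation is needed. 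The cointegral itself is then $\gamma$ whiskered with a $3$-morphism $\mu_1 \Rrightarrow \gamma^L$, hence automatically a module map. For the fourth claim, the paper (like you) leaves the invertibility verification to the reader.
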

  \begin{proof}
  By Remark~\ref{rem:justifyfullyadj}, the conditions of Proposition~\ref{prop:dualizability} are evidently satisfied. 
  
  We now show that $\mathrm{coint}_{(X,f,g,\alpha)}: I_{\mathrm{coint}}(X,f,g,\alpha) \to H(X,f,g,\alpha)$ defines a cointegral, the integral case is completely analogous. 
 Let $\gamma:= \alpha^{-1} \circ (g \ev_f f) : gff^R f \To \id_{1_{\cC}}$. Recall from Remark~\ref{rem:simplesquare} that composing any $2$-morphism $\mu: \id_{1_{\cC}} \To g ff^R f$ with $\gamma$ defines a coherent left module $\gamma \circ \mu \in \Omega^2 \cC$ of the algebra $H(X,f,g,\alpha) \simeq \gamma \circ \gamma^L$ and whiskering any $3$-morphism $\mu \Rrightarrow \mu'$ with $\gamma$ induces a module map $\gamma \circ \mu \to \gamma \circ \mu'$. For $\mu_0 := (gf \coev_f) \circ \alpha$, this module is precisely the unit $\gamma \circ \mu_0 \simeq \id_{\id_{1_{\cC}}}$ with its module structure induced from the counit of $H(X,f,g,\alpha)$. Let now $\mu_1:= (gf \coev_f) \circ (g \Rad_f) \circ \alpha$. Since $\mu_1 \simeq \mu_0 \circ  I_{\mathrm{coint}}(X,f,g,\alpha) $, the induced module structure on $\gamma \circ \mu_1 \simeq I_{\mathrm{coint}}(X,f,g,\alpha)$ agrees with the one induced by $\mu_0$, i.e.\ where $H(X,f,g,\alpha)$ acts via the counit. By definition, our map $\mathrm{coint}_{(X,f,g,\alpha)}$ from above is given by whiskering a $3$-morphism $\mu_1 \to \gamma^L$ with $\gamma$ and hence defines a left $H(X,f,g,\alpha)$-module map, i.e.\ a cointegral. 
 
 {We leave checking the invertibility of the composite $\mathrm{int}_{(X,f,g,\alpha)} \circ \mathrm{coint}_{(X,f,g,\alpha)}$ to the reader.}  
\end{proof}
       
  \begin{remark} Assuming only the weak adjunctibility conditions from Proposition~\ref{prop:dualizability}, Proposition~\ref{prop:integrals} still asserts the existence of a universal (co)integral $I_{\mathrm{coint}}(X,f,g,\alpha) \cong I_{\mathrm{int}}(X,f,g,\alpha)$ in the Karoubi completion of $\Omega^2\cC$ (at least if $\cC$ is a $(3,3)$-category). We do not know whether the full assumptions of Proposition~\ref{prop:intcoint} are necessary to ensure this object already lives in $\Omega^2 \cC$ or whether the weaker conditions from Proposition~\ref{prop:dualizability} already suffice.    \end{remark}
  
  We find it instructive to give a description of integral and cointegral directly in terms of the more symmetric composite~\eqref{eqn:unpackedstring}. Using formula \eqref{eqn:Hasanalgebra} for $H(X,f,g,\alpha)$ and the  isomorphism $(\alpha^{\rmate})^L \circ \Rad_g \simeq (\alpha^{\rmate})^R$ induced from~\eqref{eq:Radev}, we can rewrite  $H(X,f,g,\alpha)$ as 
  \[
  \begin{tikzpicture}[scale=2,baseline=(M.base)]
\path[fill=gray,opacity=0.25] (-.5,.5) -- (.5,+1) -- (+.5,-1) -- (-.5,-.5) -- cycle;
\path 
(-.5,.5) node[draw, ellipse, inner sep=0.5pt,fill=white] (W) {$\scriptstyle \alpha^\rmate$}
(+.5,-1) node[draw, ellipse, inner sep=0.5pt,fill=white] (E) {$\scriptstyle \alpha$}
(.5,+1) node[draw, ellipse, inner sep=0.5pt,fill=white] (N) {$\scriptstyle \alpha^{-1}$}
(-.5,-.5) node[draw, ellipse, inner sep=0.5pt,fill=white] (S) {$\scriptstyle \alpha^{\rmate L}$}
(0,0) node (M) {$X$}
;
\draw[string, black!50!red] (S) -- node[auto] {$\scriptstyle f^R$} (W);
\draw[string, black!50!blue] (S) -- node[auto,swap] {$\scriptstyle g$} (E);
\draw[string, black!50!blue] (W) -- node[auto] {$\scriptstyle g$} (N);
\draw[string, black!50!red] (E) -- node[auto,swap] {$\scriptstyle f$} (N);
\end{tikzpicture}
\quad\simeq\quad
\begin{tikzpicture}[scale=2,baseline=(M.base)]
\path[fill=gray,opacity=0.25] (-.5,.5) -- (.5,+1) -- (+.5,-1) -- (-.5,-.5) -- cycle;
\path 
(-.5,.5) node[draw, ellipse, inner sep=0.5pt,fill=white] (W) {$\scriptstyle \alpha^\rmate$}
(+.5,-1) node[draw, ellipse, inner sep=0.5pt,fill=white] (E) {$\scriptstyle \alpha$}
(.5,+1) node[draw, ellipse, inner sep=0.5pt,fill=white] (N) {$\scriptstyle \alpha^{-1}$}
(-.5,-.5) node[draw, ellipse, inner sep=0.5pt,fill=white] (S) {$\scriptstyle \alpha^{\rmate L}$}
(0,0) node (M) {$X$}
;
\draw[string, black!50!red] (S) -- node[auto] {$\scriptstyle f^R$} (W);
\draw[string, black!50!blue] (S) -- node[draw, line width=0.4pt, ellipse, inner sep=0.5pt,fill=white] {$\scriptstyle \Rad_g$} (E);
\draw[string, black!50!blue] (W) -- node[auto] {$\scriptstyle g$} (N);
\draw[string, black!50!red] (E) -- node[auto,swap,pos=0.25] {$\scriptstyle f$} node[auto,swap,pos=0.75] {$\scriptstyle f$} node[draw, line width=0.4pt, ellipse, inner sep=0.5pt,fill=white] {$\scriptstyle \Rad_f$} (N);
\end{tikzpicture}
\quad\simeq\quad
\begin{tikzpicture}[scale=2,baseline=(M.base)]
\path[fill=gray,opacity=0.25] (-.5,.5) -- (.5,+1) -- (+.5,-1) -- (-.5,-.5) -- cycle;
\path 
(-.5,.5) node[draw, ellipse, inner sep=0.5pt,fill=white] (W) {$\scriptstyle \alpha^\rmate$}
(+.5,-1) node[draw, ellipse, inner sep=0.5pt,fill=white] (E) {$\scriptstyle \alpha$}
(.5,+1) node[draw, ellipse, inner sep=0.5pt,fill=white] (N) {$\scriptstyle \alpha^{-1}$}
(-.5,-.5) node[draw, ellipse, inner sep=0.5pt,fill=white] (S) {$\scriptstyle \alpha^{\rmate R}$}
(0,0) node (M) {$X$}
;
\draw[string, black!50!red] (S) -- node[auto] {$\scriptstyle f^R$} (W);
\draw[string, black!50!blue] (S) -- node[auto,swap] {$\scriptstyle g$} (E);
\draw[string, black!50!blue] (W) -- node[auto] {$\scriptstyle g$} (N);
\draw[string, black!50!red] (E) -- node[auto,swap,pos=0.25] {$\scriptstyle f$} node[auto,swap,pos=0.75] {$\scriptstyle f$} node[draw, line width=0.4pt, ellipse, inner sep=0.5pt,fill=white] {$\scriptstyle \Rad_f$} (N);
\end{tikzpicture}  .\]
In these terms, the integral is given by the evaluation $3$-morphism $\ev_{\alpha^\rmate}$ for the adjunction $\alpha^\rmate \dashv (\alpha^\rmate)^R$:
  \begin{equation}
  \label{eqn:integral}
  \mathrm{int}_{H(X,f,g,\alpha)} := 
    \begin{tikzpicture}[yscale=1.5, baseline=(C.base)]
    \path
    (0,0) coordinate (alpha)
    (0,2) coordinate (alpham)
    ;
    \path[fill=gray,opacity=0.25] (alpha) .. controls +(+1.5,1) and +(+1.5,-1) .. (alpham) .. controls +(-1.5,-1) and +(-1.5,1) .. (alpha);
    \draw[string, black!50!red] (alpha) .. controls +(+1.5,1) and +(+1.5,-1) .. node[draw, black!50!red, ellipse, line width=0.4pt, inner sep=0.5pt,fill=white] {$\scriptstyle \Rad_f$} node[auto,swap,pos=0.2] {$\scriptstyle f$} node[auto,swap,pos=0.8] {$\scriptstyle f$} (alpham);
    \draw[string, black!50!blue] (alpha) .. controls +(-1.5,1) and +(-1.5,-1) .. 
    node[draw, black, line width=1pt, inner sep=4pt,fill=white] {$\scriptstyle \ev_{\alpha^\rmate}$} node[auto,pos=0.2] {$\scriptstyle g$} node[auto,pos=0.8] {$\scriptstyle g$}
     (alpham);
    \path 
    (0,1) node (C) {$X$}
    (alpha) node[draw, ellipse, inner sep=0.5pt,fill=white] {$\scriptstyle \alpha$}
    (alpham) node[draw, ellipse, inner sep=0.5pt,fill=white] {$\scriptstyle \alpha^{-1}$}
    ;
  \end{tikzpicture}
 \quad  : \quad H(X,f,g,\alpha) \Rrightarrow I_{\mathrm{int}}(X,f,g,\alpha).
  \end{equation}
  A similar logic starting with \eqref{eqn:Hasacoalgebra} identifies $H(X,f,g,\alpha)$ with
  $$ 
  \begin{tikzpicture}[scale=2,baseline=(M.base)]
\path[fill=gray,opacity=0.25] (-.5,-1) -- (-.5,+1) -- (+.5,.5) -- (.5,-.5) -- cycle;
\path 
(-.5,-1) node[draw, ellipse, inner sep=0.5pt,fill=white] (W) {$\scriptstyle \alpha$}
(+.5,.5) node[draw, ellipse, inner sep=0.5pt,fill=white] (E) {$\scriptstyle \alpha^\lmate$}
(-.5,+1) node[draw, ellipse, inner sep=0.5pt,fill=white] (N) {$\scriptstyle \alpha^{-1}$}
(.5,-.5) node[draw, ellipse, inner sep=0.5pt,fill=white] (S) {$\scriptstyle \alpha^{\lmate R}$}
(0,0) node (M) {$X$}
;
\draw[string, black!50!red] (S) -- node[auto] {$\scriptstyle f$} (W);
\draw[string, black!50!blue] (S) -- node[auto,swap] {$\scriptstyle g^L$} (E);
\draw[string, black!50!blue] (W) -- node[auto] {$\scriptstyle g$} (N);
\draw[string, black!50!red] (E) -- node[auto,swap] {$\scriptstyle f$} (N);
\end{tikzpicture}
  \quad\simeq\quad
  \begin{tikzpicture}[scale=2,baseline=(M.base)]
\path[fill=gray,opacity=0.25] (-.5,-1) -- (-.5,+1) -- (+.5,.5) -- (.5,-.5) -- cycle;
\path 
(-.5,-1) node[draw, ellipse, inner sep=0.5pt,fill=white] (W) {$\scriptstyle \alpha$}
(+.5,.5) node[draw, ellipse, inner sep=0.5pt,fill=white] (E) {$\scriptstyle \alpha^\lmate$}
(-.5,+1) node[draw, ellipse, inner sep=0.5pt,fill=white] (N) {$\scriptstyle \alpha^{-1}$}
(.5,-.5) node[draw, ellipse, inner sep=0.5pt,fill=white] (S) {$\scriptstyle \alpha^{\lmate R}$}
(0,0) node (M) {$X$}
;
\draw[string, black!50!red] (S) -- node[draw, line width=0.4pt, ellipse, inner sep=0.5pt,fill=white] {$\scriptstyle \Rad_f^{-1}$} (W);
\draw[string, black!50!blue] (S) -- node[auto,swap] {$\scriptstyle g^L$} (E);
\draw[string, black!50!blue] (W) -- node[auto,pos=0.25] {$\scriptstyle g$} node[auto,pos=0.75] {$\scriptstyle g$} node[draw, line width=0.4pt, ellipse, inner sep=0.5pt,fill=white] {$\scriptstyle \Rad_g^{-1}$} (N);
\draw[string, black!50!red] (E) -- node[auto,swap] {$\scriptstyle f$} (N);
\end{tikzpicture}
  \quad\simeq\quad
  \begin{tikzpicture}[scale=2,baseline=(M.base)]
\path[fill=gray,opacity=0.25] (-.5,-1) -- (-.5,+1) -- (+.5,.5) -- (.5,-.5) -- cycle;
\path 
(-.5,-1) node[draw, ellipse, inner sep=0.5pt,fill=white] (W) {$\scriptstyle \alpha$}
(+.5,.5) node[draw, ellipse, inner sep=0.5pt,fill=white] (E) {$\scriptstyle \alpha^\lmate$}
(-.5,+1) node[draw, ellipse, inner sep=0.5pt,fill=white] (N) {$\scriptstyle \alpha^{-1}$}
(.5,-.5) node[draw, ellipse, inner sep=0.5pt,fill=white] (S) {$\scriptstyle \alpha^{\lmate L}$}
(0,0) node (M) {$X$}
;
\draw[string, black!50!red] (S) -- node[auto] {$\scriptstyle f$} (W);
\draw[string, black!50!blue] (S) -- node[auto,swap] {$\scriptstyle g^L$} (E);
\draw[string, black!50!blue] (W) -- node[auto,pos=0.25] {$\scriptstyle g$} node[auto,pos=0.75] {$\scriptstyle g$} node[draw, line width=0.4pt, ellipse, inner sep=0.5pt,fill=white] {$\scriptstyle \Rad_g^{-1}$} (N);
\draw[string, black!50!red] (E) -- node[auto,swap] {$\scriptstyle f$} (N);
\end{tikzpicture}
  ,$$
  and the cointegral is given by the coevaluation $3$-morphism for the adjunction $\alpha^{\lmate L} \dashv \alpha^\lmate$:
  \begin{equation}
  \label{eqn:cointegral}
  \mathrm{coint}_{H(X,f,g,\alpha)} := 
    \begin{tikzpicture}[yscale=1.5, baseline=(C.base)]
    \path
    (0,0) coordinate (alpha)
    (0,2) coordinate (alpham)
    ;
    \path[fill=gray,opacity=0.25] (alpha) .. controls +(+1.5,1) and +(+1.5,-1) .. (alpham) .. controls +(-1.5,-1) and +(-1.5,1) .. (alpha);
    \draw[string, black!50!red] (alpha) .. controls +(+1.5,1) and +(+1.5,-1) .. 
    node[draw, black, line width=1pt, inner sep=4pt,fill=white] {$\scriptstyle \coev_{\alpha^\lmate}$} 
     node[auto,swap,pos=0.2] {$\scriptstyle f$} node[auto,swap,pos=0.8] {$\scriptstyle f$} (alpham);
    \draw[string, black!50!blue] (alpha) .. controls +(-1.5,1) and +(-1.5,-1) .. 
    node[draw, black!50!blue, ellipse, line width=0.4pt, inner sep=0.5pt,fill=white] {$\scriptstyle \Rad_g^{-1}$} node[auto,pos=0.2] {$\scriptstyle g$} node[auto,pos=0.8] {$\scriptstyle g$}
     (alpham);
    \path 
    (0,1) node (C) {$X$}
    (alpha) node[draw, ellipse, inner sep=0.5pt,fill=white] {$\scriptstyle \alpha$}
    (alpham) node[draw, ellipse, inner sep=0.5pt,fill=white] {$\scriptstyle \alpha^{-1}$}
    ;
  \end{tikzpicture}
 \quad  : \quad I_{\mathrm{coint}}(X,f,g,\alpha) \Rrightarrow H(X,f,g,\alpha).
  \end{equation}

\begin{remark}
Plugging the above formulas for integral and cointegral into the formula \eqref{eqn:integralformulaforantipode} for the antipode, one finds that in the $3$-dimensional pictures where we have suppressed all framing data including applications of $\Rad_f$, the antipode is simply the $180^\circ$ twist:
$$
\begin{tikzpicture}[scale=0.7]
  \path (0,0,0) coordinate (middle);
  \path
  (0,0,0)
  +(-1,0,0)  coordinate (W)
  +(+1,0,0)  coordinate (E)
  +(0,0,+1)  coordinate (S)
  +(0,0,-1)  coordinate (N)
  (0,3,0)
  +(-1,0,0)  coordinate (Wt)
  +(+1,0,0)  coordinate (Et)
  +(0,0,+1)  coordinate (St)
  +(0,0,-1)  coordinate (Nt)
  (0,-3,0)
  +(-1,0,0)  coordinate (Wb)
  +(+1,0,0)  coordinate (Eb)
  +(0,0,+1)  coordinate (Sb)
  +(0,0,-1)  coordinate (Nb)  
  ;
  \draw[string, black!50!red]  (Et) -- (Nt);
  \draw[string, black!50!blue] (Nt) -- (Wt);
  \draw[string, black!50!red]  (Eb) -- (Nb);
  \draw[string, black!50!blue] (Nb) -- (Wb);
  \draw[string] (Nb) .. controls +(0,1,0) and +(0,-1,0) .. (W);
  \draw[string] (Eb) .. controls +(0,1,0) and +(0,-1,0) .. (N) .. controls +(0,1,0) and +(0,-1,0) .. (Wt);
  \draw[string] (E) .. controls +(0,1,0) and +(0,-1,0) .. (Nt);
  \path[fill=gray!50!red,opacity=0.5] (Eb) .. controls +(0,1,0) and +(0,-1,0) .. (N) -- (W) .. controls +(0,-1,0) and +(0,+1,0) .. (Nb) -- (Eb);
  \path[fill=gray!50!blue,opacity=0.5] (E) .. controls +(0,1,0) and +(0,-1,0) .. (Nt) -- (Wt) .. controls +(0,-1,0) and +(0,+1,0) .. (N) -- (E);
  \path[fill=gray!50!blue,opacity=0.5] (Sb) .. controls +(0,1,0) and +(0,-1,0) .. (E) -- (N) .. controls +(0,-1,0) and +(0,+1,0) .. (Eb) -- (Sb);
  \path[fill=gray!50!red,opacity=0.5] (S) .. controls +(0,1,0) and +(0,-1,0) .. (Et) -- (Nt) .. controls +(0,-1,0) and +(0,+1,0) .. (E) -- (S);
  \path[fill=gray!50!blue,opacity=0.5] (Nb) .. controls +(0,1,0) and +(0,-1,0) .. (W) -- (S) .. controls +(0,-1,0) and +(0,+1,0) .. (Wb) -- (Nb);
  \path[fill=gray!50!red,opacity=0.5] (N) .. controls +(0,1,0) and +(0,-1,0) .. (Wt) -- (St) .. controls +(0,-1,0) and +(0,+1,0) .. (W) -- (N);
  \path[fill=gray!50!red,opacity=0.5] (Wb) .. controls +(0,1,0) and +(0,-1,0) .. (S) -- (E) .. controls +(0,-1,0) and +(0,+1,0) .. (Sb) -- (Wb);
  \path[fill=gray!50!blue,opacity=0.5] (W) .. controls +(0,1,0) and +(0,-1,0) .. (St) -- (Et) .. controls +(0,-1,0) and +(0,+1,0) .. (S) -- (W);
  \draw[string] (Sb) .. controls +(0,1,0) and +(0,-1,0) .. (E);
  \draw[string] (Wb) .. controls +(0,1,0) and +(0,-1,0) .. (S) .. controls +(0,1,0) and +(0,-1,0) .. (Et);
  \draw[string] (W) .. controls +(0,1,0) and +(0,-1,0) .. (St);  
  \draw[string, black!50!red]  (Wt) -- (St);
  \draw[string, black!50!blue] (St) -- (Et);
  \draw[string, black!50!red]  (Wb) -- (Sb);
  \draw[string, black!50!blue] (Sb) -- (Eb);
\end{tikzpicture}
$$
Without the extra dualizability assumptions of Proposition~\ref{prop:dualizability} (let alone those of Proposition~\ref{prop:intcoint}), Corollary~\ref{thm:adjinduceshopf} still guarantees the existence of a (generally non-invertible) antipode, which we find hard to picture topologically. 
\end{remark}

\subsection{Tannakian reconstruction}\label{subsec:tannakian}

In this final section, we will explore our construction in a convenient $3$-category in which to do higher linear algebra. We will end up recovering the classical Tannakian reconstruction (of quantum groups), which recovers a Hopf algebra $H$ from its category of comodules \cite{MR1098991,MR1173027,MR1623637}. One goal of this section is to emphasize that our main Theorem~\ref{thm:mainHopftheorem} can indeed reconstruct (co)Hopf algebras with noninvertible antipode. Such Hopf algebras already appear in the foundational work \cite{MR1098991} (note that Ulbrich uses the term ``rigid'' for categories with one-sided duals) but are often omitted in more recent works, for example the diagrammatic/TQFT-theoretic constructions of \cite{MR1915348,MR4520300,TudorWenjun}.

Let $\Pr$ denote the symmetric monoidal $(\infty,2)$-category of presentable\footnote{Recall that a \emph{presentable $(\infty,1)$-category} is a large, locally small  $(\infty,1)$-category $\cC$ which admits small colimits and which is \emph{accessible} in the sense that there is a regular cardinal $\kappa$ so that $\cC$ is generated under $\kappa$-filtered colimits by $\kappa$-compact objects; see~\cite[\S 5.5]{HTT}. Lurie constructs the symmetric monoidal structure on the underlying $(\infty,1)$-category of $\Pr$ in \cite[Prop.~4.8.1.15]{HA}. That this extends to the $(\infty,2)$-category is explained e.g.\ in~\cite{2011.03035}. Indeed, Lurie focuses on the underlying $(\infty,1)$-category of $\Pr$ because it controls the story: the $(\infty,2)$-structure can be recovered from the inner homs, adjunct to Lurie's tensor product, together with  the forgetful functor $\Pr \to \widehat{\cat{Cat}}_{(\infty,1)}$. Because of this, the literature tends to write $\Pr$ or $\mathrm{Pr}$ just for the underlying $(\infty,1)$-category of presentable $(\infty,1)$-categories and colimit-preserving functors. We will use this same notation for the $(\infty,2)$-category, and trust the reader to forgive this mild notational overload.} $(\infty,1)$-categories, colimit-preserving functors, and natural transformations. The symmetric monoidal structure $\boxtimes$ on $\Pr$ is universally determined by the property that for presentable $(\infty,1)$-categories $\cC$, $\cD$, and $\cE$, the space of colimit-preserving functors $\cC \boxtimes \cD \to  \cE$ agrees with the space of functors $\cC \times \cD \to \cE$ which preserve colimits separately in both arguments.

 Throughout this section, we fix a $\cV \in \Alg_{\mathbb{E}_2}(\Pr)$, i.e.\ a  \define{presentably braided monoidal $(\infty,1)$-category}: a presentable $(\infty,1)$-category equipped with a braided monoidal structure so that the tensor product functor $\otimes : \cV \times \cV \to \cV$ preserves small colimits  separately in both arguments. Our main example will be $\cV = \Vect$, the ($(1,1)-$)category of (small) vector spaces. 
 
We denote by $\PrV:= \RMod_{\cV}(\Pr)$ the $(\infty,2)$-category of right $\cV$-modules in $\Pr$ and refer to it as the $(\infty,2)$-category of \emph{presentable $\cV$-linear categories}\footnote{Equivalently, as explained in \cite[\S A.3]{MR4788123}, a presentable $\cV$-linear $(\infty,1)$-category $\cM$ is a large $\cV$-enriched $(\infty,1)$-category, in the sense of~\cite{MR3345192},  whose underlying category is presentable, admits tensors, and if for every $v\in \cV$, the induced functor $ -\triangleleft v: \cM \to \cM$ between the underlying categories preserves small colimits. Here, a $\cV$-enriched category is said to \emph{admit tensors} if for all $v\in \cV, m \in \cM$ the induced functor $\hom(v, \mathrm{hom}(m, - )): \cM \to \cV$ is corepresentable. Writing $m \triangleleft v$ for the corepresenting object, this induces an action of $\cV$ on $\cM$.}: presentable $(\infty,1)$-categories $\cM$ equipped with a right action by $\cV$ so that the action functor $\cM \times \cV \to \cM$ preserves small colimits in both arguments.  Since $\cV$ is braided, the category $\PrV$ inherits a monoidal structure $-\boxtimes_{\cV}-$, computed as the relative tensor product over $\cV$ in $\Pr$, which preserves colimits separately in both arguments.

The symmetric monoidal $(\infty,2)$-category $\PrV$  satisfies the conditions of~\cite[Def.~8.3]{JFS} and hence by \cite{JFS, MR3650080} gives rise to a Morita $(\infty,3)$-category $\Mor(\PPrV)$. Intuitively, $\Mor(\PPrV)$  may be described as follows:

\begin{itemize}
\item Objects are given by  algebra objects $\cA, \cB, \ldots \in \Alg(\PrV)$, i.e.\ presentably monoidal $\cV$-linear  categories;
\item $\mathrm{Hom}(\cA, \cB) = {}_{\cB}\cat{Bim}_{\cA}(\PPrV)$ is the $(\infty,2)$-category of presentable and $\cV$-linear $\cB$-$\cA$-bimodule categories;
\item Composition $\cA \to[_{\cB} \cM_{\cA}] \cB \to[{}_{\cC}\cN_{\cB}]\cC$ of bimodules given by the relative tensor product ${}_{\cC}\cN \otimes_{\cB}\cM_{\cA} $.
\end{itemize}
We point that for $\cV$ a $(1,1)$-category, $\PPrV$ is a $(2,2)$-category (namely, the $(2,2)$-category of presentable $\cV$-linear categories in the sense of \cite{MR1294136,MR648793}) and $\Mor(\PPrV)$ is a $(3,3)$-category. 
For $\cV= \Vect$, the symmetric monoidal  $(2,2)$-category $\PPrV$  is a reasonable choice for the 2-category of ``$2$-vector spaces,'' and the symmetric monoidal $(3,3)$-category $\cat{Mor}(\PPrV)$ is a reasonable choice for the 3-category of ``$3$-vector spaces''. 

\begin{remark} 
Another canonical choice for  the $3$-category of $3$-vector spaces is the $3$-category of presentable $\cV$-linear $2$-categories $\tPPrV$, introduced in \cite{2011.03035} as a natural categorification of $\PPrV$. 
All our arguments and computations below apply just as well to $\tPPrV$  and the inclined reader may just replace all occurrences of $\Mor(\PPrV)$ by $\tPPrV$.
\end{remark}

Using the canonical functor $\Alg(\PrV) \to \Mor(\PPrV)$ sending a morphism $F: \cA \to \cB$ of algebras to the bimodule ${}_{\cB} \cB_{\cA}$ with $\cA$-action twisted by $F$, any retract in $\Alg(\PrV)$ induces a retract in $\Mor(\PPrV)$. Since $\cV \in \Alg(\PrV)$ is initial, any morphism $\cA \to \cV$ in $\Alg(\PrV)$ therefore exhibits $\cV$ as a retract of $\cA$ in $\Alg(\PrV)$ and hence in $\Mor(\PPrV)$. Unpacked, this looks as follows: 
\begin{lemma} \label{lem:easyretract}Let $\cA \to \cV$ be a morphism in $\Alg(\PrV)$, i.e.\ a presentably monoidal $\cV$-linear category  $\cA$ equipped with a \emph{fiber functor},  a small colimit preserving $\cV$-linear monoidal functor $F:\cA \to \cV$.  Then, the unit inclusion $\cV \to \cA$ and the fiber functor $\cA \to \cV$ define a retract in $\Mor(\PPrV)$ 
\begin{equation}\label{eqn:moritaretract}
 \begin{tikzcd}[sep=3em]
\cV \arrow[r, equal] \arrow[d, "f := {{}_{\cA} \cA_{\cV}}"'] &\arrow[d, equal] \cV \\
\cA \arrow[r, "g:= {{}_{\cV} \cV_{\cA}} "'] 
\arrow[Rightarrow, ur, phantom, sloped, "\overset{\textstyle\sim} \Rightarrow"] 
& \cV
\end{tikzcd}.
\end{equation}
for which both $f$ and $g$ are left adjoints in $\Mor(\PPrV)$ with right adjoints $$f^R ={}_{\cV} \cA_{\cA}\hspace{1cm} g^R = {}_{\cA}\cV_{\cV}.$$

The mates $f^R \Rightarrow g$ and $f \Rightarrow g^R$ of the isomorphism $gf \simeq \id$ both unpack to the fiber functor $F$, thought of as either a map of  $\cV$-$\cA$-bimodules or of $\cA$-$\cV$-bimodules:
\[ f^R = {}_{\cV}\cA_\cA \to[F] {}_{\cV}\cV_\cA = g, \qquad f = {_\cA \cA_{\cV}} \to[ F] {_\cA \cV_{\cV}} = g^R. \]
\end{lemma}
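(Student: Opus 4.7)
My plan is to break the proof into three steps that unpack the Morita-categorical formalism. Nothing is deep; the entire difficulty is notational.

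First, I would identify the retract as arising from the canonical embedding $\Alg(\PPrV) \to \Mor(\PPrV)$ sending an algebra map $\phi: \cA \to \cB$ to the ``induction'' bimodule ${}_\cB \cB_\cA$. Since $\cV$ is the monoidal unit of $\PPrV$ (hence initial in $\Alg(\PPrV)$), both the unit map $u: \cV \to \cA$ and the fiber functor $F: \cA \to \cV$ are morphisms in $\Alg(\PPrV)$ witnessing $\cV$ as a retract of $\cA$, and this descends to the retract \eqref{eqn:moritaretract} in $\Mor(\PPrV)$. I would verify the composition $gf\simeq \id$ explicitly: ${}_\cV\cV_\cA \otimes_\cA {}_\cA \cA_\cV \simeq {}_\cV \cV_\cV$ by the standard tensor cancellation $\cV \otimes_\cA \cA \simeq \cV$, noting that the resulting right $\cV$-action agrees with the standard one because $F \circ u \simeq \id_\cV$ (as $F$ is monoidal and unital).

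Second, I would prove the folklore statement that every induction bimodule ${}_\cB \cB_\cA$ admits a right adjoint in $\Mor(\PPrV)$ given by the ``restriction'' bimodule ${}_\cA \cB_\cB$ (with left $\cA$-action through $\phi$). I would construct the adjunction data explicitly: the counit ${}_\cB \cB_\cA \otimes_\cA {}_\cA \cB_\cB \to {}_\cB\cB_\cB$ is multiplication in $\cB$, and the unit ${}_\cA \cA_\cA \to {}_\cA \cB_\cA$ is $\phi$; the triangle identities follow from unitality of $\cB$. Specializing, $f^R = {}_\cV \cA_\cA$ and $g^R = {}_\cA \cV_\cV$ as claimed.

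Third, I would compute the mates. By definition, $\alpha^\rmate: f^R \To g$ is the composite $f^R \simeq \id_\cV \circ f^R \overset{\alpha f^R}{\simeq} gf\circ f^R \overset{g\, \ev_f}{\To} g \circ \id_\cA \simeq g$. Using the explicit $\ev_f$ from the previous step, this translates to the bimodule map ${}_\cV \cA_\cA \to {}_\cV \cV_\cA$ given by $\cA \simeq \cA \otimes_\cA \cA \xrightarrow{\cA \otimes_\cA F} \cA \otimes_\cA \cV \simeq \cV$, which reduces to $F$ itself after the cancellations. I would then independently verify that $F$ does define a $\cV$-$\cA$-bimodule map: right $\cA$-linearity is exactly the definition of the $\cA$-action on $\cV$, while left $\cV$-linearity follows from $F(u(v)\cdot a) \simeq F(u(v)) \cdot F(a) \simeq v \cdot F(a)$ by monoidality and $F \circ u \simeq \id_\cV$. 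The computation for $\alpha^\lmate : f \To g^R$ is entirely symmetric and yields the same $F$, now viewed as an $\cA$-$\cV$-bimodule map ${}_\cA \cA_\cV \to {}_\cA \cV_\cV$.

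The only real obstacle is the bookkeeping: there are many bimodule structures and adjunction units/counits to keep track of, and one must match the abstract mate formula with the concrete map $F$. I would aim for a presentation in which each tensor cancellation and each side of each bimodule is made explicit, so that the reader can verify the calculation without appealing to folklore.
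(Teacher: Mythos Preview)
Your proposal is correct and follows essentially the same approach as the paper's proof: both use the functor $\Alg(\PrV) \to \Mor(\PPrV)$ to produce the retract, identify the right adjoint of an induction bimodule ${}_\cB\cB_\cA$ as ${}_\cA\cB_\cB$ with unit given by the algebra map and counit given by multiplication, and then read off the mates from this explicit adjunction data. Your version is simply more detailed about the bookkeeping (in particular the verification that $F$ is a bimodule map and the explicit tensor cancellations), which is harmless.
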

\begin{proof}
The functor $\Alg(\PrV) \to \Mor(\PPrV)$ sends the retract in $\Alg(\PrV)$ given by the unit inclusion $\cV \to \cA$ and the fiber functor $\cA \to \cV$ to the retract  $gf \simeq \id$ in $\Mor(\PPrV)$. Moreover, the bimodule ${}_{\cC}\cC_{\cB}$ induced by any monoidal functor $G: \cB \to \cC$ has right adjoint ${}_{\cB} \cC_{\cC}$ with unit ${}_{\cB} \cB_{\cB} \to {}_{\cB}\cC \boxtimes_{\cC} \cC_{\cB} \simeq {}_{\cB} \cC_{\cB}$ given by the functor $G$ itself considered and   counit $\cC \otimes_{\cB} \cC \to \cC$ given by the tensor product of $\cC$. Using these, the mates $f^R \To g$ and $f \To g^R$ are as described. 
\end{proof}

In other words, to understand the various versions of our adjunctibility criteria from Theorem~\ref{thm:mainHopftheorem} and Remark~\ref{rem:oppositevariants}, it suffices to understand when a bimodule, thought of as a $1$-morphism in $\Mor(\PPrV)$, is a right adjoint, and to understand when a bimodule morphism, thought of as a $2$-morphism in $\Mor(\PPrV)$, is a left or right adjoint.

To this end, we recall the following adjunctibility criteria in $\PPrV$ and $(\infty,2)$-categories of bimodules therein. A morphism $G:\cM \to \cN$ in $\Pr$ is a colimit-preserving functor between presentable categories and hence admits by the adjoint functor theorem~\cite[Cor.~5.5.2.9]{HTT} a right adjoint functor $G^R: \cN \to \cM$. A necessary and sufficient condition for adjunctibility of $G$ in the $(\infty,2)$-category $\PPr$ is that this right adjoint is itself a morphism in $\PPr$, i.e.\ preserves colimits. Enhancing to bimodule categories, we have:

\begin{prop}[{see e.g.~\cite[Prop.~5.2]{MR4695687}}]\label{prop:doctrinaladjunction}
Let $\cA, \cB \in \Alg(\PrV)$ and $G:{}_{\cB}\cM_{\cA} \to {}_{\cB} \cN_{\cA}$ be a $1$-morphism in ${}_{\cB}\cat{Bim}_{\cA}(\PPrV)$. Then, $G$ is a left adjoint in the $(\infty,2)$-category ${}_{\cB}\cat{Bim}_{\cA}(\PPrV)$ if and only if 
the right adjoint $G^R: \cN \to \cM$ of the underlying functor $G: \cM \to \cN$ preserves small colimits and commutes with the $\cA$ and $\cB$ action in the sense that the induced morphisms 
\[
b \triangleright G^R(m) \triangleleft a \to G^R(b \triangleright m \triangleleft a)
\]
are isomorphisms.   \qed
\end{prop}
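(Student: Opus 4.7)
The forward direction follows formally. The underlying $(\infty,2)$-functor $U : {}_\cB\cat{Bim}_\cA(\PPrV) \to \PPr$ forgetting the $\cA$- and $\cB$-actions preserves adjoint pairs, as any $(\infty,2)$-functor does. If $G$ admits a right adjoint $\widetilde G^R$ in the source, then $U\widetilde G^R$ is right adjoint to $G$ in $\PPr$, hence agrees (by uniqueness of adjoints) with the colimit-preserving functor $G^R$ from the adjoint functor theorem. Since $\widetilde G^R$ is a 1-morphism of bimodule categories, $G^R$ preserves small colimits and is $\cA$- and $\cB$-linear.

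For the converse, suppose $G^R : \cN \to \cM$ preserves small colimits and satisfies the stated linearity. Under these hypotheses $G^R$ already defines a 1-morphism in ${}_\cB\cat{Bim}_\cA(\PPrV)$, and the strategy is to upgrade the underlying $\PPr$-adjunction $G \dashv G^R$ to an adjunction in ${}_\cB\cat{Bim}_\cA(\PPrV)$. This amounts to promoting the unit $\eta : \id_\cM \To G^R G$ and counit $\epsilon : GG^R \To \id_\cN$ to $2$-morphisms compatible with the bimodule structures on source and target. This is a doctrinal-adjunction argument: the lax structure morphism $b \triangleright G^R(n) \triangleleft a \to G^R(b \triangleright n \triangleleft a)$ is, via the mate calculus in $\PPr$, identified as the mate of the strong bimodule structure on $G$ under $G \dashv G^R$, and the assumption that this mate is invertible is exactly what forces $\eta$ and $\epsilon$ to intertwine the bimodule structures on $G^R G$ and $GG^R$.

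The main obstacle is executing this mate calculus $(\infty,2)$-categorically rather than merely $2$-categorically, since what is classically a finite diagram chase involves here infinitely many coherences. I would avoid a direct coherence chase by reducing to modules over a single algebra: there is an equivalence of $(\infty,2)$-categories ${}_\cB\cat{Bim}_\cA(\PPrV) \simeq \RMod_R(\PrV)$, where $R := \cB \otimes_\cV \cA^{\op}$ is the tensor product of $\cB$ with the opposite-multiplication algebra of $\cA$. The proposition thereby reduces to the general statement that in the $(\infty,2)$-category of right modules over an algebra in a presentably monoidal $(\infty,2)$-category, a morphism is a left adjoint iff its underlying colimit-preserving right adjoint is module-linear. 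This last fact is precisely the doctrinal-adjunction theorem for presentably enriched modules worked out in~\cite{MR4695687}, which supplies the $(\infty,2)$-coherent mate calculus in a form directly applicable here.
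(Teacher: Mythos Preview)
The paper gives no proof of this proposition: it is stated with a citation to \cite[Prop.~5.2]{MR4695687} and closed immediately with \qed. Your proposal is therefore strictly more detailed than what the paper does, and your sketch is correct: the forward direction is formal, and for the converse you correctly identify the doctrinal-adjunction/mate mechanism and reduce the bimodule case to the one-sided module case via ${}_\cB\cat{Bim}_\cA(\PPrV) \simeq \RMod_{\cB \otimes_\cV \cA^{\op}}(\PrV)$ before invoking the same reference. Since the paper's ``proof'' is simply the citation, there is nothing to compare beyond noting that you have unpacked what the paper leaves implicit.
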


We recall the following terminology; see~\cite{ MR4695687, 2410.21537,RZ} for details.

\begin{definition}
Let $\cA \in \Alg(\Pr)$ be a presentable monoidal $\infty$-category and $\cM \in \RMod_{\cA}(\Pr)$. An object $m \in \cM$ is called \define{$\cA$-atomic} (also called \define{$\cA$-tiny}, see e.g.~\cite{MR3361309}) if the induced functor $m\otimes -: \cA \to \cM$ is an internal left adjoint in $\RMod_{\cA}(\PPr)$, equivalently if the internal-hom functor $\underline{\mathrm{hom}}_{\cM}(m,-): \cM \to \cA$ preserves small colimits and commutes with the the right $\cA$-action in the sense that the canonical morphism 
\[\underline{\mathrm{hom}}(m, n) \otimes a \to \underline{\mathrm{hom}}(m, n \otimes a)
\] 
is an isomorphism for all $a\in \cA, m,n \in \cM$. 
We say that $\cM$ is \define{generated by $\cA$-atomics} if there is a set of $\cA$-atomic objects in $\cM$ such that $\cM$ is the smallest full subcategory of $\cM$ which contains this set and which is closed under colimits and the $\cA$-action. 
\end{definition}
For example, the atomic objects in $\cV$ are precisely the dualizable objects.

\begin{prop}[{see e.g.~\cite[Prop.~1.40]{2410.21537}}]\label{lem:tinygeneration}
Let $\cA, \cB \in \Alg(\PrV)$ and $_\cB \cM_\cA \in {_{\cB}\cat{Bim}_{\cA}}(\PrV)$ a presentable $\cV$-linear bimodule category. Then, if the right $\cA$-module $\cM_{\cA} \in \RMod_{\cA}(\PrV)$ is generated by $\cA$-atomics, it follows\footnote{This condition is sufficient but not necessary:  e.g.\ for $\cA = \cB = \cV$, it follows from ~\cite[Thm.~1.49]{2410.21537} that $\cM \in \PPrV$ is a right dual if and only if $\cM$ is a retract of an atomically generated category. } that the $1$-morphism ${}_{\cB}\cM_{\cA}: \cA \to \cB$ in the $(\infty,3)$-category  $\Mor(\PPrV)$ is a right adjoint. 
\end{prop}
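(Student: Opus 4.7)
The plan is to exhibit an explicit left adjoint $_{\cA}\cN_{\cB}$ of $_{\cB}\cM_{\cA}$, using atomic generation of $\cM_{\cA}$ to get both the candidate and the verification of the adjunction axioms.

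First, forgetting the $\cB$-action entirely, I would invoke the fact that atomic generation of $\cM_{\cA}$ implies that $\cM_{\cA}$ is a dualizable object in the symmetric monoidal $(\infty,2)$-category $\RMod_{\cA}(\PPrV)$. Explicitly, the dual ${}_{\cA}\cM^{\vee}$ may be taken to be the Ind-completion of the opposite of the full subcategory of atomic objects of $\cM$, with its canonical left $\cA$-action; the counit $\ev : \cM^{\vee}\otimes\cM \to \cA$ is a colimit-preserving $\cA$-$\cA$-bimodule functor, and the coevaluation $\coev : \cV \to \cM\otimes_{\cA}\cM^{\vee}$ is characterized by the fact that it represents $\id_{\cM}$ under the equivalence $\cM\otimes_{\cA}\cM^{\vee} \simeq \mathrm{Fun}^{L}_{\cA}(\cM,\cM)$ coming from the atomic generation hypothesis. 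This identifies $\cM^{\vee}$ as the candidate for the underlying category of $\cN$.

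Next, I would transfer the left $\cB$-action on $\cM$ to a right $\cB$-action on $\cM^{\vee}$. For each $b\in\cB$, the functor $b\triangleright - : \cM \to \cM$ is a morphism in $\RMod_{\cA}(\PPrV)$, and hence dualizes to a functor $\cM^{\vee}\to\cM^{\vee}$. Coherently assembling these (using that $\cB\in\Alg(\PPrV)$ is a monoid and that $\otimes_{\cA}$ preserves colimits in each variable) gives $\cM^{\vee}$ the structure of an object $_{\cA}\cN_{\cB}\in {_{\cA}\cat{Bim}_{\cB}}(\PPrV)$. To check that the counit factors as an $\cA$-$\cA$-bimodule map through the relative tensor, $\cM^{\vee}\otimes_{\cB}\cM \to \cA$, amounts to checking $\cB$-equivariance of the evaluation; this follows because the $\cB$-action on $\cM^{\vee}$ was defined precisely by dualizing the $\cB$-action on $\cM$. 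Similarly, the unit descends along the $\cB$-bar construction because the atomics generating $\cM_\cA$ are preserved, up to colimits and the $\cA$-action, by the (adjointable) $\cB$-action.

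The main obstacle is producing the coherence data needed to upgrade this to an adjunction in the $(\infty,3)$-category $\Mor(\PPrV)$, rather than merely in its homotopy $2$-category. The cleanest way around this is to package the dualizability of $\cM_{\cA}$ already in $\RMod_{\cA}(\PPrV)$ and then appeal to the general fact (going back to the construction of Morita higher categories in \cite{JFS,MR3650080}) that an adjoint pair in $\RMod_{\cA\otimes\cB^{\otimes\op}}(\PPrV)$ with colimit-preserving right adjoint coherently determines an adjoint pair of $1$-morphisms in $\Mor(\PPrV)$. Concretely, under the Morita construction, the unit and counit of $\ev\dashv\coev$ in $\RMod_{\cA\otimes\cB^{\otimes\op}}(\PPrV)$ transport to the zigzag $2$-morphisms required for a coherent adjunction $_{\cA}\cN_{\cB} \dashv {_{\cB}\cM_{\cA}}$, so that the verification reduces to the step above, namely that the $\cB$-action commutes with the dualization data, which is where the atomic generation hypothesis is doing the real work.
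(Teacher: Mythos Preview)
The paper's own proof is a single sentence deferring to \cite[Prop.~1.40]{2410.21537}, noting that the argument there (written for symmetric $\cV$) applies verbatim. Your strategy is essentially what that reference does, but there are errors in the execution.

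First, $\RMod_{\cA}(\PPrV)$ is not symmetric monoidal, nor even monoidal, since $\cA$ is only assumed $\EE_1$. The correct framing is that atomic generation makes $\cM_{\cA}$ a right-adjoint $1$-morphism $\cA \to \cV$ in $\Mor(\PPrV)$, with left adjoint a left $\cA$-module ${}_{\cA}\cM^{\vee}$. One then upgrades this to ${}_{\cA}\cM^{\vee}_{\cB} \dashv {}_{\cB}\cM_{\cA}$: under the identification of $\cM\otimes_{\cA}\cM^{\vee}$ with the category of right-$\cA$-linear colimit-preserving endofunctors of $\cM$, the unit $\cB \to \cM\otimes_{\cA}\cM^{\vee}$ is simply the $\cB$-action map, which is manifestly $\cB$-bilinear, and the counit descends to $\cM^{\vee}\otimes_{\cB}\cM \to \cA$ because the right $\cB$-action on $\cM^{\vee}$ is by construction the transpose of the left $\cB$-action on $\cM$. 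No separate ``adjointability'' of the $\cB$-action is required for this step.

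Second, your final paragraph is garbled. The phrase ``the unit and counit of $\ev\dashv\coev$'' does not type-check: $\ev$ and $\coev$ are themselves the counit and unit of the adjunction you are constructing, not morphisms admitting a further unit and counit. More importantly, the coherence worry is misplaced. Being a right-adjoint $1$-morphism in an $(\infty,3)$-category is a property, detected already in the homotopy $2$-category; this is precisely the universal property of $\Adj$ from \cite{MR3415698} that the paper relies on throughout. Once you exhibit unit and counit satisfying the zigzag identities up to $2$-isomorphism, you are finished.
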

\begin{proof} The proof in \cite{2410.21537} concerns dualizability in $\PPrV$ for a presentably symmetric monoidal $\cV \in \mathrm{CAlg}(\Pr)$, but applies verbatim to the less commutative situation of Proposition~\ref{lem:tinygeneration}. 
\end{proof}

If $C$ is a small $\cV$-enriched $(\infty,1)$-category in the sense of~\cite{MR3345192}, its \emph{enriched presheaf category} $\operatorname{Psh}_\cV(C) = \Fun_\cV(C^\op,\cV)$ is an object of $\RMod_{\cV}(\Pr)$, equipped with an enriched Yoneda embedding $C \to \cA$ sending $c\mapsto \hom_C(-,c)$. In fact, $\cA$ is the free cocompletion of $C$ under colimits and tensoring, in the sense that any $\cV$-enriched functor $G:C \to \cD$ into a $\cD \in \RMod_{\cV}(\Pr)$ extends uniquely to a colimit-preserving and $\cV$-linear functor $\PShV(C) \to \cD$. We refer the reader  to \cite{MR4567127,MR4554672,MR4695687, RZ} for details on the $\infty$-categorical enriched Yoneda lemma.

\begin{prop}\label{prop:Tannaka1} Let $C$ be a small  $\cV$-enriched monoidal $(\infty,1)$-category, equipped with a $\cV$-enriched monoidal functor $F : C \to \cV$. Suppose furthermore that $C$ has left, but not necessarily right, duals.

Then, the induced presentably monoidal $\cV$-linear category $\cA:=\PShV(C)$ with fiber functor $\PShV(C) \to \cV$ given by the unique colimit-preserving $\cV$-linear extension of $F$ induces via Lemma~\ref{lem:easyretract} a  retract in $\Mor(\PPrV)$ for which: 
\begin{itemize}
\item $g$ (and $f$) is a left adjoint and $f$ is a right adjoint;
\item $f \To g^R$ is a left adjoint
\end{itemize}
Hence, by Theorem~\ref{thm:mainHopftheorem} and Remark~\ref{rem:oppositevariants} this gives rise to a coHopf algebra in $\cV$.
\end{prop}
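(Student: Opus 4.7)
The plan is to verify the two bulleted adjunctibility conditions separately, after which the final clause follows by invoking the ``1-op, 2-op'' row of Remark~\ref{rem:oppositevariants}. Since Lemma~\ref{lem:easyretract} already supplies left-adjointness of both $f$ and $g$, and identifies the mate $f \To g^R$ with the fiber functor $F$ viewed as a bimodule morphism ${}_\cA \cA_\cV \to {}_\cA \cV_\cV$, only the claims ``$f$ is a right adjoint'' and ``$F$ is a left adjoint in ${}_\cA\cat{Bim}_\cV(\PPrV)$'' remain.

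For the first, I apply Proposition~\ref{lem:tinygeneration}: it suffices to exhibit $\cA_\cV = \PShV(C)$ as a right $\cV$-module generated by $\cV$-atomic objects. Each representable $y(c) := \underline{\hom}_C(-,c)$ is $\cV$-atomic, since by the enriched Yoneda lemma $\underline{\hom}_\cA(y(c),-) \simeq \mathrm{ev}_c$ both preserves small colimits (colimits in $\PShV(C)$ are pointwise) and commutes with the pointwise right $\cV$-action. Every presheaf is a colimit of representables, finishing this step.

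For the second, Proposition~\ref{prop:doctrinaladjunction} reduces the task to showing that the (adjoint functor theorem) right adjoint $F^R : \cV \to \cA$ lies in $\PPrV$ and satisfies the projection formula $a \otimes F^R(v) \simeq F^R(F(a) \otimes v)$ for all $a \in \cA$, $v \in \cV$. This is the only place where the hypothesis that $C$ has left duals enters crucially: since $F$ is monoidal, each image $F(c)$ inherits a left dual $F({}^\vee c)$, so $F(c)$ is dualizable and hence $\cV$-atomic in $\cV$; the pointwise formula
\[ (F^R(v))(c) = \underline{\hom}_\cV(F(c),v) \simeq F({}^\vee c) \otimes v \]
then manifestly preserves colimits in $v$, so $F^R \in \PPrV$. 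With $F^R$ colimit-preserving, both sides of the projection formula preserve colimits in $a$ (the left side because Day convolution preserves colimits in each slot), so it suffices to verify it on representables $a = y(c)$. There a short adjunction chase --- move $y(c)$ across via the adjunction $y({}^\vee c) \otimes (-) \dashv y(c) \otimes (-)$ witnessing its left dual in $\cA$, apply $F \dashv F^R$, invoke monoidality of $F$, then move $F(c)$ back via $F({}^\vee c)\otimes(-) \dashv F(c) \otimes (-)$ in $\cV$ --- identifies both sides of the projection formula, tested against any probe $b$, with $\underline{\hom}_\cV(F(b), F(c) \otimes v)$, whereupon Yoneda concludes.

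I do not anticipate a serious obstacle; the main subtlety, rather than a difficulty, is tracking the direction of the mate to identify the correct variant of Remark~\ref{rem:oppositevariants}. Here $f$ being a two-sided adjoint, $g$ being a left adjoint, and the left-adjointness of $f \To g^R$ (as opposed to $f^R \To g$) places the retract precisely in the ``1-op, 2-op'' row, producing a coHopf algebra in $\Omega^2\Mor(\PPrV) \simeq \cV$.
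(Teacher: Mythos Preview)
Your proposal is correct and follows essentially the same route as the paper: Proposition~\ref{lem:tinygeneration} plus atomicity of representables for ``$f$ is a right adjoint,'' then Proposition~\ref{prop:doctrinaladjunction} for the bimodule left-adjointness of $F$, reducing the projection formula to representables and verifying it by the adjunction chase with left duals. The only cosmetic difference is that the paper cites an external criterion (a functor out of a presheaf category is an internal left adjoint iff it sends representables to atomics) where you give the pointwise formula $(F^R v)(c) \simeq \underline{\hom}_\cV(F(c),v)$ directly; both arguments amount to the observation that $F(c)$ is dualizable.
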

\begin{proof}
By Lemma~\ref{lem:easyretract}, both $f$ and $g$ are left adjoints. The fact that $f$ is also a right adjoint follows from Proposition~\ref{lem:tinygeneration} since $\cA = \PShV(C)$ is as a presheaf category $\cV$-atomically generated (see e.g.~\cite[Obs.~5.8]{RZ}).

We now show that $f \To g^R$ is a left adjoint, i.e.\ that ${ F} : {_\cA\cA} \to {_\cA\cV}$ is an internal left adjoint in $\LMod_\cA(\PPrV)$. We first show that $F$ is an internal left adjoint in $\PPrV$: 
 By~\cite[Prop.~3.32]{RZ} a functor out of a presheaf category is an internal left adjoint if and only if it sends representable presheaves to atomics. Since $\cV$-atomics in $\cV$ are precisely the dualizable objects, it therefore suffices to show that $F$ sends all objects of $C$ to dualizable objects in $\cV$. This follows from our assumption that all objects of $C$ have left duals.  By Proposition~\ref{prop:doctrinaladjunction}, to check that $F$ lifts to an internal left adjoint in $\LMod_\cA(\PPrV)$, it therefore suffices to check compatibility between its right adjoint $F^R$ and the left $\cA$-action. Since $\cA$ is generated by $C$, it suffices to check compatibility between $F^R$ and the left $C$-action: we need to show that for all $c \in C$ and all $v \in \cV$, the comparison map
  \[ c \otimes F^R(v) \to F^R(F(c) \otimes v) \]
  is an iso in $\cA$ (where the $\otimes$ on the left denotes the multiplication in $\cA$ and on the right denotes the multiplication in $\cV$). 
  We can test this comparison map by mapping into both sides from $a \in \cA$. Using that $c$ has a left dual $c^\vee$ and that $F$ is monoidal, we find:
  \begin{multline*}
    \Map_\cA(a, c \otimes F^R(v)) \simeq \Map_\cA(c^\vee \otimes a, F^R(v)) \simeq \Map_\cV(F(c^\vee \otimes a), v) 
    \\
    \simeq \Map_\cV(F(c^\vee) \otimes F(a), v) \simeq \Map_\cV(F(c)^\vee \otimes F(a), v) 
    \\
    \simeq \Map_\cV(F(a), F(c) \otimes v) \simeq \Map_\cA(a, F^R(F(c) \otimes v) )\qedhere
  \end{multline*}
\end{proof}
Since $C$ did not necessarily have right duals, this coHopf algebra constructed via Proposition~\ref{prop:Tannaka1}  is not necessarily Hopf.
   Comparing with the formulas from \S\ref{subsec:unpacked}  recovers the foundational reconstruction from~\cite{MR1098991} and e.g.~\cite{2311.14221}.

In applications of (co)Hopf algebras, one sometimes wants to work with their categories of modules and sometimes their categories of comodules; and this affects exactly which (co)Hopf algebra one reconstructions. In the former case, it is standard to work just with finite-dimensional modules; that's how to think about the small category $C$ in Proposition~\ref{prop:Tannaka1}. On the other hand, in algebraic and quantum group theory, one often wants to work with categories of all, possibly infinite-dimensional, comodules. We end with an application of our main result to that version of Tannakian reconstruction. 

For a coalgebra $C$ in $\cV$, we define the $\infty$-category of comodules $$\cat{LcoMod}^C(\cV):= \LMod_C(\cV^{\op})^{\op}.$$ Dualizing~\cite[Cor.~4.2.4.8]{HA}, there is the cofree-forgetful adjunction 
\begin{equation}\label{eq:cofree}
\mathrm{Forget}: \cat{LcoMod}^C(\cV) \rightleftarrows \cV: \mathrm{CoFree} := C \otimes -.
\end{equation}

\begin{prop}\label{prop:comodules} For $\cV \in \Alg(\PrL)$ and $C$ a coalgebra in $\cV$, the category $\cat{LcoMod}^C(\cV)$ is a  presentable $\cV$-module category, i.e.\ is in $\PrV$, and the functor $\mathrm{Forget}:  \cat{LcoMod}^C(\cV) \to \cV$ is an internal left adjoint in $\PrV$. 

Corestriction $\cat{LcoMod}^C(\cV) \to \cat{LcoMod}^D(\cV)$ along coalgebra homomorphisms $C\to D$ assembles into a functor 
\[\cat{LcoMod}^{-}(\cV): \coAlg(\cV) \to \PrV.
\]
If $\cV \in \Alg_{\mathbb{E}_2}(\PrL)$, then this functor inherits a lax monoidal structure. 
\end{prop}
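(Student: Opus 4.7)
My plan is to proceed in four stages, mirroring the four claims of the proposition.

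First, I would realize $\cat{LcoMod}^C(\cV)$ as coalgebras for the comonad $T := C \otimes -$ on $\cV$. This follows from a dualization of $\infty$-categorical Barr--Beck--Lurie (\cite[Thm.~4.7.3.5]{HA}) applied to the forgetful--cofree adjunction \eqref{eq:cofree}: the forgetful functor preserves and detects equalizers (since these are computed in $\cV^{\op}$ as coequalizers) and reflects isomorphisms. Since $\cV \in \Alg(\Pr)$, the comonad $T$ preserves all small colimits, so by the dual of Lurie's presentability criterion for monads (\cite[Cor.~4.2.3.5]{HA}), $\cat{LcoMod}^C(\cV)$ is presentable and $\mathrm{Forget}$ creates small colimits. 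Moreover, the regular right $\cV$-action on $\cV$ commutes with left multiplication by $C$, so lifts along $\mathrm{Forget}$ to a $\cV$-action on $\cat{LcoMod}^C(\cV)$ that preserves colimits in each variable (since colimits are created from $\cV$), placing it in $\PrV$.

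Second, to show that $\mathrm{Forget}$ is an internal left adjoint in $\PrV$, I apply Proposition~\ref{prop:doctrinaladjunction}: it suffices that the right adjoint $\mathrm{CoFree} = C \otimes -$ preserves small colimits and commutes with the right $\cV$-action, both of which are immediate from $\cV \in \Alg(\Pr)$ and the associativity of $\otimes$ (the comparison map $(C\otimes v)\otimes a \to C\otimes(v\otimes a)$ is an equivalence).

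Third, for functoriality in $C$, I would factor the construction as a composite
\[ \cat{LcoMod}^{-}(\cV) : \coAlg(\cV) \longrightarrow \coMnd^{L}_{\cV}(\cV) \longrightarrow \PrV, \]
where $\coMnd^{L}_{\cV}(\cV)$ is the $\infty$-category of right-$\cV$-linear colimit-preserving comonads on $\cV$. The first functor is induced by the monoidal functor $\cV \to \End^{L}_{\cV}(\cV)$, $C \mapsto C \otimes -$, which arises because right multiplication exhibits $\cV$ as a $\cV,\cV$-bimodule; dualizing gives $\coAlg(\cV) \to \coMnd^{L}_\cV(\cV)$. The second functor is the $\infty$-categorical coalgebras functor, covariantly functorial in the comonad (a map $T \To T'$ of comonads turns a $T$-coalgebra $M \to TM$ into a $T'$-coalgebra by postcomposition); that the target lies in $\PrV$ follows by exactly the argument of stages one and two applied uniformly.

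Fourth, for the lax monoidal enhancement when $\cV \in \Alg_{\EE_2}(\Pr) \simeq \Alg(\Alg(\Pr))$, both source and target of $\cat{LcoMod}^{-}(\cV)$ acquire ``outer'' monoidal structures: $\coAlg(\cV)$ inherits one by the dual of Lemma~\ref{lem:algmonoidal}, and $\PrV$ acquires $\otimes_\cV$. The lax monoidal structure maps are, at the pointwise level, the functors
\[ \cat{LcoMod}^{C}(\cV) \otimes_\cV \cat{LcoMod}^{D}(\cV) \longrightarrow \cat{LcoMod}^{C\otimes D}(\cV) \]
that send $M \boxtimes N$ to $M \otimes N$ equipped with the evident $C\otimes D$-comodule structure built from $\Delta_M$, $\Delta_N$, and the interchange coming from the $\EE_2$-structure; the unit map $\cV \to \cat{LcoMod}^{1_\cV}(\cV)$ is an equivalence. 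The hard part --- and the one that would consume most of the actual write-up --- is assembling these pointwise maps into a coherent lax monoidal structure at the $\infty$-categorical level. I would handle this by upgrading each step of the factorization in stage three to the setting of $\cV \in \Alg(\Alg(\Pr))$: the ``inner'' $\Alg$ is used to form comonads and their coalgebras as before, while the ``outer'' $\Alg$ structure makes each step lax monoidal, with the ultimate composition being lax monoidal in the outer direction. The main obstacle will be this operadic bookkeeping; a clean implementation uses the fact that $\Alg(-)$ and $\coAlg(-)$ both preserve lax monoidal structures on target $\infty$-categories, so the composite functor is lax monoidal as soon as each of its factors is.
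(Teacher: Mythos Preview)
Your stages 1--2 are essentially the paper's argument: presentability via comonadicity of the forgetful--cofree adjunction with a colimit-preserving comonad, and internal left adjointness via Proposition~\ref{prop:doctrinaladjunction}. One caveat: presentability of coalgebras for a colimit-preserving comonad on a presentable category is \emph{not} a literal dualization of the monad result (presentability is not self-dual; $\cV^{\op}$ is almost never presentable), so your appeal to ``the dual of Lurie's presentability criterion'' is a bit glib. The paper instead cites a direct result (\cite[Prop.~3.8]{2410.21537}); the content is the same, but the citation should be to something that actually treats the comonadic case.

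For stages 3--4 the paper takes a genuinely different and cleaner route. Rather than factoring through a category of comonads and then assembling lax monoidality at the end, it constructs the lax monoidal functor $\LMod_{-}(\cV) : \Alg(\cV)^{\op} \to \RMod_\cV(\widehat{\cat{Cat}}_{(\infty,1)})$ in one shot using the operadic machinery of \cite[\S3.2.4]{HA} (via the operad map $\mathrm{LM}\otimes\mathrm{RM} \to \mathrm{BM} \to \EE_1$), takes opposites to get $\LcoMod^{-}(\cV)$, and only \emph{then} checks four concrete conditions (presentability, colimit-preservation of action and corestriction, and of the lax structure maps) to verify the functor factors through the non-full subcategory $\PrV \subset \RMod_\cV(\widehat{\cat{Cat}}_{(\infty,1)})$. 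This order of operations buys a lot: all the $\infty$-categorical coherence for the lax monoidal structure is packaged into the well-established operadic construction, and what remains is a short list of pointwise checks. Your approach would work in principle, but, as you yourself note, making the coalgebras-for-a-comonad functor $\coMnd^L_\cV(\cV) \to \PrV$ coherently lax monoidal is where all the difficulty hides, and your sketch of how to do it is vague. The paper's strategy sidesteps this entirely.
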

\begin{proof}
We write $\widehat{\cat{Cat}}_{(\infty,1)}$ for the $\infty$-category of large $(\infty,1)$-categories. Note that for $\cV \in \Alg_{\mathbb{E}_2}(\widehat{\cat{Cat}}_{(\infty,1)})$, the functor (sending algebra homomorphism $A\to B$ to the restriction functors $\LMod_B(\cV) \to \LMod_A(\cV)$)
\[\LMod_{-}(\cV): \Alg(\cV)^{\op} \to \RMod_{\cV}(\widehat{\cat{Cat}}_{(\infty,1)})
\]
carries a lax monoidal structure\footnote{We give a very brief sketch of a coherent construction of this lax monoidal structure, for details and notation we refer to~\cite[\S 3.2.4]{HA}, also see~\cite[\S 8.1]{MR4567127}.  Let $\mathrm{LM}, \mathrm{RM}, \mathrm{BM}$ denote the operads corepresenting left, right, and bimodules, and let $\otimes^{\mathrm{BV}}$ denote the Boardman--Vogt tensor product of operads. For any operad $\cO$ and $\cV \in \Alg_{\cO\otimes^{\mathrm{BV}} \mathbb{E}_1}(\widehat{\cat{Cat}}_{(\infty,1)})$, let $\LMod(\cV) \to \Alg(\cV)$ denote the Cartesian fibration classifying the functor $\LMod_{-}(\cV): \Alg(\cV)^{\op} \to \widehat{\cat{Cat}}_{(\infty,1)}$.  Via~\cite[Prop.~3.2.4.3]{HA}, the map of operads $\mathrm{LM}\otimes^{\mathrm{BV}}\mathrm{RM} \to \mathrm{BM} \to \mathbb{E}_1$ induces an $\cO\otimes^{\mathrm{BV}} \mathrm{RM}$-monoidal structure on the functor $\Alg_{\mathrm{LM}/\cO \otimes^{\mathrm{BV}} \mathbb{E}_1}(\cV) \to \Alg_{\mathbb{E}_1/\cO \otimes^{\mathrm{BV}} \mathbb{E}_1}(\cV)$. By~\cite[Rem.~3.2.4.2]{HA}, its underlying $\cO\otimes^{\mathrm{BV}} \mathbb{E}_1$-monoidal functor is $\cV \to *$ acting on the $\cO$-monoidal functor $\LMod(\cV) \to \Alg(\cV)$. In fact, it can be checked that this is an $\cO\otimes^{\mathrm{BV}} \mathrm{RM}$-monoidal Cartesian fibration (the opposite notion to~\cite[Def.~2.1.2.13]{HA}) and hence can be straightened to give a lax $\cO \otimes^{\mathrm{BV}} \mathrm{RM}$-monoidal functor $\Alg(\cV)^{\op} \to \widehat{\cat{Cat}}_{(\infty,1)}$ whose underlying lax $\cO\otimes^{\mathrm{BV}} \mathbb{E}_1$-monoidal functor is $* \to \widehat{\cat{Cat}}_{(\infty,1)}$ sending $* \mapsto \cV$. Taking modules at $* \in \Alg(*)$ therefore results in a lax $\cO$-monoidal functor $\Alg(\cV)^{\op} \to \RMod_{\cV}(\widehat{\cat{Cat}}_{(\infty,1)})$. }
for the monoidal structure on $\Alg(\cV)$ from Lemma~\ref{lem:algmonoidal}  and the monoidal structure on $\RMod_{\cV}(\widehat{\cat{Cat}}_{(\infty,1)})$ given by the relative tensor product over~$\cV$. 
Taking opposites, this induces a lax monoidal structure on the functor $$\LcoMod^{-}(\cV): \coAlg(\cV) \to \RMod_{\cV}(\widehat{\cat{Cat}}_{(\infty,1)})$$ (sending a coalgebra morphism to the corestriction functor between categories of comodules). For $\cV \in \Alg_{\mathbb{E}_2}(\Pr)$, we will show that it factors (as a lax monoidal functor) through the subcategory $\PrV \subseteq  \RMod_{\cV}(\widehat{\cat{Cat}}_{(\infty,1)})$. This entails: 
\begin{enumerate}
\item $\LcoMod^C(\cV)$ is presentable;
\item The action functor  $\LcoMod^C(\cV) \times \cV \to \LcoMod^C(\cV)$ preserves colimits separately in both arguments; 
\item Corestriction $\LcoMod^C(\cV) \to \LcoMod^D(\cV)$ along any coalgebra morphism $C\to D$ preserves small colimits;
\item For coalgebras $C,D$, the functor $\LcoMod^C(\cV) \times \LcoMod^D(\cV) \to \LcoMod^{C\otimes D} (\cV)$ preserves small colimits separately in both variables. 
\end{enumerate}
Item 1 follows from \cite[Prop.~3.8]{2410.21537}  since the adjunction~\eqref{eq:cofree} is comonadic and the associated comonad $C\otimes -: \cV \to \cV$ preserves small colimits. Since $\mathrm{Forget}:\cat{LcoMod}^C(\cV) \to \cV$ is a conservative left adjoint, it creates colimits\footnote{\label{ftn:create}A functor $\cA \to \cB$  is said to \emph{create colimits} if it preserves colimits and if  every cocone in $\cA$ whose image in $\cB$ is colimiting is already a colimit cocone in $\cA$. Any conservative functor which preserves colimits also creates them.}.  Since it moreover agrees with corestriction along the terminal coalgebra homomorphism $C\to 1_{\cV}$, it follows that corestriction along any coalgebra homomorphism preserves small colimits, proving item 3. Item 2 follows from item 4 for $D= 1_{\cV}$. Item 4 follows analogously to item 2. 

To see that $\mathrm{Forget}$ is in fact an internal left adjoint, note that its right adjoint $\mathrm{coFree}$ evidently preserves the right $\cV$-action and preserves colimits since $\mathrm{Forget} \circ \mathrm{coFree} = C\otimes -$ does and $\mathrm{Forget}$ creates colimits. 
\end{proof}

\begin{cor}\label{cor:Tannaka2}
If $H \in \BiAlg(\cV)$ is a bialgebra, then $\cA:=\cat{LcoMod}^H(\cV)$ inherits a presentably $\cV$-linear monoidal structure, i.e.\ $\cA \in \Alg(\PrV)$. Moreover, the forgetful functor $\mathrm{Forget}: \cat{LcoMod}^H(\cV) \to \cV$ is a morphism in $\Alg(\PrV)$ and hence gives rise, via Lemma~\ref{lem:easyretract},  to a retract in $\Mor(\PPrV)$. If (and only if) $H$ is a Hopf algebra, this retract satisfies (in the notation of Lemma~\ref{lem:easyretract}): 
\begin{itemize}
\item $f$ (and $g$) is a left adjoint, and $g$ is a right adjoint; 
\item $f^R \To g$ is a left adjoint. 
\end{itemize}
Hence, by Theorem~\ref{thm:mainHopftheorem} and Remark~\ref{rem:oppositevariants}, this gives rise to a Hopf algebra in $\Omega^2 \Mor(\PPrV) \simeq \cV$, namely $H$. 
\end{cor}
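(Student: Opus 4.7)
Apply Proposition~\ref{prop:comodules} to obtain the lax monoidal functor $\LcoMod^{-}(\cV) : \coAlg(\cV) \to \PrV$. Since $H \in \BiAlg(\cV) = \Alg(\coAlg(\cV))$, applying this functor yields $\cA = \LcoMod^H(\cV) \in \Alg(\PrV)$. The bialgebra counit $\epsilon : H \to 1_\cV$ is a morphism of algebras in $\coAlg(\cV)$, and $\LcoMod^{-}$ sends it to the forgetful functor $\mathrm{Forget}: \cA \to \cV$ as a morphism in $\Alg(\PrV)$. Feeding the pair $(\cA, \mathrm{Forget})$ into Lemma~\ref{lem:easyretract} produces the desired retract in $\Mor(\PPrV)$, with $f$ and $g$ automatically left adjoints and with mate $f^R \to g$ given by $\mathrm{Forget}$ viewed as a $\cV$-$\cA$-bimodule morphism ${}_\cV\cA_\cA \to {}_\cV\cV_\cA$.

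\textbf{Adjunctibility equals Hopf-ness.} By Proposition~\ref{prop:doctrinaladjunction}, the mate $f^R \to g$ is a left adjoint in the $(\infty,2)$-category of $\cV$-$\cA$-bimodules iff the right adjoint of the underlying functor $\mathrm{Forget}$ preserves small colimits and commutes with both the left $\cV$-action and the right $\cA$-action. Proposition~\ref{prop:comodules} identifies this right adjoint with $\mathrm{coFree} = H \otimes (-)$ and supplies colimit preservation; commutation with the left $\cV$-action is a straightforward braiding computation. The only nontrivial condition is compatibility with the right $\cA$-action, which unpacks into the invertibility of a comparison map built from $\Delta_H$, $m_H$, and the braiding of $\cV$. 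A direct diagram chase identifies this comparison with (a variant of) the shear map of Notation~\ref{not:shearmaps}, whose invertibility is precisely the Hopf condition. A parallel analysis shows that the existence of a left adjoint bimodule to $g$ (i.e., that $g$ is a right adjoint) is governed by the same shear map.

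\textbf{Recovering $H$.} Given the adjunctibility conditions, Theorem~\ref{thm:mainHopftheorem} in the 1-op form of Remark~\ref{rem:oppositevariants} produces a Hopf algebra in $\Omega^2\Mor(\PPrV)$. To identify this with $\cV$: $\Omega\Mor(\PPrV) = {}_\cV\cat{Bim}_\cV(\PPrV)$, which using the braiding of $\cV$ is monoidally equivalent to $\PPrV$; taking $\Omega$ once more picks out $\cV$-linear endofunctors of $\cV$, which evaluate at $1_\cV$ to give $\cV$ itself, with its braided monoidal structure provided by Example~\ref{exm:3catbraided}. Using Lemma~\ref{lem:unpacked} simplified via~\eqref{eq:simplification1}, the reconstructed underlying object computes as $\mathrm{Forget} \circ \mathrm{coFree}$ evaluated at $1_\cV$, yielding $H$ itself. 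The algebra and coalgebra structures described in Remark~\ref{rem:simplesquare} then recover the bialgebra structure on $H$ by direct inspection of the units and counits of the adjunctions $f \dashv f^R$ and $g^L \dashv g$.

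\textbf{Main obstacle.} The technical heart is the shear-map identification in the second paragraph: carefully tracing how $\LcoMod^H(\cV)$ becomes a module category over itself (combining the coalgebra of $H$ with the braiding of $\cV$) through the cofree functor (which deploys the algebra of $H$) requires disciplined bookkeeping of the braiding. The reconstruction computation, while conceptually tidy once Lemma~\ref{lem:unpacked} is in hand, likewise demands coherent tracking of all mate data to verify the structure maps match.
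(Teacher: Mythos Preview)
Your proposal is essentially correct and follows the paper's approach, with one gap. The setup via Proposition~\ref{prop:comodules} and the shear-map identification for $f^R \Rightarrow g$ being a left adjoint match the paper exactly (the paper reduces to $v = 1_\cV$ and $m = H$ by noting both sides are morphisms in $\PrV$ and that $\LcoMod^H(\cV)$ is cogenerated by the regular comodule, then identifies the comparison as $\sh^\nwarrow$).

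The gap is your claim that ``a parallel analysis'' shows $g$ is a right adjoint via the same shear map. This is not how the paper proceeds, and it is not clear such a parallel analysis exists: right-adjointness of the bimodule $g = {}_\cV\cV_\cA$ is a dualizability question about $\cV$ as a right $\cA$-module, not an internal-left-adjoint question about a bimodule morphism. The paper's argument is instead a \emph{consequence} of the first adjunctibility: once $F: \cA_\cA \to \cV_\cA$ is an internal left adjoint in $\RMod_\cA(\PPrV)$, it preserves $\cA$-atomics, so $F(1_\cA) \simeq 1_\cV$ is $\cA$-atomic; since $1_\cV$ generates $\cV$ under colimits and $\cV$-tensoring (hence under colimits and $\cA$-tensoring, as $\cV$ is a retract of $\cA$), Proposition~\ref{lem:tinygeneration} then gives that $g$ is a right adjoint. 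This chain is what actually closes the argument, and it incidentally recovers the fundamental theorem of Hopf modules (as the paper notes in a footnote).

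A minor point: your ``commutation with the left $\cV$-action'' step is redundant. Since the left $\cV$-action is via the unit inclusion, $\cV$-$\cA$-bimodules here are just right $\cA$-modules in $\PrV$, and the paper accordingly works directly in $\RMod_\cA(\PPrV)$.
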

\begin{proof}
By Proposition~\ref{prop:comodules}, the functor $\LcoMod^{-}(\cV): \coAlg(\cV) \to \PrV$ is lax monoidal and hence induces a functor \[\LcoMod^{-}(\cV): \BiAlg(\cV) = \Alg(\coAlg(\cV)) \to \Alg(\PrV).\] The unique bialgebra homorphism $H \to 1_{\cV}$ is sent to $\mathrm{Forget}: \LcoMod^H(\cV) \to \cV$, thus lifting the latter to a morphism in $\Alg(\PrV)$. We will now show that $f^R \Rightarrow g$ is a left adjoint, i.e.\ that $\mathrm{Forget}$ is an internal  left adjoint in $\RMod_{\LcoMod^H(\cV)}(\PPrV)$. By Proposition~\ref{prop:comodules}, it is an internal left adjoint in $\PPrV$. Thus, it suffices by Proposition~\ref{prop:doctrinaladjunction} to show that its right adjoint commutes with the right $\LcoMod^H(\cV)$-action,  i.e.\ that for every $v\in \cV$ and $m \in \LcoMod^H(\cV)$, the canonical morphism in $\LcoMod^H(\cV)$
\[\mathrm{coFree}(v) \otimes m \to \mathrm{coFree}(v\otimes \mathrm{Forget}(m))
\]
is an isomorphism. Since both functors are morphisms in $\PrV$, it suffices to set $v= 1_{\cV}$ and since $\cat{LcoMod}^H(\cV)$ is cogenerated by the regular comodule $H$, it suffices to set $m = H$. Then $\mathrm{coFree}(1_\cV) \otimes H$ becomes $H \otimes H$ made into an $H$-comodule via the coaction that coacts via comultiplication on both copies of $H$, whereas $\mathrm{coFree}(1_{\cV} \otimes \mathrm{Forget}(H))$ becomes $H \otimes H$ made into an $H$-comodule via the coaction that coacts via comultiplication on the left copy of $H$ but that coacts trivially on the right copy of $H$. The canonical map between these $H$-comodules unpacks to the shear map denoted $\sh^\nwarrow$ in Notation~\ref{not:shearmaps}. By Lemma~\ref{lem:reversingashearmap}, this is invertible if and only if $H$ is Hopf.
  
 Since $F:\cA_{\cA} \to \cV_{\cA}$ is internal left adjoint, it preserves $\cA$-atomics and hence $F(1_{\cA}) \simeq 1_{\cV}$ is $\cA$-atomic. But $1_{\cV}$ generates $\cV$ under colimits and $\cV$-tensoring and hence (since $\cV$ is a retract of $\cA$) also under colimits and $\cA$-tensoring\footnote{In general,  $\cM_\cA \in \RMod_\cA(\Pr)$ is generated by a single $\cA$-atomic object $M$ if and only if 
  the canonical map $\LMod_{\End(M)}(\cA) \to \cM$ sending the regular module to $M$ is an equivalence in $\RMod_\cA(\Pr)$ (see e.g.~\cite[Cor.~5.13]{RZ}). Taking $(\cM,M) = (\cV, 1_\cV)$, we thus recover the so-called \define{fundamental theorem of Hopf modules} \cite[Thm. 3.6]{1202.3613} asserting that $\LMod_H\cat{LcoMod}^H(\cV) \to \cV$ is an equivalence iff $H$ is Hopf.}. Thus, it follows from Lemma~\ref{lem:tinygeneration} that $k = {}_{\cV}\cV_{\cA}$ is a right adjoint bimodule. 
  
Unpacking our construction (or rather its $1$-op variant from Remark~\ref{rem:oppositevariants}) indeed recovers $H$.
\end{proof}
\begin{remark} We note that the conditions from Proposition~\ref{prop:Tannaka1} and Corollary~\ref{cor:Tannaka2} differ: the former corresponds to $L_3(\Adj \owedge \Adj)^{\text{1-op, 2-op}}$ and gives rise to a coHopf algebra in $\cV$ while the latter corresponds to $L_3(\Adj \owedge \Adj)^{\text{1-op}}$ and gives rise to a Hopf algebra in $\cV$; compare Remark~\ref{rem:oppositevariants}. And indeed, already when $\cV = \Vect$ and if $H$ is not left-semiperfect\footnote{A coalgebra $H$ in $\Vec$ is called \define{left-semiperfect} if $ \cat{LcoMod}^H(\Vec)$ has enough projectives. If $G$ is an affine algebraic group, then $\cO(G)$ is left-semiperfect if and only if $G$ is virtually reductive, i.e.\ a finite-index extension of a reductive group. For example, the Hopf algebra $k[x] = \cO(\bG_a)$ with comultiplication $\Delta x = x \otimes 1 + 1 \otimes x$ is not left-semiperfect.}, then $\LcoMod^H(\cV)$ is not generated by $\cV$-atomic objects and Theorem~1.3 of~\cite{MR3361309} implies that moreover $f = {_{\LcoMod^H(\cV)} \LcoMod^H(\cV)_\cV}$ is not a right adjoint. Thus, the retract from Corollary~\ref{cor:Tannaka2} does not satisfy the conditions from Proposition~\ref{prop:Tannaka1}.
\end{remark}

Corollary~\ref{cor:Tannaka2} shows that our construction can indeed recover \emph{any} Hopf algebra:

\begin{corollary}\label{cor:anyHopf}
Let $\cV \in \Alg_{\EE_2}(\Pr)$ be a presentably braided monoidal $(\infty,1)$-category and $H$ be a Hopf algebra in $\cV$. Then, $H$ arises from our construction applied to the $(\infty,3)$-category $\Mor(\PPrV)$ and the retract from Corollary~\ref{cor:Tannaka2}. \qed
\end{corollary}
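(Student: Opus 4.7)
The proof will essentially assemble the pieces already established in Corollary~\ref{cor:Tannaka2}, together with the standard identification $\Omega^2 \Mor(\PPrV) \simeq \cV$. The plan is as follows. First I would invoke Corollary~\ref{cor:Tannaka2}: since a Hopf algebra is in particular a bialgebra, we obtain a retract of $\cV$ by $\cA := \cat{LcoMod}^H(\cV)$ in the $(\infty,3)$-category $\Mor(\PPrV)$ via the section $f = {_\cA}\cA_\cV$ (unit inclusion) and retraction $g = {_\cV}\cV_\cA$ (forgetful functor). Corollary~\ref{cor:Tannaka2} further establishes that the Hopf property of $H$ is exactly what guarantees the adjunctibility conditions needed to apply the $1$-op variant of Theorem~\ref{thm:mainHopftheorem} from Remark~\ref{rem:oppositevariants}: $f$ is a left adjoint, $g$ is a right adjoint, and $f^R \Rightarrow g$ (given by $F$ itself by Lemma~\ref{lem:easyretract}) is a left adjoint.

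Second, I would spell out the identification $\Omega^2 \Mor(\PPrV) \simeq \cV$ as $\EE_2$-monoidal $(\infty,1)$-categories. By definition, $\End_{\Mor(\PPrV)}(\cV) = {_\cV}\cat{Bim}_\cV(\PPrV)$, and since $\cV$ is $\EE_2$-monoidal its regular bimodule ${_\cV}\cV_\cV$ is the tensor unit with endomorphism object $\End_{{_\cV}\cat{Bim}_\cV}({_\cV}\cV_\cV) \simeq \cV$. One checks that the braided monoidal structure induced from the $\Omega^2$-construction of Corollary~\ref{cor:monoidalstructureonloops} (combined with Example~\ref{exm:3catbraided}) agrees with the original $\EE_2$-structure on $\cV$; this is a standard Dunn-additivity/centralizer argument for the ``higher Morita'' categories.

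The only remaining step is to verify that the Hopf algebra produced by the $1$-op variant of Theorem~\ref{thm:mainHopftheorem} from this retract is $H$ itself. Using the simplification~\eqref{eq:simplification1} of Remark~\ref{rem:simplesquare}, the underlying object is the composite $\gamma \circ \gamma^L$ where $\gamma : gff^Rf \Rightarrow \id_\cV$ is built from $\alpha^{-1}$ and the counit of $f \dashv f^R$. Unpacking: $f^R = {_\cV}\cA_\cA$ picks out the regular right $H$-comodule $H \in \cA$ viewed as a morphism $\cV \to \cA$, and $f^Rf$ is the comonad $H \otimes -: \cV \to \cV$ arising from the comonadic adjunction of~\eqref{eq:cofree}. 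The composite $gff^Rf = (H \otimes -) \circ (H \otimes -)$ as a 2-endomorphism of $\id_\cV \in \Omega \Mor(\PPrV)$, and whiskering with $\gamma, \gamma^L$ yields $H \otimes H \otimes -$; identifying with the underlying object of $\Omega^2\Mor(\PPrV) \simeq \cV$ via the fundamental theorem of Hopf modules recovers $H$ itself. The algebra and coalgebra structures match by construction: the coalgebra structure comes from the comonad, while the algebra structure comes from the shear-map-inverse built out of the Hopf property via Corollary~\ref{cor:Tannaka2}; these reproduce the original multiplication and comultiplication of $H$.

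The main obstacle is precisely this last verification — ensuring not only that the underlying object matches but that all four of multiplication, unit, comultiplication, and counit correspond to their counterparts in $H$. This is a calculation in Morita $3$-categories which follows a familiar pattern: the unit and comultiplication are immediate from the comonad structure on $\mathrm{Forget} \circ \mathrm{Forget}^R$, whereas the multiplication and counit require tracking mates through the adjunction $f^R \dashv (f^R)^R$ (whose existence is part of the Hopf hypothesis), ultimately reducing to the ``fundamental theorem of Hopf modules'' \cite{1202.3613} already referenced in the footnote of Corollary~\ref{cor:Tannaka2}. The homotopy coherence is automatic because all data live in the $(\infty,3)$-category $\Mor(\PPrV)$ and the construction of Theorem~\ref{thm:mainHopftheorem} is functorial by Theorem~\ref{thm:maintheoremfunctorial}.
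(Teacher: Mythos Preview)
Your approach is the same as the paper's: the corollary is marked \qed\ because all the work is absorbed into Corollary~\ref{cor:Tannaka2}, whose proof ends with the assertion ``Unpacking our construction (or rather its $1$-op variant from Remark~\ref{rem:oppositevariants}) indeed recovers $H$.'' Your first two paragraphs correctly recapitulate this.

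Your third and fourth paragraphs go beyond the paper by attempting to spell out that unpacking, and here some details are muddled. The $1$-morphism $f^R = {_\cV}\cA_\cA$ goes from $\cA$ to $\cV$, not the other way; and the composite $f^R f$ is the $\cV$-$\cV$-bimodule ${_\cV}\cA_\cV$, i.e.\ the module category $\cat{LcoMod}^H(\cV)$, not literally the comonad $H\otimes -$ on $\cV$ (that comonad is $\mathrm{Forget}\circ\mathrm{coFree}$, which in the bimodule picture involves $g$ and its adjoint). More importantly, the formula of Lemma~\ref{lem:unpacked} and Remark~\ref{rem:simplesquare} produces a $2$-endomorphism of $\id_\cV$, i.e.\ an endomorphism of the bimodule ${_\cV}\cV_\cV$, hence an object of $\cV$ --- not a $1$-endomorphism of $\cV$ as your phrasing ``$gff^Rf = (H\otimes -)\circ(H\otimes -)$ as a $2$-endomorphism'' suggests. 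The correct unpacking traces which element of $\End({_\cV}\cV_\cV)\simeq\cV$ arises from (the $1$-op version of) \eqref{eq:simplification1} or \eqref{eq:simplification2}; this amounts to evaluating the relevant bimodule maps on the generator $1_\cV$ and does yield $H$, but your bookkeeping of which adjunction furnishes the (co)monad needs correcting. None of this affects the validity of the corollary, since the paper itself leaves this computation implicit.
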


\bibliographystyle{initalpha}
\bibliography{Bibliography}

\newcommand{\etalchar}[1]{$^{#1}$}
\begin{thebibliography}{LMGR{\etalchar{+}}24}

\bibitem[AF18]{MR3869643}
D.~Ayala and J.~Francis.
\newblock Flagged higher categories.
\newblock In {\em Topology and quantum theory in interaction}, vol. 718 of {\em
  Contemp. Math.}, pp. 137--173. Amer. Math. Soc., Providence, RI, 2018.

\bibitem[AGH24]{2404.03971}
F.~Abell\'an, A.~Gagna, and R.~Haugseng.
\newblock Straightening for lax transformations and adjunctions of
  $(\infty,2)$-categories.
\newblock 2024.
\newblock \arXiv{2404.03971}.

\bibitem[AM20]{MR4146146}
D.~Ara and G.~Maltsiniotis.
\newblock Joint et tranches pour les {$\infty$}-cat\'{e}gories strictes.
\newblock {\em M\'{e}m. Soc. Math. Fr. (N.S.)}, (165):vi + 213, 2020.

\bibitem[AR94]{MR1294136}
J.~r. Ad\'amek and J.~r. Rosick\'y.
\newblock {\em Locally presentable and accessible categories}, vol. 189 of {\em
  London Mathematical Society Lecture Note Series}.
\newblock Cambridge University Press, Cambridge, 1994.

\bibitem[Ara17]{AraujoThesis}
M.~Ara\'ujo.
\newblock {\em Coherence for 3-dualizable objects}.
\newblock PhD thesis, Christ Church, University of Oxford, 2017.

\bibitem[BCJF15]{MR3361309}
M.~Brandenburg, A.~Chirvasitu, and T.~Johnson-Freyd.
\newblock Reflexivity and dualizability in categorified linear algebra.
\newblock {\em Theory Appl. Categ.}, 30:808--835, 2015.

\bibitem[BD95]{baezdolan}
J.~C. Baez and J.~Dolan.
\newblock Higher-dimensional algebra and topological quantum field theory.
\newblock {\em J. Math. Phys.}, 36(11):6073--6105, 1995.
\newblock
  \href{https://arxiv.org/abs/q-alg/9503002}{\nolinkurl{arXiv:q-alg/9503002}}.

\bibitem[Ber07]{MR2331244}
C.~Berger.
\newblock Iterated wreath product of the simplex category and iterated loop
  spaces.
\newblock {\em Adv. Math.}, 213(1):230--270, 2007.

\bibitem[BFVV17]{1710.01465}
M.~Buckley, T.~Fieremans, C.~Vasilakopoulou, and J.~Vercruysse.
\newblock Oplax {H}opf algebras.
\newblock 2017.
\newblock \arXiv{1710.01465}.

\bibitem[BKLT00]{MR1759389}
Y.~Bespalov, T.~Kerler, V.~Lyubashenko, and V.~Turaev.
\newblock Integrals for braided {H}opf algebras.
\newblock {\em J. Pure Appl. Algebra}, 148(2):113--164, 2000.

\bibitem[BLV11]{MR2793022}
A.~Brugui\`eres, S.~Lack, and A.~Virelizier.
\newblock Hopf monads on monoidal categories.
\newblock {\em Adv. Math.}, 227(2):745--800, 2011.

\bibitem[BM24]{MR4695687}
S.~Ben-Moshe.
\newblock Naturality of the {$\infty$}-categorical enriched {Y}oneda embedding.
\newblock {\em J. Pure Appl. Algebra}, 228(6):Paper No. 107625, 25, 2024.

\bibitem[BNS99]{MR1726707}
G.~B\"ohm, F.~Nill, and K.~Szlach\'anyi.
\newblock Weak {H}opf algebras. {I}. {I}ntegral theory and {$C^*$}-structure.
\newblock {\em J. Algebra}, 221(2):385--438, 1999.

\bibitem[BSP21]{MR4301559}
C.~Barwick and C.~Schommer-Pries.
\newblock On the unicity of the theory of higher categories.
\newblock {\em J. Amer. Math. Soc.}, 34(4):1011--1058, 2021.

\bibitem[BV07]{MR2355605}
A.~Brugui\`eres and A.~Virelizier.
\newblock Hopf monads.
\newblock {\em Adv. Math.}, 215(2):679--733, 2007.

\bibitem[Cam22]{2209.09376}
T.~Campion.
\newblock Cubes are dense in $(\infty, \infty)$-categories.
\newblock 2022.
\newblock \arXiv{2209.09376}.

\bibitem[Cam23]{2311.00205}
T.~Campion.
\newblock The {G}ray tensor product of $(\infty,n)$-categories.
\newblock 2023.
\newblock \arXiv{2311.00205}.

\bibitem[CH24]{2410.19053}
C.~Chanavat and A.~Hadzihasanovic.
\newblock Model structures for diagrammatic $(\infty,n)$-categories.
\newblock 2024.
\newblock \arXiv{2410.19053}.

\bibitem[Cha25]{2505.01387}
C.~Chanavat.
\newblock Gray products of diagrammatic $(\infty,n)$-categories.
\newblock 2025.
\newblock \arXiv{2505.01387}.

\bibitem[CHO24]{2408.11045}
C.~C{\'o}rdova, N.~Holfester, and K.~Ohmori.
\newblock Representation theory of solitons.
\newblock 2024.
\newblock \arXiv{2408.11045}.

\bibitem[Cra]{CransTensor}
S.~Crans.
\newblock Pasting schemes for the monoidal biclosed structure on $\omega$-cat.
\newblock Unpublished manuscript.

\bibitem[DN24]{TudorWenjun}
T.~Dimofte and W.~Niu.
\newblock Tannakian {QFT}: {F}rom spark algebras to quantum groups.
\newblock 2024.
\newblock \arXiv{2411.04194}.

\bibitem[DSPS20]{DSPS}
C.~L. Douglas, C.~Schommer-Pries, and N.~Snyder.
\newblock Dualizable tensor categories.
\newblock {\em Memoirs of the AMS}, 2020.
\newblock \arXiv{1312.7188}.

\bibitem[Dun88]{MR938617}
G.~Dunn.
\newblock Tensor product of operads and iterated loop spaces.
\newblock {\em J. Pure Appl. Algebra}, 50(3):237--258, 1988.

\bibitem[FT22]{MR4520300}
D.~S. Freed and C.~Teleman.
\newblock Topological dualities in the {I}sing model.
\newblock {\em Geom. Topol.}, 26(5):1907--1984, 2022.

\bibitem[GGN15]{MR3450758}
D.~Gepner, M.~Groth, and T.~Nikolaus.
\newblock Universality of multiplicative infinite loop space machines.
\newblock {\em Algebr. Geom. Topol.}, 15(6):3107--3153, 2015.

\bibitem[GH15]{MR3345192}
D.~Gepner and R.~Haugseng.
\newblock Enriched {$\infty$}-categories via non-symmetric {$\infty$}-operads.
\newblock {\em Adv. Math.}, 279:575--716, 2015.

\bibitem[Gol23]{MR4669810}
Z.~Goldthorpe.
\newblock Homotopy theories of {$(\infty,\infty)$}-categories as universal
  fixed points with respect to weak enrichment.
\newblock {\em Int. Math. Res. Not. IMRN}, (22):19592--19640, 2023.

\bibitem[Gra74]{MR0371990}
J.~W. Gray.
\newblock {\em Formal category theory: adjointness for {$2$}-categories}, vol.
  Vol. 391 of {\em Lecture Notes in Mathematics}.
\newblock Springer-Verlag, Berlin-New York, 1974.

\bibitem[Had17]{1709.08086}
A.~Hadzihasanovic.
\newblock {\em The algebra of entanglement and the geometry of composition}.
\newblock PhD thesis, University of Oxford, 2017.
\newblock \arXiv{1709.08086}.

\bibitem[Had21]{2101.10361}
A.~Hadzihasanovic.
\newblock The smash product of monoidal theories.
\newblock {\em Proceedings of the 36. Annual IEEE Symposium on Logic in
  Computer Science (LICS 2021)}, pp. 1--13, 2021.
\newblock Longer version at \arXiv{2101.10361}.

\bibitem[Had24]{2404.07273}
A.~Hadzihasanovic.
\newblock {\em Combinatorics of higher-categorical diagrams}.
\newblock 2024.
\newblock \arXiv{2404.07273}.

\bibitem[Hau15]{MR3402334}
R.~Haugseng.
\newblock Rectification of enriched {$\infty$}-categories.
\newblock {\em Algebr. Geom. Topol.}, 15(4):1931--1982, 2015.

\bibitem[Hau17]{MR3650080}
R.~Haugseng.
\newblock The higher {M}orita category of {$\Bbb{E}_n$}-algebras.
\newblock {\em Geom. Topol.}, 21(3):1631--1730, 2017.

\bibitem[Hau20]{2002.01037}
R.~Haugseng.
\newblock On lax transformations, adjunctions, and monads in
  $(\infty,2)$-categories.
\newblock 2020.

\bibitem[Hei17]{1712.00555}
H.~Heine.
\newblock A monadicity theorem for higher algebraic structures.
\newblock 2017.
\newblock \arXiv{1712.00555}.

\bibitem[Hei23]{MR4554672}
H.~Heine.
\newblock An equivalence between enriched {$\infty$}-categories and
  {$\infty$}-categories with weak action.
\newblock {\em Adv. Math.}, 417:Paper No. 108941, 140, 2023.

\bibitem[Hei25]{2505.22640}
H.~Heine.
\newblock On the categorification of homology.
\newblock 2025.
\newblock \arXiv{2505.22640}.

\bibitem[Hin23]{MR4567127}
V.~Hinich.
\newblock Colimits in enriched {$\infty$}-categories and {D}ay convolution.
\newblock {\em Theory Appl. Categ.}, 39:Paper No. 12, 365--422, 2023.

\bibitem[HSS15]{1511.03589}
M.~Hoyois, S.~Scherotzke, and N.~Sibilla.
\newblock Higher traces, noncommutative motives, and the categorified {C}hern
  character.
\newblock 2015.
\newblock \arXiv{1511.03589}.

\bibitem[JF24]{TheoTalkPI}
T.~Johnson-Freyd.
\newblock Quantum homotopy groups.
\newblock Lecture given at the conference ``Higher Categorical Tools for
  Quantum Phases of Matter,'' Perimeter Institute for Theoretical Physics,
  March 21, 2024.
\newblock \url{https://pirsa.org/24030091}.

\bibitem[JFS17]{JFS}
T.~Johnson-Freyd and C.~Scheimbauer.
\newblock {(Op)lax} natural transformations, twisted field theories, and ``even
  higher'' {M}orita categories.
\newblock {\em Adv. Math.}, 307:147--223, 2 2017.
\newblock \DOI{10.1016/j.aim.2016.11.014}. \arXiv{1502.06526}.

\bibitem[Joh02]{MR1953060}
P.~T. Johnstone.
\newblock {\em Sketches of an elephant: a topos theory compendium. {V}ol. 1},
  vol.~43 of {\em Oxford Logic Guides}.
\newblock The Clarendon Press, Oxford University Press, New York, 2002.

\bibitem[JS86]{JoyalStreetMacquarie}
A.~Joyal and R.~Street.
\newblock Braided monoidal categories.
\newblock {\em Macquarie Math Reports}, 1986.

\bibitem[JS91]{MR1173027}
A.~Joyal and R.~Street.
\newblock An introduction to {T}annaka duality and quantum groups.
\newblock In {\em Category theory ({C}omo, 1990)}, vol. 1488 of {\em Lecture
  Notes in Math.}, pp. 413--492. Springer, Berlin, 1991.

\bibitem[JS93]{JOYALSTREET199320}
A.~Joyal and R.~Street.
\newblock Braided tensor categories.
\newblock {\em Advances in Mathematics}, 102(1):20--78, 1993.

\bibitem[Kel82]{MR648793}
G.~M. Kelly.
\newblock Structures defined by finite limits in the enriched context. {I}.
\newblock {\em Cahiers Topologie G\'eom. Diff\'erentielle}, 23(1):3--42, 1982.
\newblock Third Colloquium on Categories, Part VI (Amiens, 1980).

\bibitem[LM24]{2311.14221}
S.~Lentner and M.~Mombelli.
\newblock Fibre functors and reconstruction of {H}opf algebras.
\newblock {\em Canad. J. Math.}, 2024.
\newblock \arXiv{2311.14221}. \DOI{10.4153/S0008414X24000531}.

\bibitem[LMGR{\etalchar{+}}24]{2401.02956}
Y.~L. Liu, A.~Mazel-Gee, D.~Reutter, C.~Stroppel, and P.~Wedrich.
\newblock A braided monoidal $( \infty , 2 ) $-category of {S}oergel bimodules.
\newblock 2024.
\newblock \arXiv{2401.02956}.

\bibitem[Lur09a]{HTT}
J.~Lurie.
\newblock {\em Higher topos theory}, vol. 170 of {\em Annals of Mathematics
  Studies}.
\newblock Princeton University Press, Princeton, NJ, 2009.

\bibitem[Lur09b]{lurie}
J.~Lurie.
\newblock On the classification of topological field theories.
\newblock In {\em Current developments in mathematics, 2008}, pp. 129--280.
  Int. Press, Somerville, MA, 2009.
\newblock \href{https://arxiv.org/abs/0905.0465}{\nolinkurl{arXiv:0905.0465}}.

\bibitem[Lur17]{HA}
J.~Lurie.
\newblock {\em Higher Algebra}.
\newblock 2017.
\newblock \url{https://www.math.ias.edu/~lurie/papers/HA.pdf}.

\bibitem[Maj94]{q-alg/9509023}
S.~Majid.
\newblock Algebras and {H}opf algebras in braided categories.
\newblock In {\em Advances in Hopf Algebras}, vol. 158 of {\em Marcel Dekker
  Lec. Notes Pure and Applied Maths}, pp. 55--105, 1994.
\newblock \arXiv{q-alg/9509023}.

\bibitem[Mas24]{NarukiThesis}
N.~Masuda.
\newblock {\em The Algebra of Categorical Spectra}.
\newblock PhD thesis, Johns Hopkins University, 2024.

\bibitem[MGS24]{MR4788123}
A.~Mazel-Gee and R.~Stern.
\newblock A universal characterization of noncommutative motives and secondary
  algebraic {K}-theory.
\newblock {\em Ann. K-Theory}, 9(2):369--445, 2024.

\bibitem[Moe02]{MR1887157}
I.~Moerdijk.
\newblock Monads on tensor categories.
\newblock vol. 168, pp. 189--208. 2002.
\newblock Category theory 1999 (Coimbra).

\bibitem[Ram24]{2410.21537}
M.~Ramzi.
\newblock Dualizable presentable $\infty$-categories.
\newblock 2024.
\newblock \arXiv{2410.21537}.

\bibitem[Reu17]{David2017Talk}
D.~Reutter.
\newblock {F}robenius algebras, {H}opf algebras and 3-categories.
\newblock Lecture given at the conference ``Hopf Algebras in Kitaev's Quantum
  Double Models: Mathematical Connections from Gauge Theory to Topological
  Quantum Computing and Categorical Quantum Mechanics,'' Perimeter Institute
  for Theoretical Physics, August 3, 2017.
\newblock \url{https://pirsa.org/17080011}.

\bibitem[RV16]{MR3415698}
E.~Riehl and D.~Verity.
\newblock Homotopy coherent adjunctions and the formal theory of monads.
\newblock {\em Adv. Math.}, 286:802--888, 2016.

\bibitem[RZ25]{RZ}
D.~Reutter and M.~Zetto.
\newblock Enriched $\infty$-categories as marked module categories.
\newblock 2025.
\newblock \arXiv{2501.07697}.

\bibitem[Sch92]{MR1623637}
P.~Schauenburg.
\newblock {\em Tannaka duality for arbitrary {H}opf algebras}, vol.~66 of {\em
  Algebra Berichte [Algebra Reports]}.
\newblock Verlag Reinhard Fischer, Munich, 1992.

\bibitem[Sch00]{MR1761130}
P.~Schauenburg.
\newblock Faithful flatness over {H}opf subalgebras: counterexamples.
\newblock In {\em Interactions between ring theory and representations of
  algebras ({M}urcia)}, vol. 210 of {\em Lecture Notes in Pure and Appl.
  Math.}, pp. 331--344. Dekker, New York, 2000.

\bibitem[Sel11]{MR2767048}
P.~Selinger.
\newblock A survey of graphical languages for monoidal categories.
\newblock In {\em New structures for physics}, vol. 813 of {\em Lecture Notes
  in Phys.}, pp. 289--355. Springer, Heidelberg, 2011.

\bibitem[{\v S}ev02]{MR1915348}
P.~{\v S}evera.
\newblock ({N}on-)abelian {K}ramers-{W}annier duality and topological field
  theory.
\newblock {\em J. High Energy Phys.}, (5):no. 49, 10, 2002.

\bibitem[SS23]{2312.05051}
C.~Scheimbauer and T.~Stempfhuber.
\newblock Relative field theories via relative dualizability.
\newblock 2023.
\newblock \arXiv{2312.05051}.

\bibitem[Ste04]{MR2061574}
R.~Steiner.
\newblock Omega-categories and chain complexes.
\newblock {\em Homology Homotopy Appl.}, 6(1):175--200, 2004.

\bibitem[Ste20]{2011.03035}
G.~Stefanich.
\newblock Presentable $(\infty,n)$-categories.
\newblock 2020.
\newblock \arXiv{2011.03035}.

\bibitem[Str72]{MR299653}
R.~Street.
\newblock The formal theory of monads.
\newblock {\em J. Pure Appl. Algebra}, 2(2):149--168, 1972.

\bibitem[Tak71]{MR292876}
M.~Takeuchi.
\newblock Free {H}opf algebras generated by coalgebras.
\newblock {\em J. Math. Soc. Japan}, 23:561--582, 1971.

\bibitem[Tak99]{MR1685417}
M.~Takeuchi.
\newblock Finite {H}opf algebras in braided tensor categories.
\newblock {\em J. Pure Appl. Algebra}, 138(1):59--82, 1999.

\bibitem[Ulb90]{MR1098991}
K.-H. Ulbrich.
\newblock On {H}opf algebras and rigid monoidal categories.
\newblock vol.~72, pp. 252--256. 1990.
\newblock Hopf algebras.

\bibitem[Ver13]{1202.3613}
J.~Vercruysse.
\newblock Hopf algebras---variant notions and reconstruction theorems.
\newblock In C.~Heunen, M.~Sadrzadeh, and E.~Grefenstette, editors, {\em
  Quantum Physics and Linguistics: A Compositional, Diagrammatic Discourse}.
  Oxford University Press, 2013.

\bibitem[Wil08]{MR2500058}
S.~Willerton.
\newblock A diagrammatic approach to {H}opf monads.
\newblock {\em Arab. J. Sci. Eng. Sect. C Theme Issues}, 33(2):561--585, 2008.

\end{thebibliography}

\end{document}